\documentclass[12pt]{amsart}
\usepackage{amsfonts,graphicx,
stmaryrd,amscd,latexsym,amsbsy}
\usepackage{pst-node}
\usepackage{tikz-cd} 
\usepackage{subcaption}
\usepackage[colorlinks=true]{hyperref}

\usepackage{pst-node}
\usepackage{pst-intersect}

\usepackage{amssymb}
\usepackage{amsmath,amsthm}
\usepackage{xypic}

\newcommand\cc{c}

\newcommand\Pc{\mathcal{P}^{(c)}}

\usepackage[tmargin=1in]{geometry}

\numberwithin{equation}{section}

\newtheorem{theorem}{Theorem}[section]

\newtheorem{lemma}[theorem]{Lemma}
\newtheorem{definition}[theorem]{Definition}
\newtheorem{proposition}[theorem]{Proposition}
\newtheorem{corollary}[theorem]{Corollary}

\newtheorem{example}[theorem]{Example}

\theoremstyle{remark}
\newtheorem{remark}[theorem]{Remark}

\newcommand{\R}{{\mathbb R}}

\newcommand{\Z}{{\mathbb Z}}

\setcounter{tocdepth}{1}
\begin{document}
\title{Quasi-polynomial representations of double affine Hecke algebras}
\author{Siddhartha Sahi, Jasper Stokman, Vidya Venkateswaran}
\address{S. Sahi, Department of Mathematics, Rutgers University, 110 Frelinghhuysen Rd,
Piscataway, NJ 08854-8019, USA.}
\address{J. Stokman, KdV Institute for Mathematics, University of Amsterdam, Science
Park 105-107, 1098 XG Amsterdam, The Netherlands.}
\address{V. Venkateswaran, Center for Communications Research, 805 Bunn Dr, Princeton,
NJ 08540, USA. }
\keywords{}
\begin{abstract}
We introduce an explicit family of representations of the double affine Hecke algebra $\mathbb{H}$ acting on spaces of quasi-polynomials, defined in terms of truncated Demazure-Lusztig type operators. We show that these quasi-polynomial representations provide concrete realisations of a natural family of cyclic $Y$-parabo\-li\-cally induced 
$\mathbb{H}$-representations.  We recover Cherednik's well-known polynomial representation as a special case.

The quasi-polynomial representation gives rise to a family of commuting operators acting on spaces of quasi-polynomials.  These generalize the Cherednik operators, which are fundamental in the study of Macdonald polynomials.  We provide a detailed study of their joint eigenfunctions, which may be regarded as quasi-polynomial, multi-parametric generalisations of nonsymmetric Macdonald polynomials.  We also introduce generalizations of symmetric Macdonald polynomials, which are invariant under a multi-parametric generalization of the standard Weyl group action.

We connect our results to the representation theory of metaplectic covers of reductive groups over non-archimedean local fields.  We introduce root system generalizations of the metaplectic polynomials from our previous work by taking a suitable restriction and reparametrization of the quasi-polynomial generalizations of Macdonald polynomials.  We show that metaplectic Iwahori-Whittaker functions can be recovered by taking the Whittaker limit of these metaplectic polynomials. 
\end{abstract}
\maketitle

\tableofcontents

\noindent
 \section{Introduction}\label{Intro}

Nonsymmetric Macdonald polynomials \cite{Ma,Ch,Sa,Ha,StAB} are a remarkable family of functions that depend on several parameters -- usually two or three, but as many as six for the Koornwinder setting. For suitable choices of parameters, they specialise to many important families of special functions arising in the representation theory of reductive groups, including spherical functions and Iwahori-Whittaker functions. In turn, Macdonald polynomials can be understood in terms of  the representation theory of Cherednik's double affine Hecke algebra $\mathbb{H}$. More precisely, they are simultaneous eigenfunctions of a certain commutative subalgebra $\mathcal{P}_Y\subset \mathbb{H}$ in Cherednik's polynomial representation of $\mathbb{H}$ on the group algebra $\mathcal{P}$ of a coroot lattice $Q^\vee$.  

In this paper we discuss a generalisation of Macdonald polynomials, which can be understood via a family of representations $\pi_{\cc,\mathfrak{t}}$ of $\mathbb{H}$ on 
the subspace $\mathcal{P}^{(\cc)}$ of the group algebra of $Q^\vee\otimes_\Z \R$ generated by 
an affine Weyl group orbit $\mathcal{O}_\cc$ in $Q^\vee\otimes_{\mathbb{Z}}\mathbb{R}$. Here $\cc$ is the representative of the orbit that lies in the fundamental alcove, and $\mathfrak{t}$ represents a number of additional representation parameters.

We refer to elements in the representation space as \emph{quasi-polynomials}, and to $\pi_{\cc,\mathfrak{t}}$ as the quasi-polynomial representation. The simultaneous eigenfunctions of the commuting operators $\pi_{\cc,\mathfrak{t}}(h)$ ($h\in\mathcal{P}_Y$) 
provide our quasi-polynomial generalisations of Macdonald polynomials. They depend on the usual Macdonald parameters as well as on the additional representation parameters $\mathfrak{t}$. 

We establish an explicit connection
to the representation theory of metaplectic covers of reductive groups over non-archimedean local fields 
and the associated metaplectic Whittaker functions 
\cite{CO,McN,CGP,PP,PP2}.  
 In our previous work \cite{SSV}, we introduced a metaplectic \textit{affine} Hecke algebra action via explicit formulas and provided a direct construction as an induced representation.  We showed that the Chinta-Gunnells \cite{CG07,CG} action of the Weyl group, 
originally used to construct Weyl group multiple Dirichlet series, 
 can be recovered from this affine Hecke algebra action through localization. We also related this affine Hecke algebra action to  the metaplectic Demazure-Lusztig operators from \cite{PP,CGP,PP2}.  This is related to the context of the present paper as follows.   If the orbit $\mathcal{O}_\cc$ is contained in the rational vector space $Q^\vee\otimes_{\mathbb{Z}}\mathbb{Q}$, then the quasi-polynomials in $\mathcal{P}^{(c)}$ can be regarded as \textit{polynomials} under an appropriate reparametrization.  Moreover, under a suitable transformation, the restriction of the representation $\pi_{c, \frak{t}}$ to the affine Hecke algebra generated by $T_1, \cdots, T_r$ and $\mathcal{P}$ recovers the metaplectic affine Hecke algebra representation of \cite{SSV}.  Under these identifications, the quasi-polynomial generalizations of Macdonald polynomials extend the metaplectic polynomials introduced in \cite{SSV}, and studied further in \cite{Sai}, from type $A$ to an arbitrary root system.
 We also recover the metaplectic Iwahori-Whittaker and spherical Whittaker functions \cite{PP, McN} as Whittaker limits of these metaplectic polynomials and their antisymmetric versions.

In the following six subsections we provide an overview of our main results on the quasi-polynomial representations and associated generalisations of the Macdonald polynomials relative to adjoint root datum.

\subsection{Double affine Weyl group actions on quasi-polynomials}\label{daWgInstroSub}
To set the stage it is instructive to first introduce the quasi-polynomial representations of the double affine Weyl group.

Let $\Phi_0$ be a reduced irreducible root system, realised within the linear dual of a Euclidean space $E$, and normalised so that long roots have squared length $2/m^2$ for some
natural number $m\in\mathbb{Z}_{>0}$. 
Let $\Phi=\Phi_0\times\mathbb{Z}$ be the associated irreducible affine root system of untwisted type.
The coroot lattice $\widehat{Q}^\vee$ of $\Phi$ is isomorphic to $Q^\vee+m^2\Z K$, where $Q^\vee$ is the coroot lattice of $\Phi_0$ and $K$ is the central element in the associated untwisted affine Lie algebra. The affine Weyl group $W$ of $\Phi$ is a semidirect product $W=W_0\ltimes Q^\vee$, where $W_0$ is the Weyl group of $\Phi_0$. 
The double affine Weyl group is the semidirect product 
\begin{equation}\label{PBWdaWg}
\mathbb{W}=W\ltimes \widehat{Q}^\vee=(W_0\ltimes Q^\vee)\ltimes \widehat{Q}^\vee,
\end{equation}
where the $W$-action $(w,\widehat{\mu})\mapsto w\cdot\widehat{\mu}$ on $\widehat{Q}^\vee$ is determined by the pairing of $Q^\vee$ into the center $m^2\Z K$ of $\mathbb{W}$ induced by the inner product $\langle\cdot,\cdot\rangle$ on $E$.

We fix simple roots $\alpha_0,\alpha_1,\ldots, \alpha_r$ for $\Phi$ such that $\alpha_1, \ldots, \alpha_r$ are simple roots for $\Phi_0$. Let $\alpha_j^\vee$ be the corresponding simple coroots. Then $\alpha_0=(-\varphi,1)$, where $\varphi$ is the highest root for $\Phi_0$, and $\alpha_0^\vee=m^2K-\varphi^\vee$. The simple reflection associated to $\alpha_j$ is denoted by $s_j\in W$. The affine root hyperplane configuration in $E$ provides a decomposition of $E$ in closed alcoves, each closed alcove being a fundamental domain for the action of $W$ on $E$ by reflections and translations. The choice of simple roots singles out the fundamental open alcove $C_+$. We write $\mathcal{O}_\cc$ for the $W$-orbit in $E$ containing $\cc\in\overline{C}_+$.
 
The face decomposition of $\overline{C}_+$ is the disjoint union
\[
\overline{C}_+=\bigsqcup_{J\subsetneq \{0,\ldots,r\}}C^J
\]
with $C^J$ 
the vectors $\cc$ in $\overline{C}_+$ satisfying $\alpha_j(\cc)=0$ if and only if $j\in J$. 

Let $\mathbf{F}$ be a field of characteristic zero and fix $q\in\mathbf{F}^\times$ not a root of unity. Let $T:=\textup{Hom}(Q^\vee,\mathbf{F}^\times)$ be the $\mathbf{F}$-torus consisting of multiplicative characters $\mathfrak{t}: Q^\vee\rightarrow\mathbf{F}^\times$.  We will denote the value of $\mathfrak{t}$ at $\mu\in Q^\vee$ by $\mathfrak{t}^\mu$. Consider the associated affine subtorus
\[
T_J:=\{\mathfrak{t}\in T \,\, | \,\, \mathfrak{t}^{\alpha_j^\vee}=1 \,\,\, \forall\, j\in J\}
\]
of dimension $r-\#J$,  where $\mathfrak{t}^{\mu+\ell K}:=q^\ell\mathfrak{t}^\mu$. The field $\mathbf{F}$ should contain a sufficiently large root of $q$ to ensure that $T_J\not=\emptyset$ for subsets $J$ containing $0$. 
 
 Denote by 
\[
\mathbf{F}[E]=\bigoplus_{y\in E}\mathbf{F}x^y
\] 
the group algebra of $E$, viewed as additive group. It splits up as the direct sum of the $\mathcal{P}$-submodules 
$\mathcal{P}^{(\cc)}:=\bigoplus_{y\in\mathcal{O}_\cc}\mathbf{F}x^y$ ($\cc\in\overline{C}_+$).

For $\cc\in C^J$ the space $\mathcal{P}^{(\cc)}$ of quasi-polynomials admits a natural family 
of $\mathbb{W}$-actions $\cdot_{\mathfrak{t}}$ depending on $q$ and $\mathfrak{t}\in T_J$.
The action of $W_0\ltimes\widehat{Q}^\vee$ is by reflections and translations of the exponents of the quasi-monomials $x^y$, with $m^2K$ acting as multiplication by $q^{m^2}$.
In particular, this part of the action is independent of $\mathfrak{t}$. 
The action of the standard abelian subgroup $\{\tau(\mu)\}_{\mu\in Q^\vee}$ in $W=W_0\ltimes Q^\vee$ is defined by
\begin{equation} \label{Xrep} 
\tau(\mu)_{\mathfrak{t}}x^y:=\mathfrak{t}_y^{-\mu}x^y\qquad\quad (\mu\in Q^\vee,\, y\in\mathcal{O}_\cc),
\end{equation}
where $\mathfrak{t}_y\in T$ is the multiplicative character $\mu\mapsto\mathfrak{t}^{w_y^{-1}\cdot\mu}$ with
$w_y\in W$ the element of shortest length such that $w_y\cc=y$ (actually, one may take here any $w\in W$ satisfying $w\cc=y$). Note that
$s_{j,\mathfrak{t}}x^y:=(s_j)_{\mathfrak{t}}x^y$ for $y\in\mathcal{O}_\cc$ is explicitly given by
\[
s_{i,\mathfrak{t}}x^y=x^{s_iy}\quad (1\leq i\leq r),\qquad\quad s_{0,\mathfrak{t}}x^y=\mathfrak{t}_y^{\varphi^\vee}x^{s_\varphi y}
\]
with $s_\varphi\in W_0$ the reflection associated to the highest root $\varphi\in\Phi_0$.

In case $J=\{1,\ldots,r\}$, $\cc=0$ and $\mathfrak{t}=1_T$ we reobtain the standard action of $\mathbb{W}$ on $\mathcal{P}$ by polynomial $q$-difference reflection operators, since
\[
\tau(\mu)_{1_T}(x^\nu)=q^{-\langle\mu,\nu\rangle}x^\nu\qquad\quad (\mu,\nu\in Q^\vee).
\]
In this case we write $w(x^\mu)$ for $w_{1_T}x^\mu$ ($w\in W$).

In this paper we deform the $\mathbb{W}$-action $\cdot_{\mathfrak{t}}$ on $\mathcal{P}^{(\cc)}$ to an action of the double affine Hecke algebra $\mathbb{H}$. The resulting quasi-polynomial representation of $\mathbb{H}$ plays a central role in this paper.

\subsection{The quasi-polynomial representation}\label{DAHAInstroSub}
Cherednik's \cite{Ch} double affine Hecke algebra (DAHA) is a certain flat deformation $\mathbb{H}=\mathbb{H}(\mathbf{k},q)$ of $\mathbf{F}[\mathbb{W}]/(m^2K-q^{m^2})\simeq W\ltimes\mathcal{P}$ depending on a multiplicity function $\mathbf{k}=(k_a)_{a\in\Phi}$.
We consider in this paper the case that $\mathbf{k}$ is invariant for the extended affine Weyl group. 
This means that $k_a\in\mathbf{F}^\times$ only depends on the length $\|\alpha\|$ of the gradient $\alpha\in\Phi_0$ of the affine root $a=(\alpha,\ell)\in\Phi$. In particular, there are at most two distinct values of the $k_a$. We write $k_j=k_{\alpha_j}$. 

The double affine Hecke algebra $\mathbb{H}$ has an explicit definition in terms of generators and relations (see Definition \ref{defDAHA}). The generators are $T_j$ ($0\leq j\leq r$) and $x^\mu$ ($\mu\in Q^\vee$).
The subalgebra generated by $T_j$ ($0\leq j\leq r$) is the affine Hecke algebra $H=H(\mathbf{k})$ of $W$. Its defining relations are the $(W,\{s_0,\ldots,s_r\})$-braid relations and the Hecke relations $(T_j-k_j)(T_j+k_j^{-1})=0$. The subalgebra generated by $x^\mu$ ($\mu\in Q^\vee$) is $\mathcal{P}$. The commutation relations between the $T_j$ and $x^\mu$ are $\mathbf{k}$-deformations of $s_jx^\mu=s_{j}(x^\mu)s_j$ in $W\ltimes\mathcal{P}$, given explicitly by 
\begin{equation}\label{crossintro}
T_jx^\mu-s_{j}(x^\mu)T_j=(k_j-k_j^{-1})\Bigl(\frac{x^\mu-s_{j}(x^\mu)}{1-x^{\alpha_j^\vee}}\Bigr).
\end{equation}
Note here that $x^{\alpha_0^\vee}=x^{m^2K^2-\varphi^\vee}=q^{m^2}x^{-\varphi^\vee}$.
The right hand side can be alternatively written as
\[
(k_j-k_j^{-1})\Bigl(\frac{1-x^{-D\alpha_j(\mu)\alpha_j^\vee}}{1-x^{\alpha_j^\vee}}\Bigr)x^\mu
\]
with $D\alpha_j\in\Phi_0$ the gradient of $\alpha_j$. In particular \eqref{crossintro} makes sense in $\mathbb{H}$, since $D\alpha_j(\mu)\in\mathbb{Z}$.
Together with the well known fact that $H\otimes\mathcal{P}\simeq\mathbb{H}$ as vector spaces by multiplication (Cherednik's \cite{Ch} PBW theorem for $\mathbb{H}$), this gives an explicit generalisation of the first isomorphism in \eqref{PBWdaWg}.

Also the decomposition $W\simeq W_0\ltimes Q^\vee$ has a natural analog in $H$, known as the Bernstein-Zelevinsky decomposition of $H$ (see \cite{Lu}). It provides a subalgebra $\mathcal{P}_Y=\bigoplus_{\mu\in Q^\vee}\mathbf{F}Y^\mu$ of $H$, isomorphic to $\mathcal{P}$, such that $H_0\otimes\mathcal{P}_Y\simeq H$ as vector spaces, with $H_0$ the subalgebra of $\mathbb{H}$ generated by $T_i$ ($1\leq i\leq r$). 
The {\it duality anti-involution} $\delta$ of $\mathbb{H}$ interchanges the two copies of $\mathcal{P}$ inside $\mathbb{H}$. Concretely, it satisfies
$\delta(Y^\mu)=x^{-\mu}$ and $\delta(T_i)=T_i$ for $\mu\in Q^\vee$ and $1\leq i\leq r$.

Let $\chi_{B}: \mathbb{R}\rightarrow\{0,1\}$ for a subset $B\subset\mathbb{R}$ be the characteristic function of $B$,
and let $\lfloor\cdot\rfloor: \mathbb{R}\rightarrow\mathbb{Z}$ be the floor-function, i.e., $\lfloor d\rfloor$ for $d\in\mathbb{R}$ is the largest integer $\leq d$.
For $0\leq j\leq r$ the truncated divided-difference operator $\nabla_j$ is the linear operator on $\mathbf{F}[E]$ defined by
\[
\nabla_j(x^y):=\Bigl(\frac{1-x^{-\lfloor D\alpha_j(y)\rfloor\alpha_j^\vee}}{1-x^{\alpha_j^\vee}}\Bigr)x^y\qquad\quad (y\in E).
\]
Its restriction to $\mathcal{P}$ is the usual divided-difference operator.

The first main result of this paper is the following DAHA analog of the $\mathbb{W}$-module $(\mathcal{P}^{(\cc)},\cdot_\mathfrak{t})$ ($\cc\in C^J$, $\mathfrak{t}\in T_J$), in which $T_j$ acts by a Demazure-Lusztig type operator involving the truncated divided-difference operator $\nabla_j$. We call $\pi_{\cc,\mathfrak{t}}$ the {\it quasi-polynomial representation} of $\mathbb{H}$.

\begin{theorem}\label{introTHM1}
Let $J\subsetneq\{0,\ldots,r\}$, $\cc\in C^J$ and $\mathfrak{t}\in T_J$. The formulas
\begin{equation*}
\begin{split}
\pi_{\cc,\mathfrak{t}}(T_j)x^y&:=k_j^{\chi_{\mathbb{Z}}(\alpha_j(y))}s_{j,\mathfrak{t}}x^y+(k_j-k_j^{-1})
\nabla_j(x^y),\\
\pi_{\cc,\mathfrak{t}}(x^\mu)x^y&:=x^{y+\mu}
\end{split}
\end{equation*}
for $0\leq j\leq r$, $\mu\in Q^\vee$ and $y\in\mathcal{O}_\cc$ define a representation $\pi_{\cc,\mathfrak{t}}: \mathbb{H}\rightarrow\textup{End}(\mathcal{P}^{(\cc)})$.
\end{theorem}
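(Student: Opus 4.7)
The plan is to verify that $\pi_{\cc,\mathfrak{t}}(T_j)$ and $\pi_{\cc,\mathfrak{t}}(x^\mu)$ satisfy each of the defining relations of $\mathbb{H}$: (i) commutativity among the $x^\mu$; (ii) the Hecke relations $(T_j-k_j)(T_j+k_j^{-1})=0$; (iii) the cross-relations \eqref{crossintro}; and (iv) the braid relations for $(W,\{s_0,\ldots,s_r\})$. Relation (i) is immediate, since each $\pi_{\cc,\mathfrak{t}}(x^\mu)$ acts as ordinary multiplication.

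For (ii), I would apply $\pi_{\cc,\mathfrak{t}}(T_j)^2$ to a quasi-monomial $x^y$ and match with $(k_j-k_j^{-1})\pi_{\cc,\mathfrak{t}}(T_j)x^y+x^y$. A useful preliminary observation is that every monomial $x^{y'}$ occurring in $s_{j,\mathfrak{t}}x^y$ or $\nabla_j(x^y)$ satisfies $\alpha_j(y')\in\alpha_j(y)+\mathbb{Z}$ or $\alpha_j(y')\in-\alpha_j(y)+\mathbb{Z}$; hence $\chi_{\mathbb{Z}}(\alpha_j(y'))=\chi_{\mathbb{Z}}(\alpha_j(y))$, so the prefactor $k_j^{\chi_{\mathbb{Z}}(\alpha_j(\cdot))}$ pulls out as a single scalar on the relevant layer. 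Using that $\cdot_\mathfrak{t}$ is a genuine $\mathbb{W}$-action, whence $s_{j,\mathfrak{t}}^2=\mathrm{id}$, the Hecke relation reduces to an explicit identity among $\nabla_j\circ s_{j,\mathfrak{t}}$, $s_{j,\mathfrak{t}}\circ\nabla_j$ and $\nabla_j^2$, which can be verified directly by expanding the geometric series defining $\nabla_j$.

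For (iii), both sides applied to $x^y$ reduce to finite identities of Laurent monomials; these follow by expanding $(1-x^{-D\alpha_j(\mu)\alpha_j^\vee})/(1-x^{\alpha_j^\vee})$ as a geometric sum and using $s_j(\mu)=\mu-D\alpha_j(\mu)\alpha_j^\vee$. The truncation in $\nabla_j$ plays no essential role because the shift $\mu\in Q^\vee$ leaves the fractional part of $\alpha_j(y+\mu)$ unchanged, so the calculation parallels Cherednik's verification for the polynomial representation.

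The principal obstacle lies in (iv). My strategy is to reduce each braid relation $\underbrace{T_iT_jT_i\cdots}_{m_{ij}}=\underbrace{T_jT_iT_j\cdots}_{m_{ij}}$ to a rank-two calculation: the operators $\pi_{\cc,\mathfrak{t}}(T_i)$ and $\pi_{\cc,\mathfrak{t}}(T_j)$ preserve each subspace $\bigoplus_{y'\in\langle s_i,s_j\rangle\cdot y}\mathbf{F}x^{y'}$, so the identity can be checked on one dihedral $\langle s_i,s_j\rangle$-orbit at a time. Within such an orbit one enumerates configurations by the values of $\alpha_i(y),\alpha_j(y)$ modulo $\mathbb{Z}$ and by their signs, carefully tracking how the truncations $\lfloor D\alpha_\ell(y')\rfloor$ and the scalar factors entering $s_{0,\mathfrak{t}}$ and $\tau(\mu)_\mathfrak{t}$ compose along the chain of reflections. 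The delicate point is to confirm that the constraints $\cc\in C^J$ and $\mathfrak{t}\in T_J$ produce exactly the cancellations required in every configuration and for each bond order $m_{ij}\in\{2,3,4,6\}$; it is precisely here that the facial stratification of $\overline{C}_+$ and the definition of the affine subtorus $T_J$ enter essentially.
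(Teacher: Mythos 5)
Your reduction of the braid relations — the one step on which the whole proposal hinges — does not work as stated, and it is also the only step you have not actually carried out. The operators $\pi_{\cc,\mathfrak{t}}(T_i)$, $\pi_{\cc,\mathfrak{t}}(T_j)$ do \emph{not} preserve the span $\bigoplus_{y'\in\langle s_i,s_j\rangle y}\mathbf{F}x^{y'}$ of a single finite dihedral orbit: the truncated operator $\nabla_j(x^y)$ (and, for $j=0$, the term $s_{0,\mathfrak{t}}x^y$, whose exponent is $s_\varphi y$, a translate of $s_0y$) produces monomials $x^{y-m\alpha_j^\vee}$ whose exponents are lattice translates of $y$ and in general lie outside the orbit of the finite group $\langle s_i,s_j\rangle$ (e.g.\ $y-\alpha_i^\vee\neq s_iy$ unless $\alpha_i(y)=1$). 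The correct invariant subspaces are spans of orbits of the rank-two \emph{affine} group generated by $s_i,s_j$ and translations by $\mathbb{Z}\alpha_i^\vee+\mathbb{Z}\alpha_j^\vee$; these are infinite, and the case-by-case verification over all configurations of $(\alpha_i(y),\alpha_j(y))$ and all bond orders that you defer to is precisely the content that a proof must supply, not assume. A secondary inaccuracy: the hypotheses $\cc\in C^J$, $\mathfrak{t}\in T_J$ are already needed for the Hecke relation, not only for the braid relations. For $\alpha_0(y)\in\mathbb{Z}$ the quadratic relation for $\pi_{\cc,\mathfrak{t}}(T_0)$ forces $\mathfrak{t}_y^{\varphi^\vee}=q_\varphi^{\varphi(y)}$ (this is Lemma \ref{datumokcor}, which uses exactly those hypotheses), so the Hecke check is not a formal identity among $s_{j,\mathfrak{t}}$, $\nabla_j$ alone as you assert.

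For comparison, the paper sidesteps the braid relations entirely: it defines a surjective linear map $\psi_{\cc,\mathfrak{t}}:\mathbb{H}\twoheadrightarrow\mathcal{P}^{(\cc)}$ on the PBW basis by \eqref{psic}, $\psi_{\cc,\mathfrak{t}}(x^\mu T_vY^\nu)=\kappa_v(\cc)(\mathfrak{s}_\cc\mathfrak{t})^{-\nu}x^{\mu+v\cc}$, and proves (Lemma \ref{generatoractiontransfer}) that $\psi_{\cc,\mathfrak{t}}(hh')=\pi_{\cc,\mathfrak{t}}(h)\psi_{\cc,\mathfrak{t}}(h')$ for the generators $h=T_j$ and $h=x^\mu$, using only relations already valid in $\mathbb{H}$ together with a case analysis on the position of $\alpha_j(v\cc)$ relative to $-1,0,1$. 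Hence $\ker\psi_{\cc,\mathfrak{t}}$ is a left ideal, $\mathcal{P}^{(\cc)}\simeq\mathbb{H}/\ker\psi_{\cc,\mathfrak{t}}$ inherits an $\mathbb{H}$-module structure, and the formulas of the theorem describe the transported action — so no relation among the $T_j$ ever has to be re-verified, and the same computation simultaneously yields Theorem \ref{introTHM2}. If you want to salvage a direct verification, you would have to replace your finite-orbit reduction by a genuine rank-two computation (or adopt the paper's quotient-of-the-regular-representation construction), and in either case actually perform the case analysis you currently only describe.
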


The representation $\pi:=\pi_{0,1_T}: \mathbb{H}\rightarrow\textup{End}(\mathcal{P})$ is Cherednik's \cite{Ch} polynomial representation of $\mathbb{H}$. In this case $J=\{1,\ldots,r\}$, and the commuting operators $\pi(Y^\mu)$ ($\mu\in Q^\vee$) are the Cherednik operators. The polynomial representation $\pi$ can alternatively be 
obtained by inducing the trivial one-dimensional $H$-representation to $\mathbb{H}$. As a second main result of this paper we extend this result to the quasi-polynomial representation $\pi_{\cc,\mathfrak{t}}$ ($\cc\in C^J$) by inducing a one-dimensional representation of the $Y$-parabolic subalgebra $\mathbb{H}_J^Y$ of $\mathbb{H}$ generated by $\delta(T_j)$ ($j\in J$) and $\mathcal{P}_Y$. 

The relevant one-dimensional $\mathbb{H}_J^Y$-representations are parametrised by the affine subtorus 
\[
L_J:=\{\mathfrak{t}\in T \,\, | \,\, \mathfrak{t}^{\alpha_j^\vee}=k_j^{-2}\quad \forall\, j\in J\}
\]
of $T$. The one-dimensional $\mathbb{H}_J^Y$-representation associated to $t\in L_J$ is then defined by
\[
\chi_{J,t}(Y^\mu):=t^{-\mu}\quad (\mu\in Q^\vee),\qquad \chi_{J,t}(\delta(T_j)):=k_j\quad (j\in J).
\]
We write $\mathbb{M}_{J,t}$
for the resulting induced $\mathbb{H}$-module, and $m_{J,t}$ for its canonical cyclic vector.

Consider
\[
\mathfrak{s}_y:=\prod_{\alpha\in\Phi_0^+}k_\alpha^{\eta(\alpha(y))\alpha}\in T\qquad (y\in E),
\]
with $\eta:=\chi_{\mathbb{Z}_{>0}}-\chi_{\mathbb{Z}_{\leq 0}}: \mathbb{R}\rightarrow\{-1,0,1\}$ (a discrete analog of the Heaviside function). 
The map $y\mapsto\mathfrak{s}_y$ is constant on faces, and we write $\mathfrak{s}_J\in T$ for its value on $C^J$. We then have $L_J=\mathfrak{s}_JT_J$ and
\begin{theorem}\label{introTHM2}
Let $\cc\in C^J$ and $\mathfrak{t}\in T_J$. Then $\mathbb{M}_{J,\mathfrak{s}_J\mathfrak{t}}\simeq (\mathcal{P}^{(\cc)},\pi_{\cc,\mathfrak{t}})$ as $\mathbb{H}$-modules, with the isomorphism determined by $m_{J,\mathfrak{s}_J\mathfrak{t}}\mapsto x^\cc$.
\end{theorem}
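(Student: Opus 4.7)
The plan is to verify that $x^\cc$ spans a one-dimensional $\mathbb{H}_J^Y$-submodule of $(\mathcal{P}^{(\cc)},\pi_{\cc,\mathfrak{t}})$ affording the character $\chi_{J,\mathfrak{s}_J\mathfrak{t}}$. Frobenius reciprocity will then produce an $\mathbb{H}$-module homomorphism $\phi\colon\mathbb{M}_{J,\mathfrak{s}_J\mathfrak{t}}\to\mathcal{P}^{(\cc)}$ sending $m_{J,\mathfrak{s}_J\mathfrak{t}}\mapsto x^\cc$, and a triangularity argument will upgrade $\phi$ to an isomorphism. I write $\pi:=\pi_{\cc,\mathfrak{t}}$ throughout.

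For the character verification, the easy case is $j\in J\cap\{1,\ldots,r\}$: here $\delta(T_j)=T_j$, and since $\alpha_j(\cc)=0$ one has $s_j\cc=\cc$, $\chi_{\mathbb Z}(\alpha_j(\cc))=1$, and $\lfloor D\alpha_j(\cc)\rfloor=0$, so the defining formula of $\pi(T_j)$ collapses immediately to $\pi(T_j)x^\cc=k_j x^\cc$. The substantive identity is the $Y$-eigenvalue relation
\[
\pi(Y^\mu)x^\cc=(\mathfrak{s}_J\mathfrak{t})^{-\mu}\,x^\cc\qquad(\mu\in Q^\vee).
\]
I would attack it through the Bernstein--Zelevinsky presentation: write $Y^\mu=T_{\tau(\mu_+)}T_{\tau(\mu_-)}^{-1}$ for a dominant decomposition $\mu=\mu_+-\mu_-$, and iterate the defining formula of Theorem~\ref{introTHM1} along reduced expressions of $\tau(\mu_\pm)$. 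The $\mathfrak{t}^{-\mu}$-factor is expected to arise from accumulated $s_{0,\mathfrak{t}}$-twists (contributing $\mathfrak{t}_y^{\varphi^\vee}$ at each intermediate exponent $y$), the $\mathfrak{s}_J^{-\mu}$-factor from accumulated $k_j^{\chi_{\mathbb Z}(\alpha_j(y))}$-scalars, and the $\nabla_j$-corrections must be shown to telescope away. When $0\in J$ one must further handle $\delta(T_0)$, which lies in $H_0\cdot\mathcal{P}$ (concretely $\delta(T_0)=T_{s_\varphi}^{-1}x^{-\varphi^\vee}$) and whose required eigenvalue reduces to the $Y$-eigenvalue identity together with the already-treated finite simple reflections.

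With $\phi$ in hand, surjectivity and injectivity follow from a single triangularity statement. For $y\in\mathcal{O}_\cc$ with shortest $W$-representative $w_y=s_{j_n}\cdots s_{j_1}$, iterating $\pi(T_{j_i})$ on $x^\cc$ gives
\[
\pi(T_{w_y})x^\cc\;=\;c_y\,x^y\;+\sum_{y'\colon\,\ell(w_{y'})<\ell(w_y)}d_{y,y'}\,x^{y'},\qquad c_y\in\mathbf{F}^\times,
\]
because at each step the leading $s_{j_i,\mathfrak{t}}$-part produces the fresh exponent $s_{j_i}y_{i-1}$ (a new element of $\mathcal{O}_\cc$ since $w_y$ is shortest), while the $\nabla_{j_i}$-corrections contribute exponents $y_{i-1}\pm\ell\,\alpha_{j_i}^\vee$ of strictly smaller $W$-length. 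Induction on $\ell(w_y)$ yields both spanning and $\mathbf{F}$-linear independence of the family $\{\pi(T_{w_y})x^\cc\}_{y\in\mathcal{O}_\cc}$ in $\mathcal{P}^{(\cc)}$. On the induced-module side, the PBW decomposition $\mathbb{H}\simeq\mathcal{P}\otimes H_0\otimes\mathcal{P}_Y$ combined with the standard basis of $H_0$ modulo its parabolic $H_0^J$-subalgebra (extended appropriately when $0\in J$) yields a spanning set of $\mathbb{M}_{J,\mathfrak{s}_J\mathfrak{t}}$ in bijection with $\mathcal{O}_\cc$, mapped by $\phi$ to precisely the above family. Hence $\phi$ is bijective.

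The main obstacle is the $Y$-eigenvalue identity. The Bernstein--Zelevinsky expansion of $Y^\mu$ is far from visibly diagonal on $x^\cc$ when $\cc$ is arbitrary in $\overline{C}_+$; the clean factorised eigenvalue $(\mathfrak{s}_J\mathfrak{t})^{-\mu}$, with $\mathfrak{s}_J$ built from the discrete Heaviside-type function $\eta$ applied to the wall coordinates $\alpha(\cc)$, only emerges after systematic cancellation between the $s_{j,\mathfrak{t}}$- and $\nabla_j$-parts of each intermediate $\pi(T_j)$. Proving this cancellation and identifying the surviving scalar as $(\mathfrak{s}_J\mathfrak{t})^{-\mu}$ is the technical heart of the theorem and the place where the precise shape of $\mathfrak{s}_J$ is forced upon us.
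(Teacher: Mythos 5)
Your overall skeleton (character of $\mathbb{H}_J^Y$ on $x^\cc$, induction/Frobenius reciprocity, then triangularity to get bijectivity) matches the paper's architecture, but the triangularity step as you state it is false, and that is where the argument breaks. In the quasi-polynomial representation the translation part of $W$ acts on quasi-monomials \emph{diagonally}: $\tau(\mu)_{\mathfrak{t}}x^y=\mathfrak{t}_y^{-\mu}x^y$, so $s_{0,\mathfrak{t}}x^y=\mathfrak{t}_y^{\varphi^\vee}x^{s_\varphi y}$ is a multiple of $x^{s_\varphi y}$, \emph{not} of $x^{s_0y}$. Concretely, if $0\notin J$ and $0<\varphi(\cc)<1$, then $\pi_{\cc,\mathfrak{t}}(T_0)x^{\cc}=\mathfrak{t}^{\varphi^\vee}x^{s_\varphi \cc}+(k_0-k_0^{-1})x^{\cc}$, which contains no monomial $x^{s_0\cc}$ at all; so your claim $\pi(T_{w_y})x^\cc=c_yx^y+\textup{l.o.t.}$ with $c_y\neq 0$ already fails at $\ell(w_y)=1$. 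More structurally, once $x^\cc$ is a $\mathcal{P}_Y$-eigenvector one has $H\,m_{J,\mathfrak{s}_J\mathfrak{t}}=H_0\mathcal{P}_Y\,m_{J,\mathfrak{s}_J\mathfrak{t}}=H_0\,m_{J,\mathfrak{s}_J\mathfrak{t}}$, a space of dimension at most $|W_0|$; hence $\{\pi(T_{w_y})x^\cc\}_{y\in\mathcal{O}_\cc}$ spans a finite-dimensional subspace and can never be linearly independent, let alone a basis of $\mathcal{P}^{(\cc)}$. The quasi-monomials $x^{w\cc}$ with $w$ genuinely affine are produced by the multiplication operators $x^\mu$, not by the $T_w$'s; accordingly your PBW spanning set $\{x^\mu T_v m\}$ is mapped by $\phi$ to $\{x^\mu\pi(T_v)x^\cc\}$ and not to $\{\pi(T_{w_y})x^\cc\}$, so the claimed matching of the two families does not hold either.

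The paper repairs exactly these points. It takes as a priori basis of the induced module $\{\delta(T_{w^{-1}})m_{J,\mathfrak{s}_J\mathfrak{t}}\}_{w\in W^J}$ (freeness of $\mathbb{H}$ over $\mathbb{H}_J^X$ plus the duality anti-involution $\delta$), and proves $\pi_{\cc,\mathfrak{t}}(\delta(T_{w^{-1}}))x^\cc=k_w(\cc)x^{w\cc}+\textup{l.o.t.}$ with respect to the parabolic Bruhat order on $\mathcal{O}_\cc$ (Proposition \ref{triangularitymbasis}); handling $\delta(T_0)=T_{s_\varphi}^{-1}x^{-\varphi^\vee}$ requires the element $U_0=q_\varphi^{-1}x^{\varphi^\vee}T_0^{-1}$ and the order-theoretic analysis of Subsection \ref{POsection}, none of which is a one-line iteration of the defining formulas. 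Moreover, the $Y$-eigenvalue identity that you flag as the ``technical heart'' but do not carry out is not obtained in the paper by telescoping along reduced expressions of $\tau(\mu_\pm)$; instead the explicit map $\psi_{\cc,\mathfrak{t}}(x^\mu T_vY^\nu)=\kappa_v(\cc)(\mathfrak{s}_J\mathfrak{t})^{-\nu}x^{\mu+v\cc}$ is shown to intertwine the left regular action with $\pi_{\cc,\mathfrak{t}}$ on the generators (Lemma \ref{generatoractiontransfer}), from which both the eigenvalue property and, after a separate direct computation of $\pi_{\cc,\mathfrak{t}}(\delta(T_0^{-1}))x^\cc$ when $0\in J$, the full character identity on $x^\cc$ follow (Lemma \ref{actionHt}); note that your suggested reduction of the $\delta(T_0)$-eigenvalue to the finite reflections acting on $x^\cc$ is also too quick, since $\pi_{\cc,\mathfrak{t}}(\delta(T_0))x^\cc$ involves the $H_0$-action on the different quasi-monomial $x^{\cc-\varphi^\vee}$. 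So both halves of your argument need to be rebuilt along these lines.
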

In particular, $\pi_{\cc,\mathfrak{t}}\simeq\pi_{\cc^\prime,\mathfrak{t}}$ if $\cc$ and $\cc^\prime$ lie in the same face of $\overline{C}_+$.

Quite remarkably, for all $\mu\in Q^\vee$ and $v\in W_0$ the vector $x^\mu T_vm_{J,\mathfrak{s}_J\mathfrak{t}}\in\mathbb{M}_{J,\mathfrak{s}_J\mathfrak{t}}$ is mapped to
an explicit constant multiple of the quasi-monomial $x^{\mu+v\cc}$ (see Theorem \ref{gbr}(3)). Actually, it is this formula which forms the starting point of the proof of both Theorem \ref{introTHM1} and Theorem \ref{introTHM2}. 

\subsection{Quasi-polynomial eigenfunctions}
The quasi-polynomial representations $\pi_{\cc,\mathfrak{t}}$ give rise to a family of quasi-polynomials, which generalize the nonsymmetric Macdonald polynomials.  These quasi-polynomials enjoy many properties paralleling the classical case, for example they are joint eigenfunctions of $\pi_{\cc,\mathfrak{t}}(Y^{\mu})$ for $\mu \in Q^{\vee}$, satisfy a triangularity condition, and can be constructed explicitly via intertwiners. 
Their definition is as follows. 

Let $\cc\in C^J$. Define a partial order on 
$\mathcal{O}_\cc$ by
\[
y\leq y^\prime\quad \Leftrightarrow \quad 
w_y\leq_B w_{y^\prime},
\]
with $\leq_B$ the Bruhat order on $(W,\{s_0,\ldots,s_r\})$. Denote by $T_J^\prime\subseteq T_J$ the set of characters $\mathfrak{t}\in T_J$ for which the $\mathfrak{s}_y\mathfrak{t}_y$ \textup{(}$y\in\mathcal{O}_\cc$\textup{)} are pairwise different in $T$ (it does not depend on the choice of $\cc\in C^J$).

\begin{theorem}\label{introTHM3}
Let $\cc\in C^J$ and $\mathfrak{t}\in T_J^\prime$.
For each $y\in\mathcal{O}_\cc$ there exists a unique joint eigenfunction $E_y^J(x;\mathfrak{t})\in\mathcal{P}^{(\cc)}$ of the commuting operators $\pi_{\cc,\mathfrak{t}}(Y^\mu)$
\textup{(}$\mu\in Q^\vee$\textup{)} satisfying
\begin{equation}\label{expansionIntro}
E_y^J(x;\mathfrak{t})=x^y+\sum_{y^\prime<y}e_{y,y^\prime;\mathfrak{t}}^Jx^{y^\prime}\qquad\quad (e_{y,y^\prime;\mathfrak{t}}^J\in\mathbf{F}).
\end{equation}
Furthermore, 
\begin{equation}\label{evIntro}
\pi_{\cc,\mathfrak{t}}(Y^\mu)E_y^J(x;\mathfrak{t})=(\mathfrak{s}_y\mathfrak{t}_y)^{-\mu}E_y^J(x;\mathfrak{t})\qquad\quad \forall\, \mu\in Q^\vee.
\end{equation}
\end{theorem}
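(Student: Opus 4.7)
The plan is to reduce the theorem to a triangularity statement for the commuting operators $\pi_{\cc,\mathfrak{t}}(Y^\mu)$ in the quasi-monomial basis, and then to deduce existence and uniqueness by a standard linear-algebra argument. Concretely, I would first prove that for every $\mu\in Q^\vee$ and $y\in\mathcal{O}_\cc$,
\[
\pi_{\cc,\mathfrak{t}}(Y^\mu)x^y = (\mathfrak{s}_y\mathfrak{t}_y)^{-\mu}\,x^y + \sum_{y'<y}d_{y,y';\mu}\,x^{y'}\qquad (d_{y,y';\mu}\in\mathbf{F}).
\]
Once this triangularity is in hand, the assumption $\mathfrak{t}\in T_J^\prime$ ensures that the joint diagonal entries $\{(\mathfrak{s}_y\mathfrak{t}_y)^{-\bullet}\}_{y\in\mathcal{O}_\cc}$ are pairwise distinct as characters of $Q^\vee$, so the commuting triangular operators admit a unique simultaneous upper-unitriangular eigenbasis for the partial order on $\mathcal{O}_\cc$; this basis is by construction $\{E_y^J(x;\mathfrak{t})\}_{y\in\mathcal{O}_\cc}$, and the eigenvalue identity \eqref{evIntro} is then built in.

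To establish the triangularity, I would transport everything to the induced module $\mathbb{M}_{J,\mathfrak{s}_J\mathfrak{t}}$ via Theorem~\ref{introTHM2}. Combining the PBW decomposition $\mathbb{H}\simeq H\otimes\mathcal{P}$ with the Bernstein--Zelevinsky structure of $H$ and the fact that $\chi_{J,\mathfrak{s}_J\mathfrak{t}}$ is a one-dimensional character of $\mathbb{H}_J^Y$, one obtains a Hecke-algebra basis of $\mathbb{M}_{J,\mathfrak{s}_J\mathfrak{t}}$ indexed by $\mathcal{O}_\cc$ whose change of basis to the quasi-monomial basis $\{x^y\}$ is upper triangular for the partial order on $\mathcal{O}_\cc$. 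Next, for a reduced expression of $w_y$, iterating the standard Bernstein--Zelevinsky commutation relations yields
\[
Y^\mu T_{w_y}\equiv T_{w_y}Y^{w_y^{-1}\mu}\pmod{\,{\textstyle\sum_{w'<_B w_y}}T_{w'}\mathcal{P}_Y\,},
\]
and evaluating on $m_{J,\mathfrak{s}_J\mathfrak{t}}$ produces the leading scalar $(\mathfrak{s}_J\mathfrak{t})^{-w_y^{-1}\mu}$ plus strictly Bruhat-lower remainders. The identification of the leading scalar, $(\mathfrak{s}_J\mathfrak{t})^{-w_y^{-1}\mu}=(\mathfrak{s}_y\mathfrak{t}_y)^{-\mu}$, is immediate from the defining relation $\mathfrak{t}_y=w_y\cdot\mathfrak{t}$, supplemented by the analogous identity $\mathfrak{s}_y=w_y\cdot\mathfrak{s}_J$ which follows by direct inspection of the definition of $\mathfrak{s}_y$ and the combinatorics of $\eta(\alpha(y))$ under the $W_0$-action.

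The main obstacle is the bookkeeping in the previous step: one must check that the Bruhat-lower Hecke terms translate, under the change of basis, to quasi-monomials $x^{y'}$ with $y'<y$, with no hidden contribution slipping back to $x^y$ through the $W_J$-stabilizer of $\cc$. This is controlled by choosing $w_y$ as the minimal-length coset representative of $W/W_J$ corresponding to $y$, and by noting that the parabolic Hecke generators $\delta(T_j)$ for $j\in J$ act by scalars on $m_{J,\mathfrak{s}_J\mathfrak{t}}$, so that $W_J$-related contributions are absorbed correctly into the leading term. Once triangularity and separation of joint eigenvalues are both secured, the Gram--Schmidt-type argument of producing simultaneous eigenvectors of a commuting family of triangular operators with distinct diagonal entries singles out $E_y^J(x;\mathfrak{t})$ uniquely, completing the proof.
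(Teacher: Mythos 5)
Your top-level reduction is exactly the paper's: Theorem \ref{introTHM3} is proved (as Theorem \ref{Edef}) from the single triangularity statement $\pi_{\cc,\mathfrak{t}}(Y^\mu)x^y=(\mathfrak{s}_y\mathfrak{t}_y)^{-\mu}x^y+\textup{l.o.t.}$ (Proposition \ref{trianY}), combined with the definition of $T_J^\prime$ and the finiteness of lower sets in $(\mathcal{O}_\cc,\leq)$; that part of your plan is fine. The gap is in how you propose to get the triangularity. You transport the problem to $\mathbb{M}_{J,\mathfrak{s}_J\mathfrak{t}}$ and assert that PBW plus the Bernstein--Zelevinsky structure plus one-dimensionality of $\chi_{J,\mathfrak{s}_J\mathfrak{t}}$ already produce a Hecke basis (namely $\delta(T_{w^{-1}})m_{J,\mathfrak{s}_J\mathfrak{t}}$, $w\in W^J$, cf. Lemma \ref{mbasis}) whose transition matrix to the quasi-monomials $\{x^y\}_{y\in\mathcal{O}_\cc}$ is triangular for the parabolic Bruhat order. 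That transition triangularity is true, but it is Proposition \ref{triangularitymbasis}, and it is not a formal consequence of PBW/BZ: proving it requires computing how the truncated Demazure--Lusztig operators act on quasi-monomials (in particular the $\delta(T_0)$, i.e. $U_0$, case) and the parabolic Bruhat order machinery of Subsection \ref{POsection} -- the identification of $\leq$ with the root-string order $\prec$, Corollary \ref{newroots}, and especially Lemma \ref{techlem}, which controls which exponents appear in the divided-difference corrections. This is precisely the technical core of the paper's argument; your proposal relocates it into the change-of-basis step and then leaves it unjustified. The ``bookkeeping obstacle'' you do address (Bruhat-lower Hecke terms and absorption of $W_J$-factors via $\chi_{J,\mathfrak{s}_J\mathfrak{t}}(\delta(T_j))=k_j$) concerns the commutation step and is handled correctly, but it is a different, and easier, issue.

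Two further points of precision. The congruence $Y^\mu T_{w_y}\equiv T_{w_y}Y^{w_y^{-1}\mu}$ modulo Bruhat-lower terms is not literally an iteration of the Bernstein--Zelevinsky relations \eqref{crossrelation}, which only govern $T_i$ with $i\in[1,r]$: here $w_y$ is an affine Weyl group element, the relevant basis vectors are $\delta(T_{w_y^{-1}})m_{J,\mathfrak{s}_J\mathfrak{t}}$ rather than $T_{w_y}m_{J,\mathfrak{s}_J\mathfrak{t}}$, and the commutation you need is the $\delta$-transport of the cross relations \eqref{crossX}; moreover $Y^{w_y^{-1}\mu}$ must be interpreted through the level-zero action, i.e. as a power of $q$ times $Y^{v_y^{-1}\mu}$, which is exactly where the $q$-shifts in $\mathfrak{t}_y$ come from. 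Likewise $\mathfrak{s}_y=w_y\cdot\mathfrak{s}_J$ only holds with the gradient $Dw_y$ acting (Proposition \ref{facecor}); the identity you actually need is $\mathfrak{s}_y\mathfrak{t}_y=w_y(\mathfrak{s}_J\mathfrak{t})$ (Corollary \ref{datumok}). For comparison, the paper avoids the induced-module detour altogether: it proves the monomial triangularity directly by factorising $\pi_{\cc,\mathfrak{t}}(Y^\mu)$, for dominant $\mu$, as $\tau(\mu)_{\mathfrak{t}}$ composed with the conjugated operators $G_{\mathfrak{t}}(a)$ for $a\in\Pi(\tau(\mu))$, each of which is triangular with diagonal entry $k_{Da}^{-\eta(Da(y))}$ by Lemma \ref{techlem}; multiplying these diagonal entries is what produces $\mathfrak{s}_y$ in the eigenvalue.
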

In particular,
$\{E_y^J(x;\mathfrak{t})\}_{y\in\mathcal{O}_\cc}$ is a basis of $\mathcal{P}^{(\cc)}$ consisting of joint eigenfunctions of the commuting operators
$\pi_{\cc,\mathfrak{t}}(Y^\mu)$ ($\mu\in Q^\vee$). Furthermore, the quasi-polynomial eigenfunction of smallest degree is trivial, 
\[
E_\cc^J(x;\mathfrak{t})=x^\cc.
\] 

The choice of the vector $\cc\in C^J$ only influences the quasi-exponents in the quasi-monomial expansion of the $E_y^J(x;\mathfrak{t})$. In other words, the coefficients
\[
e_{w,w^\prime;\mathfrak{t}}^J:=e_{w\cc,w^\prime\cc;\mathfrak{t}}^J\qquad\quad (w,w^\prime\in W)
\]
are independent of $\cc\in C^J$ (see Theorem \ref{Edef}). Furthermore, if $J^\prime\supseteq J$ 
then $C^{J^\prime}$ lies in the closure of $C^J$ and the quasi-polynomial $E^{J^\prime}_{y^\prime}(x;\mathfrak{t})$ for 
$\mathfrak{t}\in T_{J^\prime}^\prime$ can be recovered from $E^J_y(x;\mathfrak{s}_J^{-1}\mathfrak{s}_{J^\prime}\mathfrak{t})$ (see Proposition \ref{projectionprop}). 

\begin{example}\hfill
Consider $\mathbf{F}=\mathbb{Q}(q,\mathbf{k},z_1,\ldots,z_r)$ 
with indeterminates $q,k_a,z_1,\ldots,z_r$, where $k_a=k_b$ if $\|Da\|=\|Db\|$.
For $I\subseteq \{1,\ldots,r\}$ let $I^{\textup{co}}$ be its complement.
Then 
\[
\prod_{i\in I^{\textup{co}}}z_i^{\varpi_i}\in T_I^\prime,
\]
where $\varpi_i$ denotes the $i^{\textup{th}}$ fundamental weight of $\Phi_0$. Then
\[
E_y^I(x;z_1,\ldots,z_r):=E_y^I\bigl(x;\prod_{i \in I^{\textup{co}}}z_i^{\varpi_i}\bigr)\qquad\quad (y\in\mathcal{O}_\cc)
\]
depend on the indeterminates $q, \mathbf{k}$ and $z_i$ \textup{(}$i\in I^{\textup{co}}$\textup{)}.
Quasi-polynomial eigenfunctions over $\mathbb{Q}(q,\mathbf{k})$ can be obtained by specialisation of the $z_i$'s. 

The situation is a bit more intricate when $0\in J$ since the $z_i$'s will no longer be independent, see Subsection \ref{genSection}.
\end{example}

If $1_T\in T_{\{1,\ldots,r\}}^\prime$ \textup{(}which imposes generic conditions on $q$ and $\mathbf{k}$\textup{)}, then 
\[
E_\mu(x):=E_\mu^{\{1,\ldots,r\}}(x;1_T)\in\mathcal{P}
\]
 is the monic nonsymmetric Macdonald polynomial of degree $\mu\in Q^\vee$.
The Macdonald polynomials and their normalised and (anti)symmetric variants have remarkable properties. They satisfy orthogonality relations with explicit quadratic norm formulas
\cite{Ma0,Cm,Ch,Ma}, have explicit evaluation formulas, and are self-dual (see, e.g., \cite{Ma,Ch} and references therein). They admit combinatorial formulas \cite{RY,HHL}, arise as spherical functions on quantum groups 
(see, e.g., \cite{N,L,EK}), admit interpretations as
partition functions on integrable lattice models \cite{BW}, and degenerate to Heckman-Opdam polynomials \cite{Mac}, spherical functions on p-adic reductive groups \cite{Mac}, and Iwahori-Whittaker functions \cite{I0,ChW1}. In this paper we discuss generalisations of a number of these properties to the quasi-polynomial setup:
\begin{enumerate}
\item
We define a normalized version $P_y^J(x;\mathfrak{t})$ of $E_y^J(x;\mathfrak{t})$ using the action of normalised $Y$-intertwiners on the lowest degree quasi-polynomial eigenfunction $x^\cc$. It leads to an explicit expression of $E_y^J(x;\mathfrak{t})$ in terms of $Y$-intertwiners (Theorem \ref{prop:I_and_E}). This entails a quasi-polynomial analog of the evaluation formula for Macdonald polynomials. 
\item We obtain a weak form of duality for $P_y^J(x;\mathfrak{t})$, called pseudo-duality, by showing that the $H$-action
on $P_y^J(x;\mathfrak{t})$ can be alternatively described in terms of an $H$-action on $y\in\mathcal{O}_\cc$ by
discrete Demazure-Lusztig operators (Theorem \ref{17}).
\item We establish orthogonality relations and quadratic norm formulas for the quasi-polynomial eigenfunctions $E_y^J(x;\mathfrak{t})$ and $P_y^J(x;\mathfrak{t})$ (Theorem \ref{unitaritytheorem}).
\item We construct (anti)symmetric variants $E_y^{J,\pm}(x;\mathfrak{t})$ of the quasi-polynomial eigenfunctions $E_y^J(x;\mathfrak{t})$ by (anti)symme\-tri\-sa\-tion with respect to the trivial (resp., sign) idempotent of $H_0$. They generalize the (anti)symmetric Macdonald polynomials.  The symmetric variants $E_{y}^{J, +}(x; \mathfrak{t})$ are invariant under the $W_0$-action obtained by localizing $\pi_{c, \mathfrak{t}}$ (see Theorem \ref{aCG} and Lemma \ref{symm_space}).  
We obtain explicit expansion formulas of $E_y^{J,\pm}(x;\mathfrak{t})$ in terms of the quasi-polynomials $\{E_{y^\prime}^J(x;\mathfrak{t})\}_{y^\prime\in W_0y}$ (Corollary \ref{corplus} and Proposition \ref{Wformulaminus}).
\item We establish the existence of the Whittaker limit $q\rightarrow\infty$ of $E_y^J(x;\mathfrak{t})$ and $E_y^{J,-}(x;\mathfrak{t})$ (Proposition \ref{prop:qlimit} and
Corollary \ref{linkmWcor}). For $y$ in the closure of the negative Weyl chamber and $y^\prime\in W_0y$, we obtain an explicit formula for the limit $q \rightarrow \infty$ of
$E_{y^\prime}^J(x;\mathfrak{t})$ in terms of the
$\pi_{\cc,\mathfrak{t}}\vert_{H_0}$-action on $x^y$ (Theorem \ref{qlim_formula}). It generalises a formula due to Patnaik-Puskas \cite[Cor. 5.4]{PP}, which expresses 
the metaplectic Iwahori-Whittaker function in terms of metaplectic Demazure-Lusztig operators. We will say more about the link with metaplectic representation theory in Subsection \ref{mintro}.
\end{enumerate}

We also consider the theory for reductive root data, when both
$\mathcal{P}$ and $\mathcal{P}_Y$ in $\mathbb{H}$ are extended to group algebras of a finitely generated abelian subgroup $\Lambda\subset E$ satisfying 
\begin{equation}\label{lcondIntro}
Q^\vee\subseteq\Lambda\,\, \hbox{ and }\,\, 
\alpha(\Lambda)\subseteq\mathbb{Z}\,\,\,\, \forall\alpha\in\Phi_0.
\end{equation}
When only $\mathcal{P}_Y$ is extended to the group algebra of such a lattice $\Lambda^\prime$, then the associated $Y$-extended double affine Hecke algebra $\mathbb{H}_{Q^\vee,\Lambda^\prime}$ is a smashed product algebra $\Omega_{\Lambda^\prime}\ltimes\mathbb{H}$, with $\Omega_{\Lambda^\prime}\simeq\Lambda^\prime/Q^\vee$ the subgroup of the extended affine Weyl group $W_{\Lambda^\prime}:=W_0\ltimes\Lambda^\prime$ consisting of elements of length zero. The associated extended quasi-polynomial representation $\pi_{\cc,\mathfrak{t}}^{Q^\vee,\Lambda^\prime}:\mathbb{H}_{Q^\vee,\Lambda^\prime}\rightarrow\textup{End}(\mathcal{P}^{(\cc)})$ now depends on parameters $\mathfrak{t}\in T_{\Lambda^\prime,J}$, where $T_{\Lambda^\prime,J}$ is the affine subtorus of $T_{\Lambda^\prime}=\textup{Hom}(\Lambda^\prime,\mathbf{F}^\times)$
consisting of the elements $\mathfrak{t}\in T_{\Lambda^\prime}$ satisfying $\mathfrak{t}^{\alpha_j^\vee}=1$ for all $j\in J$. 

The extra representation parameters in $\pi_{\cc,\mathfrak{t}}^{Q^\vee,\Lambda^\prime}$ compared to $\pi_{\cc,\mathfrak{t}\vert_{Q^\vee}}$ are captured by the kernel $T_{\Lambda^\prime,\{1,\ldots,r\}}$ of the restriction map $T_{\Lambda^\prime}\rightarrow T$. It identifies with the character group of $\Omega_{\Lambda^\prime}$, and naturally embeds into the automorphism group of $\mathbb{H}_{Q^\vee,\Lambda^\prime}\simeq\Omega_{\Lambda^\prime}\ltimes\mathbb{H}$
fixing $\mathbb{H}$ point-wise. We call these extra parameters {\it twist parameters}, because $\pi_{\cc,\mathfrak{t}\mathfrak{t}^\prime}^{Q^\vee,\Lambda^\prime}$ with $\mathfrak{t}^\prime\in T_{\Lambda^\prime,\{1,\ldots,r\}}$ coincides with $\pi_{\cc,\mathfrak{t}}^{Q^\vee,\Lambda^\prime}$ twisted by the automorphism of 
$\mathbb{H}_{Q^\vee,\Lambda^\prime}$ associated to $\mathfrak{t}^\prime$.
Since these automorphisms rescale the $Y^\mu$ ($\mu\in Q^\vee$), the quasi-polynomials $E_y^J(x;\mathfrak{t}\vert_{Q^\vee})$ ($y\in\mathcal{O}_c$) are eigenfunctions of all the commuting operators $\pi_{\cc,\mathfrak{t}}^{Q^\vee,\Lambda^\prime}(Y^\mu)$ ($\mu\in\Lambda^\prime$) under suitable generic conditions on the parameters (Theorem \ref{propEVext0}). 

The full double affine Hecke algebra $\mathbb{H}_{\Lambda,\Lambda^\prime}$ depends on two lattices $\Lambda,\Lambda^\prime$.
The extension of the quasi-polynomial representation $\pi_{\cc,\mathfrak{t}}^{Q^\vee,\Lambda^\prime}$ to $\mathbb{H}_{\Lambda,\Lambda^\prime}$ requires adding $(\cc,\Lambda)$-dependent constraints on $\mathfrak{t}\in T_{\Lambda^\prime,J}$, as well as enlarging the representation space. It will be discussed in detail in Section \ref{Extsection}.
In case of the $\textup{GL}_{r+1}$ root datum the quasi-polynomial eigenfunctions give rise to the metaplectic polynomials introduced in our previous work
\cite[\S 5.4]{SSV}. We will say more about the root system generalisations of these metaplectic polynomials in Subsection \ref{mintro}.
 
\subsection{The uniform quasi-polynomial representation}
The quasi-polynomial representations defined in Theorem \ref{introTHM1} can be naturally combined into a family of $\mathbb{H}$-representations on $\mathbf{F}[E]$. The parametrisation of this family of uniform quasi-polynomial representations
 requires that $q\in\mathbf{F}^\times$ has a $(2h)^{th}$ root $q^{\frac{1}{2h}}$ in $\mathbf{F}$, where $h$ is the Coxeter number of $W_0$.

To motivate the parametrisation it is instructive to consider an important special case first, which requires the stronger assumption that $q\in\mathbf{F}^\times$ is part of an injective group homomorphism 
\begin{equation}\label{1par}
\mathbb{R}\rightarrow\mathbf{F}^\times,\qquad d\mapsto q^d.
\end{equation}
Then
\begin{equation}\label{uqpbasic}
\bigoplus_{\cc\in\overline{C}_+}\pi_{\cc,q^\cc}: \mathbb{H}\rightarrow\textup{End}(\mathbf{F}[E])
\end{equation}
is the prototypical example of a uniform quasi-polynomial representation, where $q^y\in T$ \textup{(}$y\in E$\textup{)} is the character $\mu\mapsto q^{\langle y,\mu\rangle}$ of $Q^\vee$. 
Note that for the underlying $\mathbb{W}$-action on $\mathbf{F}[E]$ \textup{(}given by \eqref{uqpbasic} for $\mathbf{k}\equiv 1$\textup{)}, the translations $\tau(\mu)\in W$ 
are simply acting by $q$-dilations $x^y\mapsto q^{-\langle\mu,y\rangle}x^y$ \textup{(}$y\in E$\textup{)}. 
Furthermore, for $\cc^\prime\in C^J$ and $\mathfrak{t}\in T_J$ we have $\pi_{\cc^\prime,\mathfrak{t}}\simeq \pi_{\cc,q^\cc}$ for some $\cc\in C^J$ if $\mathfrak{t}$ takes values in 
$\{q^d\}_{d\in\mathbb{R}}$. The new set of parameters that we will assign to
the uniform quasi-polynomial representation \eqref{uqpbasic} is
the tuple $\{\widehat{p}_\alpha\}_{\alpha\in\Phi_0}$ with $\widehat{p}_\alpha: \mathbb{R}\rightarrow \mathbf{F}^\times$ given by $\widehat{p}_\alpha(d):=q_\alpha^{d/h}$, where  $q_\alpha:=q^{2/\|\alpha\|^2}$.  
The link with the original representation parameters of the quasi-polynomial representation is through the formula
\[
q^\cc=\prod_{\alpha\in\Phi_0^+} \widehat{p}_{\alpha}(\alpha(c))^{\alpha}
\]
for the character $q^\cc\in T$.

 In general we will consider the group $\mathcal{G}^{\textup{amb}}$
 of tuples $\mathbf{f}=(f_\alpha)_{\alpha\in\Phi_0}$ consisting of $\mathbf{F}^\times$-valued functions $f_\alpha$ on $\mathbb{R}$. Let $\mathcal{G}$ be the subgroup of $\mathcal{G}^{\textup{amb}}$ consisting of the tuples $\mathbf{g}=(g_\alpha)_{\alpha\in\Phi_0}$ satisfying
 \begin{equation}\label{gparinvcond}
 g_{v\alpha}(d+\ell)=g_\alpha(d)\qquad (v\in W_0,\,\ell\in\mathbb{Z}),
 \end{equation}
 $g_\alpha(-d)=g_\alpha(d)^{-1}$ and $g_\alpha(0)=1$. If we replace \eqref{gparinvcond} by the quasi-invariance condition
 \[
 \widehat{g}_{v\alpha}(d+\ell)=q_\alpha^{\ell/h}\widehat{g}_\alpha(d)\qquad (v\in W_0,\,\ell\in\mathbb{Z}),
 \]
 we obtain a $\mathcal{G}$-coset in $\mathcal{G}^{\textup{amb}}$, which we will denote by $\widehat{\mathcal{G}}$.
 
For a tuple $\mathbf{f}=(f_\alpha)_{\alpha\in\Phi_0}\in\mathcal{G}^{\textup{amb}}$  and $y\in E$
we set
\begin{equation}\label{reparTJ}
\mathfrak{t}_y(\mathbf{f}):=\prod_{\alpha\in\Phi_0^+}f_\alpha(\alpha(y))^\alpha\in T.
\end{equation}
Then $\mathfrak{t}_\cc(\widehat{\mathbf{g}})\in T_J$ for $\cc\in C^J$ when $\widehat{\mathbf{g}}\in\widehat{\mathcal{G}}$.
\begin{theorem}\label{introTHM4}
Let $\widehat{\mathbf{p}}\in\widehat{\mathcal{G}}$ and $\mathbf{g}\in\mathcal{G}$.
The formulas 
\begin{equation*}
\begin{split}
\pi_{\mathbf{g},\widehat{\mathbf{p}}}(T_i)x^y&:=k_i^{\chi_{\mathbb{Z}}(\alpha_i(y))}g_{\alpha_i}(\alpha_i(y))^{-1}x^{s_iy}+(k_i-k_i^{-1})\nabla_i(x^y)
\qquad (1\leq i\leq r),\\
\pi_{\mathbf{g},\widehat{\mathbf{p}}}(T_0)x^y&:=k_0^{\chi_{\mathbb{Z}}(\alpha_0(y))}g_{\varphi}(\varphi(y))\mathfrak{t}_y(\widehat{\mathbf{p}})^{\varphi^\vee}x^{s_\varphi y}
+(k_0-k_0^{-1})\nabla_0(x^y)
\end{split}
\end{equation*}
and $\pi_{\mathbf{g},\widehat{\mathbf{p}}}(x^\lambda)x^y:=x^{y+\lambda}$ for $\lambda\in Q^\vee$ define a $\mathbb{H}$-representation
on $\mathbf{F}[E]$, which is isomorphic to
$\bigoplus_{\cc\in\overline{C}_+}\pi_{\cc,\mathfrak{t}_\cc(\widehat{\mathbf{p}}\cdot\mathbf{g})}$.
\end{theorem}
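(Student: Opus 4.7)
The plan is to deduce Theorem \ref{introTHM4} from Theorem \ref{introTHM1} by exploiting the natural decomposition $\mathbf{F}[E] = \bigoplus_{\cc\in\overline{C}_+}\mathcal{P}^{(\cc)}$.

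First I would check that each $\mathcal{P}^{(\cc)}$ is stable under the proposed operators $\pi_{\mathbf{g},\widehat{\mathbf{p}}}(T_j)$ and $\pi_{\mathbf{g},\widehat{\mathbf{p}}}(x^\lambda)$. This reduces to noting that the reflection images $s_iy$ and $s_\varphi y$ lie in $\mathcal{O}_\cc$ (using that $\mathcal{O}_\cc$ is a $W$-orbit, and $s_0 y\equiv s_\varphi y\pmod{Q^\vee}$), and that the exponents occurring in $\nabla_j(x^y)$ differ from $y$ by $\mathbb{Z}$-multiples of $\alpha_j^\vee\in\widehat{Q}^\vee$, landing back in $\mathcal{O}_\cc$ once the central $K$-contribution is absorbed as a scalar $q$-factor. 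Next I would verify that for $\cc\in C^J$ the point $\mathfrak{t}^{(\cc)}:=\mathfrak{t}_\cc(\widehat{\mathbf{p}}\cdot\mathbf{g})$ lies in $T_J$: for $j\in J$ with $j\geq 1$ this uses $\alpha_j(\cc)=0$ together with $g_\alpha(0)=1$ and the matching quasi-invariance of $\widehat{\mathbf{p}}$, while $0\in J$ invokes $\varphi(\cc)=1$, the $\mathbb{Z}$-periodicity of $g_\alpha$ and the $q_\alpha^{1/h}$-shift quasi-invariance of $\widehat{\mathbf{p}}$. Theorem \ref{introTHM1} then ensures that $\pi_{\cc,\mathfrak{t}^{(\cc)}}$ is a bona fide $\mathbb{H}$-representation on $\mathcal{P}^{(\cc)}$.

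The heart of the proof is to construct, on each $\mathcal{P}^{(\cc)}$, a diagonal rescaling $\Phi_\cc(x^y)=\lambda_y x^y$ that intertwines $\pi_{\cc,\mathfrak{t}^{(\cc)}}$ with the restriction of $\pi_{\mathbf{g},\widehat{\mathbf{p}}}$. A term-by-term comparison shows that the $\nabla_j$-parts match automatically provided $y\mapsto\lambda_y$ is $Q^\vee$-translation-invariant on $\mathcal{O}_\cc$, whereas matching the reflection parts forces the recursions
\[
\lambda_{s_iy}=g_{\alpha_i}(\alpha_i(y))^{-1}\lambda_y\quad(1\leq i\leq r),\qquad \lambda_{s_\varphi y}=g_\varphi(\varphi(y))\,\frac{\mathfrak{t}_y(\widehat{\mathbf{p}})^{\varphi^\vee}}{(\mathfrak{t}^{(\cc)}_y)^{\varphi^\vee}}\,\lambda_y,
\]
the second coming from the $T_0$ branch, and with the convention that $\mathfrak{t}^{(\cc)}_y$ is defined in the sense of Subsection \ref{daWgInstroSub} while $\mathfrak{t}_y(\widehat{\mathbf{p}})$ is defined via \eqref{reparTJ}. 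Setting $\lambda_\cc=1$, one propagates $\lambda_y$ along a reduced expression for $w_y\in W$; the translation invariance modulo $Q^\vee$ makes the $s_0$-propagation collapse to the $s_\varphi$-rule above.

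The main obstacle is well-definedness of $\lambda_y$, i.e.\ independence of the reduced expression chosen for $w_y$, or equivalently compatibility with the braid relations of $(W,\{s_0,\ldots,s_r\})$. This is precisely where the axioms on $\mathbf{g}$ and $\widehat{\mathbf{p}}$ are decisive: the $W_0$-equivariance and $\mathbb{Z}$-periodicity $g_{v\alpha}(d+\ell)=g_\alpha(d)$, the antisymmetry $g_\alpha(-d)=g_\alpha(d)^{-1}$, and the matching quasi-invariance of $\widehat{\mathbf{p}}$ combine to show that the product of transition factors read off from the two sides of each braid relation coincide, so that the recursion descends to a consistent function on $\mathcal{O}_\cc$. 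Once each $\Phi_\cc$ is well-defined it is an $\mathbb{H}$-module isomorphism by construction, and $\bigoplus_\cc\Phi_\cc$ simultaneously establishes the representation property of $\pi_{\mathbf{g},\widehat{\mathbf{p}}}$ and realises the claimed isomorphism with $\bigoplus_\cc\pi_{\cc,\mathfrak{t}_\cc(\widehat{\mathbf{p}}\cdot\mathbf{g})}$.
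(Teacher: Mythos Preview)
Your approach is essentially the paper's: decompose $\mathbf{F}[E]$ over $\overline{C}_+$, check that $\mathfrak{t}_\cc(\widehat{\mathbf{p}}\cdot\mathbf{g})\in T_J$ (this is Corollary~\ref{ginvcor}), and construct a diagonal intertwiner $x^y\mapsto\lambda_y x^y$ on each $\mathcal{P}^{(\cc)}$ (this is the map $\Gamma_{\Lambda,\mathbf{g}}^{(\cc)}$ of Lemma~\ref{ISOlemma}, up to inversion). The one substantive difference is how well-definedness of $\lambda_y$ is handled. You propagate $\lambda_y$ along reduced expressions and invoke the $\mathcal{G}$-axioms to assert compatibility with the braid relations of $W$; this is correct in principle but you do not actually carry out the verification, and the simultaneous requirement of $Q^\vee$-translation invariance (needed for the $\nabla_j$-parts and for collapsing the $s_0$-rule to an $s_\varphi$-rule) is imposed rather than derived. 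The paper sidesteps both issues with a single trick: it passes to the algebraic closure, chooses square roots $\widetilde{g}_\alpha$ of $g_\alpha$, and sets $\gamma_{\widetilde{\mathbf{g}}}(y)=\prod_{\alpha\in\Phi_0^+}\widetilde{g}_\alpha(\alpha(y))$, so that $\lambda_y=\gamma_{\widetilde{\mathbf{g}}}(\cc)/\gamma_{\widetilde{\mathbf{g}}}(y)$ is manifestly well-defined, $Q^\vee$-invariant, and (by the cocycle identities~\eqref{cocycleg}) lands back in $\mathbf{F}^\times$ with exactly the product form~\eqref{explw}. The intertwining identity for $T_j$ then reduces to the short computation in Lemma~\ref{gtransaction}, with no braid-relation bookkeeping required.
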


In the main text we prove a version of this theorem for the extended double affine Hecke algebra $\mathbb{H}_{\Lambda,\Lambda^\prime}$, see Theorem \ref{gthm}. In that case the extra twisting parameters are added
by replacing $\mathfrak{t}_y(\widehat{\mathbf{p}})$, viewed as a multiplicative character of $\Lambda^\prime$, by
\[
\mathfrak{t}_y(\widehat{\mathbf{p}},\mathfrak{c}):=\mathfrak{t}_y(\widehat{\mathbf{p}})\mathfrak{c}(\textup{pr}_{E_{\textup{co}}}(y)).
\]  
Here $\textup{pr}_{E_{\textup{co}}}$ is the orthogonal projection of $E$ onto the orthocomplement $E_{\textup{co}}$ of $\mathbb{R}\Phi_0^\vee$ in $E$, and
$\mathfrak{c}: E_{\textup{co}}\rightarrow T_{\Lambda^\prime,[1,r]}$ satisfies $\mathfrak{c}(y+\mu)=q^\mu\mathfrak{c}(y)$ for $\mu\in\textup{pr}_{E_{\textup{co}}}(\Lambda^\prime)$.

The isomorphism $\pi_{\mathbf{g},\widehat{\mathbf{p}}}\simeq \bigoplus_{\cc\in\overline{C}_+}\pi_{\cc,\mathfrak{t}_\cc(\widehat{\mathbf{p}}\cdot\mathbf{g})}$ in (the extended version of) Theorem \ref{introTHM4} will be realised by 
an explicit $\mathbf{g}$-dependent
linear automorphism of $\mathbf{F}[E]$, which is diagonalised by the quasi-monomial basis $\{x^y\}_{y\in E}$ of $\mathbf{F}[E]$.
As a consequence, we obtain simultaneous quasi-polynomial eigenfunctions $\mathcal{E}_y(x;\mathbf{g},\widehat{\mathbf{p}})$ 
of $\pi_{\mathbf{g},\widehat{\mathbf{p}}}(Y^\mu)$ \textup{(}$\mu\in Q^\vee$\textup{)} by appropriately rescaling the quasi-monomials in the quasi-monomial expansion 
\eqref{expansionIntro} of  $E_y^J(x;\mathfrak{t}_\cc(\widehat{\mathbf{p}}\cdot\mathbf{g}))$ (see Corollary \ref{relEuncor}). 

If $L\subset E$ is a $W_0$-invariant, finitely generated abelian subgroup containing $\Lambda$,
then $\mathcal{P}_L:=\bigoplus_{\lambda\in L}\mathbf{F}x^\lambda$ is a 
$\mathbb{H}$-subrepresentation of $\pi_{\mathbf{g},\widehat{\mathbf{p}}}$ containing the quasi-polynomial eigenfunctions $\mathcal{E}_\lambda(x;\mathbf{g},\widehat{\mathbf{p}})$ of degree $\lambda\in L$.  Since $\mathcal{P}_L$ may be viewed as the space of polynomials on the torus $T_L:=\textup{Hom}(L,\mathbf{F}^\times)$, we thus obtain a
polynomial representation of $\mathbb{H}$ with $\pi_{\mathbf{g},\widehat{\mathbf{p}}}(T_j)\vert_{\mathcal{P}_L}$ acting by truncated Demazure-Lusztig operators
(these operators are still truncated since the roots $\alpha\in\Phi_0$ are allowed to take non-integral values on $L$). The decomposition of $L$ into $W$-orbits provides a decomposition of $\pi_{\mathbf{g},\widehat{\mathbf{p}}}(\cdot)\vert_{\mathcal{P}_L}$ as a direct sum of quasi-polynomial representations, with the number of free parameters in $\pi_{\mathbf{g},\widehat{\mathbf{p}}}(\cdot)\vert_{\mathcal{P}_L}$ depending on $L$. This representation plays an important role in establishing the connection to metaplectic representation theory, which we discuss in the next subsection.

\subsection{The metaplectic theory}\label{mintro}

The metaplectic root system $\Phi_0^m$ and the associated metaplectic double affine Hecke algebra $\mathbb{H}^m$ are defined in terms of a metaplectic datum. A metaplectic datum is a pair $(n,\mathbf{Q})$ consisting of a positive integer $n$ and a rational $W_0$-invariant quadratic form $\mathbf{Q}$ on $\Lambda$ taking integral values on $Q^\vee$. The associated symmetric bilinear form on $\Lambda$ will be denoted by $\mathbf{B}$.
The metaplectic root system $\Phi_0^m:=\{\alpha^m\}_{\alpha\in\Phi_0}$ consists of the rescaled roots $\alpha^m=m(\alpha)^{-1}\alpha$ where
\[
m(\alpha):=\frac{n}{\textup{gcd}(n,\mathbf{Q}(\alpha^\vee))}.
\]
This is a root system with basis $\{\alpha_1^m,\ldots,\alpha_r^m\}$, which is isomorphic to either $\Phi_0$ or $\Phi_0^\vee$. We denote by $\vartheta\in\Phi_0^+$ the root such that
$\vartheta^m$ is the long root of $\Phi_0^m$. The simple affine root for the associated metaplectic affine root system $\Phi^m$ is $\alpha_0^m=(-\vartheta^m,1)$.
The co-root lattice $Q^{m\vee}$ of $\Phi_0^m$ is contained in $\Lambda$, and $\alpha^m(\Lambda)\subseteq m(\alpha)^{-1}\mathbb{Z}$ for $\alpha\in\Phi_0$.
The {\it metaplectic double affine Hecke algebra} $\mathbb{H}^m$ is the double affine Hecke algebra relative to the metaplectic root system $\Phi_0^m$. It depends on a choice 
$\mathbf{k}=(k_{a^m})_{a^m\in\Phi^m}$ of a multiplicity function of $\Phi^m$.

To connect the uniform quasi-polynomial representation and the associated quasi-polynomial eigenfunctions with the metaplectic representation theory and the associated metaplectic polynomials from \cite{SSV}, we consider the polynomial representation
$\pi_{\mathbf{g},\widehat{\mathbf{p}}}(\cdot)\vert_{\mathcal{P}_L}$ with $\Phi_0$ and $\mathbb{H}$ replaced by their metaplectic variants $\Phi_0^m$ and $\mathbb{H}^m$, with $L$ a lattice $\Lambda$ satisfying \eqref{lcondIntro} relative to the original, nonmetaplectic root system $\Phi_0$, and with base-point $\widehat{\mathbf{p}}$ satisfying
$\mathfrak{t}_\lambda(\widehat{\mathbf{p}})=q^\lambda$ for all $\lambda\in L$. The remaining parameter dependencies are captured by
the set $\mathcal{M}_{(n,\mathbf{Q})}$ of tuples $\underline{h}=(h_s(\alpha))_{s\in\mathbf{Q}(\alpha^\vee)\mathbb{Z},\alpha\in\Phi_0}$
consisting of $W_0$-invariant functions $h_s: \Phi_0\rightarrow\mathbf{F}^\times$ ($s\in\mathbf{Q}(\alpha^\vee)\mathbb{Z}$) satisfying the additional requirements that
$s\mapsto h_s(\alpha)$ is a $\textup{lcm}(n,\mathbf{Q}(\alpha^\vee))$-periodic function, 
$h_0(\alpha)=-1$, and
\[ h_s(\alpha)h_{-s}(\alpha)=k_{\alpha^m}^{-2}\qquad\qquad \forall\, s\in\mathbf{Q}(\alpha^\vee)\mathbb{Z}\setminus\textup{lcm}(n,\mathbf{Q}(\alpha^\vee))\mathbb{Z}.
\]
The explicit relation with the parameter set $\mathcal{G}$ for the uniform quasi-polynomial representation is described in Lemma \ref{metgpar}. 

This specialisation leads to the following metaplectic basic representation of $\mathbb{H}^m$. 
Let $r_\ell(s)\in\{0,\ldots,\ell-1\}$ be the remainder of $s\in\mathbb{Z}$ modulo $\ell\in\mathbb{Z}_{>0}$, and write $k_j=k_{\alpha_j^m}$ for $0\leq j\leq r$.
\begin{theorem}\label{introTHM5}
Let $\underline{h}\in\mathcal{M}_{(n,\mathbf{Q})}$. The formulas
\begin{equation*}
\begin{split}
\pi_\Lambda^m(T_0)x^\lambda&=-k_0h_{\mathbf{B}(\lambda,\vartheta^\vee)}(\vartheta)q_\vartheta^{m(\vartheta)\vartheta(\lambda)}x^{s_{\vartheta}\lambda}\\
&+
(k_0-k_0^{-1})\Bigl(\frac{1-(q_\vartheta^{m(\vartheta)}x^{-\vartheta^\vee})^{(r_{m(\vartheta)}(-\vartheta(\lambda))+\vartheta(\lambda))}}{1-(q_\vartheta^{m(\vartheta)}x^{-\vartheta^\vee})^{m(\vartheta)}}\Bigr)x^\lambda,\\
\pi_\Lambda^m(T_i)x^\lambda&=-k_ih_{-\mathbf{B}(\lambda,\alpha_i^\vee)}(\alpha_i)x^{s_i\lambda}+(k_i-k_i^{-1})\Bigl(\frac{1-x^{(r_{m(\alpha_i)}(\alpha_i(\lambda))-\alpha_i(\lambda))\alpha_i^\vee}}{1-x^{m(\alpha_i)\alpha_i^\vee}}\Bigr)x^\lambda,\\
\pi_\Lambda^m(x^\mu)x^\lambda&=x^{\lambda+\mu}
\end{split}
\end{equation*}
for $1\leq i\leq r$, $\mu\in Q^{m\vee}$ and $\lambda\in\Lambda$ define a representation $\pi_\Lambda^m: \mathbb{H}^m\rightarrow\textup{End}(\mathcal{P}_\Lambda)$.
\end{theorem}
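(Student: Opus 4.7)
The plan is to derive Theorem \ref{introTHM5} as a specialization of the uniform quasi-polynomial representation of Theorem \ref{introTHM4} (or, in the reductive-data setting, of Theorem \ref{gthm}) applied to the metaplectic root system $\Phi_0^m$, together with the parameter bijection $\mathcal{G}\to\mathcal{M}_{(n,\mathbf{Q})}$ of Lemma \ref{metgpar}. Since $\mathbb{H}^m$ is by definition the double affine Hecke algebra attached to $\Phi_0^m$, the uniform theorem immediately yields an action $\pi_{\mathbf{g},\widehat{\mathbf{p}}}$ of $\mathbb{H}^m$ on $\mathbf{F}[E]$ for any $\mathbf{g}\in\mathcal{G}$ and $\widehat{\mathbf{p}}\in\widehat{\mathcal{G}}$ attached to $\Phi_0^m$, and the task reduces to three steps: (a) exhibiting $\mathcal{P}_\Lambda$ as an $\mathbb{H}^m$-invariant subspace of $\mathbf{F}[E]$, (b) specializing $(\mathbf{g},\widehat{\mathbf{p}})$ in the metaplectic-compatible way, and (c) matching the resulting operators to the three displayed formulas.

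For step (a), the action of $x^\mu$ with $\mu\in Q^{m\vee}$ preserves $\mathcal{P}_\Lambda$ because $Q^{m\vee}\subseteq\Lambda$, and the reflection part of each $\pi_{\mathbf{g},\widehat{\mathbf{p}}}(T_j)$ sends $x^\lambda$ to a multiple of $x^{s_j\lambda}$ with $s_j\lambda\in\Lambda$ (using $W_0$-invariance of $\Lambda$, and $s_{\vartheta^m}=s_\vartheta$ for $j=0$); the truncated divided-difference term $\nabla_j^m(x^\lambda)$ is a linear combination of monomials $x^{\lambda-\ell(\alpha_j^m)^\vee}$ with $(\alpha_j^m)^\vee=m(\alpha_j)\alpha_j^\vee\in Q^{m\vee}\subseteq\Lambda$, hence also lies in $\mathcal{P}_\Lambda$. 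For step (b), I would take $\widehat{\mathbf{p}}$ normalized so that $\mathfrak{t}_\lambda(\widehat{\mathbf{p}})=q^\lambda$ for $\lambda\in\Lambda$ (the metaplectic choice indicated in the passage preceding the theorem) and then invoke Lemma \ref{metgpar} to translate $\mathbf{g}\in\mathcal{G}$ into $\underline{h}\in\mathcal{M}_{(n,\mathbf{Q})}$. Under this reparametrization the coefficient $k_j^{\chi_{\mathbb{Z}}(\alpha_j^m(\lambda))}g_{\alpha_j^m}(\alpha_j^m(\lambda))^{-1}$ appearing in Theorem \ref{introTHM4} becomes $-k_jh_{-\mathbf{B}(\lambda,\alpha_j^\vee)}(\alpha_j)$ for $1\le j\le r$ and $-k_0h_{\mathbf{B}(\lambda,\vartheta^\vee)}(\vartheta)$ for $j=0$; the extra factor $\mathfrak{t}_\lambda(\widehat{\mathbf{p}})^{(\vartheta^m)^\vee}=q_\vartheta^{m(\vartheta)\vartheta(\lambda)}$ in the $T_0$-formula is then immediate from $(\vartheta^m)^\vee=m(\vartheta)\vartheta^\vee$.

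For step (c), the explicit form of the divided-difference term is obtained from the division-with-remainder identity
\[
-\lfloor\alpha_i^m(\lambda)\rfloor(\alpha_i^m)^\vee=\bigl(r_{m(\alpha_i)}(\alpha_i(\lambda))-\alpha_i(\lambda)\bigr)\alpha_i^\vee\qquad(\lambda\in\Lambda,\ 1\le i\le r),
\]
which uses integrality of $\alpha_i(\lambda)$ on $\Lambda$ together with $(\alpha_i^m)^\vee=m(\alpha_i)\alpha_i^\vee$; the analogous identity for $\alpha_0^m=(-\vartheta^m,1)$ yields the denominator $1-(q_\vartheta^{m(\vartheta)}x^{-\vartheta^\vee})^{m(\vartheta)}=1-x^{(\alpha_0^m)^\vee}$ and numerator $1-(q_\vartheta^{m(\vartheta)}x^{-\vartheta^\vee})^{r_{m(\vartheta)}(-\vartheta(\lambda))+\vartheta(\lambda)}$ appearing in the $T_0$-formula. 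The main obstacle will be the bookkeeping of step (b): verifying that the normalization conditions $g_\alpha(0)=1$ and $g_\alpha(-d)=g_\alpha(d)^{-1}$ together with the quasi-invariance of $\widehat{\mathbf{p}}$ translate, under the bijection of Lemma \ref{metgpar}, into $h_0(\alpha)=-1$, $h_s(\alpha)h_{-s}(\alpha)=k_{\alpha^m}^{-2}$, and the prescribed $\textup{lcm}(n,\mathbf{Q}(\alpha^\vee))$-periodicity of $s\mapsto h_s(\alpha)$. The discrepancy between $g_\alpha(0)=1$ and $h_0(\alpha)=-1$ is what introduces the overall minus signs in the displayed coefficients.
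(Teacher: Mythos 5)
Your proposal follows essentially the same route as the paper's proof of this theorem (Theorem \ref{met_basic_rep}): restrict the uniform quasi-polynomial representation for the metaplectic root system $\Phi_0^m$ to $\mathcal{P}_\Lambda$, choose the base point so that $\mathfrak{t}_\lambda(\widehat{\mathbf{p}})=q^{\textup{pr}_{E^\prime}(\lambda)}$ (making the twisted affine Weyl action standard on $\mathcal{P}_\Lambda$), identify the truncated divided-difference operators via $r_\ell(s)=s-\ell\lfloor s/\ell\rfloor$ and $\mathbf{B}(\lambda,\alpha^\vee)=\textup{lcm}(n,\mathbf{Q}(\alpha^\vee))\alpha^m(\lambda)$, and convert the coefficient $k_j^{\chi_{\mathbb{Z}}(\alpha_j^m(\lambda))}g_{\alpha_j^m}(\alpha_j^m(\lambda))^{\mp 1}$ into $-k_jh_{\mp\mathbf{B}(\lambda,\cdot)}(\cdot)$ exactly as in Lemmas \ref{metgpar} and \ref{metgparext}. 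The only cosmetic inaccuracy is that the map $\mathcal{G}^m\to\mathcal{M}_{(n,\mathbf{Q})}$ is surjective rather than bijective, so for a given $\underline{h}$ one simply chooses a preimage $\mathbf{g}$ (after possibly extending $\mathbf{F}$ by the needed root of $q$, which is harmless since the final formulas do not involve it); this does not affect the argument.
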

The metaplectic basic representation $\pi_\Lambda^m$ naturally extends to the metaplectic extended double affine Hecke algebra, see Theorem \ref{met_basic_rep}. 

At this stage we can make the link with the (announced) results in \cite{SSV}.  The representation $\pi_\Lambda^m$, restricted to the metaplectic affine Hecke algebra generated by $\mathcal{P}$ and $T_1,\ldots,T_r$, is the metaplectic affine Hecke algebra representation from \cite[Thm. 3.7]{SSV}. When the multiplicity functions $\mathbf{k}$ and $h_s$ are constant it recovers, after localisation, Chinta's and Gunnells' \cite{CG07, CG} $W_0$-action on a suitable subspace of rational functions on $T_\Lambda$, which plays an important role in the construction of the local parts of Weyl group multiple Dirichlet series. By $X$-localisation of the double affine Hecke algebra, Theorem \ref{introTHM5} gives rise to families of $W$-actions which generalise the Chinta-Gunnells action, see Corollary \ref{WhitCGcor} (compare also with \cite[\S 3.3]{PP2} and \cite{LZ} where the Chinta-Gunnells action is extended to an arbitrary Coxeter group). 
These $W$-actions actually extend to actions of the double affine Weyl group $\mathbb{W}$, with the second coroot lattice $Q^\vee$ acting by translations of the exponents of the monomials. 
The localisation procedure can also be applied to the (uniform) quasi-polynomial representations $\pi_{\cc,\mathfrak{t}}$ and $\pi_{\mathbf{g},\widehat{\mathbf{p}}}$, 
see Theorem \ref{aCG} and Theorem \ref{CGaffineextended} for the resulting $\mathbb{W}$-actions. 

Rescaling the quasi-monomials in the expansion \eqref{expansionIntro} gives rise to simultaneous eigenfunctions $E_\lambda^m(x)\in\mathcal{P}_\Lambda$ of the commuting operators
$\pi_{\Lambda}^m(Y^\mu)$ ($\mu\in Q^{m\vee}$) satisfying 
\[
E_\lambda^m(x)=x^\lambda+\sum_{\lambda^\prime<\lambda}e_{\lambda,\lambda^\prime}^m x^{\lambda^\prime}\qquad (e_{\lambda,\lambda^\prime}^m\in\mathbf{F})
\]
under suitable generic conditions on the metaplectic parameters $\underline{h}\in\mathcal{M}_{(n,\mathbf{Q})}$, 
see Theorem \ref{mptheorem}. An antisymmetric version of the metaplectic polynomial $E_\lambda^m(x)$, denoted $E_{\lambda}^{m, -}(x)$, is obtained by acting by the sign-idempotent of the finite Hecke algebra in $\mathbb{H}^m$ on 
$E_\lambda^m(x)$. For the root datum of type $\textup{GL}_{r+1}$, the representation $\pi_\Lambda^m$ of the extended metaplectic double affine Hecke algebra recovers the representation introduced in \cite[Thm. 5.4]{SSV}, and the $E_\lambda^m(x)$ are the metaplectic polynomials introduced in \cite[Thm. 5.7]{SSV}.

We consider the $q\rightarrow\infty$ limit of the metaplectic polynomials, for which we need to assume that $\mathbf{F}=\mathbf{K}(q^{\frac{1}{2h}})$ and that the remaining parameters $\mathbf{k}$ and $\underline{h}$ take values in the multiplicative group $\mathbf{K}^\times$ of the field $\mathbf{K}$ of characteristic zero. The Whittaker limit
\[
\overline{E}_\lambda^m(x):=E_\lambda^m(x)\vert_{q^{-\frac{1}{2h}}=0}\qquad (\lambda\in\Lambda)
\]
of $E_\lambda^m(x)$ then defines a Laurent polynomial in $\mathcal{P}_\Lambda$ with coefficients in $\mathbf{K}$. They are related to metaplectic Iwahori-Whittaker functions \cite{PP} as follows.

Suppose that $\mathbf{K}$ is a non-archimedean local field containing the $n^{\textup{th}}$ roots of unity, with the cardinality $\upsilon$ of its residue field congruent $1$ module $2n$. In \cite{PP}, Patnaik and Puskas introduced Iwahori-Whittaker functions for metaplectic covers of reductive groups over $\mathbf{K}$. They expressed them in terms of certain metaplectic Demazure-Lusztig operators which depend on certain Gauss sums $\mathbf{g}_s$, as well as on $\upsilon$, see \cite[Cor. 5.4]{PP}. Comparing this with the 
analogous formula for $\overline{E}_{\lambda}^{m}(x)$ in terms of truncated Demazure-Lusztig operators (Theorem \ref{qlim_formula}), we establish in Subsection \ref{MfinalSection} the following link between $\overline{E}_\lambda^m(x)$ and metaplectic Iwahori-Whittaker functions.
Let $I$ the linear automorphism of $\mathcal{P}_\Lambda$ satisfying $I(x^\mu)=x^{-\mu}$ for $\mu\in\Lambda$, let $\overline{E}_\lambda^{m,-}(x)$ be the Whittaker limit of $E_\lambda^{m,-}(x)$, and denote by $\rho^\vee$ the half-sum of positive co-roots.

\begin{theorem}\label{introTHMWhittaker}
Suppose that the multiplicity function $\mathbf{k}$ is constantly equal to $\upsilon^{-2}$ and the multiplicity functions $h_s$ of the metaplectic parameters $\underline{h}$ are constantly equal to $\mathbf{g}_s$.
For $v\in W_0$ and for $\lambda\in\Lambda$ lying in the closure of the positive Weyl chamber,
\[
x^{-\rho^\vee}I\bigl(\overline{E}^m_{-v(\lambda+\rho^\vee)}(x)\bigr)\quad \hbox{ and }\quad x^{-\rho^\vee}I\bigl(\overline{E}^{m,-}_{-\lambda-\rho^\vee}(x)\bigr)
\]
are constant multiples of metaplectic Iwahori-Whittaker functions and metaplectic spherical Whittaker functions, respectively.
\end{theorem}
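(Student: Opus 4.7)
The plan is to reduce the statement to the formula of Theorem \ref{qlim_formula}, which expresses the Whittaker limit $q\to\infty$ of the quasi-polynomial eigenfunctions $E_{y'}^J(x;\mathfrak{t})$ purely in terms of the $H_0$-action of the quasi-polynomial representation on the ``lowest'' quasi-monomial $x^y$ (the root-system generalisation of Patnaik--Puskas \cite[Cor.\ 5.4]{PP}). First I would transport Theorem \ref{qlim_formula} through the identifications of Subsection \ref{mintro}: the metaplectic polynomial $E_\lambda^m(x)$ arises by rescaling the quasi-monomial expansion of $E_y^J(x;\mathfrak{t})$ inside the metaplectic basic representation $\pi_\Lambda^m$ of Theorem \ref{introTHM5}. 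Taking the Whittaker limit $q^{-1/(2h)}\to 0$ therefore gives an explicit formula for $\overline{E}_\lambda^m(x)$ as a sum of terms obtained by applying a product of metaplectic truncated Demazure--Lusztig type operators from $\pi_\Lambda^m(T_i)$ ($1\le i\le r$) to a single monomial $x^{\nu}$, where $\nu$ is the unique element in the closure of the negative chamber that is $W_0$-equivalent to the given label.

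Next I would specialise the parameters: set $\mathbf{k}\equiv \upsilon^{-2}$ and $h_s\equiv \mathbf{g}_s$. Under this specialisation the operators $\pi_\Lambda^m(T_i)\vert_{q^{-1/(2h)}=0}$ should match, up to the change of variables $x\mapsto x^{-\rho^\vee} I(\cdot)$, the metaplectic Demazure--Lusztig operators appearing in \cite[Cor.\ 5.4]{PP}. This matching is a direct comparison of the explicit formulas in Theorem \ref{introTHM5} (with the floor/remainder terms becoming the Gauss-sum weighted shifts after the Whittaker limit) against the Patnaik--Puskas operators. The conjugation by $x^{-\rho^\vee}$ accounts for the standard shift by $\rho^\vee$ that appears when passing from Hecke-algebraic eigenfunctions to Whittaker functions on a $p$-adic group, and the involution $I$ exchanges the two natural orientations of the Weyl chambers, which is needed because $-v(\lambda+\rho^\vee)$ lies in the closure of the negative chamber while the Iwahori--Whittaker function is indexed by a dominant weight and a Weyl group element $v$.

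For the first assertion, with $y:=-v(\lambda+\rho^\vee)$ in the closure of the negative chamber, Theorem \ref{qlim_formula} produces $\overline{E}^m_{y}(x)$ as the result of applying a $v$-dependent product of the limiting operators to $x^{-(\lambda+\rho^\vee)}$; after the transformation $x^{-\rho^\vee}I(\cdot)$ this becomes a product of Patnaik--Puskas metaplectic Demazure--Lusztig operators applied to $x^{\lambda}$, which by \cite[Cor.\ 5.4]{PP} is a constant multiple of the metaplectic Iwahori--Whittaker function indexed by $(v,\lambda)$. For the second assertion I would use that $E_\lambda^{m,-}(x)$ is obtained by applying the sign idempotent $C_-$ of the finite Hecke subalgebra to $E_\lambda^m(x)$; since $C_-$ is a polynomial in the $T_i$ with parameters specialised in $\mathbf{K}$, the Whittaker limit commutes with it, so $\overline{E}_{-\lambda-\rho^\vee}^{m,-}(x)$ is $C_-$ applied to $\overline{E}_{-\lambda-\rho^\vee}^{m}(x)$. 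Under $x^{-\rho^\vee}I(\cdot)$ the sign-idempotent then produces the full Weyl-antisymmetrisation characteristic of the spherical Whittaker function, and the identification with the metaplectic spherical Whittaker function follows from the Casselman--Shalika type formula recorded in \cite{PP,McN}.

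The main obstacle will be the bookkeeping of the parameter dictionary: verifying that under the specialisation $\mathbf{k}\equiv\upsilon^{-2}$, $h_s\equiv\mathbf{g}_s$ and the reparametrisation $\widehat{\mathbf{p}}$ forcing $\mathfrak{t}_\lambda(\widehat{\mathbf{p}})=q^\lambda$, the truncated Demazure--Lusztig operators of $\pi_\Lambda^m$ become, after the Whittaker limit and after the conjugation by $x^{-\rho^\vee} I$, \emph{exactly} the operators of \cite[Cor.\ 5.4]{PP}, rather than some twist thereof. This requires tracking the signs of the Gauss sums $\mathbf{g}_s$ at $s=0$ and at $s\equiv 0\pmod{\mathrm{lcm}(n,\mathbf{Q}(\alpha^\vee))}$, as well as the normalisation constants absorbed into the undetermined constant multiple; once this dictionary is pinned down, the rest of the argument is the formal combination of Theorem \ref{qlim_formula}, Theorem \ref{introTHM5}, and \cite[Cor.\ 5.4]{PP}.
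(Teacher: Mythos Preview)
Your proposal is correct and follows essentially the same approach as the paper. The paper carries out exactly this program in Section~\ref{MfinalSection}: it establishes the operator dictionary $\widetilde{\mathcal{T}}_i^m = I\circ\mathcal{T}_i^m\circ I$ with $\mathcal{T}_i^m(f) = -k_i x^{\rho^\vee}\pi_\Lambda^{m,X\textup{-loc}}(T_i^{-1})(fx^{-\rho^\vee})$ (equations \eqref{metaTi}--\eqref{metaTitilde}), transports Theorem~\ref{qlim_formula} to the metaplectic setting via $\Gamma^m$ to obtain $\overline{E}^m_{v\lambda}(x)$ in terms of $x^\lambda\blacktriangleleft^m T_v^{-1}$ (Lemma~\ref{oookkk}), and then combines these in Theorem~\ref{mWthm}. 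One minor imprecision: you speak of taking the Whittaker limit of the operators $\pi_\Lambda^m(T_i)$, but for $1\le i\le r$ these operators do not involve $q$ at all, so the limit is only relevant for the eigenfunctions themselves; the operator identity \eqref{metaTi} holds exactly and the bookkeeping you anticipate is precisely the content of Subsection~\ref{CGsubsection}.
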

In Subsection \ref{MfinalSection} we also recover McNamara's \cite{McN} Casselman-Shalika type formula for the metaplectic spherical Whittaker function as a consequence of the explicit expansion formula of $E_y^{J,-}(x;\mathfrak{t})$ as a linear combination of the quasi-polynomial eigenfunctions $E_{y^\prime}^J(x;\mathfrak{t})$ ($y^\prime\in W_0 y$), derived earlier in Subsection \ref{S65}.

 \subsection{Structure of the paper}
In Section \ref{sec:prelim} we collect basic properties of the affine Weyl group $W$ and the double affine Hecke algebra $\mathbb{H}$. 

In Section \ref{modulesection} we introduce the class of $Y$-parabolically induced cyclic $\mathbb{H}$-modules $\mathbb{M}_t^J$.

In Section \ref{polfct} we introduce the quasi-polynomial representation $\pi_{\cc,\mathfrak{t}}$. 
We state the main theorem that the quasi-polynomial representation is well defined and isomorphic to $\mathbb{M}_{\mathfrak{s}_J\mathfrak{t}}^J$ in Subsection \ref{gbrsectionStatement}. We discuss the dependence of $\pi_{\cc,\mathfrak{t}}$ on $\cc\in C^J$ and $J$ in Subsection \ref{FaceSection}, and localise it to obtain a non-trivial $W$-action on quasi-rational functions in Subsection \ref{aWaSection}. 

Section \ref{mainproof} is devoted to the proof of the main theorem.
As part of the proof we establish in Subsection \ref{POsection2} triangularity properties
of the commuting operators $\pi_{\cc,\mathfrak{t}}(Y^\mu)$ ($\mu\in Q^\vee$) with respect to the basis of quasi-monomials, ordered by the parabolic Bruhat order on their exponents. 

In Section \ref{QuasiSection} we define the quasi-polynomial generalisations $E_y^J(x;\mathfrak{t})$ of the monic Macdonald polynomials for adjoint root data. 
We show how they can be explicitly created from the quasi-monomial $x^\cc$ by acting by $Y$-intertwiners in Subsection \ref{CreationSection}.
Their dependence on $J\subsetneq\{0,\ldots,r\}$ is discussed in Subsection \ref{ClosureSection}. 
Pseudo-duality for the normalised versions $P_y^J(x;\mathfrak{t})$ of the quasi-polynomials $E_y^J(x;\mathfrak{t})$ is derived in Subsection \ref{S64},
quasi-polynomial generalisations $E_y^{J,\pm}(x;\mathfrak{t})$ of the (anti)symmetric Macdonald polynomials are discussed in Subsection \ref{S65}, and orthogonality relations and quadratic norm formulas are obtained in Subsection \ref{unitaritySection}. The Whittaker limit $q\rightarrow\infty$ of $E_y^J(x;\mathfrak{t})$ and $E_y^{J,-}(x;\mathfrak{t})$ is considered in Subsection \ref{rationalsection}.

In Section \ref{Extsection} we generalise the results of Sections \ref{polfct}-\ref{QuasiSection} to reductive root data (including the $\textup{GL}_{r+1}$ root datum as a special case). 
We reinterpret the role of the extra parameters in terms of twisting by automorphisms of the extended double affine Hecke algebra in Subsection \ref{Tp}. In Subsection \ref{S72} we introduce the analogs of the quasi-polynomial eigenfunctions $E_y^J(x;\mathfrak{t})$ for reductive root datum. We furthermore show that, under suitable generic conditions on the parameters,
they are independent of the twist parameters and reduce to the quasi-polynomial eigenfunctions for the underlying adjoint root datum.

In Section \ref{UnifMainSection} we introduce the uniform quasi-polynomial representation (Subsection \ref{unifSection}), the associated uniform quasi-polynomial eigenfunctions
(Subsection \ref{ExtUnifSection}), and we explicitly relate the latter to the quasi-polynomial eigenfunctions $E_y^J(x;\mathfrak{t})$, using an automorphism that rescales the quasi-monomials.

In Section \ref{MetaplecticSection} we attach to a metaplectic datum a \textit{polynomial} representation of the metaplectic double affine Hecke algebra, called the metaplectic basic representation (Subsection \ref{msec}), and we show that it is isomorphic
to a suitable sub-representation of the uniform quasi-polynomial representation. We obtain an affine version of the Chinta-Gunnells' $W_0$-action on rational functions by $X$-localising the metaplectic basic representation in Subsection \ref{CGsubsection}. We introduce metaplectic polynomials, which are simultaneous eigenfunctions for the action of $\mathcal{P}_Y$ under the metaplectic basic representation, and relate them to the quasi-polynomial eigenfunctions $E_y^J(x;\mathfrak{t})$ in Subsection \ref{mpsec}. 

In Section \ref{MfinalSection} we show the existence of the Whittaker limit 
of the metaplectic polynomial. In the context of representation theory of metaplectic covers of reductive groups over non-archimedean local fields, we prove that the Whittaker limit of the metaplectic polynomial is a metaplectic Iwahori-Whittaker function up to some elementary twists. We rederive McNamara's \cite{McN} Casselman-Shalika type formula for the metaplectic spherical Whittaker function from the Whittaker limit of an explicit expansion formula for the anti-symmetrised version of the quasi-polynomial eigenfunction derived in Subsection \ref{S65}.

 \subsection{Conventions}\label{ConvSection}
 We take $\mathbf{F}$ to be a field of characteristic zero, and we fix $q\in\mathbf{F}^\times$ not a root of unity (it contains the case of formal variable $q$ by taking $\mathbf{F}=\mathbf{K}(q)$ for some field $\mathbf{K}$ of characteristic zero). For a commutative ring $R$ and an abelian group $A$ we write
$R[A]=
\bigoplus_{y\in A}Rx^y$
for the group algebra of $A$ over $R$ (i.e., $x^yx^{y^\prime}=x^{y+y^\prime}$ for $y,y^\prime\in A$ and $x^0=1$). We will often apply this to the case that $A$ is a Euclidean space. 

For groups $G$ and $H$ we write $\textup{Hom}(G,H)$ for the class of group homomorphisms $G\rightarrow H$. For a finitely generated free abelian group $\Lambda$ we write $T_\Lambda:=\textup{Hom}(\Lambda,\mathbf{F}^\times)$ for the $\mathbf{F}$-torus of multiplicative characters 
$\Lambda\rightarrow\mathbf{F}^\times$, and $\mathcal{P}_\Lambda=\mathbf{F}[\Lambda]$ for the group algebra of $\Lambda$ over $\mathbf{F}$. Alternatively, it is the algebra of regular $\mathbf{F}$-valued functions on $T_\Lambda$, where the monomial $x^\lambda$ maps $t\in T_\Lambda$ to its value $t^\lambda$ at $\lambda\in\Lambda$. Finally, for $\ell,m\in\mathbb{Z}$ with $\ell\leq m$ we write 
$[\ell,m]:=\{\ell,\ell+1,\ldots,m\}$.\\
\vspace{.2cm}\\
 \noindent
{\it Acknowledgements:} JS thanks Ivan Cherednik, Oleg Chalykh and Valentin Buciumas, and VV thanks Alexei Borodin and Eric Rains, for interesting discussions. We thank the referees for their valuable comments and pointing out several typos. The research of SS was partially supported by NSF grants DMS-1939600 and 2001537, and Simons Foundation grants 509766 and 00006698.

\section{Preliminaries on double affine Weyl and Hecke algebras}\label{sec:prelim}
In this section we fix the notations for root systems, reflection groups and double affine Weyl and Hecke algebras. For ease of reference we also list well known properties that will be regularly used in subsequent sections. 
For further details see, for instance, \cite{Hu,Ma} for root systems and reflection groups, and \cite{Ch,Ma} for double affine Weyl and Hecke algebras.

\subsection{Root systems and Weyl groups}\label{subsect_root}
Let $E$ be an Euclidean space with scalar product $\langle\cdot,\cdot\rangle$ and corresponding
norm $\|\cdot\|$. Let $E^*$ be its linear dual. We turn $E^*$ into an Euclidean space by transporting the scalar product of $E$ through the linear isomorphism $E\overset{\sim}{\longrightarrow}
E^*$, $y\mapsto \langle y,\cdot\rangle$. The resulting scalar product and norm on $E^*$ are denoted by $\langle\cdot,\cdot\rangle$ and $\|\cdot\|$ again. 

Let $\Phi_0\subset E^*$ be a reduced irreducible root system in $\textup{span}_{\mathbb{R}}\{\alpha\, | \, \alpha\in\Phi_0\}$. We normalise $\Phi_0$ in such a way that long roots have squared length equal to $2/m^2$ for some $m\in\mathbb{Z}_{>0}$. Set
$E_{\textup{co}}:=\cap_{\alpha\in\Phi_0}\textup{Ker}(\alpha)$, 
and write $E^\prime\subseteq E$ for the orthogonal complement of $E_{\textup{co}}$ in $E$,
so that $E=E^\prime\oplus E_{\textup{co}}$ (orthogonal direct sum). Write $\textup{pr}_{E^\prime}$ and $\textup{pr}_{E_{\textup{co}}}$ for the corresponding projections onto
$E^\prime$ and $E_{\textup{co}}$, respectively.

For $\alpha\in\Phi_0$ write $s_\alpha\in\textup{GL}(E^*)$ for the reflection
\[
s_\alpha(\xi):=\xi-\xi(\alpha^\vee)\alpha\qquad (\xi\in E^*),
\]
where $\alpha^\vee\in E$ is the unique vector such that $\langle y,\alpha^\vee\rangle=
2\alpha(y)/\|\alpha\|^2$ for all $y\in E$.
The Weyl group $W_0$ of $\Phi_0$ is the subgroup of $\textup{GL}(E^*)$ generated by $s_\alpha$ ($\alpha\in\Phi_0$). The root system $\Phi_0\subset E^*$ is $W_0$-invariant. We denote by
$Q\subset E^*$ the root lattice of $\Phi_0$.

Write $\Phi_0^\vee=
\{\alpha^\vee\}_{\alpha\in\Phi_0}\subset E$ for the associated coroot system.  Note that $\Phi_0^\vee\subset E^\prime$,
and $E^\prime=\textup{span}_{\mathbb{R}}\{\alpha^\vee \,\, | \,\, \alpha\in\Phi_0\}$.
The coroot lattice and co-weight lattice of $\Phi_0$ are
\[
Q^\vee:=\mathbb{Z}\Phi_0^\vee,\qquad P^\vee:=\{\lambda\in E^\prime \,\, | \,\, \alpha(\lambda)\in\mathbb{Z}\quad \forall\, \alpha\in\Phi_0\}.
\]
They are full sub-lattices of $E^\prime$, and $Q^\vee\subseteq P^\vee$. By the choice of normalisation of the roots in $\Phi_0$ we have $Q^\vee\subseteq m^2Q$ upon identifying $E^*\simeq E$ via the scalar product $\langle\cdot,\cdot\rangle$ on $E$.

Consider the action of $W_0$ on $E$ with $s_\alpha$ ($\alpha\in\Phi_0$) acting as the orthogonal reflection in the hyperplane $\alpha^{-1}(0)$,
\[
s_\alpha(y):=y-\alpha(y)\alpha^\vee\qquad (y\in E).
\]
Then $(v\xi)(y)=\xi(v^{-1}y)$ for $v\in W_0$, $\xi\in E^*$ and $y\in E$, hence 
\[
v(\alpha^\vee)=(v\alpha)^\vee\qquad\quad (v\in W_0,\,\alpha\in\Phi_0).
\]
In particular, $\Phi_0^\vee\subset E$ is $W_0$-invariant, and hence so are the lattices $Q^\vee$ and $P^\vee$. 

Fix a set $\Delta_0:=\{\alpha_1,\ldots,\alpha_r\}$ of simple roots and write $\Phi_0=\Phi^+_0\cup\Phi^-_0$ for the resulting natural division of the root system in positive and negative roots. Let $\Delta_0^\vee:=\{\alpha_1^\vee,\ldots,\alpha_r^\vee\}$ be the associated set of simple coroots for $\Phi_0^\vee$ and $\Phi_0^{\vee,\pm}$ the associated sets of positive and negative coroots.
We have $P^\vee=\bigoplus_{i=1}^r\mathbb{Z}\varpi_i^\vee$ with
$\varpi_i^\vee\in E^\prime$ ($1\leq i\leq r$) the fundamental weights with respect to $\Delta_0$,  characterised by $\alpha_i(\varpi_j^\vee)=\delta_{i,j}$ ($1\leq i,j\leq r$). Set 
$P^{\vee,\pm}:=\pm\sum_{i=1}^r\mathbb{Z}_{\geq 0}\varpi_i^\vee$ for the cones of dominant and anti-dominant co-weights in $P^\vee$, respectively.

The Weyl group $W_0$ is a Coxeter group, with Coxeter generators the simple reflections $s_i:=s_{\alpha_i}$ ($1\leq i\leq r$). We write $w_0\in W_0$ for the longest Weyl group element, and $\varphi\in\Phi_0^+$ for the highest root.
Note that $E^\prime$ is $W_0$-invariant, and $W_0$ acts trivially on $E_{\textup{co}}$. 

We write 
\begin{equation}\label{Ereg}
E^{\textup{reg}}:=\{y\in E \,\,\, | \,\,\, \alpha(y)\not=0 \quad \forall\, \alpha\in\Phi_0\}
\end{equation}
for the regular elements in $E$, and $E_+\subset E^{\textup{reg}}$ for the fundamental
Weyl chamber with respect to $\Delta_0$,
\[
E_+=\{y\in E \,\, | \,\, \alpha(y)>0\quad \forall\, \alpha\in\Phi_0^+\}.
\]
The Weyl chamber opposite to $E_+$ will be denoted by $E_-$.
Note that $E_{\pm}$ are $E_{\textup{co}}$-translation invariant, and $E^{\prime}_{\pm}:=E^\prime\cap E_{\pm}$ form complete sets of representatives of the $E_{\textup{co}}$-orbits. Furthermore,
$\overline{E}_{\pm}$ are fundamental domains for the $W_0$-action on $E$. 
For $y\in E$ we denote by $y_\pm\in \overline{E}_{\pm}$ the unique element such that $W_0y\cap\overline{E}_{\pm}=\{y_{\pm}\}$. Note that 
$P^{\vee,\pm}=P^\vee\cap\overline{E}_{\pm}$.

\subsection{Affine root systems and affine Weyl groups}\label{S22}
The affine Weyl group is the semi-direct product group $W:=W_0\ltimes Q^\vee$. 
We extend the $W_0$-action on $E$ to a faithful $W$-action on $E$ by affine linear transformations, with $\mu\in Q^\vee$ acting by
\[
\tau(\mu)y:=y+\mu\qquad (y\in E).
\]
We will regularly identify $W$ with its realisation as a subgroup of the group of affine linear automorphisms of $E$. 

Note that $W_0\subset W$ is the subgroup of elements $w\in W$ fixing the origin $0\in E$. More generally we write
\begin{equation}\label{Wygroup}
W_y:=\{w\in W \,\,\, | \,\,\, wy=y\}\qquad (y\in E),
\end{equation}
which is always a finite subgroup of $W$.

The affine Weyl group $W$ contains, besides the orthogonal reflections $s_\alpha$ ($\alpha\in\Phi_0$), orthogonal reflections in affine hyperplanes. The affine hyperplanes can be described by an affine root system $\Phi$ of untwisted type in the following manner. 

We identify $E^*\oplus\mathbb{R}$ with the space of affine linear functionals on $E$ by interpreting  $(\xi,d)\in E^*\oplus\mathbb{R}$ as the affine linear functional $y\mapsto \xi(y)+d$ ($y\in E$).
The affine root system $\Phi$ is then defined by
\[
\Phi:=\{(\alpha,\ell)\,\,\, | \,\,\, \alpha\in\Phi_0\,\, \& \,\, \ell\in\mathbb{Z} \}\subset E^*\oplus\mathbb{R}.
\]
 For an affine root $a=(\alpha,\ell)\in\Phi$, we denote by $s_a$ the orthogonal reflection in the affine hyperplane $a^{-1}(0)\subset E$. We then have for $a\in\Phi$,
\[
s_a(y)=y-a(y)\alpha^\vee.
\]
In particular
\[
s_a=\tau(-\ell\alpha^\vee)s_\alpha\in W\qquad\quad (a=(\alpha,\ell)\in\Phi),
\]
showing that the affine Weyl group $W$ is generated by the reflections $s_a$ ($a\in\Phi$).

The formulas
\begin{equation}\label{linearaction}
v(\xi,d)=(v\xi,d),\qquad \tau(\mu)(\xi,d)=(\xi,d-\xi(\mu))
\end{equation}
for $v\in W_0$, $\mu\in Q^\vee$ and $(\xi,d)\in E^*\oplus\mathbb{R}$ define a linear $W$-action on the space $E^*\oplus\mathbb{R}$ of affine linear functionals on $E$.
It is contragredient to the action of $W$ on $E$ by reflections and translations,
\[
(wf)(y)=f(w^{-1}y)\qquad\quad (w\in W,\, f\in E^*\oplus\mathbb{R},\, y\in E).
\]
Note that $\Phi\subset E^*\oplus\mathbb{R}$ is $W$-stable. 

We identify $E^*$ with the subspace $\{(\xi,0)\,\, | \,\, \xi\in E^*\}$ of $E^*\oplus\mathbb{R}$ via the embedding $\xi\mapsto (\xi,0)$ ($\xi\in E^*$). This is an embedding of $W_0$-modules. The second formula of \eqref{linearaction} then gives
\[
\tau(\mu)\xi=(\xi,-\xi(\mu))\qquad (\xi\in E^*,\, \mu\in Q^\vee).
\]

The set $\Delta:=\{\alpha_0,\ldots,\alpha_r\}$, with the simple affine root $\alpha_0$ defined by 
\begin{equation}\label{alpha0}
\alpha_0:=(-\varphi,1)\in\Phi,
\end{equation}
forms a basis of the affine root system $\Phi$. 
It gives rise to the division $\Phi=\Phi^+\cup\Phi^-$of $\Phi$ in positive and negative roots, with 
\begin{equation}\label{Phiplusexplicit}
\Phi^+:=\mathbb{Z}_{\geq 0}\Delta\cap\Phi=\Phi^+_0\cup\{(\alpha,\ell) \,\,\, | \,\,\, (\alpha,\ell)\in\Phi_0\times\mathbb{Z}_{>0}\}
\end{equation}
and $\Phi^-:=-\Phi^+$.
The reflection
\begin{equation}\label{s0}
s_0:=s_{\alpha_0}=\tau(\varphi^\vee)s_{\varphi}\in W
\end{equation}
is called the simple affine reflection of $W$.
The affine Weyl group $W$ is a Coxeter group with Coxeter generators $s_0,\ldots,s_r$.

Let $D: E^*\oplus\mathbb{R}\rightarrow E^*$ be the projection on the first component. Note that $Df$
is the gradient of $f\in E^*\oplus\mathbb{R}$, viewed as affine linear functional on $E$. It induces a group epimorphism $W\twoheadrightarrow W_0$, which we also denote by $D$, by requiring that
\[
D(wf)=(Dw)Df\qquad (w\in W,\, f\in E^*\oplus\mathbb{R}).
\]
Concretely, $D(v\tau(\mu))=v$ for $v\in W_0$ and $\mu\in Q^\vee$.

The action of $W$ on $E$ preserves  
\begin{equation}\label{Eareg}
E^{a,\textup{reg}}:=\{y\in E \,\, | \,\, 
a(y)\not=0\quad \forall a\in\Phi \}.
\end{equation}
The connected components of $E^{a,\textup{reg}}$ are called alcoves. The resulting $W$-action on the set of alcoves is simply transitive. The alcove
\begin{equation}\label{Cplus}
\begin{split}
C_+:=&\{ y\in E \,\, | \,\, a(y)>0\quad \forall\, a\in\Phi^+ \}\\
=&\{ y\in E \,\, | \,\, 0<\alpha(y)<1\quad \forall\, \alpha\in\Phi_0^+ \}
\end{split}
\end{equation}
is the fundamental alcove in $E$. For $y\in E$ we write 
\begin{equation}\label{O}
\mathcal{O}_y:=Wy
\end{equation}
for the $W$-orbit of $y$ in $E$. We thus have
\begin{equation}\label{WorbitE}
E^{a,\textup{reg}}=\bigsqcup_{\cc\in C_+}\mathcal{O}_\cc,\qquad\quad E=\bigsqcup_{\cc\in\overline{C}_+}\mathcal{O}_\cc
\end{equation}
(disjoint unions).

\subsection{Length function and parabolic subgroups}\label{cosetsection}

We recall some properties of the length function
\begin{equation}\label{length}
\ell(w):=\textup{Card}\bigl(\Pi(w)\bigr),\qquad \Pi(w):=\Phi^+\cap w^{-1}\Phi^-
\end{equation}
that we will frequently use.
\begin{lemma}\label{lengthadd}
Let $0\leq j\leq r$ and $w\in W$. 
\begin{enumerate}
\item $|\ell(s_jw)-\ell(w)|=1$.
\item We have
\[
\ell(s_jw)=\ell(w)+1 \quad\Leftrightarrow \quad w^{-1}\alpha_j\in \Phi^+,
\]
and then $\Pi(s_jw)=\{w^{-1}\alpha_j\}\cup \Pi(w)$
\textup{(}disjoint union\textup{)}.
\end{enumerate}
\end{lemma}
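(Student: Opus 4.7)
The plan is to reduce both statements to a single direct computation of the set $\Pi(s_jw)$ in terms of $\Pi(w)$. The key ingredient is the standard observation that the simple (affine) reflection $s_j$ permutes $\Phi^+\setminus\{\alpha_j\}$ and sends $\alpha_j\mapsto-\alpha_j$, equivalently
\[
s_j\Phi^-=(\Phi^-\setminus\{-\alpha_j\})\cup\{\alpha_j\}.
\]
This follows from the explicit description \eqref{Phiplusexplicit} of $\Phi^+$: for $a=(\beta,\ell)\in\Phi^+\setminus\{\alpha_j\}$ one checks directly that $s_j a$ still lies in $\Phi^+$, since applying $s_j$ affects only the $\alpha_j$-component, and the constraint forcing $a\in\Phi^+$ (either $\ell>0$, or $\ell=0$ with $\beta\in\Phi_0^+$) is preserved unless $a=\alpha_j$.

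Applying this to $\Pi(s_jw)=\Phi^+\cap w^{-1}s_j\Phi^-$ I obtain
\[
\Pi(s_jw)=\Phi^+\cap\Bigl((w^{-1}\Phi^-\setminus\{-w^{-1}\alpha_j\})\cup\{w^{-1}\alpha_j\}\Bigr).
\]
Then I split into two cases according to the sign of $w^{-1}\alpha_j$. If $w^{-1}\alpha_j\in\Phi^+$, then $-w^{-1}\alpha_j\in\Phi^-$, so deleting it from $w^{-1}\Phi^-$ does not affect the intersection with $\Phi^+$; this gives
\[
\Pi(s_jw)=\Pi(w)\cup\{w^{-1}\alpha_j\},
\]
and the union is disjoint because $\Pi(w)\subseteq w^{-1}\Phi^-$ while $w^{-1}\alpha_j\in w^{-1}\Phi^+$. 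Hence $\ell(s_jw)=\ell(w)+1$. If instead $w^{-1}\alpha_j\in\Phi^-$, then $w^{-1}\alpha_j\notin\Phi^+$ and $-w^{-1}\alpha_j\in\Phi^+\cap w^{-1}\Phi^-=\Pi(w)$, giving
\[
\Pi(s_jw)=\Pi(w)\setminus\{-w^{-1}\alpha_j\},
\]
so $\ell(s_jw)=\ell(w)-1$.

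Combining the two cases yields $|\ell(s_jw)-\ell(w)|=1$, which is (1), together with the biconditional and the explicit description of $\Pi(s_jw)$ in (2). The only nontrivial point is the auxiliary stability lemma for $s_j$ on $\Phi^+\setminus\{\alpha_j\}$ in the affine setting; everything else is bookkeeping on the set-theoretic identities above, so I do not anticipate any real obstacle.
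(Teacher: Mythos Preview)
Your proof is correct; the computation of $\Pi(s_jw)$ via the set identity $s_j\Phi^-=(\Phi^-\setminus\{-\alpha_j\})\cup\{\alpha_j\}$ is the standard argument. The paper itself does not give a proof but simply cites \cite[(2.2.8)]{Ma}, and your argument is essentially what one finds in that reference; the only point that deserves a bit more care is the verification that $s_0$ permutes $\Phi^+\setminus\{\alpha_0\}$ in the affine case (your justification in terms of ``the $\alpha_j$-component'' is cleaner if phrased via simple-root coordinates rather than the height $\ell$, since $s_0$ does change $\ell$), but the claim is true and standard.
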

\begin{proof}
See, e.g., \cite[(2.2.8)]{Ma}.
\end{proof}
The length $\ell(w)$ of $w\in W$ is the smallest nonnegative integer $\ell$ for which there exists an expression $w=s_{j_1}\cdots s_{j_\ell}$ of $w$ as product of $\ell$ simple reflections 
($0\leq j_i\leq r$).
Such shortest length expressions are called reduced expressions. If
$w=s_{j_1}\cdots s_{j_\ell}$ is a reduced expression then 
\begin{equation}\label{rootsdescription}
\Pi(w)=
\{b_1,\ldots,b_\ell\}\,\,\, \hbox{ with }\, b_m:=s_{j_\ell}\cdots s_{j_{m+1}}(\alpha_{j_m})\,\,\, (1\leq m<\ell),\quad b_\ell:=\alpha_{j_\ell}.
\end{equation}
Restricting $\ell$ to $W_0$  gives the length function of $W_0$ relative to $\{s_1,\ldots,s_r\}$. 
The following important length identity 
\begin{equation}\label{fundlength}
\ell(\tau(\mu)v)=\sum_{\alpha\in\Phi_0^+\cap v\Phi_0^-}|\alpha(\mu)-1|+\sum_{\alpha\in\Phi_0^+\cap v\Phi_0^+}|\alpha(\mu)|\qquad
(\mu\in Q^\vee,\, v\in W_0),
\end{equation}
is obtained by explicitly describing the affine roots in $\Pi(\tau(\mu)v)$ (see, e.g., \cite[(2.4.1)]{Ma}).

Fix a subset $J\subseteq [0,r]$. The associated parabolic subgroup $W_J\subseteq W$ is the subgroup of $W$ generated by 
$s_j$ ($j\in J$). Write $W^{J}$ for the set of elements $w\in W$ such that 
\[
\ell(ww^\prime)=\ell(w)+\ell(w^\prime)\qquad \forall\, w^\prime\in W_{J}.
\]
Write $\Phi_{J}:=\Phi\cap\textup{span}_{\mathbb{Z}}\{\alpha_j\}_{j\in J}$ and $\Phi_J^{\pm}:=\Phi^{\pm}\cap\Phi_J$, then 
\begin{equation}\label{Phipos}
W^{J}=\{w\in W \,\, | \,\, w\Phi_J^+\subseteq\Phi^+ \}.
\end{equation}
\begin{lemma}\label{cosetcomb}
Fix a subset $J\subseteq [0,r]$.
\begin{enumerate}
\item $W^J$ is a complete
set of representatives of $W/W_J$ \textup{(}the elements in $W^J$ are called the minimal coset representatives of $W/W_J$\textup{)}.
\item Let $w\in W^J$ and $0\leq j\leq r$. Then 
\[
s_jw\not\in W^J\,\,\,\Leftrightarrow\,\,\, s_jwW_J=wW_J,
\]
and then $s_jw=ws_{j^\prime}$ for some $j^\prime\in J$ \textup{(}in particular, 
$\ell(s_jw)=\ell(w)+1$\textup{)}.
\end{enumerate}
\end{lemma}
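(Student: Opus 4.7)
The plan is to deduce both parts from the two characterisations of $W^J$ — the length-additivity definition and the root-theoretic description $W^J=\{w\in W:w\Phi_J^+\subseteq\Phi^+\}$ of \eqref{Phipos} — combined with Lemma \ref{lengthadd} and its right-multiplication analogue $\ell(ws_j)=\ell(w)+1\Leftrightarrow w\alpha_j\in\Phi^+$, obtained by inversion from $\ell(w^{-1})=\ell(w)$.

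For part (1), I will first produce a representative of each coset $wW_J$ in $W^J$ by picking $w^\ast\in wW_J$ of minimal length: if $w^\ast\notin W^J$, then $w^\ast\alpha_j\in\Phi^-$ for some $j\in J$ by \eqref{Phipos}, so $\ell(w^\ast s_j)=\ell(w^\ast)-1$, contradicting minimality since $w^\ast s_j\in wW_J$. For uniqueness, if $w_1,w_2\in W^J$ lie in the same coset and $w_2=w_1u$ with $u\in W_J$, then the length-additivity characterisation applied to both $w_1u$ and to $w_2u^{-1}=w_1$ yields $\ell(w_2)=\ell(w_1)+\ell(u)$ and $\ell(w_1)=\ell(w_2)+\ell(u)$, forcing $\ell(u)=0$ and hence $w_1=w_2$.

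For part (2), the direction $s_jwW_J=wW_J\Rightarrow s_jw\notin W^J$ is immediate from part (1), since $w\in W^J$ is the unique element of $wW_J$ in $W^J$ and $s_jw\neq w$. Conversely, assume $s_jw\notin W^J$. By \eqref{Phipos} there exists $\beta\in\Phi_J^+$ with $s_jw\beta\in\Phi^-$; since $w\in W^J$ gives $w\beta\in\Phi^+$, and the simple reflection $s_j$ permutes $\Phi^+\setminus\{\alpha_j\}$, it follows that $w\beta=\alpha_j$. Therefore $s_jw=s_{w\beta}w=ws_\beta\in wW_J$, establishing the coset equality. To finish, the length-additivity characterisation of $W^J$ gives $\ell(ws_\beta)=\ell(w)+\ell(s_\beta)$, while Lemma \ref{lengthadd}(1) gives $\ell(s_jw)=\ell(w)\pm1$; comparing forces $\ell(s_\beta)=1$ and $\ell(s_jw)=\ell(w)+1$, so $s_\beta=s_{j^\prime}$ for some $j^\prime\in J$ and $s_jw=ws_{j^\prime}$.

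The only mildly delicate point is the standard Coxeter-theoretic fact that an element $\beta\in\Phi_J^+$ whose associated reflection $s_\beta$ has length one in $W_J$ must itself be simple for $W_J$; this holds because $W_J$ is a Coxeter group with simple generators $\{s_j\}_{j\in J}$, whose length-one elements are exactly these simple reflections. Everything else reduces to careful length bookkeeping, so I do not anticipate any substantive obstacle beyond cleanly invoking the right-multiplication version of Lemma \ref{lengthadd}.
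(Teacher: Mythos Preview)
Your proof is correct. The paper itself does not give a proof at all: it simply cites Bourbaki for part~(1) and Deodhar \cite[Lem.~3.1 \& Lem.~3.2]{D} for part~(2), so your argument is strictly more self-contained. Your route---minimality for existence, double application of length-additivity for uniqueness, and the root-theoretic characterisation \eqref{Phipos} for part~(2)---is exactly the standard direct argument that those references unpack.

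One small remark on your final paragraph: what you have actually shown is that $\ell(s_\beta)=1$ in $W$ (not in $W_J$), so the cleaner conclusion is that $s_\beta$ is a simple reflection $s_{j'}$ of $W$, whence $\beta=\alpha_{j'}$; since $\beta\in\Phi_J\subseteq\textup{span}_{\mathbb{Z}}\{\alpha_j\}_{j\in J}$ and the simple affine roots are linearly independent, this forces $j'\in J$. This avoids any appeal to the (true but separate) fact that $\ell_W$ restricts to $\ell_{W_J}$ on $W_J$.
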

\begin{proof}
(1) See, e.g., \cite{B}.\\
(2) This follows from \cite[Lem. 3.1 \& Lem. 3.2]{D}. 
\end{proof}
The stabiliser subgroup $W_\cc$ of $\cc\in\overline{C}_+$ is parabolic, 
\[
W_\cc=W_{\mathbf{J}(\cc)}
\]
with $\mathbf{J}(\cc)\subseteq [0,r]$ given by
\begin{equation}\label{Jc}
\mathbf{J}(\cc):=\{j\in [0,r] \,\, | \,\, s_j\cc=\cc\}.
\end{equation}
\begin{definition}\label{wydef}
For $y\in E$ we write
\begin{enumerate}
\item $\cc_y$ for the unique element in $\overline{C}_+$ such that $y\in W\cc_y$,
\item $w_y$ for the unique element in $W^{\mathbf{J}(\cc_y)}$ such that $w_y\cc_y=y$.
\item $\mu_y:=w_y(0)\in Q^\vee$ and $v_y:=w_y^{-1}\tau(\mu_y)\in W_0$. 
\end{enumerate}
\end{definition}

Fix $\nu\in Q^\vee$. Then we have $\cc_\nu=0$ and $\mu_\nu=\nu$. In particular, $w_\nu$ is the unique element in $\tau(\nu)W_0$ of minimal length and
\begin{equation}\label{lengthwmu}
\ell(w_\nu v)=\ell(w_\nu)+\ell(v)\qquad \forall\, v\in W_0.
\end{equation}
By \cite[\S 2.4]{Ma}, $v_\nu\in W_0$ is the unique element of minimal length in $W_0$ such that $v_\nu\nu=\nu^-$,
and by \eqref{fundlength} we have
\begin{equation}\label{wmuneg}
w_\nu=\tau(\nu)\qquad \forall\, \nu\in Q^{\vee}\cap\overline{E}_-.
\end{equation}
For $\nu=\varphi^\vee$ we have 
\[
w_{\varphi^\vee}=s_0,\qquad\quad v_{\varphi^\vee}=s_\varphi,
\]
and hence $\ell(\tau(\varphi^\vee))=1+\ell(v_{\varphi^\vee})$. In addition we have
\begin{equation}\label{thetalength}
\begin{split}
&\Pi(s_{\varphi})=\{\alpha\in\Phi^+_0 \,\, | \,\, \alpha(\varphi^\vee)=1\}\cup\{\varphi\},\\
&\Phi^+_0\setminus\Pi(s_\varphi)=\{\alpha\in\Phi^+_0 \,\, | \,\, \alpha(\varphi^\vee)=0\}.
\end{split}
\end{equation}
For details see, e.g., \cite[\S2.4]{Ma}.

The following proposition reduces to \cite[Thm. 1.4]{C} when $y\in Q^\vee$.
\begin{proposition}\label{Pidescription}
For $y\in E$ we have
\[
\Pi(w_y^{-1})=\{a\in\Phi^+ \,\,\, | \,\,\, a(y)<0\}.
\]
\end{proposition}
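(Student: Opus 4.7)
The plan is to work directly from the definition $\Pi(w_y^{-1}) = \Phi^+ \cap w_y\Phi^-$, i.e., $a \in \Pi(w_y^{-1})$ iff $a \in \Phi^+$ and $w_y^{-1}a \in \Phi^-$. The linchpin is the identity
\begin{equation*}
(w_y^{-1}a)(\cc_y) = a(w_y\cc_y) = a(y),
\end{equation*}
which follows from $w_y\cc_y = y$ and the contragredient action of $W$ on $E^* \oplus \R$. This reduces the claim to an equivalence, for $a \in \Phi^+$, between $w_y^{-1}a \in \Phi^-$ and $a(y) < 0$, which I would prove by two one-line sign arguments using $\cc_y \in \overline{C}_+$.

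For the inclusion $\Pi(w_y^{-1}) \subseteq \{a \in \Phi^+ : a(y) < 0\}$, I would take $a \in \Pi(w_y^{-1})$ and set $b := -w_y^{-1}a \in \Phi^+$; then $a(y) = -b(\cc_y) \leq 0$ since $b \in \Phi^+$ and $\cc_y \in \overline{C}_+$. To upgrade this to strict inequality, I would invoke the standard fact that if $b \in \Phi^+$ vanishes on $\cc_y$ then $b \in \Phi_{\mathbf{J}(\cc_y)}^+$; by \eqref{Phipos} and $w_y \in W^{\mathbf{J}(\cc_y)}$, this forces $w_yb \in \Phi^+$, contradicting $w_yb = -a \in \Phi^-$.

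For the reverse inclusion, I would take $a \in \Phi^+$ with $a(y) < 0$ and argue by contradiction: if $w_y^{-1}a \in \Phi^+$, then $(w_y^{-1}a)(\cc_y) \geq 0$, which contradicts $(w_y^{-1}a)(\cc_y) = a(y) < 0$. Hence $w_y^{-1}a \in \Phi^-$, i.e., $a \in \Pi(w_y^{-1})$.

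The only non-formal ingredient is the parabolic identification: positive affine roots vanishing at $\cc_y$ belong to $\Phi_{\mathbf{J}(\cc_y)}^+$. This is the main obstacle in the sense that it is the one place where external structure (namely, that $W_{\cc_y} = W_{\mathbf{J}(\cc_y)}$ is a reflection subgroup whose root subsystem consists precisely of the affine roots vanishing at $\cc_y$) is used; I would quote it from the standard theory of affine reflection groups (e.g.\ \cite{Hu}). Everything else is a direct consequence of the definitions and the defining property $w_y \in W^{\mathbf{J}(\cc_y)}$ of $w_y$ in Definition~\ref{wydef}.
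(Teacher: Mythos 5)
Your proof is correct and follows essentially the same route as the paper: both reduce to the identity $(w_y^{-1}a)(\cc_y)=a(y)$ together with $\cc_y\in\overline{C}_+$, and both use the minimality of $w_y$ to rule out the boundary case $a(y)=0$. The only (cosmetic) difference is in that boundary case: you pass through the parabolic root subsystem $\Phi_{\mathbf{J}(\cc_y)}^+$ and the characterisation \eqref{Phipos} of $W^{\mathbf{J}(\cc_y)}$, whereas the paper argues via lengths, noting $s_aw_y\in w_yW_{\cc_y}$ forces $\ell(w_y^{-1}s_a)>\ell(w_y^{-1})$ and hence $w_y^{-1}a\in\Phi^+$ by \cite[Prop.~5.7]{Hu} — two equivalent packagings of the same fact.
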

\begin{proof}
Let $a\in\Phi^+$ with $a(y)<0$. Then $(w_y^{-1}a)(\cc_y)=a(y)<0$ and $\cc_y\in\overline{C}_+$, 
so $w_y^{-1}a\in\Phi^-$. In particular, $a\in\Pi(w_y^{-1})$.

Conversely,
if $a\in\Pi(w_y^{-1})$ then $w_y^{-1}a\in\Phi^-$, hence $a(y)=(w_y^{-1}a)(\cc_y)\leq 0$. If $a(y)=0$ then $s_ay=y$, hence $s_aw_y\in w_yW_{\cc_y}$. But $w_y$ is the element of minimal length in 
$w_yW_{\cc_y}$, so $\ell(s_aw_y)>\ell(w_y)$ and hence $\ell(w_y^{-1}s_a)>\ell(w_y^{-1})$. 
This implies $w_y^{-1}a\in\Phi^+$ by \cite[Prop. 5.7]{Hu}, which contradicts 
the fact that $a\in\Pi(w_y^{-1})$. Hence $a(y)<0$, which completes the proof.
\end{proof}

\subsection{The Coxeter complex of $W$}\label{S24}
Let $\Sigma(\Phi)$ be the poset consisting of faces of the affine hyperplane arrangement
 $\{a^{-1}(0)\,\, | \,\, a\in\Phi\}$ in $E$. Concretely, 
  \[
 \Sigma(\Phi)=\{wC^J\,\, | \,\, J\subsetneq [0,r]\,\, \& \,\, w\in W^J\}
 \]
 with 
 \[
 C^J:=\{\cc\in \overline{C}_+ \,\, | \,\, \alpha_j(\cc)=0\,\,\, (j\in J)\,\, \& \,\, \alpha_j(\cc)>0\,\,\,
 (j\in [0,r]\setminus J)\}.
 \]
 Note that the faces are $E_{\textup{co}}$-translation invariant, $C^{[1,r]}=E_{\textup{co}}$ and 
 \begin{equation}\label{decompdis}
 E=\bigsqcup_{F\in\Sigma(\Phi)}F
 \end{equation}
 (disjoint union). 
 The poset structure on $\Sigma(\Phi)$ is defined in terms of inclusion of closures of faces: $F\leq F^\prime$ if $\overline{F}\subseteq\overline{F}^\prime$.
 The closure $C_J$ of $C^J$ is given by
 \begin{equation}\label{closureCJ}
  C_J=\bigsqcup_{J\subseteq K\subsetneq [0,r]}C^K
 \end{equation}
 (disjoint union),  so that $C^{J^\prime}\leq C^J$ iff $J\subseteq J^\prime\subsetneq [0,r]$. 
 
 We obtain an abstract simplicial complex by adding $\emptyset$ to $\Sigma(\Phi)$. It is isomorphic to the Coxeter complex 
 \[
 \{wW_J\,\, | \,\, J\subseteq [0,r]\,\, \& \,\, w\in W\},
 \]
 partially ordered by opposite inclusion.
 
 For a set $X$ write $\mathcal{F}_\Sigma(E,X)$ for the set of functions $E\rightarrow X$ which 
 are constant on faces. It canonically identifies
 with the space $\mathcal{F}(\Sigma(\Phi),X)$ of functions $\Sigma(\Phi)\rightarrow X$.
 The following lemma provides a large class of examples when $X=\mathbb{R}$.
\begin{lemma}\label{faceconstantlem}
Suppose that $g:\mathbb{R}\rightarrow\mathbb{R}$ is a function such that $g|_{(m,m+1)}$ is constant for all $m\in\mathbb{Z}$. Let $a\in\Phi$. Then
\begin{equation}\label{thefunction}
y\mapsto g(a(y))\qquad (y\in E)
\end{equation}
lies in $\mathcal{F}_\Sigma(E,\mathbb{R})$.
\end{lemma}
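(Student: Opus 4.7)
The plan is to exploit the fact that the hypothesis on $g$ (constant on each open unit interval $(m,m+1)$) matches up precisely with the conditions that define faces of the affine hyperplane arrangement $\{b^{-1}(0)\}_{b \in \Phi}$. Write $a=(\alpha,\ell)$ with $\alpha\in\Phi_0$ and $\ell\in\mathbb{Z}$. The key observation I would highlight is that for every $m\in\mathbb{Z}$ the affine linear function $y\mapsto a(y)-m = \alpha(y)+(\ell-m)$ is itself an affine root $(\alpha,\ell-m)\in\Phi$. Therefore each level set $\{y\in E\mid a(y)=m\}$ is one of the affine hyperplanes cutting out the faces of $\Sigma(\Phi)$.

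Fix a face $F\in\Sigma(\Phi)$ and let $y\in F$. By the definition of $F$ as a face of the arrangement, for every $b\in\Phi$ the sign $\mathrm{sgn}(b(y))\in\{-,0,+\}$ is constant as $y$ ranges over $F$. Applied to $b=(\alpha,\ell-m)$ for every integer $m$, this tells us that the sign of $a(y)-m$ is the same for all $y\in F$ and all $m\in\mathbb{Z}$.

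The proof then splits into two cases. If there exists some $m_0\in\mathbb{Z}$ with $a(y)-m_0=0$ for all $y\in F$, then $a\equiv m_0$ on $F$, and $g\circ a\equiv g(m_0)$ on $F$. Otherwise $a(y)-m\neq 0$ on $F$ for every $m$, and since $F$ is convex (hence connected) and $a$ is continuous, $a(F)$ is a connected subset of $\mathbb{R}$ avoiding $\mathbb{Z}$; thus $a(F)\subseteq(m_0,m_0+1)$ for some $m_0\in\mathbb{Z}$, on which $g$ is constant by hypothesis. In either case $g\circ a$ is constant on $F$, so $y\mapsto g(a(y))$ lies in $\mathcal{F}_\Sigma(E,\mathbb{R})$.

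There is no real obstacle here: the whole argument rests on the single translation-of-integer trick that rewrites $a(y)-m$ as an element of $\Phi$, after which the face condition does all the work. The only thing worth being careful about is that faces of an affine arrangement are convex (hence connected), so that the image $a(F)$ is forced to be an interval in the non-constant case.
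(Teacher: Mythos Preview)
Your proof is correct and takes a somewhat different, cleaner route than the paper. The paper first reduces to checking constancy on the faces $C^J$ of the closed fundamental alcove (implicitly using that $\Phi$ is $W$-invariant and $W$ permutes the faces), then performs an explicit case analysis: since $0\le Da(y)\le 1$ on $\overline{C}_+$, the condition $a(y)\in\mathbb{Z}$ forces $Da(y)\in\{0,1\}$, which in turn forces $Da\in\mathrm{span}\{D\alpha_j\}_{j\in J}$, so $a$ is constant on $C^J$; otherwise $a(C^J)$ lies in some open unit interval by convexity. Your argument bypasses the reduction to the fundamental alcove entirely: the translation trick $a(y)-m=(\alpha,\ell-m)(y)\in\Phi$ lets you invoke directly the defining property of a face (constant sign pattern with respect to all affine roots) on an arbitrary $F\in\Sigma(\Phi)$, and then convexity finishes as in the paper. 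Your approach is more uniform and exposes the underlying reason more transparently; the paper's approach, on the other hand, yields the slightly sharper explicit information about how $a|_{C^J}$ behaves in terms of $J$, which is reused later (e.g.\ in the description of $\eta_J$).
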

\begin{proof}
Let $a\in\Phi$ with $Da\in\Phi_0^+$. It suffices to check that \eqref{thefunction} is constant on $C^J$ ($J\subsetneq [0,r]$). 
Fix $y\in C^J\subseteq\overline{C}_+$. Note that $0\leq Da(y)\leq 1$ by \eqref{Cplus}. 

Suppose that $a(y)\in\mathbb{Z}$. If $Da(y)=0$ then $Da\in\Phi_0^+\cap \Phi_J$, hence $Da\in\textup{span}_{\mathbb{R}}\{D\alpha_j\}_{j\in J}$. If $Da(y)=1$ then necessarily $\varphi(y)=1$, i.e., $0\in J$. It again follows that $Da\in\textup{span}\{D\alpha_j\}_{j\in J}$. In particular, in both cases $a\vert_{C^J}$ is constant.

If $a(y)\not\in\mathbb{Z}$ then $Da\not\in\textup{span}\{D\alpha_j\}_{j\in J}$ and hence $a(y^\prime)\not\in\mathbb{Z}$ for all $y^\prime\in C^J$. Since $C^J$ is convex and $a$ is an affine linear function on $E$, this forces $a\vert_{C^J}$ to take values in an interval $(m,m+1)$ for some $m\in\mathbb{Z}$.
\end{proof}
 In case $X=W$ we have the following example.
 \begin{lemma}\label{facefunction}
 The function $y\mapsto w_y$ lies in $\mathcal{F}_\Sigma(E,W)$.
 \end{lemma}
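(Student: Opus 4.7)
The plan is to read the lemma off from the unique presentation of faces described in the paragraph before \eqref{closureCJ}. Recall that
\[
\Sigma(\Phi)=\{wC^J\,\, | \,\, J\subsetneq [0,r]\,\, \& \,\, w\in W^J\}.
\]
First I would verify that this presentation of a face is unique. Suppose $wC^J=w^\prime C^{J^\prime}$. Picking $c\in C^J$ and writing $wc=w^\prime c^\prime$ for some $c^\prime\in C^{J^\prime}$, the second decomposition in \eqref{WorbitE} forces $c=c^\prime$ (since $c,c^\prime\in\overline{C}_+\cap Wy$ for $y=wc$). Combined with the partition \eqref{closureCJ} of $\overline{C}_+$ into the $C^K$, this gives $J=J^\prime$. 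Repeating the argument for every $c\in C^J$ shows that $w^{-1}w^\prime$ fixes $C^J$ pointwise, hence lies in $W_c=W_{\mathbf{J}(c)}=W_J$ by \eqref{Jc}. Since $w,w^\prime\in W^J$, Lemma~\ref{cosetcomb}(1) then yields $w=w^\prime$.

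Next I would fix a face $F=wC^J$ and an arbitrary point $y\in F$, and write $y=wc$ with $c\in C^J$. Because $c\in\overline{C}_+\cap Wy$ and $\overline{C}_+$ is a fundamental domain for $W$ (cf.\ \eqref{WorbitE}), we get $\cc_y=c$; and because $c\in C^J$, we have $\mathbf{J}(\cc_y)=\mathbf{J}(c)=J$ by the definition of $C^J$. Thus $w\in W^J=W^{\mathbf{J}(\cc_y)}$ and $w\cc_y=wc=y$, so by the uniqueness clause in Definition~\ref{wydef}(2) we obtain $w_y=w$.

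Since the element $w$ on the right-hand side depends only on the face $F$ and not on the particular $y\in F$, the function $y\mapsto w_y$ is constant on each face of $\Sigma(\Phi)$. By \eqref{decompdis} the faces partition $E$, so $y\mapsto w_y$ lies in $\mathcal{F}_\Sigma(E,W)$, as required. The only step requiring any genuine thought is the uniqueness of the presentation $F=wC^J$ with $w\in W^J$; once that is in hand, the identification $w_y=w$ is forced by the defining uniqueness of $w_y$, and the rest is bookkeeping.
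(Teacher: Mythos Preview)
Your proof is correct and follows essentially the same approach as the paper's. The paper's version is more terse: it fixes $F=wC^J$, notes that $\cc_y\in C^J$ for $y\in F$ (hence $w_y\in W^J$), observes $F=w_yC^J$, and concludes $w_y=w$. The implicit final step---deducing $w_y=w$ from $wC^J=w_yC^J$ with $w,w_y\in W^J$---relies on exactly the uniqueness of the presentation that you took care to spell out, so your version is a more explicit rendering of the same argument.
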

 \begin{proof}
Fix a face $F=wC^J$ ($J\subsetneq [0,r]$, $w\in W^J$). For $y\in F$ we have $\cc_y\in C^J$, hence $w_y\in W^J$. Furthermore, $F=w_yC^J$ since $y=w_y\cc_y$. Consequently $w_y=w$.
\end{proof}
\subsection{The double affine Weyl group}\label{S25}
The dual affine root system is 
\[
\Phi^\vee:=\{a^\vee\,\, | \,\, a\in\Phi\}\subset E\oplus\mathbb{R},
\]
with $a^\vee$ the affine coroot
\begin{equation}\label{acoroot}
a^\vee:=(\alpha^\vee,2\ell/\|\alpha\|^2)\qquad\quad (a=(\alpha,\ell)\in\Phi).
\end{equation}
Note that $2/\|\alpha\|^2\in\mathbb{Z}_{>0}$ ($\alpha\in\Phi_0$) by our convention on the length of the roots, and 
\[
\alpha_0^\vee=
m^2K-(\varphi^\vee,0)
\] 
with $K:=(0,1)\in E\oplus\mathbb{Z}$, in view of the convention that long roots have squared norm $2/m^2$. Hence $\Phi^\vee$ generates the affine coroot lattice
\[
\widehat{Q}^\vee:=Q^\vee\oplus m^2\mathbb{Z}K\subset E\oplus\mathbb{R},
\]
and the simple coroots $\alpha_j^\vee$ ($0\leq j\leq r$) form a basis of $\widehat{Q}^\vee$. 

The affine Weyl group $W$ acts linearly on $E\oplus\mathbb{R}$ by
\begin{equation}\label{linearactionE}
v\cdot(y,d):=(vy,d),\qquad \tau(\mu)\cdot (y,d)=(y,d-\langle \mu,y\rangle)
\end{equation}
for $v\in W_0$, $\mu\in Q^\vee$ and $(y,d)\in E\oplus\mathbb{R}$. We denote the action \eqref{linearactionE} with a dot to avoid confusion with the $W$-action on $E$ by translations and reflections. Note that the linear isomorphism $E\oplus\mathbb{R}\overset{\sim}{\longrightarrow} E^*\oplus\mathbb{R}$, $(y,d)\mapsto (\langle y,\cdot\rangle,d)$,
intertwines the $W$-actions \eqref{linearaction} and \eqref{linearactionE}.  

For $a\in\Phi$ and $w\in W$ we have
\begin{equation}
\label{compatibilities}
s_a\cdot(y,d)=(y,d)-Da(y)a^\vee,\qquad\quad w\cdot a^\vee=(wa)^\vee.
\end{equation}
In particular, $\Phi^\vee$ and $\widehat{Q}^\vee$ are $W$-invariant. Explicitly, $w\cdot K=K$ for all $w\in W$ and 
\[
v\cdot\mu=v\mu,\qquad \tau(\nu)\cdot \mu=\mu-\langle\nu,\mu\rangle K
\]
for $v\in W_0$ and $\mu,\nu\in Q^\vee$.
\begin{definition} 
The double affine Weyl group is the semidirect product 
\begin{equation}\label{dawg}
\mathbb{W}:=W\ltimes\widehat{Q}^\vee.
\end{equation}
\end{definition}
The action \eqref{linearactionE} of $W$ on $E\oplus\mathbb{R}$ then extends to a $\mathbb{W}$-action by
\begin{equation}\label{laext}
(\mu+\ell K)\cdot (y,d):=(y+\mu,d+\ell)
\end{equation}
for $\mu\in Q^\vee$, $\ell\in m^2\mathbb{Z}$, $y\in E$ and $d\in\mathbb{R}$ (which no longer is a linear action). Note that $m^2K$ generates the center of $\mathbb{W}$. 

We next consider, for a special class of finitely generated abelian subgroups $\widehat{\Lambda}$ in $E\oplus\mathbb{R}$ containing $\widehat{Q}^\vee$, the $\mathbf{F}$-linear extension of the $\mathbb{W}$-action \eqref{linearactionE} and \eqref{laext} on $\widehat{\Lambda}$ to the group algebra $\mathcal{P}_{\widehat{\Lambda}}=\bigoplus_{\widehat{\lambda}\in\widehat{\Lambda}}\mathbf{F}x^{\widehat{\lambda}}$ of $\widehat{\Lambda}$ over $\mathbf{F}$ (see Subsection \ref{ConvSection}).
\begin{definition}\label{latticeconddef}
Denote by $\mathcal{L}$ the set of finitely generated abelian subgroups $\Lambda\subset E$ satisfying
\begin{equation}\label{latticecond}
Q^\vee\subseteq\Lambda\quad \&\quad \alpha(\Lambda)\subseteq\mathbb{Z}\quad \forall\, \alpha\in\Phi_0.
\end{equation}
\end{definition}
Note that if $\Lambda\in\mathcal{L}$ then $L$ is $W_0$-invariant. Furthermore $\Lambda\cap E^\prime,\textup{pr}_{E^\prime}(\Lambda)\in\mathcal{L}$ and 
\[
Q^\vee\subseteq \Lambda\cap E^\prime\subseteq\textup{pr}_{E^\prime}(\Lambda)\subseteq P^\vee.
\]
As a consequence, we have 
\begin{corollary}\label{ilacor}
For $\Lambda\in\mathcal{L}$ we have a group homomorphism
\begin{equation}\label{invlimitalt}
j_\Lambda: T_{P^\vee}\rightarrow T_\Lambda,\qquad j_\Lambda(t)^\lambda:=t^{\textup{pr}_{E^\prime}(\lambda)}\quad (\lambda\in\Lambda).
\end{equation}
\end{corollary}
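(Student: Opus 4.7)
The plan is to show that the formula defining $j_\Lambda(t)^\lambda$ makes sense and that $j_\Lambda$ respects the group structure on both sides. Everything hinges on the observation that the orthogonal projection $\textup{pr}_{E^\prime}$ maps $\Lambda$ into $P^\vee$, after which the remaining verifications are immediate.

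First, I would establish the key inclusion $\textup{pr}_{E^\prime}(\Lambda)\subseteq P^\vee$. Fix $\lambda\in\Lambda$ and $\alpha\in\Phi_0$. Because $E_{\textup{co}}=\bigcap_{\beta\in\Phi_0}\textup{Ker}(\beta)$ is annihilated by every root, and $E=E^\prime\oplus E_{\textup{co}}$ is an orthogonal decomposition, one has
\[
\alpha\bigl(\textup{pr}_{E^\prime}(\lambda)\bigr)=\alpha(\lambda)-\alpha\bigl(\textup{pr}_{E_{\textup{co}}}(\lambda)\bigr)=\alpha(\lambda)\in\mathbb{Z},
\]
where the last step uses the defining condition $\alpha(\Lambda)\subseteq\mathbb{Z}$ from \eqref{latticecond}. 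Since $\textup{pr}_{E^\prime}(\lambda)\in E^\prime$ and pairs integrally with every root, it lies in $P^\vee$ by definition of the co-weight lattice.

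Next, I would use this to verify that $j_\Lambda(t)$ is a well-defined element of $T_\Lambda=\textup{Hom}(\Lambda,\mathbf{F}^\times)$ for each $t\in T_{P^\vee}$. The expression $t^{\textup{pr}_{E^\prime}(\lambda)}$ makes sense since $\textup{pr}_{E^\prime}(\lambda)\in P^\vee$. The map $\lambda\mapsto\textup{pr}_{E^\prime}(\lambda)$ is $\mathbb{Z}$-linear (being the restriction of an $\mathbb{R}$-linear projection) and $t:P^\vee\to\mathbf{F}^\times$ is a character, so their composition
\[
\lambda\longmapsto t^{\textup{pr}_{E^\prime}(\lambda)}
\]
is a group homomorphism $\Lambda\to\mathbf{F}^\times$, i.e., an element of $T_\Lambda$.

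Finally, I would check that $j_\Lambda$ itself is a group homomorphism. For $t,t^\prime\in T_{P^\vee}$ and $\lambda\in\Lambda$,
\[
j_\Lambda(tt^\prime)^\lambda=(tt^\prime)^{\textup{pr}_{E^\prime}(\lambda)}=t^{\textup{pr}_{E^\prime}(\lambda)}\,(t^\prime)^{\textup{pr}_{E^\prime}(\lambda)}=j_\Lambda(t)^\lambda\,j_\Lambda(t^\prime)^\lambda=\bigl(j_\Lambda(t)j_\Lambda(t^\prime)\bigr)^\lambda,
\]
so $j_\Lambda(tt^\prime)=j_\Lambda(t)j_\Lambda(t^\prime)$. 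There is no serious obstacle here; the only substantive point is the inclusion $\textup{pr}_{E^\prime}(\Lambda)\subseteq P^\vee$, which is the reason the hypothesis $\alpha(\Lambda)\subseteq\mathbb{Z}$ was built into the definition of $\mathcal{L}$ in the first place.
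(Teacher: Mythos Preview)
Your proof is correct and follows essentially the same approach as the paper: the paper records the inclusion $\textup{pr}_{E^\prime}(\Lambda)\subseteq P^\vee$ immediately before the corollary and presents the corollary as a direct consequence, without spelling out the remaining routine verifications. You have simply made explicit the straightforward checks that the paper leaves implicit.
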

The following proposition is useful for writing co-weights as $\mathbb{Q}$-linear combination of co-roots.  
Recall that $h$ is the Coxeter number of $W$. 
\begin{proposition}\label{latticecompatibilityprop}
For all $y\in E$ we have
\begin{equation}\label{relationinvform}
\frac{1}{h}\sum_{\alpha\in\Phi_0^+}\alpha(y)\alpha^\vee=\textup{pr}_{E^\prime}(y).
\end{equation}
In particular, $\textup{pr}_{E^\prime}(\Lambda)\subseteq P^\vee\subseteq \frac{1}{h}Q^\vee$ for all $\Lambda\in\mathcal{L}$.
\end{proposition}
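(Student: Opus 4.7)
The plan is to reduce the identity to a standard Schur-plus-trace argument on the reflection representation, and then harvest the particular claims as easy consequences.

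First I would observe that both sides of \eqref{relationinvform} are linear in $y$ and vanish on $E_{\textup{co}}$ (since $\alpha|_{E_{\textup{co}}} \equiv 0$ for every $\alpha \in \Phi_0$, and $\textup{pr}_{E^\prime}|_{E_{\textup{co}}} = 0$), so it suffices to prove the formula on $E^\prime$, where $\textup{pr}_{E^\prime}$ acts as the identity. Next, I would introduce the linear operator $T: E^\prime \to E^\prime$ given by $T(y) := \sum_{\alpha \in \Phi_0^+}\alpha(y)\alpha^\vee$, and rewrite it as $\tfrac{1}{2}\sum_{\alpha \in \Phi_0}\alpha(y)\alpha^\vee$ using the symmetry $\alpha \leftrightarrow -\alpha$ (combined with $(-\alpha)^\vee = -\alpha^\vee$). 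Since $v \cdot \alpha^\vee = (v\alpha)^\vee$ for $v \in W_0$ and $W_0$ permutes $\Phi_0$, this shows $T$ is $W_0$-equivariant.

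The crux is then Schur's lemma. Since $\Phi_0$ is irreducible, the reflection representation $E^\prime$ of $W_0$ is absolutely irreducible (a standard fact about real reflection groups), so $T = c \cdot \textup{id}_{E^\prime}$ for some scalar $c$. I would compute $c$ by taking the trace: each rank-one operator $y \mapsto \alpha(y)\alpha^\vee$ has trace equal to the pairing $\alpha(\alpha^\vee) = 2$, hence $\textup{tr}(T) = 2|\Phi_0^+| = |\Phi_0| = rh$ by the classical identity relating the order of a reduced irreducible root system to its rank and its Coxeter number. Dividing by $\dim E^\prime = r$ gives $c = h$, which establishes \eqref{relationinvform}.

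For the ``in particular'' statement, the two inclusions fall out immediately. Given $\lambda \in \Lambda \in \mathcal{L}$ and $\alpha \in \Phi_0$, the orthogonality relation $\alpha(\textup{pr}_{E^\prime}(\lambda)) = \alpha(\lambda) \in \mathbb{Z}$ from \eqref{latticecond} shows $\textup{pr}_{E^\prime}(\lambda) \in P^\vee$. Conversely, for $\lambda \in P^\vee \subset E^\prime$, applying \eqref{relationinvform} directly expresses $h\lambda$ as an integer linear combination of simple coroots, i.e., $\lambda \in h^{-1}Q^\vee$. I expect no real obstacle here: the only nontrivial inputs beyond routine linear algebra are absolute irreducibility of the reflection representation and the identity $|\Phi_0| = rh$, both of which are classical (Humphreys, \emph{Reflection groups and Coxeter groups}).
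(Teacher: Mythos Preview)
Your proof is correct and follows essentially the same approach as the paper: reduce to $E^\prime$, apply Schur's lemma to the irreducible $W_0$-module $E^\prime$, and extract the scalar via the identity $|\Phi_0|=rh$. The paper's proof is a two-line sketch citing Steinberg for the Coxeter number identity, whereas you have spelled out the $W_0$-equivariance and the trace computation; one small wording slip is that \eqref{relationinvform} expresses $h\lambda$ as an integer combination of \emph{coroots} rather than simple coroots, but since each $\alpha^\vee\in Q^\vee$ the conclusion $h\lambda\in Q^\vee$ is unaffected.
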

\begin{proof}
It suffices to prove \eqref{relationinvform} for $y\in E^\prime$. It then follows by applying Schur's lemma to the irreducible $W_0$-module $E^\prime$ and using the fact that $h=\#\Phi_0/r$, see \cite[Thm. 1.1 \& Lem. 7.2]{Ste}. The second statement is immediate.
\end{proof}

For $\Lambda\in\mathcal{L}$ consider the lattice
\[
\widehat{\Lambda}:=\Lambda\oplus\mathbb{Z}K\subset E\oplus\mathbb{R}
\]
containing $\widehat{Q}^\vee$. Note that $\widehat{\Lambda}$ is $\mathbb{W}$-invariant.
Set
\[
\mathbf{q}:=x^K\in\mathcal{P}_{\widehat{\Lambda}},
\]
then $\mathcal{P}_{\widehat{\Lambda}}$ can be alternatively viewed as the group algebra of $\Lambda$ over the Laurent polynomial ring $\mathbf{F}[\mathbf{q}^{\pm 1}]$,
\[
\mathcal{P}_{\widehat{\Lambda}}=\bigoplus_{\lambda\in\Lambda}\mathbf{F}[\mathbf{q}^{\pm 1}]x^\lambda.
\]
We now immediately obtain the following result.
\begin{proposition}\label{dWprop}
Let $\Lambda\in\mathcal{L}$. The $\mathbf{F}$-linear extension of the $\mathbb{W}$-action \eqref{linearactionE} and \eqref{laext} on $\widehat{\Lambda}$ to $\mathcal{P}_{\widehat{\Lambda}}$
is $\mathbf{F}[\mathbf{q}^{\pm 1}]$-linear. Furthermore, for $\lambda\in\Lambda$ we have
\begin{equation}\label{doubleWaction}
\begin{split}
\ell K(x^\lambda)&:=\mathbf{q}^\ell x^\lambda\,\,\, (\ell\in m^2\mathbb{Z}),\qquad\quad
\mu(x^\lambda):=x^{\lambda+\mu}\qquad\,\, (\mu\in Q^\vee),\\
v(x^\lambda)&:=x^{v\lambda}\quad (v\in W_0),\qquad
\tau(\nu)(x^\lambda):=\mathbf{q}^{-\langle\nu,\lambda\rangle}x^{\lambda}\quad (\nu\in Q^\vee),
\end{split}
\end{equation}
and for 
$w\in W$, $\widehat{\lambda}\in\widehat{\Lambda}$ and $a\in\Phi$ we have
\begin{equation}\label{actx}
w(x^{\widehat{\lambda}})=x^{w\cdot\widehat{\lambda}},\qquad\quad w(x^{a^\vee})=x^{(wa)^\vee}.
\end{equation}
\end{proposition}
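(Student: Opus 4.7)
The proposition is essentially a bookkeeping verification that the $\mathbb{W}$-action on $E\oplus\mathbb{R}$ restricts to $\widehat{\Lambda}$ and descends to a group algebra action with the stated properties. My plan is to first check $\mathbb{W}$-stability of $\widehat{\Lambda}$, then verify $\mathbf{F}[\mathbf{q}^{\pm 1}]$-linearity, and finally read off the explicit formulas.

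First I would check that $\widehat{\Lambda}=\Lambda\oplus\mathbb{Z}K$ is $\mathbb{W}$-stable. For $v\in W_0$ acting via \eqref{linearactionE}, the vector $v\lambda$ lies in $\Lambda$ since $s_\alpha\lambda=\lambda-\alpha(\lambda)\alpha^\vee\in\Lambda$ by the conditions $\alpha(\Lambda)\subseteq\mathbb{Z}$ and $Q^\vee\subseteq\Lambda$ from Definition \ref{latticeconddef}. For $\tau(\nu)$ with $\nu\in Q^\vee$, I need $\langle\nu,\lambda\rangle\in\mathbb{Z}$ for all $\lambda\in\Lambda$; this follows from writing $\nu=\sum_i c_i\alpha_i^\vee$ with $c_i\in\mathbb{Z}$ and using $\langle\alpha_i^\vee,\lambda\rangle=\tfrac{2}{\|\alpha_i\|^2}\alpha_i(\lambda)$, together with the fact that $2/\|\alpha_i\|^2$ is a positive integer (namely $m^2$ times the long-to-$\alpha_i$ length-squared ratio) because of the normalization convention on $\Phi_0$. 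Stability under the action \eqref{laext} of $\widehat{Q}^\vee$ is immediate since $Q^\vee\subseteq\Lambda$ and $m^2\mathbb{Z}\subseteq\mathbb{Z}$.

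Next I would verify $\mathbf{F}[\mathbf{q}^{\pm 1}]$-linearity of the $\mathbf{F}$-linear extension. Since $\mathbf{q}=x^K$ and monomials are a basis of $\mathcal{P}_{\widehat{\Lambda}}$, it suffices to show that the $\mathbb{W}$-action on $\widehat{\Lambda}$ commutes with translation by $K$, i.e., $w\cdot(\widehat{\lambda}+K)=w\cdot\widehat{\lambda}+K$ for every $w\in\mathbb{W}$ and $\widehat{\lambda}\in\widehat{\Lambda}$. For $v\in W_0$ this is because $v$ acts linearly and $v\cdot K=K$; for $\tau(\nu)$ it is because the $K$-component shift $-\langle\nu,\lambda\rangle$ in \eqref{linearactionE} depends only on the $E$-part of $\widehat{\lambda}$; and for $\mu+\ell K\in\widehat{Q}^\vee$ acting via \eqref{laext}, both the $E$-shift $\mu$ and the $K$-shift $\ell$ are independent of the input, so they commute with adding $K$.

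Finally, the formulas in \eqref{doubleWaction} are just \eqref{linearactionE} and \eqref{laext} evaluated on the element $(\lambda,0)\in\widehat{\Lambda}$, translated into the multiplicative notation: $v\cdot(\lambda,0)=(v\lambda,0)$, $\tau(\nu)\cdot(\lambda,0)=(\lambda,-\langle\nu,\lambda\rangle)=\lambda-\langle\nu,\lambda\rangle K$, $\mu\cdot(\lambda,0)=(\lambda+\mu,0)$, and $K\cdot(\lambda,0)=(\lambda,1)=\lambda+K$. The first identity in \eqref{actx} is tautological from $x^{\widehat{\lambda}}\mapsto x^{w\cdot\widehat{\lambda}}$, while the second is the $W$-equivariance $w\cdot a^\vee=(wa)^\vee$ already recorded in \eqref{compatibilities}. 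No step is a genuine obstacle; the only point that requires a moment of care is the integrality check $\langle Q^\vee,\Lambda\rangle\subseteq\mathbb{Z}$ ensuring that translations $\tau(\nu)$ preserve $\widehat{\Lambda}$, and this is where the normalisation of long roots to squared length $2/m^2$ is used.
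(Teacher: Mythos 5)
Your proof is correct, and it matches the paper's approach: the paper states Proposition \ref{dWprop} as an immediate consequence of \eqref{linearactionE}, \eqref{laext} and \eqref{compatibilities} without written proof, and your verification (stability of $\widehat{\Lambda}$, using $2/\|\alpha\|^2\in\mathbb{Z}$ for the integrality $\langle Q^\vee,\Lambda\rangle\subseteq\mathbb{Z}$, commutation with the $K$-shift for $\mathbf{F}[\mathbf{q}^{\pm 1}]$-linearity, and reading off the formulas) is exactly the routine check the paper leaves to the reader.
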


Without much loss of generality one may absorb $\mathbf{q}$ into the ground field $\mathbf{F}$ (by replacing $\mathbf{F}$ by $\mathbf{F}(\mathbf{q})$). This is convenient for our purposes, since it allows to interpret
\eqref{doubleWaction} as an action by $q$-difference reflection operators on the $\mathbf{F}$-torus $T_\Lambda$. We discuss it in detail in the following subsection.

\subsection{Algebras of $q$-difference reflection operators}\label{ap}
Recall that we fixed a parameter $q\in\mathbf{F}^\times$ from the outset, which is assumed not to be a root of unity (see Subsection \ref{ConvSection}).  

Let $\Lambda\in\mathcal{L}$. The $\mathbb{W}$-action on $\mathcal{P}_{\widehat{\Lambda}}$ from Proposition \ref{dWprop} descends to a $\mathbb{W}$-action on the polynomial algebra $\mathcal{P}_\Lambda$ via the specialisation map
\[
\mathcal{P}_{\widehat{\Lambda}}\twoheadrightarrow\mathcal{P}_\Lambda, \qquad \sum_{\lambda\in\Lambda}d_\lambda(\mathbf{q})x^\lambda\mapsto
\sum_{\lambda\in\Lambda}d_\lambda(q)x^\lambda\qquad\quad (d_\lambda(\mathbf{q})\in\mathbf{F}[\mathbf{q}^{\pm 1}]).
\]
The resulting action of the affine Weyl group $W$ on $\mathcal{P}_\Lambda$ is by algebra homomorphisms,
\begin{equation}\label{xyW}
v(x^{\lambda}):=x^{v\lambda},\qquad \tau(\mu)(x^{\lambda}):=q^{-\langle\mu,\lambda\rangle}x^\lambda\qquad (v\in W_0,\, \mu\in Q^\vee),
\end{equation}
while $\widehat{Q}^\vee$ acts by $(\mu+\ell K)(x^\lambda)=q^\ell q^{\lambda+\mu}$ for $\mu\in Q^\vee$ and $\ell\in m^2\mathbb{Z}$.
The resulting action of the group algebra $\mathbf{F}[\mathbb{W}]$ of $\mathbb{W}$ over $\mathbf{F}$ on $\mathcal{P}_\Lambda$ descends to an action of
the quotient algebra $\mathbf{F}[\mathbb{W}]/(m^2K-q^{m^2})$.

Denote the smashed product algebra relative to the $W$-action \eqref{xyW} on $\mathcal{P}_\Lambda$ by $W\ltimes\mathcal{P}_\Lambda$, and write
$\mathcal{P}:=\mathcal{P}_{Q^\vee}$.
Then
\begin{equation}\label{isodW}
\mathbf{F}[\mathbb{W}]/(m^2K-q^{m^2})\overset{\sim}{\longrightarrow}W\ltimes\mathcal{P}
\end{equation}
as $\mathbf{F}$-algebras by $w\mapsto w$ ($w\in W$) and $\mu\mapsto x^\mu$ ($\mu\in Q^\vee$). Hence $\mathcal{P}_\Lambda$ becomes a $W\ltimes\mathcal{P}$-module.

Viewing $\mathcal{P}_\Lambda$ as the regular $\mathbf{F}$-valued functions on $T_\Lambda$, the action \eqref{xyW} can be interpreted as an action by $q$-difference reflection operators, i.e., it is the contragredient action of a $W$-action on $T_\Lambda$ by $q$-dilations and reflections. We will now describe this $W$-action on $T_\Lambda$ in detail, and extend it to an appropriate inverse system of tori. 

Let $\Lambda\in\mathcal{L}$. For $\xi\in Q$ and $z\in\mathbf{F}^\times$ we denote by $z^\xi\in T_{\Lambda}$ the multiplicative character
\begin{equation}\label{zxi}
\lambda\mapsto z^{\xi(\lambda)}\qquad (\lambda\in\Lambda).
\end{equation}
Note that $z^\xi\vert_\Lambda=j_\Lambda(z^\xi\vert_{P^\vee})$
since $\xi(\lambda)=\xi(\textup{pr}_{E^\prime}(\lambda))$ for $\xi\in Q$.

We have the following special case of this construction. Our assumption on the normalisation of the root system $\Phi_0$ allows to view $Q^\vee$ as a sub-lattice of $Q$ via the $W_0$-equivariant isomorphism 
\[
E\overset{\sim}{\longrightarrow} E^*,\qquad y\mapsto \langle y,\cdot\rangle.
\]
More precisely, since $\|\varphi\|^2=2/m^2$ ($m\in\mathbb{Z}_{>0}$) the co-root lattice $Q^\vee$ identifies with a sub-lattice of $mQ$.
Then we obtain the multiplicative characters $z^\mu\in T_{\Lambda}$ ($\mu\in Q^\vee$) satisfying $\lambda\mapsto z^{\langle\lambda,\mu\rangle}$
($\lambda\in\Lambda$). In particular, for $\mu\in Q^\vee$ we have the multiplicative character $q^\mu\in T_{\Lambda}$ mapping $\lambda\in\Lambda$ to $q^{\langle\mu,\lambda\rangle}$. Note that $vz^\xi=z^{v\xi}$ and $vz^\mu=z^{v\mu}$ for $v\in W_0$, $\xi\in Q$ and $\mu\in Q^\vee$. 

Fix $\Lambda\in\mathcal{L}$. 
Since $\Lambda$ is $W_0$-invariant, $W_0$ acts by group automorphisms on $T_\Lambda$ via the contragredient action on $T_\Lambda$. Extend the $W_0$-action on $T_\Lambda$ to a $W$-action by
\begin{equation}\label{tauaction}
\tau(\mu)t:=q^\mu t\qquad (t\in T_\Lambda,\, \mu\in Q^\vee).
\end{equation}
Note that the map \eqref{invlimitalt} is $W$-equivariant, and
\[
(w(p))(t)=p(w^{-1}t)\qquad\quad (w\in W,\, p\in\mathcal{P}_\Lambda,\, t\in T_\Lambda),
\]
with the $W$-action on $\mathcal{P}_\Lambda$ given by \eqref{xyW}.

Write 
\[
q_\alpha:=q^{\frac{2}{\|\alpha\|^2}}\qquad\quad (\alpha\in\Phi_0),
\]
which are integral powers of $q$ by the convention that $\|\varphi\|^2=2/m^2$ for some $m\in\mathbb{Z}_{>0}$.
For an affine root $a=(\alpha,\ell)\in\Phi$ and $\lambda\in\Lambda$ we have
\begin{equation}\label{saxy}
s_a(x^\lambda)=q_\alpha^{-\ell\alpha(\lambda)}x^{s_\alpha \lambda}.
\end{equation}

{}From now on we will be working with specialised $\mathbf{q}$, unless explicitly stated otherwise. In particular, 
\begin{equation*}
x^{\lambda+\ell K}=q^\ell x^\lambda\in\mathcal{P}_\Lambda\qquad\quad (\lambda\in\Lambda,\, m\in\mathbb{Z}),
\end{equation*}
and \eqref{actx} is interpreted  accordingly as identity in $\mathcal{P}_\Lambda$.
Note that for $a\in\Phi$ and $t\in T_\Lambda$ we have
\begin{equation}\label{actiononT}
s_at=(t^{-a^\vee})^{Da}t
\end{equation}
where $t^{-a^\vee}\in \mathbf{F}^\times$ is the evaluation of $x^{-a^\vee}\in\mathcal{P}_\Lambda$ at $t$,
and $(t^{-a^\vee})^{Da}\in T_\Lambda$ is the resulting multiplicative character of $\Lambda$.
 
Let $\mathcal{Q}_\Lambda$ be the quotient field of $\mathcal{P}_\Lambda$, which identifies with the field of rational functions on $T_\Lambda$, and set $\mathcal{Q}:=\mathcal{Q}_{Q^\vee}$. We extend the $W$-action on $\mathcal{P}_\Lambda$ to a $W$-action on $\mathcal{Q}_\Lambda$ by field automorphisms, and write $W\ltimes\mathcal{Q}_\Lambda$ for the associated smashed product algebra over $\mathbf{F}$. 

\subsection{The double affine Hecke algebra}\label{DAHA}
The double affine Hecke algebra $\mathbb{H}=\mathbb{H}(\mathbf{k},q)$ is a flat deformation of $\mathbf{F}[\mathbb{W}]/(m^2K-q^{m^2})\simeq W\ltimes\mathcal{P}$, with its deformation parameters encoded by a multiplicity function $\mathbf{k}: \Phi\rightarrow\mathbf{F}^\times$. In this paper we restrict attention to multiplicity functions that only depend on the length of the gradient of the affine root. 

Write ${}^{\textup{sh}}\Phi_0$ and ${}^{\textup{lg}}\Phi_0$ for the short and long roots in $\Phi_0$, respectively. Fix 
\[
{}^{\textup{sh}}k,{}^{\textup{lg}}k\in\mathbf{F}^\times,
\]
and assume that both are admitting square roots in $\mathbf{F}$, which we fix once and for all. If all the roots have the same length, then we will view the roots as long roots, and we will denote ${}^{\textup{lg}}k$ by $k$. 
We write for $a\in\Phi$, 
\[
k_a:={}^sk \quad \hbox{ if }\quad Da\in {}^s\Phi_0\qquad (s\in\{\textup{sh},\textup{lg}\}).
\]
Then $\mathbf{k}: \Phi\rightarrow \mathbf{F}^\times$, $a\mapsto k_a$ is $W$-invariant map (it is in fact invariant for the action of the extended affine Weyl group, see Section \ref{Extsection}). We write $k_j:=k_{\alpha_j}$ for $0\leq j\leq r$. Then $k_j=k_{j^\prime}$ iff $Da_{j^\prime}$ lies in the $W_0$-orbit of $Da_j$. In particular, $k_0=k_\varphi$. 

Cherednik's \cite{Ch} equal lattice double affine Hecke algebra is the following deformation of 
$W\ltimes\mathcal{P}$.

\begin{definition}\label{defDAHA}
The double affine Hecke algebra $\mathbb{H}=\mathbb{H}(\mathbf{k},q)$ is the unital associative $\mathbf{F}$-algebra generated by $T_0,\ldots,T_r$ and 
$x^\mu$ \textup{(}$\mu\in Q^\vee$\textup{)}, subject to the following relations:
\begin{enumerate}
\item[{\bf a.}] The braid relations
\begin{equation}\label{br}
T_jT_{j^\prime}T_j\ldots=T_{j^\prime}T_jT_{j^\prime}\cdots \qquad \textup{(}
m_{jj^\prime} \textup{ factors on each side}\textup{)}
\end{equation}
for $0\leq j\not=j^\prime\leq r$, with $m_{jj^\prime}$ the order of $s_js_{j^\prime}$ in $W$,
\item[{\bf b.}] The Hecke relations 
\begin{equation}\label{Hr}
(T_j-k_j)(T_j+k_j^{-1})=0\qquad\quad (0\leq j\leq r),
\end{equation}
\item[{\bf c.}] $x^\lambda x^\mu=x^{\lambda+\mu}$ \textup{(}$\lambda,\mu\in Q^\vee$\textup{)} and $x^0=1$,
\item[{\bf d.}] The cross relations
\begin{equation}\label{crossX}
T_jx^\mu-s_j(x^\mu)T_j=(k_j-k_j^{-1})\Bigl(\frac{x^\mu-s_j(x^\mu)}{1-x^{\alpha_j^\vee}}\Bigr)
\end{equation}
for $0\leq j\leq r$ and $\mu\in Q^\vee$, with the $W$-action \eqref{xyW} on $\mathcal{P}$.
\end{enumerate}
\end{definition}
\noindent
Note that $T_j\in\mathbb{H}^\times$ with inverse $T_j^{-1}=T_j-k_j+k_j^{-1}$ by \eqref{Hr}. Furthermore, note that
\begin{equation}\label{isodaWgDAHA}
\mathbb{H}(\mathbf{1},q)\overset{\sim}{\longrightarrow}W\ltimes\mathcal{P}
\end{equation}
as algebras, with $\mathbf{1}$ the multiplicity function identically equal to $1$ and the isomorphism given by $T_w\mapsto w$ ($w\in W$) and $x^\mu\mapsto x^\mu$ ($\mu\in Q^\vee$).
\begin{remark}
The cross relations \eqref{crossX} in $\mathbb{H}$ can alternatively be written as
\begin{equation}\label{crossexplicit}
\begin{split}
T_ix^\mu-x^{s_i\mu}T_i&=(k_i-k_i^{-1})\Bigl(\frac{1-x^{-\alpha_i(\mu)\alpha_i^\vee}}{1-x^{\alpha_i^\vee}}\Bigr)x^\mu\qquad (1\leq i\leq r),\\
T_0x^\mu-q_{\varphi}^{\varphi(\mu)}x^{s_\varphi\mu}T_0&=(k_0-k_0^{-1})\Bigl(\frac{1-(q_\varphi x^{-\varphi^\vee})^{\varphi(\mu)}}{1-q_\varphi x^{-\varphi^\vee}}\Bigr)x^\mu
\end{split}
\end{equation}
in view of \eqref{saxy}.
\end{remark}

For $w\in W$ with reduced expression $w=s_{j_1}\cdots s_{j_\ell}$
($0\leq j_i\leq \ell$) we write 
\begin{equation*}
T_w:=T_{j_1}\cdots T_{j_\ell}\in\mathbb{H},
\end{equation*}
which is independent of the choice of reduced expression, due to the braid relations \eqref{br}.
It follows from \eqref{fundlength} that there exists a unique group homomorphism 
\[
Q^\vee\rightarrow \mathbb{H}^\times,\qquad \mu\mapsto Y^\mu
\]
such that
\begin{equation}\label{YT}
Y^\mu=T_{\tau(\mu)}\qquad \forall\,\mu\in Q^{\vee}\cap\overline{E}_+.
\end{equation}
Let 
\[
\chi_\pm: \Phi_0\rightarrow\{0,1\}
\] 
be the characteristic function of $\Phi_0^\pm$, then we have
\begin{equation}\label{YT0}
Y^{v^{-1}\varphi^\vee}=T_v^{-1}T_0^{\chi(v^{-1}\varphi)}T_{s_\varphi v}\qquad (v\in W_0),
\end{equation}
where 
\begin{equation}\label{chidef}
\chi:=\chi_+-\chi_-
\end{equation}
(see, e.g., \cite[(3.3.6)]{Ma}). In particular, 
\[
Y^{\varphi^\vee}=T_0T_{s_\varphi}.
\]

The Poincar{\'e}-Birkhoff-Witt (PBW) theorem \cite[Thm. 3.2.1]{Ch} for $\mathbb{H}$ states that 
\[
\{x^\mu T_vY^\nu\,\, | \,\,
v\in W_0\,\,\&\,\, \mu,\nu\in Q^\vee\}
\]
as well as $\{x^\mu T_w\,\, | \,\, \mu\in Q^\vee, w\in W\}$ are linear $\mathbf{F}$-linear bases of $\mathbb{H}$. In particular, the subalgebra of $\mathbb{H}$ spanned by $x^\mu$ ($\mu\in Q^\vee$) is isomorphic to $\mathcal{P}$,
as is the subalgebra $\mathcal{P}_Y$ of $\mathbb{H}$ spanned by $Y^\mu$ ($\mu\in Q^\vee$). For $p=\sum_{\mu\in Q^\vee}d_\mu x^\mu\in\mathcal{P}$ we will write 
\[
p(Y^{\pm 1}):=\sum_{\mu\in Q^\vee}d_\mu Y^{\pm\mu}\in\mathcal{P}_Y.
\]

The {\it finite Hecke algebra} $H_0=H_0(\mathbf{k})$ is the subalgebra of $\mathbb{H}$ generated by
$T_1,\ldots,T_r$. It has $\{T_v\,\, | \,\, v\in W_0\}$ as a $\mathbf{F}$-linear basis. The defining relations in terms of $T_1,\ldots,T_r$ are the Hecke relations \eqref{Hr} and the braid relations \eqref{br} for $1\leq j\not=j^\prime\leq r$.

The {\it affine Hecke algebra} $H=H(\mathbf{k})$ is the subalgebra of $\mathbb{H}$ generated
by $T_0,\ldots,T_r$. It has $\{T_w\,\, | \,\, w\in W\}$ as a $\mathbf{F}$-linear basis. The defining relations in terms of the generators $T_0,\ldots,T_r$ are the Hecke relations and the braid relations.
Another $\mathbf{F}$-linear basis of $H$ is $\{T_vY^\mu\,\, | \,\, v\in W_0\,\, \&\,\, \mu\in Q^\vee\}$.
The defining relations of $H$ in terms of the subalgebra $H_0$ and $\mathcal{P}_Y$ are the {\it Bernstein-Zelevinsky cross relations} 
\begin{equation}\label{crossrelation}
Y^\mu T_i-T_iY^{s_i\mu}=(k_i-k_i^{-1})\Bigl(\frac{Y^\mu-Y^{s_i\mu}}
{1-Y^{-\alpha_i^\vee}}\Bigr)
\end{equation}
for $i=1,\ldots,r$ and $\mu\in Q^\vee$, see \cite{Lu}.

The following theorem is due to Cherednik, see, e.g., \cite[\S 3.3.2]{Ch}.
\begin{theorem}\label{deltatheorem}
There exists a unique anti-algebra involution $\delta: \mathbb{H}\rightarrow \mathbb{H}$ satisfying
$\delta(Y^\mu)=x^{-\mu}$ \textup{(}$\mu\in Q^\vee$\textup{)} and $\delta(T_i)=T_i$ \textup{(}$1\leq i\leq r$\textup{)}. 
\end{theorem}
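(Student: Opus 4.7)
My plan is to prove uniqueness by pinning $\delta$ down on a small generating set, and then prove existence by exhibiting a self-dual presentation of $\mathbb{H}$ whose defining relations are manifestly stable under the candidate involution.

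\emph{Uniqueness.} The formula $Y^{\varphi^\vee}=T_0T_{s_\varphi}$, obtained from \eqref{YT0} at $v=e$, yields $T_0 = Y^{\varphi^\vee}T_{s_\varphi}^{-1}$. Hence $\mathbb{H}$, which is generated by $T_0,\ldots,T_r$ together with $\mathcal{P}$, is already generated by the set $\{T_i\}_{i=1}^r \cup \{Y^\mu\}_{\mu\in Q^\vee} \cup \{x^\mu\}_{\mu\in Q^\vee}$. Any anti-algebra map is determined by its values on a generating set; the stated conditions fix $\delta$ on the $T_i$ for $i\ge 1$ and on the $Y^\mu$, while the involution requirement $\delta^2 = \textup{id}$ forces $\delta(x^\mu)=Y^{-\mu}$. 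This pins down $\delta$ uniquely.

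\emph{Existence.} I would exhibit a self-dual presentation of $\mathbb{H}$ with generators $\{T_i\}_{i=1}^r \cup \mathcal{P} \cup \mathcal{P}_Y$, subject to: (a) the finite braid and Hecke relations among $T_1,\ldots,T_r$; (b) internal commutativity within $\mathcal{P}$ and within $\mathcal{P}_Y$; (c) the cross relations \eqref{crossX} for $i\in[1,r]$ between $T_i$ and $\mathcal{P}$; (d) the Bernstein--Zelevinsky relations \eqref{crossrelation} between $T_i$ and $\mathcal{P}_Y$; and (e) a single ``joint'' relation encoding the compatibility of $\mathcal{P}$ with $\mathcal{P}_Y$, obtained by substituting $T_0=Y^{\varphi^\vee}T_{s_\varphi}^{-1}$ into the $j=0$ instance of \eqref{crossX}. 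The candidate map $T_i\mapsto T_i,\ x^\mu\mapsto Y^{-\mu},\ Y^\mu\mapsto x^{-\mu}$ with reversed multiplication obviously preserves (a) and (b); it interchanges (c) and (d) under $\mu\leftrightarrow -\mu$, since both relations have the shape $T_i h - s_i(h)T_i = (k_i-k_i^{-1})(h-s_i(h))/(1-\xi)$ with $\xi$ a coroot monomial, and a direct computation using $s_i^2=e$ and $(-\alpha_i)^\vee=-\alpha_i^\vee$ transforms \eqref{crossX} into \eqref{crossrelation} upon applying $\delta$. Finally, relation (e) is symmetric in $x^{\varphi^\vee}$ and $Y^{\varphi^\vee}$ by construction, so $\delta$ stabilizes it. Hence $\delta$ extends to an anti-algebra involution of $\mathbb{H}$.

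The main obstacle, and the technical core, is proving that (a)--(e) actually constitute a complete set of defining relations for $\mathbb{H}$, since this is what allows $\delta$ to extend from generators to the whole algebra without running into unforeseen obstructions. That (a)--(e) hold in $\mathbb{H}$ is the easy direction: (c) and (d) are standard, and (e) is just a rewriting of the definition of $T_0$. For the converse — that no further relations are needed — the PBW theorem quoted in the paper, asserting that $\{x^\mu T_v Y^\nu\}_{\mu,\nu\in Q^\vee,v\in W_0}$ is a basis of $\mathbb{H}$, is the essential input: it bounds the dimension of the algebra defined by (a)--(e) by that of $\mathbb{H}$, while the obvious surjection goes the other way. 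An alternative that sidesteps this bookkeeping is to characterize $\delta$ representation-theoretically: Cherednik's polynomial representation on $\mathcal{P}$ is faithful, and one can build a dual faithful representation on $\mathcal{P}_Y$ by interchanging the roles of $\mathcal{P}$ and $\mathcal{P}_Y$ (with $T_0$ replaced by $T_{s_\varphi}^{-1}x^{-\varphi^\vee}$); then $\delta(h)$ is the unique element of $\mathbb{H}$ whose action on the dual representation matches that of $h$ on $\mathcal{P}$ under the natural identification $\mathcal{P}\cong\mathcal{P}_Y$.
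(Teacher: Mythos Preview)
The paper does not prove this theorem; it is stated with the attribution ``The following theorem is due to Cherednik \cite{Ch}'' and no argument is given. So there is nothing to compare against, and your task is to produce a self-contained proof.

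Your uniqueness argument is clean and correct. For existence, the overall architecture --- a self-dual presentation with generators $\{T_i\}_{i=1}^r\cup\mathcal{P}\cup\mathcal{P}_Y$ and relations (a)--(e), then checking $\delta$-stability --- is the right idea and is standard. The claim that (c) and (d) are interchanged by $\delta$ is correct and your sketch of the verification is fine. The weak point is your treatment of (e): the assertion that the relation obtained by substituting $T_0=Y^{\varphi^\vee}T_{s_\varphi}^{-1}$ into the $j=0$ instance of \eqref{crossX} is ``symmetric in $x^{\varphi^\vee}$ and $Y^{\varphi^\vee}$ by construction'' is not self-evident and is precisely where the computation lives. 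After substitution one obtains a relation mixing $Y^{\varphi^\vee}$, $T_{s_\varphi}^{-1}$, $x^\mu$, and the $q$-shifted monomial $x^{\alpha_0^\vee}=q_\varphi x^{-\varphi^\vee}$; showing that applying $\delta$ returns an equivalent relation requires commuting $Y^{\varphi^\vee}$ past $T_{s_\varphi}^{-1}$ via \eqref{crossrelation} and past $x^\mu$ via what you are trying to establish, so care is needed to avoid circularity. You should write out (e) explicitly for a single $\mu$ (say $\mu=\varphi^\vee$, which suffices together with (c) to generate all $j=0$ cross relations) and verify $\delta$-stability by direct computation.

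Your second obstacle --- completeness of the presentation (a)--(e) --- is correctly identified, and your PBW argument is the right one: the abstract algebra on these relations surjects onto $\mathbb{H}$, and one checks that (a)--(e) suffice to straighten any word into $\mathcal{P}\cdot H_0\cdot\mathcal{P}_Y$ form, whence the surjection is an isomorphism by comparison with the PBW basis. The alternative you mention via a dual faithful polynomial representation on $\mathcal{P}_Y$ is in fact how Cherednik originally established $\delta$, and is arguably cleaner than the presentation-based route since it avoids the straightening argument entirely.
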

We call $\delta$ the {\it duality anti-involution} of $\mathbb{H}$.  Note that 
\begin{equation}\label{delta0}
\delta(T_0)=T_{s_\varphi}^{-1}x^{-\varphi^\vee}.
\end{equation}
It is sometimes convenient to rewrite $\delta(T_0)$ in terms of the element
\begin{equation}\label{Uzero}
U_0:=q_\varphi^{-1}x^{\varphi^\vee}T_0^{-1}=q_\varphi^{-1}x^{\varphi^\vee} T_{s_\varphi}Y^{-\varphi^\vee}\in\mathbb{H},
\end{equation}
which satisfies 
\begin{equation}\label{deltaUzero}
\delta(T_0^{-1})=q_\varphi U_0Y^{\varphi^\vee},\qquad \delta(T_0)=
Y^{-\varphi^\vee}T_0x^{-\varphi^\vee}
\end{equation}
and the Hecke relation $(U_0-k_0)(U_0+k_0^{-1})=0$. The outer automorphism of $\mathbb{H}$, defined as conjugation by the Gaussian in the polynomial realisation of $\mathbb{H}$, maps $T_0$ to a scalar multiple of $U_0^{-1}$ (see, e.g., \cite{ChInt}).

\subsection{Intertwiners}\label{intSection}
We recall in this subsection well known facts about Cheredniki's $X$-intertwiners and $Y$-intertwi\-ners for the double affine Hecke algebra $\mathbb{H}$, see, e.g., \cite[\S 3.3.3]{Ch}.
For $0\leq j\leq r$ write
\begin{equation}\label{buildingXintertwiners}
S_j^X:=\bigl(x^{\alpha_j^\vee}-1\bigr)T_j+k_j-k_j^{-1}\in\mathbb{H}.
\end{equation}
The following theorem introduces the $X$-intertwiners $S_w^X\in\mathbb{H}$ ($w\in W$).
\begin{theorem}\label{intertwinerTHM}
For $w\in W$ set
\[
S_w^X:=S_{j_1}^X\cdots S_{j_\ell}^X\in\mathbb{H},
\] 
where $w=s_{j_1}\cdots s_{j_\ell}$ \textup{(}$0\leq j_i\leq r$\textup{)} is a reduced expression. Then $S_w^X$ is well defined and
\begin{equation}\label{Iprop}
\begin{split}
S_w^Xp&=w(p)S_w^X,\\
S_{w^{-1}}^XS_{w}^X&=\prod_{a\in\Pi(w)}\bigl(k_a^{-1}-k_ax^{a^\vee}\bigr)\bigl(k_a^{-1}-k_ax^{-a^\vee}\bigr)
\end{split}
\end{equation}
in $\mathbb{H}$ for $p\in\mathcal{P}$ and $w\in W$.
\end{theorem}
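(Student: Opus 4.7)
My plan is to work in the localization $\mathbb{H}_{\mathcal{Q}} := \mathcal{Q}\otimes_{\mathcal{P}}\mathbb{H}$ and introduce normalized intertwiners $\tilde{s}_j := T_j + (k_j - k_j^{-1})/(x^{\alpha_j^\vee}-1)$ for $0\le j\le r$, so that $S_j^X = (x^{\alpha_j^\vee}-1)\tilde{s}_j$. First, a direct manipulation of the cross relations \eqref{crossX} shows that
\[
\tilde{s}_j p = s_j(p)\tilde{s}_j\qquad \forall\, p\in\mathcal{Q},\,\, 0\le j\le r,
\]
from which the intertwining identity $S_j^X p = s_j(p)S_j^X$ for $p\in\mathcal{P}$ follows at once by multiplying on the left by $(x^{\alpha_j^\vee}-1)$. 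Combining the same cross relation with the Hecke relation $T_j^2 = 1 + (k_j-k_j^{-1})T_j$, I would compute
\[
\tilde{s}_j^2 = 1 + \frac{(k_j-k_j^{-1})^2}{(x^{\alpha_j^\vee}-1)(x^{-\alpha_j^\vee}-1)} = \frac{(k_j^{-1}-k_jx^{\alpha_j^\vee})(k_j^{-1}-k_jx^{-\alpha_j^\vee})}{(x^{\alpha_j^\vee}-1)(x^{-\alpha_j^\vee}-1)} =: \eta_{\alpha_j}.
\]

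The main technical step, and the principal obstacle, is to prove the braid relations $\tilde{s}_j\tilde{s}_{j'}\tilde{s}_j\cdots = \tilde{s}_{j'}\tilde{s}_j\tilde{s}_{j'}\cdots$ ($m_{jj'}$ factors on each side). I would reduce this to the rank-two subalgebras generated by $T_j, T_{j'}$ and the relevant $\mathcal{Q}$-coefficients, handling the cases $m_{jj'}\in\{2,3,4,6\}$ by direct computation; an equivalent viewpoint is that the subalgebra of $\mathbb{H}_{\mathcal{Q}}$ generated by $\tilde{s}_0,\ldots,\tilde{s}_r$ and $\mathcal{Q}$ admits a Lusztig--Bernstein-type presentation. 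Granting the braid relations,
\[
\tilde{s}_w := \tilde{s}_{j_1}\cdots\tilde{s}_{j_\ell}\in\mathbb{H}_{\mathcal{Q}}
\]
is independent of the reduced expression $w = s_{j_1}\cdots s_{j_\ell}$.

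Having set this up, I would derive the product formula
\[
S_w^X = \prod_{m=1}^{\ell}(x^{\alpha_{j_m}^\vee}-1)\tilde{s}_{j_m} = \prod_{m=1}^{\ell}s_{j_1}\cdots s_{j_{m-1}}(x^{\alpha_{j_m}^\vee}-1)\cdot \tilde{s}_w = \prod_{a\in\Pi(w^{-1})}(x^{a^\vee}-1)\cdot\tilde{s}_w
\]
by repeatedly applying $\tilde{s}_j p = s_j(p)\tilde{s}_j$ and using the enumeration \eqref{rootsdescription} of $\Pi(w^{-1})$ arising from the chosen reduced expression. The right-hand side depends only on $w$, establishing well-definedness of $S_w^X$, and the intertwining identity $S_w^X p = w(p)S_w^X$ is immediate.

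For the quadratic norm formula, I compute
\[
S_{w^{-1}}^X S_w^X = \prod_{b\in\Pi(w)}(x^{b^\vee}-1)\cdot \tilde{s}_{w^{-1}}\cdot \prod_{a\in\Pi(w^{-1})}(x^{a^\vee}-1)\cdot \tilde{s}_w = \prod_{b\in\Pi(w)}(x^{b^\vee}-1)(x^{-b^\vee}-1)\cdot \tilde{s}_{w^{-1}}\tilde{s}_w,
\]
using the intertwining property of $\tilde{s}_{w^{-1}}$ together with the bijection $\Pi(w^{-1})\to\Pi(w)$, $a\mapsto -w^{-1}(a)$, established in the course of proving Proposition \ref{Pidescription}. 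A telescoping argument peeling adjacent pairs $\tilde{s}_{j_m}\tilde{s}_{j_m} = \eta_{\alpha_{j_m}}$ from the middle of $\tilde{s}_{j_\ell}\cdots\tilde{s}_{j_1}\cdot\tilde{s}_{j_1}\cdots\tilde{s}_{j_\ell}$, combined with the equivariance $w(\eta_\alpha) = \eta_{w\alpha}$ (which uses that $k_a$ depends only on $\|Da\|$), gives $\tilde{s}_{w^{-1}}\tilde{s}_w = \prod_{b\in\Pi(w)}\eta_{Db}$. Multiplying through yields the claimed formula for $S_{w^{-1}}^X S_w^X$.
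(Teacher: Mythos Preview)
The paper does not prove this theorem; it is stated with a reference to Cherednik \cite[\S 3.3.3]{Ch}. Your outline is the standard route and is essentially correct, but there is one slip that would break the final cancellation if taken literally. In the telescoping step you write $\tilde{s}_{w^{-1}}\tilde{s}_w = \prod_{b\in\Pi(w)}\eta_{Db}$, whereas the correct output is $\prod_{b\in\Pi(w)}\eta_b$, with $\eta_b$ built from the \emph{affine} coroot $x^{b^\vee}$ (which carries a power of $q$ when $b\notin\Phi_0$). The equivariance you invoke, $w(\eta_{\alpha})=\eta_{w\alpha}$, already produces the affine root $w\alpha$ via \eqref{actx}; applying $D$ discards the $q$-factor, and then the denominators $(x^{(Db)^\vee}-1)(x^{-(Db)^\vee}-1)$ no longer cancel against the $(x^{b^\vee}-1)(x^{-b^\vee}-1)$ coming from your product formula for $S_{w^{-1}}^X S_w^X$. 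With $\eta_{Db}$ replaced by $\eta_b$ throughout, the argument is fine.

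A remark on the deferred braid relations: the paper introduces the differently normalized $\widetilde{S}_j^X=S_j^X/(k_jx^{\alpha_j^\vee}-k_j^{-1})$ (see \eqref{normS}), which satisfy $(\widetilde{S}_j^X)^2=1$; Theorem \ref{locTHM} then packages the braid relations as the statement that $w\mapsto\widetilde{S}_w^X$ is a group homomorphism $W\to(\mathbb{H}^{X\textup{-loc}})^\times$. Since your $\tilde{s}_j$ differ from $\widetilde{S}_j^X$ by a factor in $\mathcal{Q}$, the braid relations for your $\tilde{s}_j$ reduce to those for $\widetilde{S}_j^X$ together with a scalar identity over the positive roots of the rank-two subsystem, which is the computation you allude to. Working directly with $\widetilde{S}_j^X$ is marginally cleaner because one can then define $\widetilde{S}_w^X$ for arbitrary (not necessarily reduced) words, but your intermediate normalization works just as well.
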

Let $\mathbb{H}^{X-\textup{loc}}$ be the ring of fractions of $\mathbb{H}$ with respect to the multiplicative set $\mathcal{P}^\times$. The canonical algebra embedding $\mathcal{P}\hookrightarrow\mathbb{H}^{X-\textup{loc}}$ extends to an algebra embedding $\mathcal{Q}\hookrightarrow
\mathbb{H}^{X-\textup{loc}}$. Multiplication defines a $\mathbf{F}$-linear isomorphism
\begin{equation}\label{factorloc}
\mathcal{Q}\otimes_{\mathbf{F}}H\overset{\sim}{\longrightarrow}\mathbb{H}^{X-\textup{loc}}.
\end{equation}
The cross relations in $\mathbb{H}^{X-\textup{loc}}$ are 
\begin{equation}\label{crossloc}
T_jf-s_j(f)T_j=(k_j-k_j^{-1})\Bigl(\frac{f-s_j(f)}{1-x^{\alpha_j^\vee}}\Bigr)
\end{equation}
for $0\leq j\leq r$ and $f\in\mathcal{Q}$. Set 
\begin{equation}\label{normS}
\widetilde{S}_j^X:=\frac{1}{k_jx^{\alpha_j^\vee}-k_j^{-1}}S_j^X=
\Bigl(\frac{x^{\alpha_j^\vee}-1}{k_jx^{\alpha_j^\vee}-k_j^{-1}}\Bigr)T_j+
\Bigl(\frac{k_j-k_j^{-1}}{k_jx^{\alpha_j^\vee}-k_j^{-1}}\Bigr)\in\mathbb{H}^{X-\textup{loc}}
\end{equation}
for $0\leq j\leq r$. Then $(S_j^X)^2=1$ for $j=0,\ldots,r$.
The normalised  $X$-intertwiners $\widetilde{S}_w^X\in\mathbb{H}^{X-\textup{loc}}$ ($w\in W$)
are defined by
\[
\widetilde{S}_w^X:=\widetilde{S}_{j_1}^X\cdots\widetilde{S}_{j_\ell}^X,
\]
where $w=s_{j_1}\cdots s_{j_\ell}$ may now be any expression of $w\in W$ as product of simple reflections. 
Note that 
\begin{equation}\label{SwX}
\widetilde{S}_w^X=S_w^X\prod_{a\in\Pi(w)}\Bigl(\frac{1}{k_ax^{-a^\vee}-k_a^{-1}}\Bigr).
\end{equation}
The following result is from \cite[\S 3.1]{S0}.
\begin{theorem}\label{locTHM}
The assignment 
\[
w\otimes f\mapsto\widetilde{S}_w^Xf\qquad (w\in W, f\in\mathcal{Q})
\] 
defines an isomorphism $\ss: W\ltimes\mathcal{Q}\overset{\sim}{\longrightarrow}\mathbb{H}^{X-\textup{loc}}$ of algebras.
\end{theorem}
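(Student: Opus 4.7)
The plan is to reduce the theorem to two properties of the normalised intertwiners established just above the statement. First, an intertwining identity $\widetilde{S}_w^X f = w(f) \widetilde{S}_w^X$ for all $f \in \mathcal{Q}$ and $w \in W$, which follows by combining $S_w^X p = w(p) S_w^X$ of Theorem \ref{intertwinerTHM} (extended from $\mathcal{P}$ to $\mathcal{Q}$ by localisation) with \eqref{SwX}, since the normalising prefactor lies in $\mathcal{Q}$ and transforms trivially through the product. Second, a multiplicativity $\widetilde{S}_{w_1}^X \widetilde{S}_{w_2}^X = \widetilde{S}_{w_1 w_2}^X$, which is immediate from the expression-independence of $\widetilde{S}_w^X$ recalled above the theorem: concatenate arbitrary expressions of $w_1$ and $w_2$ to obtain an expression for $w_1 w_2$.

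With these in hand, the algebra-homomorphism property of $\ss$ is a direct check. Using the convention $w \cdot f = w(f) \cdot w$, the smashed product multiplication reads
\[
(w \otimes f)(w' \otimes f') = ww' \otimes (w')^{-1}(f)\, f',
\]
and so, applying intertwining in the form $f\,\widetilde{S}_{w'}^X = \widetilde{S}_{w'}^X\, (w')^{-1}(f)$, one computes
\[
\ss(w \otimes f)\, \ss(w' \otimes f') = \widetilde{S}_w^X f\, \widetilde{S}_{w'}^X f' = \widetilde{S}_w^X \widetilde{S}_{w'}^X\, (w')^{-1}(f)\, f' = \widetilde{S}_{ww'}^X\, (w')^{-1}(f)\, f',
\]
which matches $\ss((w \otimes f)(w' \otimes f'))$.

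For bijectivity, surjectivity is immediate: solving \eqref{normS} for $T_j$ yields $T_j = A_j^{-1}(\widetilde{S}_j^X - B_j)$ with $A_j \in \mathcal{Q}^\times$ and $B_j \in \mathcal{Q}$, so every generator of $\mathbb{H}^{X-\textup{loc}}$ lies in $\textup{im}(\ss)$. Injectivity rests on a Bruhat-triangularity argument built on \eqref{factorloc}: as a free left $\mathcal{Q}$-module, $\mathbb{H}^{X-\textup{loc}}$ has basis $\{T_w\}_{w \in W}$, and an induction on $\ell(w)$, using \eqref{normS} together with the cross relations \eqref{crossloc} to expand $\widetilde{S}_w^X = \widetilde{S}_j^X \widetilde{S}_{w'}^X$, produces
\[
\widetilde{S}_w^X = u_w T_w + \sum_{v < w} c_{w,v} T_v, \qquad u_w \in \mathcal{Q}^\times,\; c_{w,v} \in \mathcal{Q}.
\]
Upper-triangularity with invertible leading coefficients shows that $\{\widetilde{S}_w^X\}_{w \in W}$ is another free left $\mathcal{Q}$-basis of $\mathbb{H}^{X-\textup{loc}}$, and consequently $\ss$ maps the $\mathcal{Q}$-basis $\{w \otimes 1\}_{w \in W}$ of $W \ltimes \mathcal{Q}$ bijectively onto a $\mathcal{Q}$-basis of the target.

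The \textbf{main obstacle} is the triangular expansion above. The cleanest route is to observe that the unnormalised $S_w^X$ admits an analogous expansion whose leading term is $\prod_{a \in \Pi(w)}(x^{a^\vee} - 1) \cdot T_w$, obtained inductively from $S_j^X = (x^{\alpha_j^\vee} - 1) T_j + (k_j - k_j^{-1})$ and \eqref{crossexplicit}, with lower-order $T_v$-terms generated whenever a simple reflection fails to lengthen the current word. Dividing by the normalising factor in \eqref{SwX} then gives the explicit leading coefficient $u_w = \prod_{a \in \Pi(w)} (x^{a^\vee} - 1)/(k_a x^{-a^\vee} - k_a^{-1}) \in \mathcal{Q}^\times$, making the triangularity manifest.
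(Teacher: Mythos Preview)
The paper does not supply its own proof of this theorem; it attributes the result to Cherednik and cites \cite{Ch}. Your argument is the standard one and is essentially correct: the intertwining and multiplicativity properties of $\widetilde{S}_w^X$ give the homomorphism property, and Bruhat-triangularity of $\{\widetilde{S}_w^X\}$ against the $\mathcal{Q}$-basis $\{T_w\}$ yields bijectivity.

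One small inaccuracy worth flagging: when you expand $S_w^X$ in the \emph{left} $\mathcal{Q}$-basis $\{T_v\}$, the leading coefficient is $\prod_{a\in\Pi(w^{-1})}(x^{a^\vee}-1)$, not $\prod_{a\in\Pi(w)}(x^{a^\vee}-1)$. This is because each step $S_j^X S_{w'}^X$ contributes a factor $(x^{\alpha_j^\vee}-1)s_j(\,\cdot\,)$ on the left, and iterating gives the roots of $\Pi(w^{-1})$ rather than $\Pi(w)$. Correspondingly, after normalising via \eqref{SwX} and pushing the right-hand $\mathcal{Q}$-factor past $T_w$, the leading coefficient of $\widetilde{S}_w^X$ becomes $\prod_{b\in\Pi(w^{-1})}\frac{x^{b^\vee}-1}{k_b x^{b^\vee}-k_b^{-1}}$. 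None of this affects the validity of your argument, since the product is still a unit in $\mathcal{Q}$; only the explicit formula you quote for $u_w$ is off.
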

Write $\mathbb{H}^{Y-\textup{loc}}$ for the ring of fractions of $\mathbb{H}$ with respect
to the multiplicative set $\mathcal{P}_Y^\times$. The duality anti-involution $\delta$ extends to an algebra anti-isomorphism $\mathbb{H}^{X-\textup{loc}}\rightarrow\mathbb{H}^{Y-\textup{loc}}$, which we again denote by $\delta$. The $Y$-intertwiners are defined by
\[
S_j^Y:=\delta(S_j^X),\qquad S_{w}^Y:=\delta(S_{w^{-1}}^X),
\]
and the normalised versions by
$\widetilde{S}_j^Y:=\delta(\widetilde{S}_j^X)$ and $\widetilde{S}_w^Y:=\delta(\widetilde{S}_{w^{-1}}^X)$ for $0\leq j\leq r$ and $w\in W$. Note that $S_w^Y\in\mathbb{H}$ and $\widetilde{S}_w^Y\in\mathbb{H}^{Y-\textup{loc}}$. Furthermore
\[
S_w^Y=S_{j_1}^YS_{j_2}^Y\cdots  S_{j_\ell}^Y
\]
if $w=s_{j_1}\cdots s_{j_\ell}$ is a reduced expression of $w\in W$. The same holds true for $\widetilde{S}_w^Y$ (in this case it holds true for any expression $w=s_{j_1}\cdots s_{j_\ell}$ as product of simple reflections).
For later reference we write out explicitly some of the key formulas for the $Y$-intertwiners.
 
For $0\leq j\leq r$ we have
\begin{equation}\label{buildingYintertwiners}
S_j^Y=\delta(T_j)\bigl((Y^{-1})^{\alpha_j^\vee}-1\bigr)+k_j-k_j^{-1}.
\end{equation}
Note here that for $j=0$ we are evaluating the polynomial $x^{\alpha_0^\vee}=q_\varphi x^{-\varphi^\vee}$ in $Y^{-1}$, so that $(Y^{-1})^{\alpha_0^\vee}=q_\varphi Y^{\varphi^\vee}$ (which is {\it not} equal to $Y^{-\alpha_0^\vee}= q_\varphi^{-1}Y^{\varphi^\vee}$). The Hecke relations \eqref{Hr} provide the
alternative expression 
\begin{equation}\label{buildingYintertwinersinverse}
S_j^Y=\delta(T_j^{-1})\bigl((Y^{-1})^{\alpha_j^\vee}-1\bigr)+(k_j-k_j^{-1})(Y^{-1})^{\alpha_j^\vee}.
\end{equation}
An expression for $S_0$ in terms of $U_0$ is
\begin{equation}\label{buildingYintertwinersTzero}
S_0^Y=\bigl(U_0(q_\varphi Y^{\varphi^\vee}-1)+k_0-k_0^{-1}\bigr)q_\varphi Y^{\varphi^\vee}.
\end{equation}
For $p\in\mathcal{P}$ and $w\in W$ we have
\begin{equation}\label{IpropY}
\begin{split}
S_w^Yp(Y^{-1})&=w(p)(Y^{-1})S_w^Y,\\
S_{w^{-1}}^YS_{w}^Y&=\prod_{a\in\Pi(w)}\bigl(k_a^{-1}-k_a(Y^{-1})^{a^\vee}\bigr)\bigl(k_a^{-1}-k_a(Y^{-1})^{-a^\vee}\bigr)
\end{split}
\end{equation}
in $\mathbb{H}$. Finally, we have
\begin{equation}\label{RelToUnnormY}
\widetilde{S}_w^Y=S_w^Y\prod_{a\in\Pi(w)}\Bigl(\frac{1}{k_a(Y^{-1})^{a^\vee}-k_a^{-1}}\Bigr)
\end{equation}
for $w\in W$.

\section{$Y$-parabolically induced cyclic $\mathbb{H}$-modules}\label{modulesection}

Throughout this section we fix a lattice $\Lambda\in\mathcal{L}$. As before (see Subsection \ref{ap}), we will omit $\Lambda$ from the notations when $\Lambda=Q^\vee$.
\subsection{Induction parameters}\label{IndPar}
\begin{definition}
For $J\subsetneq [0,r]$ write 
\begin{equation}\label{TJ}
\begin{split}
T_{\Lambda,J}&:=\{\mathfrak{t}\in T_\Lambda \,\, | \,\, \mathfrak{t}^{\alpha_j^\vee}=1\quad\,\,\, \forall\, j\in J\},\\
T^{\textup{red}}_{\Lambda,J}&:=\{\mathfrak{t}\in T_\Lambda\,\, | \,\, \mathfrak{t}^{D\alpha_j^\vee}=1\quad \forall\, j\in J\}.
\end{split}
\end{equation}
\end{definition}
Note that $T^{\textup{red}}_{\Lambda,J}\subseteq T_\Lambda$ is a subtorus.
Furthermore, $T_{\Lambda,I}=T^{\textup{red}}_{\Lambda,I}$ if $I\subseteq [1,r]$, and $T_{[1,r]}=T_{[1,r]}^{\textup{red}}=\{1_T\}$ with $1_T$ the neutral element of $T=T_{Q^\vee}$. 
If $T_{\Lambda,J}\not=\emptyset$ then $T_{\Lambda,J}$ is a $T^{\textup{red}}_{\Lambda,J}$-coset since
\[
(tt^\prime)^{a^\vee}=t^{a^\vee}t^\prime{}^{Da^\vee}
\]
for $t,t^\prime\in T_\Lambda$ and  $a\in\Phi$. 
In the following lemma we provide a natural, mild condition on $q\in\mathbf{F}$ ensuring that $T_J\not=\emptyset$ for all $J\subsetneq [0,r]$.

Let $h$ be the Coxeter number of $W_0$, i.e., $h$ is the order of $s_1s_2\cdots s_r$ in $W_0$ (see, e.g., \cite[Chpt. 3]{Hu} and \cite{Ste}).
\begin{lemma}\label{nonzeroLem}
$T_J\not=\emptyset$ for all $J\subsetneq [0,r]$ if $q$ has a $(2h)^{th}$ root in $\mathbf{F}$.
\end{lemma}
\begin{proof}
It $I\subseteq [1,r]$ then $T_I\not=\emptyset$ without restrictions on $q$ since $T_I$ contains $1_T$. 

Fix now a subset $J\subsetneq [0,r]$ containing $0$. Write $J_0:=[1,r]\cap J$, which is a proper subset of $[1,r]$. For $\mathfrak{t}\in T$ write $c_i:=\mathfrak{t}^{\alpha_i^\vee}$ ($i\in [1,r]$). Then $\mathfrak{t}\in T_J$
iff $c_i=1$ for $i\in J_0$ and 
\begin{equation}\label{nt}
\prod_{i\in [1,r]\setminus J_0}c_i^{m_i}=q_\varphi,
\end{equation}
where $m_i\in\mathbb{Z}_{>0}$ are the coefficients in the expansion $\varphi^\vee=\sum_{i=1}^rm_i\alpha_i^\vee$ of $\varphi^\vee$ in simple co-roots. It thus remains to show that there exists $c_i\in\mathbf{F}$ ($i\in [1,r]\setminus J_0$) satisfying \eqref{nt} when $q$ has a $(2h)^{th}$ root in $\mathbf{F}$.

It is clear from \eqref{nt} that such $c_i$ exists if 
$q_\varphi$ has a $m_\ell^{th}$ root in $\mathbf{F}$ for all $\ell\in [1,r]$. Since $q_\varphi$ is an integer power of $q$ (due to our choice of normalisation of the root lengths, cf. Subsection \ref{subsect_root}), this is the case when $q$ has a $m(\varphi)^{th}$ root in $\mathbf{F}$, where
\[
m(\varphi):=\textup{lcm}(m_i).
\] 
By direct inspection using table 2 of \cite[\S 12]{Hu0}, we have $m(\varphi)=1$ for types $\textup{A}_r$ and $\textup{B}_r$, $m(\varphi)=2$ for types $\textup{C}_r$, $\textup{D}_r$ and $\textup{G}_2$, $m(\varphi)=6$ for types $\textup{E}_6$ and $\textup{F}_4$, $m(\varphi)=12$ for type $\textup{E}_7$ and $m(\varphi)=60$ for type $\textup{E}_8$. 
Comparing with 
 the Coxeter numbers, which are explicitly listed in \cite[\S 3.18, Table 2]{Hu},
 one checks by direct inspection that
\begin{equation}\label{mh}
m(\varphi)\vert 2h.
\end{equation}
Hence $T_J\not=\emptyset$ if
 $q$ has a $(2h)^{th}$ root in $\mathbf{F}$.
\end{proof}

Let  $W^{\textup{red}}_J\subseteq W_0$ be the image of the parabolic subgroup $W_J\subseteq W$ under the group epimorphism $D: W\twoheadrightarrow W_0$. We then have
\begin{equation}\label{inclusions}
T_{\Lambda,J}\subseteq T_\Lambda^{W_J},\qquad T^{\textup{red}}_{\Lambda,J}\subseteq T_\Lambda^{W^{\textup{red}}_J}.
\end{equation}
Here $T^G_\Lambda\subseteq T_\Lambda$, for a subgroup $G\subseteq W$, denotes the set of $G$-fixed elements in $T_\Lambda$.
By \eqref{actiononT}, the inclusions \eqref{inclusions} are equalities if $D\alpha_j(\Lambda)=\mathbb{Z}$ for all $j\in J$.

Note that $T_{\Lambda,J}$ is a $T_{\Lambda,[1,r]}$-subset of $T_\Lambda$ for all $J\subsetneq [0,r]$, since $T_{\Lambda,[1,r]}=T_{\Lambda,[1,r]}^{\textup{red}}$. Write
\begin{equation}\label{widetildeTJ}
\widetilde{T}_\Lambda:=\{\widetilde{s}\in T_{\Lambda} \,\, | \,\, \widetilde{s}\vert_{\Lambda\cap E_{\textup{co}}}\equiv 1\},\qquad
\widetilde{T}_{\Lambda,J}:=T_{\Lambda,J}\cap\widetilde{T}_\Lambda,
\end{equation}
which are $\widetilde{T}_{\Lambda,[1,r]}$-subsets of $T_\Lambda$. The following lemma, which explains how characters $t\in T_J$ can be lifted to $T_\Lambda$,
will be useful when we discuss the quasi-polynomial representation for extended double affine Hecke algebras in Section \ref{Extsection}.

\begin{lemma}\label{resLambdaQ} Let $J\subsetneq [0,r]$.
\begin{enumerate}
\item We have injective maps
\[
T_{\Lambda,J}/T_{\Lambda,[1,r]}\hookrightarrow T_J,\qquad \widetilde{T}_{\Lambda,J}/\widetilde{T}_{\Lambda,[1,r]}\hookrightarrow T_J
\]
defined by $sT_{\Lambda,[1,r]}\mapsto s\vert_{Q^\vee}$ and $s\widetilde{T}_{\Lambda,[1,r]}\mapsto s\vert_{Q^\vee}$, respectively.
\item The maps in \textup{(1)} are bijective if the restriction map $T_{\textup{pr}_{E^\prime}(\Lambda),J}\rightarrow T_J$, $s\mapsto s\vert_{Q^\vee}$ is surjective.
\end{enumerate}
\end{lemma}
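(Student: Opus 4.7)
The plan is to exhibit $T_{\Lambda,[1,r]}$ (resp.\ $\widetilde T_{\Lambda,[1,r]}$) as the kernel of the restriction map $T_\Lambda \to T$ (resp.\ $\widetilde T_\Lambda \to T$), check that restriction sends $T_{\Lambda,J}$ into $T_J$, and for part~(2) produce a lift of any $\mathfrak t\in T_J$ using the orthogonal decomposition $E=E'\oplus E_{\textup{co}}$ and the hypothesis on $\textup{pr}_{E'}(\Lambda)$.

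First I would record the key general fact: since $Q^\vee\subseteq\Lambda$, the dual restriction map $\rho:T_\Lambda\to T$ is a well-defined group homomorphism, and $\ker(\rho)$ consists of exactly those $s\in T_\Lambda$ with $s^{\alpha_i^\vee}=1$ for all $i\in[1,r]$, i.e.\ $\ker(\rho)=T_{\Lambda,[1,r]}$. The same computation restricted to $\widetilde T_\Lambda$ gives $\ker(\rho|_{\widetilde T_\Lambda})=\widetilde T_{\Lambda,[1,r]}$.

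Next I would verify that $\rho$ sends $T_{\Lambda,J}$ into $T_J$. For $j\in J\cap[1,r]$ this is immediate from $\alpha_j^\vee\in Q^\vee$. For $j=0$ (if $0\in J$), one uses $\alpha_0^\vee=m^2K-\varphi^\vee$ together with the convention $\mathfrak t^{\mu+\ell K}=q^\ell\mathfrak t^\mu$ on $T$: the identity $s^{\alpha_0^\vee}=q^{m^2}s^{-\varphi^\vee}=1$ for $s\in T_{\Lambda,J}$ translates directly into $(s|_{Q^\vee})^{\alpha_0^\vee}=1$. Thus restriction descends to well-defined maps $T_{\Lambda,J}/T_{\Lambda,[1,r]}\to T_J$ and $\widetilde T_{\Lambda,J}/\widetilde T_{\Lambda,[1,r]}\to T_J$, and these are injective by the kernel computation in the previous paragraph (both $T_{\Lambda,[1,r]}$ and $\widetilde T_{\Lambda,[1,r]}$ act freely on the coset $T_{\Lambda,J}$, resp.\ $\widetilde T_{\Lambda,J}$, because $T_{\Lambda,[1,r]}=T_{\Lambda,[1,r]}^{\textup{red}}$ acts on the cosets $T_{\Lambda,J}$ by multiplication).

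For part~(2) I would proceed constructively. Given $\mathfrak t\in T_J$, apply the hypothesis to obtain $\widetilde{\mathfrak t}\in T_{\textup{pr}_{E'}(\Lambda),J}$ with $\widetilde{\mathfrak t}|_{Q^\vee}=\mathfrak t$, and then define
\[
s:=\widetilde{\mathfrak t}\circ\textup{pr}_{E'}\colon\Lambda\longrightarrow\mathbf F^\times.
\]
By construction $s$ is trivial on $\Lambda\cap E_{\textup{co}}$, so $s\in\widetilde T_\Lambda$, and $s|_{Q^\vee}=\widetilde{\mathfrak t}|_{Q^\vee}=\mathfrak t$ since $\textup{pr}_{E'}$ is the identity on $Q^\vee$. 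Because $\alpha_i^\vee\in E'$ for $i\in[1,r]$ one also has $s^{\alpha_i^\vee}=\widetilde{\mathfrak t}^{\alpha_i^\vee}=1$ for $i\in J\cap[1,r]$; and when $0\in J$ the identity $\widetilde{\mathfrak t}^{\varphi^\vee}=q^{m^2}$, which follows from $\widetilde{\mathfrak t}\in T_{\textup{pr}_{E'}(\Lambda),J}$, yields $s^{\alpha_0^\vee}=q^{m^2}s^{-\varphi^\vee}=1$. Hence $s\in\widetilde T_{\Lambda,J}\subseteq T_{\Lambda,J}$, proving surjectivity of both maps.

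The only mildly delicate step is the bookkeeping at $j=0$: one has to track the convention $\mathfrak t^K=q$ consistently across the three tori $T$, $T_\Lambda$ and $T_{\textup{pr}_{E'}(\Lambda)}$ so that the condition $\mathfrak t^{\alpha_0^\vee}=1$ matches on all three sides; everything else is straightforward linear algebra using $Q^\vee\subseteq\Lambda$ and $E=E'\oplus E_{\textup{co}}$.
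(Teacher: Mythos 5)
Your proof is correct and follows essentially the same route as the paper: part (1) is the elementary kernel/coset observation (which the paper simply calls clear), and part (2) lifts $\mathfrak t\in T_J$ by composing with $\textup{pr}_{E'}$, which is exactly the paper's construction $\widetilde s^{\,\lambda}:=s^{\,\textup{pr}_{E'}(\lambda)}$. Your extra care with the $j=0$ condition via $\mathfrak t^{\alpha_0^\vee}=q^{m^2}\mathfrak t^{-\varphi^\vee}$ is the right bookkeeping and matches the paper's conventions.
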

The condition in (2) is 
for instance met when 
$\mathbf{F}$ is algebraically closed.
\begin{proof}
(1) This is clear.\\
(2) For $s\in T_{\textup{pr}_{E^\prime}(\Lambda)}$ define $\widetilde{s}\in T_\Lambda$ by 
\begin{equation}\label{LambdatoQ}
\widetilde{s}^{\,\lambda}:=s^{\,\textup{pr}_{E^\prime}(\lambda)}\qquad\quad (\lambda\in\Lambda).
\end{equation}
It gives rise to a bijective map
\begin{equation}\label{maptoQ}
T_{\textup{pr}_{E^\prime}(\Lambda),J}\overset{\sim}{\longrightarrow}\widetilde{T}_{\Lambda,J},\qquad s\mapsto \widetilde{s}
\end{equation}
satisfying $s\vert_{Q^\vee}=\widetilde{s}\vert_{Q^\vee}$
for all $s\in T_{\textup{pr}_{E^\prime}(\Lambda),J}$, 
cf. Corollary \ref{ilacor}. Combined with the assumption that the restriction map $T_{\textup{pr}_{E^\prime}(\Lambda),J}\rightarrow T_J$, $s\mapsto s\vert_{Q^\vee}$ is surjective, we conclude that the maps in (1) are surjective.
\end{proof}

The following subsets of $T_\Lambda$ will serve as induction parameters for a class of cyclic double affine Hecke algebra modules, to be defined shortly.
\begin{definition}
For $J\subsetneq [0,r]$ we set
\begin{equation}\label{LJ}
L_{\Lambda,J}:=\{t\in T_\Lambda \,\, | \,\, t^{\alpha_j^\vee}=k_j^{-2}\quad\,\,\,\, \forall\, j\in J\}.
\end{equation}
\end{definition}
Note that the condition $t^{\alpha_0^\vee}=k_0^{-2}$ reduces to $t^{\varphi^\vee}=q_\varphi k_\varphi^2$. 

We next construct an explicit base element $\mathfrak{s}_J\in T_{P^\vee}$ such that 
\[
L_{\Lambda,J}=\mathfrak{s}_{J} T_{\Lambda,J}
\]
in $T_\Lambda$, where $\mathfrak{s}_{J}$ is regarded as multiplicative character of $\Lambda$ via Corollary \ref{ilacor}.

The image $\alpha(C^J)$ of the restriction of the root $\alpha\in\Phi_0^+$ to the face $C^J$ is either $\{0\}$, $\{1\}$, or it is contained in the open interval $(0,1)$ (see the proof of Lemma \ref{faceconstantlem}). We record these three cases by the indicator function
\begin{equation}\label{etaJ}
\eta_J(\alpha):=
\begin{cases}
-1\qquad&\hbox{if }\, \alpha(C^J)=\{0\},\\
0\qquad&\hbox{if }\, \alpha(C^J)\subseteq (0,1),\\
1\qquad&\hbox{if }\, \alpha(C^J)=\{1\}. 
\end{cases}
\end{equation}
It can be alternatively described as follows.
Set
\begin{equation}\label{J0}
J_0:=[1,r]\cap J,
\end{equation}
and denote by $\Phi_{0,J_0}\subseteq \Phi_0$ the parabolic root subsystem with basis $\{\alpha_i\}_{i\in J_0}$. Write 
$\Phi_{0,J_0}^+:=\Phi_0^+\cap\Phi_{0,J_0}$.  For $\alpha\in\Phi_0^+$ denote by $n_i(\alpha)$ \textup{(}$1\leq i\leq r$\textup{)} 
the nonnegative integers such that $\alpha=\sum_{i=1}^rn_i(\alpha)\alpha_i$. 
\begin{lemma}\label{etaJdef}
For $J\subsetneq [0,r]$ we have
\begin{equation*}
\eta_J(\alpha)=
\begin{cases}
-1\qquad &\hbox{if }\, \alpha\in\Phi_{0,J_0}^+,\\
0\qquad &\hbox{if }\, \alpha\in\Phi_0^+\setminus (\Phi_0^{+,J}\cup\Phi_{0,J_0}^+),\\
1\qquad &\hbox{if }\, \alpha\in\Phi_0^{+,J},
\end{cases}
\end{equation*}
where
\begin{equation}\label{setB}
\Phi_0^{+,J}:=
\begin{cases} 
\{\alpha\in\Phi_0^+ \,\, | \,\, n_i(\alpha)=n_i(\varphi)\,\,\,\forall\, i\in [1,r]\setminus J_0\}
\qquad &\hbox{ if }\, 0\in J,\\
\emptyset \qquad &\hbox{ if }\, 0\not\in J.
\end{cases}
\end{equation}
\end{lemma}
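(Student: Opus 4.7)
The plan is to evaluate $\alpha(c)$ for $\alpha\in\Phi_0^+$ and $c\in C^J$ using the expansion $\alpha=\sum_{i=1}^r n_i(\alpha)\alpha_i$ together with the defining constraints on $C^J$. Concretely, $c\in C^J\subseteq\overline{C}_+$ gives $\alpha_i(c)=0$ for $i\in J_0$, $\alpha_i(c)>0$ for $i\in[1,r]\setminus J_0$, together with $\varphi(c)=1$ when $0\in J$ and $\varphi(c)<1$ when $0\notin J$ (this last dichotomy comes from $\alpha_0=(-\varphi,1)$ and the definition of $C^J$). Since $\varphi$ is the highest root, the standard fact $n_i(\alpha)\leq n_i(\varphi)$ for every $i$ will be used throughout.

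I would first characterize when $\alpha(c)=0$. Writing $\alpha(c)=\sum_{i\in[1,r]\setminus J_0}n_i(\alpha)\alpha_i(c)$ (the $i\in J_0$ terms vanishing), this is a sum of nonnegative terms with strictly positive factors $\alpha_i(c)$. Hence it equals $0$ precisely when $n_i(\alpha)=0$ for all $i\in[1,r]\setminus J_0$, i.e.\ $\alpha\in\Phi_{0,J_0}^+$. This handles the $\eta_J(\alpha)=-1$ case, independently of whether $0\in J$.

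Next I would characterize when $\alpha(c)=1$. Since $\alpha(c)\leq\varphi(c)$ coefficient-by-coefficient (using $n_i(\alpha)\leq n_i(\varphi)$ and $\alpha_i(c)\geq 0$), in the case $0\notin J$ we have $\alpha(c)\leq\varphi(c)<1$, so no $\alpha\in\Phi_0^+$ can satisfy $\alpha(c)=1$, matching $\Phi_0^{+,J}=\emptyset$. When $0\in J$, we have $\varphi(c)=1$, and the identity
\[
\varphi(c)-\alpha(c)=\sum_{i\in[1,r]\setminus J_0}\bigl(n_i(\varphi)-n_i(\alpha)\bigr)\alpha_i(c)
\]
is a sum of nonnegative terms with positive factors $\alpha_i(c)$, which vanishes exactly when $n_i(\alpha)=n_i(\varphi)$ for every $i\in[1,r]\setminus J_0$, i.e.\ $\alpha\in\Phi_0^{+,J}$.

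Finally, the remaining case $\alpha\in\Phi_0^+\setminus(\Phi_{0,J_0}^+\cup\Phi_0^{+,J})$ is precisely when neither of the boundary equalities occurs, so $0<\alpha(c)<1$, giving $\eta_J(\alpha)=0$. The only slightly subtle point is the dependence on whether $0\in J$, which is cleanly absorbed into the definition \eqref{setB} by declaring $\Phi_0^{+,J}=\emptyset$ when $0\notin J$; once this bookkeeping is in place, the three cases exhaust $\Phi_0^+$ disjointly, completing the proof. I do not anticipate any serious obstacle: the argument is essentially the observation that $\alpha(c)$ lies in $[0,\varphi(c)]$ with the boundary values realized exactly under the stated combinatorial conditions on the coefficients $n_i(\alpha)$.
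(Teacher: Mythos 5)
Your proof is correct, and it is essentially the argument the paper has in mind: the paper's proof of Lemma \ref{etaJdef} is just the remark that it is "similar to the proof of Lemma \ref{faceconstantlem}", i.e.\ a direct analysis of $\alpha(\cc)$ for $\cc\in C^J$ using the defining (in)equalities of the face, which is exactly what your coefficient expansion $\alpha(\cc)=\sum_{i\in[1,r]\setminus J_0}n_i(\alpha)\alpha_i(\cc)$ together with $n_i(\alpha)\leq n_i(\varphi)$ carries out explicitly. Your write-up in fact supplies the details the paper leaves implicit, including the verification that $\alpha(C^J)$ is $\{0\}$, $\{1\}$, or contained in $(0,1)$, so there is nothing to fix.
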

\begin{proof}
This is similar to the proof of Lemma \ref{faceconstantlem}.  
\end{proof}
\begin{definition}
For $\Lambda\in\mathcal{L}$ and $J\subsetneq [0,r]$ define the base-point $\mathfrak{s}_J\in T_{\Lambda}$ by 
\begin{equation}\label{fraksprime}
\mathfrak{s}_J:=\prod_{\alpha\in\Phi_0^+}k_\alpha^{\eta_J(\alpha)\alpha},
\end{equation}
i.e., $\mathfrak{s}_J$ is the multiplicative character mapping $\lambda\in\Lambda$ to $\prod_{\alpha\in\Phi_0^+}k_\alpha^{\eta_J(\alpha)\alpha(\lambda)}$.
\end{definition}
Special cases are 
\[
\mathfrak{s}_{[1,r]}=\prod_{\alpha\in\Phi_0^+}k_\alpha^{-\alpha},\qquad\quad \mathfrak{s}_{\{0\}}=k_\varphi^{\varphi},\qquad\quad \mathfrak{s}_\emptyset=1_{T_{\Lambda}},
\] 
with $1_{T_{\Lambda}}$ the neutral element of $T_{\Lambda}$. Note that 
$\mathfrak{s}_J\vert_\Lambda=j_\Lambda(\mathfrak{s}_J\vert_{P^\vee})\in T_\Lambda$ since $\mathfrak{s}_J$ maps $\textup{pr}_{E_{\textup{co}}}(\Lambda)$ to $1$.

\begin{proposition}\label{lemmabasepoint}
For $J\subsetneq [0,r]$ we have
\[
\mathfrak{s}_J^{D\alpha_j^\vee}=k_j^{-2}\qquad \forall\, j\in J.
\]
In particular, $L_{\Lambda,J}=\mathfrak{s}_{J}T_{\Lambda,J}$ in $T_\Lambda$ for all $\Lambda\in\mathcal{L}$.
\end{proposition}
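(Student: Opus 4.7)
The proposition consists of two assertions: the identity $\mathfrak{s}_J^{D\alpha_j^\vee}=k_j^{-2}$ for $j\in J$, and the equality of cosets $L_{\Lambda,J}=\mathfrak{s}_JT_{\Lambda,J}$. My strategy is to reduce the first assertion to two pairing-up arguments over positive roots, and to derive the second using the relation $(tt')^{a^\vee}=t^{a^\vee}t'^{Da^\vee}$ together with a coset comparison.

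Using Lemma \ref{etaJdef} I first rewrite
\[
\mathfrak{s}_J^{D\alpha_j^\vee}=\prod_{\alpha\in\Phi_0^{+,J}}k_\alpha^{\alpha(D\alpha_j^\vee)}\cdot\prod_{\alpha\in\Phi_{0,J_0}^+}k_\alpha^{-\alpha(D\alpha_j^\vee)}.
\]
For $j\in J_0$ we have $D\alpha_j^\vee=\alpha_j^\vee$, and the simple reflection $s_j$ stabilises $\Phi_{0,J_0}^+\setminus\{\alpha_j\}$ without fixed points; since $k_{s_j\alpha}=k_\alpha$ and $(s_j\alpha)(\alpha_j^\vee)=-\alpha(\alpha_j^\vee)$, every $\{\alpha,s_j\alpha\}$-pair cancels, leaving only the contribution $k_j^{-2}$ from $\alpha=\alpha_j$. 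A parallel argument shows the first product is $1$: vacuous if $0\notin J$, and otherwise $s_j$ stabilises $\Phi_0^{+,J}$ (it preserves the $\alpha_i$-coefficients for $i\notin J_0$ because $j\in J_0$, and no element of $\Phi_0^{+,J}$ equals $\alpha_j$, as $\alpha_j$ has vanishing coefficients at every $i\in[1,r]\setminus J_0$, which is nonempty since $J\subsetneq[0,r]$).

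The case $j=0\in J$ reduces to $\mathfrak{s}_J^{\varphi^\vee}=k_\varphi^2$, because $D\alpha_0^\vee=-\varphi^\vee$ and $k_0=k_\varphi$. By \eqref{thetalength}, $\alpha(\varphi^\vee)\in\{0,1\}$ for $\alpha\in\Phi_0^+\setminus\{\varphi\}$ while $\varphi(\varphi^\vee)=2$. The root $\varphi$ lies in $\Phi_0^{+,J}$ and contributes $k_\varphi^2$. All remaining nontrivial contributions come from $A_1:=\{\alpha\in\Phi_0^+\setminus\{\varphi\}\mid\alpha(\varphi^\vee)=1\}$, on which the involution $\alpha\mapsto\varphi-\alpha=-s_\varphi\alpha$ acts freely. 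This involution is length-preserving, and the identity $n_i(\varphi-\alpha)=n_i(\varphi)-n_i(\alpha)$ shows it exchanges $\Phi_{0,J_0}^+\cap A_1$ with $\Phi_0^{+,J}\cap A_1$ and preserves $A_1\setminus(\Phi_{0,J_0}^+\cup\Phi_0^{+,J})$, whence $\eta_J(\alpha)+\eta_J(\varphi-\alpha)=0$ on every pair. This is the main combinatorial step, and it makes the entire contribution of $A_1$ collapse to $1$, yielding $\mathfrak{s}_J^{\varphi^\vee}=k_\varphi^2$.

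For the second assertion, the formula $(tt')^{a^\vee}=t^{a^\vee}t'^{Da^\vee}$ shows that $L_{\Lambda,J}$ and $\mathfrak{s}_JT_{\Lambda,J}$ are both right $T_{\Lambda,J}^{\textup{red}}$-cosets once nonempty. Applying this formula with $t=\mathfrak{s}_J$ and $t'\in T_{\Lambda,J}$, together with the first assertion, yields $(\mathfrak{s}_Jt')^{\alpha_j^\vee}=\mathfrak{s}_J^{\alpha_j^\vee}t'^{D\alpha_j^\vee}=k_j^{-2}$ directly for $j\in J_0$. For $j=0\in J$ the $q$-factor $q_\varphi$ in $\mathfrak{s}_J^{\alpha_0^\vee}=q_\varphi\mathfrak{s}_J^{-\varphi^\vee}$ cancels against $t'^{-\varphi^\vee}=q_\varphi^{-1}$, which follows from $t'^{\alpha_0^\vee}=1$, giving $(\mathfrak{s}_Jt')^{\alpha_0^\vee}=q_\varphi\cdot k_\varphi^{-2}\cdot q_\varphi^{-1}=k_0^{-2}$. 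Hence $\mathfrak{s}_JT_{\Lambda,J}\subseteq L_{\Lambda,J}$, and the coset comparison promotes this inclusion to equality (if either side is empty, so is the other, by the same kind of computation applied to $\mathfrak{s}_J^{-1}$). The main obstacle is the $j=0$ bookkeeping, where the combinatorics of the involution $\alpha\mapsto\varphi-\alpha$ must be combined carefully with the $q$-shifts arising from the convention $\mathfrak{t}^{\mu+\ell K}=q^\ell\mathfrak{t}^\mu$.
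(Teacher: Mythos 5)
Your proof is correct and is essentially the paper's argument: the paper proves the same two pairing identities, only packaged as a computation of $s_{D\alpha_j}\mathfrak{s}_J$ (using $s_iC^J=C^J$ for $i\in J_0$, and for $j=0$ the free action of $\alpha\mapsto\varphi-\alpha$ on $\{\alpha\in\Phi_0^+\,:\,\alpha(\varphi^\vee)=1\}$ together with $\eta_J(\varphi-\alpha)=-\eta_J(\alpha)$) followed by an appeal to \eqref{actiononT}, whereas you evaluate $\mathfrak{s}_J^{D\alpha_j^\vee}$ directly via Lemma \ref{etaJdef}; the coset statement is deduced in the same one-line way in both. One small inaccuracy: for $j\in J_0$ the reflection $s_j$ does in general have fixed points in $\Phi_{0,J_0}^+\setminus\{\alpha_j\}$ (any root of $\Phi_{0,J_0}^+$ orthogonal to $\alpha_j$), so the phrase ``without fixed points'' is wrong as stated, but this is harmless because a fixed root satisfies $\alpha(\alpha_j^\vee)=0$ and contributes trivially to the product; the same remark applies to the $s_j$-action on $\Phi_0^{+,J}$ in your treatment of the case $0\in J$.
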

 \begin{proof}
 Let $i\in J_0$. Since $s_iC^J=C^J$ we have $\eta_J(s_i\alpha)=\eta_J(\alpha)$ for all $\alpha\in\Phi_0^+\setminus\{\alpha_i\}$. Furthermore, $\eta_J(\alpha_i)=-1$, and hence
 \[
 s_i\mathfrak{s}_J=k_i^{-\eta_J(\alpha_i)\alpha_i}\prod_{\alpha\in\Phi_0^+\setminus\{\alpha_i\}}k_\alpha^{\eta_J(\alpha)s_i\alpha}=
 k_i^{2\alpha_i}\mathfrak{s}_J.
 \]
 By \eqref{actiononT} we conclude that $\mathfrak{s}_J^{\alpha_i^\vee}=k_i^{-2}$.
 
 Assume now that $0\in J$. It then remains to show that $\mathfrak{s}_J^{\varphi^\vee}=k_\varphi^2$. By \eqref{thetalength} we have
 \[
 s_\varphi\mathfrak{s}_J=k_\varphi^{-2\eta_J(\varphi)\varphi}\Bigl(\prod_{\alpha\in\Phi_0^+:\, \alpha(\varphi^\vee)=1}k_\alpha^{-\eta_J(\alpha)}\Bigr)^\varphi\mathfrak{s}_J.
 \]
 Clearly $\eta_J(\varphi)=1$, and the $2$-group $\langle -s_\varphi\rangle$ acts freely on $\{\alpha\in\Phi_0^+\,\, | \,\, \alpha(\varphi^\vee)=1\}$ since
 $\Phi_0$ is reduced. Let $Z$ be a complete set of representatives of the $\langle -s_\varphi\rangle$-orbits in $\{\alpha\in\Phi_0^+\,\, | \,\, \alpha(\varphi^\vee)=1\}$, then we conclude that
 \[
 s_\varphi\mathfrak{s}_J=k_\varphi^{-2\varphi}\Bigl(\prod_{\alpha\in Z}k_\alpha^{-(\eta_J(\alpha)+\eta_J(-\alpha+\varphi))}\Bigr)^\varphi\mathfrak{s}_J.
 \]
 But for $\alpha\in Z$ we have $\eta_J(-\alpha+\varphi)=-\eta_J(\alpha)$ (cf. Lemma \ref{etaJdef}), hence $s_\varphi\mathfrak{s}_J=k_\varphi^{-2\varphi}\mathfrak{s}_J$.
 By \eqref{actiononT} we then have $\mathfrak{s}_J^{\varphi^\vee}=k_\varphi^2$, which completes the proof of the proposition. 
 \end{proof}
 
\subsection{The $\mathbb{H}$-module $\mathbb{M}^J_t$}\label{SectionMt}
We introduce a family of $\mathbb{H}$-modules, obtained by induction from one-dimensional representations of  
$Y$-parabolic subalgebras.

\begin{definition}
Let $J\subsetneq [0,r]$. 
\begin{enumerate}
\item The $X$-parabolic subalgebra $\mathbb{H}_J^X\subseteq\mathbb{H}$ is the unital subalgebra generated
by $T_j$ \textup{(}$j\in J$\textup{)} and $\mathcal{P}$.
\item The $Y$-parabolic subalgebra $\mathbb{H}_J^Y\subseteq\mathbb{H}$ is the image of $\mathbb{H}_J^X$
under the duality anti-involution $\delta$. 
\end{enumerate} 
\end{definition}
Concretely, $\mathbb{H}_{J}^Y\subseteq\mathbb{H}$ is the unital subalgebra generated by $\delta(T_j)$ \textup{(}$j\in J$\textup{)} and 
$\mathcal{P}_Y$.
For $i\in J_0$ the generator $\delta(T_i)$ is simply $T_i$. If $0\in J$ then the algebraic generator $\delta(T_0)=Y^{-\varphi^\vee}T_0x^{-\varphi^\vee}$ of $\mathbb{H}_J^Y$
may be replaced by $U_0$, since $Y^{\varphi^\vee}\in\mathbb{H}_J^Y$.

\begin{lemma}\label{chitJ}
For $t\in L_{J}$
there exists a unique algebra homomorphism $\chi_{J,t}^X: \mathbb{H}_{J}^X\rightarrow\mathbf{F}$
satisfying  $\chi_{J,t}^X(p)=p(t)$ \textup{(}$p\in\mathcal{P}$\textup{)} and $\chi_{J,t}^X(T_j)=k_j$ \textup{(}$j\in J$\textup{)}.
\end{lemma}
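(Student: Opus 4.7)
Uniqueness is immediate: the prescribed values on the generating set $\{T_j\}_{j \in J} \cup \mathcal{P}$ determine $\chi_{J,t}^X$ on all of $\mathbb{H}_J^X$. For existence, the plan is to verify that the prescribed values are consistent with the defining relations of $\mathbb{H}_J^X$. The parabolic analog of Cherednik's PBW theorem (a direct consequence of the full PBW decomposition recalled after Theorem \ref{deltatheorem}) shows that $\{x^\mu T_w : \mu \in Q^\vee,\, w \in W_J\}$ is an $\mathbf{F}$-basis of $\mathbb{H}_J^X$, so the relations (a), (b), (c), (d) of Definition \ref{defDAHA} restricted to $j, j' \in J$ form a complete set of defining relations. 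It therefore suffices to check each family of relations after the substitution $T_j \mapsto k_j$, $x^\mu \mapsto t^\mu$.

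The relations (b) and (c) are immediate, and the braid relations (a) follow from the $W$-invariance of $\mathbf{k}$: when $m_{jj'}$ is odd (necessarily $m_{jj'} = 3$), the simple roots $\alpha_j, \alpha_{j'}$ lie in a single $W_0$-orbit so $k_j = k_{j'}$, and both sides reduce to $k_j^3$; when $m_{jj'}$ is even, both sides coincide in the commutative ring $\mathbf{F}$. The principal computation is the cross relation \eqref{crossX} for $j \in J$, where the hypothesis $t \in L_J$ enters decisively. Substituting, the left-hand side becomes $k_j(t^\mu - t^{s_j\mu})$, while the right-hand side is the value at $t$ of the polynomial $(k_j - k_j^{-1})(x^\mu - s_j(x^\mu))/(1 - x^{\alpha_j^\vee}) \in \mathcal{P}$. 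The elementary identity $(k_j - k_j^{-1})/(1 - k_j^{-2}) = k_j$, combined with $t^{\alpha_j^\vee} = k_j^{-2}$, yields the right-hand side as $k_j(t^\mu - t^{s_j\mu})$, matching the left-hand side. The degenerate sub-case $k_j^2 = 1$ is handled separately: then $k_j - k_j^{-1} = 0$ and $t^{\alpha_j^\vee} = 1$ forces $t^{s_j\mu} = t^\mu$ for all $\mu \in Q^\vee$, so both sides vanish.

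No single step poses a genuine obstacle. The only real subtlety is the invocation of the parabolic PBW statement to reduce the question to a finite list of relations; once this is granted, the verification is a direct calculation. The case $j = 0 \in J$ appears slightly different in form because $x^{\alpha_0^\vee} = q_\varphi x^{-\varphi^\vee}$ and $s_0(x^\mu) = q_\varphi^{\varphi(\mu)} x^{s_\varphi \mu}$, but the same computation goes through with $t^{\varphi^\vee} = q_\varphi k_\varphi^2$ in place of the analogous identity for finite simple roots, so no new obstacle arises.
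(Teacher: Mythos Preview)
Your proof is correct and follows essentially the same approach as the paper's: reduce to checking the defining relations of $\mathbb{H}_J^X$, note that $T_j \mapsto k_j$ gives the trivial character of $H_J$, and verify the cross relation using $t^{\alpha_j^\vee} = k_j^{-2}$, with the degenerate case $k_j^2 = 1$ handled separately. You are more explicit than the paper about uniqueness, the parabolic PBW justification, and the braid-relation check (which the paper absorbs into the phrase ``the trivial one-dimensional representation of $H_J$''), but the underlying argument is identical.
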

\begin{proof}
Let $H_J\subseteq H$ be the parabolic sub-algebra generated by $T_j$ ($j\in J$).
A presentation of $\mathbb{H}^X_J$ in terms of $\mathcal{P}$ and $H_J$ is given by Definition \ref{defDAHA}, with the indices $j$ and $j^\prime$ now taken from the subset $J$. 

The assignment  $T_j\mapsto k_j$ ($j\in J$) defines the trivial one-dimensional representation of $H_J$. To show that it extends to a well defined algebra map $\chi_{J,t}^X: \mathbb{H}_J^X\rightarrow\mathbf{F}$ satisfying $\chi_{J,t}^X(p):=p(t)$ ($p\in\mathcal{P}$) it suffices to show that the cross relations \eqref{crossX} for $p\in\mathcal{P}$ and $j\in J$ are respected by $\chi_{J,t}^X$. 
 
 If $j\in J$ and $k_j^2=1$ then $t\in L_{J}$ implies
 $t^{\alpha_j^\vee}=1$, hence $s_jt=t$. In this case the cross relation \eqref{crossX} reduces to $T_jp=s_j(p)T_j$, which is clearly respected by $\chi_{J,t}^X$.

If $j\in J$ and $k_j^2\not=1$ then $t^{\alpha_j^\vee}=k_j^{-2}\not=1$, and a direct computation shows that
$\chi_{J,t}^X$ respects the cross relation \eqref{crossX}.
\end{proof}
\begin{corollary}
For $t\in L_{J}$ there exists a unique algebra map $\chi_{J,t}: \mathbb{H}_J^Y\rightarrow\mathbf{F}$ satisfying
\begin{equation}\label{expchitJ}
\begin{split}
\chi_{J,t}(p(Y))&=p(t^{-1})\qquad\,\,\,\, (p\in\mathcal{P}),\\
\chi_{J,t}(T_i)&=k_i\qquad\qquad\,\,\, (i\in J_0),\\
\chi_{J,t}(T_0x^{-\varphi^\vee})&=k_\varphi t^{-\varphi^\vee}\qquad \hbox{ if }\,\,\, 0\in J.
\end{split}
\end{equation}
\end{corollary}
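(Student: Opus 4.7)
The plan is to deduce the corollary directly from Lemma~\ref{chitJ} by pulling back along the duality anti-involution. Since $\mathbb{H}_J^Y=\delta(\mathbb{H}_J^X)$ and $\delta^2=\mathrm{id}$ by Theorem~\ref{deltatheorem}, the restriction $\delta|_{\mathbb{H}_J^Y}\colon\mathbb{H}_J^Y\to\mathbb{H}_J^X$ is an anti-algebra isomorphism. I would define
\[
\chi_{J,t}:=\chi_{J,t}^X\circ\delta|_{\mathbb{H}_J^Y}.
\]
This is an honest $\mathbf{F}$-algebra homomorphism, because $\chi_{J,t}^X$ is multiplicative, $\delta$ is anti-multiplicative, and the target $\mathbf{F}$ is commutative (so anti-homomorphisms and homomorphisms into $\mathbf{F}$ coincide).

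Next I would verify the three defining identities. For $p=\sum_\mu d_\mu x^\mu\in\mathcal{P}$ one has $\delta(p(Y))=\sum_\mu d_\mu x^{-\mu}$, so $\chi_{J,t}(p(Y))=\sum_\mu d_\mu t^{-\mu}=p(t^{-1})$, giving (1). For $i\in J_0\subseteq[1,r]$ the identity $\delta(T_i)=T_i$ yields $\chi_{J,t}(T_i)=\chi_{J,t}^X(T_i)=k_i$, giving (2). For (3), the crucial observation is that although $T_0x^{-\varphi^\vee}$ is not one of the obvious generators $\delta(T_j)$ of $\mathbb{H}_J^Y$, the identity $\delta(T_0)=Y^{-\varphi^\vee}T_0x^{-\varphi^\vee}$ from \eqref{deltaUzero} can be rewritten as
\[
T_0x^{-\varphi^\vee}=Y^{\varphi^\vee}\,\delta(T_0),
\]
which visibly lies in $\mathbb{H}_J^Y$ when $0\in J$. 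Applying $\chi_{J,t}$ and using multiplicativity together with $k_0=k_\varphi$ then gives $\chi_{J,t}(T_0x^{-\varphi^\vee})=t^{-\varphi^\vee}\cdot k_0=k_\varphi t^{-\varphi^\vee}$.

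For uniqueness, I would observe that $\mathbb{H}_J^Y$ is generated as a unital algebra by $\mathcal{P}_Y$ together with $\{\delta(T_j)\}_{j\in J}$. For $i\in J_0$ we have $\delta(T_i)=T_i$, and for $0\in J$ we can solve $\delta(T_0)=Y^{-\varphi^\vee}(T_0x^{-\varphi^\vee})$, so that $T_0x^{-\varphi^\vee}$ is itself an algebra generator equivalent to $\delta(T_0)$ modulo $\mathcal{P}_Y$. Hence the elements listed in \eqref{expchitJ} generate $\mathbb{H}_J^Y$, and any algebra map out of $\mathbb{H}_J^Y$ is determined by its values on them. The only non-formal step in the whole argument is recognizing, via \eqref{deltaUzero}, that $T_0x^{-\varphi^\vee}$ belongs to $\mathbb{H}_J^Y$ and is the natural image of $T_0$ under the duality; everything else is bookkeeping.
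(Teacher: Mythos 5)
Your proposal is correct and follows the paper's own route: both define $\chi_{J,t}:=\chi_{J,t}^X\circ\delta\vert_{\mathbb{H}_J^Y}$ and then verify \eqref{expchitJ} by a direct check, using $\delta(T_0)=Y^{-\varphi^\vee}T_0x^{-\varphi^\vee}$ from \eqref{deltaUzero} for the third identity. Your explicit uniqueness argument via generators is a fine (if routine) addition that the paper leaves implicit.
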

\begin{proof}
Composing the anti-algebra isomorphism $\delta\vert_{\mathbb{H}_J^Y}: \mathbb{H}_J^Y\overset{\sim}{\longrightarrow}\mathbb{H}_J^X$ with the algebra map $\chi_{J,t}^X: \mathbb{H}_J^X\rightarrow\mathbf{F}$ gives a well defined algebra map 
\begin{equation}\label{chit}
\chi_{J,t}:=\chi_{J,t}^X\circ\delta\vert_{\mathbb{H}_J^Y}: \mathbb{H}_J^Y\rightarrow\mathbf{F}.
\end{equation}
A direct check shows that $\chi_{J,t}$ satisfies \eqref{expchitJ}. 
\end{proof}
Write 
\[
\mathbf{F}_{J,t}=\mathbf{F}1_{J,t}
\]
for the one-dimensional 
$\mathbb{H}_{J}^Y$-module with representation map 
$\chi_{J,t}$.
\begin{definition}\label{defMM}
We call
\[
\mathbb{M}^J_t:=\mathbb{H}\otimes_{\mathbb{H}_{J}^Y}\mathbf{F}_{J,t}
\]
the $Y$-parabolically induced cyclic $\mathbb{H}$-module relative to $J\subsetneq [0,r]$ and $t\in L_J$. 
\end{definition}
Write
\[
m^J_t:=1\otimes_{\mathbb{H}_{J}^Y}1_{J,t}\in\mathbb{M}^J_t.
\]
It is a cyclic vector of $\mathbb{M}^J_t$, and 
\[
hm^J_t=\chi_{J,t}(h)m^J_t\qquad\forall\, h\in\mathbb{H}_J^Y.
\]
In particular, 
$p(Y)m^J_t=p(t^{-1})m^J_t$ for all $p\in\mathcal{P}$. In Cherednik's terminology \cite[\S 3.6]{Ch},
$\mathbb{M}^J_t$ is a $Y$-cyclic $\mathbb{H}$-module, and the $\mathbb{H}$-module $\mathbb{M}^{\emptyset}_t$ ($t\in T$) is the universal $Y$-cyclic $\mathbb{H}$-module $\mathcal{I}_Y[t^{-1}]$ from \cite[\S 3.6.1]{Ch}. See \cite{V} for a geometric approach to the representation theory of $\mathbb{H}$.
\begin{remark}\label{specNJt}\hfill 
\begin{enumerate}
\item For $t\in T$ write
\begin{equation}\label{Jt}
J(t):=\{j\in [0,r] \,\, | \,\,  t^{\alpha_j^\vee}=k_j^{-2}\}.
\end{equation}
Then $L_J=\bigsqcup_{J^\prime\supseteq J}L^{J^\prime}$ (disjoint union) with 
\[
L^{J^\prime}:=\{ t\in T \,\ | \,\, J(t)=J^\prime\}.
\]
If $t^\prime\in L^{J^\prime}$ and $J^\prime\supseteq J$, then $M^{J^\prime}_{t^\prime}$ is a quotient of $M^J_{t^\prime}$. In Theorem \ref{irredTHM} we will show that $M^J_t$ is a simple $\mathbb{H}$-module for generic $t\in L^J$.
\item The $\mathbb{H}$-representations $\mathbb{M}^I_t$ ($I\subseteq [1,r]$ \& $t\in L^I$) are analogues of the so-called standard modules $M(\lambda,\textup{Triv})$ of the
rational Cherednik algebra, as defined in \cite{BEG}. Here $\lambda\in\mathfrak{h}^*$ with $\mathfrak{h}:=E\otimes_{\mathbb{R}}\mathbb{C}$, and $\textup{Triv}$ is the trivial representation of $W_{0,\lambda}$, with $W_{0,\lambda}\subseteq W_0$ the stabiliser subgroup of $\lambda$.
\end{enumerate}
\end{remark}
The``Fourier dual'' of $\mathbb{M}^J_t$ ($t\in L_J$) is the $X$-parabolically induced cyclic $\mathbb{H}$-module
\[
{}^X\mathbb{M}^J_t:=\mathbb{H}\otimes_{\mathbb{H}_J^X}\mathbf{F}^X_{J,t},
\]
with $\mathbf{F}^X_{J,t}$ the one-dimensional $\mathbb{H}_J^X$-module with representation map $\chi_{J,t}^X$. 

\begin{remark}\label{realIND}
The following explicit realisations of the
 $Y$-parabolically and $X$-para\-bo\-li\-cally induced $\mathbb{H}$-modules are known:
\begin{enumerate}
\item Cherednik's \cite{Ch} polynomial representation $\pi: \mathbb{H}\rightarrow\textup{End}(\mathcal{P})$, defined in terms of Demazure-Lusztig operators (see \eqref{Chpolrep}). It is isomorphic to $\mathbb{M}^{[1,r]}_{t_{\textup{sph}}}$
with 
\begin{equation}\label{tsph}
t_{\textup{sph}}:=\mathfrak{s}_{[1,r]}=\prod_{\alpha\in\Phi_0^+}k_\alpha^{-\alpha}\in L_{[1,r]}.
\end{equation}
Under suitable generic conditions on $q$ and $\mathbf{k}$, nonsymmetric Macdonald polynomials provide the simultaneous eigenfunctions of the commuting Cherednik operators $\pi(Y^\mu)$ ($\mu\in Q^\vee$).
\item Analytic realisation of the universal $Y$-cyclic $\mathbb{H}$-module $\mathbb{M}_t^\emptyset$ for generic $t\in T$. In this case $\mathbf{F}=\mathbb{C}$ and $0<k_a,q<1$,
and $\mathbb{M}_t^\emptyset$ is realised as subrepresentation of the space $\mathcal{M}(T)$ of meromorphic functions on the complex torus $T$, with the action again defined by Demazure-Lusztig operators. The cyclic vectors
$m_t^\emptyset$ are $\mathcal{E}(\cdot,w_0t)$ with $\mathcal{E}$ Cherednik's \cite{ChMM} global spherical function $\mathcal{E}\in\mathcal{M}(T\times T)$, also known as the basic hypergeometric function \cite[Thm. 2.13]{StAoM}. 
The simultaneous eigenfunctions for the action of the commuting Cherednik operators on $\mathcal{M}(T)$ are of the form $\mathcal{E}(\cdot,t^\prime)$ for appropriate $t^\prime\in T$.
\item Cherednik's realisation of ${}^X\mathbb{M}^J_t$ for generic $t\in L_J$ on the space of finitely supported $\mathbf{F}$-valued functions on $W/W_J$, with the action defined in terms of discrete Demazure-Lusztig type operators \cite[\S 3.4.2]{Ch}. The delta-functions are the simultaneous eigenfunctions for the action of the multiplication operators $x^\mu$ ($\mu\in Q^\vee$).
\end{enumerate}
\end{remark}

In the following section we realise the $Y$-parabolically induced $\mathbb{H}$-modules $\mathbb{M}^J_t$ ($t\in L_J$) on {\it spaces of quasi-polynomials}. The resulting simultaneous eigenfunctions for the action of $Y^\mu$ ($\mu\in Q^\vee$) become quasi-polynomial generalisations of the nonsymmetric Macdonald polynomials.

\begin{lemma}\label{mbasis}
For $t\in L_J$ set
\begin{equation}\label{mw}
m^J_{w;t}:=\delta(T_{w^{-1}})\,m^J_t\in\mathbb{M}^J_t\qquad (w\in W^J).
\end{equation}
Then $\{m^J_{w;t}\,\, | \,\, w\in W^J\}$ 
 is a  basis of $\mathbb{M}^J_t$.
\end{lemma}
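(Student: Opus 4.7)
The plan is to reduce the statement to the assertion that $\mathbb{H}$ is a free right $\mathbb{H}_J^Y$-module with basis $\{\delta(T_{w^{-1}})\}_{w\in W^J}$. Granting this, tensoring over $\mathbb{H}_J^Y$ with the one-dimensional module $\mathbf{F}_{J,t}$ immediately gives
\[
\mathbb{M}^J_t \;=\; \bigoplus_{w\in W^J} \bigl(\delta(T_{w^{-1}})\otimes_{\mathbb{H}_J^Y}\mathbf{F}_{J,t}\bigr) \;=\; \bigoplus_{w\in W^J}\mathbf{F}\,m^J_{w;t},
\]
which is the desired conclusion. Applying the anti-algebra involution $\delta$, the required decomposition of $\mathbb{H}$ is equivalent to the \emph{left} $\mathbb{H}_J^X$-module decomposition
\[
\mathbb{H} \;=\; \bigoplus_{w\in W^J} \mathbb{H}_J^X\, T_{w^{-1}},
\]
which is what I would actually prove.

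To establish this, I would assemble three standard ingredients. First, Cherednik's PBW theorem gives $\mathbb{H}=\bigoplus_{u\in W}\mathcal{P}\,T_u$ (either order works by \eqref{crossX}). Second, repeated application of the cross relations \eqref{crossX} for $j\in J$ shows that every element of $\mathbb{H}_J^X$ lies in $\sum_{w'\in W_J}\mathcal{P}\,T_{w'}$, and comparison with the global PBW basis yields $\mathbb{H}_J^X=\bigoplus_{w'\in W_J}\mathcal{P}\,T_{w'}$. Third, by Lemma \ref{cosetcomb}(1), $W^J$ is a complete set of minimal representatives of $W/W_J$ with $\ell(ww')=\ell(w)+\ell(w')$ for $w\in W^J$ and $w'\in W_J$; inverting, the set $\{w^{-1}:w\in W^J\}$ is a complete system of minimal representatives of $W_J\backslash W$ with $\ell(w'w^{-1})=\ell(w')+\ell(w^{-1})$ for all $w'\in W_J$, so $T_{w'w^{-1}}=T_{w'}T_{w^{-1}}$ in the affine Hecke algebra $H$. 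The map $(w',w)\mapsto w'w^{-1}$ is a bijection $W_J\times W^J\to W$, hence $H=\bigoplus_{w\in W^J}H_J\,T_{w^{-1}}$.

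Combining the three: $\mathbb{H}=\mathcal{P}\cdot H=\bigoplus_{w\in W^J}\mathcal{P}\cdot H_J\cdot T_{w^{-1}}=\sum_{w\in W^J}\mathbb{H}_J^X\,T_{w^{-1}}$. Directness of this last sum is checked directly: a relation $\sum_{w\in W^J}h_wT_{w^{-1}}=0$ with $h_w=\sum_{w'\in W_J}p_{w,w'}T_{w'}$ expands to $\sum_{w,w'}p_{w,w'}T_{w'w^{-1}}=0$, and since the affine Weyl group elements $w'w^{-1}$ are pairwise distinct, the PBW basis forces $p_{w,w'}=0$ throughout. Applying $\delta$ (which sends $\mathbb{H}_J^X$ to $\mathbb{H}_J^Y$ and reverses multiplication) then yields $\mathbb{H}=\bigoplus_{w\in W^J}\delta(T_{w^{-1}})\,\mathbb{H}_J^Y$, and the proof is completed as in the first paragraph.

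The only real obstacle is bookkeeping: $W^J$ parametrizes $W/W_J$ (i.e.\ left cosets), whereas the decomposition of $H$ as a \emph{left} $H_J$-module naturally proceeds via $W_J\backslash W$. The passage to inverses, together with the corresponding length additivity, handles this cleanly; beyond that, no input deeper than PBW and the parabolic coset combinatorics is needed.
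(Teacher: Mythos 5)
Your proposal is correct and follows essentially the same route as the paper: the paper's proof simply asserts, as a consequence of the PBW theorem, that $\mathbb{H}$ is a free left $\mathbb{H}_J^X$-module with basis $\{T_{w^{-1}}\mid w\in W^J\}$ and then applies $\delta$. You merely spell out the PBW/coset bookkeeping (the decomposition $\mathbb{H}_J^X=\bigoplus_{w'\in W_J}\mathcal{P}T_{w'}$ and the passage between $W/W_J$ and $W_J\backslash W$) that the paper leaves implicit, and that elaboration is accurate.
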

\begin{proof}
The Poincar{\'e}-Birkhoff-Witt Theorem for $\mathbb{H}$ implies that $\mathbb{H}$ is a free left $\mathbb{H}_{J}^{X}$-module with basis $\{T_{w^{-1}}\,\, | \,\, w\in W^J\}$. Now apply the anti-involution $\delta$ and recall that $\mathbb{H}_{J}^Y=\delta(\mathbb{H}_{J}^X)$. 
\end{proof}
\begin{lemma}\label{mbasisvdmonomial}
For $\mu\in Q^\vee\cap\overline{E}_-$ and $v\in W_0$ such that $\tau(\mu)v\in W^J$ we have 
\[
m_{\tau(\mu)v;t}^J=x^\mu T_vm_t^J.
\]
\end{lemma}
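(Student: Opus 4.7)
The plan is to unpack the definition of $m^J_{\tau(\mu)v;t}$ and rewrite $\delta(T_{(\tau(\mu)v)^{-1}})$ as $x^\mu T_v$ using length additivity, the identification $Y^\nu = T_{\tau(\nu)}$ for antidominant... I mean dominant $\nu$, and the action of $\delta$ on generators.

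First I would establish the length decomposition. Since $\mu\in Q^\vee\cap\overline{E}_-$, formula \eqref{wmuneg} gives $w_\mu=\tau(\mu)$, and then \eqref{lengthwmu} yields
\[
\ell(\tau(\mu)v) = \ell(\tau(\mu)) + \ell(v) \qquad \forall v\in W_0.
\]
Taking inverses preserves length, so $\ell(v^{-1}\tau(-\mu))=\ell(\tau(-\mu))+\ell(v^{-1})$. Choosing reduced expressions for $\tau(-\mu)$ and $v^{-1}$ and concatenating gives a reduced expression for $(\tau(\mu)v)^{-1}=v^{-1}\tau(-\mu)$, and hence
\[
T_{(\tau(\mu)v)^{-1}} = T_{v^{-1}}\,T_{\tau(-\mu)}.
\]

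Next I would identify $T_{\tau(-\mu)}$ with $Y^{-\mu}$. Since $-\mu\in Q^\vee\cap\overline{E}_+$, the defining property \eqref{YT} of the abelian lattice $\mathcal{P}_Y\subset H$ gives $Y^{-\mu}=T_{\tau(-\mu)}$. Therefore
\[
T_{(\tau(\mu)v)^{-1}} = T_{v^{-1}}\,Y^{-\mu}.
\]

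Now I would apply the duality anti-involution. Since $\delta$ reverses the order of products, $\delta(T_i)=T_i$ for $i\in[1,r]$ implies $\delta(T_{v^{-1}})=T_v$ for every $v\in W_0$ (apply $\delta$ to a reduced expression of $v^{-1}$ and reverse the factors). Together with $\delta(Y^{-\mu})=x^\mu$, we obtain
\[
\delta\!\bigl(T_{(\tau(\mu)v)^{-1}}\bigr) = \delta(Y^{-\mu})\,\delta(T_{v^{-1}}) = x^\mu T_v.
\]
Applying both sides to $m^J_t$ and invoking the definition \eqref{mw} of $m^J_{\tau(\mu)v;t}$ finishes the proof.

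There is no genuine obstacle here; the one point to be careful about is bookkeeping under $\delta$. Because $\delta$ is an \emph{anti}-algebra involution, one must commute the order of $T_{v^{-1}}$ and $Y^{-\mu}$ when passing from the affine Hecke algebra factorisation $T_{(\tau(\mu)v)^{-1}}=T_{v^{-1}}Y^{-\mu}$ to its image $x^\mu T_v$. The length additivity from \eqref{wmuneg}–\eqref{lengthwmu} is exactly what makes this factorisation legitimate at the level of $T_w$'s.
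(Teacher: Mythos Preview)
Your proof is correct and follows essentially the same route as the paper: use \eqref{wmuneg} and \eqref{lengthwmu} to obtain the length-additive factorisation $T_{(\tau(\mu)v)^{-1}}=T_{v^{-1}}T_{\tau(-\mu)}$, identify $T_{\tau(-\mu)}=Y^{-\mu}$ via \eqref{YT}, and apply $\delta$. Your write-up just makes the bookkeeping under the anti-involution a bit more explicit than the paper does.
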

\begin{proof}
By \eqref{lengthwmu} and \eqref{wmuneg} we have 
\[
m_{\tau(\mu)v;t}^J=\delta(T_{v^{-1}}T_{\tau(-\mu)})m_t^J.
\]
But $-\mu\in Q^\vee\cap\overline{E}_+$, hence $T_{\tau(-\mu)}=Y^{-\mu}$ by \eqref{YT}. Consequently
\[
m_{\tau(\mu)v;t}^J=\delta(Y^{-\mu})\delta(T_{v^{-1}})m_t^J=x^\mu T_vm_t^J.
\]
\end{proof}
\begin{remark}
For $w\in W$ write $\mu_w\in Q^\vee$ and $v_w\in W_0$ such that $w=\tau(\mu_w)v_w$.
We will prove in Theorem \ref{gbr}(3) that $\{x^{\mu_w}T_{v_w}m^J_t\,\,\, | \,\,\, w\in W^J\}$ is also a 
basis of $\mathbb{M}^J_t$. The quasi-polynomial realisation of $\mathbb{M}^J_t$, see Theorem \ref{gbr}, 
provides an explicit description of the $\mathbb{H}$-action on the
basis $\{x^{\mu_w}T_{v_w}m^J_t\,\,\, | \,\,\, w\in W^J\}$ of
$\mathbb{M}^J_t$, with the action described in terms of truncated Demazure-Lusztig operators.
\end{remark}

\section{Quasi-polynomial and quasi-rational representations}\label{polfct}
Let $\cc\in E$ be an element lying in the face $C^J$ of $\overline{C}_+$. In this section we will give explicit realisations of the $\mathbb{H}$-modules $\mathbb{M}^{J}_t$ ($t\in L_{J}$) on the space of quasi-polynomials with exponents in the affine Weyl group orbit $\mathcal{O}_\cc$. By localisation we also obtain nontrivial families of $W$-actions on spaces of quasi-rational functions.

\subsection{Spaces of quasi-polynomials}\label{qpsection}

The group algebra $\mathbf{F}[E]$ contains $\mathcal{P}_\Lambda$ as sub-algebra for all $\Lambda\in\mathcal{L}$. 
The formula $v(x^y):=x^{vy}$ ($v\in W_0$, $y\in E$) defines a $W_0$-action on $\mathbf{F}[E]$ by algebra automorphisms, which is compatible with the $W_0$-action on $\mathcal{P}_\Lambda$. We write
\[
x^{(y,\ell)}:=q^\ell x^y\in\mathbf{F}[E]\qquad (y,\ell)\in E\times\mathbb{Z}.
\]
For $\cc\in\overline{C}_+$ write
\[
\Pc:=\bigoplus_{y\in\mathcal{O}_\cc}\mathbf{F}x^y.
\]
We call $\Pc$ the space of quasi-polynomials with exponents in $\mathcal{O}_\cc$.

\begin{definition}\label{defty}
Let $\Lambda\in\mathcal{L}$, $\cc\in C^J$ and $\mathfrak{t}\in T_{\Lambda,J}$.
For $y\in\mathcal{O}_\cc$, define
\begin{equation}\label{deftyform}
\mathfrak{t}_y:=w_y\mathfrak{t}\in T_\Lambda.
\end{equation}
\end{definition}
Note that 
\[
\mathfrak{t}_{wy}=w\mathfrak{t}_y \qquad\quad (w\in W,\, y\in\mathcal{O}_{\cc})
\]
since $W^J=\{w_y\}_{y\in\mathcal{O}_\cc}$ and $\mathfrak{t}\in T_J\subseteq T^{W_J}$.
In particular,
\begin{equation}\label{integralshiftt}
\mathfrak{t}_{y+\mu}=q^\mu\mathfrak{t}_y\qquad (y\in\mathcal{O}_\cc,\, \mu\in Q^\vee).
\end{equation}

In the following lemma we identify $W\ltimes\mathcal{P}$ with $\mathbb{H}(\mathbf{1},q)$ by the isomorphism \eqref{isodaWgDAHA}.
\begin{lemma}\label{actiondeform}
For $\cc\in C^J$ and $\mathfrak{t}\in T_J$ the formulas $K_{\mathfrak{t}}x^y:=qx^y$ and
\begin{equation}\label{xyWt}
v_{\mathfrak{t}}x^y:=x^{vy},\quad \tau(\mu)_\mathfrak{t}x^y:=\mathfrak{t}_y^{-\mu}x^y,\quad
\nu_{\mathfrak{t}}(x^y):=x^{y+\nu}
\qquad (v\in W_0,\,\mu,\nu\in Q^\vee)
\end{equation}
for $y\in\mathcal{O}_\cc$ define a $\mathbf{F}$-linear $\mathbb{W}$-action on $\mathcal{P}^{(\cc)}$. The resulting $\mathbf{F}[\mathbb{W}]$-action on $\mathcal{P}^{(\cc)}$
descends to an action of $W\ltimes\mathcal{P}$ via the isomorphism \eqref{isodW}. Furthermore,
\[
\mathbb{M}_{\mathfrak{t}}^J\overset{\sim}{\longrightarrow}(\mathcal{P}^{(\cc)},\cdot_{\mathfrak{t}})\quad \hbox{ as modules over }\, \mathbb{H}(\mathbf{1},q)\simeq W\ltimes\mathcal{P}
\]
with the isomorphism defined by $(x^\mu v)m_{\mathfrak{t}}^J\mapsto x^{\mu+v\cc}$ \textup{(}$\mu\in Q^\vee,\, v\in W_0$\textup{)}.
\end{lemma}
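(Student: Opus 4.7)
The plan has four parts.

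First I verify that \eqref{xyWt} (together with $K_\tau = q\,\cdot$) defines a $\mathbb{W}$-action on $\mathcal{P}^{(\cc)}$ by checking the defining relations of $\mathbb{W} = (W_0 \ltimes Q^\vee) \ltimes \widehat{Q}^\vee$. The $W_0$-action $v_\mathfrak{t}$ is clearly a group action (it is just the $W_0$-action on $\mathcal{O}_\cc$), the maps $\tau(\mu)_\mathfrak{t}$ combine multiplicatively in $\mu$ because $\mathfrak{t}_y^{-(\mu+\mu')} = \mathfrak{t}_y^{-\mu}\mathfrak{t}_y^{-\mu'}$, the translations $\nu_\tau$ manifestly give a $Q^\vee$-action, and $K_\tau$ is central. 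The only non-trivial check for $W = W_0 \ltimes Q^\vee$ is $v_\mathfrak{t}\tau(\mu)_\mathfrak{t}v_\mathfrak{t}^{-1} = \tau(v\mu)_\mathfrak{t}$, which applied to $x^y$ reduces to the character identity $\mathfrak{t}_{v^{-1}y}^\mu = \mathfrak{t}_y^{v\mu}$. I would prove this by noting that $vw_{v^{-1}y}$ and $w_y$ both send $\cc$ to $y$, hence differ by some $u \in W_\cc = W_J$, so that $\mathfrak{t}_{v^{-1}y} = v^{-1}w_y u^{-1}\mathfrak{t} = v^{-1}\mathfrak{t}_y$ using $u\mathfrak{t} = \mathfrak{t}$ (via $\mathfrak{t} \in T_J \subseteq T_\Lambda^{W_J}$ from \eqref{inclusions}); applying the contragredient $W_0$-action on $T_\Lambda$ yields the identity. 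For the outer semidirect product, the only non-trivial relation is the Heisenberg-type identity $\tau(\mu)_\mathfrak{t}\nu_\tau = q^{-\langle\mu,\nu\rangle}\nu_\tau\tau(\mu)_\mathfrak{t}$, which is immediate from $\mathfrak{t}_{y+\nu} = q^\nu\mathfrak{t}_y$ in \eqref{integralshiftt}.

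Since $K$ acts as the scalar $q$, the resulting $\mathbf{F}[\mathbb{W}]$-action descends to $\mathbf{F}[\mathbb{W}]/(\mathbf{q}-q) \simeq W \ltimes \mathcal{P}$ via \eqref{isodW}; under this identification $\tau(\mu) \in W$ acts as $\tau(\mu)_\mathfrak{t}$ and $x^\nu \in \mathcal{P}$ acts as $\nu_\tau$.

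For the identification with $\mathbb{M}^J_\mathfrak{t}$, I work implicitly at $\mathbf{k} = \mathbf{1}$ (where $\mathbb{H} \simeq W \ltimes \mathcal{P}$ and $L_J = T_J$, so $\mathbb{M}^J_\mathfrak{t}$ is a $(W \ltimes \mathcal{P})$-module) and apply the universal property of the induced module. Setting $\phi(m^J_\mathfrak{t}) = x^\cc$ extends uniquely to a $(W \ltimes \mathcal{P})$-linear map $\phi\colon \mathbb{M}^J_\mathfrak{t} \to (\mathcal{P}^{(\cc)}, \cdot_\mathfrak{t})$ provided $x^\cc$ is a $\chi_{J,\mathfrak{t}}$-eigenvector for $\mathbb{H}^Y_J$. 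I would verify this on the generators listed in \eqref{expchitJ}: for $\mu \in Q^\vee$, $Y^\mu \mapsto \tau(\mu)$ and $\tau(\mu)_\mathfrak{t}x^\cc = \mathfrak{t}_\cc^{-\mu}x^\cc = \mathfrak{t}^{-\mu}x^\cc$ since $w_\cc = e$; for $i \in J_0$, $s_i\cc = \cc$ gives $s_{i,\mathfrak{t}}x^\cc = x^\cc$; and if $0 \in J$, the generator $\delta(T_0) \mapsto s_\varphi x^{-\varphi^\vee}$ (via \eqref{delta0} with $T_{s_\varphi}^{-1} \mapsto s_\varphi$ at $\mathbf{k} = \mathbf{1}$), while $s_0\cc = \cc$ forces $s_\varphi\cc = \cc - \varphi^\vee$, whence $(s_\varphi x^{-\varphi^\vee})\cdot_\mathfrak{t} x^\cc = x^{s_\varphi(\cc - \varphi^\vee)} = x^\cc$. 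The resulting $\phi$ then sends $x^\mu v\,m^J_\mathfrak{t}$ to $x^\mu\cdot_\mathfrak{t}x^{v\cc} = x^{\mu + v\cc}$, matching the stated formula. Surjectivity is immediate from $\mathcal{O}_\cc = \{\mu + v\cc \mid \mu \in Q^\vee,\, v \in W_0\}$; injectivity follows from Lemma \ref{mbasis}, which provides a basis $\{m^J_{w;\mathfrak{t}}\}_{w \in W^J}$ of $\mathbb{M}^J_\mathfrak{t}$ indexed by $W^J$, in bijection with $\mathcal{O}_\cc$ via $w \mapsto w\cc$ (Definition \ref{wydef}), matching the monomial basis of $\mathcal{P}^{(\cc)}$.

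The main obstacle is the semidirect-product bookkeeping in the first step: the relation $v_\mathfrak{t}\tau(\mu)_\mathfrak{t}v_\mathfrak{t}^{-1} = \tau(v\mu)_\mathfrak{t}$ is precisely where the hypothesis $\mathfrak{t} \in T_J$ is used essentially, and verifying it requires the careful comparison of the minimal coset representatives $w_y$ and $w_{v^{-1}y}$ modulo $W_J$. Once that identity is in hand, the remainder is dimension counting and routine checks on generators.
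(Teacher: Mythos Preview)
Your approach is essentially the same as the paper's: check the $\mathbb{W}$-relations directly (the paper just calls this ``a direct check''), then verify that $x^\cc$ satisfies the $\chi_{J,\mathfrak{t}}$-character relations on generators of $\mathbb{H}_J^Y$ to produce the morphism $\phi$ by the universal property, and finally identify its image on $(\mu v)m_\mathfrak{t}^J$. Your detailed verification of the semidirect-product relation, reducing to $\mathfrak{t}_{v^{-1}y}=v^{-1}\mathfrak{t}_y$, is exactly the content of the remark after Definition~\ref{defty} (where the paper records $\mathfrak{t}_{wy}=w\mathfrak{t}_y$ for all $w\in W$), and your treatment of the $0\in J$ case matches the paper's computation with $\delta(s_0)=\varphi^\vee s_\varphi$ written in the other order.

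There is one genuine gap: your injectivity argument by ``dimension counting'' is not valid, since both spaces are infinite-dimensional over $\mathbf{F}$ and a surjection between countable-dimensional spaces need not be injective. What you need is that $\phi$ sends the basis $\{m_{w;\mathfrak{t}}^J\}_{w\in W^J}$ of Lemma~\ref{mbasis} to the monomial basis $\{x^{w\cc}\}_{w\in W^J}$. This is a one-line computation at $\mathbf{k}=\mathbf{1}$: writing $w=\tau(\mu_w)v_w$ one has $\delta(w^{-1})=\delta(v_w^{-1}\tau(-\mu_w))=\delta(\tau(-\mu_w))\delta(v_w^{-1})=x^{\mu_w}v_w$, so $m_{w;\mathfrak{t}}^J=x^{\mu_w}v_w\,m_\mathfrak{t}^J$ and hence $\phi(m_{w;\mathfrak{t}}^J)=x^{\mu_w+v_w\cc}=x^{w\cc}$. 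With this in place, $\phi$ is a bijection on bases and therefore an isomorphism. (The paper's own proof is equally terse here and relies on the same implicit observation.)
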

\begin{proof}
It is a direct check that \eqref{xyWt} defines a $\mathbb{W}$-action on $\mathcal{P}^{(\cc)}$. For the last statement, note that $\tau(\mu)_{\mathfrak{t}}x^\cc=\mathfrak{t}^{-\mu}x^\cc$ for all $\mu\in Q^\vee$, and for $i\in J_0$ (see \eqref{J0}),
\[
\delta(s_i)_{\mathfrak{t}}x^\cc=s_{i,\mathfrak{t}}x^\cc=x^{s_i\cc}=x^\cc.
\]
If $0\in J$, then $\cc=s_0\cc=s_\varphi\cc+\varphi^\vee$ and $\delta(s_0)=\varphi^\vee s_\varphi\in\mathbb{W}$, hence
\[
\delta(s_0)_{\mathfrak{t}}x^\cc=(\varphi^\vee)_{\mathfrak{t}}x^{s_\varphi\cc}=x^{s_\varphi\cc+\varphi^\vee}=x^\cc.
\]
Hence there exists a unique morphism $\mathbb{M}_{\mathfrak{t}}^J\rightarrow (\mathcal{P}^{(\cc)},\cdot_{\mathfrak{t}})$ of $W\ltimes\mathcal{P}$-modules satisfying
$m_{\mathfrak{t}}^J\mapsto x^\cc$. Then $(x^\mu v)m_{\mathfrak{t}}^J$ maps to $(\mu v)_{\mathfrak{t}}x^\cc=x^{\mu+v\cc}$ for $\mu\in Q^\vee$ and $v\in W_0$.
\end{proof}
In the special case $\cc=0$ we necessarily have $\mathfrak{t}=1_T$. In this case the $W\ltimes\mathcal{P}$-action on $\mathcal{P}^{(0)}=\mathcal{P}$ reduces to the action introduced in Subsection \ref{ap},
\[
w_{1_T}x^\mu=w(x^\mu)\qquad\quad (w\in W,\, \mu\in Q^\vee).
\]

Note that formula \eqref{saxy} generalises to
\begin{equation}\label{saxyt}
s_{a,\mathfrak{t}}x^y=\mathfrak{t}_y^{-\ell\alpha^\vee}x^{s_\alpha y}\qquad (a=(\alpha,\ell)\in\Phi,\,\, y\in\mathcal{O}_\cc).
\end{equation}
In particular, $s_{0,\mathfrak{t}}x^y=\mathfrak{t}_y^{\varphi^\vee}x^{s_\varphi y}$ for $y\in\mathcal{O}_\cc$. 

The natural extension of formula \eqref{actx} to the present context is as follows.
\begin{lemma}\label{datumokcor}
Let $\cc\in C^J$, $\mathfrak{t}\in T_J$, $y\in\mathcal{O}_\cc$ and $a\in\Phi$ such that $a(y)\in\mathbb{Z}$. Then 
\begin{enumerate}
\item
$\mathfrak{t}_y^{Da^\vee}=q_{Da}^{Da(y)}$,
\item 
$ s_{a,\mathfrak{t}}x^y=x^{y-Da(y)a^\vee}$.
\end{enumerate}
\end{lemma}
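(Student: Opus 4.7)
The plan is to deduce (2) immediately from (1), and to prove (1) by exploiting the fact that $\mathfrak{t}_y=w\mathfrak{t}$ for any $w\in W$ with $w\cc=y$ and reducing the claim to the defining relations $\mathfrak{t}^{\alpha_j^\vee}=1$ ($j\in J$) via a parabolic-coroot argument.

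For (2), writing $a=(\alpha,\ell)$, formula \eqref{saxyt} gives $s_{a,\mathfrak{t}}x^y=\mathfrak{t}_y^{-\ell\alpha^\vee}x^{s_\alpha y}$. On the other hand, using $a^\vee=(\alpha^\vee,2\ell/\|\alpha\|^2)$ and $x^{(z,\ell')}=q^{\ell'}x^z$ in $\mathbf{F}[E]$, one computes $x^{y-Da(y)a^\vee}=q_\alpha^{-\ell\alpha(y)}x^{s_\alpha y}$. These two expressions match exactly when $\mathfrak{t}_y^{\alpha^\vee}=q_\alpha^{\alpha(y)}$, which is (1).

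For (1), I will use the integrality $\alpha(y)\in\mathbb{Z}$ to consider the affine root $b:=(\alpha,-\alpha(y))\in\Phi$, which then satisfies $b(y)=0$. For any $w\in W$ with $w\cc=y$ it follows that $(w^{-1}b)(\cc)=0$, so $s_{w^{-1}b}$ fixes $\cc$ and hence lies in $W_\cc=W_J$. Combining the $W$-equivariance $(w\mathfrak{t})^{\widehat{\mu}}=\mathfrak{t}^{w^{-1}\cdot\widehat{\mu}}$ (which holds because $\tau(\mu)$ acts on $T$ by the $q^\mu$-shift and $v\in W_0$ acts contragrediently) with $w^{-1}\cdot b^\vee=(w^{-1}b)^\vee$ from \eqref{compatibilities} yields $\mathfrak{t}_y^{b^\vee}=\mathfrak{t}^{(w^{-1}b)^\vee}$. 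The central input is the standard parabolic-coroot fact: any $c\in\Phi$ with $s_c\in W_J$ lies in the sub-root system $\Phi_J$, and its coroot $c^\vee\in\widehat{Q}^\vee$ is an integer combination of $\{\alpha_j^\vee\}_{j\in J}$. Since $\mathfrak{t}\in T_J$, this forces $\mathfrak{t}^{(w^{-1}b)^\vee}=1$, hence $\mathfrak{t}_y^{b^\vee}=1$. Finally, expanding $b^\vee=\alpha^\vee-(2\alpha(y)/\|\alpha\|^2)K$ and using $\mathfrak{t}_y^K=q$, the equality $\mathfrak{t}_y^{b^\vee}=1$ reads $q_\alpha^{-\alpha(y)}\mathfrak{t}_y^{\alpha^\vee}=1$, which gives (1).

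The main obstacle is the parabolic-coroot step, particularly when $0\in J$: because $\alpha_0^\vee=m^2K-\varphi^\vee$ has a nontrivial $K$-component, one must check that writing $(w^{-1}b)^\vee$ as an integer combination of the $\alpha_j^\vee$ ($j\in J$) correctly balances both the $E$-component and the $K$-component. This is handled by identifying the finite root system of affine roots vanishing on $\cc$ with a root system having simple roots $\{\alpha_j\}_{j\in J}$; the two cases $0\notin J$ (where the statement reduces to the parabolic-coroot property for a finite reduced irreducible root system) and $0\in J$ (where the $K$-component of $b^\vee$ matches the $K$-contribution of $\alpha_0^\vee$) are then both covered uniformly.
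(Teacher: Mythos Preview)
Your proof is correct and follows essentially the same route as the paper's. Both arguments pick the affine root $(\alpha,-\alpha(y))$ that vanishes at $y$, pull it back to $\cc$ via some $w\in W$ with $w\cc=y$, observe that the resulting root lies in $\Phi_J$ so its coroot is an integer combination of $\{\alpha_j^\vee\}_{j\in J}$, and conclude $\mathfrak{t}^{(w^{-1}b)^\vee}=1$; part (2) is then read off from \eqref{saxyt} exactly as you do. The only cosmetic difference is that the paper argues $w_y^{-1}(\alpha,-\alpha(y))\in\Phi_J$ directly from the fact that it vanishes on $\cc\in C^J$ (an affine root vanishing on the open face $C^J$ must lie in $\Phi_J$), whereas you pass through the reflection $s_{w^{-1}b}\in W_\cc=W_J$ and invoke the standard Coxeter-theoretic fact that reflections in a parabolic subgroup come from parabolic roots---both are valid and yield the same conclusion.
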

\begin{proof}
Write $\alpha:=Da\in\Phi_0$ and $n:=\alpha(y)\in\mathbb{Z}$. The affine root $b:=w_y^{-1}(\alpha,-n)\in\Phi$
satisfies $b(\cc_y)=0$. Since $\cc_y\in C^J$ we conclude that $b\in\Phi_J$. 
Note that $\{\alpha_0^\vee,\ldots,\alpha_r^\vee\}$ is a basis for the affine coroot system $\Phi^\vee$, hence the coroot $b^\vee$ lies in $\bigoplus_{j\in J}\mathbb{Z}\alpha_j^\vee$, and hence $\mathfrak{t}^{b^\vee}=1$. Then
\[
1=\mathfrak{t}^{b^\vee}=(w_y\mathfrak{t})^{(\alpha,-n)^\vee}=q_\alpha^{-n}\mathfrak{t}_y^{\alpha^\vee},
\]
where the second equality follows from \eqref{actx}. Hence $\mathfrak{t}_y^{\alpha^\vee}=q_\alpha^{\alpha(y)}$, which proves (1).

Set $\ell:=a(0)\in\mathbb{Z}$. Then $a=(\alpha,\ell)$ and 
\[
s_{a,\mathfrak{t}}x^y=\mathfrak{t}_y^{-\ell\alpha^\vee}x^{s_\alpha y}=q_\alpha^{-\ell\alpha(y)}x^{s_\alpha y}=x^{y-Da(y)a^\vee}
\]
where we have used \eqref{saxyt} for the first equality, (1) for the second equality,
and \eqref{saxy} and \eqref{actx} for the last equality. This proves (2).
\end{proof}

Let $T_{\mathbb{R}}^\vee$ be the space of $Q^\vee$-orbits in $E$, with $Q^\vee$ acting on $E$ by translations. It is a real torus admitting a natural $W_0$-action.
For $\gamma\in T^\vee_{\mathbb{R}}$
set
\[
\mathcal{P}_\gamma:=\bigoplus_{y\in\gamma}\mathbf{F}x^y,
\]
which is a cyclic $\mathcal{P}$-submodule of $\mathbf{F}[E]$. It equals $\mathcal{P}$ when $\gamma$
is the neutral element of $T_{\mathbb{R}}^\vee$. Writing $[\cc]:=\cc+Q^\vee\in T_{\mathbb{R}}^\vee$, we have
\[
\Pc=\bigoplus_{\gamma\in W_0[\cc]}\mathcal{P}_\gamma
\]
since $W_0[\cc]\simeq W\cc/Q^\vee$, which allows one to think of $\Pc$ as vector-valued polynomial functions on $T$.

The spaces $\mathcal{P}_\gamma$ ($\gamma\in T_{\mathbb{R}}^\vee$) naturally arise in the Bethe ansatz for the trigonometric Gaudin model \cite{MV,MTV}. They also serve as the ambient spaces of the multivariable Baker-Akhiezer functions, see, e.g., \cite{Cha, ChE,S}. Multivariable Baker-Akhiezer functions are terminating power series solutions of the spectral problem for the Macdonald operators, which exist for special values of $k_j$ (for instance, when $k_j$ a nonpositive integral power of $q_{\alpha_j}$). 
In \cite{Cha,MV,MTV}, elements in $\mathcal{P}_\gamma$ are called quasi-polynomials or quasi-exponentials. 

For rational weights $\cc\in\mathbb{Q}\otimes_{\mathbb{Z}}Q^\vee$, we may view $\Pc$ as subspace of the space
of polynomials on the torus $\textup{Hom}(\Lambda,\mathbf{F}^\times)$ for an appropriate lattice $Q^\vee\subset\Lambda\subset E^\prime$. This is a natural viewpoint in the context of
metaplectic representation theory, see Subsection \ref{msec}.

In Subsection \ref{gbrsectionStatement} we deform the $\mathfrak{t}$-dependent $W\ltimes\mathcal{P}$-action on $\Pc$ to an action of the double affine Hecke algebra $\mathbb{H}$, with the $T_j$'s acting by truncated Demazure-Lusztig operators.

\subsection{Truncated divided difference operators}\label{truncsection}

For $a\in\Phi$ let $\nabla_a$ be the linear operator on $\mathbf{F}[E]$ defined by
\begin{equation}\label{nablaj}
\nabla_a(x^y):=\left(\frac{1-x^{-\lfloor Da(y)\rfloor a^\vee}}{1-x^{a^\vee}}\right)x^y\qquad (y\in E),
\end{equation}
where $\lfloor z\rfloor\in\mathbb{Z}$ is the floor of $z\in\mathbb{R}$. Note that $\nabla_a$ is well defined, since the numerator is divisible by $1-x^{a^\vee}$ in $\mathbf{F}[E]$. The subspaces $\Pc$ ($\cc\in\overline{C}_+$) are $\nabla_a$-stable.

We will write $\nabla_j:=\nabla_{\alpha_j}$, so that
\[
\nabla_0(x^y)=\left(\frac{1-(q_\varphi^{-1}x^{\varphi^\vee})^{\lfloor -\varphi(y)\rfloor}}
{1-q_\varphi x^{-\varphi^\vee}}\right)x^y,\qquad
\nabla_i(x^y)=
\left(\frac{1-x^{-\lfloor \alpha_i(y)\rfloor\alpha_i^\vee}}{1-x^{\alpha_i^\vee}}\right)x^y
\]
for $1\leq i\leq r$ and $y\in E$. 

\begin{lemma}
For $a\in\Phi$ and $y,y^\prime\in E$ such that $a(y^\prime)\in\mathbb{Z}$ we have
\begin{equation}\label{stepder}
\begin{split}
\nabla_a(x^{y^\prime})&=\left(\frac{1-x^{-Da(y^\prime)a^\vee}}{1-x^{a^\vee}}\right)x^{y^\prime},\\
\nabla_a(x^{y+y^\prime})&-x^{y^\prime-Da(y^\prime)a^\vee}\nabla_a(x^{y})=\nabla_a(x^{y^\prime})x^{y}.
\end{split}
\end{equation}
\end{lemma}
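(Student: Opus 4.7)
The proof will be a direct unpacking of the definition \eqref{nablaj}, hinging on a single arithmetic observation: if $a = (Da, \ell)$ with $\ell \in \mathbb{Z}$ and $a(y^\prime) \in \mathbb{Z}$, then $Da(y^\prime) = a(y^\prime) - \ell \in \mathbb{Z}$ as well. This lets me drop the floor function on $Da(y^\prime)$ and split the floor of a sum involving $y^\prime$ additively.

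For the first identity, I will simply substitute $\lfloor Da(y^\prime)\rfloor = Da(y^\prime)$ into the definition of $\nabla_a(x^{y^\prime})$. For the second identity, setting $n := \lfloor Da(y) \rfloor$ and $m := Da(y^\prime) \in \mathbb{Z}$, the additivity $\lfloor Da(y+y^\prime)\rfloor = n + m$ lets me rewrite each of the three terms as an explicit scalar in $\mathbf{F}[E]$ times $x^{y+y^\prime}$:
\begin{align*}
\nabla_a(x^{y+y^\prime}) &= \frac{1 - x^{-(n+m)a^\vee}}{1 - x^{a^\vee}}\,x^{y+y^\prime},\\
x^{y^\prime - m a^\vee}\nabla_a(x^y) &= \frac{x^{-m a^\vee} - x^{-(n+m) a^\vee}}{1 - x^{a^\vee}}\,x^{y+y^\prime},\\
\nabla_a(x^{y^\prime})\,x^y &= \frac{1 - x^{-m a^\vee}}{1 - x^{a^\vee}}\,x^{y+y^\prime},
\end{align*}
the first line using the additive split of the floor and the last two using the first identity of the lemma. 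The claim then reduces to the purely algebraic cancellation
\[
\bigl(1 - x^{-(n+m)a^\vee}\bigr) - \bigl(x^{-m a^\vee} - x^{-(n+m)a^\vee}\bigr) = 1 - x^{-m a^\vee}.
\]

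There is no genuine obstacle; the only point requiring care is noting that $a(y^\prime) \in \mathbb{Z}$ (a condition on the affine functional) upgrades to $Da(y^\prime) \in \mathbb{Z}$ (a condition on the linear part), which is what allows both the floor to vanish and the floor of $Da(y+y^\prime)$ to distribute. Everything else is formal manipulation in $\mathbf{F}[E]$.
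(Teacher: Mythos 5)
Your proof is correct and is exactly the direct verification the paper has in mind (the paper simply states the check is "straightforward" and leaves it to the reader): the key point, that $a(y^\prime)\in\mathbb{Z}$ forces $Da(y^\prime)\in\mathbb{Z}$ so the floor disappears and $\lfloor Da(y+y^\prime)\rfloor=\lfloor Da(y)\rfloor+Da(y^\prime)$, is identified and used properly. Only a cosmetic remark: your second displayed line follows from the definition of $\nabla_a(x^y)$ (with $n=\lfloor Da(y)\rfloor$) rather than from the first identity, which is only needed for the third line.
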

\begin{proof}
This is a straightforward verification which we leave to the reader.
\end{proof}
It follows from \eqref{actx} and the first formula in \eqref{stepder} that $\nabla_a$ restricts to the usual divided difference operator on $\mathcal{P}_\Lambda$ ($\Lambda\in\mathcal{L}$), 
\begin{equation}\label{usualDD}
\nabla_a(x^\mu)=\frac{x^\mu-s_a(x^\mu)}{1-x^{a^\vee}}=\frac{x^\mu-x^{s_a\cdot\mu}}{1-x^{a^\vee}}
\qquad (\mu\in\Lambda).
\end{equation}
Cherednik's \cite[Thm. 3.2.1]{Ch} polynomial representation $\pi: \mathbb{H}\rightarrow\textup{End}(\mathcal{P})$ is then explicitly defined by
\begin{equation}\label{Chpolrep}
\begin{split}
\pi(T_j)x^\mu&:=k_js_j(x^\mu)+(k_j-k_j^{-1})\nabla_j(x^\mu),\\
\pi(x^\lambda)x^\mu&:=x^{\lambda+\mu}
\end{split}
\end{equation}
for $0\leq j\leq r$ and $\lambda,\mu\in Q^\vee$.  

In the next subsection we introduce families of quasi-polynomial representations $\pi_{\cc,\mathfrak{t}}: \mathbb{H}\rightarrow\textup{End}(\mathcal{P}^{(\cc)})$ 
($\cc\in C^J,\,\mathfrak{t}\in T_J$). The action of $\mathcal{P}$ will still be by multiplication operators. The action of $T_j$ will be reminiscent to formula \eqref{Chpolrep} for $\pi(T_j)$; its first term will involve the $\mathfrak{t}$-twisted $W$-action on $\mathcal{P}^{(\cc)}$, and its second term the truncated divided difference operator $\nabla_j\vert_{\mathcal{P}^{(\cc)}}$.

\subsection{The quasi-polynomial realisation of $\mathbb{M}_t^J$}\label{gbrsectionStatement}

Denote by $\chi_B: \mathbb{R}\rightarrow\{0,1\}$ the characteristic function of the subset $B\subseteq\mathbb{R}$.
Define $\eta: \mathbb{R}\rightarrow\{-1,0,1\}$ by 
\begin{equation}\label{eta}
\eta=\chi_{\mathbb{Z}_{>0}}-\chi_{\mathbb{Z}_{\leq 0}},
\end{equation}
and set
\begin{equation}\label{kvy}
\kappa_v(y):=\prod_{\alpha\in\Pi(v)}k_\alpha^{-\eta(\alpha(y))}\qquad (v\in W_0,\, y\in E).
\end{equation}
Note that $\kappa_v\in\mathcal{F}_\Sigma(E,\mathbf{F}^\times)$ by Lemma \ref{faceconstantlem}.

\begin{theorem}\label{gbr}
Let $J\subsetneq [0,r]$, $\cc\in C^J$ and $\mathfrak{t}\in T_J$.
\begin{enumerate}
\item[{\textup{(1)}}] The formulas
\begin{equation}\label{actionformulas}
\begin{split}
\pi_{\cc,\mathfrak{t}}(T_j)x^y&:=k_j^{\chi_{\mathbb{Z}}(\alpha_j(y))}s_{j,\mathfrak{t}}x^y
+(k_j-k_j^{-1})\nabla_j(x^y),\\
\pi_{\cc,\mathfrak{t}}(x^\mu)x^y&:=x^{y+\mu}
\end{split}
\end{equation}
for $0\leq j\leq r$, $\mu\in Q^\vee$
and $y\in\mathcal{O}_\cc$ turn $\mathcal{P}^{(\cc)}$ in a $\mathbb{H}$-module, which we denote by $\mathcal{P}_{\mathfrak{t}}^{(\cc)}$.
\item[{\textup{(2)}}] 
There exists a unique isomorphism 
\[
\phi_{\cc,\mathfrak{t}}: \mathbb{M}^{J}_{\mathfrak{s}_{J}\mathfrak{t}}\overset{\sim}{\longrightarrow} \Pc_{\mathfrak{t}}
\]
of $\mathbb{H}$-modules satisfying $\phi_{\cc,\mathfrak{t}}\bigl(m^{J}_{\mathfrak{s}_{J}\mathfrak{t}}\bigr)=x^\cc$. 
\item[{\textup{(3)}}] 
We have
\begin{equation}\label{phiformula}
\phi_{\cc,\mathfrak{t}}\bigl(x^\mu T_vm^{J}_{\mathfrak{s}_{J}\mathfrak{t}}\bigr)=
\kappa_v(\cc)x^{\mu+v\cc}\qquad (\mu\in Q^\vee,\, v\in W_0).
\end{equation}
In particular, $\{x^{\mu_w}T_{v_w}m^{J}_{\mathfrak{s}_{J}\mathfrak{t}}\,\, | \,\, w\in W^J\}$
is a basis of $\mathbb{M}^{J}_{\mathfrak{s}_{J}\mathfrak{t}}$. 
\end{enumerate}
\end{theorem}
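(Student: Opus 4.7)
Because $L_J = \mathfrak{s}_J T_J$ by Proposition \ref{lemmabasepoint}, the character $\chi_{J,\mathfrak{s}_J\mathfrak{t}}$ and hence the induced module $\mathbb{M}^J_{\mathfrak{s}_J\mathfrak{t}}$ are well defined. My plan is to establish (1), (2), (3) simultaneously rather than verifying the algebra relations for $\pi_{\cc,\mathfrak{t}}$ head-on. I will define a candidate $\phi_{\cc,\mathfrak{t}}$ via the formula \eqref{phiformula} of (3), transport the genuine $\mathbb{H}$-action on $\mathbb{M}^J_{\mathfrak{s}_J\mathfrak{t}}$ through $\phi_{\cc,\mathfrak{t}}$ to a representation $\widetilde\pi$ on $\mathcal{P}^{(\cc)}$, and then check that $\widetilde\pi$ agrees with the prescribed $\pi_{\cc,\mathfrak{t}}$ from \eqref{actionformulas}. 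Once this is done, (1) follows automatically, (2) is tautological, and (3) holds by construction.

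The first step is to show that the assignment $x^\mu T_v m^J_{\mathfrak{s}_J\mathfrak{t}} \mapsto \kappa_v(\cc)\, x^{\mu + v\cc}$ is well defined on the PBW spanning set $\{x^\mu T_v m^J_{\mathfrak{s}_J\mathfrak{t}}\}_{\mu\in Q^\vee,\,v\in W_0}$ of $\mathbb{M}^J_{\mathfrak{s}_J\mathfrak{t}}$. Writing $\tau(\mu)v = wu$ with $w\in W^J$ and $u\in W_J$, one uses the character values \eqref{expchitJ} to reduce $x^\mu T_v m^J_{\mathfrak{s}_J\mathfrak{t}}$ to a scalar multiple of the basis vector $m^J_{w;\mathfrak{s}_J\mathfrak{t}}$ of Lemma \ref{mbasis}, collecting factors $k_\alpha^{\pm 1}$ along the way. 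The crux is that these accumulated factors coincide with $\kappa_v(\cc)$, which by Lemma \ref{etaJdef} is precisely a product of $k_\alpha^{\pm 1}$ indexed by the positive roots flipped by $v$ that lie in $\Phi_{0,J_0}^+$ or in $\Phi_0^{+,J}$. The input from Proposition \ref{lemmabasepoint} is essential here: the identity $\mathfrak{s}_J^{\alpha_j^\vee} = k_j^{-2}$ for $j\in J$ guarantees that the base-point shift calibrating $L_J = \mathfrak{s}_J T_J$ produces exactly the $\chi_{J,\mathfrak{s}_J\mathfrak{t}}$-factors needed to match $\kappa_v(\cc)$. Since $\kappa_v(\cc)\in\mathbf{F}^\times$ and the map $w\mapsto w\cc$ is a bijection $W^J\overset{\sim}{\longrightarrow}\mathcal{O}_\cc$, once well-definedness is verified $\phi_{\cc,\mathfrak{t}}$ is automatically a linear bijection.

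To finish, I need to check $\widetilde\pi = \pi_{\cc,\mathfrak{t}}$ on the algebra generators. The case of $x^\mu$ is immediate from \eqref{phiformula}. For $T_j$ with $y = \mu + v\cc \in \mathcal{O}_\cc$, I would use the cross relation \eqref{crossX} to move $T_j$ past $x^\mu$, reduce $T_jT_v$ through braid-Hecke relations, and express $T_j\cdot x^\mu T_v m^J_{\mathfrak{s}_J\mathfrak{t}}$ as a linear combination of $x^{\mu'}T_{v'}m^J_{\mathfrak{s}_J\mathfrak{t}}$-terms, then apply \eqref{phiformula}. The main obstacle is the case analysis on $\alpha_j(y)$: when $\alpha_j(y)\notin\mathbb{Z}$ the reflected quasi-monomial $s_{j,\mathfrak{t}}x^y$ sits on a different $\alpha_j$-string in $\mathcal{O}_\cc$ from $x^y$, contributing no extra $k_j$-factor (consistent with $\chi_\mathbb{Z}(\alpha_j(y)) = 0$); whereas when $\alpha_j(y)\in\mathbb{Z}$, the point $s_{\alpha_j}y\in\mathcal{O}_\cc$ lies on the same finite $\alpha_j$-string and the parabolic character absorbs an additional $k_j$ (yielding $\chi_\mathbb{Z}(\alpha_j(y)) = 1$). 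The residual $(k_j - k_j^{-1})$-correction must then reassemble into $\nabla_j(x^y)$, with the truncation $\lfloor D\alpha_j(y)\rfloor$ matching the endpoint of the relevant $\alpha_j$-string inside $\mathcal{O}_\cc$. This truncation-matching is precisely where the truncated divided-difference operators enter, and carrying it out case-by-case using Lemmas \ref{lengthadd} and \ref{cosetcomb} is the bulk of the proof.
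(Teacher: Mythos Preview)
Your overall strategy---define a candidate map, transport the action, then check it matches \eqref{actionformulas}---is close in spirit to the paper's approach, and your description of the $T_j$-verification (the case analysis on $\alpha_j(y)$, the appearance of the truncation $\lfloor D\alpha_j(y)\rfloor$) is essentially the content of the paper's Lemma~\ref{generatoractiontransfer}. However, your first step contains a genuine error.

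You claim that writing $\tau(\mu)v=wu$ with $w\in W^J$, $u\in W_J$, the vector $x^\mu T_v m^J_{\mathfrak{s}_J\mathfrak{t}}$ reduces to a \emph{scalar multiple} of $m^J_{w;\mathfrak{s}_J\mathfrak{t}}$. This is false. The relationship between the bases $\{x^{\mu_w}T_{v_w}m^J_{\mathfrak{s}_J\mathfrak{t}}\}_{w\in W^J}$ and $\{m^J_{w;\mathfrak{s}_J\mathfrak{t}}\}_{w\in W^J}$ is only \emph{triangular}, not diagonal: once the theorem is established, Proposition~\ref{triangularitymbasis} shows $\phi_{\cc,\mathfrak{t}}(m^J_{w;\mathfrak{s}_J\mathfrak{t}})=k_w(\cc)x^{w\cc}+\textup{l.o.t.}$, whereas $\phi_{\cc,\mathfrak{t}}(x^{\mu_w}T_{v_w}m^J_{\mathfrak{s}_J\mathfrak{t}})=\kappa_{v_w}(\cc)x^{w\cc}$ exactly. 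The two only coincide when there are no lower-order terms, which by Lemma~\ref{mbasisvdmonomial} happens precisely when $\mu\in Q^\vee\cap\overline{E}_-$. For general $(\mu,v)$, expressing $x^\mu T_v m^J_{\mathfrak{s}_J\mathfrak{t}}$ in the $\{m^J_{w'}\}$-basis produces many terms, and you cannot read off well-definedness or bijectivity of $\phi_{\cc,\mathfrak{t}}$ by matching single basis vectors.

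The paper repairs this by working one level up: it defines $\psi_{\cc,\mathfrak{t}}:\mathbb{H}\to\mathcal{P}^{(\cc)}$ on the genuine PBW basis $\{x^\mu T_v Y^\nu\}$ via $\psi_{\cc,\mathfrak{t}}(x^\mu T_v Y^\nu)=\kappa_v(\cc)(\mathfrak{s}_J\mathfrak{t})^{-\nu}x^{\mu+v\cc}$, so well-definedness is automatic. The transfer identity $\psi_{\cc,\mathfrak{t}}(hh')=\pi_{\cc,\mathfrak{t}}(h)\psi_{\cc,\mathfrak{t}}(h')$ (Lemma~\ref{generatoractiontransfer})---precisely the case analysis you describe---then shows $\ker\psi_{\cc,\mathfrak{t}}$ is a left ideal, establishing (1). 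A separate short check (Lemma~\ref{actionHt}) shows $x^\cc$ carries the correct $\mathbb{H}_J^Y$-character, so $\psi_{\cc,\mathfrak{t}}$ descends to an epimorphism $\phi_{\cc,\mathfrak{t}}:\mathbb{M}^J_{\mathfrak{s}_J\mathfrak{t}}\twoheadrightarrow\mathcal{P}^{(\cc)}_{\mathfrak{t}}$. Injectivity is \emph{not} automatic from cardinality (both spaces are infinite-dimensional); the paper obtains it from the triangularity of $\phi_{\cc,\mathfrak{t}}(m^J_{w;\mathfrak{s}_J\mathfrak{t}})$ with respect to the Bruhat order (Proposition~\ref{triangularitymbasis}, \S\ref{POsection3}), which in turn requires the technical material on the parabolic Bruhat order in \S\ref{POsection}--\S\ref{POsection2}.
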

Note that the $\mathbb{H}$-action on $\Pc_{\mathfrak{t}}$ reduces to the $\mathfrak{t}$-dependent $W\ltimes\mathcal{P}$-action on $\Pc$
from Lemma \ref{actiondeform} when $k_a=1$ for all $a\in\Phi$. 

Furthermore, for $\cc=0$ and $\mathfrak{t}=1_T$, $\pi_{0,1_T}: \mathbb{H}\rightarrow \textup{End}(\mathcal{P})$ is Cherednik's polynomial representation $\pi$ (see \eqref{Chpolrep}). 
In this case the theorem is due to Cherednik, see \cite[Thm. 2.3]{C} and \cite[Thm. 3.1]{ChInt}.
We call $\pi_{\cc,\mathfrak{t}}$ the {\it quasi-polynomial representation} of $\mathbb{H}$.

The proof of Theorem \ref{gbr} is relegated to Section \ref{mainproof}. The proof of part (1), which will be given in Subsection \ref{proof1section}, will follow closely the proof of \cite[Thm. 3.7]{SSV} introducing the metaplectic affine Hecke algebra action on Laurent polynomials (in Section \ref{MetaplecticSection} we will show how \cite[Thm. 3.7]{SSV} follows from Theorem \ref{gbr}). Similarly as in the context of Cherednik's basic representation and its associated Macdonald polynomials (the special case $\cc=0$ and $\mathfrak{t}=1_T$), the proof of parts (2) and (3) in Subsections \ref{POsection}--\ref{POsection2}, as well as the existence of the quasi-polynomial eigenfunctions of $\pi_{\cc,\mathfrak{t}}(Y^\lambda)$ ($\lambda\in Q^\vee$) in Section \ref{QuasiSection}, will require a detailed analysis of the triangularity properties of the truncated Demazure-Lusztig operators $\pi_{\cc,\mathfrak{t}}(T_j)$ ($j\in [0,r]$). For a detailed discussion of the triangularity properties of Demazure-Lusztig operators in the context of Cherednik's polynomial representation  see, e.g., \cite[\S 4.6]{Ma}.

 In the following two subsections we give some direct consequences of Theorem \ref{gbr}. 

\subsection{The face dependence of the quasi-polynomial representation}\label{FaceSection}

The quasi-polynomial representations $\pi_{\cc,\mathfrak{t}}$ are equivalent for different values of $\cc\in C^J$ by Theorem \ref{gbr}(2). For $\mathfrak{t}\in T_J$ and
$\cc,\cc^\prime\in C^J$, the isomorphism is explicitly given by
\begin{equation}\label{isomface}
\mathcal{P}_{\mathfrak{t}}^{(\cc)}\overset{\sim}{\longrightarrow}\mathcal{P}_{\mathfrak{t}}^{(\cc^\prime)},\qquad x^{w\cc}\mapsto x^{w\cc^\prime}\quad (w\in W^J).
\end{equation}
As a direct consequence of \eqref{isomface}, we obtain the following result.
\begin{corollary}\label{expansionHcor}
Let $J\subsetneq [0,r]$, $\mathfrak{t}\in T_{J}$, $w\in W^J$ and $h\in\mathbb{H}$. Then
\[
\pi_{\cc,\mathfrak{t}}(h)x^{w\cc}=
\sum_{w^\prime\in W^J}e_{w,w^\prime;\mathfrak{t}}^{J,h}
x^{w^\prime\cc}\qquad \forall\,\cc\in C^J
\]
with coefficients $e_{w,w^\prime;\mathfrak{t}}^{J,h}\in\mathbf{F}$ \textup{(}$w^\prime\in W^J$\textup{)} independent of the choice of $\cc\in C^J$.
\end{corollary}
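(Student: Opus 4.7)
The plan is to derive this directly from the $\mathbb{H}$-module isomorphism \eqref{isomface}. For $\cc,\cc^\prime\in C^J$, let $\Psi_{\cc,\cc^\prime}:\mathcal{P}_{\mathfrak{t}}^{(\cc)}\to\mathcal{P}_{\mathfrak{t}}^{(\cc^\prime)}$ denote the $\mathbf{F}$-linear map determined by $x^{w\cc}\mapsto x^{w\cc^\prime}$ for $w\in W^J$. I would first justify that $\Psi_{\cc,\cc^\prime}$ is an isomorphism of $\mathbb{H}$-modules by realising it as $\phi_{\cc^\prime,\mathfrak{t}}\circ\phi_{\cc,\mathfrak{t}}^{-1}$, with $\phi_{\cc,\mathfrak{t}}$ the isomorphism of Theorem \ref{gbr}(2). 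Indeed, formula \eqref{phiformula} gives
\[
\phi_{\cc,\mathfrak{t}}^{-1}(x^{w\cc})=\kappa_{v_w}(\cc)^{-1}x^{\mu_w}T_{v_w}m^{J}_{\mathfrak{s}_J\mathfrak{t}},
\]
so
\[
\bigl(\phi_{\cc^\prime,\mathfrak{t}}\circ\phi_{\cc,\mathfrak{t}}^{-1}\bigr)(x^{w\cc})=\frac{\kappa_{v_w}(\cc^\prime)}{\kappa_{v_w}(\cc)}\,x^{w\cc^\prime}=x^{w\cc^\prime},
\]
where the last equality uses that $\kappa_{v_w}\in\mathcal{F}_\Sigma(E,\mathbf{F}^\times)$ is constant on the face $C^J$ (Lemma \ref{faceconstantlem}), so the normalisation factors cancel.

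With this in hand, the corollary is immediate. A priori, write
\[
\pi_{\cc,\mathfrak{t}}(h)x^{w\cc}=\sum_{w^\prime\in W^J}c_{w,w^\prime}(\cc)\,x^{w^\prime\cc}
\]
with $c_{w,w^\prime}(\cc)\in\mathbf{F}$, and analogously for $\cc^\prime$. Applying the $\mathbb{H}$-equivariance of $\Psi_{\cc,\cc^\prime}$ to this identity yields
\[
\pi_{\cc^\prime,\mathfrak{t}}(h)x^{w\cc^\prime}=\Psi_{\cc,\cc^\prime}\bigl(\pi_{\cc,\mathfrak{t}}(h)x^{w\cc}\bigr)=\sum_{w^\prime\in W^J}c_{w,w^\prime}(\cc)\,x^{w^\prime\cc^\prime},
\]
and comparing with the expansion of $\pi_{\cc^\prime,\mathfrak{t}}(h)x^{w\cc^\prime}$ together with linear independence of $\{x^{w^\prime\cc^\prime}\}_{w^\prime\in W^J}$ forces $c_{w,w^\prime}(\cc)=c_{w,w^\prime}(\cc^\prime)$, proving the claimed $\cc$-independence. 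There is no real obstacle here: the work is done by Theorem \ref{gbr}(2) and by the face-constancy of $\kappa_v$ which makes the normalisation on the two sides compatible.
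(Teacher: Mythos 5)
Your argument is correct and is essentially the paper's own: the corollary is deduced from the intertwiner $x^{w\cc}\mapsto x^{w\cc'}$ of \eqref{isomface}, which the paper likewise obtains by composing the isomorphisms $\phi_{\cc',\mathfrak{t}}\circ\phi_{\cc,\mathfrak{t}}^{-1}$ from Theorem \ref{gbr}(2)\&(3). Your explicit check that the normalisation factors $\kappa_{v_w}(\cc)$ and $\kappa_{v_w}(\cc')$ cancel via the face-constancy of $\kappa_v$ (Lemma \ref{faceconstantlem}) is exactly the point the paper leaves implicit, and it is handled correctly.
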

The closure relations between the faces $C^J$ of $\overline{C}_+$ are translated to quotient relations for the quasi-polynomial representations $\pi_{\cc,\mathfrak{t}}$
(see Remark \ref{specNJt}(1)). 
\begin{corollary}\label{projectioncor}
Let $J\subseteq J^\prime\subsetneq [0,r]$, $\cc\in C^J$, $\cc^\prime\in C^{J^\prime}$ and $\mathfrak{t}^\prime\in T_{J^\prime}$. 
\begin{enumerate}
\item There exists a unique epimorphism 
\[
\textup{pr}_{\cc,\cc^\prime}^{\mathfrak{t}^\prime}:\Pc_{\mathfrak{s}_J^{-1}\mathfrak{s}_{J^\prime}\mathfrak{t}^\prime}
\twoheadrightarrow\mathcal{P}^{(\cc^\prime)}_{\mathfrak{t}^\prime}
\]
of $\mathbb{H}$-modules mapping $x^\cc$ to $x^{\cc^\prime}$. 
\item We have
\[
\textup{pr}_{\cc,\cc^\prime}^{\mathfrak{t}^\prime}(x^{\mu+v\cc})=\frac{\kappa_v(\cc^\prime)}{\kappa_v(\cc)}x^{\mu+v\cc^\prime}\qquad  (\mu\in Q^\vee,\, v\in W_0).
\]
\end{enumerate}
\end{corollary}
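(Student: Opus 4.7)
The strategy is to identify both quasi-polynomial representations with $Y$-parabolically induced cyclic $\mathbb{H}$-modules via Theorem \ref{gbr}(2), and then exhibit $\textup{pr}^{\mathfrak{t}'}_{\cc,\cc'}$ as the image, under these identifications, of the natural quotient morphism between induced cyclic $\mathbb{H}$-modules provided by Remark \ref{specNJt}(1).

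First I would verify that $\mathfrak{s}_J^{-1}\mathfrak{s}_{J'}\mathfrak{t}'\in T_J$, so that the source module $\mathcal{P}^{(\cc)}_{\mathfrak{s}_J^{-1}\mathfrak{s}_{J'}\mathfrak{t}'}$ is well defined: this reduces to $\mathfrak{t}'\in T_{J'}\subseteq T_J$ (since $J\subseteq J'$) together with Proposition \ref{lemmabasepoint}, which yields $\mathfrak{s}_J^{\alpha_j^\vee}=\mathfrak{s}_{J'}^{\alpha_j^\vee}=k_j^{-2}$ for all $j\in J$. Setting $t:=\mathfrak{s}_{J'}\mathfrak{t}'\in L_{J'}\subseteq L_J$, Theorem \ref{gbr}(2) provides $\mathbb{H}$-module isomorphisms
\[
\phi_{\cc,\mathfrak{s}_J^{-1}\mathfrak{s}_{J'}\mathfrak{t}'}:\mathbb{M}_t^{J}\overset{\sim}{\longrightarrow}\mathcal{P}^{(\cc)}_{\mathfrak{s}_J^{-1}\mathfrak{s}_{J'}\mathfrak{t}'},\qquad
\phi_{\cc',\mathfrak{t}'}:\mathbb{M}_t^{J'}\overset{\sim}{\longrightarrow}\mathcal{P}^{(\cc')}_{\mathfrak{t}'},
\]
carrying the canonical generators $m_t^J$ and $m_t^{J'}$ to $x^\cc$ and $x^{\cc'}$ respectively.

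Next I would establish the quotient morphism. Since $t\in L_{J'}$ a direct inspection of \eqref{expchitJ} shows that $\chi_{J',t}$ restricts to $\chi_{J,t}$ on $\mathbb{H}_J^Y\subseteq\mathbb{H}_{J'}^Y$, so that $\mathbf{F}_{J',t}$ restricts to $\mathbf{F}_{J,t}$. By the universal property of induction this yields a unique $\mathbb{H}$-module map $\mathbb{M}_t^{J}\to\mathbb{M}_t^{J'}$ sending $m_t^J\mapsto m_t^{J'}$, which is surjective because $m_t^{J'}$ is cyclic; this is precisely the quotient morphism recorded in Remark \ref{specNJt}(1). Composing with the two isomorphisms above produces the required $\mathbb{H}$-module epimorphism $\textup{pr}^{\mathfrak{t}'}_{\cc,\cc'}$ sending $x^\cc\mapsto x^{\cc'}$, and uniqueness is immediate from the cyclicity of $x^\cc$ in the source.

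For part (2) I would simply apply Theorem \ref{gbr}(3) on both sides: the formula gives
\[
\phi_{\cc,\mathfrak{s}_J^{-1}\mathfrak{s}_{J'}\mathfrak{t}'}\bigl(x^\mu T_v m_t^{J}\bigr)=\kappa_v(\cc)\,x^{\mu+v\cc},\qquad
\phi_{\cc',\mathfrak{t}'}\bigl(x^\mu T_v m_t^{J'}\bigr)=\kappa_v(\cc')\,x^{\mu+v\cc'},
\]
while the quotient $\mathbb{M}_t^{J}\to\mathbb{M}_t^{J'}$ sends $x^\mu T_v m_t^{J}$ to $x^\mu T_v m_t^{J'}$. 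Since $\kappa_v(\cc)\in\mathbf{F}^\times$, comparing the two sides yields $\textup{pr}^{\mathfrak{t}'}_{\cc,\cc'}(x^{\mu+v\cc})=(\kappa_v(\cc')/\kappa_v(\cc))x^{\mu+v\cc'}$. There is no genuine obstacle in this argument; the only point of care is the compatibility of the characters $\chi_{J,t}$ and $\chi_{J',t}$ under restriction, which is a brief check on generators.
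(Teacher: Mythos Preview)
Your proof is correct and follows essentially the same approach as the paper: both define $\textup{pr}^{\mathfrak{t}'}_{\cc,\cc'}$ as the composite $\phi_{\cc',\mathfrak{t}'}\circ\psi\circ\phi_{\cc,\mathfrak{s}_J^{-1}\mathfrak{s}_{J'}\mathfrak{t}'}^{-1}$, where $\psi:\mathbb{M}^J_t\twoheadrightarrow\mathbb{M}^{J'}_t$ (with $t=\mathfrak{s}_{J'}\mathfrak{t}'$) is the quotient of Remark~\ref{specNJt}(1), and both deduce part~(2) from Theorem~\ref{gbr}(3). Your write-up simply makes explicit a few points the paper leaves implicit (that $\mathfrak{s}_J^{-1}\mathfrak{s}_{J'}\mathfrak{t}'\in T_J$, the character compatibility underlying $\psi$, and uniqueness via cyclicity of $x^\cc$).
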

\begin{proof}
Write $t^\prime:=\mathfrak{s}_{J^\prime}\mathfrak{t}^\prime\in L_{J^\prime}\subseteq L_J=\mathfrak{s}_JT_J$.
Recall from Remark \ref{specNJt}(1) that we have an epimorphism $\psi: \mathbb{M}^J_{t^\prime}\twoheadrightarrow\mathbb{M}^{J^\prime}_{t^\prime}$ of $\mathbb{H}$-modules mapping 
$m^J_{t^\prime}$ to $m^{J^\prime}_{t^\prime}$. Then 
\[
\textup{pr}^{\mathfrak{t}^\prime}_{\cc,\cc^\prime}:=\phi_{\cc^\prime,\mathfrak{t}^\prime}\circ\psi\circ\phi_{\cc,\mathfrak{s}_J^{-1}t^\prime}^{-1}: \Pc_{\mathfrak{s}_J^{-1}\mathfrak{s}_{J^\prime}\mathfrak{t}^\prime} 
\twoheadrightarrow\mathcal{P}^{(\cc^\prime)}_{\mathfrak{t}^\prime}
\]
has the desired properties (use Theorem \ref{gbr}\,(2)\&(3)).
 \end{proof}

\begin{remark}\label{projectionrem}
Let $J\subseteq [1,r]$ and $\cc\in C^J$. Then Corollary \ref{projectioncor} gives an epimorphism $\textup{pr}_{\cc,0}^{1_{T}}: \Pc_{\mathfrak{s}_J^{-1}\mathfrak{s}_0}
\twoheadrightarrow\mathcal{P}^{(0)}_{1_T}$ 
of $\mathbb{H}$-modules. The co-domain is Cherednik's basic representation, and
\[
\textup{pr}_{\cc,0}^{1_T}\bigl(x^{\mu+v\cc}\bigr)=\Bigl(\prod_{\alpha\in\Pi(v)}k_\alpha^{1+\eta(\alpha(\cc))}\Bigr)x^\mu\qquad (\mu\in Q^\vee,\, v\in W_0).
\]
\end{remark}

\subsection{Affine Weyl group actions on quasi-rational functions}\label{aWaSection}
We derive an affine Weyl group action on quasi-rational functions by localizing the quasi-poly\-no\-mial representation $\pi_{\cc,\mathfrak{t}}$ (Theorem \ref{gbr}(1)).  In Section \ref{MfinalSection}, we will show that this action gives rise to an affine analog of Chinta and Gunnells' $W_0$-action on rational functions \cite{CG07, CG}.
The approach in this subsection follows the paper \cite{SSV}, where the Chinta-Gunnells' $W_0$-action was obtained by localization of an appropriate metaplectic affine Hecke algebra representation. 
 
Consider the $\mathcal{Q}$-algebra 
\[\mathbf{F}_{\mathcal{Q}}[E]:=\mathcal{Q}\otimes_{\mathcal{P}}\mathbf{F}[E],
\] 
which is isomorphic to the localization of $\mathbf{F}[E]$ by $\mathcal{P}^\times$. The algebra $W_0\ltimes\mathcal{Q}$ naturally act on $\mathbf{F}_{\mathcal{Q}}[E]$.

For $\gamma\in T_{\mathbb{R}}^\vee$ denote by
$\mathcal{Q}_\gamma$ the $\mathcal{Q}$-submodule of $\mathbf{F}_{\mathcal{Q}}[E]$ generated
by $\mathcal{P}_\gamma$. Then 
\[
\mathcal{Q}_\gamma=\mathcal{Q}x^y\qquad \forall\, y\in\gamma.
\]
Write $\mathcal{Q}^{(\cc)}$ for the $\mathcal{Q}$-submodule of $\mathbf{F}_{\mathcal{Q}}[E]$ generated by $\Pc$
($\cc\in\overline{C}_+$), so that
\[
\mathcal{Q}^{(\cc)}=\bigoplus_{\gamma\in W_0[\cc]}\mathcal{Q}_\gamma.
\]
Note that $\mathcal{Q}^{(\cc)}$ is a cyclic $W_0\ltimes\mathcal{Q}$-submodule of $\mathbf{F}_{\mathcal{Q}}[E]$. 

Fix $\cc\in C^J$ and $\mathfrak{t}\in T_J$. The $W\ltimes\mathcal{P}$-action on $\Pc$ from Lemma \ref{actiondeform}
extends to a $W\ltimes\mathcal{Q}$-action on $\mathcal{Q}^{(\cc)}$. The following theorem provides a $\mathbf{k}$-deformation of this action.
\begin{theorem}\label{aCG}
Let $J\subsetneq [0,r]$, $\cc\in C^J$ and $\mathfrak{t}\in T_J$. The formulas
\begin{equation}\label{sigmaaction}
\begin{split}
\sigma_{\cc,\mathfrak{t}}(s_j)(x^y)&:=
\frac{k_j^{\chi_{\mathbb{Z}}(\alpha_j(y))}(x^{\alpha_j^\vee}-1)}{(k_jx^{\alpha_j^\vee}-k_j^{-1})}
s_{j,\mathfrak{t}}x^y+\frac{(k_j-k_j^{-1})}{(k_jx^{\alpha_j^\vee}-k_j^{-1})}x^{y-\lfloor D\alpha_j(y)
\rfloor\alpha_j^\vee},\\
\sigma_{\cc,\mathfrak{t}}(f)(x^y)&:=fx^y
\end{split}
\end{equation}
for $0\leq j\leq r$, $y\in\mathcal{O}_\cc$ and $f\in\mathcal{Q}$ turn $\mathcal{Q}^{(\cc)}$ into a 
$W\ltimes\mathcal{Q}$-module, which we denote by $\mathcal{Q}_{\mathfrak{t}}^{(\cc)}$.
\end{theorem}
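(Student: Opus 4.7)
The plan is to construct the $W \ltimes \mathcal{Q}$-module structure on $\mathcal{Q}^{(\cc)}$ by transporting the quasi-polynomial representation $\pi_{\cc,\mathfrak{t}}$ of Theorem \ref{gbr}(1) through Cherednik's localization isomorphism $\ss: W \ltimes \mathcal{Q} \overset{\sim}{\longrightarrow} \mathbb{H}^{X-\textup{loc}}$ from Theorem \ref{locTHM}. First, I would extend $\pi_{\cc,\mathfrak{t}}$ to a representation $\pi_{\cc,\mathfrak{t}}^{X-\textup{loc}}: \mathbb{H}^{X-\textup{loc}} \to \textup{End}(\mathcal{Q}^{(\cc)})$. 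This is possible because $\mathcal{P}^{(\cc)} = \bigoplus_{\gamma \in W_0[\cc]} \mathcal{P}_\gamma$ is a free $\mathcal{P}$-module (each summand $\mathcal{P}_\gamma = \mathcal{P} x^y$ for any $y \in \gamma$ being free of rank one), so multiplication by any $p \in \mathcal{P}^\times$ acts injectively on $\mathcal{P}^{(\cc)}$ and becomes invertible on the localization $\mathcal{Q}^{(\cc)} = \mathcal{Q} \otimes_\mathcal{P} \mathcal{P}^{(\cc)}$. The universal property of Ore localization then yields a unique such extension, and setting $\sigma_{\cc,\mathfrak{t}} := \pi_{\cc,\mathfrak{t}}^{X-\textup{loc}} \circ \ss$ defines a $W \ltimes \mathcal{Q}$-module structure on $\mathcal{Q}^{(\cc)}$.

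It then remains to identify this action with the explicit formulas \eqref{sigmaaction}. For $f \in \mathcal{Q}$, the element $\ss(f) = f$ acts on $\mathcal{Q}^{(\cc)}$ by multiplication, which immediately matches the second formula in \eqref{sigmaaction}. For a simple reflection $s_j$ we have $\ss(s_j) = \widetilde{S}_j^X$, and from the explicit expression \eqref{normS} we obtain
\begin{equation*}
\sigma_{\cc,\mathfrak{t}}(s_j)(x^y) = \frac{x^{\alpha_j^\vee}-1}{k_j x^{\alpha_j^\vee} - k_j^{-1}}\, \pi_{\cc,\mathfrak{t}}(T_j)(x^y) + \frac{k_j - k_j^{-1}}{k_j x^{\alpha_j^\vee} - k_j^{-1}}\, x^y.
\end{equation*}
Substituting the formula for $\pi_{\cc,\mathfrak{t}}(T_j)(x^y)$ from Theorem \ref{gbr}(1) and using the elementary identity $(x^{\alpha_j^\vee}-1)\nabla_j(x^y) = x^{y - \lfloor D\alpha_j(y) \rfloor \alpha_j^\vee} - x^y$, which is immediate from \eqref{nablaj}, causes the two $x^y$-contributions to cancel, and the resulting expression is precisely the right-hand side of \eqref{sigmaaction}.

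The essential inputs for this strategy are Theorem \ref{gbr}(1) (the existence of the quasi-polynomial representation, established independently in Section \ref{mainproof}) and Theorem \ref{locTHM} (Cherednik's localization isomorphism). The main obstacle to a direct approach — verifying by hand that \eqref{sigmaaction} satisfies the braid relations, the involutivity $\sigma_{\cc,\mathfrak{t}}(s_j)^2 = \textup{id}$, and the semidirect-product cross relations $\sigma_{\cc,\mathfrak{t}}(s_j) \sigma_{\cc,\mathfrak{t}}(f) = \sigma_{\cc,\mathfrak{t}}(s_j(f)) \sigma_{\cc,\mathfrak{t}}(s_j)$ — would be substantially complicated by the discontinuous correction factors $\chi_{\mathbb{Z}}(\alpha_j(y))$ and $\lfloor D\alpha_j(y) \rfloor$, which force multiple case distinctions depending on whether $\alpha_j(y) \in \mathbb{Z}$ and on its fractional part. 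The localization argument above bypasses this entirely, reducing the check to the single line of computation carried out on the unique quasi-monomial $x^y$.
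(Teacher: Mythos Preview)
Your proof is correct and follows essentially the same route as the paper's: extend $\pi_{\cc,\mathfrak{t}}$ to $\mathbb{H}^{X-\textup{loc}}$ acting on $\mathcal{Q}^{(\cc)}$, then define $\sigma_{\cc,\mathfrak{t}}:=\pi_{\cc,\mathfrak{t}}^{X-\textup{loc}}\circ\ss$ and compute $\pi_{\cc,\mathfrak{t}}^{X-\textup{loc}}(\widetilde{S}_j^X)(x^y)$ explicitly. The only cosmetic differences are that the paper realises the localized module as the induced module $\mathbb{H}^{X-\textup{loc}}\otimes_{\mathbb{H}}\mathcal{P}_{\mathfrak{t}}^{(\cc)}$ rather than invoking Ore localization directly, and carries out the final simplification via the auxiliary operator $\check{s}_j(x^y):=x^{y-\lfloor D\alpha_j(y)\rfloor\alpha_j^\vee}$ instead of the identity $(x^{\alpha_j^\vee}-1)\nabla_j(x^y)=x^{y-\lfloor D\alpha_j(y)\rfloor\alpha_j^\vee}-x^y$ you use.
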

\begin{proof}
Let $\check{s}_j$ be the linear operator on $\Pc$ defined by
\begin{equation}\label{checksj}
\check{s}_j(x^y):=x^{y-\lfloor D\alpha_j(y)\rfloor\alpha_j^\vee}\qquad (y\in\mathcal{O}_\cc).
\end{equation}
Note that 
\begin{equation}\label{compatibleQ}
\check{s}_j(px^y)=s_j(p)\check{s}_j(x^y)
\end{equation}
for $0\leq j\leq r$, $y\in\mathcal{O}_\cc$ and $p\in\mathcal{P}$. We extend 
$\check{s}_j$ to a linear operator on $\mathcal{Q}^{(\cc)}$
by requiring \eqref{compatibleQ} to hold true for all $p\in\mathcal{Q}$. 

Transporting the $\mathbb{H}^{X-\textup{loc}}$-action of the induced $\mathbb{H}^{X-\textup{loc}}$-module
$\mathbb{H}^{X-\textup{loc}}\otimes_{\mathbb{H}}\Pc_{\mathfrak{t}}$
to $\mathcal{Q}^{(\cc)}$ through the linear isomorphism 
\[
\mathcal{Q}^{(\cc)}\overset{\sim}{\longrightarrow}\mathbb{H}^{X-\textup{loc}}\otimes_{\mathbb{H}}\Pc_{\mathfrak{t}},\qquad
fx^y\mapsto f\otimes_{\mathbb{H}}x^y\qquad (f\in\mathcal{Q},\, y\in\mathcal{O}_\cc)
\]
(see \eqref{factorloc}) gives a representation 
\[
\pi_{\cc,\mathfrak{t}}^{X-\textup{loc}}: \mathbb{H}^{X-\textup{loc}}\rightarrow
\textup{End}(\mathcal{Q}^{(\cc)}).
\]
By \eqref{crossloc} and \eqref{compatibleQ} the representation map $\pi_{\cc,\mathfrak{t}}^{X-\textup{loc}}$ is explicitly given by
\begin{equation}\label{hatform}
\begin{split}
\pi_{\cc,\mathfrak{t}}^{X-\textup{loc}}(T_j)(fx^y)&=k_j^{\chi_{\mathbb{Z}}(\alpha_j(y))}s_{j,\mathfrak{t}}(fx^y)+(k_j-k_j^{-1})\Bigl(\frac{fx^y-\check{s}_j(fx^y)}{1-x^{\alpha_j^\vee}}\Bigr),\\
\pi_{\cc,\mathfrak{t}}^{X-\textup{loc}}(f^\prime)(fx^y)&=f^\prime fx^y
\end{split}
\end{equation}
for $0\leq j\leq r$, $y\in\mathcal{O}_\cc$ and $f,f^\prime\in\mathcal{Q}$. By Theorem \ref{locTHM} we obtain a representation $\sigma_{\cc,\mathfrak{t}}: W\ltimes\mathcal{Q}\rightarrow\textup{End}(\mathcal{Q}^{(\cc)})$, defined by
\begin{equation}\label{localaction}
\sigma_{\cc,\mathfrak{t}}(w\otimes f):=\pi_{\cc,\mathfrak{t}}^{X-\textup{loc}}(\widetilde{S}_w^Xf)
\qquad (w\in W, f\in\mathcal{Q}).
\end{equation}
Then $\sigma_{\cc,\mathfrak{t}}(f^\prime)(fx^y)=ff^\prime x^y$ for $f,f^\prime\in\mathcal{Q}$ and $y\in\mathcal{O}_\cc$, and 
\begin{equation*}
\begin{split}
\sigma_{\cc,\mathfrak{t}}(s_j)&=\pi_{\cc,\mathfrak{t}}^{X-\textup{loc}}(\widetilde{S}_j^X)\\
&=\pi_{\cc,\mathfrak{t}}^{X-\textup{loc}}\Bigl(
\Bigl(\frac{x^{\alpha_j^\vee}-1}{k_jx^{\alpha_j^\vee}-k_j^{-1}}\Bigr)T_j+
\Bigl(\frac{k_j-k_j^{-1}}{k_jx^{\alpha_j^\vee}-k_j^{-1}}\Bigr)\Bigr)\\
&=\frac{k_j^{\chi_{\mathbb{Z}}(\alpha_j(y))}(x^{\alpha_j^\vee}-1)}{(k_jx^{\alpha_j^\vee}-k_j^{-1})}
s_{j,\mathfrak{t}}+\frac{(k_j-k_j^{-1})}{(k_jx^{\alpha_j^\vee}-k_j^{-1})}\check{s}_j
\end{split}
\end{equation*}
as linear operators on $\mathcal{Q}^{(\cc)}$.
\end{proof}

\begin{remark}\label{linkCGlocalrem}\hfill
\begin{enumerate}
\item
Note that for $0\leq j\leq r$, $f\in\mathcal{Q}$ and $y\in\mathcal{O}_\cc$ we have
\[
s_{j,\mathfrak{t}}(fx^y)=
\check{s}_j(fx^y)\,\,\hbox{ if }\, \alpha_j(y)\in\mathbb{Z}
\]
by Lemma \ref{datumokcor}(2), and hence 
$\sigma_{\cc,\mathfrak{t}}(s_j)(fx^y)=\check{s}_j(fx^y)$ if $\alpha_j(y)\in\mathbb{Z}$. In particular, 
$\sigma_{0,1_T}$ reduces 
to the standard $W\ltimes\mathcal{Q}$-action on
$\mathcal{Q}^{(0)}=\mathcal{Q}$ as described in Subsection \ref{ap}.
\item We have
\begin{equation}\label{linkCGlocal}
\sigma_{\cc,\mathfrak{t}}=\pi_{\cc,\mathfrak{t}}^{X-\textup{loc}}\circ\ss
\end{equation}
by Theorem \ref{locTHM} and \eqref{localaction}.
\end{enumerate}
\end{remark}
\begin{remark}\label{DLform}
Theorem \ref{aCG} allows to rewrite the truncated Demazure-Lusztig type operators $\pi_{\cc,\mathfrak{t}}^{X-\textup{loc}}(T_j)$ (see \eqref{actionformulas})
as standard Demazure-Lusztig operators involving the $W$-action $\sigma_{\cc,\mathfrak{t}}$ on $\mathcal{Q}^{(\cc)}$,
\begin{equation*}
\pi_{\cc,\mathfrak{t}}^{X-\textup{loc}}(T_j)f=k_jf+k_j^{-1}\Bigl(\frac{1-k_j^2x^{\alpha_j^\vee}}{1-x^{\alpha_j^\vee}}\Bigr)\bigl(\sigma_{\cc,\mathfrak{t}}(s_j)f-f\bigr)
\qquad (f\in\mathcal{Q}^{(\cc)}).
\end{equation*}
This follows from \eqref{linkCGlocal} and the fact that 
\[
\ss^{-1}(T_j)=k_j+k_j^{-1}\Bigl(\frac{1-k_j^2x^{\alpha_j^\vee}}{1-x^{\alpha_j^\vee}}\Bigr)(s_j-1)
\]
in $W\ltimes\mathcal{Q}$. This formula shows that $\pi_{\cc,\mathfrak{t}}^{X-\textup{loc}}(T_j)$ is a so-called metaplectic Demazure-Lusztig operator (see \cite{CGP,PP,PP2}) in the 
context of metaplectic representation theory of reductive groups over non-archimedean local fields, see Section \ref{MfinalSection} for further details.
\end{remark}
\begin{remark}\label{k0limit}
In the limit case where $\mathbf{k}=\mathbf{0}$ (meaning $k_{\alpha} = 0$ for all $\alpha \in \Phi_0$), the $W$-action from
Theorem \ref{aCG} does not depend on $\mathfrak{t}$ and satisfies
\[
\sigma_{\cc,\mathfrak{t}}^{\mathbf{k}=\mathbf{0}}(s_j)(x^y)=\check{s}_j(x^y)\qquad (0\leq j\leq r,\, y\in\mathcal{O}_\cc),
\]
with $\check{s}_j$ defined by \eqref{checksj}. For $y \in\mathcal{O}_\cc$ let $\xi_y\in y+P^\vee$ be the unique element satisfying 
$0 \leq \alpha_i(\xi_y) < 1$ for all $1 \leq i \leq r$. Then
$\check{s}_i(x^y)=x^{\xi_y+s_i(y-\xi_y)}$ for $i\in [1,r]$, hence
\begin{equation*}
\sigma_{c, \mathfrak{t}}^{\mathbf{k}=\mathbf{0}}(v)(x^y) = x^{\xi_y + v (y-\xi_y)}
\end{equation*}
for all $v \in W_0$.  In particular, this is a shift of the standard $W_0$-action.  
\end{remark}

\section{The proof of the main theorem}\label{mainproof}

This section is dedicated to the proof of Theorem \ref{gbr}. As a byproduct we obtain triangularity properties of the commuting
operators $\pi_{\cc,\mathfrak{t}}(Y^\mu)$ ($\mu\in Q^\vee$) which will play an important role in the construction of the quasi-polynomial
generalisations of the Macdonald polynomials in Section \ref{QuasiSection}.

\subsection{Properties of the base point $\mathfrak{s}_J$}\label{idsection}
Recall the step function $\eta$ defined by \eqref{eta}. It satisfies the identities
\begin{equation}\label{etaaffine}
\eta(x)+\eta(-x)=-2\chi_{\{0\}}(x),\qquad 
\eta(x)+\eta(1-x)=0
\end{equation}
for all $x\in\mathbb{R}$. 
\begin{definition}\label{fraksdef}
Let $\Lambda\in\mathcal{L}$. For $y\in E$ set 
\begin{equation}\label{ffrak}
\mathfrak{s}_y:=\prod_{\alpha\in\Phi^+_0}k_\alpha^{\eta(\alpha(y))\alpha}\in T_{\Lambda}.
\end{equation}
\end{definition}
Note that $\mathfrak{s}_y\vert_\Lambda=j_\Lambda(\mathfrak{s}_y\vert_{P^\vee})$ since $\mathfrak{s}_y$ maps $\textup{pr}_{E_{\textup{co}}}(\Lambda)$ to $1$.

For $J\subsetneq [0,r]$ the element $\mathfrak{s}_J\in T_{P^\vee}$ (see \eqref{fraksprime}) can be recovered from $\mathfrak{s}_y$ as follows.
\begin{lemma}\label{facecors}
Let $\Lambda\in\mathcal{L}$. The function $y\mapsto \mathfrak{s}_y$ lies in $\mathcal{F}_\Sigma(E,T_{\Lambda})$. Furthermore,
\begin{equation}\label{srelation}
\mathfrak{s}_\cc=\mathfrak{s}_{J}\,\, \hbox{ if }\,\, \cc\in C^J.
\end{equation}
\end{lemma}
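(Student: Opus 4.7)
The plan is to reduce both assertions to statements about individual factors $k_\alpha^{\eta(\alpha(y))\alpha}$ indexed by $\alpha\in\Phi_0^+$, and then handle these factors one at a time.

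For the first assertion, I would apply Lemma \ref{faceconstantlem} to the function $\eta$ and to each affine root $(\alpha,0)$ with $\alpha\in\Phi_0^+$. The hypothesis of that lemma is satisfied because $\eta$ vanishes on $\mathbb{R}\setminus\mathbb{Z}$, so in particular $\eta|_{(m,m+1)}\equiv 0$ for every $m\in\mathbb{Z}$. Consequently $y\mapsto\eta(\alpha(y))$ lies in $\mathcal{F}_\Sigma(E,\mathbb{Z})$ for each $\alpha\in\Phi_0^+$, and therefore the finite product $y\mapsto\mathfrak{s}_y=\prod_{\alpha\in\Phi_0^+}k_\alpha^{\eta(\alpha(y))\alpha}$ lies in $\mathcal{F}_\Sigma(E,T_\Lambda)$.

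For the second assertion, fix $\cc\in C^J$. It suffices to verify that $\eta(\alpha(\cc))=\eta_J(\alpha)$ for every $\alpha\in\Phi_0^+$, since then the defining products for $\mathfrak{s}_\cc$ and for $\mathfrak{s}_J$ (viewed in $T_\Lambda$ via $j_\Lambda$, as explained right after Definition~\ref{fraksdef}) match factor by factor. Recall from \eqref{Cplus} that $\alpha(\overline{C}_+)\subseteq[0,1]$ for $\alpha\in\Phi_0^+$, so the trichotomy defining $\eta_J$ in \eqref{etaJ} is exhaustive. I would then run through the three cases: if $\alpha(C^J)=\{0\}$ then $\alpha(\cc)=0$ and $\eta(0)=-1=\eta_J(\alpha)$; if $\alpha(C^J)\subseteq(0,1)$ then $\alpha(\cc)\notin\mathbb{Z}$ and $\eta(\alpha(\cc))=0=\eta_J(\alpha)$; if $\alpha(C^J)=\{1\}$ then $\alpha(\cc)=1$ and $\eta(1)=1=\eta_J(\alpha)$.

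There is no real obstacle here: the lemma is a routine unpacking of definitions once the bound $\alpha(\overline{C}_+)\subseteq[0,1]$ is in hand and Lemma \ref{faceconstantlem} is invoked to guarantee face-constancy. The only mild point to flag is the identification of $\mathfrak{s}_J\in T_{P^\vee}$ with an element of $T_\Lambda$ through $j_\Lambda$, which is legitimate because the exponent $\sum_{\alpha\in\Phi_0^+}\eta_J(\alpha)\alpha$ annihilates $E_{\textup{co}}$, exactly as in the remark following \eqref{fraksprime}.
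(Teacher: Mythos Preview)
Your proposal is correct and follows essentially the same approach as the paper's proof: the paper simply cites Lemma~\ref{faceconstantlem} with $g=\eta$ for the first statement and refers to \eqref{etaJ} and \eqref{fraksprime} for the second. Your version unpacks these references explicitly, verifying the case analysis $\eta(\alpha(\cc))=\eta_J(\alpha)$ and the hypothesis on $\eta$ needed for Lemma~\ref{faceconstantlem}, but the logical content is identical.
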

\begin{proof}
The first statement follows from Lemma \ref{faceconstantlem} applied to $g=\eta$. The second statement
follows from \eqref{etaJ} and \eqref{fraksprime}.
\end{proof}
We now analyse how the affine Weyl group $W$ acts on $\mathfrak{s}_y\in T_{\Lambda}$.
\begin{lemma}\label{fraks}
Let $y\in E$ and $j\in [0,r]$. 
\begin{enumerate}
\item If $s_j\in W_y$ then $\mathfrak{s}_y^{D\alpha_j^\vee}=k_j^{-2}$  and $s_{D\alpha_j}\mathfrak{s}_y=k_j^{2D\alpha_j}\mathfrak{s}_y$,
\item If $s_j\in W\setminus W_y$ then $s_{D\alpha_j}\mathfrak{s}_y=\mathfrak{s}_{s_jy}$.
\end{enumerate}
\end{lemma}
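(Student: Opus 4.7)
The plan is to verify both claims by direct manipulation of the product defining $\mathfrak{s}_y$ in \eqref{ffrak}, using only the two identities for $\eta$ in \eqref{etaaffine}, the $W_0$-invariance $k_{v\alpha}=k_\alpha$ of the multiplicity function, the identity \eqref{actiononT} translating $s_a t$ into a power of $t^{-a^\vee}$, and the description \eqref{thetalength} of $\Pi(s_\varphi)$. I would treat the cases $j\in [1,r]$ and $j=0$ separately, since $s_0$ has a nontrivial translational component.

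For part (1) with $j\in [1,r]$ the condition $s_j\in W_y$ reads $\alpha_j(y)=0$. Applying $s_{\alpha_j}$ to $\mathfrak{s}_y$ and reindexing by $\beta = s_{\alpha_j}\alpha$, I use that $s_{\alpha_j}$ preserves $\Phi_0^+\setminus\{\alpha_j\}$ and swaps $\pm\alpha_j$, so the only discrepancy with $\mathfrak{s}_y$ comes from the $\alpha_j$-factor, whose combined exponent is $-(\eta(\alpha_j(y))+\eta(-\alpha_j(y)))\alpha_j = 2\alpha_j$ by the first identity of \eqref{etaaffine} at $x=0$. This yields $s_{\alpha_j}\mathfrak{s}_y = k_j^{2\alpha_j}\mathfrak{s}_y$, and $\mathfrak{s}_y^{\alpha_j^\vee} = k_j^{-2}$ then follows from \eqref{actiononT}. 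For $j=0$ the condition becomes $\varphi(y)=1$; I would partition $\Phi_0^+$ via \eqref{thetalength} into the set $\{\alpha\in\Phi_0^+\mid \alpha(\varphi^\vee)=0\}$ fixed pointwise by $s_\varphi$, the pairs in $\Pi(s_\varphi)\setminus\{\varphi\}$ on which $\langle -s_\varphi\rangle$ acts freely, and the singleton $\{\varphi\}$. The second identity $\eta(1-x)=-\eta(x)$ collapses the middle contributions to those of $\mathfrak{s}_y$, while the $\varphi$-factor produces a discrepancy of $k_\varphi^{-2\varphi}$. Hence $s_\varphi\mathfrak{s}_y = k_\varphi^{-2\varphi}\mathfrak{s}_y$, equivalently $\mathfrak{s}_y^{\varphi^\vee} = k_\varphi^2$, which is the $j=0$ case of the claim via \eqref{actiononT}.

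For part (2) the same reindexing applies. When $j\in [1,r]$ with $\alpha_j(y)\ne 0$, the $\alpha_j$-factor in the ratio $s_{\alpha_j}\mathfrak{s}_y/\mathfrak{s}_{s_jy}$ is trivial since $\eta(\alpha_j(y))+\eta(-\alpha_j(y))=0$ by the first identity of \eqref{etaaffine}, giving $s_{\alpha_j}\mathfrak{s}_y = \mathfrak{s}_{s_jy}$. For $j=0$ with $\varphi(y)\ne 1$, after writing $s_0y = s_\varphi y + \varphi^\vee$ I would match the three families of contributions to $s_\varphi\mathfrak{s}_y$ and $\mathfrak{s}_{s_0y}$ separately: those from $\Phi_0^+\setminus\Pi(s_\varphi)$ coincide immediately since $s_\varphi$ fixes these roots and they are orthogonal to $\varphi^\vee$; those from $\Pi(s_\varphi)\setminus\{\varphi\}$ match via $\eta(1-x)=-\eta(x)$ applied at $x=\varphi(y)-\gamma(y)$; and the $\varphi$-factor reduces to $\eta(2-\varphi(y))=-\eta(\varphi(y))$, which holds precisely because $\varphi(y)\ne 1$, combining $\eta(1-x)=-\eta(x)$ with the elementary observation that $\eta(1+z)=\eta(z)$ for all $z\ne 0$. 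The main obstacle is the bookkeeping in the $j=0$ sub-cases: three partitions of $\Phi_0^+$, shifts by $\varphi(y)$ and $1$ in the arguments of $\eta$, and repeated appeal to \eqref{etaaffine} must be threaded together carefully, but no deeper tools are required.
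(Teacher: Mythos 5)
Your proposal is correct and follows essentially the same route as the paper's proof: the rank-one cases $j\in[1,r]$ via the first identity in \eqref{etaaffine}, and the $j=0$ case via the partition of $\Phi_0^+$ coming from \eqref{thetalength} together with the identity $\eta(1-x)=-\eta(x)$, with \eqref{actiononT} converting the character identity into the statement about $\mathfrak{s}_y^{D\alpha_j^\vee}$. The only cosmetic differences are that you verify $\eta(2-x)+\eta(x)=0$ for $x\neq 1$ via $\eta(1+z)=\eta(z)$ ($z\neq 0$) rather than the paper's three-case check, and you organize the $\Pi(s_\varphi)\setminus\{\varphi\}$ contributions through the involution $\beta\mapsto\varphi-\beta$ instead of applying the $\eta$-identity term by term.
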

\begin{proof}
Let $1\leq i\leq r$. Then $\Pi(s_i)=\{\alpha_i\}$, hence
\[
s_i\mathfrak{s}_y=\prod_{\alpha\in s_i\Phi_0^+}k_\alpha^{\eta(\alpha(s_iy))\alpha}=
\mathfrak{s}_{s_iy}k_i^{-(\eta(\alpha_i(y))+\eta(-\alpha_i(y)))\alpha_i}.
\]

Suppose that $s_iy=y$. Then $\alpha_i(y)=0$ hence $\eta(\alpha_i(y))+\eta(-\alpha_i(y))=-2$, so 
$s_i\mathfrak{s}_y=\mathfrak{s}_yk_i^{2\alpha_i}$ in $T_{P^\vee}$. On the other hand, $s_{i}\mathfrak{s}_y=\mathfrak{s}_y\bigl(\mathfrak{s}_y^{-\alpha_i^\vee}\bigr)^{\alpha_i}$
by \eqref{actiononT}, so $\bigl(\mathfrak{s}_y^{-\alpha_i^\vee}\bigr)^{\alpha_i}=k_i^{2\alpha_i}$ in $T_{\Lambda}$. This in particular holds true for $\Lambda=P^\vee$.
Hence $\mathfrak{s}_y^{\alpha_i^\vee}=k_i^{-2}$.

Suppose that $s_iy\not=y$. Then $\alpha_i(y)\not=0$, hence $\eta(\alpha_i(y))+\eta(-\alpha_i(y))=0$ by the first equality of \eqref{etaaffine}. It follows that $s_i\mathfrak{s}_y=\mathfrak{s}_{s_iy}$.

The proof for $j=0$ is a bit more involved. First note that
\begin{equation}\label{sphirel}
s_\varphi\mathfrak{s}_y=\mathfrak{s}_{s_0y}\prod_{\alpha\in\Phi_0^+\setminus\Pi(s_\varphi)}
k_\alpha^{(\eta(\alpha(s_\varphi y))-\eta(\alpha(s_0y)))\alpha}
\prod_{\beta\in\Pi(s_\varphi)}k_\beta^{-(\eta(-\beta(s_\varphi y))+\eta(\beta(s_0y)))\beta}
\end{equation}
since $k_{s_\varphi\gamma}=k_\gamma$ for all $\gamma\in\Phi_0$. Now $\alpha\in\Phi_0^+\setminus\Pi(s_\varphi)$ implies
$\alpha(\varphi^\vee)=0$ by \eqref{thetalength}, hence
\[\eta(\alpha(s_\varphi y))-\eta(\alpha(s_0y))=\eta(\alpha(y))-\eta(\alpha(y))=0.
\] 
Hence the product over $\alpha\in\Phi_0^+\setminus\Pi(s_\varphi)$ in \eqref{sphirel} is equal to $1$.
By the explicit description \eqref{thetalength} of $\Pi(s_\varphi)$ we then have
\[
s_\varphi\mathfrak{s}_y=\mathfrak{s}_{s_0y}k_\varphi^{-(\eta(2-\varphi(y))+\eta(\varphi(y)))\varphi}
\prod_{\beta\in\Phi_0^+:\, \beta(\varphi^\vee)=1}
k_\beta^{-(\eta(-\beta(s_\varphi y))+\eta(\beta(s_\varphi y)+1))\beta}.
\]
The product over $\beta$ is equal to $1$ by \eqref{etaaffine}, so we conclude that
\[
s_\varphi\mathfrak{s}_y=\mathfrak{s}_{s_0y}k_\varphi^{-(\eta(2-\varphi(y))+\eta(\varphi(y)))\varphi}.
\]
Suppose that $s_0y=y$. Then $\varphi(y)=1$, hence $\eta(2-\varphi(y))+\eta(\varphi(y))=2\eta(1)=2$ and
$s_\varphi\mathfrak{s}_y=\mathfrak{s}_yk_\varphi^{-2\varphi}$ in $T_{\Lambda}$. Similarly as before, this implies that 
 $\mathfrak{s}_y^{-\varphi^\vee}=k_\varphi^{-2}$. 
 
 Suppose that $s_0y\not=y$. Then $\varphi(y)\not=1$
and $\eta(2-\varphi(y))+\eta(\varphi(y))=0$ (indeed, it is $0+0$ if $\varphi(y)\in\mathbb{R}\setminus\mathbb{Z}$, it is $-1+1$ if $\varphi(y)\in\mathbb{Z}_{>1}$ and it is $1-1$ if $\varphi(y)\in\mathbb{Z}_{<1}$). Hence $s_\varphi\mathfrak{s}_y=\mathfrak{s}_{s_0y}$.
\end{proof}

\begin{proposition}\label{facecor}
Let $\Lambda\in\mathcal{L}$. We have
\[
\mathfrak{s}_y=(Dw_y)\mathfrak{s}_{\cc_y}
\]
in $T_{\Lambda}$ for all $y\in E$.
\end{proposition}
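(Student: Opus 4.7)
The plan is to prove the identity by induction on the length $\ell(w_y)$. The base case $\ell(w_y)=0$ forces $w_y=e$ and hence $y=\cc_y$, $Dw_y=e$, so both sides equal $\mathfrak{s}_{\cc_y}$.

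For the inductive step, fix $y\in E$ with $\ell(w_y)>0$ and choose $j\in[0,r]$ with $\ell(s_jw_y)=\ell(w_y)-1$. Set $y':=s_jy$; I first claim that $w_{y'}=s_jw_y$ and $\cc_{y'}=\cc_y$. The second is immediate since $y'\in Wy$, so $\mathcal{O}_{y'}=\mathcal{O}_y$ and both have the same representative in $\overline{C}_+$. For the first, Lemma \ref{cosetcomb}(2) says that $s_jw_y\notin W^{\mathbf{J}(\cc_y)}$ would force $\ell(s_jw_y)=\ell(w_y)+1$, contradicting the choice of $j$; hence $s_jw_y\in W^{\mathbf{J}(\cc_y)}$, and since $(s_jw_y)\cc_y=s_jy=y'$, uniqueness of the minimal coset representative gives $w_{y'}=s_jw_y$. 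Note in particular that $s_jw_y\neq w_y$, so $s_jy\neq y$ and therefore $s_j\notin W_y$.

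Now apply Lemma \ref{fraks}(2) to the pair $(y,j)$: since $s_j\notin W_y$, we have
\begin{equation*}
s_{D\alpha_j}\mathfrak{s}_y=\mathfrak{s}_{s_jy}=\mathfrak{s}_{y'}.
\end{equation*}
Because $s_{D\alpha_j}$ is an involution on $T_\Lambda$, this rearranges to $\mathfrak{s}_y=s_{D\alpha_j}\mathfrak{s}_{y'}$. By the induction hypothesis applied to $y'$ (valid since $\ell(w_{y'})=\ell(w_y)-1$) together with $\cc_{y'}=\cc_y$,
\begin{equation*}
\mathfrak{s}_{y'}=(Dw_{y'})\mathfrak{s}_{\cc_y}=D(s_jw_y)\mathfrak{s}_{\cc_y}=s_{D\alpha_j}(Dw_y)\mathfrak{s}_{\cc_y}.
\end{equation*}
Combining these two equalities,
\begin{equation*}
\mathfrak{s}_y=s_{D\alpha_j}\cdot s_{D\alpha_j}(Dw_y)\mathfrak{s}_{\cc_y}=(Dw_y)\mathfrak{s}_{\cc_y},
\end{equation*}
which closes the induction.

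The only delicate point is the identification $w_{y'}=s_jw_y$, for which one must verify that the descent $s_jw_y$ still lies in the minimal-coset-representative set $W^{\mathbf{J}(\cc_y)}$; this is where Lemma \ref{cosetcomb}(2) is essential, since without it one could not rule out the possibility that $s_jw_y$ lies outside $W^{\mathbf{J}(\cc_y)}$ and the inductive step would collapse. Everything else is a direct application of Lemma \ref{fraks}(2) combined with the involutivity of reflections.
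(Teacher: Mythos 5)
Your proof is correct and is essentially the paper's argument: the paper fixes a reduced expression of $w_y$ and applies Lemma \ref{fraks}(2) repeatedly along it, which is exactly your downward induction on $\ell(w_y)$ run in the opposite direction, with your use of Lemma \ref{cosetcomb}(2) making explicit the descent bookkeeping ($w_{s_jy}=s_jw_y$, $s_jy\neq y$) that the paper leaves implicit.
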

\begin{proof}
Write $\cc:=\cc_y\in\overline{C}_+$ and $J:=\mathbf{J}(\cc)$. Fix a reduced expression $w_y=s_{j_1}\cdots s_{j_\ell}$ for $w_y\in W^J$ and write $y_m:=s_{j_m}\cdots s_{j_\ell}\cc$ ($1\leq m\leq \ell$) and $y_{\ell+1}:=\cc$. Then
$s_{j_{m}}y_{m+1}\not=y_m$ for $m=1\ldots,\ell$. Repeated application of Lemma \ref{fraks}(2) then gives $\mathfrak{s}_y=(Dw_y)\mathfrak{s}_\cc$. 
\end{proof}
 
Recall from Proposition \ref{lemmabasepoint} that $L_{\Lambda,J}=\mathfrak{s}_JT_{\Lambda,J}$ for $\Lambda\in\mathcal{L}$.
Furthermore, for $\cc\in C^J$ and $\mathfrak{t}\in T_{\Lambda,J}$ we defined $\mathfrak{t}_y\in T_\Lambda$ ($y\in \mathcal{O}_\cc$) by $\mathfrak{t}_y:=w_y\mathfrak{t}$ (see Definition \ref{defty}).
\begin{corollary}\label{datumok} Let $c\in C^J$ and $\mathfrak{t}\in T_{\Lambda,J}$.
For $y\in\mathcal{O}_\cc$ we have
\[
\mathfrak{s}_y\mathfrak{t}_y=w_y(\mathfrak{s}_J\mathfrak{t})
\]
in $T_\Lambda$.
\end{corollary}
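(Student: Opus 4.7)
The plan is to string together the earlier formulas for $\mathfrak{t}_y$, $\mathfrak{s}_y$, and the decomposition $w_y=\tau(\mu_y)v_y$ from Definition \ref{wydef}, and then observe that the apparent discrepancy between the affine $W$-action and the linear $W_0$-action takes care of itself.

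First, by Definition \ref{defty}, $\mathfrak{t}_y=w_y\mathfrak{t}$. Writing $w_y=\tau(\mu_y)v_y$ with $v_y=Dw_y\in W_0$ and using the translation action \eqref{tauaction}, we get
\[
\mathfrak{t}_y=\tau(\mu_y)v_y\mathfrak{t}=q^{\mu_y}\cdot v_y(\mathfrak{t}).
\]
On the other hand, since $y\in\mathcal{O}_\cc$ with $\cc\in C^J$ we have $\cc_y=\cc$, so Proposition \ref{facecor} combined with Lemma \ref{facecors} yields
\[
\mathfrak{s}_y=(Dw_y)\mathfrak{s}_{\cc_y}=v_y\mathfrak{s}_{\cc}=v_y\mathfrak{s}_J.
\]

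Multiplying these, and using that $v_y\in W_0$ acts by group automorphisms of $T_\Lambda$, gives
\[
\mathfrak{s}_y\mathfrak{t}_y=v_y(\mathfrak{s}_J)\cdot q^{\mu_y}v_y(\mathfrak{t})=q^{\mu_y}\,v_y(\mathfrak{s}_J\mathfrak{t})=\tau(\mu_y)v_y(\mathfrak{s}_J\mathfrak{t})=w_y(\mathfrak{s}_J\mathfrak{t}),
\]
which is the claimed identity.

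There is no real obstacle: the only point to notice is that while $\tau(\mu)$ does not respect products on $T_\Lambda$ (one gets an extra $q^\mu$ factor if one distributes it over a product), the identity is set up so that the $q^{\mu_y}$ on the right-hand side $w_y(\mathfrak{s}_J\mathfrak{t})$ matches exactly the $q^{\mu_y}$ hidden inside $\mathfrak{t}_y=w_y\mathfrak{t}$, while $\mathfrak{s}_y$ only carries the linear part $v_y$ of $w_y$ via Proposition \ref{facecor}. Hence the translation factor appears on both sides precisely once, and the two expressions coincide.
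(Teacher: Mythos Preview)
Your argument is correct and is essentially the same as the paper's proof, which compresses your computation into the single line $w_y(\mathfrak{s}_J\mathfrak{t})=((Dw_y)\mathfrak{s}_J)(w_y\mathfrak{t})=\mathfrak{s}_y\mathfrak{t}_y$, invoking Proposition~\ref{facecor} and Definition~\ref{defty}. One notational caveat: in the paper's Definition~\ref{wydef}, $v_y:=w_y^{-1}\tau(\mu_y)$, so that $w_y=\tau(\mu_y)v_y^{-1}$ and $Dw_y=v_y^{-1}$, not $v_y$; your local use of ``$v_y=Dw_y$'' therefore clashes with the established notation, though the underlying computation is unaffected.
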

\begin{proof}
This follows from 
\[
w_y(\mathfrak{s}_J\mathfrak{t})=((Dw_y)\mathfrak{s}_J)(w_y\mathfrak{t})=
\mathfrak{s}_y\mathfrak{t}_y
\]
for $y\in\mathcal{O}_\cc$, where the second equality follows from Proposition \ref{facecor}.
\end{proof}
We will also be needing analogous results for scalar-valued functions on $E$, in particular for
$\kappa_v: E\rightarrow\mathbf{F}^\times$ (see \eqref{kvy}).
Recall the characteristic function $\chi_{\pm}: \Phi_0\rightarrow\{0,1\}$ of $\Phi_0^{\pm}$.
\begin{lemma}\label{kylemma1}
\hfill
\begin{enumerate}
\item For $u,v\in W_0$ and $y\in E$,
\begin{equation}\label{krel}
\kappa_{uv}(y)=\kappa_{v}(y)\prod_{\alpha\in v^{-1}\Pi(u)}k_\alpha^{-\chi(\alpha)
\eta(\chi(\alpha)\alpha(y))},
\end{equation}
where $\chi:=\chi_+-\chi_-$. 
\item If $y\in E^{\textup{reg}}$ then $\kappa_{uv}(y)=\kappa_u(vy)\kappa_v(y)$ for all $u,v\in W_0$.
\end{enumerate}
\end{lemma}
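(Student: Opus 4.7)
The plan is to prove (1) by tracking how the inversion sets $\Pi(v)$ and $\Pi(uv)$ differ, then to deduce (2) from (1) via the antisymmetry of $\eta$ off the integers.

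For (1), I would partition $\Phi_0^+$ into four classes according to the signs of $v\alpha$ and $uv\alpha$. Exactly two of these contribute to the symmetric difference $\Pi(uv)\triangle\Pi(v)$: the set $B:=\{\alpha\in\Phi_0^+\, |\, v\alpha\in\Phi_0^+,\, uv\alpha\in\Phi_0^-\}\subseteq\Pi(uv)\setminus\Pi(v)$, and the set $C:=\{\alpha\in\Phi_0^+\, |\, v\alpha\in\Phi_0^-,\, uv\alpha\in\Phi_0^+\}\subseteq\Pi(v)\setminus\Pi(uv)$. Setting $\beta:=v\alpha$ identifies $B$ with $v^{-1}\Pi(u)\cap\Phi_0^+$, and $\beta:=-v\alpha$ identifies $C$ with $\{\alpha\in\Phi_0^+\,|\,{-\alpha}\in v^{-1}\Pi(u)\}$. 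From the definition of $\kappa$ this gives
\[
\frac{\kappa_{uv}(y)}{\kappa_v(y)}=\prod_{\alpha\in B}k_\alpha^{-\eta(\alpha(y))}\cdot\prod_{\alpha\in C}k_\alpha^{\eta(\alpha(y))}.
\]
I would then fold the second product into the first by reindexing $\alpha\mapsto -\alpha$ in the $C$ sum and using $k_{-\alpha}=k_\alpha$ together with the tautology $\eta(\alpha(y))=\eta(-(-\alpha)(y))$. On $B$ the exponent $-\chi(\alpha)\eta(\chi(\alpha)\alpha(y))$ reduces to $-\eta(\alpha(y))$ since $\chi(\alpha)=1$, while on the reindexed $C$ (living in $v^{-1}\Pi(u)\cap\Phi_0^-$) the same formula yields $-(-1)\eta(-\alpha(y))=\eta(-\alpha(y))$, which matches the $C$-contribution after substitution. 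This gives exactly the claimed formula.

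For (2), start from (1). Changing variables $\beta=v\alpha$ in the definition of $\kappa_u(vy)$ and using $k_{v\alpha}=k_\alpha$ (length invariance of $\mathbf{k}$) together with $(v\alpha)(vy)=\alpha(y)$ yields
\[
\kappa_u(vy)=\prod_{\alpha\in v^{-1}\Pi(u)}k_\alpha^{-\eta(\alpha(y))}.
\]
Comparing with (1), the identity (2) reduces to the pointwise statement
\[
\chi(\alpha)\eta(\chi(\alpha)\alpha(y))=\eta(\alpha(y))\qquad\forall\,\alpha\in v^{-1}\Pi(u).
\]
This is immediate for $\alpha\in\Phi_0^+$, while for $\alpha\in\Phi_0^-$ it becomes $-\eta(-\alpha(y))=\eta(\alpha(y))$, which is the first identity of \eqref{etaaffine} provided $\alpha(y)\neq 0$. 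Here the regularity hypothesis $y\in E^{\textup{reg}}$ enters: it rules out precisely the correction term $-2\chi_{\{0\}}(\alpha(y))$ that would otherwise spoil antisymmetry of $\eta$.

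The main obstacle is the sign bookkeeping in (1): keeping straight the two ways a positive root $\alpha\in\Phi_0^+$ can contribute to the symmetric difference of $\Pi(v)$ and $\Pi(uv)$, and checking that after the substitution $\alpha\mapsto -\alpha$ on the $C$-piece the exponents collate into the single uniform expression $-\chi(\alpha)\eta(\chi(\alpha)\alpha(y))$ indexed over all of $v^{-1}\Pi(u)$. Once (1) is in this symmetric form, (2) is a short consequence of the oddness of $\eta$ on $\mathbb{R}\setminus\{0\}$.
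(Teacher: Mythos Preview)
Your proof is correct and follows essentially the same approach as the paper: both arguments rest on the decomposition of $\Pi(uv)$ via the sets $B=\Phi_0^+\cap v^{-1}\Pi(u)$ and $C=\Phi_0^+\cap(-v^{-1}\Pi(u))$ (the paper phrases this as $\Pi(uv)=(\Pi(v)\setminus C)\sqcup B$), and both deduce (2) from (1) using the oddness of $\eta$ on $\mathbb{R}\setminus\{0\}$. Your version is simply more explicit about the sign bookkeeping; note also that your inclusions $B\subseteq\Pi(uv)\setminus\Pi(v)$ and $C\subseteq\Pi(v)\setminus\Pi(uv)$ are in fact equalities, which is implicit in your claim that these two classes exhaust the symmetric difference.
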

\begin{proof}
(1) Formula \eqref{krel} follows from a direct computation, using that $\Pi(uv)$ is the disjoint union of the subsets 
$\Pi(v)\setminus (\Phi_0^+\cap (-v^{-1}\Pi(u)))$ and $\Phi_0^+\cap v^{-1}\Pi(u)$.\\
(2) By \eqref{etaaffine} we have $\eta(-x)=-\eta(x)$ for $x\in\mathbb{R}^\times$. Hence, for $y\in E^{\textup{reg}}$ formula \eqref{krel} reduces to
\[
\kappa_{uv}(y)=\kappa_{v}(y)\prod_{\alpha\in v^{-1}\Pi(u)}k_\alpha^{-\eta(\alpha(y))}=
\kappa_v(y)\kappa_u(vy).
\]
\end{proof}

We next introduce an affine version of $\kappa_v(y)$. Set
\begin{equation}\label{ky}
k(y):=\prod_{\alpha\in\Phi_0^+}k_\alpha^{\frac{\eta(\alpha(y))}{2}}\qquad (y\in E),
\end{equation}
and define
\begin{equation}\label{kawy}
k_w(y):=\frac{k(wy)}{k(y)}\qquad (y\in E, w\in W).
\end{equation}
The $k_w(y)$ satisfy the cocycle condition $k_{ww^\prime}(y)=k_w(w^\prime y)k_{w^\prime}(y)$ for
$w,w^\prime\in W$ and $y\in E$. Note furthermore that
\[
k_{\tau(\mu)}(y)=\frac{k(y+\mu)}{k(y)}=\prod_{\alpha\in\Phi_0^+}k_\alpha^{(\eta(\alpha(y)+\alpha(\mu))-
\eta(\alpha(y)))/2}
\]
for $y\in E$ and $\mu\in Q^\vee$. 

By Lemma \ref{faceconstantlem} we have $k,k_w\in\mathcal{F}_\Sigma(E,\mathbf{F}^\times)$ for all $w\in W$.
\begin{lemma}\label{kylemma2}
For $0\leq j\leq r$ 
and $y\in E$,
\begin{equation}\label{kjcombi}
k_{s_j}(y)=\kappa_{Ds_j}(y)\,\, \hbox{ if }\, \alpha_j(y)\not=0.
\end{equation}
\end{lemma}
\begin{proof}
We need to show that 
\begin{equation}\label{kj}
\begin{split}
k_{s_i}(y)&=k_i^{-\eta(\alpha_i(y))}\qquad\qquad\quad \hbox{ if }\,\, \alpha_i(y)\not=0,\\
k_{s_0}(y)&=\prod_{\alpha\in\Pi(s_\varphi)}k_\alpha^{-\eta(\alpha(y))}\qquad\,\,
\hbox{ if }\,\, \alpha_0(y)\not=0
\end{split}
\end{equation}
for $1\leq i\leq r$.
The first line follows directly from Lemma \ref{lengthadd} and \eqref{etaaffine}. For the second line one computes,
using \eqref{thetalength},
\[
k(s_0y)=k_{\varphi}^{\eta(-\varphi(y)+2)/2}\prod_{\alpha\in\Pi(s_\varphi)\setminus\{\varphi\}}
k_\alpha^{\eta(\alpha(s_\varphi y)+1)/2}
\prod_{\beta\in\Phi_0^+\setminus\Pi(s_\varphi)}k_\beta^{\eta(\beta(y))/2}.
\]
By \eqref{etaaffine} one then gets
\[
k_{s_0}(y)=k_{\varphi}^{(\eta(-\varphi(y)+2)+\eta(\varphi(y)))/2}\prod_{\alpha\in\Pi(s_\varphi)}k_\alpha^{-\eta(\alpha(y))}.
\]
Since $\eta(2-x)+\eta(x)=0$ if $x\not=1$ (see the proof of Lemma \ref{fraks}), this reduces to the second formula in \eqref{kj}.
\end{proof}

\subsection{The $\mathbb{H}$-action on $\mathcal{P}^{(\cc)}$}\label{proof1section}
In this subsection we give the proof of Theorem \ref{gbr}(1).
Fix $\cc\in C^J$ and $\mathfrak{t}\in T_J$. An important role in the proof 
is played by the following map.

\begin{definition}
Define the surjective linear map $\psi_{\cc,\mathfrak{t}}: \mathbb{H}\twoheadrightarrow \Pc$ by 
\begin{equation}\label{psic}
\psi_{\cc,\mathfrak{t}}(x^\mu T_v Y^\nu):=\kappa_v(\cc)(\mathfrak{s}_\cc\mathfrak{t})^{-\nu} x^{\mu+v\cc}
\end{equation}
for $\mu,\nu\in Q^\vee$ and $v\in W_0$. 
\end{definition}
Note that $\psi_{\cc,\mathfrak{t}}(1)=x^\cc$. 

\begin{lemma}\label{generatoractiontransfer}
Let $h^\prime\in\mathbb{H}$. Then
\begin{equation*}
\psi_{\cc,\mathfrak{t}}(hh^\prime)=\pi_{\cc,\mathfrak{t}}(h)\psi_{\cc,\mathfrak{t}}(h^\prime)
\end{equation*}
for $h=T_j$ \textup{(}$0\leq j\leq r$\textup{)} and $h=x^\mu$ \textup{(}$\mu\in Q^\vee$\textup{)}.
\end{lemma}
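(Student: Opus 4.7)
By $\mathbf{F}$-linearity in $h'$ and the PBW theorem for $\mathbb{H}$, it suffices to verify the identity on basis elements $h' = x^{\mu'} T_v Y^\nu$ with $\mu',\nu \in Q^\vee$ and $v \in W_0$. Since $\psi_{\cc,\mathfrak{t}}(x^{\mu'} T_v Y^\nu) = (\mathfrak{s}_\cc\mathfrak{t})^{-\nu}\,\psi_{\cc,\mathfrak{t}}(x^{\mu'}T_v)$ by definition, and since multiplying on the left by $h \in \{T_j, x^\mu\}$ involves no $Y$-generators, standard PBW reordering keeps $Y^\nu$ at the right end on both sides, and the scalar $(\mathfrak{s}_\cc\mathfrak{t})^{-\nu}$ extracts cleanly. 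This reduces the problem to $h' = x^{\mu'} T_v$. The case $h = x^\mu$ is then immediate: both sides equal $\kappa_v(\cc)\, x^{\mu+\mu'+v\cc}$ by the definition of $\psi_{\cc,\mathfrak{t}}$ and $\pi_{\cc,\mathfrak{t}}(x^\mu)x^y = x^{y+\mu}$.

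For $h = T_j$, apply the DAHA cross relation \eqref{crossX} together with \eqref{usualDD} to obtain
\begin{equation*}
T_j\, x^{\mu'} T_v \;=\; x^{s_j\mu'}\, T_j T_v \;+\; (k_j-k_j^{-1})\,\nabla_j(x^{\mu'})\, T_v
\end{equation*}
inside $\mathbb{H}$. The product $T_j T_v$ is then rewritten in PBW form by standard manipulations in the affine Hecke algebra: for $j \in [1,r]$ via the Iwahori-Matsumoto rule, giving $T_{s_j v}$ (plus a Hecke correction $(k_j-k_j^{-1})T_v$ when $v^{-1}\alpha_j \in \Phi_0^-$); for $j = 0$ via $T_0 T_v = T_{s_0 v}$ (since $v^{-1}\alpha_0 \in \Phi^+$ always, by Lemma \ref{lengthadd}), together with relation \eqref{YT0}, which produces a $Y^{\pm v^{-1}\varphi^\vee}$-factor accompanied by a finite-Hecke element $T_u$ for some $u \in W_0$. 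Applying $\psi_{\cc,\mathfrak{t}}$ to each resulting PBW summand produces a scalar multiple of an explicit quasi-monomial, the scalar being a product of a $\kappa$-factor and a character value of $\mathfrak{s}_\cc\mathfrak{t}$.

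The right-hand side $\pi_{\cc,\mathfrak{t}}(T_j)\,\psi_{\cc,\mathfrak{t}}(x^{\mu'}T_v) = \kappa_v(\cc)\,\pi_{\cc,\mathfrak{t}}(T_j)\, x^y$ (with $y := \mu' + v\cc$) expands via \eqref{actionformulas} into a reflection term $k_j^{\chi_{\mathbb{Z}}(\alpha_j(y))}\, s_{j,\mathfrak{t}}\, x^y$ and a divided-difference term $(k_j-k_j^{-1})\,\nabla_j(x^y)$. Matching the reflection term with the $\psi_{\cc,\mathfrak{t}}$-image of $x^{s_j\mu'} T_j T_v$ uses Lemma \ref{kylemma1}(1) for the ratio $\kappa_{s_j v}(\cc)/\kappa_v(\cc)$, and Corollary \ref{datumok} to convert character values of $\mathfrak{s}_\cc\mathfrak{t}$ on $v$-translates of coroots into character values of $\mathfrak{t}_{v\cc}$ on the original coroots. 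The indicator $\chi_{\mathbb{Z}}(\alpha_j(y))$ arises because $\alpha_j(y) \in \mathbb{Z}$ iff $v^{-1}\alpha_j(\cc) \in \mathbb{Z}$, in which case Lemma \ref{datumokcor}(2) collapses $s_{j,\mathfrak{t}}\, x^y$ to the clean monomial $x^{y - D\alpha_j(y)\alpha_j^\vee}$. The divided-difference term is matched by invoking the additivity identity \eqref{stepder}, which splits $\nabla_j(x^y)$ into contributions that pair with $(k_j-k_j^{-1})\nabla_j(x^{\mu'}) T_v$ and with the Hecke-correction piece from the PBW reduction of $T_j T_v$.

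The principal obstacle is the $j = 0$ case: the affine translation $\tau(\varphi^\vee)$ hidden in $s_0$ produces the character factor $\mathfrak{t}_y^{\varphi^\vee}$ demanded by $s_{0,\mathfrak{t}} x^y = \mathfrak{t}_y^{\varphi^\vee}\, x^{s_\varphi y}$. Tracing this factor through the PBW rewrite \eqref{YT0} of $T_{s_0 v}$ and identifying it with $(\mathfrak{s}_\cc\mathfrak{t})^{v^{-1}\varphi^\vee}$ via Corollary \ref{datumok} is the most delicate bookkeeping step; it requires a sub-case analysis according to the sign of $v^{-1}\varphi$ in $\Phi_0$ and careful use of the Hecke relation $(T_0 - k_0)(T_0 + k_0^{-1}) = 0$ when the exponent $\chi(v^{-1}\varphi)$ in \eqref{YT0} is $-1$. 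With the four ingredients \eqref{YT0}, Lemma \ref{kylemma1}(1), Lemma \ref{datumokcor}, and Corollary \ref{datumok} in hand, a direct verification completes the proof.
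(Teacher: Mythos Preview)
Your outline follows the paper's approach closely: reduce to PBW monomials $h'=x^{\mu'}T_vY^\nu$, strip off the $Y^\nu$, handle $h=x^\mu$ trivially, and for $h=T_j$ push $T_j$ past $x^{\mu'}$ via the cross relation and then reduce $T_jT_v$ to PBW form (directly for $j\in[1,r]$, via \eqref{YT0} for $j=0$). The divided-difference bookkeeping via \eqref{stepder} is exactly how the paper organises \eqref{nablaiaction} and \eqref{nabla0rel}.

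Two points deserve sharpening. First, the case analysis is finer than the sign of $v^{-1}D\alpha_j$: the paper distinguishes four sub-cases according to whether $\alpha_i(v\cc)$ (resp.\ $\varphi(v\cc)$) lies in $[-1,0)$, $\{0\}$, $(0,1)$, or $\{1\}$ (resp.\ $(0,1]$, $\{0\}$, $(-1,0)$, $\{-1\}$), because the $k_j$-exponent, the $\nabla_j(x^{v\cc})$ contribution, and the Hecke correction term all change across these ranges. Second, for $j=0$ the decisive identity is not obtained from Corollary~\ref{datumok} but from the explicit computation
\[
\kappa_{s_\varphi v}(\cc)\;=\;\kappa_v(\cc)\,k_\varphi^{\chi(v^{-1}\varphi)\eta(\chi(v^{-1}\varphi)\varphi(v\cc))}\,\mathfrak{s}_\cc^{-v^{-1}\varphi^\vee},
\]
which the paper derives by applying \eqref{krel}, invoking \eqref{thetalength}, and then recognising the remaining product after a change of positive system ($\Phi_0^+\to v\Phi_0^+$) as the character value $\mathfrak{s}_\cc^{-v^{-1}\varphi^\vee}$. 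This $\mathfrak{s}_\cc$-factor is what cancels against the $\mathfrak{s}_\cc$-part of $t^{v^{-1}\varphi^\vee}=(\mathfrak{s}_\cc\mathfrak{t})^{v^{-1}\varphi^\vee}$, leaving the bare $\mathfrak{t}^{v^{-1}\varphi^\vee}$ that matches $\mathfrak{t}_y^{\varphi^\vee}$ on the right-hand side. Corollary~\ref{datumok} concerns $\mathfrak{s}_y\mathfrak{t}_y$ jointly and does not isolate this cancellation; Lemma~\ref{datumokcor} is what the paper actually uses, and only in the integer sub-cases 2b and 2d to evaluate $\mathfrak{t}^{v^{-1}\varphi^\vee}$ explicitly.
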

\begin{proof}
For the duration of the proof we write 
\begin{equation}\label{taff}
t:=\mathfrak{s}_\cc \mathfrak{t}\in L_{J}.
\end{equation}
Recall that $\chi_{\pm}: \Phi_0\rightarrow\{0,1\}$ is the characteristic function of $\Phi_0^{\pm}$, and $\chi:=\chi_+-\chi_-$.
Note that for $v\in W_0$ we have $\chi(v^{-1}\alpha_i)=\pm 1$ iff $\ell(s_iv)=\ell(v)\pm 1$ by Lemma \ref{lengthadd}.

The lemma is immediate for $h=x^\nu$ ($\nu\in Q^\vee$).
We split the proof for $h=T_j$ in two cases.\\
 {\bf Case 1:} $h=T_i$ with $1\leq i\leq r$.\\
This case is similar to the proof of \cite[Thm. 3.7]{SSV}, see in particular \cite[Lemma 3.2]{SSV}. We shortly discuss it here for completeness.
Fix $v\in W_0$ and $\mu,\nu\in Q^\vee$. By the cross relation \eqref{crossX} in $\mathbb{H}$ and the fact that $T_iT_v=\chi_-(v^{-1}\alpha_i)(k_i-k_i^{-1})T_v+T_{s_iv}$, we have for $v\in W_0$ and $\mu,\nu\in Q^\vee$,
\begin{equation}\label{Til}
\begin{split}
\psi_{\cc,\mathfrak{t}}(T_ix^\mu T_vY^\nu)&=
\kappa_{s_iv}(\cc)t^{-\nu}x^{s_i\mu+s_iv\cc}\\
&+\kappa_v(\cc)t^{-\nu}(k_i-k_i^{-1})\bigl(\nabla_i(x^\mu)x^{v\cc}+\chi_-(v^{-1}\alpha_i)x^{s_i\mu+v\cc}\bigr).
\end{split}
\end{equation}
By Lemma \ref{lengthadd} we have $0\leq \chi(v^{-1}\alpha_i)\alpha_i(v\cc)\leq 1$ since $\cc\in\overline{C}_+$, and 
\[
\kappa_{s_iv}(\cc)=\kappa_v(\cc)k_i^{-\chi(v^{-1}\alpha_i)\eta(\chi(v^{-1}\alpha_i)\alpha_i(v\cc))}
\]
by \eqref{krel}. 
So \eqref{Til} reduces to
\begin{equation}\label{Til2}
\begin{split}
\psi_{\cc,\mathfrak{t}}(T_ix^\mu T_vY^\nu)=\kappa_v(\cc)t^{-\nu}\Bigl(&k_i^{-\chi(v^{-1}\alpha_i)\eta(\chi(v^{-1}\alpha_i)\alpha_i(v\cc))}
x^{s_i\mu+s_iv\cc}\\
&+
(k_i-k_i^{-1})\bigl(\nabla_i(x^\mu)x^{v\cc}+\chi_-(v^{-1}\alpha_i)x^{s_i\mu+v\cc}\bigr)\Bigr).
\end{split}
\end{equation}
On the other hand,
\begin{equation}\label{Tir}
\pi_{\cc,\mathfrak{t}}(T_i)\psi_{\cc,\mathfrak{t}}(x^\mu T_vY^\nu)=
\kappa_v(\cc)t^{-\nu}\Bigl(k_i^{\chi_{\mathbb{Z}}(\alpha_i(v\cc))}x^{s_i\mu+s_iv\cc}
+(k_i-k_i^{-1})\nabla_i(x^{\mu+v\cc})\Bigr),
\end{equation}
and
\begin{equation}\label{nablaiaction}
\nabla_i(x^{\mu+v\cc})=
\begin{cases}
\nabla_i(x^\mu)x^{v\cc}+x^{s_i\mu+v\cc}\quad
&\hbox{ if }\,\, -1\leq \alpha_i(v\cc)<0,\\
\nabla_i(x^\mu)x^{v\cc}\quad &\hbox{ if }\,\, 0\leq\alpha_i(v\cc)<1,\\
\nabla_i(x^\mu)x^{v\cc}-x^{s_i\mu+s_iv\cc}
\quad
&\hbox{ if }\,\, \alpha_i(v\cc)=1.
\end{cases}
\end{equation}
To show that \eqref{Til2} is equal to \eqref{Tir},
we distinguish subcases depending on whether $v\cc$ lies on one of the three affine root hyperplanes $H_{i,m}:=\{z\in E\,\, | \,\, \alpha_i(z)=m\}$ ($m\in\{-1,0,1\}$) or that it lies between $H_{i,m}$ and $H_{i,m+1}$ for $m=-1$ and $m=0$. The cases $v\cc\in H_{i,-1}$, and $v\cc$ lying between $H_{i,-1}$ and $H_{i,0}$, are taken together as case 1a.\\
{\it Case 1a:} $-1\leq\alpha_i(v\cc)<0$. Then $v^{-1}\alpha_i\in\Phi_0^-$,
$\eta(-\alpha_i(v\cc))=
\chi_{\mathbb{Z}}(\alpha_i(v\cc))$ and $\chi_-(v^{-1}\alpha_i)=1$. Taking \eqref{nablaiaction} into account, it follows now directly that \eqref{Til2} is equal to \eqref{Tir}.\\
{\it Case 1b:} $\alpha_i(v\cc)=0$. Then $s_iv\cc=v\cc$ and $\eta(\chi(v^{-1}\alpha_i)\alpha_i(v\cc))=-1$, but it is undetermined whether $v^{-1}\alpha_i$ is a positive or a negative root. 
The expression \eqref{Til2} simplifies to
\begin{equation*}
\begin{split}
\psi_{\cc,\mathfrak{t}}(&T_ix^\mu T_vY^\nu)=\\
&=\kappa_v(\cc)t^{-\nu}
\Bigl(k_i^{\chi(v^{-1}\alpha_i)}x^{s_i\mu+v\cc}+(k_i-k_i^{-1})\bigl(\nabla_i(x^\mu)x^{v\cc}+\chi_-(v^{-1}\alpha_i)x^{s_i\mu+v\cc}\bigr)\Bigr)\\
&=\kappa_v(\cc)t^{-\nu}\Bigl(k_ix^{s_i\mu+v\cc}+(k_i-k_i^{-1})\nabla_i(x^\mu)x^{v\cc}\Bigr).
\end{split}
\end{equation*}
Note here that for $v^{-1}\alpha_i\in\Phi_0^-$ the last equality follows by combining the first and third term, which has the effect that it flips the prefactor $k_i^{-1}$ of the first term to $k_i$. Formula \eqref{nablaiaction}  now reduces this expression to \eqref{Tir}.\\
{\it Case 1c:} $0<\alpha_i(v\cc)<1$. Then $v^{-1}\alpha_i\in\Phi_0^+$ so $\chi_-(v^{-1}\alpha_i)=0$,
and furthermore $\eta(\alpha_i(v\cc))=0=\chi_{\mathbb{Z}}(\alpha_i(v\cc))$. Taking also \eqref{nablaiaction} into account, it follows that \eqref{Til2} is equal to \eqref{Tir}.\\
{\it Case 1d:} $\alpha_i(v\cc)=1$. Then $\chi_-(v^{-1}\alpha_i)=0$ and $\eta(\alpha_i(v\cc))=1$, so 
\begin{equation}\label{Tirpart}
\psi_{\cc,\mathfrak{t}}(T_ix^\mu T_vY^\nu)=
\kappa_v(\cc)t^{-\nu}\Bigl(k_i^{-1}x^{s_i\mu+s_iv\cc}+(k_i-k_i^{-1})\nabla_i(x^\mu)x^{v\cc}\Bigr),
\end{equation}
while \eqref{Tir} by \eqref{nablaiaction} becomes
\[
\pi_{\cc,\mathfrak{t}}(T_i)\psi_{\cc,\mathfrak{t}}(x^\mu T_vY^\nu)=
\kappa_v(\cc)t^{-\nu}\Bigl(k_ix^{s_i\mu+s_iv\cc}+
(k_i-k_i^{-1})(\nabla_i(x^\mu)x^{v\cc}-x^{s_i\mu+s_iv\cc})\Bigr).
\]
Moving the correction term of the divided difference operator to the front is altering the prefactor
$k_i$ of $x^{s_i\mu+s_iv\cc}$ to $k_i^{-1}$. This reduces the last expression to \eqref{Tirpart}.\\ 
{\bf Case 2:} $h=T_0$. Fix $v\in W_0$ and $\mu,\nu\in Q^\vee$. By \eqref{YT0} we have
\[
T_0T_v=T_{s_\varphi v}Y^{-v^{-1}\varphi^\vee}+\chi_+(v^{-1}\varphi)(k_\varphi-k_\varphi^{-1})T_v
\]
for $v\in W_0$ (recall here that $k_0=k_\varphi$). Combined with the cross relation \eqref{crossexplicit} in $\mathbb{H}$ and the definition of $\psi_{\cc,\mathfrak{t}}$, we get
\begin{equation*}
\begin{split}
\psi_{\cc,\mathfrak{t}}(T_0x^\mu T_vY^\nu)&=
\kappa_{s_\varphi v}(\cc)t^{-\nu+v^{-1}\varphi^\vee}q_\varphi^{\varphi(\mu)}
x^{s_\varphi\mu+s_\varphi v\cc}\\
&+
\kappa_v(\cc)t^{-\nu}(k_\varphi-k_\varphi^{-1})
\bigl(\nabla_0(x^\mu)x^{v\cc}+\chi_+(v^{-1}\varphi)q_\varphi^{\varphi(\mu)}x^{s_\varphi \mu+v\cc}
\bigr).
\end{split}
\end{equation*}
Now apply \eqref{krel} to $\kappa_{s_\varphi v}(\cc)$ and use \eqref{thetalength} to arrive at
\begin{equation}\label{kbusiness}
\kappa_{s_\varphi v}(\cc)=\kappa_v(\cc)k_{\varphi}^{\chi(v^{-1}\varphi)
\eta(\chi(v^{-1}\varphi)\varphi(v\cc))}\prod_{\alpha\in\Phi_0^+}k_\alpha^{\epsilon_v(\alpha)}
\end{equation}
with $\epsilon_v(\alpha):=-\chi(v^{-1}\alpha)\eta(\chi(v^{-1}\alpha)\alpha(v\cc))\alpha(\varphi^\vee)$ for $\alpha\in\Phi_0$. Since  
$\epsilon_v(-\alpha)=\epsilon_v(\alpha)$ for all $\alpha\in\Phi_0$, the product over $\alpha\in\Phi_0^+$ in \eqref{kbusiness} is unchanged when taking a different choice of positive roots for $\Phi_0^+$.
Choosing $v\Phi_0^+$ as set of positive roots we get
\begin{equation*}
\begin{split}
\kappa_{s_\varphi v}(\cc)&=\kappa_v(\cc)k_{\varphi}^{\chi(v^{-1}\varphi)
\eta(\chi(v^{-1}\varphi)\varphi(v\cc))}\prod_{\alpha\in\Phi_0^+}k_\alpha^{-\eta(\alpha(\cc))\alpha(v^{-1}\varphi^\vee)}\\
&=\kappa_v(\cc)k_{\varphi}^{\chi(v^{-1}\varphi)
\eta(\chi(v^{-1}\varphi)\varphi(v\cc))}\mathfrak{s}_\cc^{-v^{-1}\varphi^\vee},
\end{split}
\end{equation*}
where the last equality follows from the explicit expression of $\mathfrak{s}_y$ (see Definition
\ref{fraksdef}). By \eqref{taff} we conclude that
\begin{equation}\label{psiT0final}
\begin{split}
\psi_{\cc,\mathfrak{t}}(T_0x^\mu T_vY^\nu)=\kappa_v(\cc)&t^{-\nu}
\Bigl(\mathfrak{t}^{v^{-1}\varphi^\vee}k_\varphi^{\chi(v^{-1}\varphi)\eta(\chi(v^{-1}\varphi)
\varphi(v\cc))}q_\varphi^{\varphi(\mu)}x^{s_\varphi\mu+s_\varphi v\cc}\\
&\,\,+
(k_\varphi-k_\varphi^{-1})
\bigl(\nabla_0(x^\mu)x^{v\cc}+\chi_+(v^{-1}\varphi)q_\varphi^{\varphi(\mu)}x^{s_\varphi \mu+v\cc}
\bigr)\Bigr).
\end{split}
\end{equation}
On the other hand,
\begin{equation}\label{piT0final}
\begin{split}
\pi_{\cc,\mathfrak{t}}(T_0)&\psi_{\cc,\mathfrak{t}}(x^\mu T_vY^\nu)=\\
&=\kappa_v(\cc)t^{-\nu}\Bigl(\mathfrak{t}^{v^{-1}\varphi^\vee}k_\varphi^{\chi_{\mathbb{Z}}(\varphi(v\cc))}q_\varphi^{\varphi(\mu)}x^{s_\varphi \mu+s_\varphi v\cc}
+(k_\varphi-k_\varphi^{-1})\nabla_0(x^{\mu+v\cc})\Bigr)
\end{split}
\end{equation}
since $\mathfrak{t}_{\mu+v\cc}^{\varphi^\vee}=q_\varphi^{\varphi(\mu)}\mathfrak{t}^{v^{-1}\varphi^\vee}$ by Definition \ref{defty}. Furthermore,
\begin{equation}\label{nabla0rel}
\nabla_0(x^{\mu+v\cc})=
\begin{cases}
\nabla_0(x^\mu)x^{v\cc}-q_{\varphi}^{\varphi(\mu+v\cc)}x^{s_\varphi \mu+s_\varphi v\cc}
\quad &\hbox{ if }\,\,\,\, \varphi(v\cc)=-1,\\
\nabla_0(x^\mu)x^{v\cc}\quad &\hbox{ if }\,\,\,\, -1<\varphi(v\cc)\leq 0,\\
\nabla_0(x^\mu)x^{v\cc}+q_{\varphi}^{\varphi(\mu)}x^{s_\varphi\mu+v\cc}\quad
&\hbox{ if }\,\,\,\, 0<\varphi(v\cc)\leq 1.
\end{cases}
\end{equation}
Now as in case 1 we prove that the expression \eqref{psiT0final} is equal to \eqref{piT0final} by distinguishing various cases related to the position of
$v\cc$ relative to the affine root hyperplanes $\{z\in E \,\, | \,\, \varphi(z)=m\}$ ($m=-1,0,1$). Since there are various additional subtleties, we discuss the cases in detail.\\
{\it Case 2a:} $0<\varphi(v\cc)\leq 1$. Then $v^{-1}\varphi\in\Phi_0^+$, hence $\chi(v^{-1}\varphi)=1$, $\eta(\varphi(v\cc))=\chi_{\mathbb{Z}}(\varphi(v\cc))$ and $\chi_+(v^{-1}\varphi)=1$. The desired result now follows immediately from \eqref{nabla0rel}.\\
{\it Case 2b:} $\varphi(v\cc)=0$. Then $\eta(\varphi(v\cc))=-1$, $s_\varphi v\cc=v\cc$, and $\mathfrak{t}^{v^{-1}\varphi^\vee}=1$ by Lemma \ref{datumokcor}. Substitution into \eqref{psiT0final} gives
\begin{equation*}
\begin{split}
\psi_{\cc,\mathfrak{t}}(T_0x^\mu T_vY^\nu)=
\kappa_v(\cc)t^{-\nu}&\Bigl(k_\varphi^{-\chi(v^{-1}\varphi)}q_{\varphi}^{\varphi(\mu)}
x^{s_\varphi\mu+v\cc}\\
&\,\,+(k_\varphi-k_\varphi^{-1})\bigl(\nabla_0(x^\mu)x^{v\cc}+\chi_+(v^{-1}\varphi)
q_\varphi^{\varphi(\mu)}x^{s_\varphi\mu+v\cc}\bigr)\Bigr)\\
=\kappa_v(\cc)t^{-\nu}&\Bigl(k_\varphi q_\varphi^{\varphi(\mu)}x^{s_\varphi\mu+v\cc}
+(k_\varphi-k_\varphi^{-1})\nabla_0(x^\mu)x^{v\cc}\Bigr).
\end{split}
\end{equation*}
Note here that for $v^{-1}\varphi\in\Phi_0^+$, the second equality follows by combining the first and third term. By \eqref{nabla0rel} this expression reduces to \eqref{piT0final} (using once more $\mathfrak{t}^{v^{-1}\varphi^\vee}=1$).\\
{\it Case 2c:} $-1<\varphi(v\cc)<0$. Then $v^{-1}\varphi\in\Phi_0^-$, $\eta(-\varphi(v\cc))=0$ and $\chi_+(v^{-1}\varphi)=0$. Using \eqref{nabla0rel} it follows that \eqref{psiT0final} equals \eqref{piT0final}.\\ 
{\it Case 2d:} $\varphi(v\cc)=-1$. Then $v^{-1}\varphi\in\Phi_0^-$, $\eta(-\varphi(v\cc))=1$,
$\chi_+(v^{-1}\varphi)=0$, and $\mathfrak{t}^{v^{-1}\varphi^\vee}=q_\varphi^{-1}$ by Lemma \ref{datumokcor}. Hence
\begin{equation}\label{T01}
\psi_{\cc,\mathfrak{t}}(T_0x^\mu T_vY^\nu)=
\kappa_v(\cc)t^{-\nu}\Bigl(k_\varphi^{-1}q_\varphi^{\varphi(\mu+v\cc)}
x^{s_\varphi\mu+s_\varphi v\cc}+(k_\varphi-k_\varphi^{-1})\nabla_0(x^\mu)x^{v\cc}\Bigr).
\end{equation}
By \eqref{nabla0rel} the expression \eqref{piT0final} becomes
\begin{equation}\label{T02}
\begin{split}
\pi_{\cc,\mathfrak{t}}(T_0)\psi_{\cc,\mathfrak{t}}(x^\mu T_vY^\nu)=
\kappa_v(\cc)&t^{-\nu}\Bigl(k_\varphi q^{\varphi(\mu+v\cc)}_\varphi x^{s_\varphi\mu+s_\varphi v\cc}\\
&+(k_\varphi-k_\varphi^{-1})\bigl(\nabla_0(x^\mu)x^{v\cc}-q_\varphi^{\varphi(\mu+v\cc)}x^{s_\varphi\mu+s_\varphi v\cc}\bigr)\Bigr).
\end{split}
\end{equation}
Then \eqref{T02} reduces to \eqref{T01} by combining the first and third term in \eqref{T02} into a single term.
\end{proof}
Lemma \ref{generatoractiontransfer} implies that $\textup{ker}(\psi_{\cc,\mathfrak{t}})\subseteq\mathbb{H}$ is a left ideal, hence $\mathbb{H}/\textup{ker}(\psi_{\cc,\mathfrak{t}})$
is canonically a left $\mathbb{H}$-module. Now consider $\Pc$ as left $\mathbb{H}$-module by transporting the left $\mathbb{H}$-action on $\mathbb{H}/\textup{ker}(\psi_{\cc,\mathfrak{t}})$ through the linear isomorphism $\mathbb{H}/\textup{ker}(\psi_{\cc,\mathfrak{t}})\overset{\sim}{\longrightarrow}\Pc$ induced by $\psi_{\cc,\mathfrak{t}}$.
By Lemma \ref{generatoractiontransfer} again, its representation map is characterised by \eqref{actionformulas}. This completes the proof of Theorem \ref{gbr}(1).
\begin{corollary}\label{monomialcor}
View $\mathbb{H}$ as left $\mathbb{H}$-module with respect to the left-regular $\mathbb{H}$-action.
\begin{enumerate}
\item The surjective linear map $\psi_{\cc,\mathfrak{t}}:\mathbb{H}\twoheadrightarrow\Pc_{\mathfrak{t}}$, defined by \eqref{psic}, is $\mathbb{H}$-linear.
\item We have
\[
\psi_{\cc,\mathfrak{t}}(h)=\pi_{\cc,\mathfrak{t}}(h)x^\cc\qquad \forall\, h\in\mathbb{H}.
\]
In particular, $\pi_{\cc,\mathfrak{t}}(T_v)x^\cc=\kappa_v(\cc)x^{v\cc}$ for all $v\in W_0$.
\end{enumerate}
\end{corollary}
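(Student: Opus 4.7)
The plan is to deduce the corollary as a clean consequence of Lemma \ref{generatoractiontransfer} together with Theorem \ref{gbr}(1), which has already been established in Subsection \ref{proof1section}.

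For part (1), I first observe that $\psi_{\cc,\mathfrak{t}}(1)=x^{\cc}$: indeed, setting $\mu=0$, $v=e$ and $\nu=0$ in the defining formula \eqref{psic} yields $\psi_{\cc,\mathfrak{t}}(1)=\kappa_e(\cc)(\mathfrak{s}_\cc\mathfrak{t})^0 x^{\cc}=x^\cc$, since $\Pi(e)=\emptyset$ and hence $\kappa_e(\cc)=1$. Now Lemma \ref{generatoractiontransfer} asserts that the intertwining identity
\[
\psi_{\cc,\mathfrak{t}}(hh')=\pi_{\cc,\mathfrak{t}}(h)\psi_{\cc,\mathfrak{t}}(h')\qquad (h'\in\mathbb{H})
\]
holds whenever $h$ belongs to the generating set $\{T_0,\ldots,T_r\}\cup\{x^\mu\,|\,\mu\in Q^\vee\}$ of $\mathbb{H}$. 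I extend this to all $h\in\mathbb{H}$ by a standard bootstrap argument: if the identity holds for $h_1$ and $h_2$, then
\[
\psi_{\cc,\mathfrak{t}}(h_1h_2h')=\pi_{\cc,\mathfrak{t}}(h_1)\psi_{\cc,\mathfrak{t}}(h_2h')=\pi_{\cc,\mathfrak{t}}(h_1)\pi_{\cc,\mathfrak{t}}(h_2)\psi_{\cc,\mathfrak{t}}(h')=\pi_{\cc,\mathfrak{t}}(h_1h_2)\psi_{\cc,\mathfrak{t}}(h'),
\]
where the last step invokes Theorem \ref{gbr}(1) (i.e., that $\pi_{\cc,\mathfrak{t}}$ is an algebra homomorphism). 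By linearity and induction on word length, the identity extends to all $h\in\mathbb{H}$, which is precisely the statement that $\psi_{\cc,\mathfrak{t}}$ is $\mathbb{H}$-linear with respect to the left-regular action on $\mathbb{H}$ and the $\pi_{\cc,\mathfrak{t}}$-action on $\Pc_{\mathfrak{t}}$.

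For part (2), I specialise the identity from part (1) to $h'=1$ and use $\psi_{\cc,\mathfrak{t}}(1)=x^\cc$ to obtain
\[
\psi_{\cc,\mathfrak{t}}(h)=\psi_{\cc,\mathfrak{t}}(h\cdot 1)=\pi_{\cc,\mathfrak{t}}(h)\psi_{\cc,\mathfrak{t}}(1)=\pi_{\cc,\mathfrak{t}}(h)x^\cc
\]
for every $h\in\mathbb{H}$. In particular, for $h=T_v$ with $v\in W_0$, the defining formula \eqref{psic} with $\mu=\nu=0$ gives
\[
\pi_{\cc,\mathfrak{t}}(T_v)x^\cc=\psi_{\cc,\mathfrak{t}}(T_v)=\kappa_v(\cc)x^{v\cc},
\]
completing the proof. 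There is no real obstacle here: the entire content of the corollary has been packaged into Lemma \ref{generatoractiontransfer} via the case analysis carried out in its proof, and the corollary is just the formal unwinding of that intertwining property together with the normalisation $\psi_{\cc,\mathfrak{t}}(1)=x^\cc$.
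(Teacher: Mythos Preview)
Your proof is correct and follows essentially the same approach as the paper. The paper's proof of part (1) simply says it is ``immediate from the paragraph preceding the corollary,'' where that paragraph establishes Theorem \ref{gbr}(1) by transporting the $\mathbb{H}$-module structure on $\mathbb{H}/\ker(\psi_{\cc,\mathfrak{t}})$ through $\psi_{\cc,\mathfrak{t}}$ --- which makes the $\mathbb{H}$-linearity of $\psi_{\cc,\mathfrak{t}}$ tautological; your explicit bootstrap from generators using Lemma \ref{generatoractiontransfer} and the fact that $\pi_{\cc,\mathfrak{t}}$ is an algebra homomorphism is just the direct unpacking of this same observation, and part (2) proceeds identically in both.
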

\begin{proof}
(1) This is immediate from the paragraph preceding the corollary.\\
(2) By (1) we have
\[
\psi_{\cc,\mathfrak{t}}(h)=\pi_{\cc,\mathfrak{t}}(h)\psi_{\cc,\mathfrak{t}}(1)=
\pi_{\cc,\mathfrak{t}}(h)x^\cc\qquad \forall\, h\in\mathbb{H}.
\]
The second statement then follows from \eqref{psic}.
\end{proof}
\subsection{The $\mathbb{H}$-module $\mathcal{P}^{(\cc)}_{\mathfrak{t}}$ as quotient of $\mathbb{M}^J_{\mathfrak{s}_\cc\mathfrak{t}}$}\label{mapsection}
Take $\cc\in C^J$ and $\mathfrak{t}\in T_{J}$. Then $\mathfrak{s}_\cc\mathfrak{t}\in L_J$, and we have 
the one-dimensional representation $\chi_{J,\mathfrak{s}_\cc\mathfrak{t}}: \mathbb{H}_J^Y\rightarrow\mathbf{F}$ from \eqref{chit}.
\begin{lemma}\label{actionHt}
We have $\pi_{\cc,\mathfrak{t}}(h)x^\cc=\chi_{J,\mathfrak{s}_{\cc}\mathfrak{t}}(h)x^\cc$ for all $h\in\mathbb{H}_{J}^Y$.
\end{lemma}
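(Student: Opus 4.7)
The plan is to verify the identity on a set of algebraic generators of $\mathbb{H}_J^Y$, exploiting the $\mathbb{H}$-linearity of $\psi_{\cc,\mathfrak{t}}$ and the formula $\pi_{\cc,\mathfrak{t}}(h)x^\cc=\psi_{\cc,\mathfrak{t}}(h)$ from Corollary \ref{monomialcor}. A convenient generating set is $\mathcal{P}_Y\cup\{T_i\,|\,i\in J_0\}$, supplemented by $\delta(T_0)=T_{s_\varphi}^{-1}x^{-\varphi^\vee}$ when $0\in J$. Since $\chi_{J,\mathfrak{s}_\cc\mathfrak{t}}$ is an algebra map, it is enough to match the scalars on these generators.

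For $Y^\mu$ with $\mu\in Q^\vee$, the defining formula \eqref{psic} with $v=e$ and $\mu=0$ gives directly $\pi_{\cc,\mathfrak{t}}(Y^\mu)x^\cc=\psi_{\cc,\mathfrak{t}}(Y^\mu)=(\mathfrak{s}_\cc\mathfrak{t})^{-\mu}x^\cc$, which agrees with $\chi_{J,\mathfrak{s}_\cc\mathfrak{t}}(Y^\mu)$ by \eqref{expchitJ}. For $T_i$ with $i\in J_0$, the relation $\alpha_i(\cc)=0$ yields $s_i\cc=\cc$, $\chi_{\mathbb{Z}}(\alpha_i(\cc))=1$, and $\lfloor\alpha_i(\cc)\rfloor=0$, so that $\nabla_i(x^\cc)=0$; the formula \eqref{actionformulas} then collapses to $\pi_{\cc,\mathfrak{t}}(T_i)x^\cc=k_ix^\cc$, matching $\chi_{J,\mathfrak{s}_\cc\mathfrak{t}}(T_i)=k_i$.

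The remaining case, $\delta(T_0)\in\mathbb{H}_J^Y$ when $0\in J$, is the main computational step. Here $\varphi(\cc)=1$, hence $\cc-\varphi^\vee=s_\varphi\cc$. Using $\mathbb{H}$-linearity of $\psi_{\cc,\mathfrak{t}}$ together with Corollary \ref{monomialcor}(2) applied to $v=s_\varphi$,
\[
\pi_{\cc,\mathfrak{t}}\bigl(\delta(T_0)\bigr)x^\cc=\pi_{\cc,\mathfrak{t}}(T_{s_\varphi}^{-1})\pi_{\cc,\mathfrak{t}}(x^{-\varphi^\vee})x^\cc=\pi_{\cc,\mathfrak{t}}(T_{s_\varphi}^{-1})x^{s_\varphi\cc}=\kappa_{s_\varphi}(\cc)^{-1}x^\cc.
\]
It remains to show that $\kappa_{s_\varphi}(\cc)=k_\varphi^{-1}$, which will match $\chi_{J,\mathfrak{s}_\cc\mathfrak{t}}(\delta(T_0))=\chi_{J,\mathfrak{s}_\cc\mathfrak{t}}^X(T_0)=k_\varphi$. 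By \eqref{thetalength}, $\Pi(s_\varphi)=\{\varphi\}\sqcup\{\alpha\in\Phi_0^+\,|\,\alpha(\varphi^\vee)=1\}$. The contribution of $\varphi$ equals $k_\varphi^{-\eta(1)}=k_\varphi^{-1}$. For the remaining contribution one uses the same free action of $\langle -s_\varphi\rangle$ that appears in the proof of Proposition \ref{lemmabasepoint}: the involution $\alpha\mapsto\varphi-\alpha$ pairs up the set $\{\alpha\in\Phi_0^+\,|\,\alpha(\varphi^\vee)=1\}$, the two partners have equal length (so equal $k$-value), and since $\cc\in C^J$ one has $\eta(\alpha(\cc))=\eta_J(\alpha)$ and $\eta_J(\varphi-\alpha)=-\eta_J(\alpha)$, so the paired factors cancel. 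The expected obstacle is precisely this cancellation, but it is already settled verbatim in the argument of Proposition \ref{lemmabasepoint}, so the proof reduces to invoking it. This completes the verification and hence the lemma.
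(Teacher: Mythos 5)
Your proof is correct, and for the two easy families of generators ($Y^\mu$ via $\psi_{\cc,\mathfrak{t}}$/Corollary \ref{monomialcor}, and $T_i$ for $i\in J_0$) it coincides with the paper's argument. The difference lies in the affine generator when $0\in J$: the paper works with the inverse, writing $\delta(T_0^{-1})=x^{\varphi^\vee}T_0^{-1}Y^{\varphi^\vee}$, and evaluates $\pi_{\cc,\mathfrak{t}}(x^{\varphi^\vee}T_0^{-1})x^\cc$ by a direct computation with the truncated Demazure--Lusztig formula at a point where $\alpha_0(y)\in\mathbb{Z}$, then finishes using $\mathfrak{t}^{-\varphi^\vee}=q_\varphi^{-1}$ (Lemma \ref{datumokcor}) and $\mathfrak{s}_\cc^{-\varphi^\vee}=k_\varphi^{-2}$ (Lemma \ref{fraks}(1)). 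You instead use the factorisation $\delta(T_0)=T_{s_\varphi}^{-1}x^{-\varphi^\vee}$, the identity $\cc-\varphi^\vee=s_\varphi\cc$, and invert the formula $\pi_{\cc,\mathfrak{t}}(T_{s_\varphi})x^\cc=\kappa_{s_\varphi}(\cc)x^{s_\varphi\cc}$ of Corollary \ref{monomialcor}(2), reducing everything to the scalar identity $\kappa_{s_\varphi}(\cc)=k_\varphi^{-1}$, which you prove by the $\langle-s_\varphi\rangle$-pairing of $\{\alpha\in\Phi_0^+\,:\,\alpha(\varphi^\vee)=1\}$ exactly as in the proof of Proposition \ref{lemmabasepoint}. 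This buys you a route that avoids any explicit manipulation of the operator formulas \eqref{actionformulas} in the affine case and does not invoke Lemma \ref{datumokcor}; the price is redoing the root-pairing computation (note that $\kappa_{s_\varphi}(\cc)=k_\varphi\,\mathfrak{s}_\cc^{-\varphi^\vee}$ by \eqref{thetalength}, so your key scalar identity could also have been quoted directly from Lemma \ref{fraks}(1), making the two arguments essentially interchangeable in content). All the steps you use (Theorem \ref{gbr}(1), Corollary \ref{monomialcor}) are established before this lemma in the paper, so there is no circularity, and restricting the verification to the algebraic generators of $\mathbb{H}_J^Y$ is legitimate since $\{h\in\mathbb{H}_J^Y\,:\,\pi_{\cc,\mathfrak{t}}(h)x^\cc=\chi_{J,\mathfrak{s}_\cc\mathfrak{t}}(h)x^\cc\}$ is a subalgebra.
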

\begin{proof}
Write $t:=\mathfrak{s}_\cc\mathfrak{t}\in L_{J}$ for the duration of this proof. 

For $\mu\in Q^\vee$ we have by Corollary \ref{monomialcor},
\[
\pi_{\cc,\mathfrak{t}}(Y^\mu)x^\cc=\psi_{\cc,\mathfrak{t}}(Y^\mu)=t^{-\mu}x^\cc=\chi_{J,t}(Y^\mu)x^\cc.
\]
For $i\in J_0$ we have $\kappa_{s_i}(\cc)=k_i$ and $s_i\cc=\cc$ since $\alpha_i(\cc)=0$, hence 
\[
\pi_{\cc,\mathfrak{t}}(\delta(T_i))x^\cc=\pi_{\cc,\mathfrak{t}}(T_i)x^\cc=
k_ix^\cc=\chi_{J,t}(\delta(T_i))x^\cc,
\]
where we used Corollary \ref{monomialcor}(2) for the second equality.

Suppose that $0\in J$, so that $s_0\cc=\cc$ and
$\varphi(\cc)=1$. We need to show that $\pi_{\cc,\mathfrak{t}}(\delta(T_0))x^\cc=
k_{\varphi}x^\cc$.  
Using $\delta(T_0^{-1})=x^{\varphi^\vee}T_0^{-1}Y^{\varphi^\vee}$ we compute
\begin{equation*}
\pi_{\cc,\mathfrak{t}}(\delta(T_0^{-1}))x^\cc=t^{-\varphi^\vee}\pi_{\cc,\mathfrak{t}}(x^{\varphi^\vee}T_0^{-1})x^\cc=q_\varphi k_\varphi t^{-\varphi^\vee}x^\cc,
\end{equation*}
where the second equality follows from a direct computation using that
\[
\pi_{\cc,\mathfrak{t}}(T_j)x^y=k_jx^y+\Bigl(\frac{k_jx^{\alpha_j^\vee}-k_j^{-1}}{x^{\alpha_j^\vee}-1}\Bigr)
(x^{y-D\alpha_j(y)\alpha_j^\vee}-x^y)\,\,\, \hbox{ if }\, y\in\mathcal{O}_\cc\, \hbox{ and }\, \alpha_j(y)\in\mathbb{Z}.
\]
Now $\varphi(\cc)=1$ and $\mathfrak{t}\in T_{J}$ implies that $\mathfrak{t}^{-\varphi^\vee}=q_\varphi^{-1}$ by Lemma \ref{datumokcor}, and $\mathfrak{s}_\cc^{-\varphi^\vee}=k_\varphi^{-2}$ by Lemma \ref{fraks}(1). Hence $\pi_{\cc,\mathfrak{t}}(\delta(T_0^{-1}))x^\cc=k_\varphi^{-1}x^\cc$, as desired.
\end{proof}
\begin{corollary}\label{factorcorollary}
There exists a unique epimorphism 
$\phi_{\cc,\mathfrak{t}}: \mathbb{M}^{J}_{\mathfrak{s}_\cc\mathfrak{t}}\twoheadrightarrow \Pc_{\mathfrak{t}}$ of $\mathbb{H}$-modules such that
\[
\psi_{\cc,\mathfrak{t}}(h)=\phi_{\cc,\mathfrak{t}}\bigl(hm^{J}_{\mathfrak{s}_\cc\mathfrak{t}}\bigr)\qquad
\forall\, h\in\mathbb{H}.
\]
Furthermore,
\begin{equation}\label{mmcomp}
\phi_{\cc,\mathfrak{t}}\bigl(x^\mu T_vm^{J}_{\mathfrak{s}_\cc\mathfrak{t}}\bigr)=
\kappa_v(\cc)x^{\mu+v\cc}
\qquad (\mu\in Q^\vee,\, v\in W_0).
\end{equation}
\end{corollary}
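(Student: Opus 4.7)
The plan is to realize $\phi_{\cc,\mathfrak{t}}$ as the unique factorization of $\psi_{\cc,\mathfrak{t}}: \mathbb{H} \twoheadrightarrow \mathcal{P}^{(\cc)}_{\mathfrak{t}}$ through the natural $\mathbb{H}$-linear surjection $q: \mathbb{H} \twoheadrightarrow \mathbb{M}^J_{\mathfrak{s}_\cc\mathfrak{t}}$, $h \mapsto h\,m^J_{\mathfrak{s}_\cc\mathfrak{t}}$. Since $\mathbb{M}^J_{\mathfrak{s}_\cc\mathfrak{t}}$ is induced from the one-dimensional $\mathbb{H}_J^Y$-module $\mathbf{F}_{J,\mathfrak{s}_\cc\mathfrak{t}}$, the universal property of induction asserts that giving an $\mathbb{H}$-linear map out of $\mathbb{M}^J_{\mathfrak{s}_\cc\mathfrak{t}}$ is equivalent to picking a vector in the target on which $\mathbb{H}_J^Y$ acts through the character $\chi_{J,\mathfrak{s}_\cc\mathfrak{t}}$. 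I would take that vector to be $x^\cc \in \mathcal{P}^{(\cc)}_{\mathfrak{t}}$.

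First I would verify the hypotheses for this universal property. The $\mathbb{H}$-linearity of $\psi_{\cc,\mathfrak{t}}$ (with $\mathbb{H}$ carrying the left-regular action) is exactly Corollary \ref{monomialcor}(1), and the identity $\psi_{\cc,\mathfrak{t}}(h) = \pi_{\cc,\mathfrak{t}}(h)x^\cc$ is Corollary \ref{monomialcor}(2). Lemma \ref{actionHt} then supplies precisely the required covariance: $\pi_{\cc,\mathfrak{t}}(h)x^\cc = \chi_{J,\mathfrak{s}_\cc\mathfrak{t}}(h)x^\cc$ for every $h \in \mathbb{H}_J^Y$. Consequently, for any $h \in \mathbb{H}$ and $h' \in \mathbb{H}_J^Y$ with $\chi_{J,\mathfrak{s}_\cc\mathfrak{t}}(h') = 0$, we have $\psi_{\cc,\mathfrak{t}}(hh') = \pi_{\cc,\mathfrak{t}}(h)\pi_{\cc,\mathfrak{t}}(h')x^\cc = 0$, so $\ker(q) \subseteq \ker(\psi_{\cc,\mathfrak{t}})$.

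This containment yields the existence of a unique $\mathbb{H}$-linear map $\phi_{\cc,\mathfrak{t}}: \mathbb{M}^J_{\mathfrak{s}_\cc\mathfrak{t}} \to \mathcal{P}^{(\cc)}_{\mathfrak{t}}$ with $\psi_{\cc,\mathfrak{t}} = \phi_{\cc,\mathfrak{t}} \circ q$, and in particular $\phi_{\cc,\mathfrak{t}}(h\,m^J_{\mathfrak{s}_\cc\mathfrak{t}}) = \psi_{\cc,\mathfrak{t}}(h)$. Surjectivity of $\phi_{\cc,\mathfrak{t}}$ follows from surjectivity of $\psi_{\cc,\mathfrak{t}}$, which is built into its definition \eqref{psic}. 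For the explicit formula \eqref{mmcomp}, I would specialise the defining relation \eqref{psic} at $\nu = 0$, giving $\phi_{\cc,\mathfrak{t}}(x^\mu T_v\,m^J_{\mathfrak{s}_\cc\mathfrak{t}}) = \psi_{\cc,\mathfrak{t}}(x^\mu T_v) = \kappa_v(\cc)\,x^{\mu+v\cc}$.

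In summary, the substantive content has already been carried out in the preceding two subsections: Lemma \ref{generatoractiontransfer} and Corollary \ref{monomialcor} establish the $\mathbb{H}$-equivariance of $\psi_{\cc,\mathfrak{t}}$, while Lemma \ref{actionHt} pins down the $\mathbb{H}_J^Y$-character of the cyclic vector $x^\cc$. The only remaining step, and hence the only place a subtle obstacle could plausibly appear, is checking that these two ingredients assemble into the required factorization; this is routine once the character relation for $x^\cc$ is known, so I expect no real obstacle here.
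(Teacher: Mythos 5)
Your proposal is correct and matches the paper's own argument: the paper likewise deduces the factorization from Corollary \ref{monomialcor}(1) (the $\mathbb{H}$-linearity of $\psi_{\cc,\mathfrak{t}}$, with $\psi_{\cc,\mathfrak{t}}(h)=\pi_{\cc,\mathfrak{t}}(h)x^\cc$) together with Lemma \ref{actionHt} (the $\chi_{J,\mathfrak{s}_\cc\mathfrak{t}}$-covariance of $x^\cc$), and obtains \eqref{mmcomp} by evaluating \eqref{psic} at $\nu=0$. Your kernel-containment phrasing is just a reformulation of the same universal property of the induced module, so there is no substantive difference.
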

\begin{proof}
The first part is an immediate consequence of Corollary \ref{monomialcor}(1) and Lemma \ref{actionHt}. For
\eqref{mmcomp} note that
\[
\phi_{\cc,\mathfrak{t}}\bigl(x^\mu T_vm^{J}_{\mathfrak{s}_\cc\mathfrak{t}}\bigr)=
\psi_{\cc,\mathfrak{t}}(x^\mu T_v)=\kappa_v(\cc)x^{\mu+v\cc}
\]
for $\mu\in Q^\vee$ and $v\in W_0$, where the second equality follows from \eqref{psic}.
\end{proof}

\subsection{The parabolic Bruhat order on $E$}\label{POsection}
The restrictions of the Bruhat order on $W$ to the subsets $W^J$ ($J\subsetneq [0,r]$) of minimal coset representatives can be glued together to a partial order on $E$, which we call the parabolic Bruhat order on $E$. This subsection is devoted to a detailed study of its properties.
In the next subsection we will continue with the proof of Theorem \ref{gbr} by
showing  
that the action of $\delta(T_j)$ on $\Pc_{\mathfrak{t}}$ 
is triangular with respect to the basis $\{x^y\}_{y\in\mathcal{O}_\cc}$ of monomials, partially ordered by the restriction of the parabolic Bruhat order to the set 
$\mathcal{O}_\cc$ of quasi-exponents. 

Let $\leq_B$ be the Bruhat order on $W$ with respect to the simple reflections $s_0,\ldots,s_r$ (see \cite[Chpt. 5]{Hu} for details). 
We write $w<_Bw^\prime$ if $w\leq_Bw^\prime$ and $w\not=w^\prime$. The following two properties of $\leq_B$ are well known.
\begin{lemma}\label{Bruhattrans}\hfill
\begin{enumerate}
\item Let $w,w^\prime\in W$ and $0\leq j\leq r$ satisfying
$w<_Bw^\prime<_Bs_jw^\prime$. Then we have $s_jw<_Bs_jw^\prime$.
\item For $a\in\Phi$ and $w\in W$ we have
\[
ws_a<_Bw\,\,\, \Leftrightarrow\,\,\, a\in\Pi(w)\cup -\Pi(w)\,\,\,\Leftrightarrow\,\,\,
\ell(ws_a)<\ell(w).
\] 
\end{enumerate}
\end{lemma}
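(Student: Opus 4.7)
Both parts of Lemma~\ref{Bruhattrans} are standard facts from the theory of Coxeter groups applied to the system $(W,\{s_0,\ldots,s_r\})$, so the plan is to identify each as an instance of a classical result and to supply the short reduction needed in the present notation.

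For (1), the plan is to invoke Deodhar's Property Z (equivalently, the left-multiplicative Lifting Property of the Bruhat order; see, e.g., Humphreys, \emph{Reflection Groups and Coxeter Groups}, Ch.~5), which states: if $s$ is a simple reflection and $u\leq_Bv$ with $sv>_Bv$, then $su\leq_Bsv$. Since the hypothesis $w'<_Bs_jw'$ forces $s_jw'>_Bw'$, applying Property Z with $s=s_j$, $u=w$, $v=w'$ gives $s_jw\leq_Bs_jw'$. Strict inequality follows at once: $s_jw=s_jw'$ would imply $w=w'$, contradicting $w<_Bw'$.

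For (2), I would reduce both equivalences to the Strong Exchange Condition. Writing $a=\epsilon b$ with $\epsilon\in\{\pm 1\}$ and $b\in\Phi^+$, one has $s_a=s_b$, and the condition $a\in\Pi(w)\cup-\Pi(w)$ is equivalent to $b\in\Pi(w)$. For the outer equivalence $ws_b<_Bw\Leftrightarrow\ell(ws_b)<\ell(w)$: if $\ell(ws_b)<\ell(w)$ then Strong Exchange (Humphreys, Thm.~5.8) realises $ws_b$ as a proper subexpression of a reduced expression of $w$, so $ws_b<_Bw$; conversely $ws_b<_Bw$ immediately forces $\ell(ws_b)<\ell(w)$, since strict Bruhat inequality forces strict decrease of length. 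For the central equivalence $\ell(ws_b)<\ell(w)\Leftrightarrow b\in\Pi(w)$, fix a reduced expression $w=s_{j_1}\cdots s_{j_\ell}$; Strong Exchange then gives $\ell(ws_b)<\ell(w)$ iff $ws_b=s_{j_1}\cdots\widehat{s_{j_m}}\cdots s_{j_\ell}$ for some $m$, iff $b=s_{j_\ell}\cdots s_{j_{m+1}}(\alpha_{j_m})$ for some $m$, which by \eqref{rootsdescription} is precisely the condition $b\in\Pi(w)$.

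There is no serious obstacle here; the lemma collects textbook Coxeter-theoretic facts in the precise form required later in the paper. The only points that merit care are the reduction $s_a=s_b$ in (2), so as to apply Strong Exchange (which is naturally stated for positive roots), and the strict-inequality argument in (1), which relies on injectivity of left multiplication by $s_j$.
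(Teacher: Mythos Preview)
Your proposal is correct. The paper's own proof of this lemma consists solely of citations to standard references (\cite[Prop.~5.9]{Hu} for part~(1) and \cite[(2.3.2)]{Ma} for part~(2)), and what you have written is essentially the content behind those citations: the Lifting Property/Property~Z for~(1) and the Strong Exchange Condition for~(2). So your approach is the same as the paper's, just spelled out rather than cited.
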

\begin{proof}
(1) See, e.g., \cite[Prop. 5.9]{Hu}.\\
(2) See \cite[(2.3.3)]{Ma}.
\end{proof}

\begin{definition}\label{orderB}
For $y,z\in E$ we write $y\leq z$ if $y\in Wz$
and $w_y\leq_Bw_z$.
\end{definition}
In other words, elements in $E$ from different $W$-orbits are incomparable, and the restriction of $\leq$ to a $W$-orbit $\mathcal{O}_\cc$ ($\cc\in C^J$) corresponds to the 
Bruhat order on $W^J$ through the natural bijection $\mathcal{O}_\cc\overset{\sim}{\longrightarrow} W^J$, $y\mapsto w_y$ (which, in turn, can be regarded as the 
natural partial order on the coset space $W/W_\cc\simeq W^J$ induced by $\leq_B$). 
In particular, $\leq$ defines a partial order on $E$. 
 
 We will now establish an alternative description of the partial order $\leq$.
 
\begin{definition}\label{precalpha}
Let $\alpha\in\Phi^+_0$ and $y,z\in E$. Write $y\prec_\alpha z$ if 
the following two conditions hold true:
\begin{enumerate}
\item $y=s_{a}z$ for some $a\in\Phi$ with $Da=\alpha$,
\item either $|\alpha(y)|<|\alpha(z)|$ or $\alpha(y)=-\alpha(z)>0$.
\end{enumerate}
Let $\prec$ be the transitive closure of the relations $\prec_\alpha$ \textup{(}$\alpha\in\Phi^+_0$\textup{)} on $E$.
\end{definition}
\begin{remark}\label{remprec}\hfill
\begin{enumerate}
\item For $\alpha\in\Phi^+_0$ we have $y\prec_\alpha s_\alpha y$ iff $\alpha(y)>0$. 
\item Elements from different $W$-orbits in $E$ are incomparable with respect to $\prec$.
\item The relation $\prec$ on $Q^\vee$ coincides with the relation $\prec_{\Phi_0^+}$ introduced by Haiman in \cite[(5.14)]{Ha}.
\end{enumerate}
\end{remark}
Let $\textup{sgn}: \mathbb{Z}\rightarrow\{-1,1\}$ be the sign function, so $\textup{sgn}(m)=\frac{m}{|m|}$ for $m\not=0$ and $\textup{sgn}(0)=1$.
\begin{proposition}\label{propequivrel}
Let $\cc\in\overline{C}_+$, $w\in W$ and $a=(\alpha,\ell)\in\Phi$ with $\alpha\in\Phi_0^+$. The following three statements are equivalent.
\begin{enumerate}
\item $s_aw<_Bw$ and $a(w\cc)\not=0$.
\item $\textup{sgn}(\ell)a(w\cc)<0$.
\item $s_aw\cc\prec_\alpha w\cc$.
\end{enumerate}
\end{proposition}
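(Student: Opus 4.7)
The plan is to establish (2) $\Leftrightarrow$ (3) by direct computation inside $E$, and then (1) $\Leftrightarrow$ (2) using Lemma \ref{Bruhattrans}(2) together with the positivity of affine roots on $\overline{C}_+$. Note that (2) already forces $a(w\cc)\neq 0$, so the side-condition in (1) is automatic once (2) holds; this keeps the three-way equivalence coherent.

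For (2) $\Leftrightarrow$ (3), set $u:=\alpha(w\cc)$, so $a(w\cc)=u+\ell$. Using $\alpha(\alpha^\vee)=2$ one gets
\[
\alpha(s_aw\cc)=\alpha(w\cc-a(w\cc)\alpha^\vee)=u-2(u+\ell)=-u-2\ell.
\]
The first clause of Definition \ref{precalpha} is automatic since $Da=\alpha$. The second clause splits into two subcases: $|\alpha(s_aw\cc)|<|\alpha(w\cc)|$ becomes $|u+2\ell|<|u|$, equivalent after squaring to $\ell(u+\ell)<0$, i.e.\ $\ell\cdot a(w\cc)<0$ with $\ell\neq 0$; and $\alpha(s_aw\cc)=-\alpha(w\cc)>0$ forces $\ell=0$ and $a(w\cc)=u<0$. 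In both subcases this matches exactly $\mathrm{sgn}(\ell)a(w\cc)<0$, using the convention $\mathrm{sgn}(0)=1$.

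For (1) $\Leftrightarrow$ (2), write $s_aw=w\,s_{w^{-1}a}$ and apply Lemma \ref{Bruhattrans}(2) to the pair $(w,w^{-1}a)$: the inequality $s_aw<_Bw$ is equivalent to $w^{-1}a\in\Pi(w)\cup(-\Pi(w))$, i.e.\ to $a$ and $w^{-1}a$ lying on opposite sides of $\Phi^+\sqcup\Phi^-$. Now observe that $(w^{-1}a)(\cc)=a(w\cc)$ and $\cc\in\overline{C}_+$; since every $b\in\Phi^+$ satisfies $b(\cc)\geq 0$, one has, whenever $a(w\cc)\neq 0$,
\[
w^{-1}a\in\Phi^+\iff a(w\cc)>0,\qquad w^{-1}a\in\Phi^-\iff a(w\cc)<0.
\]
Combining this with the description $a\in\Phi^+\Leftrightarrow \ell\geq 0$ (from \eqref{Phiplusexplicit}, recalling $\alpha\in\Phi_0^+$), a short case distinction on the sign of $\ell$ shows that, under $a(w\cc)\neq 0$, the condition ``$a$ and $w^{-1}a$ have opposite signs'' is equivalent to $\mathrm{sgn}(\ell)a(w\cc)<0$. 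The case $\ell=0$ collapses to $a\in\Phi^+$, $w^{-1}a\in\Phi^-$, $a(w\cc)<0$, consistent with $\mathrm{sgn}(0)=1$.

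The main obstacle is not arithmetic but bookkeeping: keeping straight the difference between $\Pi(w)$ and $\Pi(w^{-1})$, the conversion from left reflection $s_aw$ to right multiplication by $s_{w^{-1}a}$, and the convention $\mathrm{sgn}(0)=1$ that is needed to cleanly absorb the $\ell=0$ subcase into a single inequality. Once these conventions are fixed at the outset, both equivalences reduce to elementary manipulations and the three statements match up without further work.
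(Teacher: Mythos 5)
Your proof is correct and follows essentially the same route as the paper: the equivalence (1)$\Leftrightarrow$(2) via the conjugation identity $w^{-1}s_aw=s_{w^{-1}a}$, Lemma \ref{Bruhattrans}(2), the description \eqref{Phiplusexplicit} of $\Phi^+$ and the nonnegativity of positive affine roots on $\overline{C}_+$, and the equivalence (2)$\Leftrightarrow$(3) by direct computation from Definition \ref{precalpha}. The only cosmetic difference is that your squaring identity $|u+2\ell|<|u|\iff \ell(u+\ell)<0$ packages the paper's case-by-case inequality analysis for (2)$\Leftrightarrow$(3) into a single computation, with the $\ell=0$ case handled separately exactly as in the paper.
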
 
\begin{proof}
(1)$\Leftrightarrow$(2): By Lemma \ref{Bruhattrans}(2), the identity $w^{-1}s_aw=s_{w^{-1}a}$, and
the description \eqref{Phiplusexplicit} of $\Phi^+$, we have 
\[
s_aw<_Bw\,\,\,\Leftrightarrow\,\,\,\textup{sgn}(\ell)w^{-1}a\in\Phi^-.
\]
Furthermore, for $b\in\Phi$ with $b(\cc)\not=0$ we have $b\in\Phi^-$ iff $b(\cc)<0$, since $\cc\in\overline{C}_+$. The result follows from these observations and the fact that
$(w^{-1}a)(\cc)=a(w\cc)$.\\
(2)$\Rightarrow$(3): If $\ell=0$ then $\alpha(w\cc)<0$, hence $s_aw\cc=s_\alpha w\cc\prec_\alpha w\cc$ by Remark \ref{remprec}(1). If $\ell\not=0$ then $0<|\ell|<-\textup{sgn}(\ell)\alpha(w\cc)$, which implies that
\[
|\alpha(w\cc)|=-\textup{sgn}(\ell)\alpha(w\cc)>\pm\textup{sgn}(\ell)(\alpha(w\cc)+2\ell).
\]
Here the inequality for the minus-sign follows from $|\ell |>0$, and the inequality for the plus-sign follows from $|\ell |<-\textup{sgn}(\ell)\alpha(w\cc)$. Hence $|\alpha(w\cc)+2\ell |<|\alpha(w\cc)|$, and
we conclude that
\[
|\alpha(s_aw\cc)|=|\alpha(w\cc)+2\ell|<
|\alpha(w\cc)|.
\]
Hence $s_aw\cc\prec_\alpha w\cc$.\\
(3)$\Rightarrow$(2): We have either $|\alpha(w\cc)+2\ell |<|\alpha(w\cc)|$ or
$\alpha(w\cc)+2\ell=\alpha(w\cc)<0$. In the second case we have $\ell=0$, and hence $\textup{sgn}(\ell)a(w\cc)=\alpha(w\cc)<0$, as desired. Now suppose that
$|\alpha(w\cc)+2\ell |<|\alpha(w\cc)|$. Then 
$\alpha(w\cc)\not=0$. If $\alpha(w\cc)>0$ then 
$\ell\in\mathbb{Z}_{<0}$ and $-\alpha(w\cc)-2\ell<\alpha(w\cc)$, i.e., $-a(w\cc)<0$. If $\alpha(w\cc)<0$
then $\ell\in\mathbb{Z}_{>0}$ and 
$\alpha(w\cc)+2\ell<-\alpha(w\cc)$, i.e., $a(w\cc)<0$. So again we have $\textup{sgn}(\ell)a(w\cc)<0$, as desired.
\end{proof}
\begin{corollary}\label{equivalenceorder1} 
Let $w,w^\prime\in W$ with $w\leq_B w^\prime$. Then $w\cc\preceq w^\prime\cc$
for all $\cc\in\overline{C}_+$.
\end{corollary}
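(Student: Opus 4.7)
The plan is to reduce the statement to a one-step version of Proposition \ref{propequivrel} via the standard chain characterisation of the Bruhat order, and then assemble the steps using the transitivity of $\prec$.

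First I would recall that $w\leq_B w^\prime$ if and only if there exists a chain
\[
w=w_0<_B w_1<_B\cdots <_B w_n=w^\prime
\]
in $W$ such that for each $1\leq i\leq n$ one has $w_{i-1}=s_{a_i}w_i$ for some $a_i\in\Phi$ (see, e.g., \cite[\S 5.9]{Hu}; this is the ``reflection chain'' form of the Bruhat order, equivalent to the subword property). Since $s_{a_i}=s_{-a_i}$, by replacing $a_i$ with $-a_i$ if necessary I may assume throughout that $Da_i\in\Phi_0^+$.

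The main step is the following single-step claim, which is essentially what Proposition \ref{propequivrel} provides: if $a=(\alpha,\ell)\in\Phi$ with $\alpha\in\Phi_0^+$ and $w\in W$ satisfy $s_aw<_Bw$, then for every $\mathfrak{c}\in\overline{C}_+$ one has
\[
s_aw\mathfrak{c}\preceq w\mathfrak{c}.
\]
Indeed, either $a(w\mathfrak{c})=0$, in which case $s_aw\mathfrak{c}=w\mathfrak{c}-a(w\mathfrak{c})\alpha^\vee=w\mathfrak{c}$ (so the two sides coincide), or $a(w\mathfrak{c})\neq 0$, in which case Proposition \ref{propequivrel} gives $s_aw\mathfrak{c}\prec_\alpha w\mathfrak{c}$, and in particular $s_aw\mathfrak{c}\prec w\mathfrak{c}$.

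Applying this to each step of the chain yields $w_{i-1}\mathfrak{c}\preceq w_i\mathfrak{c}$ for every $1\leq i\leq n$. Since $\prec$ is defined as the transitive closure of the relations $\prec_\alpha$ ($\alpha\in\Phi_0^+$), the relation $\preceq$ is transitive, so chaining these inequalities yields $w\mathfrak{c}=w_0\mathfrak{c}\preceq w_n\mathfrak{c}=w^\prime\mathfrak{c}$, as required.

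I do not anticipate any genuine obstacle: the only point that requires a small amount of care is the possibility that some intermediate reflections fix $w_i\mathfrak{c}$ (forcing equality rather than strict inequality at that step), but this is harmless since we only claim $\preceq$ rather than strict $\prec$. The sign convention $Da_i\in\Phi_0^+$ is likewise a cosmetic adjustment needed to match the hypotheses of Proposition \ref{propequivrel}.
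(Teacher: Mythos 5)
Your proof is correct and follows essentially the same route as the paper: write $w\leq_B w'$ as a chain of covering-type steps $w_{i-1}=s_{a_i}w_i$ with $Da_i\in\Phi_0^+$, observe that a step where the affine root vanishes on the point gives equality while otherwise Proposition \ref{propequivrel} gives $s_{a_i}w_i\cc\prec_{Da_i}w_i\cc$, and conclude by transitivity of $\preceq$. The only cosmetic difference is that the paper evaluates the affine root at $w_{i-1}\cc$ rather than $w_i\cc$, which is equivalent since $a_i(w_{i-1}\cc)=-a_i(w_i\cc)$.
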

\begin{proof}
Let $\cc\in\overline{C}_+$ and suppose that $w<_Bw^\prime$. Then there exists a chain
\[
w=:w_0^\prime<_Bw_1^\prime<_B\cdots <_Bw_{m-1}^\prime<_Bw_m^\prime:=w^\prime
\]
with $w_j^\prime\in W$ satisfying $w_{j-1}^\prime=s_{b_j}w_j^\prime$ ($1\leq j\leq m$) for affine roots $b_j\in\Phi$ with $\beta_j:=Db_j\in\Phi_0^+$. If $b_j(w_{j-1}^\prime\cc)=0$ then $w_{j-1}^\prime\cc=w_j^\prime\cc$. If $b_j(w_{j-1}^\prime\cc)\not=0$
then $w_{j-1}^\prime\cc\prec_{\beta_j}w_j^\prime\cc$ by Proposition \ref{propequivrel}. Hence $w\cc\preceq w^\prime\cc$.
\end{proof}

\begin{proposition}\label{equivalenceorder2}\hfill
The relation $\preceq$ on $E$ coincides with the partial order $\leq$ on $E$.
\end{proposition}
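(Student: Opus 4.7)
The plan is to establish both inclusions separately. The inclusion $\leq\,\subseteq\,\preceq$ is already given by Corollary \ref{equivalenceorder1}: if $y\leq z$, then by Definition \ref{orderB} we have $y,z\in\mathcal{O}_\cc$ for a common $\cc\in\overline{C}_+$ with $w_y\leq_B w_z$, and the corollary applied to $w=w_y$ and $w^\prime=w_z$ yields $y=w_y\cc\preceq w_z\cc=z$.

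For the reverse inclusion $\preceq\,\subseteq\,\leq$, since $\leq$ is reflexive and transitive, it suffices to show that each generating relation $y\prec_\alpha z$ (for $\alpha\in\Phi_0^+$) implies $y\leq z$. So suppose $y=s_a z$ with $a\in\Phi$, $Da=\alpha$, and either $|\alpha(y)|<|\alpha(z)|$ or $\alpha(y)=-\alpha(z)>0$. In particular $a(z)\neq 0$, so $y\neq z$. Since $y\in Wz$ we have $\cc_y=\cc_z$; set $\cc:=\cc_z\in\overline{C}_+$ and $J:=\mathbf{J}(\cc)$. Writing $w_z\cc=z$ and applying the equivalence $(3)\Leftrightarrow(1)$ of Proposition \ref{propequivrel} to $w=w_z\in W^J$, we conclude $s_a w_z<_B w_z$.

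The remaining step is to transfer this inequality from $s_a w_z$ to its minimal coset representative $w_y$. Since
\[
(s_a w_z)\cdot\cc=s_a(w_z\cc)=s_a z=y=w_y\cc,
\]
the element $s_a w_z$ lies in the coset $w_y W_J$. The standard fact that the minimal length representative $u^J\in W^J$ of a coset $uW_J$ satisfies $u^J\leq_B u$ for every $u\in u^J W_J$ (which follows from the factorisation $u=u^J u_J$ with $\ell(u)=\ell(u^J)+\ell(u_J)$ together with the subword characterisation of the Bruhat order) then yields $w_y\leq_B s_a w_z$. Combined with $s_a w_z<_B w_z$, this gives $w_y<_B w_z$, whence $y<z$, and in particular $y\leq z$.

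The main obstacle is essentially bookkeeping: while Proposition \ref{propequivrel} directly controls the relation $s_a w_z<_B w_z$, the definition of $\leq$ requires comparison at the level of the minimal coset representatives $w_y,w_z\in W^J$. The standard Bruhat order fact about minimal coset representatives is what bridges this gap, and no deeper argument is needed beyond the two propositions already at hand.
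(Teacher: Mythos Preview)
Your proof is correct and follows essentially the same approach as the paper. The only cosmetic difference is that you reduce upfront to a single generating relation $y\prec_\alpha z$ by invoking transitivity of $\leq$, whereas the paper carries an explicit chain $y=y_0\prec_{\beta_1}\cdots\prec_{\beta_m}y_m=z$ and applies the identical argument (Proposition~\ref{propequivrel} plus the minimal-coset-representative bound) at each step; the content is the same.
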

\begin{proof}
Let $y,z\in E$. If $y\leq z$ then $w_y\leq_Bw_z$ and $\cc_y=\cc_z$. Write $\cc:=\cc_y$, then we get $y=w_y\cc\preceq w_z\cc=z$ by Corollary \ref{equivalenceorder1}.

Suppose that $y\prec z$. Then $y$ and $z$ lie in the same $W$-orbit by Remark \ref{remprec}(2), hence $\cc_y=\cc_z$. Write $\cc:=\cc_y$.
There exists
a chain 
\[
y=:y_0\prec_{\beta_1}y_1\prec_{\beta_2}\cdots\prec_{\beta_{m-1}}y_{m-1}\prec_{\beta_m}y_m:=z
\]
with $\beta_1,\ldots,\beta_m\in\Phi_0^+$ and $y_j\in\mathcal{O}_\cc$ satisfying $y_{j-1}=s_{b_j}y_j$ for some $b_j\in\Phi$ with $Db_j=\beta_j$. Then $s_{b_j}w_{y_j}<_Bw_{y_j}$ ($1\leq j\leq m$) by Proposition \ref{propequivrel}. Since $w_{y_{j-1}}\cc=y_{j-1}=s_{b_j}w_{y_j}\cc$ and $w_{y_{j-1}}$
is a minimal coset representative of $w_{y_{j-1}} W_\cc$, we have
$w_{y_{j-1}}\leq_B s_{b_j}w_{y_j}$, hence
$w_{y_{j-1}}<_{B}w_{y_j}$ ($1\leq j\leq m$). This implies $w_y<_{B}w_z$, hence $y<z$.
\end{proof}
\begin{corollary}\label{Bruhatproperty}
Let $y,z\in E$ and $0\leq j\leq r$, and suppose that $y<z<s_jz$. Then $s_jy<s_jz$.
\end{corollary}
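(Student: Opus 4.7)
The plan is to reduce the statement to a Bruhat-order inequality on $W$ and then appeal to Lemma~\ref{Bruhattrans}(1). By Definition~\ref{orderB}, the chain $y<z<s_jz$ forces $y,z,s_jz$ to lie in a common $W$-orbit $\mathcal{O}_\cc$ with $\cc\in C^J$ for $J:=\mathbf{J}(\cc)$, and the associated minimal coset representatives satisfy $w_y<_Bw_z<_Bw_{s_jz}$ in $W^J$. To prove $s_jy<s_jz$, I need to identify $w_{s_jz}$ and $w_{s_jy}$ and then verify $w_{s_jy}<_Bw_{s_jz}$.

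First I would pin down $w_{s_jz}$. Since $s_jz=(s_jw_z)\cc$, Lemma~\ref{cosetcomb}(2) gives a dichotomy: either $s_jw_z\in W^J$, in which case $w_{s_jz}=s_jw_z$, or else $s_jw_z=w_zs_{j'}$ for some $j'\in J$. The latter would force $s_jz=w_zs_{j'}\cc=w_z\cc=z$, contradicting $z<s_jz$. Hence $w_{s_jz}=s_jw_z$, and rewriting the hypothesis $w_z<_Bw_{s_jz}$ yields $w_z<_Bs_jw_z$ (equivalently, $\ell(s_jw_z)=\ell(w_z)+1$ by Lemma~\ref{lengthadd}).

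Next I would apply the same dichotomy to $w_y$: either $s_jw_y\in W^J$, so that $w_{s_jy}=s_jw_y$, or $s_jy=y$, so that $w_{s_jy}=w_y$. In the second case, transitivity of $<_B$ applied to $w_y<_Bw_z<_Bs_jw_z$ directly gives $w_{s_jy}=w_y<_Bs_jw_z=w_{s_jz}$, and we are done. In the first case, I would invoke Lemma~\ref{Bruhattrans}(1) with the data $w:=w_y$, $w^\prime:=w_z$, and the chain $w_y<_Bw_z<_Bs_jw_z$ to conclude $s_jw_y<_Bs_jw_z$, i.e.\ $w_{s_jy}<_Bw_{s_jz}$. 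Combined with $s_jy\in W\cc$, this yields $s_jy<s_jz$ in the parabolic Bruhat order.

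The only delicate point is the case split governed by Lemma~\ref{cosetcomb}(2) on whether $s_j$ fixes the coset $w_yW_J$; once that bookkeeping is in place, everything else is a direct application of the cited lemmas, so I do not foresee any substantive obstacle.
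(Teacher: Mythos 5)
Your proof is correct and follows essentially the same route as the paper: reduce everything to the Bruhat order on the minimal coset representatives, derive $w_z<_B s_jw_z$ from the hypothesis $z<s_jz$, and apply the lifting property of Lemma~\ref{Bruhattrans}(1) to the chain $w_y<_Bw_z<_Bs_jw_z$. The only differences are bookkeeping: the paper obtains $w_z<_Bs_jw_z$ from the inequality $w_{s_jz}\leq_B s_jw_z$ satisfied by minimal coset representatives rather than from your Lemma~\ref{cosetcomb}(2) dichotomy, and it leaves the identification $w_{s_jz}=s_jw_z$ and the case $s_jy=y$ implicit, which your write-up handles explicitly.
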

\begin{proof}
Let $\cc\in\overline{C}_+$ such that $y,z\in \mathcal{O}_\cc$.
Since 
$w_{s_jz}$ is a minimal coset representative of the coset
$s_jw_z W_\cc$, we have $w_{s_jz}\leq_B s_jw_z$. 
By the assumption $z<s_jz$ we have 
$w_z<_Bw_{s_jz}$. Combining the two relations we get $w_z<_Bs_jw_z$. Since furthermore $y<z$,
we conclude that
\[
w_y<_Bw_z<_Bs_jw_z,
\]
hence $s_jw_y<_Bs_jw_z$ by Lemma \ref{Bruhattrans}(1). Then $s_jy<s_jz$ by Corollary
\ref{equivalenceorder1} and Proposition \ref{equivalenceorder2}.
\end{proof}
\begin{proposition}\label{reflectionrelation}
Let $y\in E$ and $a\in\Phi^+$. 
Then $y<s_ay$ iff $a(y)>0$.
\end{proposition}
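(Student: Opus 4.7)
The plan is to deduce the proposition from the characterisation of the partial order $\leq$ given by Proposition \ref{equivalenceorder2} (namely that $\leq$ coincides with $\preceq$) together with the equivalence (2)$\Leftrightarrow$(3) in Proposition \ref{propequivrel}. The latter says, for $a=(\alpha,\ell)\in\Phi$ with $\alpha\in\Phi_0^+$,
\[
\textup{sgn}(\ell)\,a(z) < 0 \quad \Longleftrightarrow \quad s_az \prec_\alpha z \qquad (z\in E),
\]
which I will apply both at $z=y$ and at $z=s_ay$, exploiting the identity $a(s_ay)=-a(y)$.

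First dispose of the trivial case: if $a(y)=0$ then $s_ay=y$, so both sides of the claimed equivalence fail. From now on assume $a(y)\neq 0$, so that $y\neq s_ay$. Write $a=(\alpha,\ell)$. Since $a\in\Phi^+$, by \eqref{Phiplusexplicit} there are two cases: (A) $\alpha\in\Phi_0^+$ and $\ell\geq 0$ (so $\textup{sgn}(\ell)=1$), or (B) $\alpha\in\Phi_0^-$ and $\ell>0$. Case (B) is reduced to the positive-gradient setting by replacing $a$ with $-a=(-\alpha,-\ell)$, noting that $s_{-a}=s_a$, $(-a)(z)=-a(z)$, and $\textup{sgn}(-\ell)=-1$.

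In case (A), applying the equivalence above at $z=s_ay$ and using $a(s_ay)=-a(y)$ yields
\[
a(y)>0 \quad\Longleftrightarrow\quad y\prec_\alpha s_ay,
\]
and via Proposition \ref{equivalenceorder2} the right-hand side implies $y<s_ay$. For the converse, if $y<s_ay$ but $a(y)<0$, then applying the equivalence directly at $z=y$ gives $s_ay\prec_\alpha y$, hence $s_ay<y$, contradicting the anti-symmetry of $<$. Case (B) is handled identically with $-a$ in place of $a$: the two sign flips $\textup{sgn}(-\ell)=-1$ and $(-a)(z)=-a(z)$ combine to produce the same final chain of equivalences, now with $\prec_{-\alpha}$ in place of $\prec_\alpha$.

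The only content to organise is the bookkeeping of signs in case (B); none of it is substantial, so no real obstacle arises.
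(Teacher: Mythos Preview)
Your proof is correct and takes a genuinely different route from the paper's, especially for the forward implication. The paper proves $y<s_ay\Rightarrow a(y)>0$ by working directly with the Bruhat order on $W$: from $w_y<_B w_{s_ay}$ it deduces $w_y<_B s_aw_y$ via minimal-coset-representative bookkeeping, then invokes the standard characterisation $w<_Bs_aw\Leftrightarrow w^{-1}a\in\Phi^+$ and evaluates at $\cc_y$. You instead run the contrapositive through Proposition~\ref{propequivrel} (2)$\Leftrightarrow$(3) at $z=y$ and appeal to antisymmetry of $<$, never touching $w_y$ or the Bruhat order explicitly. For the converse, the paper directly verifies the defining inequalities of $\prec_\alpha$ (a short case analysis on whether $\ell=0$ or $\ell>0$), whereas you apply Proposition~\ref{propequivrel} at $z=s_ay$, which packages exactly that verification.

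Your approach is more uniform --- both directions flow from the same lemma, and the case split (A)/(B) is handled by a trivial sign trick --- at the cost of depending on the full equivalence in Proposition~\ref{equivalenceorder2}. The paper's forward direction is more self-contained (it only needs the definition of $\leq$ via the Bruhat order, not its identification with $\preceq$), which is mildly preferable if one cares about keeping the logical dependencies minimal, but your argument is shorter and arguably clearer.
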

\begin{proof}
Suppose that $y<s_ay$. Then $a(y)\not=0$ and 
$w_y<_B w_{s_ay}$.
Writing $s_a w_y=w_{s_ay}w^\prime$ with $w^\prime\in W_{\cc_y}$, we have 

\[
w_y<_Bw_{s_ay}\leq_B w_{s_ay}w^\prime=s_a w_y
\]
since $\ell(w_{s_ay}w^\prime)=
\ell(w_{s_ay})+\ell(w^\prime)$.
So $w_y<_Bs_a w_y$, which implies that $w_y^{-1}a\in\Phi^+$ by \cite[Prop. 5.7]{Hu}. 
It follows that $a(y)=(w_y^{-1}a)(\cc_y)\geq 0$.
Since $a(y)\not=0$ this forces $a(y)>0$.

For the converse implication, let $a=(\alpha,\ell)\in\Phi^+$ such that $a(y)=\ell+\alpha(y)>0$. The condition that $a$ is a positive root implies that $\ell\in\mathbb{Z}_{\geq 0}$ if $\alpha\in\Phi_0^+$, and
$\ell\in\mathbb{Z}_{>0}$ if $\alpha\in\Phi_0^-$. It follows that
\[
\alpha(s_ay)=-\alpha(y)-2\ell<0,
\]
hence $|\alpha(s_ay)|=\alpha(y)+2\ell$. Since $\ell\in\mathbb{Z}_{\geq 0}$ we conclude that 
$\alpha(y)\leq |\alpha(s_ay)|$.
If $\alpha(y)=|\alpha(s_ay)|$ then $\ell=0$, so $a(y)=\alpha(y)>0$. 
Then $\alpha(y)=-\alpha(s_\alpha y)=-\alpha(s_ay)>0$. We conclude that $y\prec s_ay$, and hence $y<s_ay$ by Proposition \ref{equivalenceorder2}. 
If $\alpha(y)<|\alpha(s_ay)|$ then $\ell\in\mathbb{Z}_{>0}$ and
\[
-\alpha(y)=-(\alpha(y)+\ell)+\ell<
(\alpha(y)+\ell)+\ell=|\alpha(s_ay)|,
\]
hence $|\alpha(y)|<|\alpha(s_ay)|$. Consequently $y\prec s_ay$, so $y<s_ay$ by Proposition \ref{equivalenceorder2}.
\end{proof}
\begin{corollary}\label{newroots}
Let $y\in E$ and $a\in\Phi^+$. Then
$s_ay<y$ iff $a\in\Pi(w_y^{-1})$ iff $a(y)<0$.
\end{corollary}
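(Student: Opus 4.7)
The plan is to deduce the corollary immediately from the two preceding results: the second equivalence is nothing other than Proposition \ref{Pidescription}, and the first equivalence is a short reduction from Proposition \ref{reflectionrelation}. So I would open by simply citing Proposition \ref{Pidescription} for the equivalence $a\in\Pi(w_y^{-1})\Leftrightarrow a(y)<0$, and spend the body of the argument on $s_ay<y\Leftrightarrow a(y)<0$.

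For the nontrivial direction, suppose $a(y)<0$, and set $z:=s_ay\in E$. Since $s_a$ is an involution and $a=(\alpha,\ell)$ acts linearly on $E^*\oplus\mathbb{R}$ via \eqref{compatibilities}, a direct calculation gives $a(z)=a(s_ay)=-a(y)>0$. Applying Proposition \ref{reflectionrelation} to the pair $(z,a)$ (still with $a\in\Phi^+$) yields $z<s_az$, that is, $s_ay<s_a(s_ay)=y$, as desired.

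For the reverse implication, assume $s_ay<y$. First, $a(y)\neq 0$: if $a(y)=0$ then $s_ay=y$ and no strict relation holds. Next, $a(y)>0$ is excluded because Proposition \ref{reflectionrelation} would then give $y<s_ay$, contradicting antisymmetry of the partial order $\leq$. Hence $a(y)<0$, completing the equivalence. Since neither step requires any computation beyond the two cited propositions, I do not expect any real obstacle; the only thing to keep straight is that $a$ stays in $\Phi^+$ when we swap $y$ for $s_ay$, which is automatic here because we apply Proposition \ref{reflectionrelation} to the same positive root $a$.
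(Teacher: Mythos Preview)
Your proposal is correct and follows essentially the same approach as the paper, which simply states ``Combine Proposition \ref{Pidescription} and Proposition \ref{reflectionrelation}.'' You have merely spelled out the trivial unpacking of how Proposition \ref{reflectionrelation} (which concerns $y<s_ay$) yields the equivalence for $s_ay<y$ via the substitution $z=s_ay$ and antisymmetry.
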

\begin{proof}
Combine Proposition \ref{Pidescription} and Proposition \ref{reflectionrelation}.
\end{proof}
\begin{corollary}\label{wyrules}
Let $\cc\in C^J$, $y\in\mathcal{O}_\cc$ and $j\in [0,r]$.
\begin{enumerate}
\item If $\alpha_j(y)\not=0$ then $w_{s_jy}=s_jw_y$ and 
\[
\ell(s_jw_y)=\ell(w_y)\pm 1\quad \Leftrightarrow\quad \alpha_j(y)\gtrless 0.
\]
\item If $\alpha_j(y)=0$ then $w_{s_jy}=w_y$ and $\ell(s_jw_y)=\ell(w_y)+1$.
\end{enumerate}
\end{corollary}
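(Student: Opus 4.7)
\medskip

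\textbf{Proof plan for Corollary \ref{wyrules}.} The two parts are proved by combining the characterization of $w_y$ as a minimal coset representative (Definition \ref{wydef}) with the combinatorics of cosets $W/W_J$ in Lemma \ref{cosetcomb} and the key reflection property established in Proposition \ref{reflectionrelation}. Throughout, write $J=\mathbf{J}(\cc)$ so that $w_y\in W^J$ and $W_\cc=W_J$.

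For part (1), suppose $\alpha_j(y)\neq 0$, so $s_jy\neq y$ and $s_jy\in\mathcal{O}_\cc$. The first task is to show $s_jw_y\in W^J$; equivalently, by Lemma \ref{cosetcomb}(2), that $s_jw_yW_J\neq w_yW_J$. If instead $s_jw_yW_J=w_yW_J$, then $w_y^{-1}s_jw_y\in W_J=W_\cc$, whence $s_jy=s_jw_y\cc=w_y\cc=y$, a contradiction. Hence $s_jw_y\in W^J$, and since $s_jw_y\cc=s_jy$, uniqueness in Definition \ref{wydef} forces $w_{s_jy}=s_jw_y$. For the length statement, Proposition \ref{reflectionrelation} (applied with $a=\alpha_j\in\Phi^+$) gives $y<s_jy$ iff $\alpha_j(y)>0$. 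Since $y<s_jy$ is equivalent by definition to $w_y<_B w_{s_jy}=s_jw_y$, and since Lemma \ref{lengthadd}(1) forces $|\ell(s_jw_y)-\ell(w_y)|=1$, we conclude $\ell(s_jw_y)=\ell(w_y)+1$ exactly when $\alpha_j(y)>0$, which is the desired equivalence.

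For part (2), suppose $\alpha_j(y)=0$. Then $s_jy=y$, and uniqueness of the minimal coset representative immediately yields $w_{s_jy}=w_y$. The identity $s_jy=y$ also rewrites as $w_y^{-1}s_jw_y\in W_\cc=W_J$, so the coset $s_jw_yW_J$ coincides with $w_yW_J$. By Lemma \ref{cosetcomb}(2) this says $s_jw_y\not\in W^J$ and simultaneously asserts $\ell(s_jw_y)=\ell(w_y)+1$, completing the proof.

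The main conceptual step is the verification in part (1) that $s_jw_y$ remains a minimal coset representative when $\alpha_j(y)\neq 0$; once this is in hand, the link between the partial order on $\mathcal{O}_\cc$ and the Bruhat order on $W^J$ built into Definition \ref{orderB}, together with Proposition \ref{reflectionrelation}, makes the length assertion automatic. Nothing beyond the already established material is required.
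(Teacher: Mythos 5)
Your argument is correct and essentially the paper's own: part (2) and the identification $w_{s_jy}=s_jw_y$ in part (1) are proved exactly as in the text via Lemma \ref{cosetcomb}(2) and uniqueness of minimal coset representatives, and the length dichotomy rests on Proposition \ref{reflectionrelation} together with Definition \ref{orderB}. One small citation point: to pass from the failure of $w_y<_B s_jw_y$ (the case $\alpha_j(y)<0$) to $\ell(s_jw_y)=\ell(w_y)-1$ you need Bruhat comparability of $w_y$ and $s_jw_y$, i.e.\ Lemma \ref{Bruhattrans}(2) (the paper instead invokes Corollary \ref{newroots} at this step); Lemma \ref{lengthadd}(1) alone only yields $|\ell(s_jw_y)-\ell(w_y)|=1$, not the direction you implicitly use.
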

\begin{proof}
(1) If $\alpha_j(y)\not=0$ then $s_jy\not=y$ and so $s_jw_y\not\in w_yW_{\cc}$. Hence
$s_jw_y\in W^{J}$ by Lemma \ref{cosetcomb}(2). Furthermore, $s_jw_y\cc=s_jy=w_{s_jy}\cc$,
hence $s_jw_y=w_{s_jy}$. If $\alpha_j(y)>0$ then $w_y<_Bs_jw_y$ by Proposition \ref{reflectionrelation}, hence $\ell(s_jw_y)=\ell(w_y)+1$ by Lemma \ref{Bruhattrans}(2). 
Similarly, if $\alpha_j(y)<0$ then $w_{s_jy}<_Bw_y$ by Corollary \ref{newroots},
hence $\ell(s_jw_y)=\ell(w_y)-1$ by Lemma \ref{Bruhattrans}(2) again.\\
(2) If $\alpha_j(y)=0$ then $s_jy=y$, so the first equality is trivial. Then $s_jw_y\in w_yW_{\cc}$,
hence the second part of the statement follows from Lemma \ref{cosetcomb}(2).
\end{proof}
We conclude this subsection with the following lemma, which explicitly describes root strings with respect to $\prec_\alpha$. 
\begin{lemma}\label{techlem}
Let $\alpha\in\Phi_0^+$ and $y\in\mathcal{O}_\cc$. Then we have
\begin{enumerate}
\item If $\alpha(y)\in\mathbb{R}_{<0}$ then $s_\alpha y-\ell\alpha^\vee\prec_\alpha y$ for $0\leq \ell\leq -1-\lfloor\alpha(y)\rfloor$,
\item If $\alpha(y)\in\mathbb{Z}_{>1}$ then $y-\ell\alpha^\vee\prec_\alpha y$ for $1\leq \ell\leq \alpha(y)-1$,
\item If $\alpha(y)\in\mathbb{R}_{>1}\setminus\mathbb{Z}_{>1}$ then $s_\alpha y+\ell\alpha^\vee\prec_\alpha y$ for $1\leq \ell\leq \lfloor \alpha(y)\rfloor$.
\end{enumerate}
\end{lemma}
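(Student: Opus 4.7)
My plan is that each of the three cases is a direct verification against Definition \ref{precalpha}. In every case I produce an explicit affine root $a = (\alpha, m) \in \Phi$ (recall $\Phi = \Phi_0 \times \mathbb{Z}$) whose gradient is $\alpha$, so that the proposed element equals $s_a y$, and then check the numerical condition on $\alpha(s_a y)$ versus $\alpha(y)$. The general formula I will use throughout is $s_{(\alpha,m)} y = y - (\alpha(y)+m)\alpha^\vee$, and consequently $\alpha(s_{(\alpha,m)} y) = -\alpha(y) - 2m$.

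For (1), assume $\alpha(y) < 0$ and let $0 \leq \ell \leq -1 - \lfloor \alpha(y)\rfloor$. I take $a_\ell := (\alpha, \ell) \in \Phi$, so that $s_{a_\ell} y = s_\alpha y - \ell \alpha^\vee$ and $\alpha(s_{a_\ell} y) = -\alpha(y) - 2\ell$. When $\ell = 0$ we have $\alpha(s_\alpha y) = -\alpha(y) > 0$, which matches the second alternative of Definition \ref{precalpha}(2). When $\ell \geq 1$, I split on the sign of $-\alpha(y) - 2\ell$: in the subcase $-\alpha(y) - 2\ell \geq 0$ the inequality $|\alpha(s_{a_\ell} y)| < |\alpha(y)|$ reduces to $\ell > 0$, and in the subcase $-\alpha(y) - 2\ell < 0$ it reduces to $\ell < -\alpha(y)$. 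The latter is ensured by the hypothesis $\ell \leq -1 - \lfloor \alpha(y)\rfloor$ together with $\lfloor \alpha(y)\rfloor + 1 > \alpha(y)$.

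For (2), assume $\alpha(y) \in \mathbb{Z}_{>1}$ and let $1 \leq \ell \leq \alpha(y) - 1$. Because $\alpha(y)$ is an integer I can take $a_\ell := (\alpha, \ell - \alpha(y)) \in \Phi$, giving $s_{a_\ell} y = y - \ell \alpha^\vee$ and $\alpha(s_{a_\ell} y) = \alpha(y) - 2\ell$. Since $\alpha(y) > 0$ the second alternative of Definition \ref{precalpha}(2) is ruled out, and the inequality $|\alpha(y) - 2\ell| < \alpha(y)$ reduces, on splitting by sign, to the two bounds $\ell > 0$ and $\ell < \alpha(y)$, both of which are in the hypothesis. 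For (3), assume $\alpha(y) \in \mathbb{R}_{>1} \setminus \mathbb{Z}_{>1}$ and let $1 \leq \ell \leq \lfloor \alpha(y)\rfloor$. I take $a_\ell := (\alpha, -\ell) \in \Phi$, so that $s_{a_\ell} y = s_\alpha y + \ell \alpha^\vee$ and $\alpha(s_{a_\ell} y) = -\alpha(y) + 2\ell$. The required $|2\ell - \alpha(y)| < \alpha(y)$ reduces to $\ell > 0$ or $\ell < \alpha(y)$; the latter uses the non-integrality of $\alpha(y)$ in the form $\ell \leq \lfloor \alpha(y)\rfloor < \alpha(y)$.

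The main obstacle is organizational rather than mathematical: there is no conceptual difficulty, but one must keep careful track of the sign subcases, and in particular verify that the boundary bound $\ell \leq -1 - \lfloor \alpha(y)\rfloor$ in (1) correctly encompasses the integer case (where $\lfloor \alpha(y)\rfloor = \alpha(y)$ and one needs strict inequality $\ell < -\alpha(y)$). Once all three cases are verified, the transitive closure in Definition \ref{precalpha} is not needed since each relation $\prec_\alpha$ is already established directly.
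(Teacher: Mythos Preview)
Your proof is correct and follows essentially the same approach as the paper: in each case you identify exactly the same affine root $a_\ell$ as the paper does and verify the numerical condition of Definition~\ref{precalpha} directly. The only cosmetic difference is that the paper presents each verification as a single compact chain of inequalities (e.g., $\alpha(y) < 2+2\lfloor\alpha(y)\rfloor - \alpha(y) \leq -2\ell - \alpha(y) < -\alpha(y)$ in case~(1)) rather than splitting into sign subcases, but the content is identical.
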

\begin{proof}
(1) Write $y_\ell:=s_\alpha y-\ell\alpha^\vee$ with $0\leq \ell\leq -1-\lfloor\alpha(y)\rfloor$. Then
$y_\ell=s_ay$ with $a:=(\alpha,\ell)\in\Phi$. If $\ell=0$ then $\alpha(y_0)=-\alpha(y)>0$ and so 
$y_0\prec_\alpha y$. If $1\leq \ell\leq -1-\lfloor\alpha(y)\rfloor$ then 
\[
\alpha(y)<2+2\lfloor\alpha(y)\rfloor-\alpha(y)\leq -2\ell-\alpha(y)=\alpha(y_\ell)<-\alpha(y).
\]
Hence $|\alpha(y_\ell)|<|\alpha(y)|$, and so $y_\ell\prec_\alpha y$.\\
(2) Write $y_\ell:=y-\ell\alpha^\vee$ with $1\leq \ell\leq \alpha(y)-1$. Then we have $y_\ell=s_ay$ with $a=(\alpha,\ell-\alpha(y))\in\Phi$ since $\alpha(y)\in\mathbb{Z}$, and $-\alpha(y)+2\leq\alpha(y)-2\ell=\alpha(y_\ell)\leq \alpha(y)-2$. Hence $|\alpha(y_\ell)|<|\alpha(y)|$, and so $y_\ell\prec_\alpha y$.\\
(3) Write $y_\ell:=s_\alpha y+\ell\alpha^\vee$ with $1\leq \ell\leq\lfloor \alpha(y)\rfloor$. Then $y_\ell=s_ay$ with
$a=(\alpha,-\ell)\in\Phi$, and 
\[
-\alpha(y)<2-\alpha(y)\leq 2\ell-\alpha(y)=\alpha(y_\ell)\leq 2\lfloor \alpha(y)\rfloor-\alpha(y)<\alpha(y),
\]
where the last inequality follows from the fact that $\alpha(y)\not\in\mathbb{Z}$. Hence
$|\alpha(y_\ell)|<|\alpha(y)|$, and so $y_\ell\prec_\alpha y$.
\end{proof}

\subsection{Triangularity properties}\label{POsection2}
Throughout this subsection we fix $\cc\in C^J$ and $\mathfrak{t}\in T_{J}$.
We will analyse the triangularity properties of the action of $Y^\mu$ ($\mu\in Q^{\vee}$)
and $\delta(T_j)$ ($0\leq j\leq r$) on $\Pc_{\mathfrak{t}}$ with respect to the basis $\{x^y\}_{y\in\mathcal{O}_\cc}$ of quasi-monomials, partially ordered
by the parabolic Bruhat order on $\mathcal{O}_\cc$.

\begin{definition}
For $f\in\mathbf{F}[E]$ and $y\in E$ we write 
\begin{equation}\label{ftriang}
f=dx^y+\textup{l.o.t.}
\end{equation}
if $f$ is of the form
\[
f=dx^y+\sum_{z<y}d_{z}x^z\qquad (d,d_z\in\mathbf{F}).
\] 
If $d\not=0$ then $y$ is called the leading exponent of $f$,
and $d$ its leading coefficient.
\end{definition}
\begin{remark}\label{orderfinite}
If $f\in\mathbf{F}[E]$ is of the form \eqref{ftriang} then $f$ is a quasi-polynomial. In fact, 
$z<y$ forces $z\in Wy$, so that $f\in\mathcal{P}^{(\cc_y)}$.
Furthermore, for $y\in \mathcal{O}_\cc$ ($\cc\in C^J$) we have
\[
\#\{z\in E \,\, | \,\, z<y\}<\infty
\]
since $\#\{w\in W^J\,\, | \,\, w<_Bw_y\}<\infty$.
\end{remark}

We now first show that the linear operators $\pi_{\cc,\mathfrak{t}}(Y^\mu)$
($\mu\in Q^\vee$) are triangular. 
For an affine root $a\in\Phi$ define
$G_{\mathfrak{t}}(a)\in\textup{End}(\Pc)$ by
\[
G_{\mathfrak{t}}(a)x^y:=k_a^{\chi_{\mathbb{Z}}(a(y))}x^y+
(k_a-k_a^{-1})\Bigl(\frac{1-x^{\lfloor Da(y)\rfloor a^\vee}}{1-x^{-a^\vee}}\Bigr)s_{a,\mathfrak{t}}x^y\qquad (y\in\mathcal{O}_{\cc}).
\]
Using Lemma \ref{actiondeform} and the fact that $w_{\mathfrak{t}}x^y$ for $w\in W$ is a scalar 
multiple of $x^{(Dw)y}$, one shows that 
\begin{equation}\label{eq1}
G_{\mathfrak{t}}(\alpha_j)=s_{j,\mathfrak{t}}\pi_{\cc,\mathfrak{t}}(T_j)\qquad (j=0,\ldots,r)
\end{equation}
and 
\begin{equation}\label{eq2}
G_{\mathfrak{t}}(wa)=w_{\mathfrak{t}}G_{\mathfrak{t}}(a)w_{\mathfrak{t}}^{-1}\qquad (w\in W,\,\, a\in\Phi)
\end{equation}
as linear operators on $\Pc$.

\begin{lemma}\label{triangularG}
Fix $a=(\alpha,\ell)\in\Phi$ with $\alpha\in\Phi_0^+$ and $\ell\in\mathbb{Z}$.
Then
\[
G_{\mathfrak{t}}(a)x^y=k_\alpha^{-\eta(\alpha(y))}x^y+\textup{l.o.t.}
\]
for all $y\in \mathcal{O}_{\cc}$.
\end{lemma}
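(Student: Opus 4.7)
The plan is to compute $G_{\mathfrak{t}}(a)x^y$ directly from its definition, then read off the coefficient of $x^y$ and verify that all other exponents which appear are strictly less than $y$ in the parabolic Bruhat order. Since $a^\vee = (\alpha^\vee, 2\ell/\|\alpha\|^2)$ with $\ell \in \mathbb{Z}$, the specialization $K=q$ gives $x^{ja^\vee} = q_\alpha^{j\ell}x^{j\alpha^\vee}$ in $\mathbf{F}[E]$, and the geometric series identity
\[
\frac{1-x^{na^\vee}}{1-x^{-a^\vee}}=\begin{cases} 0 &\text{if } n=0,\\ -\sum_{j=1}^{n} x^{ja^\vee} &\text{if } n>0,\\ \sum_{j=0}^{-n-1} x^{-ja^\vee} &\text{if } n<0 \end{cases}
\]
with $n=\lfloor\alpha(y)\rfloor$, together with \eqref{saxyt} giving $s_{a,\mathfrak{t}}x^y = \mathfrak{t}_y^{-\ell\alpha^\vee}x^{s_\alpha y}$, yields an explicit expansion of $G_{\mathfrak{t}}(a)x^y$ as a linear combination of quasi-monomials $x^{z}$ with $z \in \{y\} \cup \{s_{(\alpha,j)}y\}$ for certain integers $j$.

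I would then split into cases according to the location of $\alpha(y)$. If $\alpha(y)\in\mathbb{Z}_{\le 0}\cup [0,1)\cup\mathbb{R}_{<0}\setminus\mathbb{Z}$ or $\alpha(y)\in\mathbb{R}_{>1}\setminus\mathbb{Z}$, then none of the exponents $s_{(\alpha,j)}y$ appearing in the expansion coincide with $y$: the vanishing of $\alpha(y)+j$ is impossible in the allowed range of $j$, as one checks directly. In each such case the coefficient of $x^y$ comes solely from the first summand $k_\alpha^{\chi_\mathbb{Z}(a(y))}x^y$, and a direct verification of $\chi_{\mathbb{Z}}(a(y))$ versus $-\eta(\alpha(y))$ (using $a(y)\in\mathbb{Z}\Leftrightarrow\alpha(y)\in\mathbb{Z}$) yields the claimed leading coefficient. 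The strictness $s_{(\alpha,j)}y<y$ in the Bruhat order on $\mathcal{O}_\cc$ follows from parts (1) and (3) of Lemma \ref{techlem} via Proposition \ref{equivalenceorder2}.

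The subtlety lies in the remaining case $\alpha(y)=n\in\mathbb{Z}_{>0}$, where the summand indexed by $j=n$ contributes an exponent $s_\alpha y + n\alpha^\vee = y$. Its coefficient equals
\[
-(k_\alpha-k_\alpha^{-1})\,\mathfrak{t}_y^{-\ell\alpha^\vee}\,q_\alpha^{n\ell};
\]
by Lemma \ref{datumokcor}(1), $\mathfrak{t}_y^{\alpha^\vee}=q_\alpha^{\alpha(y)}=q_\alpha^n$, so $\mathfrak{t}_y^{-\ell\alpha^\vee}q_\alpha^{n\ell}=1$, and the contribution to the $x^y$-coefficient reduces to $-(k_\alpha-k_\alpha^{-1})$. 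Adding the first-summand contribution $k_\alpha^{\chi_\mathbb{Z}(a(y))}=k_\alpha$ then gives total leading coefficient $k_\alpha-(k_\alpha-k_\alpha^{-1})=k_\alpha^{-1}=k_\alpha^{-\eta(\alpha(y))}$, as required. The remaining terms with $1\le j<n$ have exponents $y-(n-j)\alpha^\vee$, which are $<y$ by Lemma \ref{techlem}(2) (and there are none when $n=1$). Assembling the five sub-cases of the case analysis completes the proof; the central difficulty is exactly this use of Lemma \ref{datumokcor}(1) to force the cancellation at $\alpha(y)\in\mathbb{Z}_{>0}$.
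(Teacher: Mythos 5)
Your proposal is correct and follows essentially the same route as the paper: expand the truncated geometric series, split into cases according to the position of $\alpha(y)$ relative to $\mathbb{Z}$ and the interval $[0,1]$, invoke Lemma \ref{datumokcor}(1) to produce the cancellation $k_a-(k_a-k_a^{-1})=k_a^{-1}$ when $\alpha(y)\in\mathbb{Z}_{>0}$, and use Lemma \ref{techlem} together with Proposition \ref{equivalenceorder2} to see that all remaining exponents are strictly below $y$. The only difference is cosmetic bookkeeping of the cases (the paper folds $\alpha(y)=1$ into the interval $0\le\alpha(y)\le 1$, while you treat all of $\mathbb{Z}_{>0}$ uniformly), which does not affect the argument.
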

\begin{proof}
Note that 
\begin{equation*}
k_\alpha^{-\eta(\alpha(y))}=
\begin{cases} k_a^{-1}\quad &\hbox{ if }\alpha(y)\in\mathbb{Z}_{>0},\\
k_a^{\chi_{\mathbb{Z}}(a(y))}\quad &\textup{ if }
\alpha(y)\in\mathbb{R}\setminus\mathbb{Z}_{>0}.
\end{cases}
\end{equation*}
Keeping in mind Lemma \ref{datumokcor}, one then obtains by direct computations,
\begin{equation*}
\begin{split}
G_{\mathfrak{t}}(a)x^y&-k_\alpha^{-\eta(\alpha(y))}x^y=\\
&=
\begin{cases}
(k_a-k_a^{-1})(1+x^{-a^\vee}+\cdots+x^{(1+\lfloor\alpha(y)\rfloor)a^\vee})s_{a,\mathfrak{t}}x^{y}
&\hbox{ if } \alpha(y)\in\mathbb{R}_{<0},\\
0 &\hbox{ if }  0\leq \alpha(y)\leq 1,\\
(k_a^{-1}-k_a)(x^{-a^\vee}+x^{-2a^\vee}+\cdots+x^{(1-\alpha(y))a^\vee})x^y
&\hbox{ if } \alpha(y)\in\mathbb{Z}_{>1},\\
(k_a^{-1}-k_a)(x^{a^\vee}+x^{2a^\vee}+\cdots+x^{\lfloor \alpha(y)\rfloor a^\vee})s_{a,\mathfrak{t}}x^{y}
\,\,&\hbox{ if } \alpha(y)\in \mathbb{R}_{>1}\setminus\mathbb{Z}_{>1}.
\end{cases}
\end{split}
\end{equation*}
Now note that $x^{a^\vee}=q_\alpha^\ell x^{\alpha^\vee}$, and $s_{a,\mathfrak{t}}x^y=\mathfrak{t}_y^{-\ell\alpha^\vee}x^{y-\alpha(y)\alpha^\vee}$ by \eqref{saxyt}, hence the result follows from
Lemma \ref{techlem}.
\end{proof}
\begin{proposition}\label{trianY}
For $\mu\in Q^\vee$ and $y\in\mathcal{O}_{\cc}$ we have
\[
\pi_{\cc,\mathfrak{t}}(Y^\mu)x^y=(\mathfrak{s}_y\mathfrak{t}_y)^{-\mu}x^y+\textup{l.o.t.}
\]
in $\Pc$.
\end{proposition}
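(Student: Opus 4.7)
The plan is to proceed by induction on $\ell(w_y)$, establishing the identity simultaneously for all $\mu\in Q^\vee$. For the base case $\ell(w_y)=0$ (that is, $y=c$), Lemma~\ref{actionHt} yields $\pi_{c,\mathfrak{t}}(Y^\mu)x^c=\chi_{J,\mathfrak{s}_J\mathfrak{t}}(Y^\mu)x^c=(\mathfrak{s}_J\mathfrak{t})^{-\mu}x^c$, and combining with $\mathfrak{s}_c=\mathfrak{s}_J$ (Lemma~\ref{facecors}) and $\mathfrak{t}_c=\mathfrak{t}$ produces the desired identity with no lower-order terms.

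For the inductive step, since $y\ne c$ implies $y\notin\overline{C}_+$, there exists $j\in[0,r]$ with $\alpha_j(y)<0$. Setting $y':=s_jy$, Corollary~\ref{wyrules}(1) gives $w_y=s_jw_{y'}$ with $\ell(w_{y'})=\ell(w_y)-1$, and Proposition~\ref{reflectionrelation} gives $y'<y$. Assume first that $j$ can be chosen in $[1,r]$. Direct inspection of the formula \eqref{actionformulas} for $\pi_{c,\mathfrak{t}}(T_j)x^{y'}$, combined with Lemma~\ref{techlem} to control $\nabla_j(x^{y'})$, shows
\[
\pi_{c,\mathfrak{t}}(T_j)x^{y'}=c_j\,x^y+\textup{l.o.t.},
\]
where $c_j\in\mathbf{F}^\times$ is explicit ($c_j=1$ if $\alpha_j(y')\notin\mathbb{Z}$, and $c_j=k_j^{-1}$ otherwise, the latter arising from the cancellation between the $k_j$-term and the $x^y$-contribution of $\nabla_j(x^{y'})$). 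Solving for $x^y$ and applying $\pi_{c,\mathfrak{t}}(Y^\mu)$, the Bernstein--Zelevinsky cross relation \eqref{crossrelation} expresses $Y^\mu T_j=T_jY^{s_j\mu}+(k_j-k_j^{-1})\sum_\nu c_\nu Y^\nu$ as a finite sum. The inductive hypothesis at $y'$ then yields triangular expansions of $\pi_{c,\mathfrak{t}}(Y^{s_j\mu})x^{y'}$ and of each $\pi_{c,\mathfrak{t}}(Y^\nu)x^{y'}$, and Corollary~\ref{Bruhatproperty} guarantees that further application of $\pi_{c,\mathfrak{t}}(T_j)$ to exponents strictly below $y'$ produces only exponents strictly below $y$. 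Combining everything,
\[
\pi_{c,\mathfrak{t}}(Y^\mu)x^y=(\mathfrak{s}_{y'}\mathfrak{t}_{y'})^{-s_j\mu}\,x^y+\textup{l.o.t.},
\]
and Corollary~\ref{datumok} applied to $w_y=s_jw_{y'}$ rewrites the leading coefficient as $(s_j\cdot w_{y'}(\mathfrak{s}_J\mathfrak{t}))^{-\mu}=(w_y(\mathfrak{s}_J\mathfrak{t}))^{-\mu}=(\mathfrak{s}_y\mathfrak{t}_y)^{-\mu}$, as required.

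The main obstacle is the case where the only admissible descent is $j=0$, which arises precisely when $y\in\overline{E}_+$ and $\varphi(y)>1$; here \eqref{crossrelation} does not directly supply the needed $Y^\mu T_0$ commutation. This is resolved by the factorization $T_0=Y^{\varphi^\vee}T_{s_\varphi}^{-1}$, obtained from $\delta(T_0)=T_{s_\varphi}^{-1}x^{-\varphi^\vee}$ in \eqref{delta0} by applying the anti-involution $\delta$ (using $\delta(T_{s_\varphi})=T_{s_\varphi}$ and $\delta(x^{-\varphi^\vee})=Y^{\varphi^\vee}$). This reduces the required cross relation to iterated application of \eqref{crossrelation} for the simple reflections $s_i$ with $i\in[1,r]$ appearing in a reduced expression of $s_\varphi$, combined with the commutativity of $Y^{\varphi^\vee}$ with the remaining $Y$'s. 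The leading-coefficient identification is parallel, again via Corollary~\ref{datumok} applied to $w_y=s_0w_{y'}$.
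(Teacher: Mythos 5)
Your induction for a descent at a finite simple root $s_i$ ($i\in[1,r]$) is essentially sound: there $\pi_{\cc,\mathfrak{t}}(T_i)x^{y'}$ really does have leading term $c_i\,x^{s_iy'}$ (this is the content of the paper's formula \eqref{tipm}), the Bernstein--Zelevinsky relation lets you move $Y^\mu$ across $T_i$, and Corollary \ref{datumok} converts $(\mathfrak{s}_{y'}\mathfrak{t}_{y'})^{-s_i\mu}$ into $(\mathfrak{s}_y\mathfrak{t}_y)^{-\mu}$. (Even here, the step ``$\pi_{\cc,\mathfrak{t}}(T_j)$ applied to exponents $<y'$ gives exponents $<y$'' needs more than Corollary \ref{Bruhatproperty}: you must also know that all exponents of $\pi_{\cc,\mathfrak{t}}(T_j)x^{z'}$ are $\leq\max(z',s_jz')$, which requires the same root-string analysis as Lemma \ref{techlem}; this is fillable.) However, the case you isolate as the ``main obstacle'' — when the only descent of $y$ is $j=0$ — is not resolved by your argument, and the gap is not the commutation relation but the very first step. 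In the quasi-polynomial representation $T_0$ acts through $s_{0,\mathfrak{t}}$, whose exponent part is $s_\varphi$ (not $s_0$), together with $\nabla_0$, whose exponents are $y'-\ell\varphi^\vee$ with integer $\ell$. A short check (or already the classical case $\cc=0$, $\mathfrak{t}=1_T$, where $\pi(T_0)x^0=k_0$ while $y=\varphi^\vee$) shows that the monomial $x^{s_0y'}$ \emph{never} occurs in $\pi_{\cc,\mathfrak{t}}(T_0)x^{y'}$: the candidate exponent $s_0y'=s_\varphi y'+\varphi^\vee$ lies strictly above both the reflection exponent $s_\varphi y'$ and the top $\nabla_0$-exponent. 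So there is no analogue of ``$\pi_{\cc,\mathfrak{t}}(T_0)x^{y'}=c_0x^y+\textup{l.o.t.}$'' and nothing to solve for $x^y$ from.

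The factorization $T_0=Y^{\varphi^\vee}T_{s_\varphi}^{-1}$ (which is correct) repairs only the algebraic commutation $Y^\mu T_0$, not this representation-theoretic failure: the operator that actually raises $x^{y'}$ to a multiple of $x^{s_0y'}$ is of the form $\delta(T_0)=T_{s_\varphi}^{-1}x^{-\varphi^\vee}$ (or $x^{\varphi^\vee}T_{s_\varphi}$-type elements), i.e.\ it involves multiplication by $x^{\pm\varphi^\vee}$, and commuting $Y^\mu$ past such elements is governed by the genuinely double affine relations, not by \eqref{crossrelation}. This is exactly the difficulty the paper circumvents by a different route: it suffices to treat dominant $\mu$, write $Y^\mu=T_{\tau(\mu)}$ along a reduced expression, and conjugate the factors into the operators $G_{\mathfrak{t}}(a)$ attached to the roots $a\in\Pi(\tau(\mu))$ (all with gradient in $\Phi_0^+$); each $G_{\mathfrak{t}}(a)$ is triangular with diagonal entry $k_{Da}^{-\eta(Da(y))}$ by the root-string Lemma \ref{techlem}, and the product of these entries together with the final $\tau(\mu)_{\mathfrak{t}}$ gives $(\mathfrak{s}_y\mathfrak{t}_y)^{-\mu}$ with no induction on $y$ and no use of the Bernstein--Zelevinsky relations. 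To salvage your induction you would need a triangularity statement for $\pi_{\cc,\mathfrak{t}}(\delta(T_0))$ or $U_0^{\pm1}$ on $x^{y'}$ (as in \eqref{claimU0}) \emph{plus} a controlled commutation of $Y^\mu$ with those elements — at which point you are essentially reproducing the paper's computation rather than shortcutting it.
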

\begin{proof}
It suffices to prove the proposition for $\mu\in Q^{\vee}\cap\overline{E}_+$.

If $\tau(\mu)=s_{j_1}\cdots s_{j_m}$ is a reduced expression and $b_1,\ldots,b_m$ are the associated positive affine roots in $\Pi(\tau(\mu))$ (see \eqref{rootsdescription}), then 
\[
\pi_{\cc,\mathfrak{t}}(Y^\mu)=\tau(\mu)_\mathfrak{t}G_{\mathfrak{t}}(b_1)\cdots G_{\mathfrak{t}}(b_m)
\]
as linear operators on $\Pc$ by the equivariance properties \eqref{eq1} and \eqref{eq2} of $G_{\mathfrak{t}}(a)$.
Furthermore, since $\mu\in Q^{\vee}\cap\overline{E}_+$ we have 
\begin{equation*}
\Pi(\tau(\mu))=
\bigsqcup_{\alpha\in\Phi^+_0} \{(\alpha,\ell)\,\, | \,\, 0\leq \ell<\alpha(\mu)\}.
\end{equation*}
Combined with Lemma \ref{triangularG} we get for $y\in\mathcal{O}_{\cc}$,
\begin{equation*}
\begin{split}
\pi_{\cc,\mathfrak{t}}(Y^\mu)x^y&=
\Bigl(
\prod_{a\in\Pi(\tau(\mu))}k_{Da}^{-\eta(Da(y))}\Bigr)\tau(\mu)_{\mathfrak{t}}x^y+\textup{ l.o.t.}\\
&=
\Bigl(\prod_{\alpha\in\Phi_0^+}k_\alpha^{-\eta(\alpha(y))\alpha(\mu)}\Bigr)\mathfrak{t}_y^{-\mu}x^y+\textup{l.o.t.}\\
&=(\mathfrak{s}_y\mathfrak{t}_y)^{-\mu}x^y+\textup{ l.o.t.}
\end{split}
\end{equation*}
in $\Pc$, as desired.
\end{proof}
Recall $k_w\in\mathcal{F}_\Sigma(E,\mathbf{F}^\times)$ ($w\in W$), defined by \eqref{kawy}. 
\begin{proposition}\label{triangularitymbasis}
For $\cc\in C^J$ and $w\in W^J$ we have
\[
\pi_{\cc,\mathfrak{t}}(\delta(T_{w^{-1}}))x^\cc=k_w(\cc)
x^{w\cc}+\textup{l.o.t.}
\]
\textup{(}here $w\cc$ is the action of $w$ on $\cc\in E$ by translations and reflections\textup{)}.
\end{proposition}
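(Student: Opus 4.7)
We induct on $\ell(w)$ for $w\in W^J$. The base case $w=e$ is immediate: $\delta(T_{e^{-1}})=1$, $k_e(\cc)=1$, and both sides equal $x^\cc$. For the inductive step, pick a leftmost simple reflection $s_{j_1}$ of some reduced expression of $w$, so $w=s_{j_1}w'$ with $\ell(w')=\ell(w)-1$; the element $w'$ lies in $W^J$, for otherwise Lemma \ref{cosetcomb}(2) would give $\ell(s_{j_1}w)=\ell(w)+1$, contradicting $\ell(w')=\ell(w)-1$. The anti-involution identity $\delta(T_{w^{-1}})=\delta(T_{j_1})\,\delta(T_{w'^{-1}})$, the inductive hypothesis $\pi_{\cc,\mathfrak{t}}(\delta(T_{w'^{-1}}))\,x^\cc=k_{w'}(\cc)\,x^{w'\cc}+R$ (with $R$ supported on exponents $<w'\cc$), and the cocycle $k_w(\cc)=k_{s_{j_1}}(w'\cc)\,k_{w'}(\cc)$ reduce the task to verifying:

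(A) $\pi_{\cc,\mathfrak{t}}(\delta(T_{j_1}))\,x^{w'\cc}=k_{s_{j_1}}(w'\cc)\,x^{w\cc}+(\text{exponents}<w\cc)$; and (B) for every $y''<w'\cc$ in $\mathcal{O}_\cc$, every exponent appearing in $\pi_{\cc,\mathfrak{t}}(\delta(T_{j_1}))\,x^{y''}$ is strictly below $w\cc$. By Corollary \ref{wyrules}(1), our assumption $s_{j_1}w'\in W^J$ with $\ell(s_{j_1}w')=\ell(w')+1$ forces $\alpha_{j_1}(w'\cc)>0$, so $w'\cc<s_{j_1}w'\cc=w\cc$. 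Claim (B) then follows by applying Corollary \ref{Bruhatproperty} to the chain $y''<w'\cc<s_{j_1}w'\cc$ to get $s_{j_1}y''<w\cc$, while the remaining exponents contributed by the truncated divided difference $\nabla_{j_1}(x^{y''})$ (of the form $y''\pm\ell\alpha_{j_1}^\vee$, depending on the sign of $\alpha_{j_1}(y'')$) are bounded below $\max(y'',s_{j_1}y'')$ by Lemma \ref{techlem}, with another appeal to Corollary \ref{Bruhatproperty} as needed.

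For Claim (A) with $j_1\in[1,r]$, $\delta(T_{j_1})=T_{j_1}$ and Theorem \ref{gbr}(1) applies directly. Writing $\nabla_{j_1}(x^y)=-\sum_{\ell=1}^{\lfloor\alpha_{j_1}(y)\rfloor}x^{y-\ell\alpha_{j_1}^\vee}$ (via $(1-z^{-n})/(1-z)=-\sum_{\ell=1}^{n}z^{-\ell}$) and isolating the coefficient of $x^{s_{j_1}y}$ produces $k_{j_1}^{-\eta(\alpha_{j_1}(y))}$: a cancellation of $k_{j_1}$ with $-(k_{j_1}-k_{j_1}^{-1})$ occurs precisely when $\alpha_{j_1}(y)\in\mathbb{Z}_{>0}$ (from the extremal term $\ell=\alpha_{j_1}(y)$), and this is exactly $k_{s_{j_1}}(y)$ by \eqref{kjcombi}. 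The remaining exponents $y-\ell\alpha_{j_1}^\vee$ are $<s_{j_1}y=w\cc$ by Lemma \ref{techlem}(2) when $\alpha_{j_1}(y)\in\mathbb{Z}$, and by Lemma \ref{techlem}(3) combined with Corollary \ref{Bruhatproperty} otherwise: the relation $s_\alpha y+\ell\alpha^\vee\prec_\alpha y$ together with $y<s_\alpha y$ yields $y-\ell\alpha^\vee=s_\alpha(s_\alpha y+\ell\alpha^\vee)<s_\alpha y$.

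For the subcase $j_1=0$, we substitute $\delta(T_0)=Y^{-\varphi^\vee}T_0x^{-\varphi^\vee}$ from \eqref{deltaUzero}, so
\[
\pi_{\cc,\mathfrak{t}}(\delta(T_0))\,x^y=\pi_{\cc,\mathfrak{t}}(Y^{-\varphi^\vee})\,\pi_{\cc,\mathfrak{t}}(T_0)\,x^{y-\varphi^\vee}.
\]
The $T_0$-action on $x^{y-\varphi^\vee}$ is computed from Theorem \ref{gbr}(1), using $s_{0,\mathfrak{t}}x^{y-\varphi^\vee}=\mathfrak{t}_{y-\varphi^\vee}^{\varphi^\vee}x^{s_\varphi y+\varphi^\vee}=\mathfrak{t}_{y-\varphi^\vee}^{\varphi^\vee}x^{s_0 y}$; the subsequent $\pi_{\cc,\mathfrak{t}}(Y^{-\varphi^\vee})$-action is then triangular by Proposition \ref{trianY}. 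The leading exponent lands at $s_0y=w\cc$, and the resulting leading coefficient collapses, via Corollary \ref{datumok}, Lemma \ref{fraks}, and the shift relation \eqref{integralshiftt}, to $k_{s_0}(y)=\prod_{\alpha\in\Pi(s_\varphi)}k_\alpha^{-\eta(\alpha(y))}$, in accordance with \eqref{kj}. The remaining lower-order exponents (from $\nabla_0$ and from the off-diagonal part of $Y^{-\varphi^\vee}$) are again bounded below $w\cc$ by Lemma \ref{techlem} and Corollary \ref{Bruhatproperty}. The main obstacle is precisely this scalar bookkeeping in the $j_1=0$ case: the coefficient $k_{s_0}(w'\cc)$ emerges from the interplay between the $k_0^{\chi_{\mathbb{Z}}}$-prefactor, the factor $\mathfrak{t}_{y-\varphi^\vee}^{\varphi^\vee}$ coming from $s_{0,\mathfrak{t}}$, and the factor $(\mathfrak{s}_{s_0y}\mathfrak{t}_{s_0y})^{\varphi^\vee}$ from the diagonal part of $\pi_{\cc,\mathfrak{t}}(Y^{-\varphi^\vee})$; reconciling these with the product \eqref{kj} requires a careful application of Lemma \ref{fraks} that is uniform in the integer/non-integer status of $\varphi(w'\cc)$.
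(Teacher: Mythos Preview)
Your approach matches the paper's: both reduce to the single-step claim
\[
\pi_{\cc,\mathfrak{t}}(\delta(T_j))x^y=k_{s_j}(y)x^{s_jy}+\textup{l.o.t.}\qquad(\alpha_j(y)>0),
\]
and both treat $j=0$ via the factorisation $\delta(T_0)=Y^{-\varphi^\vee}T_0x^{-\varphi^\vee}$ (equivalently $q_\varphi^{-1}Y^{-\varphi^\vee}U_0^{-1}$) together with Proposition~\ref{trianY}. Your explicit separation of Claim~(B) is a nice clarification of a step the paper leaves implicit.

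There is, however, a genuine gap in your handling of Claim~(B) when $j_1=0$. You write that the ``remaining exponents'' come from $\nabla_{j_1}(x^{y''})$, but for $j_1=0$ the operator $\pi_{\cc,\mathfrak{t}}(\delta(T_0))$ is \emph{not} of the shape ``reflection term plus $\nabla_0(x^{y''})$'': it is $\pi_{\cc,\mathfrak{t}}(Y^{-\varphi^\vee})\pi_{\cc,\mathfrak{t}}(T_0)x^{y''-\varphi^\vee}$, so the exponents arise from $\nabla_0(x^{y''-\varphi^\vee})$ followed by the (triangular but nontrivial) action of $Y^{-\varphi^\vee}$. To make (B) go through you need the statement that $\pi_{\cc,\mathfrak{t}}(\delta(T_0))x^{y''}$ is supported on $\{z:z\leq\max(y'',s_0y'')\}$ for \emph{all} $y''\in\mathcal{O}_\cc$, not only those with $\alpha_0(y'')>0$. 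The paper supplies exactly this by establishing the full-range formulas \eqref{claimm} and \eqref{claimU0} (the latter via a five-case analysis in $\varphi(y)$), from which the required support bound follows after combining with the Hecke relation and Proposition~\ref{trianY}. Your phrase ``with another appeal to Corollary~\ref{Bruhatproperty} as needed'' does not cover this; Corollary~\ref{Bruhatproperty} only bounds the reflected exponent $s_0y''$, not the intermediate exponents produced along the way.

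Your sketch of Claim~(A) for $j_1=0$ is correct in outline (and I checked that the coefficient indeed collapses to $\mathfrak{s}_y^{-\varphi^\vee}k_\varphi^{\eta(\varphi(y))}=k_{s_0}(y)$ in both the integer and non-integer cases of $\varphi(y)$), but you should be aware that the paper's careful case-by-case computation of $\pi_{\cc,\mathfrak{t}}(U_0^{-1})x^y$ is doing double duty: it both verifies (A) and provides the all-cases triangularity needed for (B).
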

\begin{proof}
Let $w=s_{j_1}\cdots s_{j_m}$ be a reduced expression for $w\in W^{J}$, so that
\[
\pi_{\cc,\mathfrak{t}}(\delta(T_{w^{-1}}))x^\cc=
\pi_{\cc,\mathfrak{t}}(\delta(T_{j_1}))\cdots\pi_{\cc,\mathfrak{t}}(\delta(T_{j_m}))x^\cc.
\]
Write $\Pi(w)=\{b_1,\ldots,b_m\}$ with $b_i:=s_{j_m}\cdots s_{j_{i+1}}\alpha_{j_i}$ ($1\leq i<m$) and $b_m:=\alpha_{j_m}$.
Then
\[
\alpha_{j_i}(s_{{j_{i+1}}}\cdots s_{j_{m}}\cc)=b_i(\cc)>0\qquad (i=1,\ldots,m)
\]
since $w\in W^{J}$. It thus suffices to show for $j=0,\ldots,r$ that
\begin{equation}\label{deltaTjcase}
\pi_{\cc,\mathfrak{t}}(\delta(T_j))x^y=k_{s_j}(y)x^{s_jy}+\textup{l.o.t.}\qquad
\hbox{ if }\,\,\, y\in\mathcal{O}_\cc\, \hbox{ and }\, \alpha_j(y)>0.
\end{equation}
 
For $y\in\mathcal{O}_\cc$
and $1\leq i\leq r$ we have
\begin{equation}\label{claimm}
\pi_{\cc,\mathfrak{t}}(T_i^{-1})x^y=
\begin{cases}
k_i^{-1}x^{s_iy}+\textup{ l.o.t.} \qquad &\hbox{ if }\,\, \alpha_i(y)\in
\mathbb{Z}_{\geq 0},\\
k_i^{\chi_{\mathbb{Z}}(\alpha_i(y))}x^{s_iy}+\textup{ l.o.t.}\qquad
&\hbox{ otherwise}.
\end{cases}
\end{equation}
This follows in a similar way as Lemma \ref{triangularG}, using as extra ingredients Proposition \ref{reflectionrelation} and the fact that
\begin{equation}\label{floorrel}
\lfloor z\rfloor + \lfloor -z\rfloor=
\begin{cases}
-1\qquad &\hbox{ if } z\in\mathbb{R}\setminus\mathbb{Z},\\
0\qquad &\hbox{ if }  z\in\mathbb{Z}
\end{cases}
\end{equation}
(below we give the similar, but more involved, proof of the variant \eqref{claimU0} of formula \eqref{claimm} for the simple reflection $s_0$).
As an easy consequence of Proposition \ref{reflectionrelation} and the Hecke relations \eqref{Hr} for $T_i$, we conclude that for $1\leq i\leq r$ and $y\in\mathcal{O}_\cc$ with $\alpha_i(y)>0$,
\begin{equation}\label{tipm}
\pi_{\cc,\mathfrak{t}}(T_i^{\pm 1})x^y=k_i^{-\eta(\alpha_i(y))}x^{s_iy}
+\textup{ l.o.t.}
\end{equation}
Then \eqref{deltaTjcase} for $j>0$ follows from \eqref{kjcombi}, \eqref{tipm} and the fact
that $\delta(T_i)=T_i$ for $1\leq i\leq r$.

For \eqref{deltaTjcase} with $j=0$, we first note that for $U_0=q_\varphi^{-1}x^{\varphi^\vee}T_0^{-1}\in\mathbb{H}$ (see \eqref{Uzero}),
\begin{equation}\label{claimU0pre}
\pi_{\cc,\mathfrak{t}}(U_0^{-1})x^y=k_\varphi^{\chi_{\mathbb{Z}}(\alpha_0(y))}\mathfrak{t}_y^{\varphi^\vee}
x^{s_{\varphi}y-\alpha_0^\vee}+
(k_\varphi-k_\varphi^{-1})\left(\frac{1-x^{-\lfloor \alpha_0(y)+1\rfloor \alpha_0^\vee}}{1-x^{\alpha_0^\vee}}
\right)x^{y+\alpha_0^\vee}
\end{equation}
for $y\in\mathcal{O}_{\cc}$.
Here we used that $\mathfrak{t}_{y-\varphi^\vee}^{\varphi^\vee}=q_\varphi^{-2}\mathfrak{t}_y^{\varphi^\vee}$ (see Definition \ref{defty}), and the fact that
$q_\varphi x^{-\varphi^\vee}=x^{\alpha_0^\vee}$.
We will now first show that for $y\in\mathcal{O}_{\cc}$,
\begin{equation}\label{claimU0}
\pi_{\cc,\mathfrak{t}}(U_0^{-1})x^y=
\begin{cases}
k_\varphi^{-1}q_\varphi^{-\alpha_0(y)}x^{s_0y}+\textup{ l.o.t.} \qquad &\hbox{ if }  \varphi(y)\in\mathbb{Z}_{\leq 1},\\
k_\varphi^{\chi_{\mathbb{Z}}(\varphi(y))}\mathfrak{t}_y^{\varphi^\vee}q_\varphi^{-1}x^{s_{0}y}+
\textup{ l.o.t.}\qquad
&\hbox{ otherwise}
\end{cases}
\end{equation}
(note here that $x^{s_0y}=x^{\tau(\varphi^\vee)s_\varphi y}=x^{s_\varphi y+\varphi^\vee}=q_\varphi x^{s_\varphi y-\alpha_0^\vee}$).
If $\varphi(y)\in\mathbb{Z}_{\leq 1}$ then $\alpha_0(y)\geq 0$ and it follows from \eqref{claimU0pre}
and Lemma \ref{datumokcor} that
\begin{equation*}
\begin{split}
\pi_{\cc,\mathfrak{t}}(U_0^{-1})x^y&=
k_{\varphi}x^{y-\alpha_0(y)\alpha_0^\vee}+(k_\varphi^{-1}-k_\varphi)\bigl(
1+x^{\alpha_0^\vee}+\cdots+x^{\alpha_0(y)\alpha_0^\vee}\bigr)x^{y-\alpha_0(y)\alpha_0^\vee}\\
&=k_\varphi^{-1}x^{y-\alpha_0(y)\alpha_0^\vee}+(k_\varphi^{-1}-k_\varphi)\bigl(1+x^{-\alpha_0^\vee}+\cdots+
x^{(1-\alpha_0(y))\alpha_0^\vee}\bigr)x^y,
\end{split}
\end{equation*}
with the second equation interpreted as $k_\varphi^{-1}x^{y}$ when $\alpha_0(y)=0$. 

When $\varphi(y)=1$, i.e., $\alpha_0(y)=0$, it is now clear that \eqref{claimU0}
is correct. If $\varphi(y)\in\mathbb{Z}_{<1}$ then $\alpha_0(y)\in\mathbb{Z}_{>0}$, and hence $y<s_0y$ by Proposition \ref{reflectionrelation}. In particular, for
$\varphi(y)=0$ we get $\pi_{\cc,\mathfrak{t}}(U_0^{-1})x^y=k_\varphi^{-1}x^{y-\alpha_0(y)\alpha_0^\vee}+
(k_\varphi^{-1}-k_\varphi)x^y=k_\varphi^{-1}q_\varphi^{-\alpha_0(y)}x^{s_0y}+\textup{l.o.t.}$, as desired. If $\varphi(y)\in\mathbb{Z}_{<0}$ then Lemma \ref{techlem}(1) and Proposition \ref{equivalenceorder2} give $y+\ell\varphi^\vee<y$ for $1\leq\ell\leq -\varphi(y)$. In addition $y<s_0y$ by Proposition \ref{reflectionrelation}, and so
 \eqref{claimU0} is correct. Suppose that $\varphi(y)\in\mathbb{R}_{<1}\setminus\mathbb{Z}_{<1}$. Then \eqref{claimU0pre} gives
 \[
 \pi_{\cc,\mathfrak{t}}(U_0^{-1})x^y=\mathfrak{t}_y^{\varphi^\vee}
 x^{s_{\varphi}y-\alpha_0^\vee}+(k_\varphi^{-1}-k_\varphi)\bigl(1+x^{-\alpha_0^\vee}+\cdots
 +x^{-\lfloor\alpha_0(y)\rfloor\alpha_0^\vee}\bigr)x^y.
 \]
We have $\varphi(s_0y)=\alpha_0(y)+1=2-\varphi(y)\in\mathbb{R}_{>1}\setminus\mathbb{Z}_{>1}$, and hence
$y+\ell\varphi^\vee<s_0y$ for $0\leq \ell\leq -\lfloor \varphi(y)\rfloor$ by Lemma \ref{techlem}(3) and Proposition \ref{equivalenceorder2}. Furthermore, $\lfloor \alpha_0(y)\rfloor=-\lfloor\varphi(y)\rfloor$
by \eqref{floorrel}, so \eqref{claimU0} is correct. If $1<\varphi(y)\leq 2$  then $\lfloor \alpha_0(y)+1\rfloor=0$, and
\eqref{claimU0} immediately follows from \eqref{claimU0pre}.
Finally we consider the case $\varphi(y)\in\mathbb{R}_{>2}$. Then $1+\alpha_0(y)\in\mathbb{R}_{<0}$, and so \eqref{claimU0pre} gives
\[
\pi_{\cc,\mathfrak{t}}(U_0^{-1})x^y=
k_\varphi^{\chi_{\mathbb{Z}}(\alpha_0(y))}\mathfrak{t}_y^{\varphi^\vee}x^{s_{\varphi}y-\alpha_0^\vee}
+(k_\varphi-k_\varphi^{-1})\bigl(x^{\alpha_0^\vee}+x^{2\alpha_0^\vee}+
\cdots+x^{(-2-\lfloor-\varphi(y)\rfloor)\alpha_0^\vee}\bigr)x^y.
\]
Now $\varphi(s_0y)\in\mathbb{R}_{<0}$ so Lemma \ref{techlem}(1) and Proposition \ref{equivalenceorder2} give
$y-\ell\varphi^\vee<s_0y$ for $1\leq\ell\leq -2-\lfloor-\varphi(y)\rfloor$. 
This completes the proof of \eqref{claimU0}.

Fix $y\in\mathcal{O}_\cc$ with $\alpha_0(y)>0$. Formula \eqref{claimU0}, the Hecke relation for $U_0$, 
Lemma \ref{datumokcor} and Proposition \ref{reflectionrelation} imply that
\begin{equation}\label{t0pm}
\pi_{\cc,\mathfrak{t}}(U_0^{\pm 1})x^y=k_\varphi^{-\eta(\alpha_0(y))}\mathfrak{t}_y^{\varphi^\vee}q_\varphi^{-1}x^{s_0y}
+\textup{ l.o.t.}
\end{equation}
By the formula $\delta(T_0)=q_\varphi^{-1}Y^{-\varphi^\vee}U_0^{-1}$, \eqref{t0pm} and
Proposition \ref{trianY}, we conclude that 
\[
\pi_{\cc,\mathfrak{t}}(\delta(T_0))x^y=k_\varphi^{-\eta(\alpha_0(y))}
(\mathfrak{s}_{s_0y}\mathfrak{t}_{s_0y})^{\varphi^\vee}\mathfrak{t}_y^{\varphi^\vee}q_\varphi^{-2}x^{s_0y}+\textup{ l.o.t.}
\]
But $\alpha_0(y)>0$ implies $s_0y\not=y$, hence Lemma \ref{fraks}(2) and Corollary \ref{datumok} give
\[
(\mathfrak{s}_{s_0y}\mathfrak{t}_{s_0y})^{\varphi^\vee}=(s_\varphi\mathfrak{s}_y)^{\varphi^\vee}(s_0\mathfrak{t}_{y})^{\varphi^\vee}
=q_\varphi^2\mathfrak{s}_y^{-\varphi^\vee}\mathfrak{t}_y^{-\varphi^\vee}.
\]
So we obtain 
\[
\pi_{\cc,\mathfrak{t}}(\delta(T_0))x^y=\mathfrak{s}_y^{-\varphi^\vee}k_\varphi^{\eta(\varphi(y))}
x^{s_0y}+\textup{ l.o.t.},
\]
where we have also used \eqref{etaaffine} to rewrite the $k_\varphi$-factor. Now observe that
\[
\mathfrak{s}_y^{-\varphi^\vee}=k_\varphi^{-\eta(\varphi(y))}
\prod_{\alpha\in\Pi(s_\varphi)}k_\alpha^{-\eta(\alpha(y))}
\]
by Definition \ref{fraksdef} and \eqref{thetalength}. We conclude that
\[
\pi_{\cc,\mathfrak{t}}(\delta(T_0))x^y=\Bigl(\prod_{\alpha\in\Pi(s_\varphi)}k_\alpha^{-\eta(\alpha(y))}
\Bigr)x^{s_0y}+\textup{l.o.t.}
\]
The proof of  \eqref{deltaTjcase} for $j=0$ now follows from \eqref{kj}.
\end{proof}
\subsection{Completion of the proof}\label{POsection3}
Fix $\cc\in C^J$ and $\mathfrak{t}\in T_{J}$. To complete the proof of Theorem \ref{gbr} it remains to show that the epimorphism
$\phi_{\cc,\mathfrak{t}}: \mathbb{M}^{J}_{\mathfrak{s}_\cc\mathfrak{t}}\twoheadrightarrow \Pc_{\mathfrak{t}}$ of $\mathbb{H}$-modules, defined in Corollary \ref{factorcorollary}, is an isomorphism.

Note that $\phi_{\cc,\mathfrak{t}}$ maps the standard basis element $m_{w,\mathfrak{s}_\cc\mathfrak{t}}^{J}=\delta(T_{w^{-1}})
m^{J}_{\mathfrak{s}_\cc\mathfrak{t}}$ 
($w\in W^{J}$) of $\mathbb{M}^{J}_{\mathfrak{s}_\cc\mathfrak{t}}$ to
\[
\pi_{\cc,\mathfrak{t}}(\delta(T_{w^{-1}}))x^\cc=k_w(\cc)x^{w\cc}+\textup{l.o.t.}
\]
It follows that $\{\phi_{\cc,\mathfrak{t}}(m_{w,\mathfrak{s}_\cc\mathfrak{t}}^{J}\bigr)\,\, | \,\, w\in W^{J}\}$ is linear independent in 
$\Pc$, hence indeed $\phi_{\cc,\mathfrak{t}}$ is an isomorphism of $\mathbb{H}$-modules.

\section{Quasi-polynomial generalisations of Macdonald polynomials}
\label{QuasiSection}
In this section $J$ will be a proper subset of $[0,r]$. We show that for $\cc\in C^J$, the commuting operators $\pi_{\cc,\mathfrak{t}}(Y^\mu)$ ($\mu\in Q^\vee$) are simultaneously diagonalisable under suitable restrictions on the character $\mathfrak{t}\in T_J$. We study their simultaneous eigenfunctions, which are quasi-polynomial generalisations of the Macdonald polynomials.

\subsection{Simple spectrum conditions}\label{genSection}
Let $c\in C^J$. We will construct in this section simultaneous eigenfunctions for the commuting operators $\pi_{\cc,\mathfrak{t}}(Y^\mu)$ ($\mu\in Q^\vee$) when the character $\mathfrak{t}$ lies in the following subset of $T_J$.
\begin{definition}\label{simplespec}
Denote by $T_J^\prime\subseteq T_J$ the set of elements $\mathfrak{t}\in T_J$ for which the map
\begin{equation}\label{spectralmap}
W^J\rightarrow T,\qquad w\mapsto w(\mathfrak{s}_J\mathfrak{t})
\end{equation}
is injective. 
\end{definition}
If $\cc\in C^J$ then $W^J=\{w_y\}_{y\in\mathcal{O}_\cc}$, and the condition can be rephrased as the requirement that
\[
\mathcal{O}_\cc\rightarrow T,\qquad y\mapsto \mathfrak{s}_y\mathfrak{t}_y=\mathfrak{t}_y\prod_{\alpha\in\Phi_0^+}k_\alpha^{\eta(\alpha(y))\alpha}
\]
is injective, where $\mathfrak{t}_y=w_y\mathfrak{t}$ (see Corollary \ref{datumok}). We will now show that for $J=I\subseteq [1,r]$ we have $T_I^\prime\not=\emptyset$ under generic conditions on $k_a$, and that for $0\in J\subsetneq [0,r]$ we have $T_J^\prime\not=\emptyset$ if we in addition assume that $\mathbf{F}$ contains a $(2h)^{th}$ root of $q$.

In the classical context of Cherednik's basic representation we have $J=[1,r]$, $T_{[1,r]}=\{1_T\}$, $C^{[1,r]}=\{0\}$ and $\mathcal{O}_0=Q^\vee$.
The requirement that $1_T\in T_{[1,r]}^\prime$ then amounts to generic conditions on $k_a$, since $q$ is not a root of unity and 
\[
\mathfrak{s}_\mu(1_T)_\mu=q^\mu\prod_{\alpha\in\Phi_0^+}k_\alpha^{\eta(\alpha(\mu))\alpha}\qquad (\mu\in Q^\vee)
\]
\textup{(}here we use that $(1_T)_\mu=q^\mu$, in view of \eqref{integralshiftt}\textup{)}. 

If $J=I\subseteq [1,r]$ then $W_I$ is already a parabolic subgroup of $W_0$, and
we denote by $W_0^I$ the resulting minimal coset representatives of $W_0/W_I$. For $\cc\in C^I$ each $y\in\mathcal{O}_\cc$ can be uniquely written as
$y=\mu+v\cc$ with $\mu\in Q^\vee$ and $v\in W_0^I$, and 
\[
\mathfrak{s}_y\mathfrak{t}_{y}=q^\mu(v\mathfrak{t})\prod_{\alpha\in\Phi_0^+}k_\alpha^{\eta(\alpha(\mu+v\cc))\alpha}.
\]
For generic $k_a$ the maps $Q^\vee\rightarrow T$, $\mu\mapsto q^\mu\prod_{\alpha\in\Phi_0^+}k_\alpha^{\eta(\alpha(\mu+v\cc))\alpha}$  ($v\in W_0^I$) are injective, and
the requirement $\mathfrak{t}\in T_I^\prime$ results in generic conditions on $\mathfrak{t}\in T_I$.

Finally, consider the general case $J\subsetneq [0,r]$ when
$\mathbf{F}$ contains a $(2h)^{th}$ root $q^{\frac{1}{2h}}$ of $q$. Recall that this assumption on $q$ ensures that $T_J\not=\emptyset$ by Lemma \ref{nonzeroLem}. In fact, one can construct elements
in $T_J$ as follows. For $y\in \frac{1}{2h}P^\vee$ let $q^{y}\in T$ be the character which takes the value
\[
q^{\langle y,\alpha^\vee\rangle}=q_\alpha^{\alpha(y)}
\]
at the coroot $\alpha^\vee\in\Phi_0^\vee$. Using the table \cite[Table 2, Chpt. 3]{Hu0} containing the explicit values of the expansion coefficients $n_i(\varphi)$ of $\varphi$ as positive integral linear combination of simple roots, as well as the corresponding explicit values $h=1+\sum_{i=1}^rn_i(\varphi)$ for the Coxeter number, it follows by a straightforward case-by-case check that
$C^J\cap\frac{1}{2h}P^\vee\not=\emptyset$
(cf. the proof Lemma \ref{nonzeroLem}).
Then for 
\begin{equation}\label{lambdaJ}
\lambda_J\in C^J\cap\tfrac{1}{2h}P^\vee
\end{equation}
we have $q^{\lambda_J}\in T_J$
and $w_yq^{\lambda_J}=q^y$ ($y\in\mathcal{O}_{\lambda_J}$). Since the values of $\mathfrak{s}_y=v_y^{-1}\mathfrak{s}_{J}$ ($y\in\mathcal{O}_{\lambda_J}$) are monomials
in ${}^{\textup{sh}}k$ and ${}^{\textup{lg}}k$ (with possibly negative exponents), we have $q^{\lambda_J}\in T_J^\prime$ for generic $k_a$.
\begin{remark}\label{parametersREM}
Let $P\subseteq E^*$ be the weight lattice of $\Phi_0$. Initially it is actually a full lattice in $(E^\prime)^*$, which we view as a sublattice of $E^*$ by declaring the weights to be zero on $E_{\textup{co}}$.
Let $\{\varpi_i\}_{i=1}^r\subset E^*$ be the fundamental weights with respect to $\Delta_0$. Write $I^{\textup{co}}:=[1,r]\setminus I$ for a subset $I\subseteq [1,r]$. 
For a subset $J\subsetneq [0,r]$ and 
$\mathbf{z}:=(z_i)_{i\in J_0^{\textup{co}}}\in (\mathbf{F}^\times)^{\#J_0^{\textup{co}}}$ set
\[
\mathfrak{t}(\mathbf{z}):=\prod_{j\in J_0^{\textup{co}}}z_j^{\varpi_j}.
\]
Then
\begin{equation*}
T_J^{\textup{red}}=
\begin{cases}
\{\,\mathfrak{t}(\mathbf{z})\,\, | \,\, \mathbf{z}\in\mathbf{F}^{\#J_0^{\textup{co}}}\,\}\quad &\hbox{ if }\,\, J_0=J,\\
\{\,\mathfrak{t}(\mathbf{z})\,\, | \,\, \mathbf{z}\in\mathbf{F}^{\#J_0^{\textup{co}}}\,\, \&\,\, \prod_{i\in J_0^{\textup{co}}}z_i^{m_i}=1\,\}\quad 
&\hbox{ if }\,\, J_0\not=J,
\end{cases}
\end{equation*}
where 
$\varphi^\vee=\sum_{i=1}^rm_i\alpha_i^\vee$ ($m_i\in\mathbb{Z}_{>0}$) is the expansion of $\varphi^\vee\in\Phi_0^\vee$ in simple coroots.
\end{remark}

\subsection{The monic $Y$-eigenbasis of $\Pc_{\mathfrak{t}}$}\label{monicSection}

For $\cc\in C^J$ and $\mathfrak{t}\in T_J$ we have encountered so far two natural bases of $\Pc_{\mathfrak{t}}$, 
the basis $\{x^y\}_{y\in\mathcal{O}_\cc}$ of quasi-monomials and the basis $\{m_y^{J}(x;\mathfrak{t})\}_{y\in\mathcal{O}_\cc}$ with
\begin{equation}\label{mwP}
m_y^{J}(x;\mathfrak{t}):=\pi_{\cc,\mathfrak{t}}\bigl(\delta(T_{w_y^{-1}})\bigr)x^\cc\qquad (y\in\mathcal{O}_\cc).
\end{equation}
The latter basis corresponds to the basis $\{m_{w;\mathfrak{s}_J\mathfrak{t}}^J\}_{w\in W^J}$ of $\mathbb{M}_{\mathfrak{s}_J\mathfrak{t}}^J$ through the isomorphism $\phi_{\cc,\mathfrak{t}}$ from Theorem \ref{gbr}(2). The change of basis matrix between these two bases is triangular with respect to the partially ordered set $(\mathcal{O}_\cc,\leq)$, 
\begin{equation}\label{trmx}
m_y^{J}(x;\mathfrak{t})=k_{w_y}(\cc)x^{y}+\sum_{y^\prime<y}d_{y,y^\prime;\mathfrak{t}}^{J}x^{y^\prime}\qquad (y\in\mathcal{O}_\cc)
\end{equation}
($d_{y,y^\prime;\mathfrak{t}}^{J}\in\mathbf{F}$) by Proposition \ref{triangularitymbasis}. Note that for $u,w\in W^J$ satisfying $u<_Bw$,
the coefficient 
\[
d_{w,u;\mathfrak{t}}^J:=d_{w\cc,u\cc;\mathfrak{t}}^{J}
\]
of $x^{u\cc}$ in the expansion \eqref{trmx} of $m_{w\cc}^{J}(x;\mathfrak{t})$ does not depend on $\cc\in C^J$ by Corollary \ref{expansionHcor}.
We now construct for $\mathfrak{t}\in T_J^\prime$ a third basis of $\Pc_{\mathfrak{t}}$ consisting of simultaneous eigenfunctions of $\pi_{\cc,\mathfrak{t}}(Y^\mu)$ ($\mu\in Q^\vee$), which is always triangular with respect to the basis of quasi-monomials.

We start with introducing some general terminology. For a left $\mathbb{H}$-module $M$ and $s\in T$ write
\[
M[s]:=\{m\in M \,\,\, | \,\,\, p(Y)m=p(s^{-1})m\quad \forall\, p\in\mathcal{P} \}.
\]
We call $s\in T$ a {\it $Y$-weight} of $M$ if $M[s]\not=0$. A nonzero vector in $M[s]$ is called a 
{\it $Y$-weight vector} of weight $s$. The set of all $Y$-weights of $M$ is called the {\it $Y$-spectrum} of $M$, and will be denoted by $\mathcal{S}(M)$. Following Cherednik \cite[\S 3.6]{Ch} we say that $M$ is {\it $Y$-semisimple} if 
\[
M=\bigoplus_{s\in\mathcal{S}(M)}M[s].
\]
The $Y$-spectrum of a $Y$-semisimple $\mathbb{H}$-module $M$ is called {\it simple} if $\dim_{\mathbf{F}}(M[s])=1$ for all $s\in\mathcal{S}(M)$. A basis of $M$ consisting of $Y$-weight vectors is called a {\it $Y$-eigenbasis} of $M$.
\begin{theorem}\label{Edef}
Let $\cc\in C^J$ and $\mathfrak{t}\in T_J^\prime$.
\begin{enumerate}
\item For $y\in\mathcal{O}_\cc$ there exists a unique simultaneous eigenfunction of the commuting operators $\pi_{\cc,\mathfrak{t}}(Y^\mu)\in\textup{End}(\mathcal{P}^{(\cc)})$ \textup{(}$\mu\in Q^\vee$\textup{)}
of the form
\[
E_y^J(x;\mathfrak{t})=x^y+\sum_{y^\prime<y}e_{y,y^\prime;\mathfrak{t}}^{J}x^{y^\prime}\qquad (e_{y,y^\prime;\mathfrak{t}}^{J}\in\mathbf{F}).
\]
\item $E_y^J(x;\mathfrak{t})\in\mathcal{P}^{(\cc)}_{\mathfrak{t}}[\mathfrak{s}_y\mathfrak{t}_y]$ for $y\in\mathcal{O}_\cc$.
\item $\{E_y^J(x;\mathfrak{t})\}_{y\in\mathcal{O}_\cc}$ is a $Y$-eigenbasis of $\Pc_{\mathfrak{t}}$. In particular, $\Pc_{\mathfrak{t}}\simeq\mathbb{M}_{\mathfrak{s}_\cc\mathfrak{t}}^J$ is $Y$-semisimple, with simple $Y$-spectrum
$\{\mathfrak{s}_y\mathfrak{t}_y\}_{y\in\mathcal{O}_\cc}$.
\item For $u,w\in W^J$ such that $u<_Bw$, the coefficient
\[
e_{w,u;\mathfrak{t}}^J:=e_{w\cc,u\cc;\mathfrak{t}}^{J}
\]
of $x^{u\cc}$ in the monomial expansion of $E_{w\cc}^J(x;\mathfrak{t})$ does not depend on $\cc\in C^J$.
\end{enumerate}
\end{theorem}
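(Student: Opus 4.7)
The plan is to derive all four parts simultaneously from the triangularity statement in Proposition \ref{trianY} combined with the genericity assumption $\mathfrak{t}\in T_J^\prime$. For each $y\in\mathcal{O}_\cc$ set
\[
V_y:=\bigoplus_{z\leq y}\mathbf{F}x^z\subseteq\mathcal{P}^{(\cc)},
\]
which is finite-dimensional by Remark \ref{orderfinite}. Proposition \ref{trianY} says that in the basis $\{x^z\}_{z\leq y}$ of $V_y$, each operator $\pi_{\cc,\mathfrak{t}}(Y^\mu)$ is upper-triangular with diagonal entries $(\mathfrak{s}_z\mathfrak{t}_z)^{-\mu}$; in particular $V_y$ is stable under all $\pi_{\cc,\mathfrak{t}}(Y^\mu)$, $\mu\in Q^\vee$.

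Next I construct $E_y^J(x;\mathfrak{t})$ by a standard projector argument. Since $\mathfrak{t}\in T_J^\prime$, the characters $\mathfrak{s}_z\mathfrak{t}_z=w_z(\mathfrak{s}_J\mathfrak{t})$ are pairwise distinct in $T$ as $z$ varies over $\mathcal{O}_\cc$ (using Corollary \ref{datumok}), so there exists $\mu_0\in Q^\vee$ with $(\mathfrak{s}_y\mathfrak{t}_y)^{-\mu_0}\neq(\mathfrak{s}_z\mathfrak{t}_z)^{-\mu_0}$ for every $z\leq y$, $z\neq y$. Define
\[
E_y^J(x;\mathfrak{t}):=\Bigl(\prod_{z<y}\frac{\pi_{\cc,\mathfrak{t}}(Y^{\mu_0})-(\mathfrak{s}_z\mathfrak{t}_z)^{-\mu_0}}{(\mathfrak{s}_y\mathfrak{t}_y)^{-\mu_0}-(\mathfrak{s}_z\mathfrak{t}_z)^{-\mu_0}}\Bigr)\,x^y\in V_y.
\]
Each factor acts upper-triangularly on $V_y$ with diagonal entry $1$ at the position $x^y$, so the product is upper-triangular with the same property; hence $E_y^J(x;\mathfrak{t})=x^y+\text{l.o.t.}$ Moreover $E_y^J(x;\mathfrak{t})$ lies in the $(\mathfrak{s}_y\mathfrak{t}_y)^{-\mu_0}$-eigenspace of $\pi_{\cc,\mathfrak{t}}(Y^{\mu_0})\vert_{V_y}$, which is one-dimensional by simplicity of the spectrum of this triangular operator. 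Because the $\pi_{\cc,\mathfrak{t}}(Y^\mu)$ all commute and preserve this eigenline, they must act on it by scalars, necessarily equal to the diagonal entry $(\mathfrak{s}_y\mathfrak{t}_y)^{-\mu}$; this proves (2). Uniqueness in (1) follows by the same simplicity argument: any two candidates of the form $x^y+\text{l.o.t.}$ with the prescribed eigenvalues have a difference lying in $V_y\cap\bigl(\bigoplus_{y^\prime<y}\mathbf{F}x^{y^\prime}\bigr)\cap\ker\bigl(\pi_{\cc,\mathfrak{t}}(Y^{\mu_0})-(\mathfrak{s}_y\mathfrak{t}_y)^{-\mu_0}\bigr)$, which is zero by genericity.

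For part (3), the transition matrix from $\{x^y\}_{y\in\mathcal{O}_\cc}$ to $\{E_y^J(x;\mathfrak{t})\}_{y\in\mathcal{O}_\cc}$ is unitriangular (relative to any total order refining $\leq$), so the latter is a basis of $\mathcal{P}^{(\cc)}_{\mathfrak{t}}$; its weights $\mathfrak{s}_y\mathfrak{t}_y$ are pairwise distinct, giving the simplicity of the $Y$-spectrum. Combined with Theorem \ref{gbr}(2), the isomorphism $\mathcal{P}^{(\cc)}_{\mathfrak{t}}\simeq\mathbb{M}^J_{\mathfrak{s}_J\mathfrak{t}}$ then transfers $Y$-semisimplicity to $\mathbb{M}^J_{\mathfrak{s}_J\mathfrak{t}}$.

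Finally, for part (4) I appeal to Corollary \ref{expansionHcor} applied to $h=Y^{\mu_0}$: the matrix entries of $\pi_{\cc,\mathfrak{t}}(Y^{\mu_0})$ in the basis $\{x^{u\cc}\}_{u\in W^J}$ are independent of $\cc\in C^J$, and the eigenvalue $(\mathfrak{s}_{w\cc}\mathfrak{t}_{w\cc})^{-\mu_0}=w(\mathfrak{s}_J\mathfrak{t})^{-\mu_0}$ depends only on $w$. Writing $E_{w\cc}^J(x;\mathfrak{t})=x^{w\cc}+\sum_{u<_Bw}e_{w,u;\mathfrak{t}}^Jx^{u\cc}$ and expanding the eigenvalue equation $\pi_{\cc,\mathfrak{t}}(Y^{\mu_0})E_{w\cc}^J(x;\mathfrak{t})=w(\mathfrak{s}_J\mathfrak{t})^{-\mu_0}E_{w\cc}^J(x;\mathfrak{t})$ in the quasi-monomial basis yields a triangular linear system for the $e_{w,u;\mathfrak{t}}^J$ whose coefficients and right-hand side are independent of $\cc$; the system is uniquely solvable by the same genericity, so its solution $e_{w,u;\mathfrak{t}}^J$ is independent of $\cc$. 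The main (small) obstacle throughout is bookkeeping of the triangular structure against the parabolic Bruhat order, but Remark \ref{orderfinite} ensures all relevant subspaces are finite-dimensional and everything reduces to elementary linear algebra.
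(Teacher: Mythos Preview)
Your overall strategy is exactly what the paper does: invoke Proposition~\ref{trianY} for the triangularity, then use the distinctness of the diagonal characters (the hypothesis $\mathfrak{t}\in T_J^\prime$) to diagonalise; part~(4) comes from the $\cc$-independence of the matrix entries (Corollary~\ref{expansionHcor}, which is just the isomorphism~\eqref{isomface}). So the approach is right.

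There is one genuine gap, however. You assert that a single $\mu_0\in Q^\vee$ can be chosen with $(\mathfrak{s}_y\mathfrak{t}_y)^{-\mu_0}\neq(\mathfrak{s}_z\mathfrak{t}_z)^{-\mu_0}$ for every $z<y$. This amounts to saying that $Q^\vee$ is not the union of the finitely many proper subgroups $\ker\bigl(\mu\mapsto(\mathfrak{s}_y\mathfrak{t}_y/\mathfrak{s}_z\mathfrak{t}_z)^{\mu}\bigr)$. But a free abelian group \emph{can} be a finite union of proper subgroups (for instance $\mathbb{Z}^2$ is the union of its three index-$2$ subgroups), and nothing in the hypothesis $\mathfrak{t}\in T_J^\prime$ prevents some of these ratio characters from being torsion, giving finite-index kernels. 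So the existence of $\mu_0$ is not justified, and both your projector construction in~(1) and your linear-system argument in~(4) rest on it. The fix is painless: replace the single monomial $Y^{\mu_0}$ by $p(Y)$ for a suitable $p\in\mathcal{P}$. Since the finitely many points $(\mathfrak{s}_z\mathfrak{t}_z)^{-1}\in T$ ($z\leq y$) are distinct, the evaluation map $\mathcal{P}\to\mathbf{F}^{\{z\leq y\}}$ is surjective (Artin independence of characters), so one can choose $p$ with the values $p\bigl((\mathfrak{s}_z\mathfrak{t}_z)^{-1}\bigr)$ pairwise distinct. Then $\pi_{\cc,\mathfrak{t}}(p(Y))\vert_{V_y}$ is upper-triangular with simple spectrum, and your Lagrange-interpolation projector and the uniqueness/independence arguments go through verbatim with $p(Y)$ in place of $Y^{\mu_0}$. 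For part~(4) one may alternatively bypass the linear system entirely: the $\mathbb{H}$-isomorphism~\eqref{isomface} carries $x^{w\cc}\mapsto x^{w\cc^\prime}$ and intertwines the $Y$-action, hence by the uniqueness in~(1) it must send $E_{w\cc}^J(x;\mathfrak{t})$ to $E_{w\cc^\prime}^J(x;\mathfrak{t})$, which gives the $\cc$-independence of the coefficients directly.
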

\begin{proof}
Parts (1)\&(2) are due to Proposition \ref{trianY}, part (3) is a direct consequence of (1),(2) and Theorem \ref{gbr}(2), and part (4) follows from \eqref{isomface}.
\end{proof} 
\begin{remark}\label{trRemark}\hfill
\begin{enumerate}
\item
If $1_T\in T_{[1,r]}^\prime$ and $\mu\in\mathcal{O}_0=Q^\vee$ then
\[
E_\mu^{[1,r]}(x;1_T)\in\mathcal{P}^{(0)}_{1_T}
\]
is the monic non-symmetric Macdonald polynomial of degree $\mu$ (see, e.g., \cite{Ch,Ma}).  
\item For $\cc\in C^J$, $\mathfrak{t}\in T_J$ and $y\in\mathcal{O}_\cc$ we have by \eqref{trmx} and Theorem \ref{Edef}(1),
\[
E_y^J(x;\mathfrak{t})=k_{w_y}(\cc)^{-1}m_y^{J}(x;\mathfrak{t})+ \cdots
\]
with $\cdots$ meaning a linear combination of the $m_{y^\prime}^{J}(x;\mathfrak{t})$ with $y^\prime<y$.
\end{enumerate}
\end{remark}
The following corollary is immediate.
\begin{corollary}
Let $\cc\in C^J$ and $\mathfrak{t}\in T_J^\prime$. 
\begin{enumerate}
\item
The transition matrices between the three bases $\{x^y\}_{y\in\mathcal{O}_\cc}$, $\{m_y^{J}(x;\mathfrak{t})\}_{y\in\mathcal{O}_\cc}$ and 
$\{E_y^{J}(x;\mathfrak{t})\}_{y\in\mathcal{O}_\cc}$ of $\Pc_{\mathfrak{t}}$ are triangular with respect to $\leq$.
\item
Identifying $(\mathcal{O}_\cc,\leq)$ with $(W^J,\leq_B)$ as partially ordered sets by the map
\[
W^J\overset{\sim}{\longrightarrow}\mathcal{O}_\cc,\qquad w\mapsto w\cc,
\]
the transition matrices $A=(a_{u,w})_{u,w\in W^J}$ between the bases in (1) do not depend on $\cc\in C^J$.
\end{enumerate}
\end{corollary}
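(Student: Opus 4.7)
Part (1) is essentially a direct assembly of what has already been proved. The transition matrix from the quasi-monomial basis $\{x^y\}_{y\in\mathcal{O}_\cc}$ to the basis $\{m_y^J(x;\mathfrak{t})\}_{y\in\mathcal{O}_\cc}$ is triangular with respect to $\leq$ by formula \eqref{trmx} (i.e. Proposition \ref{triangularitymbasis}), with diagonal entries $k_{w_y}(\cc)\in\mathbf{F}^\times$. The transition matrix from $\{x^y\}$ to the monic $Y$-eigenbasis $\{E_y^J(x;\mathfrak{t})\}$ is triangular by the very definition of $E_y^J(x;\mathfrak{t})$ in Theorem \ref{Edef}(1), with diagonal entries equal to $1$. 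The remaining transition matrix, from $\{m_y^J(x;\mathfrak{t})\}$ to $\{E_y^J(x;\mathfrak{t})\}$, is the composition of one of the above with the inverse of the other; since both factors are triangular with invertible diagonal, so is their product. This verifies (1) and moreover shows that all diagonal entries are nonzero, so all six transition matrices are well-defined and triangular.

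For part (2) I would verify independence of $\cc\in C^J$ one matrix at a time, using the identifications $y=w\cc$ and $y'=u\cc$ ($u,w\in W^J$). The matrix $\{x^y\}\to\{m_y^J(x;\mathfrak{t})\}$ has entries $d_{w,u;\mathfrak{t}}^J$ \emph{off} the diagonal, and these are $\cc$-independent by Corollary \ref{expansionHcor} (this is literally the content of the definition of $d_{w,u;\mathfrak{t}}^J$ given immediately after \eqref{trmx}). The diagonal entries are $k_{w_y}(\cc)=k_w(\cc)$, which is also $\cc$-independent on $C^J$ because $k_w\in\mathcal{F}_\Sigma(E,\mathbf{F}^\times)$ is constant on faces (see Subsection \ref{idsection} and Lemma \ref{kylemma2}). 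Similarly, the matrix $\{x^y\}\to\{E_y^J(x;\mathfrak{t})\}$ has entries $e_{w,u;\mathfrak{t}}^J$ on the diagonal equal to $1$ and off-diagonal entries $e_{w,u;\mathfrak{t}}^J$ which are $\cc$-independent by Theorem \ref{Edef}(4).

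Finally, the third transition matrix $\{m_y^J(x;\mathfrak{t})\}\to\{E_y^J(x;\mathfrak{t})\}$ is the matrix product of one of the two matrices already handled with the inverse of the other; since the class of triangular matrices with entries depending only on $(w,u,J,\mathfrak{t})$ is closed under inversion and multiplication (the inverse of such a matrix is computed by Gaussian elimination using only those entries), the resulting matrix is again independent of $\cc\in C^J$. This exhausts all three matrices (and their inverses) and completes (2). No step is really an obstacle here; the main point is simply to record that $k_w(\cc)$ is face-constant, so that even the ``trivial'' diagonal factors of the $m_y^J(x;\mathfrak{t})\leftrightarrow x^y$ change of basis contribute no $\cc$-dependence.
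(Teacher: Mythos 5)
Your proposal is correct and matches the paper's intent: the paper simply declares the corollary "immediate" from \eqref{trmx} (Proposition \ref{triangularitymbasis}), Theorem \ref{Edef}(1)\&(4) and Corollary \ref{expansionHcor}, which is exactly the assembly you carry out, including the closure of triangular, face-independent matrices under inversion and products (using Remark \ref{orderfinite} for finiteness below any element). Your observation that the diagonal entries $k_w(\cc)$ are $\cc$-independent is also covered either by the face-constancy $k_w\in\mathcal{F}_\Sigma(E,\mathbf{F}^\times)$ or directly by Corollary \ref{expansionHcor}, so nothing is missing.
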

In the following lemma we give two elementary properties of the monic quasi-polynomials $E_y^J(x;\mathfrak{t})$.
\begin{lemma}\label{remE1}
For $\cc\in C^J$ and $\mathfrak{t}\in T_J^\prime$ we have
\begin{enumerate}
\item
$E_{\cc}^J(x,\mathfrak{t})=x^{\cc}$.
\item $E_{y+z}^J(x;\mathfrak{t})=
x^zE_y^J(x;\mathfrak{t})$ when $y\in\mathcal{O}_\cc$ and $z\in E_{\textup{co}}$.
\end{enumerate}
\end{lemma}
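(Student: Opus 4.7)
The plan for (1) is short: since $w_\cc=e$ is the Bruhat minimum of $W^J$, the element $\cc$ is the $\leq$-minimum of $\mathcal{O}_\cc$, and so the defining expansion in Theorem \ref{Edef}(1) forces $E_\cc^J(x;\mathfrak{t})=x^\cc$ as soon as one verifies that $x^\cc$ is a joint $Y$-eigenvector in $\mathcal{P}^{(\cc)}_{\mathfrak{t}}$. This is visible directly from Proposition \ref{trianY} (where the l.o.t.\ sum is automatically empty for $y=\cc$), or more structurally from Theorem \ref{gbr}(2) together with Lemma \ref{actionHt}, which identify $x^\cc$ with the cyclic vector $m^J_{\mathfrak{s}_J\mathfrak{t}}\in\mathbb{M}^J_{\mathfrak{s}_J\mathfrak{t}}$ on which $Y^\mu$ acts by $(\mathfrak{s}_J\mathfrak{t})^{-\mu}=(\mathfrak{s}_\cc\mathfrak{t}_\cc)^{-\mu}$.

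For (2) the strategy is to show that multiplication by $x^z$ defines an $\mathbb{H}$-module isomorphism $\mathcal{P}^{(\cc)}_\mathfrak{t}\overset{\sim}{\longrightarrow}\mathcal{P}^{(\cc+z)}_\mathfrak{t}$, and then to invoke uniqueness in Theorem \ref{Edef}(1). Since faces of $\overline{C}_+$ are $E_{\textup{co}}$-stable (Subsection \ref{S24}), one has $\cc+z\in C^J$; and since every $v\in W_0$ fixes $z\in E_{\textup{co}}$, writing $w=\tau(\mu)v$ gives $w(\cc+z)=w\cc+z$, so the shift $y\mapsto y+z$ is a bijection $\mathcal{O}_\cc\overset{\sim}{\longrightarrow}\mathcal{O}_{\cc+z}$ with $w_{y+z}=w_y$. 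In particular the shift preserves the parabolic Bruhat order, and one has $\mathfrak{t}_{y+z}=\mathfrak{t}_y$ and $\mathfrak{s}_{y+z}=\mathfrak{s}_y$ (the latter via Proposition \ref{facecor} together with $\mathfrak{s}_{\cc+z}=\mathfrak{s}_J=\mathfrak{s}_\cc$), so in particular the eigenvalue $(\mathfrak{s}_{y+z}\mathfrak{t}_{y+z})^{-\mu}$ coincides with $(\mathfrak{s}_y\mathfrak{t}_y)^{-\mu}$.

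Applying Theorem \ref{gbr}(2) at both $\cc$ and $\cc+z$, the composition $\Psi:=\phi_{\cc+z,\mathfrak{t}}\circ\phi_{\cc,\mathfrak{t}}^{-1}$ is an $\mathbb{H}$-module isomorphism $\mathcal{P}^{(\cc)}_\mathfrak{t}\to\mathcal{P}^{(\cc+z)}_\mathfrak{t}$. Combining Theorem \ref{gbr}(3) with the face-constancy of $\kappa_v$ (Lemma \ref{faceconstantlem}, giving $\kappa_v(\cc)=\kappa_v(\cc+z)$) one computes $\Psi(x^{\mu+v\cc})=x^{\mu+v\cc+z}$ for all $\mu\in Q^\vee$ and $v\in W_0$, so $\Psi$ is indeed multiplication by $x^z$. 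It follows that $x^zE_y^J(x;\mathfrak{t})$ is a joint $Y$-eigenvector in $\mathcal{P}^{(\cc+z)}_\mathfrak{t}$ with quasi-monomial expansion
\[
x^zE_y^J(x;\mathfrak{t})=x^{y+z}+\sum_{y^\prime<y}e^J_{y,y^\prime;\mathfrak{t}}x^{y^\prime+z},
\]
whose index set is precisely $\{y^\prime+z\in\mathcal{O}_{\cc+z}\,|\,y^\prime+z<y+z\}$ by the order-preserving property of the shift. Uniqueness in Theorem \ref{Edef}(1) then identifies $x^zE_y^J(x;\mathfrak{t})$ with $E_{y+z}^J(x;\mathfrak{t})$. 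The only delicate point in the argument is the identification of $\Psi$ with multiplication by $x^z$, which is driven by the face-constancy of $\kappa_v$; everything else reduces to routine bookkeeping under the shift $y\mapsto y+z$.
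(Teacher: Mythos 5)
Your proof is correct and takes essentially the same route as the paper: part (1) is exactly the paper's argument ($x^\cc$ is a $Y$-weight vector of weight $\mathfrak{s}_J\mathfrak{t}$ by Theorem \ref{gbr}(2), then apply the triangularity/uniqueness of Theorem \ref{Edef}), and part (2) rests on the same key fact, namely that multiplication by $x^z$ intertwines the quasi-polynomial representations attached to $\cc$ and $\cc+z\in C^J$, after which uniqueness and the order-preserving shift $y\mapsto y+z$ finish the argument. The only cosmetic difference is how this intertwiner is verified: you obtain it by composing the isomorphisms of Theorem \ref{gbr}(2)--(3) (this is precisely the change-of-base-point isomorphism \eqref{isomface} with $\cc^\prime=\cc+z$, using $\kappa_v(\cc+z)=\kappa_v(\cc)$), whereas the paper checks directly from the action formulas that $\pi_{\cc+z,\mathfrak{t}}(h)x^{y+z}=x^z\pi_{\cc,\mathfrak{t}}(h)x^{y}$ for all $h\in\mathbb{H}$ — both verifications are immediate and rely on the same underlying facts.
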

\begin{proof}
(1) By Theorem \ref{gbr}(2) we have $x^\cc\in\Pc_{\mathfrak{t}}[\mathfrak{s}_J\mathfrak{t}]$. Now apply Theorem \ref{Edef}.\\
(2) This follows from the fact that $\cc+z\in C^J$, $\mathcal{O}_{\cc+z}=\mathcal{O}_\cc+z$ and 
\[
\pi_{\cc,\mathfrak{t}}(h)x^{y+z}=x^z\pi_{\cc,\mathfrak{t}}(h)x^{y}
\]
for all $h\in\mathbb{H}$.
\end{proof}

\subsection{Creation operators and irreducibility conditions}\label{CreationSection}
The $Y$-semisimple $\mathbb{H}$-module $\Pc_{\mathfrak{t}}$ ($\cc\in C^J$, $\mathfrak{t}\in T_J^\prime$)
has simple $Y$-spectrum $\{w(\mathfrak{s}_J\mathfrak{t})\}_{w\in W^J}$ by Theorem \ref{Edef}. In this subsection we will use the action of $Y$-intertwiners $S_w^Y$ ($w\in W^J$) to create the quasi-polynomial $E_{w\cc}^J(x;\mathfrak{t})\in\Pc_{\mathfrak{t}}[w(\mathfrak{s}_J\mathfrak{t})]$ from the quasi-monomial 
$x^\cc\in\Pc_{\mathfrak{t}}[\mathfrak{s}_J\mathfrak{t}]$.

\begin{lemma}\label{YintAction}
Let $M$ be a $\mathbb{H}$-module, $s\in\mathcal{S}(M)$, and $w\in W$. Then $S_w^Y$ maps $M[s]$ to $M[ws]$.
\end{lemma}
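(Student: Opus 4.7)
The plan is to derive the weight-shift property directly from the fundamental intertwining identity \eqref{IpropY} by testing it against Laurent monomials in $Y$ and then evaluating on $m \in M[s]$.

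First, I would specialise \eqref{IpropY} to the element $p := x^{-\mu} \in \mathcal{P}$ for arbitrary $\mu \in Q^\vee$. Since $p(Y^{-1}) = Y^\mu$, the relation becomes
\begin{equation*}
S_w^Y\, Y^\mu \;=\; w(x^{-\mu})(Y^{-1})\, S_w^Y \qquad \text{in } \mathbb{H}.
\end{equation*}
Writing $w = v\tau(\nu)$ with $v \in W_0$ and $\nu \in Q^\vee$, the $W$-action on $\mathcal{P}$ from \eqref{xyW} gives $w(x^{-\mu}) = q^{\langle \nu, \mu\rangle} x^{-v\mu}$, so the right-hand side equals $q^{\langle \nu, \mu\rangle} Y^{v\mu} S_w^Y$.

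Second, I would apply both sides to $m \in M[s]$ and use $Y^\mu m = s^{-\mu}m$ to obtain
\begin{equation*}
Y^{v\mu}(S_w^Y m) \;=\; q^{-\langle \nu, \mu\rangle} s^{-\mu}\, (S_w^Y m).
\end{equation*}

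Third, I would verify that the scalar on the right matches $(ws)^{-v\mu}$ under the $W$-action on $T$. With that action given by the contragredient $W_0$-action together with the $q$-dilations \eqref{tauaction}, one computes $(ws)^{v\mu} = (q^\nu s)^{v^{-1}(v\mu)} = q^{\langle \nu, \mu\rangle} s^{\mu}$, and hence $(ws)^{-v\mu} = q^{-\langle \nu, \mu\rangle} s^{-\mu}$, as required. Since $\mu \mapsto v\mu$ is a bijection of $Q^\vee$, the exponent $v\mu$ sweeps out all of $Q^\vee$ as $\mu$ does, so $Y^\lambda(S_w^Y m) = (ws)^{-\lambda}(S_w^Y m)$ holds for all $\lambda \in Q^\vee$. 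This yields $S_w^Y m \in M[ws]$.

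There is no real obstacle here: the content is purely bookkeeping, ensuring that the $W$-action on $\mathcal{P}$ used in \eqref{IpropY} and the $W$-action on the spectral torus $T$ are contragredient, so that the $q^{\langle \nu, \mu\rangle}$ factors coming from the translation part $\tau(\nu)$ of $w$ cancel consistently on both sides. Once this is set up the statement is a direct consequence of the intertwining relation; in particular the normalisation constants and the product formula in \eqref{IpropY} play no role.
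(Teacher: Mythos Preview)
Your proof is correct and follows essentially the same route as the paper: both arguments apply the intertwining identity \eqref{IpropY} to monomials in $Y$, evaluate on $m\in M[s]$, and identify the resulting scalar with $(ws)^{-\lambda}$. The only difference is cosmetic---the paper packages the computation into the auxiliary notation $p_w^\iota$ and works with general $p\in\mathcal{P}$, whereas you decompose $w=v\tau(\nu)$ explicitly and track the $q$-factor coming from the translation part by hand.
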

\begin{proof}
For a polynomial $p=\sum_{\mu\in Q^\vee}a_\mu x^\mu\in\mathcal{P}$ and $w\in W$
write 
\[
p_w^\iota:=\sum_{\mu\in Q^\vee}a_\mu w(x^{-\mu})=\sum_{\mu\in Q^\vee}a_\mu x^{-w\cdot\mu}\in\mathcal{P}.
\] 
Then we have $p(Y)S_w^Y
=S_w^Yp_w^\iota(Y^{-1})$
by \eqref{IpropY}. For $m\in M[s]$ we then have
\begin{equation*}
p(Y)S_w^Ym=S_{w}^Yp^\iota_w(Y^{-1})m
=p^\iota_w(s)S_w^Ym=p((ws)^{-1})S_w^Ym,
\end{equation*}
hence $S_w^Ym\in M[ws]$.
\end{proof}
Applying the lemma to $\Pc_{\mathfrak{t}}$
($\cc\in C^J$, $\mathfrak{t}\in T_J$) we conclude that
\begin{equation}\label{bwJt}
\pi_{\cc,\mathfrak{t}}(S_{w}^Y)x^\cc\in\mathcal{P}_{\mathfrak{t}}^{(\cc)}[w(\mathfrak{s}_J\mathfrak{t})]\qquad (w\in W),
\end{equation}
since $x^\cc\in\Pc_{\mathfrak{t}}[\mathfrak{s}_J\mathfrak{t}]$.

For $w\in W$ set
\begin{equation}\label{dwt}
d_{w}(x):=\prod_{a\in\Pi(w)}(x^{a^\vee}-1)\in\mathcal{P}.
\end{equation}
\begin{proposition}\label{triangular1}
Let $\cc\in C^J$ and $\mathfrak{t}\in T_J$.
\begin{enumerate}
\item $\pi_{\cc,\mathfrak{t}}(S_w^Y)x^\cc=0$ if $w\in W\setminus W^J$.
\item There exist $\epsilon_{w,u}\in\mathbf{F}$ \textup{(}$u,w\in W^J$\textup{:} $u<_Bw$\textup{)} such that 
\begin{equation}\label{todo}
\pi_{\cc,\mathfrak{t}}(S_w^Y)x^\cc=d_{w}(\mathfrak{s}_J\mathfrak{t})m_{w\cc}^{J}(x;\mathfrak{t})+\sum_{u\in W^J: u<_Bw}\epsilon_{w,u}m_{u\cc}^{J}(x;\mathfrak{t})
\end{equation}
for all $w\in W^J$.
\end{enumerate}
\end{proposition}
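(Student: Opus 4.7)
The plan is to prove (1) by reducing to $S_j^Ym_t^J=0$ for every $j\in J$, and to prove (2) by induction on $\ell(w)$ for $w\in W^J$, via a left factorisation $w=s_ju$ with $u\in W^J$. For (1), writing $S_j^Y=\delta(T_j)\bigl((Y^{-1})^{\alpha_j^\vee}-1\bigr)+(k_j-k_j^{-1})$ by \eqref{buildingYintertwiners}, I compute the action on $m_t^J$. Since $\chi_{J,t}=\chi_{J,t}^X\circ\delta|_{\mathbb{H}_J^Y}$ and $\delta^2=\textup{id}$, Lemma \ref{chitJ} gives $\chi_{J,t}(\delta(T_j))=k_j$ for $j\in J$. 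For $j\in J_0$, the element $(Y^{-1})^{\alpha_j^\vee}$ acts on $m_t^J$ by $t^{\alpha_j^\vee}=k_j^{-2}$ (since $t\in L_J$); for $j=0\in J$, we use $(Y^{-1})^{\alpha_0^\vee}=q_\varphi Y^{\varphi^\vee}$, which acts by $q_\varphi t^{-\varphi^\vee}=t^{\alpha_0^\vee}=k_0^{-2}$. Either way $S_j^Ym_t^J=\bigl(k_j(k_j^{-2}-1)+(k_j-k_j^{-1})\bigr)m_t^J=0$. For $w\in W\setminus W^J$, \eqref{Phipos} supplies $j\in J$ with $w\alpha_j\in\Phi^-$, so $\ell(ws_j)=\ell(w)-1$ (Lemma \ref{lengthadd}); a reduced expression of $w$ ending in $s_j$ factors $S_w^Y=S_{ws_j}^YS_j^Y$, giving $S_w^Ym_t^J=0$.

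For (2), I induct on $\ell(w)$, with $w=e$ trivial. For $w\in W^J$ with $\ell(w)\geq 1$, pick $j$ with $\ell(s_jw)=\ell(w)-1$ and set $u:=s_jw$. A short check gives $u\in W^J$: if $u\alpha_{j'}\in\Phi^-$ for some $j'\in J$, then $s_ju\alpha_{j'}=w\alpha_{j'}\in\Phi^+$ together with the fact that $s_j$ maps $\Phi^-\setminus\{-\alpha_j\}$ into $\Phi^-$ forces $u\alpha_{j'}=-\alpha_j$, hence $u^{-1}\alpha_j=-\alpha_{j'}\in\Phi^-$, contradicting $u^{-1}\alpha_j\in\Phi^+$ (the latter coming from $\ell(s_ju)>\ell(u)$ via Lemma \ref{lengthadd}). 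Moreover $\alpha_j(u\cc)>0$, since $u^{-1}\alpha_j\in\Phi_J^+$ would yield $s_{u^{-1}\alpha_j}\in W_J$ and $w=s_ju=us_{u^{-1}\alpha_j}\in uW_J\setminus\{u\}$, contradicting that $w,u\in W^J$ are distinct minimal representatives for $W/W_J$. Thus $w=s_ju$ is reduced and $S_w^Y=S_j^YS_u^Y$. By induction and Proposition \ref{triangularitymbasis},
\[
\pi_{\cc,\mathfrak{t}}(S_u^Y)x^\cc=d_u(\mathfrak{s}_J\mathfrak{t})k_u(\cc)x^{u\cc}+\sum_{y<u\cc}a_yx^y.
\]
Expanding $S_j^Y$ via \eqref{buildingYintertwiners}, Proposition \ref{trianY} and Corollary \ref{datumok} give $\pi_{\cc,\mathfrak{t}}\bigl((Y^{-1})^{\alpha_j^\vee}\bigr)x^{u\cc}=(\mathfrak{s}_J\mathfrak{t})^{u^{-1}\alpha_j^\vee}x^{u\cc}+\textup{l.o.t.}$, while \eqref{deltaTjcase} (applicable because $\alpha_j(u\cc)>0$) yields $\pi_{\cc,\mathfrak{t}}(\delta(T_j))x^{u\cc}=k_{s_j}(u\cc)x^{w\cc}+\textup{l.o.t.}$ Since $\Pi(w)=\Pi(u)\sqcup\{u^{-1}\alpha_j\}$ by \eqref{rootsdescription} and $k_w(\cc)=k_{s_j}(u\cc)k_u(\cc)$ by the cocycle, the coefficient of $x^{w\cc}$ in $\pi_{\cc,\mathfrak{t}}(S_w^Y)x^\cc$ is
\[
d_u(\mathfrak{s}_J\mathfrak{t})k_u(\cc)\bigl((\mathfrak{s}_J\mathfrak{t})^{u^{-1}\alpha_j^\vee}-1\bigr)k_{s_j}(u\cc)=d_w(\mathfrak{s}_J\mathfrak{t})k_w(\cc).
\]

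The main obstacle is verifying that every other monomial in $\pi_{\cc,\mathfrak{t}}(S_w^Y)x^\cc$ has exponent strictly below $w\cc$ in the parabolic Bruhat order on $\mathcal{O}_\cc$. For each $v\in W^J$ with $v<_Bu$, Lemma \ref{Bruhattrans}(1) applied to $v<_Bu<_Bs_ju=w$ gives $s_jv<_Bw$, so both $v$ and $s_jv$ lie strictly below $w$; combining this with the case-by-case description (according to the sign of $\alpha_j(v\cc)$) of the leading exponent of $\pi_{\cc,\mathfrak{t}}(\delta(T_j))x^{v\cc}$ from the proof of Proposition \ref{triangularitymbasis}, together with the Bruhat-triangularity of $(Y^{-1})^{\alpha_j^\vee}$ from Proposition \ref{trianY}, every lower-order contribution produced by applying $\pi_{\cc,\mathfrak{t}}(S_j^Y)$ to $\pi_{\cc,\mathfrak{t}}(S_u^Y)x^\cc$ indeed lies in $\mathrm{span}_{\mathbf{F}}\{x^{z\cc}:z\in W^J,\,z<_Bw\}$. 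Finally, inverting the triangular change of basis between $\{x^{v\cc}\}_{v\leq_Bw}$ and $\{m_{v\cc}^J(x;\mathfrak{t})\}_{v\leq_Bw}$ afforded by Proposition \ref{triangularitymbasis} converts the monomial expansion into the claimed \eqref{todo}, with leading coefficient $d_w(\mathfrak{s}_J\mathfrak{t})$ at $m_{w\cc}^J(x;\mathfrak{t})$.
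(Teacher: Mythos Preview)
Your argument is correct. Part~(1) coincides with the paper's proof. For part~(2), both you and the paper induct on $\ell(w)$ via $w=s_ju$, but the bookkeeping differs in two respects. First, the paper exploits that $\pi_{\cc,\mathfrak{t}}(S_u^Y)x^\cc$ is a $Y$-eigenvector of weight $u(\mathfrak{s}_J\mathfrak{t})$ (see \eqref{bwJt}), so $(Y^{-1})^{\alpha_j^\vee}$ acts on it as the scalar $t^{(u^{-1}\alpha_j)^\vee}$ rather than merely triangularly; this collapses one layer of lower-order terms entirely. Second, the paper then stays in the $m$-basis, where $\pi_{\cc,\mathfrak{t}}(\delta(T_j))m_{w'\cc}^J$ is computed directly from the Hecke relations in $H$ (three cases according to the sign of $\ell(s_jw')-\ell(w')$ and whether $s_jw'\in W^J$). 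You instead expand in monomials and rely on the bound that $\pi_{\cc,\mathfrak{t}}(\delta(T_j))x^{v\cc}$ lies in $\bigoplus_{z\leq\max(v\cc,\,s_jv\cc)}\mathbf{F}x^z$. This bound is true, but your pointer to ``the proof of Proposition~\ref{triangularitymbasis}'' is loose: formula \eqref{deltaTjcase} there only treats $\alpha_j(y)>0$, and for $\alpha_j(v\cc)\leq 0$ one must assemble the bound from \eqref{claimm} together with $T_i=T_i^{-1}+(k_i-k_i^{-1})$ when $j\in[1,r]$, and from \eqref{claimU0}, the identity $\delta(T_0)=q_\varphi^{-1}Y^{-\varphi^\vee}U_0^{-1}$, and Proposition~\ref{trianY} when $j=0$. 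The paper's route sidesteps this extra triangularity analysis by using the eigenvector property and the transparent $\delta(T_j)$-action on the $m$-basis.
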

\begin{proof}
For the duration of the proof we write $t:=\mathfrak{s}_J\mathfrak{t}\in L_J$.\\
(1) Let $j\in J$. It suffices to show that $\pi_{\cc,\mathfrak{t}}(S_j^Y)x^\cc=0$.
Since $x^\cc\in\Pc_{\mathfrak{t}}[t]$ and $\pi_{\cc,\mathfrak{t}}(\delta(T_j))x^\cc=k_jx^\cc$ by Theorem \ref{gbr}, substitution of the explicit expression \eqref{buildingYintertwiners} for $S_j^Y$ gives
\begin{equation*}
\pi_{\cc,\mathfrak{t}}(S_j^Y)x^\cc=\bigl(k_j(t^{\alpha_j^\vee}-1)+k_j-k_j^{-1}\bigr)x^\cc.
\end{equation*}
This vanishes since $t\in L_J$.\\
(2) The proof is by induction to $\ell(w)$. 
Fix $w\in W^{J}$ with
$\ell(w)>0$. Write $w=s_{j}u$ with $j\in [0,r]$ and $\ell(s_ju)=\ell(u)+1$. Then $u\in W^{J}$ by Lemma \ref{cosetcomb}, and $u<_Bw$. Furthermore, 
$S_w^Y=S_j^YS_u^Y$ and 
\begin{equation}\label{eq:d_relation}
d_{w}(x)=\bigl(x^{(u^{-1}\alpha_j)^\vee}-1\bigr)d_{u}(x)
\end{equation}
by Lemma \ref{lengthadd}.
By the induction hypothesis, the explicit expression \eqref{buildingYintertwiners} for $S_j^Y$ and \eqref{bwJt}
we then have for $\cc\in C^J$ and $\mathfrak{t}\in T_J$,
\begin{equation*}
\begin{split}
&\pi_{\cc,\mathfrak{t}}(S_w^Y)x^\cc=
\bigl((t^{(u^{-1}\alpha_j)^\vee}-1)\pi_{\cc,\mathfrak{t}}(\delta(T_j))+k_j-k_j^{-1}\bigr)\pi_{\cc,\mathfrak{t}}(S_u^Y)x^\cc\\
&\,\,\,\,\,\,\,\in d_{w}(t)m_{w\cc}^{J}(x;\mathfrak{t})+\sum_{w^\prime\in W^{J}:\, w^\prime<_Bu}
\bigl(t^{(u^{-1}\alpha_j)^\vee}-1\bigr)\epsilon_{u,w^\prime}\pi_{\cc,\mathfrak{t}}(\delta(T_j))m_{w^\prime\cc}^{J}(x;\mathfrak{t})+\mathcal{P}^{(\cc)}_{<w}
\end{split}
\end{equation*}
for some $\epsilon_{u,w^\prime}\in\mathbf{F}$, with 
\[
\mathcal{P}^{(\cc)}_{<w}:=\bigoplus_{w^{\prime\prime}\in W^J:\, w^{\prime\prime}<_Bw}\mathbf{F}m_{w^{\prime\prime}\cc}^J(x;\mathfrak{t}).
\]
It thus remains to show that $\pi_{\cc,\mathfrak{t}}(\delta(T_j))m_{w^\prime\cc}^{J}(x;\mathfrak{t})\in\mathcal{P}^{(\cc)}_{<w}$ when $w^\prime\in W^{J}$ and $w^\prime<_Bu$. There are three cases to consider.\\

\noindent
{\it Case 1:} If $\ell(s_jw^\prime)=\ell(w^\prime)+1$ and $s_jw^\prime\not\in W^{J}$ then $s_jw^\prime=w^\prime s_{j^\prime}$ with $j^\prime\in J$ by Lemma \ref{cosetcomb}, and hence
$\pi_{\cc,\mathfrak{t}}(\delta(T_j))m_{w^\prime\cc}^{J}(x;\mathfrak{t})=k_jm_{w^\prime\cc}^{J}(x;\mathfrak{t})\in\mathcal{P}^{(\cc)}_{<w}$ since $w^\prime<_Bu<_Bw$.\\

\noindent
{\it Case 2:} If $\ell(s_jw^\prime)=\ell(w^\prime)+1$ and $s_jw^\prime\in W^{J}$ then 
$\pi_{\cc,\mathfrak{t}}(\delta(T_j))m_{w^\prime\cc}^{J}(x;\mathfrak{t})=m_{s_jw^\prime\cc}^{J}(x;\mathfrak{t})$
lies in $\mathcal{P}^{(\cc)}_{<w}$ since $s_jw^\prime<_Bs_ju=w$ by Lemma \ref{Bruhattrans}.\\

\noindent
{\it Case 3:} If $\ell(s_jw^\prime)=\ell(w^\prime)-1$ then $s_jw^\prime<_Bw^\prime<w$ and $s_jw^\prime\in W^{J}$ by Lemma \ref{cosetcomb}, hence 
\[
\pi_{\cc,\mathfrak{t}}(\delta(T_j))m_{w^\prime\cc}^{J}(x;\mathfrak{t})=
m_{s_jw^\prime\cc}^{J}(x;\mathfrak{t})+(k_j-k_j^{-1})m_{w^\prime\cc}^{J}(x;\mathfrak{t})
\]
lies in $\mathcal{P}_{<w}^{(\cc)}$.  
\end{proof}
The basis elements $m_{w\cc}^J(x;\mathfrak{t})$ ($w\in W^J$) of $\mathcal{P}_{\mathfrak{t}}^{(\cc)}$ can now be expanded in terms of $\pi_{\cc,\mathfrak{t}}(S_u^Y)x^\cc$ ($u\in W^J$) when the leading coefficients $d_w(\mathfrak{s}_J\mathfrak{t})$ ($w\in W^J$) in \eqref{todo} are nonzero. This will be ensured by the following generic condition on $\mathfrak{t}\in T_J$.

\begin{definition}
We say that $\mathfrak{t}\in T_J$ is $J$-regular if $(\mathfrak{s}_J\mathfrak{t})^{a^\vee}\not=1$ for all $a\in\Phi\setminus\Phi_{J}$. 
\end{definition}
We now have the following consequence of Proposition \ref{triangular1}(2).
\begin{corollary}\label{14} 
Let $\cc\in C^J$ and $\mathfrak{t}\in T_J$. Assume that $\mathfrak{t}$ is $J$-regular. 
\begin{enumerate}
\item For $w\in W^J$ we have
\[
m_{w\cc}^{J}(x;\mathfrak{t})=\frac{1}{d_{w}(\mathfrak{s}_J\mathfrak{t})}\pi_{\cc,\mathfrak{t}}(S_w^Y)x^\cc+\sum_{u\in W^{J}:\, u<_B w}\epsilon^\prime_{w,u}\pi_{\cc,\mathfrak{t}}(S_u^Y)x^\cc
\]
for some $\epsilon_{w,u}^\prime\in\mathbf{F}$.
\item
$\{\pi_{\cc,\mathfrak{t}}(S_w^Y)x^\cc\,\, | \,\, w\in W^J\}$
is a $Y$-eigenbasis of $\Pc_{\mathfrak{t}}$.
\end{enumerate}
\end{corollary}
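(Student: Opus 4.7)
The plan is to deduce both statements directly from Proposition~\ref{triangular1}(2) by exploiting the $J$-regularity of $\mathfrak{t}$. First I would observe that for every $w\in W^J$, the set $\Pi(w)$ is disjoint from $\Phi_J$. Indeed, $\Pi(w)\subseteq\Phi^+$ by definition, while any $a\in\Phi_J^+$ satisfies $wa\in\Phi^+$ by the characterisation \eqref{Phipos} of $W^J$, so $a\notin\Pi(w)$. Thus $\Pi(w)\subseteq\Phi^+\setminus\Phi_J^+\subseteq\Phi\setminus\Phi_J$, and the $J$-regularity assumption on $\mathfrak{t}$ gives $(\mathfrak{s}_J\mathfrak{t})^{a^\vee}\neq 1$ for every $a\in\Pi(w)$. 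Consequently
\[
d_w(\mathfrak{s}_J\mathfrak{t})=\prod_{a\in\Pi(w)}\bigl((\mathfrak{s}_J\mathfrak{t})^{a^\vee}-1\bigr)\neq 0
\qquad (w\in W^J).
\]

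With these leading coefficients nonzero, Proposition~\ref{triangular1}(2) expresses $\pi_{\cc,\mathfrak{t}}(S_w^Y)x^\cc$ as $d_w(\mathfrak{s}_J\mathfrak{t})\, m_{w\cc}^J(x;\mathfrak{t})$ plus a linear combination of $m_{u\cc}^J(x;\mathfrak{t})$ with $u<_B w$ in $W^J$. Enumerating $W^J$ in a manner refining the Bruhat order, this identifies the transition matrix from $\{m_{w\cc}^J(x;\mathfrak{t})\}_{w\in W^J}$ to $\{\pi_{\cc,\mathfrak{t}}(S_w^Y)x^\cc\}_{w\in W^J}$ as upper triangular with nonzero diagonal entries $d_w(\mathfrak{s}_J\mathfrak{t})$. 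Inverting this triangular matrix and reading off the $w$-th row yields formula~(1), with the diagonal entry becoming $d_w(\mathfrak{s}_J\mathfrak{t})^{-1}$ and the off-diagonal coefficients $\epsilon^\prime_{w,u}$ supported on $u<_B w$.

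Part~(2) is then immediate: since the transition matrix is invertible and $\{m_{w\cc}^J(x;\mathfrak{t})\}_{w\in W^J}$ is a basis of $\mathcal{P}^{(\cc)}_{\mathfrak{t}}$ (via the isomorphism $\phi_{\cc,\mathfrak{t}}$ of Theorem~\ref{gbr}(2) combined with Lemma~\ref{mbasis}), the image family $\{\pi_{\cc,\mathfrak{t}}(S_w^Y)x^\cc\}_{w\in W^J}$ is also a basis. Moreover, \eqref{bwJt} together with Proposition~\ref{triangular1}(1) show that each $\pi_{\cc,\mathfrak{t}}(S_w^Y)x^\cc$ is a $Y$-weight vector of weight $w(\mathfrak{s}_J\mathfrak{t})$, so this basis is a $Y$-eigenbasis.

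I do not expect any serious obstacle: the only delicate point is verifying $\Pi(w)\cap\Phi_J=\emptyset$ for $w\in W^J$, which is essentially the definition of minimal coset representatives. Everything else is a formal triangular inversion combined with the facts already packaged in Proposition~\ref{triangular1} and Lemma~\ref{YintAction}.
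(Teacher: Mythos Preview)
Your proposal is correct and follows essentially the same route as the paper: verify $\Pi(w)\subseteq\Phi^+\setminus\Phi_J^+$ via \eqref{Phipos} so that $d_w(\mathfrak{s}_J\mathfrak{t})\neq 0$ under $J$-regularity, then invert the triangular relation from Proposition~\ref{triangular1}(2) and invoke Lemma~\ref{mbasis}. The only minor remark is that the reference to Proposition~\ref{triangular1}(1) in your last paragraph is unnecessary---\eqref{bwJt} (via Lemma~\ref{YintAction}) already gives the $Y$-weight vector property.
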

\begin{proof}
For  $w\in W^{J}$ we have $\Pi(w)\subseteq\Phi^+\setminus\Phi^+_{J}$ by \eqref{Phipos}, hence $d_{w}\in\mathcal{P}$ does not vanish at $\mathfrak{s}_J\mathfrak{t}$ for $J$-regular $\mathfrak{t}\in T_J$. The corollary now follows from Lemma \ref{mbasis} and Proposition \ref{triangular1}.
\end{proof}
In Theorem \ref{Edef}(3) we described the $Y$-spectrum of $\mathcal{P}_{\mathfrak{t}}^{(\cc)}$ when $\mathfrak{t}\in T_J^\prime$, in which case the $Y$-spectrum is simple. When the condition $\mathfrak{t}\in T_J^\prime$ is replaced by the assumption that $\mathfrak{t}\in T_J$ is $J$-regular, we have the following result.
\begin{corollary}\label{14cor}
Let $\cc\in C^J$ and suppose that $\mathfrak{t}\in T_J$ is $J$-regular. Then
\begin{enumerate}
\item $\Pc_{\mathfrak{t}}$ is $Y$-semisimple.
\item For $s\in T$ we have
\[
\textup{Dim}_{\mathbf{F}}\bigl(\Pc_{\mathfrak{t}}[s]\bigr)=\#\{w\in W^J\,\, 
| \,\, s=w(\mathfrak{s}_J\mathfrak{t})\}.
\]
\end{enumerate}
\end{corollary}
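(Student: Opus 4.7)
The plan is to derive both statements directly from Corollary \ref{14}(2), which is the substantive input: under the $J$-regularity assumption it already produces an explicit basis of $\mathcal{P}^{(\cc)}_{\mathfrak{t}}$ consisting of $Y$-eigenvectors. The remaining work is essentially bookkeeping about how such a basis forces the full weight space decomposition.

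First, I would recall that by Corollary \ref{14}(2) the set
\[
\mathcal{B}:=\bigl\{\pi_{\cc,\mathfrak{t}}(S_w^Y)x^\cc \,\,\bigl|\,\, w\in W^J\bigr\}
\]
is a basis of $\mathcal{P}^{(\cc)}_{\mathfrak{t}}$, and by \eqref{bwJt} its element associated with $w$ lies in the weight space $\mathcal{P}^{(\cc)}_{\mathfrak{t}}[w(\mathfrak{s}_J\mathfrak{t})]$. In particular $\mathcal{P}^{(\cc)}_{\mathfrak{t}}$ is spanned by $Y$-weight vectors and hence is $Y$-semisimple, proving (1).

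For (2), fix $s\in T$ and suppose
\[
m=\sum_{w\in W^J}c_w\,\pi_{\cc,\mathfrak{t}}(S_w^Y)x^\cc\in\mathcal{P}^{(\cc)}_{\mathfrak{t}}[s].
\]
Applying $\pi_{\cc,\mathfrak{t}}(Y^\mu)$ for arbitrary $\mu\in Q^\vee$ and using that each basis vector is a $Y$-eigenvector with the weight prescribed above, the identity $\pi_{\cc,\mathfrak{t}}(Y^\mu)m=s^{-\mu}m$ combined with the linear independence of $\mathcal{B}$ yields
\[
c_w\bigl((w(\mathfrak{s}_J\mathfrak{t}))^{-\mu}-s^{-\mu}\bigr)=0\qquad (w\in W^J,\,\mu\in Q^\vee).
\]
Hence $c_w=0$ whenever $w(\mathfrak{s}_J\mathfrak{t})\neq s$, so
\[
\mathcal{P}^{(\cc)}_{\mathfrak{t}}[s]=\bigoplus_{w\in W^J:\,w(\mathfrak{s}_J\mathfrak{t})=s}\mathbf{F}\,\pi_{\cc,\mathfrak{t}}(S_w^Y)x^\cc,
\]
which has dimension $\#\{w\in W^J\,|\,w(\mathfrak{s}_J\mathfrak{t})=s\}$, proving (2).

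There is no real obstacle to overcome here: the delicate work was done in establishing Proposition \ref{triangular1} and Corollary \ref{14}, whose triangularity and non-vanishing of the leading coefficients $d_w(\mathfrak{s}_J\mathfrak{t})$ (guaranteed by $J$-regularity via $\Pi(w)\subseteq\Phi^+\setminus\Phi_J^+$ for $w\in W^J$) secured the existence of a full $Y$-eigenbasis. The only point to be mindful of is the distinction with Theorem \ref{Edef}(3): under the weaker hypothesis of $J$-regularity (instead of $\mathfrak{t}\in T_J'$) the characters $w(\mathfrak{s}_J\mathfrak{t})$ for $w\in W^J$ need not be pairwise distinct, so weight spaces can have dimension larger than one, which is precisely what the multiplicity formula in (2) records.
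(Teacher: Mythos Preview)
Your proof is correct and follows exactly the approach the paper intends: the corollary is stated without proof as an immediate consequence of Corollary~\ref{14}(2) together with \eqref{bwJt}, and your argument spells out precisely this deduction.
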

\begin{remark}\label{14rem}
Suppose that $\mathfrak{t}\in T_J$ is $J$-regular. Then $\Pc_{\mathfrak{t}}$ ($\cc\in C^J$) is $Y$-semisimple with finite dimensional $Y$-weight spaces, since $q$ is not a root of unity.
In the terminology of Cherednik \cite[\S 3.6.1]{Ch}, $\Pc_{\mathfrak{t}}\simeq\mathbb{M}^{J}_{\mathfrak{s}_J\mathfrak{t}}$ is a $Y$-cyclic, $Y$-semisimple $\mathbb{H}$-module in category $\mathcal{O}_Y$.
\end{remark}
In the following theorem we relate $\pi_{\cc,\mathfrak{t}}(S_w^Y)x^\cc$ for $\cc\in C^J$, $\mathfrak{t}\in T_J$ and $w\in W^J$ to the monic quasi-polynomial simultaneous eigenfunction $E_{w\cc}^J(x;\mathfrak{t})$ of the commuting operators
$\pi_{\cc,\mathfrak{t}}(Y^\mu)$ ($\mu\in Q^\vee$)
from Theorem \ref{Edef}. This requires $\mathfrak{t}\in T_J^\prime$, but $\mathfrak{t}$ does not have to be $J$-regular.
\begin{theorem}\label{prop:I_and_E}
Let $\cc\in C^J$, $\mathfrak{t}\in T_J^\prime$ and $w\in W^J$. Then
\begin{equation}\label{relS}
\pi_{\cc,\mathfrak{t}}(S_w^Y)x^\cc=
d_w(\mathfrak{s}_J\mathfrak{t})k_w(\cc)E_{w\cc}^J(x;
\mathfrak{t})
\end{equation}
with $k_w(y)\in\mathbf{F}$ and $d_w\in\mathcal{P}$ defined by \eqref{kawy} and \eqref{dwt}, respectively. 
\end{theorem}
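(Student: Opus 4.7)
The plan is to identify both sides as elements of the same one-dimensional $Y$-weight space of $\mathcal{P}^{(\cc)}_{\mathfrak{t}}$ and then match leading coefficients in the quasi-monomial basis.

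First, by Lemma \ref{YintAction} applied to $M = \mathcal{P}^{(\cc)}_{\mathfrak{t}}$, and using that $x^\cc$ lies in $\mathcal{P}^{(\cc)}_{\mathfrak{t}}[\mathfrak{s}_J\mathfrak{t}]$ (by Theorem \ref{gbr}(2) combined with the definition of the induced module $\mathbb{M}^J_{\mathfrak{s}_J\mathfrak{t}}$), I would conclude that $\pi_{\cc,\mathfrak{t}}(S_w^Y)x^\cc \in \mathcal{P}^{(\cc)}_{\mathfrak{t}}[w(\mathfrak{s}_J\mathfrak{t})]$. By Corollary \ref{datumok} applied to $y = w\cc$ (noting $w_{w\cc} = w$ since $w \in W^J$), we have $w(\mathfrak{s}_J\mathfrak{t}) = \mathfrak{s}_{w\cc}\mathfrak{t}_{w\cc}$, so the weight matches the $Y$-eigenvalue of $E_{w\cc}^J(x;\mathfrak{t})$ given by Theorem \ref{Edef}(2).

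Next, since $\mathfrak{t} \in T_J^\prime$, Theorem \ref{Edef}(3) states that $\mathcal{P}^{(\cc)}_{\mathfrak{t}}$ has simple $Y$-spectrum, so the weight space $\mathcal{P}^{(\cc)}_{\mathfrak{t}}[\mathfrak{s}_{w\cc}\mathfrak{t}_{w\cc}]$ is one-dimensional and spanned by $E_{w\cc}^J(x;\mathfrak{t})$. Hence there is a unique scalar $c \in \mathbf{F}$ such that
\[
\pi_{\cc,\mathfrak{t}}(S_w^Y)x^\cc = c\,E_{w\cc}^J(x;\mathfrak{t}).
\]

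It remains to compute $c$ by reading off the coefficient of $x^{w\cc}$ on both sides. On the right, Theorem \ref{Edef}(1) gives that the coefficient of $x^{w\cc}$ in $E_{w\cc}^J(x;\mathfrak{t})$ is $1$, and all other terms involve $x^{y'}$ with $y' < w\cc$. On the left, I would apply Proposition \ref{triangular1}(2) to expand $\pi_{\cc,\mathfrak{t}}(S_w^Y)x^\cc$ in the basis $\{m_{u\cc}^J(x;\mathfrak{t})\}_{u \in W^J}$ with leading term $d_w(\mathfrak{s}_J\mathfrak{t})\,m_{w\cc}^J(x;\mathfrak{t})$, and then use the triangular expansion \eqref{trmx}, which says $m_{w\cc}^J(x;\mathfrak{t}) = k_w(\cc)\,x^{w\cc} + \textup{l.o.t.}$ Since the correction terms $m_{u\cc}^J(x;\mathfrak{t})$ for $u <_B w$ are supported on quasi-monomials $x^{y'}$ with $y' \leq u\cc < w\cc$, they contribute nothing to the coefficient of $x^{w\cc}$. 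Thus $c = d_w(\mathfrak{s}_J\mathfrak{t})\,k_w(\cc)$, which yields \eqref{relS}.

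The argument is essentially a triangularity comparison, so there is no serious obstacle; the one delicate point to verify carefully is the identification $w(\mathfrak{s}_J\mathfrak{t}) = \mathfrak{s}_{w\cc}\mathfrak{t}_{w\cc}$ via Corollary \ref{datumok}, which is what ensures that the weight space containing $\pi_{\cc,\mathfrak{t}}(S_w^Y)x^\cc$ is precisely the one-dimensional space spanned by $E_{w\cc}^J(x;\mathfrak{t})$.
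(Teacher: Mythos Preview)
Your proof is correct and follows essentially the same approach as the paper: both arguments use that $\pi_{\cc,\mathfrak{t}}(S_w^Y)x^\cc$ lies in the one-dimensional weight space $\mathcal{P}^{(\cc)}_{\mathfrak{t}}[w(\mathfrak{s}_J\mathfrak{t})]$ spanned by $E_{w\cc}^J(x;\mathfrak{t})$, and then determine the proportionality constant by combining Proposition~\ref{triangular1}(2) with the triangular expansion \eqref{trmx} to read off the coefficient of $x^{w\cc}$. Your explicit invocation of Corollary~\ref{datumok} to match $w(\mathfrak{s}_J\mathfrak{t})$ with $\mathfrak{s}_{w\cc}\mathfrak{t}_{w\cc}$ just makes explicit a step the paper leaves implicit.
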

\begin{proof}
Let $w\in W^J$.
We have $\pi_{\cc,\mathfrak{t}}(S_w^Y)x^\cc=\epsilon_wE_{w\cc}^J(x;\mathfrak{t})$ for some $\epsilon_w\in\mathbf{F}$ because the $Y$-weight space
$\Pc_{\mathfrak{t}}[w(\mathfrak{s}_J\mathfrak{t})]$ is one-dimensional. 
By \eqref{todo} and \eqref{trmx} we have
\[
\pi_{\cc,\mathfrak{t}}(S_w^Y)x^\cc=
d_w(\mathfrak{s}_J\mathfrak{t})k_w(\cc)x^{w\cc}+ \textup{l.o.t.},
\]
hence Theorem \ref{Edef}(1) implies that $\epsilon_w=d_w(\mathfrak{s}_J\mathfrak{t})k_w(\cc)$.
This concludes the proof.
\end{proof}
\begin{corollary}
With the assumptions of Theorem \ref{prop:I_and_E} we have
\begin{enumerate}
\item
$\pi_{\cc,\mathfrak{t}}(S_w^Y)x^\cc=0$ $\Leftrightarrow$ $d_w(\mathfrak{s}_J\mathfrak{t})=0$ for $w\in W^J$.
\item If $\mathfrak{t}$ is $J$-regular then $\{\pi_{\cc,\mathfrak{t}}(S_w^Y)x^\cc\}_{w\in W^J}$ is a $Y$-eigenbasis of $\mathcal{P}_{\mathfrak{t}}^{(\cc)}$.
\end{enumerate}
\end{corollary}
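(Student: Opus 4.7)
The corollary is an immediate spinoff of Theorem \ref{prop:I_and_E}, so the plan is short.

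For part (1), I would start from the identity
\[
\pi_{\cc,\mathfrak{t}}(S_w^Y)x^\cc = d_w(\mathfrak{s}_J\mathfrak{t})\,k_w(\cc)\,E_{w\cc}^J(x;\mathfrak{t})
\]
of Theorem \ref{prop:I_and_E}. The quasi-polynomial $E_{w\cc}^J(x;\mathfrak{t})$ is nonzero because it has leading quasi-monomial $x^{w\cc}$ with coefficient $1$ (Theorem \ref{Edef}(1)). The scalar $k_w(\cc)=k(w\cc)/k(\cc)$ lies in $\mathbf{F}^\times$, since our standing assumption that every $k_a$ admits a square root in $\mathbf{F}^\times$ (Subsection \ref{DAHA}) ensures $k(y)\in\mathbf{F}^\times$ for every $y\in E$ via its definition \eqref{ky}. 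Consequently the right-hand side vanishes precisely when $d_w(\mathfrak{s}_J\mathfrak{t})=0$, which is statement (1).

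For part (2), under the additional $J$-regularity assumption I would argue exactly as in the proof of Corollary \ref{14}: for $w\in W^J$ one has $\Pi(w)\subseteq \Phi^+\setminus \Phi_J^+$ by \eqref{Phipos}, so each factor $x^{a^\vee}-1$ in $d_w=\prod_{a\in\Pi(w)}(x^{a^\vee}-1)$ is evaluated at $\mathfrak{s}_J\mathfrak{t}$ with $a\in\Phi\setminus\Phi_J$, and $J$-regularity gives $(\mathfrak{s}_J\mathfrak{t})^{a^\vee}\neq 1$. Hence $d_w(\mathfrak{s}_J\mathfrak{t})\neq 0$ for every $w\in W^J$, and by part (1) each $\pi_{\cc,\mathfrak{t}}(S_w^Y)x^\cc$ is a nonzero scalar multiple of $E_{w\cc}^J(x;\mathfrak{t})$. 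Since $\{E_{w\cc}^J(x;\mathfrak{t})\}_{w\in W^J}$ is a $Y$-eigenbasis of $\mathcal{P}^{(\cc)}_{\mathfrak{t}}$ by Theorem \ref{Edef}(3), so is $\{\pi_{\cc,\mathfrak{t}}(S_w^Y)x^\cc\}_{w\in W^J}$.

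There is no real obstacle, since all the substantive work has been done in Theorem \ref{prop:I_and_E} and in the nonvanishing analysis of $d_w$ already carried out for Corollary \ref{14}. The only mild point is the observation that $k_w(\cc)\neq 0$, which I would justify by referring back to the square-root hypothesis on the multiplicity function.
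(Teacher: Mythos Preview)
Your proposal is correct and follows essentially the same approach as the paper: the paper's proof simply says ``(1) is immediate from \eqref{relS}'' and for (2) ``it suffices to recall that the $J$-regularity of $\mathfrak{t}$ ensures that $d_w(\mathfrak{s}_J\mathfrak{t})\not=0$ for all $w\in W^J$.'' You have spelled out the details the paper leaves implicit, including the nonvanishing of $k_w(\cc)$ and the reference back to Theorem~\ref{Edef}(3) for the eigenbasis statement.
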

\begin{proof}
(1) is immediate from \eqref{relS}. For (2) it suffices to recall that the $J$-regularity of $\mathfrak{t}$ ensures that $d_w(\mathfrak{s}_J\mathfrak{t})\not=0$ for all $w\in W^J$.
\end{proof}
\begin{theorem}\label{irredTHM}
Let $\cc\in C^J$. 
Suppose that $\mathfrak{t}\in T_J^\prime$ is $J$-regular and that $(\mathfrak{s}_J\mathfrak{t})^{a^\vee}\not=k_a^{-2}$
for all $a\in\Phi\setminus\Phi_J$. Then $\mathfrak{s}_J\mathfrak{t}\in L^J$, and $\Pc_{\mathfrak{t}}$ is irreducible.
\end{theorem}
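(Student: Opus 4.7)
The assertion $\mathfrak{s}_J\mathfrak{t}\in L^J$ is immediate from the hypotheses. By Proposition \ref{lemmabasepoint} we have $\mathfrak{s}_J\mathfrak{t}\in L_J$, so $(\mathfrak{s}_J\mathfrak{t})^{\alpha_j^\vee}=k_j^{-2}$ for every $j\in J$; conversely, for $j\notin J$ the simple affine root $\alpha_j$ lies in $\Phi\setminus\Phi_J$ by linear independence of $\{\alpha_0,\ldots,\alpha_r\}$, so the hypothesis gives $(\mathfrak{s}_J\mathfrak{t})^{\alpha_j^\vee}\neq k_j^{-2}$. Hence $J(\mathfrak{s}_J\mathfrak{t})=J$.

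For irreducibility, the plan is to exploit that $\mathcal{P}^{(\cc)}_{\mathfrak{t}}$ is $Y$-semisimple with simple $Y$-spectrum (Theorem \ref{Edef}, using $\mathfrak{t}\in T_J^\prime$). Any nonzero $\mathbb{H}$-submodule $M\subseteq\mathcal{P}^{(\cc)}_{\mathfrak{t}}$ is then $Y$-stable and contains some one-dimensional weight space $\mathbf{F}\,E_{y_0}^J(x;\mathfrak{t})$. I would first show that $M$ contains $x^\cc=E_\cc^J(x;\mathfrak{t})$, and then that from $x^\cc$ every other $E_{w\cc}^J(x;\mathfrak{t})$ can be produced, so $M=\mathcal{P}^{(\cc)}_{\mathfrak{t}}$.

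For the first step I would apply the $Y$-intertwiner $S_{w_{y_0}^{-1}}^Y$ to $E_{y_0}^J(x;\mathfrak{t})$. By Lemma \ref{YintAction} the result lies in the one-dimensional space $\mathcal{P}^{(\cc)}_{\mathfrak{t}}[\mathfrak{s}_J\mathfrak{t}]=\mathbf{F}x^\cc$. To see that it is nonzero, compose with $S_{w_{y_0}}^Y$ and use the product identity in \eqref{IpropY}: the composition acts on $E_{y_0}^J(x;\mathfrak{t})$ as multiplication by
\[
\prod_{a\in\Pi(w_{y_0}^{-1})}\bigl(k_a^{-1}-k_a s^{a^\vee}\bigr)\bigl(k_a^{-1}-k_a s^{-a^\vee}\bigr),\qquad s:=w_{y_0}(\mathfrak{s}_J\mathfrak{t}).
\]
Setting $b:=-w_{y_0}^{-1}a$ and using the $W$-equivariance $w\cdot a^\vee=(wa)^\vee$ together with $W$-invariance of $\mathbf{k}$, each factor becomes $\bigl(k_b^{-1}-k_b(\mathfrak{s}_J\mathfrak{t})^{\mp b^\vee}\bigr)$. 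As $a$ ranges over $\Pi(w_{y_0}^{-1})$, the element $b$ ranges over $\Pi(w_{y_0})\subseteq\Phi^+\setminus\Phi_J$ (since $w_{y_0}\in W^J$ and \eqref{Phipos}). The non-vanishing conditions $(\mathfrak{s}_J\mathfrak{t})^{b^\vee}\neq k_b^{\pm 2}$ both follow from the standing hypothesis $(\mathfrak{s}_J\mathfrak{t})^{c^\vee}\neq k_c^{-2}$ for $c\in\Phi\setminus\Phi_J$, applied respectively to $c=b$ and $c=-b$ (both of which lie in $\Phi\setminus\Phi_J$). Hence $\pi_{\cc,\mathfrak{t}}(S_{w_{y_0}^{-1}}^Y)E_{y_0}^J(x;\mathfrak{t})$ is a nonzero scalar multiple of $x^\cc$, forcing $x^\cc\in M$.

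For the second step, Theorem \ref{prop:I_and_E} gives
\[
\pi_{\cc,\mathfrak{t}}(S_w^Y)x^\cc=d_w(\mathfrak{s}_J\mathfrak{t})\,k_w(\cc)\,E_{w\cc}^J(x;\mathfrak{t})\qquad (w\in W^J).
\]
The $J$-regularity of $\mathfrak{t}$ makes $d_w(\mathfrak{s}_J\mathfrak{t})=\prod_{a\in\Pi(w)}\bigl((\mathfrak{s}_J\mathfrak{t})^{a^\vee}-1\bigr)$ nonzero, since $\Pi(w)\subseteq\Phi^+\setminus\Phi_J$, and $k_w(\cc)\in\mathbf{F}^\times$ automatically. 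Thus every basis element $E_{w\cc}^J(x;\mathfrak{t})$ of $\mathcal{P}^{(\cc)}_{\mathfrak{t}}$ lies in $M$, completing the proof. The main obstacle is the non-vanishing verification in the first step, whose subtlety is the observation that the single-sided hypothesis on $\mathfrak{s}_J\mathfrak{t}$ is automatically symmetric under $c\mapsto -c$ and therefore controls both factors in the intertwiner product identity.
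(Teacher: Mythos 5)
Your proof is correct and follows essentially the same route as the paper's: use $Y$-semisimplicity with simple spectrum (from $\mathfrak{t}\in T_J^\prime$) to find a $Y$-weight vector in any nonzero submodule, descend to $x^\cc$ by a $Y$-intertwiner whose nonvanishing is controlled by \eqref{IpropY} together with $\Pi(w)\subseteq\Phi^+\setminus\Phi_J^+$ and the two-sided hypothesis $(\mathfrak{s}_J\mathfrak{t})^{a^\vee}\neq k_a^{-2}$ on $\Phi\setminus\Phi_J$, and then regenerate the whole module from $x^\cc$. The only cosmetic difference is that the paper starts from the intertwiner eigenbasis $\{\pi_{\cc,\mathfrak{t}}(S_w^Y)x^\cc\}_{w\in W^J}$ (so applying $S_{w^{-1}}^Y$ immediately yields the nonzero scalar $\textup{n}_w(\mathfrak{s}_J\mathfrak{t})$ times $x^\cc$, and $M=\Pc_{\mathfrak{t}}$ follows from cyclicity of $x^\cc$), whereas you start from the monic basis $\{E_y^J(x;\mathfrak{t})\}$, add the back-and-forth composition step to check nonvanishing, and conclude via Theorem \ref{prop:I_and_E} and $J$-regularity.
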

\begin{proof}
The first statement follows immediately from the definition of $L^J\subseteq L_J$ (see Remark \ref{specNJt}(1)).

Suppose that $M\subseteq\Pc_{\mathfrak{t}}$ is a nonzero subrepresentation.
By the assumptions on $\mathfrak{t}$, $\Pc_{\mathfrak{t}}$ is $Y$-semisimple with simple $Y$-spectrum, and $\{\pi_{\cc,\mathfrak{t}}(S_w^Y)x^\cc\}_{w\in W^{J}}$ is a $Y$-eigenbasis of $\Pc_{\mathfrak{t}}$. Hence $\pi_{\cc,\mathfrak{t}}(S_w^Y)x^\cc\in M$ for some
$w\in W^{J}$. 
By \eqref{IpropY}, we then have
\begin{equation}\label{dSaction}
\pi_{\cc,\mathfrak{t}}(S_{w^{-1}}^Y)\pi_{\cc,\mathfrak{t}}(S_w^Y)x^\cc=\textup{n}_w(\mathfrak{s}_J\mathfrak{t})x^\cc\in M,
\end{equation}
with $\textup{n}_w\in\mathcal{P}$ defined by
\begin{equation}
\label{nw}
\textup{n}_w(x):=\prod_{a\in\Pi(w)}(k_a^{-1}-k_ax^{a^\vee})(k_a^{-1}-k_ax^{-a^\vee}).
\end{equation}
Note that $\Pi(w)\subseteq\Phi^+\setminus\Phi_{J}^+$ by \eqref{Phipos}, hence $n_w(\mathfrak{s}_J\mathfrak{t})\not=0$ by
the assumption that $(\mathfrak{s}_J\mathfrak{t})^{a^\vee}\not=k_a^{-2}$ for all $a\in\Phi\setminus\Phi_J$. We conclude that $x^\cc\in M$, hence
$M=\Pc_{\mathfrak{t}}$.
\end{proof}

\subsection{Closure relations}\label{ClosureSection}
For $\cc\in C^J$, $\mathfrak{t}\in T_J$ and $w\in W^J$ the coefficients $e_{w,u;\mathfrak{t}}^J$ of $x^{u\cc}$ ($u\in W^J$) in the expansion of $E_{w\cc}^J(x;\mathfrak{t})$ in quasi-monomials do not dependent on $\cc\in C^J$ (see Theorem \ref{Edef}). In this subsection we express the coefficients $e_{w^\prime,u^\prime;\mathfrak{t}^\prime}^{J^\prime}$ in terms of
$e_{w,u;\mathfrak{t}^\prime}^J$ when $J\subseteq J^\prime$ (i.e., when $C^{J^\prime}\subseteq\overline{C^J}$).
  
Recall the map $\textup{pr}_{\cc,\cc^\prime}^{\mathfrak{t}^\prime}$, defined in Corollary \ref{projectioncor}. 
\begin{proposition}\label{projectionprop}
Let $J\subseteq J^\prime$. Fix 
a $J^\prime$-regular $\mathfrak{t}^\prime\in T_{J^\prime}^\prime$ such that $\mathfrak{s}_J^{-1}\mathfrak{s}_{J^\prime}\mathfrak{t}^\prime\in T_J$ lies in $T_J^\prime$ and is $J$-regular.
Let $\cc\in C^J$ and $\cc^\prime\in C^{J^\prime}$. Then 
\begin{equation}\label{ttddnew}
\textup{pr}_{\cc,\cc^\prime}^{\mathfrak{t}^\prime}\Bigl(k_w(\cc)E_{w\cc}^J(x;\mathfrak{s}_J^{-1}\mathfrak{s}_{J^\prime}\mathfrak{t}^\prime)\Bigr)=
\begin{cases}
0\quad &\hbox{ if }\, w\in W^J\setminus W^{J^\prime},\\
k_w(\cc^\prime)E_{w\cc^\prime}^{J^\prime}(x;\mathfrak{t}^\prime)\quad &\hbox{ if }\, w\in W^{J^\prime}.
\end{cases}
\end{equation}
\end{proposition}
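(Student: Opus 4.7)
\emph{Proof plan.} The plan is to combine the $\mathbb{H}$-linearity of the projection $\textup{pr}_{\cc,\cc^\prime}^{\mathfrak{t}^\prime}$ from Corollary~\ref{projectioncor} with the intertwiner creation formula (Theorem~\ref{prop:I_and_E}) and the $Y$-weight-space structure (Theorem~\ref{Edef}). Since $\mathfrak{s}_J\cdot(\mathfrak{s}_J^{-1}\mathfrak{s}_{J^\prime}\mathfrak{t}^\prime)=\mathfrak{s}_{J^\prime}\mathfrak{t}^\prime$, the modules $\mathcal{P}^{(\cc)}_{\mathfrak{s}_J^{-1}\mathfrak{s}_{J^\prime}\mathfrak{t}^\prime}$ and $\mathcal{P}^{(\cc^\prime)}_{\mathfrak{t}^\prime}$ carry $Y$-spectra built on the same base element $\mathfrak{s}_{J^\prime}\mathfrak{t}^\prime$, indexed by $W^J$ and $W^{J^\prime}$ respectively. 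By Theorem~\ref{Edef}(2), $E_{w\cc}^J(x;\mathfrak{s}_J^{-1}\mathfrak{s}_{J^\prime}\mathfrak{t}^\prime)$ has $Y$-weight $w(\mathfrak{s}_{J^\prime}\mathfrak{t}^\prime)$, so its image under $\textup{pr}_{\cc,\cc^\prime}^{\mathfrak{t}^\prime}$ lies in $\mathcal{P}^{(\cc^\prime)}_{\mathfrak{t}^\prime}[w(\mathfrak{s}_{J^\prime}\mathfrak{t}^\prime)]$.

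When $w\in W^J\setminus W^{J^\prime}$, Theorem~\ref{Edef}(3) tells us the $Y$-spectrum of $\mathcal{P}^{(\cc^\prime)}_{\mathfrak{t}^\prime}$ is the simple set $\{w^\prime(\mathfrak{s}_{J^\prime}\mathfrak{t}^\prime)\}_{w^\prime\in W^{J^\prime}}$; a coincidence $w(\mathfrak{s}_{J^\prime}\mathfrak{t}^\prime)=w^\prime(\mathfrak{s}_{J^\prime}\mathfrak{t}^\prime)$ with $w^\prime\in W^{J^\prime}\subseteq W^J$ would, by the simplicity of the $W^J$-indexed spectrum of $\mathcal{P}^{(\cc)}_{\mathfrak{s}_J^{-1}\mathfrak{s}_{J^\prime}\mathfrak{t}^\prime}$ (guaranteed by $\mathfrak{t}^\prime\in T_J^\prime$), force $w=w^\prime$, which is impossible. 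Hence the weight space is zero and the image vanishes, giving the first case of~\eqref{ttddnew}. When $w\in W^{J^\prime}$, the weight space $\mathcal{P}^{(\cc^\prime)}_{\mathfrak{t}^\prime}[w(\mathfrak{s}_{J^\prime}\mathfrak{t}^\prime)]$ is one-dimensional and spanned by $E_{w\cc^\prime}^{J^\prime}(x;\mathfrak{t}^\prime)$, so the image has the form $\lambda_w E_{w\cc^\prime}^{J^\prime}(x;\mathfrak{t}^\prime)$ for some scalar $\lambda_w\in\mathbf{F}$. To identify $\lambda_w$ I would apply $\textup{pr}_{\cc,\cc^\prime}^{\mathfrak{t}^\prime}$ to the creation identity
\[
\pi_{\cc,\mathfrak{s}_J^{-1}\mathfrak{s}_{J^\prime}\mathfrak{t}^\prime}(S_w^Y)x^\cc=d_w(\mathfrak{s}_{J^\prime}\mathfrak{t}^\prime)\,k_w(\cc)\,E_{w\cc}^J(x;\mathfrak{s}_J^{-1}\mathfrak{s}_{J^\prime}\mathfrak{t}^\prime)
\]
given by Theorem~\ref{prop:I_and_E}, and invoke the analogous identity at $(\cc^\prime,\mathfrak{t}^\prime)$; $\mathbb{H}$-linearity together with $\textup{pr}_{\cc,\cc^\prime}^{\mathfrak{t}^\prime}(x^\cc)=x^{\cc^\prime}$ yields $d_w(\mathfrak{s}_{J^\prime}\mathfrak{t}^\prime)\,k_w(\cc)\,\lambda_w = d_w(\mathfrak{s}_{J^\prime}\mathfrak{t}^\prime)\,k_w(\cc^\prime)$. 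Since $\Pi(w)\subseteq\Phi^+\setminus\Phi_{J^\prime}^+$, the $J^\prime$-regularity of $\mathfrak{t}^\prime$ makes $d_w(\mathfrak{s}_{J^\prime}\mathfrak{t}^\prime)\neq 0$, so $\lambda_w=k_w(\cc^\prime)/k_w(\cc)$, giving the second case of~\eqref{ttddnew}.

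The main obstacle lies in the $w\in W^J\setminus W^{J^\prime}$ case: a naive division in the pushed-forward creation identity fails because $d_w(\mathfrak{s}_{J^\prime}\mathfrak{t}^\prime)$ can vanish when $\Pi(w)$ meets $\Phi_{J^\prime}^+\setminus\Phi_J^+$. The $Y$-weight-space argument sidesteps this difficulty, but it compels one to use the simplicity of the $W^J$-indexed spectrum encoded in the hypothesis $\mathfrak{t}^\prime\in T_J^\prime$, which is strictly stronger than the $J^\prime$-regularity that would already be enough for the $w\in W^{J^\prime}$ case.
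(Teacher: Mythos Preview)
Your proof is correct. The paper takes precisely the route you flag as ``naive'': it rewrites $k_w(\cc)E_{w\cc}^J(x;\mathfrak{s}_J^{-1}\mathfrak{s}_{J'}\mathfrak{t}')$ as $d_w(\mathfrak{s}_{J'}\mathfrak{t}')^{-1}\,\pi_{\cc,\mathfrak{s}_J^{-1}\mathfrak{s}_{J'}\mathfrak{t}'}(S_w^Y)x^\cc$ via~\eqref{relS}, pushes forward using $\mathbb{H}$-linearity of $\textup{pr}_{\cc,\cc'}^{\mathfrak{t}'}$ together with $\textup{pr}_{\cc,\cc'}^{\mathfrak{t}'}(x^\cc)=x^{\cc'}$, and then applies Proposition~\ref{triangular1}(1) for $w\notin W^{J'}$ and~\eqref{relS} on the target side for $w\in W^{J'}$.

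Your concern that $d_w(\mathfrak{s}_{J'}\mathfrak{t}')$ might vanish for $w\in W^J\setminus W^{J'}$ is, however, unfounded under the stated hypotheses: the simplicity of the $W^J$-indexed spectrum already forces $d_w(\mathfrak{s}_{J'}\mathfrak{t}')\neq 0$ for every $w\in W^J$. Suppose some $a\in\Pi(w)$ has $(\mathfrak{s}_{J'}\mathfrak{t}')^{a^\vee}=1$. Choose a reduced expression $w=s_{j_1}\cdots s_{j_\ell}$ with $a=s_{j_\ell}\cdots s_{j_{m+1}}\alpha_{j_m}$ as in~\eqref{rootsdescription}, and set $u:=s_{j_{m+1}}\cdots s_{j_\ell}$. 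Since $u$ is a length-additive right factor of $w\in W^J$ one checks $u\in W^J$, and $(u(\mathfrak{s}_{J'}\mathfrak{t}'))^{\alpha_{j_m}^\vee}=(\mathfrak{s}_{J'}\mathfrak{t}')^{a^\vee}=1$, so $s_{j_m}u$ and $u$ have the same image under the spectral map. If $s_{j_m}u\in W^J$ this contradicts injectivity; if not, Lemma~\ref{cosetcomb}(2) gives $s_{j_m}u=us_{j'}$ with $j'\in J$, hence $a=u^{-1}\alpha_{j_m}=\alpha_{j'}\in\Phi_J^+$, contradicting $\Pi(w)\subseteq\Phi^+\setminus\Phi_J^+$. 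Thus the paper's division is legitimate. Your $Y$-weight-space argument is a perfectly valid alternative that trades this nonvanishing check for a direct appeal to weight spaces; the paper's route is shorter but leaves that check implicit.
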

\begin{proof}
Let $w\in W^J$. By \eqref{relS}, the left hand side of \eqref{ttddnew} equals 
\[
\frac{1}{d_w(\mathfrak{s}_{J^\prime}\mathfrak{t}^\prime)}\textup{pr}_{\cc,\cc^\prime}^{\mathfrak{t}^\prime}\bigl(\pi_{\cc,\mathfrak{s}_J^{-1}\mathfrak{s}_{J^\prime}\mathfrak{t}^\prime}(S_w^Y)x^\cc
\bigr).
\]
By Proposition \ref{triangular1}(1) and \eqref{relS} it then suffices to show that
\[
\textup{pr}_{\cc,\cc^\prime}^{\mathfrak{t}^\prime}\bigl(\pi_{\cc,\mathfrak{s}_J^{-1}\mathfrak{s}_{J^\prime}\mathfrak{t}^\prime}(S_w^Y)x^\cc\bigr)=
\pi_{\cc^\prime,\mathfrak{t}^\prime}(S_w^Y)x^{\cc^\prime}.
\]
This follows from Corollary \ref{projectioncor}.
\end{proof}
Keep the assumptions of Proposition \ref{projectionprop}. 
For $w\in W^J$ and $u^\prime\in W^{J^\prime}$, consider the expression
\begin{equation}\label{combicoeff}
e_{w,u^\prime;\mathfrak{t}^\prime}^{J,J^\prime}:=\sum_{u\in W^J\cap u^\prime W_{J^\prime}:\,u\leq_Bw}
\frac{\kappa_{Du}(\cc^\prime)}{\kappa_{Du}(\cc)}e_{w,u;\mathfrak{s}_J^{-1}\mathfrak{s}_{J^\prime}\mathfrak{t}^\prime}^J
\end{equation}
involving the coefficients of $E_{w\cc}^J(x;\mathfrak{s}_J^{-1}\mathfrak{s}_{J^\prime}\mathfrak{t}^\prime)$ in its expansion in quasi-monomials.
The following consequence of Proposition \ref{projectionprop} shows that \eqref{combicoeff} provides the coefficients of $E_{w\cc}^{J^\prime}(x;\mathfrak{t}^\prime)$ in its expansion in quasi-monomials when $w\in W^{J^\prime}$.

\begin{corollary}\label{closurecor} 
Keep the assumptions of Proposition \ref{projectionprop}. Then
\begin{enumerate}
\item If $w\in W^J\setminus W^{J^\prime}$ then
$e_{w,u^\prime;\mathfrak{t}^\prime}^{J,J^\prime}=0$ for all $u^\prime\in W^{J^\prime}$.
\item If $w\in W^{J^\prime}$ and $u^\prime\not\leq_Bw$ then $e_{w,u^\prime;\mathfrak{t}^\prime}^{J,J^\prime}=0$.
\item If $w\in W^{J^\prime}$ then $e_{w,w;\mathfrak{t}^\prime}^{J,J^\prime}=k_w(\cc^\prime)/k_w(\cc)$.
\item If $u^\prime,w\in W^{J^\prime}$ and $u^\prime\leq_Bw$ then 
\[
e_{w,u^\prime;\mathfrak{t}^\prime}^{J^\prime}=\frac{k_w(\cc)}{k_w(\cc^\prime)}e_{w,u^\prime;\mathfrak{t}^\prime}^{J,J^\prime}.
\]
\end{enumerate}
\end{corollary}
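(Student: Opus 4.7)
The plan is to expand $k_w(\cc)E_{w\cc}^J(x;\mathfrak{s}_J^{-1}\mathfrak{s}_{J'}\mathfrak{t}')$ as a linear combination of quasi-monomials $x^{u\cc}$ ($u\in W^J$, $u\leq_B w$), apply the projection $\textup{pr}_{\cc,\cc'}^{\mathfrak{t}'}$ term by term using Corollary \ref{projectioncor}(2), regroup the resulting quasi-monomials on $\mathcal{O}_{\cc'}$ according to their minimal $W_{J'}$-coset representatives in $W^{J'}$, and compare with Proposition \ref{projectionprop}.

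First I would invoke Theorem \ref{Edef} to write
\[
E_{w\cc}^J(x;\mathfrak{s}_J^{-1}\mathfrak{s}_{J'}\mathfrak{t}')=\sum_{u\in W^J:\,u\leq_B w}e_{w,u;\mathfrak{s}_J^{-1}\mathfrak{s}_{J'}\mathfrak{t}'}^J\,x^{u\cc}.
\]
Decomposing each $u\in W^J$ as $u=\tau(\mu_u)v_u$ with $v_u=Du\in W_0$ and $\mu_u\in Q^\vee$ gives $u\cc=\mu_u+v_u\cc$, so Corollary \ref{projectioncor}(2) yields
\[
\textup{pr}_{\cc,\cc'}^{\mathfrak{t}'}(x^{u\cc})=\frac{\kappa_{Du}(\cc')}{\kappa_{Du}(\cc)}\,x^{u\cc'}.
\]

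Next, because $W_{\cc'}=W_{J'}$ and $W^{J'}$ is a complete set of minimal coset representatives for $W/W_{J'}$, each $u\in W^J$ has a unique $u'\in W^{J'}$ with $u\in u'W_{J'}$, and then $u\cc'=u'\cc'$ (as $W_{J'}$ fixes $\cc'$). Since $J\subseteq J'$ forces $W^{J'}\subseteq W^J$, regrouping the projected expansion according to the index $u'$ produces
\[
\textup{pr}_{\cc,\cc'}^{\mathfrak{t}'}\bigl(k_w(\cc)E_{w\cc}^J(x;\mathfrak{s}_J^{-1}\mathfrak{s}_{J'}\mathfrak{t}')\bigr)=k_w(\cc)\sum_{u'\in W^{J'}}e_{w,u';\mathfrak{t}'}^{J,J'}\,x^{u'\cc'},
\]
where the bracketed coefficient coincides verbatim with \eqref{combicoeff}.

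Finally I would invoke Proposition \ref{projectionprop}: the left-hand side vanishes when $w\in W^J\setminus W^{J'}$, and equals $k_w(\cc')E_{w\cc'}^{J'}(x;\mathfrak{t}')=k_w(\cc')\sum_{u'\in W^{J'}:\,u'\leq_B w}e_{w,u';\mathfrak{t}'}^{J'}x^{u'\cc'}$ (with $e_{w,w;\mathfrak{t}'}^{J'}=1$) when $w\in W^{J'}$. Since $\{x^{u'\cc'}\}_{u'\in W^{J'}}$ is linearly independent, comparing coefficients delivers all four parts at once: the vanishing case gives (1); the coefficient of $x^{u'\cc'}$ for $u'\not\leq_B w$ gives (2); the coefficient of $x^{w\cc'}$ gives (3); and the coefficient of $x^{u'\cc'}$ for $u'<_B w$ gives (4). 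The only step that requires any real care is the coset bookkeeping in the middle — verifying that the regrouped coefficients on $\mathcal{O}_{\cc'}$ reproduce \eqref{combicoeff} exactly — after which the match with Proposition \ref{projectionprop} is purely formal.
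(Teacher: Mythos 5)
Your proposal is correct and follows the paper's own route: the paper likewise expands $E_{w\cc}^J(x;\mathfrak{s}_J^{-1}\mathfrak{s}_{J'}\mathfrak{t}')$ in quasi-monomials, applies $\textup{pr}_{\cc,\cc'}^{\mathfrak{t}'}$ via Corollary \ref{projectioncor}(2), regroups over $W_{J'}$-cosets to land exactly on the coefficients \eqref{combicoeff}, and then reads off all four statements by comparing coefficients with Proposition \ref{projectionprop}. The coset bookkeeping you spell out is precisely the content the paper leaves implicit in its one-line displayed identity.
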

\begin{proof}
By Corollary \ref{projectioncor} we have
\begin{equation*}
\textup{pr}_{\cc,\cc^\prime}^t\bigl(E_{w\cc}^J(x;\mathfrak{s}_J^{-1}\mathfrak{s}_{J^\prime}\mathfrak{t}^\prime)\bigr)=\sum_{u^\prime\in W^{J^\prime}}e_{w,u^\prime;\mathfrak{t}^\prime}^{J,J^\prime}x^{u^\prime\cc^\prime}\qquad (w\in W^J).
\end{equation*}
The result now follows from Proposition \ref{projectionprop}.
\end{proof}
Consider the special case $J^\prime=[1,r]$, $\cc^\prime=0$ and $\mathfrak{t}^\prime=1_T$, cf. Remark \ref{projectionrem}. 
In this case $W^{[1,r]}=\{w_\mu\,\, | \,\, \mu\in Q^\vee\}$. Let $J\subseteq J^\prime$ and $\cc\in C^J$. Then we conclude from Proposition \ref{projectionprop} that 
\[
\textup{pr}_{\cc,0}^{1_T}(E_{w_\lambda\cc}^J(x;\mathfrak{s}_J^{-1}t_{\textup{sph}}))=\frac{k_{w_\lambda}(0)}{k_{w_\lambda}(\cc)}E_\lambda^{[1,r]}
(x;1_T)\qquad (\lambda\in Q^\vee).
\]
Corollary \ref{closurecor}(3) then yields the expression
\[
e_{w_\lambda,w_\mu;1_T}^{[1,r]}=\frac{k_{w_\lambda}(\cc)}{k_{w_\lambda}(0)}\sum_{u\in W^J\cap w_\mu W_0:\, u\leq_Bw_\lambda}\frac{\kappa_{Du}(0)}{\kappa_{Du}(\cc)}
e_{w_\lambda,u;\mathfrak{s}_J^{-1}t_{\textup{sph}}}^J
\]
for the coefficient of $x^\mu$ in the expansion of
the nonsymmetric Macdonald polynomial $E_\lambda^{[1,r]}(x;1_T)$ in monomials.

\subsection{The normalised $Y$-eigenbasis and pseudo-duality}\label{S64}
 The normalised version of the quasi-polynomial $E_y^{J}(x;\mathfrak{t})$ that we will introduce in this subsection, requires the following additional constraint on $\mathfrak{t}\in T_J^\prime$.
\begin{definition}
We say that
$\mathfrak{t}\in T_J$ is $J$-generic if $\mathfrak{t}$ is $J$-regular and $(\mathfrak{s}_J\mathfrak{t})^{a^\vee}\not=k_a^{-2}$ for all $a\in\Phi^+\setminus\Phi_J^+$.
\end{definition}
Note that $\mathfrak{s}_J\mathfrak{t}\in L^J$ if $\mathfrak{t}\in T_J$ is $J$-generic.
Furthermore, for $\cc\in C^J$ and $J$-generic $\mathfrak{t}\in T_J^\prime$ the irreducibility criterion for the quasi-polynomial representation $\Pc_{\mathfrak{t}}$ (see Theorem \ref{irredTHM}) requires the additional constraints $(\mathfrak{s}_J\mathfrak{t})^{a^\vee}\not=k_a^{-2}$ for all $a\in\Phi^-\setminus\Phi_J^-$. 

For $w\in W$ 
define $r_{w}\in\mathcal{P}$ by 
\begin{equation}\label{rw}
r_{w}(x):=\prod_{a\in\Pi(w)}(k_a x^{a^\vee}-k_a^{-1}).
\end{equation}
Note that $\widetilde{S}^Y_w=S_w^Yr_w(Y^{-1})^{-1}$ in $\mathbb{H}^{Y-\textup{loc}}$ by \eqref{RelToUnnormY}.
For $w\in W^J$ and $J$-generic $\mathfrak{t}\in T_J$ we have $r_{w}(\mathfrak{s}_J\mathfrak{t})\not=0$ by \eqref{Phipos}, and hence we may define for $c\in C^J$,
\begin{equation}\label{boldbw}
P_{w\cc}^{J}(x;\mathfrak{t}):=
\frac{\pi_{\cc,\mathfrak{t}}(S_w^Y)x^\cc}{r_w(\mathfrak{s}_J\mathfrak{t})}\in\Pc_{\mathfrak{t}}[w(\mathfrak{s}_J\mathfrak{t})]\qquad\quad (w\in W^J).
\end{equation}
Note that $\{P_y^J(x;\mathfrak{t})\}_{y\in\mathcal{O}_\cc}$ is a $Y$-eigenbasis of $\mathcal{P}_{\mathfrak{t}}^{(\cc)}$ by Corollary \ref{14}. 
 \begin{remark}\label{projectionrem2}
Adding to the assumptions of Proposition \ref{projectionprop} the assumption that $\mathfrak{t}^\prime$ is $J^\prime$-generic, we have
\[
\textup{pr}_{\cc,\cc^\prime}^{\mathfrak{t}^\prime}\bigl(P_{w\cc}^J(x;\mathfrak{s}_J^{-1}\mathfrak{s}_{J^\prime}\mathfrak{t}^\prime)\bigr)=
P_{w\cc^\prime}^{J^\prime}(x;\mathfrak{t}^\prime)\qquad (w\in W^{J^\prime}).
\]
This follows from \eqref{boldbw} and the fact that $\textup{pr}_{\cc,\cc^\prime}^{\mathfrak{t}^\prime}(x^\cc)=x^{\cc^\prime}$.
\end{remark}
The precise relation of the normalised quasi-polynomials \eqref{boldbw} with the monic
$Y$-eigenbasis $\{E_y^J(x;\mathfrak{t})\}_{y\in\mathcal{O}_\cc}$ of $\mathcal{P}_{\mathfrak{t}}^{(\cc)}$ is as follows.
\begin{proposition}\label{leadcoeff}
Let $\cc\in C^J$. For $J$-generic $\mathfrak{t}\in T_J^\prime$ and $y\in\mathcal{O}_\cc$ set 
\begin{equation}\label{ty}
t_y:=w_y(\mathfrak{s}_J\mathfrak{t})=\mathfrak{s}_y\mathfrak{t}_y\in T.
\end{equation}
Then 
\[
P_y^J(x;\mathfrak{t})=\frac{k(y)}{k(\cc)}\prod_{a\in\Phi^+: a(y)<0}
\Bigl(\frac{t_y^{-a^\vee}-1}{k_at_y^{-a^\vee}-k_a^{-1}}\Bigr)
E_y^J(x;\mathfrak{t})\qquad\quad (y\in\mathcal{O}_\cc).
\]
\end{proposition}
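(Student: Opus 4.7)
\medskip

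\noindent\textbf{Proof proposal.} Fix $y\in\mathcal{O}_{\cc}$ and write $y=w_y\cc$ with $w_y\in W^J$. Starting from the definition
\[
P_y^J(x;\mathfrak{t})=\frac{\pi_{\cc,\mathfrak{t}}(S_{w_y}^Y)x^\cc}{r_{w_y}(\mathfrak{s}_J\mathfrak{t})},
\]
the plan is to substitute the identity $\pi_{\cc,\mathfrak{t}}(S_{w_y}^Y)x^\cc=d_{w_y}(\mathfrak{s}_J\mathfrak{t})\,k_{w_y}(\cc)\,E_y^J(x;\mathfrak{t})$ coming from Theorem \ref{prop:I_and_E}. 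This gives
\[
P_y^J(x;\mathfrak{t})=\frac{k_{w_y}(\cc)\,d_{w_y}(\mathfrak{s}_J\mathfrak{t})}{r_{w_y}(\mathfrak{s}_J\mathfrak{t})}\,E_y^J(x;\mathfrak{t}),
\]
so the task reduces to identifying the scalar prefactor with the one in the statement. The factor $k_{w_y}(\cc)=k(w_y\cc)/k(\cc)=k(y)/k(\cc)$ comes out of the definition \eqref{kawy} at once.

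To convert the product $\prod_{b\in\Pi(w_y)}$ in $d_{w_y}(\mathfrak{s}_J\mathfrak{t})/r_{w_y}(\mathfrak{s}_J\mathfrak{t})$ into the product over $\{a\in\Phi^+\,|\,a(y)<0\}$ that appears in the statement, I plan to use the bijection
\[
\Pi(w_y^{-1})\;\overset{\sim}{\longrightarrow}\;\Pi(w_y),\qquad a\longmapsto b:=-w_y^{-1}a,
\]
(which sends $a\in\Phi^+$ with $w_y^{-1}a\in\Phi^-$ to $b\in\Phi^+$ with $w_yb\in\Phi^-$), and then invoke Proposition \ref{Pidescription} to identify $\Pi(w_y^{-1})$ with $\{a\in\Phi^+\,|\,a(y)<0\}$. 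Under this bijection, using \eqref{compatibilities} and the definition \eqref{ty} of $t_y=w_y(\mathfrak{s}_J\mathfrak{t})$, one has
\[
(\mathfrak{s}_J\mathfrak{t})^{b^\vee}=(\mathfrak{s}_J\mathfrak{t})^{-(w_y^{-1}a)^\vee}=(w_y(\mathfrak{s}_J\mathfrak{t}))^{-a^\vee}=t_y^{-a^\vee}.
\]
Since $Db$ and $Da$ lie in the same $W_0$-orbit up to sign, $k_a=k_b$; the factors $x^{b^\vee}-1$ and $k_bx^{b^\vee}-k_b^{-1}$ in $d_{w_y}$ and $r_{w_y}$ therefore evaluate at $\mathfrak{s}_J\mathfrak{t}$ to $t_y^{-a^\vee}-1$ and $k_at_y^{-a^\vee}-k_a^{-1}$ respectively, producing exactly the stated product.

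There is no substantive obstacle: the one point that requires care is to verify that $a\mapsto -w_y^{-1}a$ really is a well-defined bijection $\Pi(w_y^{-1})\to\Pi(w_y)$ (a standard check using $\Pi(w)=\Phi^+\cap w^{-1}\Phi^-$), and that the induced identification of coroots behaves as claimed, which follows from $w\cdot a^\vee=(wa)^\vee$ and $(-a)^\vee=-a^\vee$. Everything else is a direct substitution.
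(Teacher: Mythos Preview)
Your proposal is correct and follows essentially the same route as the paper's proof: both invoke Theorem~\ref{prop:I_and_E} to reduce to computing $k_{w_y}(\cc)\,d_{w_y}(\mathfrak{s}_J\mathfrak{t})/r_{w_y}(\mathfrak{s}_J\mathfrak{t})$, then use the bijection $\Pi(w_y)=-w_y^{-1}\Pi(w_y^{-1})$ together with the identification $\Pi(w_y^{-1})=\{a\in\Phi^+\mid a(y)<0\}$ (your Proposition~\ref{Pidescription}, the paper's Corollary~\ref{newroots}) and the transformation $(\mathfrak{s}_J\mathfrak{t})^{(-w_y^{-1}a)^\vee}=t_y^{-a^\vee}$ to rewrite the product.
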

\begin{proof}
By Theorem \ref{prop:I_and_E} we have 
\begin{equation}\label{PYrel}
P_{w\cc}^J(x;\mathfrak{t})=\frac{d_w(\mathfrak{s}_J\mathfrak{t})k_w(\cc)}{r_w(\mathfrak{s}_J\mathfrak{t})}E_{w\cc}^J(x;\mathfrak{t})\qquad\quad (w\in W^J).
\end{equation}
Now substitute $w=w_y\in W^J$ ($y\in\mathcal{O}_\cc$). Then $k_{w_y}(\cc)=k(y)/k(\cc)$ by \eqref{kawy}, and
\[
d_{w_y}(\mathfrak{s}_J\mathfrak{t})=
\prod_{a\in\Pi(w_y^{-1})}\bigl((\mathfrak{s}_J\mathfrak{t})^{-(w_y^{-1}a)^\vee}-1\bigr)=
\prod_{a\in\Phi^+: a(y)<0}\bigl(t_y^{-a^\vee}-1\bigr)
\]
by \eqref{dwt}, $\Pi(w_y)=-w_y^{-1}\Pi(w_y^{-1})$, \eqref{actx} and Corollary
\ref{newroots}. In a similar manner one verifies that
\[
r_{w_y}(\mathfrak{s}_J\mathfrak{t})=\prod_{a\in\Phi^+: a(y)<0}\bigl(k_at_y^{-a^\vee}-k_a^{-1}\bigr).
\]
Substituting these expressions in \eqref{PYrel} gives the desired result.
\end{proof}
\begin{remark}
Suppose that 
$1_T\in T_{[1,r]}^\prime$ and that $1_T$ is $[1,r]$-generic (this holds true for suitably generic values of $k_a\in\mathbf{F}$, cf. Subsection \ref{genSection}).
Then
\[
P_\mu^{[1,r]}(x;1_T)\in\mathcal{P}\qquad
 (\mu\in \mathcal{O}_0=Q^\vee)
 \]
are the normalised nonsymmetric Macdonald polynomials, i.e., $P_\mu^{[1,r]}(x;1_T)$ is the normalisation of
the monic nonsymmetric Macdonald polynomials $E_\mu^{[1,r]}(x;1_T)$
such that 
\[
P_\mu^{[1,r]}(t_{\textup{sph}};1_T)=1.
\]
This follows from Proposition \ref{leadcoeff} and the evaluation formula for the nonsymmetric monic Macdonald polynomials,
\[
E_\mu^{[1,r]}(t_{\textup{sph}};1_T)=\frac{k(0)}{k(\mu)}\prod_{a\in\Phi^+: a(\mu)<0}
\Bigl(\frac{k_a\mathfrak{s}_\mu^{-a^\vee}-k_a^{-1}}{\mathfrak{s}_\mu^{-a^\vee}-1}\Bigr)
\qquad (\mu\in Q^\vee),
\]
see, e.g., \cite[(5.2.14)]{Ma} and \cite[\S 5]{Cm}.
\end{remark}

We will now describe the action of $\delta(T_j)$ on $\mathcal{P}_{\mathfrak{t}}^{(\cc)}$ in terms of a discrete Demazure-Lusztig operator acting on $\mathcal{O}_\cc$ (see Theorem \ref{17}). We first need to analyse the action of the affine Weyl group $W$ on $L_J$ in more detail.

For $w\in W$ set
\[
\rho_w:=\prod_{a\in \Pi(w^{-1})}k_a^{2Da}\in T.
\]
\begin{lemma}\label{7}
If $w\in W_J$ and $t\in L_J$ then $wt=\rho_wt$.
\end{lemma}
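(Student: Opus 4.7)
The plan is to prove this by induction on $\ell(w)$, using that the action of $W$ on $T$ is by group automorphisms together with the elementary compatibility $\rho_{s_j w'} = s_j(\rho_{w'})\rho_{s_j}$ for reduced products in $W_J$.

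For the base case $w = s_j$ with $j\in J$, formula \eqref{actiononT} gives $s_j t = (t^{-\alpha_j^\vee})^{D\alpha_j}\, t$, which by the defining condition $t^{\alpha_j^\vee} = k_j^{-2}$ of $L_J$ becomes $k_j^{2D\alpha_j}\, t$. Since $\Pi(s_j) = \{\alpha_j\}$, the definition of $\rho_w$ yields $\rho_{s_j} = k_j^{2D\alpha_j}$, so the base case holds.

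For the inductive step, take $w\in W_J$ with $\ell(w)\ge 2$ and a reduced expression $w = s_j w'$ with $j\in J$ and $w'\in W_J$ of length $\ell(w)-1$. I would then compute
\[
wt = s_j(w't) = s_j(\rho_{w'} t) = s_j(\rho_{w'})\, s_j(t) = s_j(\rho_{w'})\,\rho_{s_j}\,t,
\]
applying the induction hypothesis to $w' t$, using that $s_j$ acts on $T$ by a group automorphism (note that a priori $w' t\notin L_J$, but this is not needed because we treat $\rho_{w'}$ and $t$ separately), and invoking the base case for the factor $s_j t$ since $t\in L_J$.

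It then remains to check that $\rho_w = s_j(\rho_{w'})\,\rho_{s_j}$. For this, I would describe $\Pi(w^{-1})$ using a reduced expression for $w'^{-1}$ and appending $s_j$ on the right: formula \eqref{rootsdescription} gives $\Pi(w^{-1}) = \{\alpha_j\}\sqcup s_j\Pi(w'^{-1})$ (disjointness because $s_j\beta = \alpha_j$ with $\beta\in\Phi^+$ would force $\beta = -\alpha_j$). Using $W$-invariance of the multiplicity function ($k_{s_j b} = k_b$) and linearity of the gradient map ($D(s_j b) = s_j(Db)$), I would compute
\[
\rho_w = k_{\alpha_j}^{2D\alpha_j}\prod_{b\in\Pi(w'^{-1})} k_{s_j b}^{2D(s_j b)} = \rho_{s_j}\prod_{b\in\Pi(w'^{-1})} k_b^{2s_j(Db)} = \rho_{s_j}\,s_j(\rho_{w'}),
\]
where the final equality uses that for any $\beta\in E^*$ and $k\in\mathbf{F}^\times$ one has $s_j(k^{2\beta}) = k^{2s_j\beta}$ in $T$.

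The argument is essentially bookkeeping; the only point that requires a moment's care is that although $w' t$ typically leaves $L_J$, this causes no difficulty because the induction is closed via the identity $s_j(\rho_{w'} t) = s_j(\rho_{w'})\,s_j(t)$ rather than by re-applying the base case to $w' t$. No genuine obstacle is anticipated.
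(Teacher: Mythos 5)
Your induction scheme (peeling a simple reflection off on the left) is sound in outline, and your base case is correct for every $j\in J$, including $j=0$. The flaw is in the inductive step: the assertion that $s_j$ acts on $T$ by a group automorphism is false when $j=0$. Only the finite Weyl group acts on $T$ by automorphisms; the affine reflection $s_0=\tau(\varphi^\vee)s_\varphi$ acts by $t\mapsto q^{\varphi^\vee}s_\varphi(t)$, an affine transformation of $T$. The correct product rule is $w(t_1t_2)=(Dw)(t_1)\,w(t_2)$ for $w\in W$, so $s_0(\rho_{w'}t)=s_\varphi(\rho_{w'})\,s_0(t)$, which differs from your claimed $s_0(\rho_{w'})\,s_0(t)$ by the factor $q^{\varphi^\vee}$. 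For the same reason your closing identity $\rho_w=s_j(\rho_{w'})\,\rho_{s_j}$ is false at $j=0$, and the rule ``$s_j(k^{2\beta})=k^{2s_j\beta}$ in $T$'' is not available there: $s_0\beta$ is an affine linear functional, and the actual action gives $s_0(k^{2\beta})=q^{\varphi^\vee}k^{2s_\varphi\beta}$. Your two misstatements are off from the truth by the same factor $q^{\varphi^\vee}$ and so would cancel in the final conclusion, but as written the justification breaks down exactly in the case $0\in J$, which the lemma must cover.

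The repair is small and brings you essentially to the paper's proof. Using $w(t_1t_2)=(Dw)(t_1)\,w(t_2)$, the factor extracted in the inductive step is $(Ds_j)(\rho_{w'})=s_{D\alpha_j}(\rho_{w'})$, so that $wt=s_j(\rho_{w'}t)=(Ds_j)(\rho_{w'})\,s_j(t)=(Ds_j)(\rho_{w'})\,\rho_{s_j}\,t$; and your $\Pi$-set computation, which is otherwise correct (with $\Pi(w^{-1})=\{\alpha_j\}\sqcup s_j\Pi(w'^{-1})$, $k_{s_jb}=k_b$ and $D(s_jb)=(Ds_j)(Db)$), yields precisely $\rho_w=\rho_{s_j}\,(Ds_j)(\rho_{w'})$, whence $wt=\rho_wt$. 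The paper runs the same induction with the reflection peeled off on the right, $w=us_j$ with $\ell(us_j)=\ell(u)+1$: it first applies the base case to $s_jt$ and then moves $u$ across as $(Du)(k_j^{2D\alpha_j})$, which keeps the induction hypothesis applied to $ut$ with $t\in L_J$ and makes the gradient $D$ explicit at the one spot where the affine subtlety enters.
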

\begin{proof}
This is correct for $w=s_j$ ($j\in J$) since $\Pi(s_j)=\{\alpha_j\}$ and $t\in L_J$ (see \eqref{actiononT}). We now apply induction to $\ell(w)$.
If $w=us_j$ with $u \in W_{J}$, $j\in J$ and
$\ell(u s_j)=\ell(u)+1$, then 
\begin{equation}\label{hoh}
\rho_w=k_j^{2D(u\alpha_j)}\rho_{u}
\end{equation}
by Lemma \ref{lengthadd}, and
\[
wt=u(k_j^{2D\alpha_j}t)
=(Du)(k_j^{2D\alpha_j})u t
=k_j^{2D(u\alpha_j)}ut.
\]
Applying the induction hypothesis to $ut$ and using \eqref{hoh}, we obtain $wt=\rho_wt$.
\end{proof}

Suppose that $w\in W^J$ and $j\in [0,r]$ satisfy $s_jw\not\in wW_J$. Then $s_jw\in W^J$ by Lemma \ref{cosetcomb}, hence both
$wt$ and $s_jwt$ are $Y$-weights of $\mathbb{M}^J_t$
when $t\in\mathfrak{s}_JT_J^\prime\subseteq L_J$.

We now analyse what happens with the $Y$-weights of $\mathbb{M}_t^J$ when $w\in W^J$ and $j\in [0,r]$ satisfy $s_jw\in wW_{J}$.

\begin{lemma}\label{8} 
Let $t\in L_J$, $w\in W^J$ and $j\in [0,r]$ such that $s_jw\in wW_{J}$. Then
\[
(wt)^{\alpha_j^\vee}=k_j^{-2},
\]
i.e., $j\in J(wt)$ \textup{(}see \eqref{Jt}\textup{)}. Furthermore,  $s_jwt=k_j^{2D\alpha_j}wt$.
\end{lemma}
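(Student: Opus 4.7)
The strategy is to reduce everything to the relation $s_jw=ws_{j'}$ for some $j'\in J$, and then to exploit the $W$-equivariance of the natural pairing between $T$ and the affine coroot lattice $\widehat{Q}^\vee$.

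First I would unpack the hypothesis using Lemma \ref{cosetcomb}(2): since $w\in W^J$ and $s_jw\in wW_J$, there exists $j'\in J$ with $s_jw=ws_{j'}$ and $\ell(s_jw)=\ell(w)+1$. Writing the conjugation $w^{-1}s_jw=s_{j'}$ as an identity between affine reflections, $s_{w^{-1}\alpha_j}=s_{\alpha_{j'}}$, so $w^{-1}\alpha_j=\pm\alpha_{j'}$. By Lemma \ref{lengthadd}(2), the length identity $\ell(s_jw)=\ell(w)+1$ forces $w^{-1}\alpha_j\in\Phi^+$, and hence $w^{-1}\alpha_j=\alpha_{j'}$. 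Applying \eqref{compatibilities} then gives $w^{-1}\cdot\alpha_j^\vee=\alpha_{j'}^\vee$. Since the gradient map $D\colon W\to W_0$ is a homomorphism, $D\alpha_j=(Dw)D\alpha_{j'}$, so $\|D\alpha_j\|=\|D\alpha_{j'}\|$ and therefore $k_j=k_{j'}$ by the $W$-invariance of $\mathbf{k}$.

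Next I would establish the key duality identity
\[
(wt)^{w\cdot\lambda}=t^\lambda\qquad(w\in W,\ \lambda\in\widehat{Q}^\vee,\ t\in T).
\]
This is verified by writing $w=v\tau(\nu)$ with $v\in W_0$, $\nu\in Q^\vee$, splitting $\lambda=\mu+\ell K$, and computing both sides directly from the definitions in \eqref{tauaction} (for the $W$-action on $T$) and \eqref{linearactionE} (for the $W$-action on $\widehat{Q}^\vee$); the cross-terms $q^{\langle\nu,\mu\rangle}$ and $q^{-\langle\nu,\mu\rangle}$ cancel, leaving $t^\mu q^\ell=t^\lambda$. Using this with $\lambda=\alpha_{j'}^\vee$ and the identity $w\cdot\alpha_{j'}^\vee=\alpha_j^\vee$ from the previous paragraph, together with the assumption $t\in L_J$ (so $t^{\alpha_{j'}^\vee}=k_{j'}^{-2}$), yields
\[
(wt)^{\alpha_j^\vee}=(wt)^{w\cdot\alpha_{j'}^\vee}=t^{\alpha_{j'}^\vee}=k_{j'}^{-2}=k_j^{-2},
\]
which is the first assertion and shows $j\in J(wt)$.

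Finally, the second assertion is immediate from formula \eqref{actiononT} applied to $a=\alpha_j$ and the element $wt\in T$:
\[
s_j(wt)=\bigl((wt)^{-\alpha_j^\vee}\bigr)^{D\alpha_j}\,wt=k_j^{2D\alpha_j}\,wt.
\]
I do not anticipate any real obstacle; the only subtlety is the sign analysis pinning down $w^{-1}\alpha_j=+\alpha_{j'}$, for which the built-in length increase from Lemma \ref{cosetcomb}(2) is exactly what is needed.
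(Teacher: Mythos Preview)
Your proof is correct and follows essentially the same approach as the paper: use Lemma \ref{cosetcomb}(2) to obtain $s_jw=ws_{j'}$ with $j'\in J$, deduce $w^{-1}\alpha_j=\alpha_{j'}$ (hence $k_j=k_{j'}$), transport the $L_J$-condition $t^{\alpha_{j'}^\vee}=k_{j'}^{-2}$ along $w$, and finish with \eqref{actiononT}. The paper's version is terser---it writes the transport step as $(wt)^{\alpha_j^\vee}=t^{(w^{-1}\alpha_j)^\vee}$ directly from \eqref{actx}, whereas you establish the equivalent identity $(wt)^{w\cdot\lambda}=t^\lambda$ by an explicit coordinate check---but the content is the same.
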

\begin{proof}
By Lemma \ref{cosetcomb}, the assumption $s_jw\in wW_J$ implies that 
$s_jw=ws_{j^\prime}$ for some $j^\prime\in J$, and $\ell(s_jw)=\ell(w)+1=\ell(ws_{j^\prime})$. Then $w\alpha_{j^\prime}=\alpha_j$,
and hence
\[
(wt)^{\alpha_j^\vee}=t^{(w^{-1}\alpha_j)^\vee}=t^{\alpha_{j^\prime}^\vee}=k_{j^\prime}^{-2}=k_j^{-2}.
\]
The second statement follows from 
\eqref{actiononT}.
\end{proof}

\begin{corollary}\label{10} 
Let $\cc\in C^J$ and $\mathfrak{t}\in T_J$. Suppose that $w\in W^J$ and $j\in [0,r]$ such that $s_jw\in wW_J$. Then
\begin{equation}\label{tddd}
(\pi_{\cc,\mathfrak{t}}(\delta(T_j))-k_j)\pi_{\cc,\mathfrak{t}}(S_w^Y)x^\cc=0
\end{equation}
if $k_j^2\not=1$.
\end{corollary}
\begin{proof}
For the duration of the proof we write $t:=\mathfrak{s}_J\mathfrak{t}\in L_J$.\\
Let $w\in W^J$ and $j\in [0,r]$ such that $s_jw\in wW_J$. Then $s_jw=ws_{j^\prime}$ for some $j^\prime\in J$, and
$\ell(s_jw)=\ell(w)+1=\ell(ws_{j^\prime})$ (see the proof of Lemma \ref{8}). Hence 
\[
\pi_{\cc,\mathfrak{t}}(S_j^Y)\pi_{\cc,\mathfrak{t}}(S_w^Y)x^\cc=\pi_{\cc,\mathfrak{t}}(S_{s_jw}^Y)x^\cc=\pi_{\cc,\mathfrak{t}}(S_w^YS_{j^\prime}^Y)x^\cc=0
\]
by (the proof of) Proposition \ref{triangular1}(1). We have $\pi_{\cc,\mathfrak{t}}(S_w^Y)x^\cc\in\mathcal{P}_{\mathfrak{t}}^{(\cc)}[wt]$, and $j\in J(wt)$ by Lemma \ref{8}, 
hence 
\[
\pi_{\cc,\mathfrak{t}}\bigl((Y^{-1})^{\alpha_j^\vee}\bigr)\pi_{\cc,\mathfrak{t}}(S_w^Y)x^\cc=k_j^{-2}\pi_{\cc,\mathfrak{t}}(S_w^Y)x^\cc.
\] 
Using the explicit expression \eqref{buildingYintertwiners} for $S_j^Y$ we conclude that 
\[
0=\pi_{\cc,\mathfrak{t}}(S_j^Y)\pi_{\cc,\mathfrak{t}}(S_w^Y)x^\cc=((k_j^{-2}-1)\pi_{\cc,\mathfrak{t}}(\delta(T_j))+k_j-k_j^{-1})\pi_{\cc,\mathfrak{t}}(S_w^Y)x^\cc.
\]
The result then follows from the assumption that $k_j^2\not=1$.
\end{proof}
\begin{remark}\label{ksquaredone}
If $\mathfrak{t}\in T_J^\prime$ and $\mathfrak{t}$ is $J$-regular, then the condition $k_j^2\not=1$ may be omitted in Corollary \ref{10}. Indeed, assume that $\mathfrak{t}\in T_J^\prime$ is $J$-regular and that $k_j^2=1$,
and fix $w\in W^J$ and $j\in [0,r]$ with $s_jw\in wW_J$. Then we have $s_jwt=wt$ by Lemma \ref{8}, where $t=\mathfrak{s}_J\mathfrak{t}$. The cross relations \eqref{crossX} then show that $\pi_{\cc,\mathfrak{t}}(\delta(T_j))\pi_{\cc,\mathfrak{t}}(S_w^Y)x^\cc$ lies in $\mathcal{P}_{\mathfrak{t}}^{(\cc)}[wt]$. Since $\mathfrak{t}\in T_J^\prime$ it follows that $\pi_{\cc,\mathfrak{t}}(\delta(T_j))\pi_{\cc,\mathfrak{t}}(S_w^Y)x^\cc$ is a constant multiple
of $\pi_{\cc,\mathfrak{t}}(S_w^Y)x^\cc$. The constant multiple can be computed by expanding $\pi_{\cc,\mathfrak{t}}(S_w^Y)x^\cc$ in terms of the basis $\{m_{u\cc}^J(x;\mathfrak{t})\}_{u\in W^J}$. Since $\mathfrak{t}$ is $J$-regular, $d_w(t)m_{w\cc}^J(x;\mathfrak{t})$ is the nonzero leading term in this expansion (see Proposition \ref{triangular1}(2)). By the proof of Proposition \ref{triangular1}, the leading term of $\pi_{\cc,\mathfrak{t}}(\delta(T_j))\pi_{\cc,\mathfrak{t}}(S_w^Y)x^\cc$ in its expansion along
$\{m_{u\cc}^J(x;\mathfrak{t})\}_{u\in W^J}$ will then be $d_w(t)\pi_{\cc,\mathfrak{t}}(\delta(T_j))m_w^J(x;\mathfrak{t})=k_jd_w(t)m_w^J(x;\mathfrak{t})$, where the equality follows from Lemma \ref{chitJ}.
  We conclude that \eqref{tddd} is valid.
\end{remark}

In the following theorem we show that the action of $\delta(T_j)$ ($0\leq j\leq r$) on the normalised $Y$-eigenbasis $\{P_y^J(x;\mathfrak{t})\}_{y\in\mathcal{O}_\cc}$ of $\mathcal{P}_{\mathfrak{t}}^{(\cc)}$ is described by discrete Demazure-Lusztig operators acting on $y\in\mathcal{O}_\cc$. We call this the {\it pseudo-duality property} of 
the normalised quasi-polynomials $P_y^J(x;\mathfrak{t})$ ($y\in\mathcal{O}_\cc$).
\begin{theorem}\label{17}
Let $\cc\in C^J$ and $\mathfrak{t}\in T_J^\prime$. Suppose that $\mathfrak{t}$ is $J$-generic. Then 
\begin{equation}\label{DLopgbr}
\pi_{\cc,\mathfrak{t}}(\delta(T_j))P_{y}^J(x;\mathfrak{t})=k_jP_{y}^J(x;\mathfrak{t})+
\Bigl(\frac{k_j^{-1}-k_jt_y^{\alpha_j^\vee}}{1-t_y^{\alpha_j^\vee}}\Bigr)
\bigl(P_{s_jy}^J(x;\mathfrak{t})-P_{y}^J(x;\mathfrak{t})\bigr)
\end{equation}
for $0\leq j\leq r$ and $y\in\mathcal{O}_\cc$. Here $t_y\in T$ is defined by \eqref{ty} and the right hand side of \eqref{DLopgbr} should always be read as $k_jP_y^J(x;\mathfrak{t})$ when $s_jy=y$
\end{theorem}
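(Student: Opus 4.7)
My plan is to compute $\pi_{\cc,\mathfrak{t}}(\delta(T_j)) P_y^J(x;\mathfrak{t})$ directly from the definition \eqref{boldbw}, which expresses $P_y^J$ as $\pi_{\cc,\mathfrak{t}}(S_{w_y}^Y) x^\cc / r_{w_y}(\mathfrak{s}_J\mathfrak{t})$, together with the fact that $\pi_{\cc,\mathfrak{t}}(S_{w_y}^Y) x^\cc \in \mathcal{P}_{\mathfrak{t}}^{(\cc)}[t_y]$, which is a consequence of Lemma \ref{YintAction} and $x^\cc \in \mathcal{P}_{\mathfrak{t}}^{(\cc)}[\mathfrak{s}_J\mathfrak{t}]$. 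The argument splits into three cases according to the sign of $\alpha_j(y)$, governed by Corollary \ref{wyrules}. In the degenerate case $\alpha_j(y) = 0$ one has $s_j y = y$ and $w_{s_j y} = w_y$, so $s_j w_y \in w_y W_J$; Lemma \ref{8} then yields $t_y^{\alpha_j^\vee} = k_j^{-2}$, which forces the coefficient $(k_j^{-1} - k_j t_y^{\alpha_j^\vee})/(1 - t_y^{\alpha_j^\vee})$ to vanish and reduces the claim to $\pi_{\cc,\mathfrak{t}}(\delta(T_j)) P_y^J = k_j P_y^J$, which is Corollary \ref{10}.

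For $\alpha_j(y) > 0$, Corollary \ref{wyrules}(1) makes $s_j w_y = s_j \cdot w_y$ a reduced expression for $w_{s_j y} \in W^J$, so $S_{s_j w_y}^Y = S_j^Y S_{w_y}^Y$. Using \eqref{buildingYintertwiners} and evaluating on the $Y$-weight vector $\pi_{\cc,\mathfrak{t}}(S_{w_y}^Y) x^\cc$ of weight $t_y$, I obtain
\begin{equation*}
\pi_{\cc,\mathfrak{t}}(S_{s_j w_y}^Y) x^\cc = (t_y^{\alpha_j^\vee} - 1)\,\pi_{\cc,\mathfrak{t}}(\delta(T_j))\,\pi_{\cc,\mathfrak{t}}(S_{w_y}^Y) x^\cc + (k_j - k_j^{-1})\,\pi_{\cc,\mathfrak{t}}(S_{w_y}^Y) x^\cc.
\end{equation*}
From $\Pi(s_j w_y) = \{w_y^{-1}\alpha_j\} \sqcup \Pi(w_y) \subseteq \Phi^+ \setminus \Phi_J^+$, the $J$-regularity of $\mathfrak{t}$ gives $t_y^{\alpha_j^\vee} \neq 1$, so I can solve for $\pi_{\cc,\mathfrak{t}}(\delta(T_j))\,\pi_{\cc,\mathfrak{t}}(S_{w_y}^Y) x^\cc$; dividing by $r_{w_y}(\mathfrak{s}_J\mathfrak{t})$ (nonzero by $J$-genericity) and applying the factorisation $r_{s_j w_y}(\mathfrak{s}_J\mathfrak{t}) = (k_j t_y^{\alpha_j^\vee} - k_j^{-1})\,r_{w_y}(\mathfrak{s}_J\mathfrak{t})$, one arrives after elementary algebra at \eqref{DLopgbr}.

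For $\alpha_j(y) < 0$ the reduced expression becomes $w_y = s_j \cdot s_j w_y$, so $S_{w_y}^Y = S_j^Y S_{s_j w_y}^Y$ and therefore $S_j^Y S_{w_y}^Y = (S_j^Y)^2 S_{s_j w_y}^Y$. Applying the quadratic identity \eqref{IpropY} for $(S_j^Y)^2$ on $\pi_{\cc,\mathfrak{t}}(S_{s_j w_y}^Y) x^\cc \in \mathcal{P}_{\mathfrak{t}}^{(\cc)}[t_{s_j y}]$ and using $t_{s_j y}^{\pm\alpha_j^\vee} = t_y^{\mp\alpha_j^\vee}$ yields
\begin{equation*}
\pi_{\cc,\mathfrak{t}}(S_j^Y)\,\pi_{\cc,\mathfrak{t}}(S_{w_y}^Y) x^\cc = (k_j^{-1} - k_j t_y^{\alpha_j^\vee})(k_j^{-1} - k_j t_y^{-\alpha_j^\vee})\,\pi_{\cc,\mathfrak{t}}(S_{s_j w_y}^Y) x^\cc.
\end{equation*}
Combining this with the same expansion of $S_j^Y$ on $M[t_y]$ as before and dividing by $r_{w_y}(\mathfrak{s}_J\mathfrak{t}) = (k_j t_y^{-\alpha_j^\vee} - k_j^{-1})\,r_{s_j w_y}(\mathfrak{s}_J\mathfrak{t})$, the factor $k_j^{-1} - k_j t_y^{-\alpha_j^\vee}$ in the numerator cancels with the corresponding factor in $r_{w_y}/r_{s_j w_y}$, leaving exactly the same coefficient $(k_j t_y^{\alpha_j^\vee} - k_j^{-1})/(t_y^{\alpha_j^\vee} - 1)$ in front of $P_{s_j y}^J$ as in the previous case. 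The same trailing algebra then gives \eqref{DLopgbr}. The main obstacle, beyond routine bookkeeping, is executing this telescoping in Case (iii): one must verify that $J$-genericity ensures both $r_{w_y}(\mathfrak{s}_J\mathfrak{t})$ and $k_j t_y^{-\alpha_j^\vee} - k_j^{-1}$ are nonzero (which follows from $-w_y^{-1}\alpha_j \in \Phi^+ \setminus \Phi_J^+$), and that the resulting $Y$-weight factors telescope to exactly match the uniform formula, thereby unifying Cases (ii) and (iii) into the single statement \eqref{DLopgbr}.
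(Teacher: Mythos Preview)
Your proof is correct. Cases (i) and (ii) match the paper's Cases 1 and 2 essentially verbatim, though your Case (ii) is slightly leaner: the paper expands $\pi_{\cc,\mathfrak{t}}(\delta(T_j))\pi_{\cc,\mathfrak{t}}(S_{w_y}^Y)x^\cc$ in the $Y$-eigenbasis $\{\pi_{\cc,\mathfrak{t}}(S_w^Y)x^\cc\}_{w\in W^J}$ and then invokes the simple $Y$-spectrum to kill all but two coefficients, whereas you simply solve the single linear relation coming from \eqref{buildingYintertwiners}.

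The genuine difference is in Case (iii). The paper does not use the quadratic identity for $(S_j^Y)^2$ at all; instead it applies the already-established Case 2 formula to $s_jy$ (for which $\alpha_j(s_jy)>0$), then acts by $\pi_{\cc,\mathfrak{t}}(\delta(T_j))$ a second time and invokes the Hecke relation $\delta(T_j)^2=(k_j-k_j^{-1})\delta(T_j)+1$ to extract $\pi_{\cc,\mathfrak{t}}(\delta(T_j))P_y^J$. Your route via \eqref{IpropY} is more direct and treats the two nondegenerate cases more uniformly: the telescoping of $(k_j^{-1}-k_jt_y^{-\alpha_j^\vee})$ against the extra factor in $r_{w_y}/r_{s_jw_y}$ lands immediately on \eqref{intermediate0}. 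Both arguments require only $J$-genericity (your verification that $-w_y^{-1}\alpha_j\in\Pi(w_y)\subseteq\Phi^+\setminus\Phi_J^+$ is exactly what is needed to guarantee $r_{w_y}(\mathfrak{s}_J\mathfrak{t})\neq 0$ and $t_y^{\alpha_j^\vee}\neq 1$). The paper's bootstrap has the minor advantage of avoiding the product formula \eqref{IpropY}, relying only on the Hecke quadratic relation; your approach has the advantage of a single uniform mechanism.
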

\begin{proof}
Let $y\in\mathcal{O}_\cc$ and $0\leq j\leq r$. 

The case that $s_jy=y$ follows from Remark \ref{ksquaredone}. Note that in this case we have
$s_jw_y\in w_yW_J$, so Lemma \ref{8} gives 
\[
t_{y}^{\alpha_j^\vee}=(\mathfrak{s}_J\mathfrak{t})^{(w_y^{-1}\alpha_j)^\vee}=k_j^{-2},
\] 
hence the right hand side of \eqref{DLopgbr} is well defined when $k_j^2\not=1$ and it simplifies to $k_jP_y^J(x;\mathfrak{t})$. 

In the remainder of the proof we assume that $s_jy\not=y$. This is equivalent to 
$s_jw_y=w_{s_jy}$ by Lemma \ref{cosetcomb} and Corollary \ref{wyrules}.
Then 
\[
w_y^{-1}\alpha_j\in\Pi(w_y)\cup\Pi(s_jw_y)\subseteq\Phi\setminus\Phi_J
\] 
by Lemma \ref{lengthadd} and \eqref{Phipos}, and
\[
t_{y}^{\alpha_j^\vee}=(\mathfrak{s}_J\mathfrak{t})^{(w_y^{-1}\alpha_j)^\vee}\not=1
\]
since $\mathfrak{t}$ is $J$-generic, hence in particular $J$-regular. So the right hand side of \eqref{DLopgbr} is well defined, and formula \eqref{DLopgbr} is equivalent to
\begin{equation}\label{intermediate0}
\pi_{\cc,\mathfrak{t}}(\delta(T_j))P_{y}^J(x;\mathfrak{t})=\Bigl(\frac{k_j-k_j^{-1}}{1-t_y^{\alpha_j^\vee}}\Bigr)P_y^J(x;\mathfrak{t})+
\Bigl(\frac{k_j^{-1}-k_jt_y^{\alpha_j^\vee}}{1-t_y^{\alpha_j^\vee}}\Bigr)P_{s_jy}^J(x;\mathfrak{t}).
\end{equation}
To prove \eqref{intermediate0}, we consider two cases.\\

\noindent
{\bf Case 1:} $\ell(w_{s_jy})=\ell(w_y)+1$.\\

\noindent
Consider the expansion
\[
\pi_{\cc,\mathfrak{t}}(\delta(T_j))\pi_{\cc,\mathfrak{t}}(S_{w_y}^Y)x^\cc
=\sum_{y^\prime\in\mathcal{O}_\cc}C_{y^\prime}
\pi_{\cc,\mathfrak{t}}(S_{w_{y^\prime}}^Y)x^\cc
\qquad (C_{y^\prime}\in\mathbf{F})
\]
in the $Y$-eigenbasis $\{\pi_{\cc,\mathfrak{t}}(S_{y^\prime}^Y)x^\cc\}_{y^{\prime}\in\mathcal{O}_\cc}$ of $\Pc_{\mathfrak{t}}$. 
Since $\ell(w_{s_jy})=\ell(w_y)+1$ we then have
\begin{equation}\label{spectralexpansion}
\begin{split}
\pi_{\cc,\mathfrak{t}}&(S_{w_{s_jy}}^Y)x^\cc=\pi_{\cc,\mathfrak{t}}(S_j^YS_{w_y}^Y)x^\cc\\
=&\bigl((t_y^{\alpha_j^\vee}-1)\pi_{\cc,\mathfrak{t}}(\delta(T_j))+k_j-k_j^{-1}\bigr)\pi_{\cc,\mathfrak{t}}(S_{w_y}^Y)x^\cc\\
=&\bigl((t_y^{\alpha_j^\vee}-1)C_y+k_j-k_j^{-1}\bigr)\pi_{\cc,\mathfrak{t}}(S_{w_y}^Y)x^\cc
+\sum_{y^\prime\in\mathcal{O}_\cc\setminus\{y\}}(t_y^{\alpha_j^\vee}-1)
C_{y^\prime}\pi_{\cc,\mathfrak{t}}(S_{w_{y^\prime}}^Y)x^\cc.
\end{split}
\end{equation}
The assumptions on $\mathfrak{t}$ ensure that the $t_y$ ($y\in\mathcal{O}_\cc$) are pairwise different $Y$-weights of $\mathcal{P}_{\mathfrak{t}}^{(\cc)}$ and that
$\Pc_{\mathfrak{t}}[t_y]$ is spanned by $\pi_{\cc,\mathfrak{t}}(S_{w_y}^Y)x^\cc$ for $y\in\mathcal{O}_\cc$.
Comparing the different $Y$-weight contributions on both sides of \eqref{spectralexpansion} thus gives
\begin{equation*}
\begin{split}
C_{y^\prime}&=0\quad \textup{ unless }\, y^\prime\in \{y,s_jy\},\\
C_y&=\frac{k_j-k_j^{-1}}{1-t_y^{\alpha_j^\vee}},\qquad
C_{s_jy}=\frac{1}{t_y^{\alpha_j^\vee}-1}.
\end{split}
\end{equation*}
We conclude that
\begin{equation}\label{intcase2}
\pi_{\cc,\mathfrak{t}}(\delta(T_j))\pi_{\cc,\mathfrak{t}}(S_{w_y}^Y)x^\cc
=\Bigl(\frac{k_j-k_j^{-1}}{1-t_y^{\alpha_j^\vee}}\Bigr)\pi_{\cc,\mathfrak{t}}(S_{w_y}^Y)x^\cc+
\Bigl(\frac{1}{t_y^{\alpha_j^\vee}-1}\Bigr)\pi_{\cc,\mathfrak{t}}(S_{w_{s_jy}}^Y)x^\cc.
\end{equation}
Then \eqref{boldbw} and the identity
$r_{w_{s_jy}}(\mathfrak{s}_J\mathfrak{t})=(k_jt_y^{\alpha_j^\vee}-k_j^{-1})r_{w_y}(\mathfrak{s}_J\mathfrak{t})$ give \eqref{intermediate0}.\\

\noindent
{\it Case 2:} $\ell(w_{s_jy})=\ell(w_y)-1$.\\

\noindent
We can now apply the first case to $s_jy$, yielding
\begin{equation}\label{intermediate}
\pi_{\cc,\mathfrak{t}}(\delta(T_j))P_{s_jy}^J(x;\mathfrak{t})=
\Bigl(\frac{k_j-k_j^{-1}}{1-t_{s_jy}^{\alpha_j^\vee}}\Bigr)P_{s_jy}^{J}(x;\mathfrak{t})
+\Bigl(\frac{k_j^{-1}-k_jt_{s_jy}^{\alpha_j^\vee}}{1-t_{s_jy}^{\alpha_j^\vee}}\Bigr)P_{y}^J(x;\mathfrak{t}).
\end{equation}
Now act on both sides with $\delta(T_j)$, apply the Hecke relation 
$\delta(T_j)^2=(k_j-k_j^{-1})\delta(T_j)+1$ to the left hand side, and substitute \eqref{intermediate} for the resulting 
$\pi_{\cc,\mathfrak{t}}(\delta(T_j))P_{s_jy}^J(x;\mathfrak{t})$'s in the equality. A straightforward computation, avoiding rescaling of the equation,  then shows that
\begin{equation*}
\begin{split}
&\Bigl(\frac{k_j^{-1}-k_jt_{s_jy}^{\alpha_j^\vee}}{1-t_{s_jy}^{\alpha_j^\vee}}\Bigr)\pi_{\cc,\mathfrak{t}}(\delta(T_j))P_y^J(x;\mathfrak{t})=\\
&\qquad=\frac{(k_j^{-1}-k_jt_{s_jy}^{\alpha_j^\vee})(k_j-k_j^{-1})}{(1-t_{s_jy}^{\alpha_j^\vee})
(1-t_{s_jy}^{-\alpha_j^\vee})}P_y^J(x;\mathfrak{t})+
\frac{(k_j^{-1}-k_jt_{s_jy}^{\alpha_j^\vee})(k_j^{-1}-k_jt_{s_jy}^{-\alpha_j^\vee})}
{(1-t_{s_jy}^{\alpha_j^\vee})(1-t_{s_jy}^{-\alpha_j^\vee})}P_{s_jy}^J(x;\mathfrak{t}).
\end{split}
\end{equation*}
Now dividing out the prefactor of $\pi_{\cc,\mathfrak{t}}(\delta(T_j))P_y^J(x;\mathfrak{t})$
(which is nonzero since $\mathfrak{t}$ is $J$-generic) and using that $t_{s_jy}^{\alpha_j^\vee}=t_y^{-\alpha_j^\vee}$, we obtain the desired formula \eqref{intermediate0}.
\end{proof}
\begin{remark}
\hfill
\begin{enumerate}
\item 
In case of Cherednik's basic representation $\mathcal{P}^{(0)}_{1_T}$, Theorem \ref{17} is an important intermediate step in proving the well-known duality formula
\[
P_\mu^{[1,r]}(\mathfrak{s}_\nu;1_T)=P_\nu^{[1,r]}(\mathfrak{s}_\mu;1_T)\qquad (\mu,\nu\in Q^\vee)
\]
for the normalised nonsymmetric Macdonald polynomials, see \cite[Thm 5.1]{Cm}. An analogue of the duality formula for the quasi-polynomial generalisations $P_y^J(x;\mathfrak{t})$ of the normalised nonsymmetric Macdonald polynomials is not known.
\item
The quasi-duality property \eqref{DLopgbr}, together with the eigenvalue equations
\[
\pi_{\cc,\mathfrak{t}}(Y^\mu)P_y^J(x;\mathfrak{t})=t_y^{-\mu}P_y^J(x;\mathfrak{t})\qquad (y\in\mathcal{O}_\cc,\, \mu\in Q^\vee)
\]
for $\cc\in C^J$ and $J$-generic $\mathfrak{t}\in T_J^\prime$, allows one to describe the $\mathbb{H}$-action on $\Pc_{\mathfrak{t}}$ entirely in terms of 
an $\mathbb{H}$-action on the space of $\mathbf{F}$-valued functions on $\mathcal{O}_\cc$, with $\delta(T_j)$ ($0\leq j\leq r$) acting by discrete Demazure-Lusztig type operators (cf. Remark \ref{realIND}). 
The subspace of finitely supported $\mathbf{F}$-valued functions on $\mathcal{O}_\cc$ forms a $\mathbb{H}$-submodule, which alternatively can be obtained as a quotient of 
Cherednik's \cite[\S 3.4.2]{Ch} $\mathbb{H}$-representation on the space of finitely supported $\mathbf{F}$-valued functions on $W$ (cf. Remark \ref{realIND}(3)).
\end{enumerate}
\end{remark}
\begin{corollary}\label{17cor}
Let $\cc\in C^J$ and $\mathfrak{t}\in T_J^\prime$. Suppose that $\mathfrak{t}$ is $J$-generic. Then 
\begin{equation}\label{DLopgbrcor}
\pi_{\cc,\mathfrak{t}}(S_j^Y)P_y^J(x;\mathfrak{t})=\bigl(k_jt_y^{\alpha_j^\vee}-k_j^{-1})P_{s_jy}^J(x;\mathfrak{t})
\end{equation}
for $j\in [0,r]$ and $y\in\mathcal{O}_\cc$, where $t_y\in T$ is defined by \eqref{ty}. In particular, 
\begin{equation}\label{DLopgbrcor1}
\pi_{\cc,\mathfrak{t}}(S_j^Y)P_y^J(x;\mathfrak{t})=0\,\,\hbox{ if }\, s_jy=y.
\end{equation}
\end{corollary}
\begin{proof}
If $s_jy=y$ then $t_y^{\alpha_j^\vee}=k_j^{-2}$, hence the right hand side of \eqref{DLopgbrcor} is zero and the resulting equation \eqref{DLopgbrcor1} follows from \eqref{buildingYintertwiners} and Remark \ref{ksquaredone}. If $s_jy\not=y$ then \eqref{DLopgbrcor} follows from \eqref{buildingYintertwiners} and \eqref{DLopgbr} by a straightforward computation.
\end{proof}
\begin{remark}
Under additional assumptions on $\mathfrak{t}$, a proof of Corollary \ref{17cor} that avoids Theorem \ref{17} is as follows.

Formula \eqref{DLopgbrcor} follows directly from 
the definition of $P_y^J(x;\mathfrak{t})$ (see \eqref{boldbw}) when $s_jw_y\not\in W^J$, and when $s_jw_y\in W^J$ and $\ell(s_jw_y)=\ell(w_y)+1$. In the first case use Proposition \ref{triangular1}(1) and Lemma \ref{8}, in the second case use $s_jw_y=w_{s_jy}$ and 
$r_{s_jw_y}(x)=(k_jx^{(w_y^{-1}\alpha_j)^\vee}-k_j^{-1})r_{w_y}(x)$. 
When $s_jw_y\in W^J$ and $\ell(s_jw_y)=\ell(w_y)-1$, acting by $S_j^Y$ on \eqref{DLopgbrcor} and applying \eqref{IpropY} gives
\[
(k_jt_y^{\alpha_j^\vee}-k_j^{-1})\pi_{\cc,\mathfrak{t}}(S_j^Y)P_{s_jy}^J(x;\mathfrak{t})=(k_jt_y^{\alpha_j^\vee}-k_j^{-1})(k_jt_y^{-\alpha_j^\vee}-k_j^{-1})P_y^J(x;\mathfrak{t}).
\]
This yields \eqref{DLopgbrcor} if the scalar factor $k_jt_y^{\alpha_j^\vee}-k_j^{-1}$ can be divided out.
This is ensured if $(\mathfrak{s}_J\mathfrak{t})^{a^\vee}\not=k_a^{-2}$ for
all $a\in\Phi^-\setminus\Phi_J^-$. Note that these are exactly the additional requirements on $\mathfrak{t}$ ensuring the irreducibility of $\pi_{\cc,\mathfrak{t}}$, see Theorem \ref{irredTHM}. 
\end{remark}

The action of $\delta(T_j)$ on the monic quasi-polynomials $E_y^J(x;\mathfrak{t})$ can be described as follows.
Recall the constants $\kappa_v(y)\in\mathbf{F}^\times$ ($y\in E$, $v\in W_0$), defined by \eqref{kvy}.
\begin{corollary}\label{Whittakerprop}
Let $\cc\in C^J$ and assume that $\mathfrak{t}\in T_J^\prime$ is $J$-generic.

The monic quasi-polynomials $E_y^J(x;\mathfrak{t})\in\Pc_{\mathfrak{t}}$ \textup{(}$y\in\mathcal{O}_\cc$\textup{)} satisfy for $0\leq j\leq r$,
\begin{equation*}
\pi_{\cc,\mathfrak{t}}(\delta(T_j))E_y^J(x;\mathfrak{t})=
\begin{cases}
k_jE_y^J(x;\mathfrak{t})\qquad &\hbox{ if }\,\, \alpha_j(y)=0,\\
\Bigl(\frac{k_j-k_j^{-1}}{1-t_y^{\alpha_j^\vee}}\Bigr)E_y^J(x;\mathfrak{t})+\kappa_{Ds_j}(y)E_{s_jy}^J(x;\mathfrak{t})
\qquad &\hbox{ if }\,\, \alpha_j(y)>0,
\end{cases}
\end{equation*}
where $t_y\in T$ is defined by \eqref{ty}. Furthermore, for $y\in\mathcal{O}_\cc$ such that $\alpha_j(y)<0$,
\begin{equation*}
\begin{split}
\pi_{\cc,\mathfrak{t}}(\delta(T_j))E_y^J(x;\mathfrak{t})&=
\Bigl(\frac{k_j-k_j^{-1}}{1-t_y^{\alpha_j^\vee}}\Bigr)E_y^J(x;\mathfrak{t})\\
&+
\kappa_{Ds_j}(y)\frac{(k_j-k_j^{-1}t_y^{\alpha_j^\vee})(k_j-k_j^{-1}t_y^{-\alpha_j^\vee})}
{\bigl(1-t_y^{\alpha_j^\vee}\bigr)\bigl(1-t_y^{-\alpha_j^\vee}\bigr)}E_{s_jy}^J(x;\mathfrak{t}).
\end{split}
\end{equation*}
\end{corollary}
\begin{proof}
The statement when $\alpha_j(y)=0$ is clear.
If $\alpha_j(y)>0$ then $s_jw_y=w_{s_jy}\in W^J$ and $\ell(s_jw_y)=\ell(w_y)+1$ by Corollary
\ref{wyrules}. Furthermore $t_y^{\alpha_j^\vee}\not=1$, see the proof of Theorem \ref{17}. Then \eqref{intcase2} and \eqref{relS} give
\[
\pi_{\cc,\mathfrak{t}}(\delta(T_j))E_y^J(x;\mathfrak{t})=\Bigl(\frac{k_j-k_j^{-1}}{1-t_y^{\alpha_j^\vee}}\Bigr)
E_y^J(x;\mathfrak{t})+\frac{d_{s_jw_y}(\mathfrak{s}_J\mathfrak{t})k_{s_jw_y}(\cc)}{d_{w_y}(\mathfrak{s}_J\mathfrak{t})k_{w_y}(\cc)}
\Bigl(\frac{1}{t_y^{\alpha_j^\vee}-1}\Bigr)E_{s_jy}^J(x;\mathfrak{t}).
\]
Simplifying the second factor using \eqref{eq:d_relation}, \eqref{kawy} and Lemma \ref{kylemma2} yields the desired result.

If $\alpha_j(y)<0$ then the formula follows from a direct computation using the previous case, the Hecke relation \eqref{Hr} and Lemma \ref{kylemma2}.
\end{proof}

\subsection{(Anti)symmetrisation}\label{S65}
By \eqref{kvy} we have $\kappa_v(0)=\prod_{\alpha\in\Pi(v)}k_\alpha$. The element
\[
\mathbf{1}_+:=\sum_{v\in W_0}\kappa_v(0)T_v=\kappa_{w_0}(0)^2\sum_{v\in W_0}\kappa_v(0)^{-1}T_{v^{-1}}^{-1}
\]
is, up to a normalisation factor, the trivial idempotent of $H_0$. Concretely, it satisfies 
\[
T_i\mathbf{1}_+=k_i\mathbf{1}_+=\mathbf{1}_+T_i\qquad (1\leq i\leq r)
\]
and $\mathbf{1}_+^2=\bigl(\sum_{v\in W_0}\kappa_v(0)^2\bigr)\mathbf{1}_+$. 

The following result follows by a direct computation.
\begin{lemma}\label{SymmInt}
Let $i\in [1,r]$. Then
\begin{equation*}
S_i^X\mathbf{1}_+=(k_ix^{\alpha_i^\vee}-k_i^{-1})\mathbf{1}_+,\qquad
\mathbf{1}_+S_i^Y=\mathbf{1}_+(k_i(Y^{-1})^{\alpha_i^\vee}-k_i^{-1})
\end{equation*}
in $\mathbb{H}$. In particular, $\widetilde{S}_i^X\mathbf{1}_+=\mathbf{1}_+$ in 
$\mathbb{H}^{X-\textup{loc}}$ and $\mathbf{1}_+\widetilde{S}_i^Y=\mathbf{1}_+$
in $\mathbb{H}^{Y-\textup{loc}}$.
\end{lemma}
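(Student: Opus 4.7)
The plan is to substitute the explicit expressions for $S_i^X$ and $S_i^Y$ and use only the intertwining identity $T_i\mathbf{1}_+=k_i\mathbf{1}_+=\mathbf{1}_+T_i$. No cross relations in $\mathbb{H}$ are needed, because in each identity the polynomial (resp.\ $Y$-polynomial) factor that appears on the right-hand side sits on the correct side of $\mathbf{1}_+$ from the outset.

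First I would prove the $X$-identity. Starting from $S_i^X=(x^{\alpha_i^\vee}-1)T_i+k_i-k_i^{-1}$ from \eqref{buildingXintertwiners} and multiplying on the right by $\mathbf{1}_+$, the factor $T_i\mathbf{1}_+$ collapses to $k_i\mathbf{1}_+$, so
\[
S_i^X\mathbf{1}_+=(x^{\alpha_i^\vee}-1)k_i\mathbf{1}_++(k_i-k_i^{-1})\mathbf{1}_+=(k_ix^{\alpha_i^\vee}-k_i^{-1})\mathbf{1}_+,
\]
with no commutation issues because $x^{\alpha_i^\vee}$ stays to the left of $\mathbf{1}_+$ throughout. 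Dividing by $k_ix^{\alpha_i^\vee}-k_i^{-1}\in\mathcal{P}^\times$ (which lives in $\mathbb{H}^{X-\textup{loc}}$) and comparing with the normalisation \eqref{normS} then yields $\widetilde{S}_i^X\mathbf{1}_+=\mathbf{1}_+$.

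For the $Y$-identity I would use the formula $S_i^Y=\delta(T_i)\bigl((Y^{-1})^{\alpha_i^\vee}-1\bigr)+k_i-k_i^{-1}$ from \eqref{buildingYintertwiners} together with $\delta(T_i)=T_i$ for $1\leq i\leq r$, so that multiplying on the left by $\mathbf{1}_+$ gives
\[
\mathbf{1}_+S_i^Y=\mathbf{1}_+T_i\bigl((Y^{-1})^{\alpha_i^\vee}-1\bigr)+(k_i-k_i^{-1})\mathbf{1}_+=k_i\mathbf{1}_+\bigl((Y^{-1})^{\alpha_i^\vee}-1\bigr)+(k_i-k_i^{-1})\mathbf{1}_+,
\]
which regroups to $\mathbf{1}_+\bigl(k_i(Y^{-1})^{\alpha_i^\vee}-k_i^{-1}\bigr)$. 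The statement $\mathbf{1}_+\widetilde{S}_i^Y=\mathbf{1}_+$ then follows by multiplying on the right by $\bigl(k_i(Y^{-1})^{\alpha_i^\vee}-k_i^{-1}\bigr)^{-1}\in\mathbb{H}^{Y-\textup{loc}}$, in view of \eqref{RelToUnnormY} applied to $w=s_i$ (so $\Pi(s_i)=\{\alpha_i\}$).

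There is no real obstacle here: the whole point is that the factor $T_i$ appearing in $S_i^X$ is adjacent to $\mathbf{1}_+$ (on one side for $X$, on the other side for $Y$), so the intertwining relation $T_i\mathbf{1}_+=k_i\mathbf{1}_+=\mathbf{1}_+T_i$ suffices. The mildest thing to check is simply that one does not inadvertently commute $x^{\alpha_i^\vee}$ past $\mathbf{1}_+$ or $(Y^{-1})^{\alpha_i^\vee}$ past $\mathbf{1}_+$, which is avoided by writing every term in the canonical order dictated by the target formula.
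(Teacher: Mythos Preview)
Your proof is correct and is exactly the direct computation the paper alludes to: substitute the explicit formulas \eqref{buildingXintertwiners} and \eqref{buildingYintertwiners} for $S_i^X$ and $S_i^Y$, use $T_i\mathbf{1}_+=k_i\mathbf{1}_+=\mathbf{1}_+T_i$, and simplify. The paper does not spell out any details, so your write-up is already more explicit than the original.
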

For $\cc\in C^J$ and $\mathfrak{t}\in T_J$ consider the $\mathcal{P}^{W_0}$-submodule
\[
\mathcal{P}^{(\cc),+}_{\mathfrak{t}}:=\pi_{\cc,\mathfrak{t}}(\mathbf{1}_+)\Pc_{\mathfrak{t}}
\]
of $\Pc_{\mathfrak{t}}$. The interpretation of $\mathcal{P}^{(\cc),+}_{\mathfrak{t}}$ as $W_0$-invariant quasi-polynomials requires the 
$\mathbb{H}^{X-\textup{loc}}$-module $\mathcal{Q}^{(\cc)}_{\mathfrak{t}}$ from the proof of Theorem \ref{aCG}, which contains $\mathcal{P}_{\mathfrak{t}}^{(\cc)}$ as
$\mathbb{H}$-submodule.
\begin{lemma}\label{symm_space}
Let $\cc\in C^J$ and $\mathfrak{t}\in T_J$. The $\mathcal{Q}^{W_0}$-submodule $\mathcal{Q}^{(\cc),+}_{\mathfrak{t}}:=\pi_{\cc,\mathfrak{t}}^{X-\textup{loc}}(\mathbf{1}_+)\mathcal{Q}^{(\cc)}_{\mathfrak{t}}$
of $\mathcal{Q}_{\mathfrak{t}}^{(\cc)}$ is contained in 
$\{f\in\mathcal{Q}^{(\cc)}_{\mathfrak{t}} \,\, | \,\, \sigma_{\cc,\mathfrak{t}}(v)f=f\quad \forall\, v\in W_0\}$.
\end{lemma}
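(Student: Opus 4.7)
The plan is to reduce the statement to the simple reflections and then invoke Lemma~\ref{SymmInt} via the identification of $\sigma_{\cc,\mathfrak{t}}$ with $\pi_{\cc,\mathfrak{t}}^{X-\textup{loc}}\circ\ss$ recorded in \eqref{linkCGlocal} (equivalently \eqref{localaction}). Since $W_0$ is generated by $s_1,\ldots,s_r$, it is enough to check that $\sigma_{\cc,\mathfrak{t}}(s_i)f=f$ for all $i\in[1,r]$ and all $f\in\mathcal{Q}^{(\cc),+}_{\mathfrak{t}}$.

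Fix such an $i$ and such an $f$, and write $f=\pi_{\cc,\mathfrak{t}}^{X-\textup{loc}}(\mathbf{1}_+)g$ for some $g\in\mathcal{Q}^{(\cc)}_{\mathfrak{t}}$. By \eqref{localaction} applied with $w=s_i$ and the trivial scalar factor, $\sigma_{\cc,\mathfrak{t}}(s_i)=\pi_{\cc,\mathfrak{t}}^{X-\textup{loc}}(\widetilde{S}_i^X)$ as linear operators on $\mathcal{Q}^{(\cc)}_{\mathfrak{t}}$. Combining this with the algebra-homomorphism property of $\pi_{\cc,\mathfrak{t}}^{X-\textup{loc}}$ and the identity $\widetilde{S}_i^X\mathbf{1}_+=\mathbf{1}_+$ in $\mathbb{H}^{X-\textup{loc}}$ (Lemma~\ref{SymmInt}), we compute
\[
\sigma_{\cc,\mathfrak{t}}(s_i)f=\pi_{\cc,\mathfrak{t}}^{X-\textup{loc}}(\widetilde{S}_i^X)\pi_{\cc,\mathfrak{t}}^{X-\textup{loc}}(\mathbf{1}_+)g=\pi_{\cc,\mathfrak{t}}^{X-\textup{loc}}(\widetilde{S}_i^X\mathbf{1}_+)g=\pi_{\cc,\mathfrak{t}}^{X-\textup{loc}}(\mathbf{1}_+)g=f,
\]
which completes the argument.

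There is essentially no obstacle: once Lemma~\ref{SymmInt} is in hand and one knows that $\sigma_{\cc,\mathfrak{t}}(s_i)$ acts on $\mathcal{Q}^{(\cc)}_{\mathfrak{t}}$ as $\pi_{\cc,\mathfrak{t}}^{X-\textup{loc}}(\widetilde{S}_i^X)$, the lemma drops out from the simple reflections. The only small point to notice is that $\pi_{\cc,\mathfrak{t}}^{X-\textup{loc}}(\mathbf{1}_+)g$ is a meaningful element of $\mathcal{Q}^{(\cc)}_{\mathfrak{t}}$, which is immediate from the fact that $\mathcal{Q}^{(\cc)}_{\mathfrak{t}}$ carries a genuine $\mathbb{H}^{X-\textup{loc}}$-module structure as used throughout the proof of Theorem~\ref{aCG}.
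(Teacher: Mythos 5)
Your proof is correct and follows essentially the same route as the paper: both rest on Lemma \ref{SymmInt} together with the identification $\sigma_{\cc,\mathfrak{t}}(v)=\pi_{\cc,\mathfrak{t}}^{X-\textup{loc}}(\widetilde{S}_v^X)$ coming from \eqref{localaction}. The paper applies $\widetilde{S}_v^X\mathbf{1}_+=\mathbf{1}_+$ directly for all $v\in W_0$, while you reduce to the simple reflections via the group-action property; this is an immaterial difference.
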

\begin{proof}
For $f\in\mathcal{Q}_{\mathfrak{t}}^{(\cc),+}$ we have $\pi^{X-\textup{loc}}_{\cc,\mathfrak{t}}(\widetilde{S}_v^X)f=f$ 
for all $v\in W_0$ by Lemma \ref{SymmInt}. 
The lemma now follows from the definition \eqref{localaction} of the $W\ltimes\mathcal{Q}$-action $\sigma_{\cc,\mathfrak{t}}$.
\end{proof}

\begin{definition}
Let $\cc\in C^J$ and $\mathfrak{t}\in T_J^\prime$.  
Define $E_y^{J,+}(x;\mathfrak{t})\in\mathcal{P}^{(\cc),+}_{\mathfrak{t}}$ \textup{(}$y\in\mathcal{O}_\cc$\textup{)} by
\[
E_y^{J,+}(x;\mathfrak{t}):=\pi_{\cc,\mathfrak{t}}(\mathbf{1}_+)E_y^J(x;\mathfrak{t}).
\]
If furthermore $\mathfrak{t}$ is $J$-generic, then we define normalised versions $P_y^{J,+}(x;\mathfrak{t})\in\mathcal{P}_{\mathfrak{t}}^{(\cc),+}$
of $E_y^{J,+}(x;\mathfrak{t})$ by 
\begin{equation*}
P_y^{J,+}(x;\mathfrak{t}):=\pi_{\cc,\mathfrak{t}}(\mathbf{1}_+)P_y^J(x;\mathfrak{t})\qquad (y\in\mathcal{O}_\cc).
\end{equation*}
\end{definition}
Note that $E_y^{J,+}(x;\mathfrak{t})\in\mathcal{P}^{(\cc),+}_{\mathfrak{t}}$ ($y\in\mathcal{O}_\cc$) for $\cc\in C^J$ and $\mathfrak{t}\in T_J^\prime$ satisfies the eigenvalue equations
\begin{equation}\label{Wde}
\pi_{\cc,\mathfrak{t}}(p(Y))E_y^{J,+}(x;\mathfrak{t})=p(\mathfrak{s}_y^{-1}\mathfrak{t}_y^{-1})E_y^{J,+}(x;\mathfrak{t})\qquad \forall\, p\in\mathcal{P}^{W_0},
\end{equation}
since $\mathcal{P}_Y^{W_0}$ is the center of $H$. The same holds true for $P_y^{J,+}(x;\mathfrak{t})$.
\begin{remark}\label{remsymMac}
If $1_T\in T_{[1,r]}^\prime$ then $E_\mu^{[1,r],+}(x;1_T)\in\mathcal{P}^{W_0}$ ($\mu\in Q^\vee$) is, up to a normalisation factor, the symmetric Macdonald polynomial of degree $\mu\in Q^\vee$ (see, e.g., \cite[\S 5.3]{Ma}).
\end{remark}

\begin{proposition}\label{ydepSymm}
Let $\cc\in C^J$ and assume that $\mathfrak{t}\in T_J^\prime$ is $J$-generic. 
\begin{enumerate}
\item For $i\in [1,r]$ such that $\alpha_i(y)>0$ we have
\[
E_{s_iy}^{J,+}(x;\mathfrak{t})=k_i^{\eta(\alpha_i(y))}\Bigl(\frac{k_i^{-1}-k_it_y^{\alpha_i^\vee}}{1-t_y^{\alpha_i^\vee}}\Bigr)E_y^{J,+}(x;\mathfrak{t}),
\]
where $t_y\in T$ is given by \eqref{ty}.
\item $P_{s_iy}^{J,+}(x;\mathfrak{t})=P_y^{J,+}(x;\mathfrak{t})$
for $i\in [1,r]$ and $y\in\mathcal{O}_\cc$.
\end{enumerate}
\end{proposition}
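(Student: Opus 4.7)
The plan is to treat parts (1) and (2) in sequence, with (1) serving as the main input for (2).

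For part (1), I will apply the symmetrising idempotent $\pi_{\cc,\mathfrak{t}}(\mathbf{1}_+)$ to the formula of Corollary \ref{Whittakerprop}, specialised to $j = i \in [1,r]$ (so that $\delta(T_i) = T_i$) and $\alpha_i(y) > 0$:
\begin{equation*}
\pi_{\cc,\mathfrak{t}}(T_i)\,E_y^J(x;\mathfrak{t}) = \Bigl(\frac{k_i - k_i^{-1}}{1 - t_y^{\alpha_i^\vee}}\Bigr) E_y^J(x;\mathfrak{t}) + \kappa_{s_i}(y)\,E_{s_iy}^J(x;\mathfrak{t}).
\end{equation*}
The identity $\mathbf{1}_+ T_i = k_i \mathbf{1}_+$, recorded just before Lemma \ref{SymmInt}, turns the left-hand side into $k_i E_y^{J,+}(x;\mathfrak{t})$. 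Solving algebraically for $E_{s_iy}^{J,+}(x;\mathfrak{t})$ and using $\kappa_{s_i}(y) = k_i^{-\eta(\alpha_i(y))}$ (immediate from $\Pi(s_i) = \{\alpha_i\}$ and \eqref{kvy}) yields the stated formula, since
$k_i - (k_i - k_i^{-1})/(1 - t_y^{\alpha_i^\vee}) = (k_i^{-1} - k_i t_y^{\alpha_i^\vee})/(1 - t_y^{\alpha_i^\vee})$.

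For part (2), I will treat three cases according to the sign of $\alpha_i(y)$. When $\alpha_i(y) = 0$ one has $s_iy = y$, and the claim is trivial. When $\alpha_i(y) > 0$, Corollary \ref{wyrules}(1) gives $w_{s_iy} = s_i w_y$ with $\ell(s_iw_y) = \ell(w_y) + 1$, so by Lemma \ref{lengthadd} one has $\Pi(s_iw_y) = \{w_y^{-1}\alpha_i\} \sqcup \Pi(w_y)$. Combined with $(\mathfrak{s}_J\mathfrak{t})^{(w_y^{-1}\alpha_i)^\vee} = t_y^{\alpha_i^\vee}$ and $k_{w_y^{-1}\alpha_i} = k_i$, this yields the ratios
\begin{equation*}
\frac{d_{s_iw_y}(\mathfrak{s}_J\mathfrak{t})}{d_{w_y}(\mathfrak{s}_J\mathfrak{t})} = t_y^{\alpha_i^\vee} - 1, \qquad \frac{r_{s_iw_y}(\mathfrak{s}_J\mathfrak{t})}{r_{w_y}(\mathfrak{s}_J\mathfrak{t})} = k_i t_y^{\alpha_i^\vee} - k_i^{-1},
\end{equation*}
while the cocycle $k_{s_iw_y}(\cc)/k_{w_y}(\cc) = k_{s_i}(y)$ together with Lemma \ref{kylemma2}(1) gives $k_{s_iw_y}(\cc)/k_{w_y}(\cc) = k_i^{-\eta(\alpha_i(y))}$. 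Feeding these three ratios into formula \eqref{PYrel} for both $P_{s_iy}^{J,+}$ and $P_y^{J,+}$ and combining with part (1), the scalar prefactor on the ratio $P_{s_iy}^{J,+}/P_y^{J,+}$ is
\begin{equation*}
(t_y^{\alpha_i^\vee} - 1)\cdot k_i^{-\eta(\alpha_i(y))} \cdot \frac{1}{k_i t_y^{\alpha_i^\vee} - k_i^{-1}} \cdot k_i^{\eta(\alpha_i(y))} \cdot \frac{k_i^{-1} - k_i t_y^{\alpha_i^\vee}}{1 - t_y^{\alpha_i^\vee}} = 1,
\end{equation*}
where the two $k_i^{\pm\eta(\alpha_i(y))}$ factors cancel and the remaining factors cancel using $(t_y^{\alpha_i^\vee} - 1)(k_i^{-1} - k_i t_y^{\alpha_i^\vee}) = (1 - t_y^{\alpha_i^\vee})(k_i t_y^{\alpha_i^\vee} - k_i^{-1})$. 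When $\alpha_i(y) < 0$, I apply the previous case to $y' := s_iy \in \mathcal{O}_\cc$, which satisfies $\alpha_i(y') > 0$; the outcome $P_{s_iy'}^{J,+} = P_{y'}^{J,+}$ reads $P_y^{J,+} = P_{s_iy}^{J,+}$.

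The main obstacle I anticipate is simply the sign-bookkeeping in part (2), case $\alpha_i(y) > 0$: tracking the opposite signs of $(t_y^{\alpha_i^\vee} - 1)$ versus $(1 - t_y^{\alpha_i^\vee})$ and of $(k_i^{-1} - k_i t_y^{\alpha_i^\vee})$ versus $(k_i t_y^{\alpha_i^\vee} - k_i^{-1})$, and verifying that the $k_i^{\pm\eta(\alpha_i(y))}$ factors from the $P$-to-$E$ conversion and from part (1) do match up. The $J$-genericity of $\mathfrak{t}$ ensures that all the denominators appearing are nonzero, since $\alpha_i(y) \neq 0$ forces $w_y^{-1}\alpha_i \in \Phi \setminus \Phi_J$. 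An alternative slicker route for part (2) via the pseudo-duality of Theorem \ref{17} (applying $\pi_{\cc,\mathfrak{t}}(\mathbf{1}_+)$ to the $T_i$-recursion for $P_y^J$) gives $0 = \frac{k_i^{-1} - k_i t_y^{\alpha_i^\vee}}{1 - t_y^{\alpha_i^\vee}}(P_{s_iy}^{J,+} - P_y^{J,+})$, but requires the prefactor to be nonzero, which is not guaranteed by $J$-genericity when $\alpha_i(y) < 0$; hence the case-by-case route above is more robust.
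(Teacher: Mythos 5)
Your proposal is correct. Part (1) is essentially the paper's own argument: both apply $\pi_{\cc,\mathfrak{t}}(\mathbf{1}_+)$ to the $T_i$-recursion of Corollary \ref{Whittakerprop} for $\alpha_i(y)>0$, use $\mathbf{1}_+T_i=k_i\mathbf{1}_+$ and $\kappa_{s_i}(y)=k_i^{-\eta(\alpha_i(y))}$, and solve for $E_{s_iy}^{J,+}(x;\mathfrak{t})$. For part (2) you take a mildly different route after the common reduction to $\alpha_i(y)>0$: the paper works directly from the intertwiner definition \eqref{boldbw} of $P^{J,+}_{s_iy}$, writes $S_{w_{s_iy}}^Y=S_i^YS_{w_y}^Y$, and uses the symmetriser identity $\mathbf{1}_+S_i^Y=\mathbf{1}_+\bigl(k_i(Y^{-1})^{\alpha_i^\vee}-k_i^{-1}\bigr)$ from Lemma \ref{SymmInt} together with the $Y$-eigenvalue of $P_y^J$, so that the scalar $k_it_y^{\alpha_i^\vee}-k_i^{-1}$ cancels against $r_{s_iw_y}/r_{w_y}$ in one stroke and the proof of (2) is independent of (1); you instead deduce (2) from (1) by converting $P$ to $E$ via the leading-coefficient relation \eqref{PYrel} and tracking the three ratios $d_{s_iw_y}/d_{w_y}$, $r_{s_iw_y}/r_{w_y}$ and $k_{s_iw_y}(\cc)/k_{w_y}(\cc)$, which is a correct computation (the sign bookkeeping checks out, and $J$-genericity indeed guarantees $d_{w_y}(\mathfrak{s}_J\mathfrak{t}),r_{w_y}(\mathfrak{s}_J\mathfrak{t}),r_{s_iw_y}(\mathfrak{s}_J\mathfrak{t})\neq 0$ and $t_y^{\alpha_i^\vee}\neq 1$ because $w_y^{-1}\alpha_i\in\Pi(s_iw_y)\subseteq\Phi^+\setminus\Phi_J^+$). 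The paper's version buys a shorter, sign-free argument that does not depend on part (1); yours buys independence from Lemma \ref{SymmInt} at the cost of the ratio bookkeeping. Your closing observation about the alternative via Theorem \ref{17} is also accurate: the prefactor $k_i^{-1}-k_it_y^{\alpha_i^\vee}$ is only guaranteed nonzero by $J$-genericity when $w_y^{-1}\alpha_i$ is a positive root, so that shortcut needs the same reduction to $\alpha_i(y)>0$ anyway.
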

\begin{proof}
(1) Let $i\in [1,r]$ such that $\alpha_i(y)>0$. Then $t_y^{\alpha_i^\vee}\not=1$, see the proof of Theorem \ref{17}. We furthermore have $\kappa_{s_i}(y)=k_i^{-\eta(\alpha_i(y))}$, hence
Corollary \ref{Whittakerprop} gives
\begin{equation*}
\begin{split}
E_{s_iy}^{J,+}(x;\mathfrak{t})&=k_i^{\eta(\alpha_i(y))}\pi_{\cc,\mathfrak{t}}(\mathbf{1}_+)
\Bigl(\pi_{\cc,\mathfrak{t}}(T_i)-\Bigl(\frac{k_i-k_i^{-1}}{1-t_y^{\alpha_i^\vee}}\Bigr)\Bigr)
E_y^J(x;\mathfrak{t})\\
&=k_i^{\eta(\alpha_i(y))}\Bigl(\frac{k_i^{-1}-k_it_y^{\alpha_i^\vee}}{1-t_y^{\alpha_i^\vee}}\Bigr)
E_y^{J,+}(x;\mathfrak{t}),
\end{split}
\end{equation*}
where the second equality follows by a direct computation using 
$\mathbf{1}_+T_i=k_i\mathbf{1}_+$.\\
(2) It suffices to prove the statement under the additional assumption that $\alpha_i(y)>0$. Then
$w_{s_iy}=s_iw_y$, 
$\ell(w_{s_iy})=\ell(w_y)+1$ and $\Pi(w_{s_iy})=\{w_y^{-1}\alpha_i\}\cup\Pi(w_y)$, hence 
\[
P_{s_iy}^{J,+}(x;\mathfrak{t})=\frac{\pi_{\cc,\mathfrak{t}}(\mathbf{1}_+S_{w_{s_iy}}^Y)x^\cc}{r_{w_{s_iy}}(\mathfrak{s}_J\mathfrak{t})}=
\frac{\pi_{\cc,\mathfrak{t}}(\mathbf{1}_+S_i^YS_{w_y}^Y)x^\cc}{r_{w_y}(\mathfrak{s}_J\mathfrak{t})
(k_it_y^{\alpha_i^\vee}-k_i^{-1})}.
\]
Applying Lemma \ref{SymmInt} we conclude that
\[
P_{s_iy}^{J,+}(x;\mathfrak{t})=\frac{\pi_{\cc,\mathfrak{t}}(\mathbf{1}_+(k_i^{-1}-k_i(Y^{-1})^{\alpha_i^\vee}))P_y^J(x;\mathfrak{t})}
{(k_i^{-1}-k_it_y^{\alpha_i^\vee})}=
\pi_{\cc,\mathfrak{t}}(\mathbf{1}_+)P_y^J(x;\mathfrak{t})=P_y^{J,+}(x;\mathfrak{t}),
\]
as desired.
\end{proof}
For $\cc\in\overline{C}_+$ write $\mathcal{O}_\cc^+:=\mathcal{O}_\cc\cap\overline{E}_+$, which is a fundamental domain for the $W_0$-action on
$\mathcal{O}_\cc$.
\begin{corollary}\label{basisW}
Let $\cc\in C^J$ and assume that $\mathfrak{t}\in T_J^\prime$ is $J$-generic. 

Then 
$\{E_y^{J,+}(x;\mathfrak{t})\}_{y\in\mathcal{O}_\cc^+}$ and $\{P_y^{J,+}(x;\mathfrak{t})\}_{y\in\mathcal{O}_\cc^+}$ are bases of $\mathcal{P}^{(\cc),+}_{\mathfrak{t}}$ consisting of simultaneous eigenfunctions of the commuting operators
$\pi_{\cc,\mathfrak{t}}(p(Y))$ \textup{(}$p\in\mathcal{P}^{W_0}$\textup{)}. 
\end{corollary}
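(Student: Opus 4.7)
The plan is to first handle $P_y^{J,+}$ through the $E_y^{J,+}$ case: Proposition~\ref{leadcoeff} shows that $P_y^J$ is a nonzero scalar multiple of $E_y^J$ under $J$-genericity, hence $P_y^{J,+}$ is the corresponding nonzero multiple of $E_y^{J,+}$, and the basis claim transfers. The simultaneous eigenfunction property is immediate from \eqref{Wde}, since these are images of $Y$-eigenvectors under $\pi_{\cc,\mathfrak{t}}(\mathbf{1}_+)$ and $\mathcal{P}_Y^{W_0}$ is central in $H$. So the core task is to show that $\{E_y^{J,+}(x;\mathfrak{t})\}_{y\in\mathcal{O}_\cc^+}$ is a basis of $\mathcal{P}_{\mathfrak{t}}^{(\cc),+}$.

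For the spanning step I would begin with the $Y$-eigenbasis $\{E_y^J(x;\mathfrak{t})\}_{y\in\mathcal{O}_\cc}$ of $\mathcal{P}_\mathfrak{t}^{(\cc)}$ (Theorem~\ref{Edef}); applying $\pi_{\cc,\mathfrak{t}}(\mathbf{1}_+)$ produces a spanning set $\{E_y^{J,+}\}_{y\in\mathcal{O}_\cc}$ of $\mathcal{P}_\mathfrak{t}^{(\cc),+}$, and the next step is to collapse this to dominant orbit representatives. For $y\in\mathcal{O}_\cc$ and $i\in[1,r]$ with $\alpha_i(y)>0$, Corollary~\ref{wyrules} gives $w_y^{-1}\alpha_i\in\Phi^+\setminus\Phi_J^+$, so $J$-genericity of $\mathfrak{t}$ yields both $t_y^{\alpha_i^\vee}\ne 1$ and $t_y^{\alpha_i^\vee}\ne k_i^{-2}$. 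The scalar $k_i^{\eta(\alpha_i(y))}(k_i^{-1}-k_it_y^{\alpha_i^\vee})/(1-t_y^{\alpha_i^\vee})$ in Proposition~\ref{ydepSymm}(1) is therefore nonzero; iterating along a sequence of such reflections moving $y$ to its unique dominant representative $y_+\in W_0 y\cap\overline{E}_+$ will show $E_y^{J,+}\in\mathbf{F}^\times\cdot E_{y_+}^{J,+}$, and hence $\{E_y^{J,+}\}_{y\in\mathcal{O}_\cc^+}$ still spans.

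For linear independence I would exploit the $H$-stable decomposition of $\mathcal{P}_\mathfrak{t}^{(\cc)}$ according to $W_0$-orbits of $Y$-weights. The Bernstein--Zelevinsky cross relations \eqref{crossrelation}, together with the simplicity of the $Y$-spectrum of $\mathcal{P}_\mathfrak{t}^{(\cc)}$ coming from $\mathfrak{t}\in T_J^\prime$ (Theorem~\ref{Edef}), imply that for each $W_0$-orbit $O\subset T$ the subspace $V_O:=\bigoplus_{y':\,W_0 t_{y'}=O}\mathbf{F}\,E_{y'}^J$ is $H$-invariant, so $\mathcal{P}_\mathfrak{t}^{(\cc)}=\bigoplus_O V_O$ as $H$-modules and $\mathcal{P}_\mathfrak{t}^{(\cc),+}=\bigoplus_O\pi_{\cc,\mathfrak{t}}(\mathbf{1}_+)V_O$. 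Since $E_y^{J,+}$ lies in the summand indexed by $W_0 t_y$, linear independence of $\{E_y^{J,+}\}_{y\in\mathcal{O}_\cc^+}$ reduces to two claims: injectivity of the map $\mathcal{O}_\cc^+\to W_0\backslash T$, $y\mapsto W_0 t_y$, and one-dimensionality of each nonzero summand $\pi_{\cc,\mathfrak{t}}(\mathbf{1}_+)V_O$. The hard part will be the latter, which is a Schur-type statement for the $H_0$-structure of $V_O$ asserting that the $\mathbf{1}_+$-spherical component of the generically irreducible $H_0$-module underlying $V_O$ has dimension one. Once that is in hand, matching dimensions against the spanning set forces both the injectivity and the basis conclusion simultaneously.
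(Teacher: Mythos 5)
Your first half matches the paper: the transfer to $P_y^{J,+}$ (via Proposition \ref{leadcoeff}, or equivalently Proposition \ref{ydepSymm}(2)), the eigenvalue equations from \eqref{Wde}, and the reduction of spanning to $\mathcal{O}_\cc^+$ via Proposition \ref{ydepSymm}(1), with the scalar nonzero because $w_y^{-1}\alpha_i\in\Phi^+\setminus\Phi_J^+$ and $\mathfrak{t}$ is $J$-generic, are all correct and are exactly what the paper does. The gap is in your linear-independence argument. Both of your key claims — injectivity of $y\mapsto W_0t_y$ on $\mathcal{O}_\cc^+$, and one-dimensionality of $\pi_{\cc,\mathfrak{t}}(\mathbf{1}_+)V_O$ — are left unproved, and they are entangled rather than independent: if two distinct $y,y'\in\mathcal{O}_\cc^+$ had $W_0t_y=W_0t_{y'}$, then $E_y^{J,+}$ and $E_{y'}^{J,+}$ would sit in the same block $V_O$ while being supported on the disjoint basis subsets $\{E_{vy}^J\}_v$ and $\{E_{vy'}^J\}_v$, so (if nonzero) they would make $\dim\pi_{\cc,\mathfrak{t}}(\mathbf{1}_+)V_O\geq 2$; thus your Schur-type claim already presupposes the injectivity you hope to extract from it, and the proposed "matching dimensions against the spanning set" cannot break this circle (the ambient space is infinite-dimensional, and inside one block the dimension count is precisely the open question). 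Note also that $\mathfrak{t}\in T_J^\prime$ only makes the weights $t_{y'}$ pairwise distinct; it does not separate their $W_0$-orbits, and $t_{vy}\neq vt_y$ in general because $\mathfrak{s}_{vy}\neq v\mathfrak{s}_y$ when stabilizers intervene (Lemma \ref{fraks}(1)), so the central-character bookkeeping is more delicate than your sketch assumes. Injectivity of $y\mapsto W_0t_y$ is simply not among the hypotheses and would itself require a separate argument.

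The block decomposition is also unnecessary: the paper's route is shorter and uses only material already established. By Corollary \ref{Whittakerprop} (equivalently Theorem \ref{17} together with Proposition \ref{leadcoeff}), $E_y^{J,+}=\pi_{\cc,\mathfrak{t}}(\mathbf{1}_+)E_y^J$ lies in $\bigoplus_{v\in W_0}\mathbf{F}\,E_{vy}^J$. For distinct $y\in\mathcal{O}_\cc^+$ the orbits $W_0y$ are disjoint, so by Theorem \ref{Edef}(3) these subspaces intersect trivially; hence any linear relation among the $E_y^{J,+}$, $y\in\mathcal{O}_\cc^+$, splits orbit by orbit, and independence follows once each $E_y^{J,+}$ is nonzero, which is read off from the explicit expansion of Theorem \ref{Weylformula} (or Corollary \ref{corplus}). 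This avoids central characters, $H$-module blocks, and any multiplicity-one statement altogether; if you want to keep your framework, you must actually prove the orbit-separation and the spherical-multiplicity claim, which amounts to more work than the corollary itself.
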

\begin{proof}
By Proposition \ref{ydepSymm} and \eqref{Wde} it suffices to show that $\{E_y^{J,+}(x;\mathfrak{t})\}_{y\in\mathcal{O}_\cc^+}$ is a linear independent set. But Theorem \ref{17} shows
that $E_y^{J,+}(x;\mathfrak{t})$ lies in subspace spanned by $E_{vy}^J(x;\mathfrak{t})$ ($v\in W_0$), hence the linear independence is a consequence of 
Theorem \ref{Edef}(3).
\end{proof}
The coefficients in the expansion of $E_y^{J,+}(x;\mathfrak{t})$ as linear combination of the quasi-polynomials $E_{vy}^J(x;\mathfrak{t})$ ($v\in W_0$) can be explicitly computed as follows.

Recall that $y_{\pm}$ denotes the unique element in $\overline{E}_{\pm}\cap W_0y$. 
Write $W_{0,y_{\pm}}$ for the subgroup of $W_0$ consisting of the elements
$v\in W_0$ fixing $y_{\pm}$. It is a parabolic subgroup. Denote by $W_0^{y_{\pm}}$ the minimal coset representatives of $W_0/W_{0,y_{\pm}}$. Denote by $g_y\in W_0$ the unique element of minimal length such that $y_-=g_yy$. Note that $y\mapsto g_y^{-1}$ defines a bijection $W_0y\overset{\sim}{\longrightarrow} W_0^{y_-}$. 
Observe furthermore that $g_{y_+}$ is the minimal coset representative of $w_0W_{0,y_+}$. In particular, $y_-=g_{y_+}y_+=w_0y_+$.
\begin{theorem}\label{Weylformula}
Let $\cc\in C^J$ and assume that $\mathfrak{t}\in T_J^\prime$ is $J$-generic. Set $t_y:=w_y(\mathfrak{s}_J\mathfrak{t})$ for $y\in\mathcal{O}_\cc$ \textup{(}see \eqref{ty}\textup{)}. 
For $y\in\mathcal{O}_\cc$ we have
\begin{equation}\label{expgen}
E_y^{J,+}(x;\mathfrak{t})=\sum_{y^\prime\in W_0y}\textup{C}_y^+(y^\prime)E_{y^\prime}^{J}(x;\mathfrak{t})
\end{equation}
with the coefficients
$C_y^+(y^\prime)\in\mathbf{F}$ determined by the following two properties:
\begin{equation}\label{ch1}
C_{s_iy}^+(y^\prime)=k_i^{\eta(\alpha_i(y))}\Bigl(\frac{k_i-k_i^{-1}t_y^{-\alpha_i^\vee}}
{1-t_y^{-\alpha_i^\vee}}\Bigr)C_y^+(y^\prime)\,\, \hbox{ for }\,\, 1\leq i\leq r\,
\hbox{ such that }\, \alpha_i(y)>0
\end{equation}
and
\begin{equation}\label{ch2}
C_{y_+}^+(y^\prime)=\Bigl(\sum_{v\in W_{0,y_+}}\kappa_v(0)^2\Bigr)\kappa_{g_{y_+}}(0)\kappa_{g_{y_+}}(y_+)
\prod_{\beta\in\Pi(g_{y^\prime}^{-1})}k_\beta^{-\eta(\beta(y_-))}
\Bigl(\frac{k_\beta-k_\beta^{-1}t_{y_-}^{-\beta^\vee}}{1-t_{y_-}^{-\beta^\vee}}\Bigr)
\end{equation}
for $y^\prime\in W_0y$.
\end{theorem}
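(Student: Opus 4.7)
The strategy is to establish \eqref{expgen} with uniquely determined coefficients subject to the recursion \eqref{ch1} and the initial value \eqref{ch2}, and then verify the base case by a direct computation that transfers, via Proposition \ref{ydepSymm}(2) and the normalised $Y$-intertwiners, to the antidominant representative $y_-$.

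\emph{Existence, recursion, uniqueness.} Iterated application of Corollary \ref{Whittakerprop} and Remark \ref{Whittakerrem} shows that every $\pi_{\cc,\mathfrak{t}}(T_v)$ ($v\in W_0$) preserves the $W_0y$-graded subspace $\bigoplus_{y'\in W_0y}\mathbf{F}E_{y'}^J(x;\mathfrak{t})$, since each $\pi_{\cc,\mathfrak{t}}(T_i)$ sends $E_y^J$ to a linear combination of $E_y^J$ and $E_{s_iy}^J$. Writing $\mathbf{1}_+=\sum_v\kappa_v(0)T_v$ gives the expansion \eqref{expgen}, and the coefficients are uniquely determined by Theorem \ref{Edef}(3). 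Substituting \eqref{expgen} into the identity of Proposition \ref{ydepSymm}(1) and equating coefficients of $E_{y'}^J$ produces the relation $C_{s_iy}^+(y')=k_i^{\eta(\alpha_i(y))}(k_i^{-1}-k_it_y^{\alpha_i^\vee})/(1-t_y^{\alpha_i^\vee})\,C_y^+(y')$ for $\alpha_i(y)>0$, which is precisely \eqref{ch1} after multiplying numerator and denominator by $-t_y^{-\alpha_i^\vee}$. Since any element of $W_0y_+$ can be reached from $y_+$ by a sequence of simple reflections $s_{i_k}$ that are positive at the current step (take a reduced expression for the minimal representative of $W_0/W_{0,y_+}$ carrying $y_+$ to $y$), iterating \eqref{ch1} reduces the proof to establishing \eqref{ch2}.

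\emph{Stabiliser contribution to the base case.} Decompose $W_0=\bigsqcup_{u\in W_0^{y_+}}uW_{0,y_+}$ and use $\kappa_{uw}(0)=\kappa_u(0)\kappa_w(0)$ from Lemma \ref{kylemma1}(2). Each generator $s_i$ of $W_{0,y_+}$ satisfies $\alpha_i(y_+)=0$, so Corollary \ref{Whittakerprop} (case $\alpha_j(y)=0$) combined with an induction on $\ell(w)$ yields $\pi_{\cc,\mathfrak{t}}(T_w)E_{y_+}^J=\kappa_w(0)E_{y_+}^J$ for all $w\in W_{0,y_+}$. Assembling these ingredients,
\[
E_{y_+}^{J,+}(x;\mathfrak{t})=\Bigl(\sum_{w\in W_{0,y_+}}\kappa_w(0)^2\Bigr)\sum_{u\in W_0^{y_+}}\kappa_u(0)\,\pi_{\cc,\mathfrak{t}}(T_u)E_{y_+}^J(x;\mathfrak{t}),
\]
which supplies the first factor of \eqref{ch2}. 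It remains to extract the coefficient of $E_{y'}^J$ from the inner sum.

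\emph{Transfer to $y_-$ and extraction via $Y$-intertwiners.} I pass to the normalised basis. By Proposition \ref{ydepSymm}(2) the symmetrisation is constant along $W_0$-orbits, so $P_{y_+}^{J,+}=P_{y_-}^{J,+}$, and by Corollary \ref{17cor} the normalised $Y$-intertwiners act by $\pi_{\cc,\mathfrak{t}}(\widetilde{S}_v^Y)P_y^J=P_{vy}^J$ for every $v\in W_0$; in particular $P_{y'}^J=\pi_{\cc,\mathfrak{t}}(\widetilde{S}_{g_{y'}^{-1}}^Y)P_{y_-}^J$. Factorising $\widetilde{S}_{g_{y'}^{-1}}^Y$ via \eqref{RelToUnnormY} over the roots $\beta\in\Pi(g_{y'}^{-1})$ generates the product $\prod_{\beta\in\Pi(g_{y'}^{-1})}(k_\beta-k_\beta^{-1}t_{y_-}^{-\beta^\vee})/(1-t_{y_-}^{-\beta^\vee})$ appearing in \eqref{ch2}, while converting between the $E^J$-basis and the $P^J$-basis through Proposition \ref{leadcoeff} introduces the sign factors $k_\beta^{-\eta(\beta(y_-))}$ attached to the same index set. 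The remaining prefactor $\kappa_{g_{y_+}}(0)\kappa_{g_{y_+}}(y_+)$ arises as the leading coefficient of $\pi_{\cc,\mathfrak{t}}(T_{g_{y_+}})E_{y_+}^J=\kappa_{g_{y_+}}(y_+)E_{y_-}^J+\textup{l.o.t.}$, obtained by iterating the case $\alpha_j(y)>0$ of Corollary \ref{Whittakerprop} along a reduced expression of $g_{y_+}$ and applying Lemma \ref{kylemma2}(1), together with the weight $\kappa_{g_{y_+}}(0)$ that $g_{y_+}$ carries in $\mathbf{1}_+$.

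\emph{Main obstacle.} The principal technical difficulty is the bookkeeping of multiplicative factors across three simultaneous changes of basis—quasi-monomials to $E_{y'}^J$, $E_{y'}^J$ to $P_{y'}^J$, and the dominant $y_+$ to the antidominant $y_-$—while matching the index set $\Pi(g_{y'}^{-1})$ governing the product in \eqref{ch2}. The combinatorial identities from Lemmas \ref{kylemma1}--\ref{kylemma2}, together with the explicit factorisation \eqref{RelToUnnormY} of $\widetilde{S}_w^Y$ and the description of $\Pi(g_{y'}^{-1})$ as the set of positive roots that $g_{y'}^{-1}$ sends negative, are expected to be the central tools for aligning all three factors.
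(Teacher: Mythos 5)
Your skeleton up to the base case coincides with the paper's argument: uniqueness of the expansion \eqref{expgen}, the recursion \eqref{ch1} read off from Proposition \ref{ydepSymm}(1), the factorisation $\mathbf{1}_+=\mathbf{1}_+^{y_+}\sum_{v\in W_{0,y_+}}\kappa_v(0)T_v$ giving the stabiliser factor, and the identification of $C_{y_+}^+(y_-)$ with $\bigl(\sum_{v\in W_{0,y_+}}\kappa_v(0)^2\bigr)\kappa_{g_{y_+}}(0)\kappa_{g_{y_+}}(y_+)$ by iterating Corollary \ref{Whittakerprop} along $g_{y_+}$. The gap is in how you obtain $C_{y_+}^+(y')$ for general $y'\in W_0y$. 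First, the identity $\pi_{\cc,\mathfrak{t}}(\widetilde{S}_v^Y)P_y^J(x;\mathfrak{t})=P_{vy}^J(x;\mathfrak{t})$ ``for every $v\in W_0$'' is not what Corollary \ref{17cor} provides: that corollary gives $\pi_{\cc,\mathfrak{t}}(S_j^Y)P_y^J=(k_jt_y^{\alpha_j^\vee}-k_j^{-1})P_{s_jy}^J$, and passing to $\widetilde{S}_j^Y$ requires dividing by $k_jt_y^{\alpha_j^\vee}-k_j^{-1}$. Along your chain from $y_-$ down to $y'$ the simple root is negative at each intermediate point $z$, so $t_z^{\alpha_j^\vee}=(\mathfrak{s}_J\mathfrak{t})^{a^\vee}$ with $a=w_z^{-1}\alpha_j\in\Phi^-\setminus\Phi_J^-$, and $J$-genericity only excludes $(\mathfrak{s}_J\mathfrak{t})^{a^\vee}=k_a^{-2}$ for $a\in\Phi^+\setminus\Phi_J^+$; the paper explicitly notes, in the remark following Corollary \ref{17cor}, that the descending direction needs this extra nonvanishing for negative roots (and the blanket statement also fails when $vy=y$). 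So under the hypotheses of Theorem \ref{Weylformula} your intertwiner chain $P_{y'}^J=\pi_{\cc,\mathfrak{t}}(\widetilde{S}_{g_{y'}^{-1}}^Y)P_{y_-}^J$ need not be defined, and you offer no density or specialisation argument to remove the extra assumptions.

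Second, and more fundamentally, even granting that identity you have not supplied the step that turns a relation between the individual basis vectors $P_{y'}^J$ and $P_{y_-}^J$ into the coefficient of $E_{y'}^J$ inside the single expansion of $E_{y_+}^{J,+}$. The factorisation \eqref{RelToUnnormY} and the $E$-to-$P$ conversion of Proposition \ref{leadcoeff} (whose product, incidentally, runs over the affine set $\{a\in\Phi^+: a(y)<0\}$, not over $\Pi(g_{y'}^{-1})$) only rescale eigenfunctions; they do not by themselves relate $C_{y_+}^+(y')$ to $C_{y_+}^+(y_-)$. The paper closes exactly this hole with an auxiliary recursion in the second index: from $\pi_{\cc,\mathfrak{t}}(T_i)E_{y''}^{J,+}=k_iE_{y''}^{J,+}$, expanding in the $E^J$-basis via Corollary \ref{Whittakerprop} and Remark \ref{Whittakerrem} and comparing coefficients of $E_{y'}^J$, one gets $C_{y''}^+(s_iy')=C_{y''}^+(y')\,k_i^{-\eta(\alpha_i(y'))}\bigl(k_i-k_i^{-1}t_{y'}^{-\alpha_i^\vee}\bigr)/\bigl(1-t_{y'}^{-\alpha_i^\vee}\bigr)$ whenever $\alpha_i(y')<0$; iterating this along a reduced word for $g_{y'}^{-1}$, whose roots $\beta\in\Pi(g_{y'}^{-1})$ satisfy $\beta(y_-)<0$, produces the product in \eqref{ch2} with denominators $1-t_{y_-}^{-\beta^\vee}$ that are nonzero under $J$-regularity alone. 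Without this mechanism (or an alternative, e.g.\ extracting coefficients by pairing with the form of Theorem \ref{unitaritytheorem}), your derivation of \eqref{ch2} for $y'\neq y_-$ is incomplete.
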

\begin{proof}
Note that there exists an expansion of the form \eqref{expgen} for unique $C_y^+(y^\prime)\in\mathbf{F}$, cf. Corollary \ref{basisW}. It is clear that the two conditions
\eqref{ch1} and \eqref{ch2} determine the coefficients $C_y(y^\prime)\in\mathbf{F}$ 
uniquely.

The recursion relation
\eqref{ch1} follows from Proposition \ref{ydepSymm}(1). It thus remains to prove \eqref{ch2}.

Fix $y\in\mathcal{O}_\cc$. Note that
\[
\mathbf{1}_+=\mathbf{1}_+^{y_+}\sum_{v\in W_{0,y_+}}\kappa_v(0)T_v
\]
with $\mathbf{1}_+^{y_+}:=\sum_{u\in W_0^{y_+}}\kappa_u(0)T_u$. Then Corollary \ref{Whittakerprop} gives
\[
E_{y_+}^{J,+}(x;\mathfrak{t})=\Bigl(\sum_{v\in W_{0,y_+}}\kappa_v(0)^2\Bigr)
\pi_{\cc,\mathfrak{t}}(\mathbf{1}_+^{y_+})E_{y_+}^{J}(x;\mathfrak{t}).
\]
Note that for $u\in W_0^{y_+}$ and
$1\leq i\leq r$ one has $\alpha_i(uy_+)\gtrless 0$ iff $s_iu\in W_0^{y_+}$ and $\ell(s_iu)=\ell(u)\pm 1$ (this follows from the Weyl group variant of Corollary \ref{wyrules} with
$W$ and its fundamental alcove $\overline{C}_+$ replaced by $W_0$ and its fundamental Weyl chamber $\overline{E}_{+}$). Again invoking
Corollary \ref{Whittakerprop} we then get from Lemma \ref{kylemma1},
\begin{equation}\label{lt}
C_{y_+}^+(y_-)=\Bigl(\sum_{v\in W_{0,y_+}}\kappa_v(0)^2\Bigr)\kappa_{g_{y_+}}(0)\kappa_{g_{y_+}}(y_+).
\end{equation}
We now derive the coefficients $C_{y_+}^+(y^\prime)$ for arbitrary $y^\prime\in W_0y$ recursively
using the recursion relation
\begin{equation}\label{hulpagain}
C_{y^{\prime\prime}}^+(s_iy^\prime)=C_{y^{\prime\prime}}^+(y^\prime)k_i^{-\eta(\alpha_i(y^\prime))}
\Bigl(
\frac{k_i-k_i^{-1}t_{y^\prime}^{-\alpha_i^\vee}}{1-t_{y^\prime}^{-\alpha_i^\vee}}\Bigr)
\, \hbox{ for }\, y^\prime\in W_0y\, \hbox{ such that }\, \alpha_i(y^\prime)<0
\end{equation}
for $y^{\prime\prime}\in W_0y$.
The recursion relation \eqref{hulpagain} follows from the identity 
\[
\pi_{\cc,\mathfrak{t}}(T_i)E_{y^{\prime\prime}}^{J,+}(x;\mathfrak{t})=k_iE_{y^{\prime\prime}}^{J,+}(x;\mathfrak{t}),
\]
expanding $E_{y^{\prime\prime}}^{J,+}(x;\mathfrak{t})$ as linear combination of quasi-polynomials $E_{y^{\prime\prime\prime}}^J(x;\mathfrak{t})$ ($y^{\prime\prime\prime}\in W_0y$), and subsequently comparing the coefficient of $E_{y^\prime}^J(x;\mathfrak{t})$ on both sides using Corollary \ref{Whittakerprop}.

Now take $y^\prime\in W_0y$ and let $g_{y^\prime}^{-1}=s_{i_1}\cdots s_{i_\ell}$ be a reduced expression. Then $\Pi(g_{y^\prime}^{-1})=\{\beta_1,\ldots,\beta_\ell\}$
with $\beta_m:=s_{i_\ell}\cdots s_{i_{m+1}}(\alpha_{i_m})$ for $1\leq m\leq\ell$. Furthermore,
\[
\beta_m(y_-)=\alpha_{i_m}(s_{i_{m+1}}\cdots s_{i_\ell}y_-)<0\qquad\quad (1\leq m\leq\ell)
\] 
since $g_{y^\prime}^{-1}\in W_0^{y_-}$ (this follows from the Weyl group variant of Corollary \ref{wyrules} with
$W$ and $\overline{C}_+$ replaced by $W_0$ and $\overline{E}_{-}$). By repeated application of \eqref{hulpagain} 
we conclude that
\[
C_{y_+}^+(y^\prime)=C_{y_+}^+(y_-)\prod_{\beta\in\Pi(g_{y^\prime}^{-1})}k_\beta^{-\eta(\beta(y_-))}\Bigl(\frac{k_\beta-k_\beta^{-1}t_{y_-}^{-\beta^\vee}}{1-t_y^{-\beta^\vee}}\Bigr).
\]
This completes the proof of the theorem. 
\end{proof}
When considering the Whittaker limit of $E_y^{J,+}(x;\mathfrak{t})$ in Subsection \ref{rationalsection} it is convenient to use the explicit expansion formula for $E_y^{J,+}(x;\mathfrak{t})$ when $y\in\overline{E}_-\cap\mathcal{O}_\cc$.
\begin{corollary}\label{corplus}
Let $\cc\in C^J$, and assume that $\mathfrak{t}\in T_J^\prime$ is $J$-generic. 
For $y\in\overline{E}_-\cap\mathcal{O}_\cc$ we have
\begin{equation*}
\begin{split}
E_{y}^{J,+}(x;\mathfrak{t})&=\Bigl(\sum_{v\in W_{0,y}}\kappa_{g_{y_+}^{-1}v}(0)^2\Bigr)\prod_{\alpha\in\Pi(g_{y_+}^{-1})}\Bigl(\frac{1-k_\alpha^{-2}t_{y}^{\alpha^\vee}}
{1-t_{y}^{\alpha^\vee}}\Bigr)\\
&\qquad\quad\times
\sum_{v\in W_0^{y}}\frac{\kappa_v(y)}{\kappa_v(0)}\Bigl(\prod_{\beta\in\Pi(v)}\Bigl(\frac{1-k_\beta^{2}t_{y}^{\beta^\vee}}{1-t_{y}^{\beta^\vee}}\Bigr)\Bigr)
E_{vy}^J(x;\mathfrak{t}),
\end{split}
\end{equation*}
where $t_y\in T$ is defined by \eqref{ty}.
\end{corollary}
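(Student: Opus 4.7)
The plan is to derive Corollary \ref{corplus} as a specialisation of Theorem \ref{Weylformula} to $y\in\overline{E}_-\cap\mathcal{O}_\cc$ (so that $y=y_-$), by iterating the recursion \eqref{ch1} backwards from $y_+$ to $y_-$ along a reduced expression of $g_{y_+}$ and substituting the initial datum \eqref{ch2}.  First I would fix a reduced expression $g_{y_+}=s_{i_1}\cdots s_{i_m}$, set $u_k:=s_{i_1}\cdots s_{i_k}$, and $y^{(k)}:=s_{i_k}\cdots s_{i_m}y_+$, so that $y^{(m+1)}=y_+$ and $y^{(1)}=y_-$.  The hypothesis $\alpha_{i_k}(y^{(k+1)})>0$ needed for \eqref{ch1} follows by writing $\alpha_{i_k}(y^{(k+1)})=\beta_k(y_+)$ with $\beta_k:=s_{i_m}\cdots s_{i_{k+1}}\alpha_{i_k}$ an element of $\Pi(g_{y_+})$ by \eqref{rootsdescription}, and observing that the minimality of $g_{y_+}$ in $g_{y_+}W_{0,y_+}$ forces $\Pi(g_{y_+})=\Phi_0^+\setminus\Phi_{0,y_+}^+$.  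Corollary \ref{wyrules}(1) then yields $w_{y^{(k+1)}}=u_k^{-1}w_{y_-}$ with length additivity, whence $t_{y^{(k+1)}}=u_k^{-1}t_y$.  Iterating \eqref{ch1} with $\eta(\alpha_{i_k}(y^{(k+1)}))=1$ throughout gives
\[
C_{y_-}^+(y') \;=\; \Bigl(\prod_{k=1}^{m}k_{i_k}\,\frac{k_{i_k}-k_{i_k}^{-1}t_{y^{(k+1)}}^{-\alpha_{i_k}^\vee}}{1-t_{y^{(k+1)}}^{-\alpha_{i_k}^\vee}}\Bigr)\,C_{y_+}^+(y').
\]

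Second I would convert the iteration product into the global factor in the Corollary.  The computation $u_k\alpha_{i_k}=-\gamma_k$ with $\gamma_k:=u_{k-1}\alpha_{i_k}$ converts $t_{y^{(k+1)}}^{-\alpha_{i_k}^\vee}$ into $t_y^{\gamma_k^\vee}$, and \eqref{rootsdescription} applied to $g_{y_+}^{-1}=s_{i_m}\cdots s_{i_1}$ identifies $\{\gamma_1,\dots,\gamma_m\}$ with $\Pi(g_{y_+}^{-1})$.  The elementary rewrite $k\,\frac{k-k^{-1}z}{1-z}=k^2\cdot\frac{1-k^{-2}z}{1-z}$, together with $k_{\gamma_k}=k_{i_k}$ and $\prod_{k=1}^m k_{i_k}=\kappa_{g_{y_+}^{-1}}(0)=:K$, then gives
\[
\prod_{k=1}^{m}k_{i_k}\,\frac{k_{i_k}-k_{i_k}^{-1}t_y^{\gamma_k^\vee}}{1-t_y^{\gamma_k^\vee}} \;=\; K^{2}\prod_{\alpha\in\Pi(g_{y_+}^{-1})}\frac{1-k_\alpha^{-2}t_y^{\alpha^\vee}}{1-t_y^{\alpha^\vee}},
\]
matching the global factor of the Corollary up to a scalar $K^2$ that will later cancel out.

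Third I would simplify \eqref{ch2}.  For $y=y_-$ and $v:=g_{y'}^{-1}\in W_0^y$, each $\beta\in\Pi(v)\subseteq\Phi_0^+\setminus\Phi_{0,y}^+$ satisfies $\beta(y_-)<0$, giving $\eta(\beta(y_-))=-1$ and $k_\beta^{-\eta(\beta(y_-))}=k_\beta$; the identity $k_\beta\cdot\frac{k_\beta-k_\beta^{-1}t_y^{-\beta^\vee}}{1-t_y^{-\beta^\vee}}=\frac{1-k_\beta^2t_y^{\beta^\vee}}{1-t_y^{\beta^\vee}}$ reproduces the per-$v$ factor of the Corollary, and the leftover $\prod k_\beta$ equals $\kappa_v(0)$.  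The same sign considerations give $\kappa_v(y)=\kappa_v(0)$ (so $\kappa_v(y)/\kappa_v(0)=1$, matching the Corollary), and the equalities $\Pi(g_{y_+})=\Pi(g_{y_+}^{-1})=\Phi_0^+\setminus\Phi_{0,y_+}^+$ with $\alpha(y_+)>0$ on these roots force $\kappa_{g_{y_+}}(0)=\kappa_{g_{y_+}^{-1}}(0)=K$ and $\kappa_{g_{y_+}}(y_+)=K^{-1}$, so the prefactors from \eqref{ch2} and from the iteration collapse to $K^2\cdot\sum_{v''\in W_{0,y_+}}\kappa_{v''}(0)^2$.  Finally, the inner conjugation $v''\mapsto g_{y_+}v''g_{y_+}^{-1}$ bijects $W_{0,y_+}$ with $W_{0,y_-}$, and length-additivity $\ell(v''g_{y_+}^{-1})=\ell(v'')+\ell(g_{y_+}^{-1})$ (which holds because $g_{y_+}\Pi(v'')\subseteq g_{y_+}\Phi_{0,y_+}^+\subseteq\Phi_0^+$) yields $\kappa_{g_{y_+}^{-1}v'}(0)=\kappa_{v''}(0)\cdot K$, so $\sum_{v'\in W_{0,y}}\kappa_{g_{y_+}^{-1}v'}(0)^2 = K^{2}\sum_{v''\in W_{0,y_+}}\kappa_{v''}(0)^2$, which absorbs the factor $K^2$ and completes the match.

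The main obstacle will be the second step: the delicate identification $t_{y^{(k+1)}}^{-\alpha_{i_k}^\vee}=t_y^{\gamma_k^\vee}$ — with a sign flip in the exponent and a re-indexing from $\{\alpha_{i_k}\}$ to the roots $\gamma_k$ of $\Pi(g_{y_+}^{-1})$ — together with the algebraic rearrangement that produces the precise normalisations $k_\alpha^{\pm 2}$ appearing in the Corollary.  Once the $W_0$-equivariance $t_{y^{(k+1)}}=u_k^{-1}t_y$ is in hand (via Corollary \ref{wyrules} and the length-additive decomposition $w_{y_-}=g_{y_+}w_{y_+}$), the remaining $\kappa$-bookkeeping in steps three and four is routine.
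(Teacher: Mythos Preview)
Your overall strategy—iterate the recursion \eqref{ch1} along a reduced word for $g_{y_+}$ to pass from $C_{y_+}^+(y')$ to $C_{y_-}^+(y')$, then insert \eqref{ch2}—is exactly the paper's approach. However, several of your intermediate claims are false, because you tacitly assume the root-evaluations $\beta(y_\pm)$ are integers.

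Concretely: you assert $\eta(\alpha_{i_k}(y^{(k+1)}))=1$, but $\alpha_{i_k}(y^{(k+1)})=\beta_k(y_+)$ is only known to be positive; if it is not an integer then $\eta(\beta_k(y_+))=0$. The correct $k$-power coming out of the iteration is $\prod_{\beta\in\Pi(g_{y_+})}k_\beta^{\eta(\beta(y_+))}=\kappa_{g_{y_+}}(y_+)^{-1}$, not $K$. Likewise, $\eta(\beta(y_-))=-1$ and $\kappa_v(y)=\kappa_v(0)$ fail whenever $\beta(y_-)$ is a negative non-integer. Finally, $\Pi(g_{y_+})=\Pi(g_{y_+}^{-1})$ is false: these are $\Phi_0^+\setminus\Phi_{0,y_+}^+$ and $\Phi_0^+\setminus\Phi_{0,y_-}^+$ respectively, which differ when $W_{0,y}\ne\{1\}$ (take $A_2$ with $y_+$ on a wall). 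What \emph{is} true—and is all you need—is $\kappa_{g_{y_+}}(0)=\kappa_{g_{y_+}^{-1}}(0)=K$, since $\Pi(g_{y_+}^{-1})=-g_{y_+}\Pi(g_{y_+})$ and $k$ is $W_0$-invariant.

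Your errors cancel in pairs (the wrong iteration factor against the wrong value of $\kappa_{g_{y_+}}(y_+)$ in \eqref{ch2}; the wrong $\eta=-1$ against the wrong $\kappa_v(y)/\kappa_v(0)=1$), so you land on the correct formula, but the argument as written is not valid. The fix is simple: keep $\eta$ unspecialised throughout. The iteration then yields
\[
C_{y}^+(y')=\kappa_{g_{y_+}}(y_+)^{-1}\cdot K\cdot\Bigl(\prod_{\alpha\in\Pi(g_{y_+}^{-1})}\frac{1-k_\alpha^{-2}t_y^{\alpha^\vee}}{1-t_y^{\alpha^\vee}}\Bigr)C_{y_+}^+(y'),
\]
the factor $\kappa_{g_{y_+}}(y_+)$ cancels against the explicit $\kappa_{g_{y_+}}(y_+)$ in \eqref{ch2}, and the $\beta$-product in \eqref{ch2} rewrites directly (using $\frac{k_\beta-k_\beta^{-1}z^{-1}}{1-z^{-1}}=k_\beta^{-1}\frac{1-k_\beta^2 z}{1-z}$ and the definitions of $\kappa_v(y),\kappa_v(0)$) as $\frac{\kappa_v(y)}{\kappa_v(0)}\prod_{\beta\in\Pi(v)}\frac{1-k_\beta^2 t_y^{\beta^\vee}}{1-t_y^{\beta^\vee}}$. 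This is precisely the paper's computation.
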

\begin{proof}
Using Lemma \ref{kylemma1} and \eqref{ch1} we have for $y\in\overline{E}_-\cap\mathcal{O}_\cc$,
\[
C_{y}^+(y^\prime)=
\frac{\kappa_{g_{y_+}^{-1}}(0)^2}{\kappa_{g_{y_+}}(0)\kappa_{g_{y_+}}(y_+)}\Bigl(\prod_{\alpha\in\Pi(g_{y_+}^{-1})}\Bigl(
\frac{1-k_\alpha^{-2}t_{y}^{\alpha^\vee}}{1-t_{y}^{\alpha^\vee}}\Bigr)\Bigr)C_{y_+}^+(y^\prime).
\]
Substituting \eqref{ch2}, the desired result follows after a direct computation using the fact that $\kappa_v(0)=\prod_{\alpha\in\Pi(v)}k_\alpha$ and 
$\kappa_v(y)=\prod_{\alpha\in\Pi(v)}k_\alpha^{-\eta(\alpha(y))}$ for $v\in W_0^y$ (cf. Lemma \ref{kylemma1}).
\end{proof}
Within the ambient $\mathbb{H}^{X-\textup{loc}}$-module $\mathcal{Q}_{\mathfrak{t}}^{(\cc)}$ of $\mathcal{P}_{\mathfrak{t}}^{(\cc)}$, we can express $E_y^{J,+}(x;\mathfrak{t})$ as  linear combination of the $\sigma_{\cc,\mathfrak{t}}(W_0)$-translates of $E_y^J(x;\mathfrak{t})$ as follows.
\begin{proposition}\label{Namaraprop}
Let $\cc\in C^J$ and assume that $\mathfrak{t}\in T_J^\prime$ is $J$-generic. For $y\in\mathcal{O}_\cc$ we have
\[
E_y^{J,+}(x;\mathfrak{t})=\sum_{v\in W_0}\kappa_v(0)^2\Bigl(\prod_{\alpha\in\Phi_0^+}\frac{1-k_\alpha^{2\chi(v^{-1}\alpha)}x^{-\alpha^\vee}}{1-x^{-\alpha^\vee}}\Bigr)
\sigma_{\cc,\mathfrak{t}}(v)E_y^J(x;\mathfrak{t})
\]
where \textup{(}recall\textup{)} $\chi(\beta)=\pm 1$ if $\beta\in\Phi_0^{\pm}$.
\end{proposition}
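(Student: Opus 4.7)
The plan is to transport the identity to the smashed product $W_0\ltimes\mathcal{Q}$ via the Cherednik isomorphism $\ss:W\ltimes\mathcal{Q}\overset{\sim}{\longrightarrow}\mathbb{H}^{X-\textup{loc}}$ of Theorem \ref{locTHM}, where it becomes a purely algebraic identity, and then verify it by combining left-$W_0$-invariance with a comparison of the $w_0$-coefficient. Since $\mathbf{1}_+\in H_0\subset\mathbb{H}$, the vector $\pi_{\cc,\mathfrak{t}}(\mathbf{1}_+)E_y^J(x;\mathfrak{t})$ agrees with $\pi_{\cc,\mathfrak{t}}^{X-\textup{loc}}(\mathbf{1}_+)E_y^J(x;\mathfrak{t})$, and by \eqref{linkCGlocal} the latter operator equals $\sigma_{\cc,\mathfrak{t}}(\ss^{-1}(\mathbf{1}_+))$. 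Set $d_e(x):=\prod_{\alpha\in\Phi_0^+}\frac{1-k_\alpha^2x^{-\alpha^\vee}}{1-x^{-\alpha^\vee}}$. A direct check, splitting $\Phi_0^+$ according to the sign of $v^{-1}\alpha$, using $\kappa_v(0)=\kappa_{v^{-1}}(0)$, and clearing factors of $-x^{\gamma^\vee}$ to move exponents from $\gamma^\vee$ to $-\gamma^\vee$, gives the formula
\[
v(d_e(x))=\kappa_v(0)^2\prod_{\alpha\in\Phi_0^+}\frac{1-k_\alpha^{2\chi(v^{-1}\alpha)}x^{-\alpha^\vee}}{1-x^{-\alpha^\vee}},
\]
so the right-hand side of the proposition, applied to $E_y^J(x;\mathfrak{t})$, coincides with $\sigma_{\cc,\mathfrak{t}}\bigl(\bigl(\sum_{v\in W_0}v\bigr)d_e(x)\bigr)E_y^J(x;\mathfrak{t})$. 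The proposition therefore reduces to the identity
\[
\ss^{-1}(\mathbf{1}_+)=\Bigl(\sum_{v\in W_0}v\Bigr)d_e(x)
\]
in $W_0\ltimes\mathcal{Q}$.

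Next I would show that $\ss^{-1}(\mathbf{1}_+)$ is left-$W_0$-invariant. By Lemma \ref{SymmInt}, $\widetilde{S}_i^X\mathbf{1}_+=\mathbf{1}_+$ in $\mathbb{H}^{X-\textup{loc}}$, which via $\widetilde{S}_i^X=\ss(s_i)$ and the injectivity of $\ss$ translates to $s_i\cdot\ss^{-1}(\mathbf{1}_+)=\ss^{-1}(\mathbf{1}_+)$ for all $i\in[1,r]$. Writing any left-$W_0$-invariant $Y=\sum_{u\in W_0}c_uu\in W_0\ltimes\mathcal{Q}$, the relation $v\cdot Y=Y$ forces $c_u=u(c_e)$ for all $u\in W_0$, whence $Y=\bigl(\sum_{u\in W_0}u\bigr)c_e$. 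Consequently $\ss^{-1}(\mathbf{1}_+)=\bigl(\sum_{u\in W_0}u\bigr)\mathfrak{f}$ for a unique $\mathfrak{f}\in\mathcal{Q}$, and since two elements of the form $(\sum_uu)f$ and $(\sum_uu)g$ agree iff $f=g$ (compare the coefficient of any single $u$), it suffices to identify $\mathfrak{f}$ with $d_e(x)$.

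I would pin down $\mathfrak{f}$ by comparing the coefficients of $w_0$ on both sides: on the right-hand side this is $w_0(\mathfrak{f})$. For the left-hand side, use the explicit formula $\ss^{-1}(T_j)=k_j^{-1}\frac{1-k_j^2x^{\alpha_j^\vee}}{1-x^{\alpha_j^\vee}}s_j+\frac{k_j-k_j^{-1}}{1-x^{\alpha_j^\vee}}$ from Remark \ref{DLform}. Multiplying out $\ss^{-1}(T_u)=\prod_{j=1}^\ell\ss^{-1}(T_{i_j})$ along a reduced expression $u=s_{i_1}\cdots s_{i_\ell}$ shows that $\ss^{-1}(T_u)$ is supported on $\{v\in W_0:v\le_Bu\}$, so only $u=w_0$ contributes to the $w_0$-coefficient of $\mathbf{1}_+=\sum_u\kappa_u(0)T_u$. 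Extracting from $\ss^{-1}(T_{w_0})$ the term in which every factor contributes its $s_{i_j}$-part, one obtains $\prod_{j=1}^\ell s_{i_1}\cdots s_{i_{j-1}}(a_{i_j}(x))$ as the leading coefficient, where $a_{i_j}(x)=k_{i_j}^{-1}\frac{1-k_{i_j}^2x^{\alpha_{i_j}^\vee}}{1-x^{\alpha_{i_j}^\vee}}$. Using $W_0$-invariance of $\mathbf{k}$ and the standard identification of the sequence $\{s_{i_1}\cdots s_{i_{j-1}}\alpha_{i_j}\}_{j=1}^\ell$ with $\Pi(w_0^{-1})=\Phi_0^+$, this leading coefficient simplifies to $\kappa_{w_0}(0)^{-1}\prod_{\beta\in\Phi_0^+}\frac{1-k_\beta^2x^{\beta^\vee}}{1-x^{\beta^\vee}}$; multiplying by the outer factor $\kappa_{w_0}(0)$ in $\mathbf{1}_+$ yields $\prod_{\beta\in\Phi_0^+}\frac{1-k_\beta^2x^{\beta^\vee}}{1-x^{\beta^\vee}}$ as the $w_0$-coefficient of $\ss^{-1}(\mathbf{1}_+)$. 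Applying $w_0^{-1}$, using $w_0\Phi_0^+=\Phi_0^-$ and the $W_0$-invariance of $\mathbf{k}$ once more, gives $\mathfrak{f}=d_e(x)$, completing the proof. The main technical point is the telescoping bookkeeping for the leading coefficient of $\ss^{-1}(T_{w_0})$, which is a routine but care-intensive computation tracking the positive-root sequence $\Pi(w_0^{-1})$ along the chosen reduced expression.
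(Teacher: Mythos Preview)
Your proof is correct and follows the same overall strategy as the paper: reduce the proposition, via \eqref{linkCGlocal}, to the identity
\[
\ss^{-1}(\mathbf{1}_+)=\Bigl(\sum_{v\in W_0}v\Bigr)\prod_{\alpha\in\Phi_0^+}\frac{1-k_\alpha^2x^{-\alpha^\vee}}{1-x^{-\alpha^\vee}}
\]
in $W_0\ltimes\mathcal{Q}$, and then rewrite the resulting sum $\sum_v\sigma_{\cc,\mathfrak{t}}(v)(d_e(x)E_y^J)$ in the stated form using the elementary computation of $v(d_e(x))$.

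The difference lies in how the displayed identity is established. The paper simply cites \cite[(5.5.14)]{Ma} for it, whereas you supply an independent derivation: you deduce left $W_0$-invariance of $\ss^{-1}(\mathbf{1}_+)$ from Lemma~\ref{SymmInt}, observe that this forces the shape $(\sum_uu)\mathfrak{f}$, and then identify $\mathfrak{f}$ by computing the $w_0$-coefficient via the expansion of $\ss^{-1}(T_{w_0})$ along a reduced word. Your argument is self-contained within the paper's framework and avoids the external reference, at the cost of a slightly longer computation; the paper's route is shorter but relies on the reader having access to Macdonald's formula. Both approaches are standard, and the bookkeeping you flag (tracking the root sequence $\{s_{i_1}\cdots s_{i_{j-1}}\alpha_{i_j}\}$ as $\Pi(w_0^{-1})=\Phi_0^+$) is indeed routine and correct.
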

\begin{proof}
By 
\cite[(5.5.14)]{Ma} we have
\[
\ss^{-1}(\mathbf{1}_+)=\Bigl(\sum_{v\in W_0}v\Bigr)\prod_{\alpha\in\Phi_0^+}\Bigl(\frac{1-k_\alpha^2x^{-\alpha^\vee}}{1-x^{-\alpha^\vee}}\Bigr)
\]
and hence, in view of \eqref{linkCGlocal} and the definition of $E_y^{J,+}(x;\mathfrak{t})$,
\[
E_y^{J,+}(x;\mathfrak{t})=\pi_{\cc,\mathfrak{t}}^{X-\textup{loc}}(\mathbf{1}_+)E_y^J(x;\mathfrak{t})=
\sum_{v\in W_0}\sigma_{\cc,\mathfrak{t}}(v)\Bigl(E_y^J(x;\mathfrak{t})\prod_{\alpha\in\Phi_0^+}\Bigl(\frac{1-k_\alpha^2x^{-\alpha^\vee}}{1-x^{-\alpha^\vee}}\Bigr)
\Bigr)
\]
in $\mathcal{Q}^{(\cc)}$. The result now follows by a straightforward computation.
\end{proof}
Similar results can be obtained with respect to Hecke anti-symmetrisation. Write
\begin{equation}\label{basicminus0}
\mathbf{1}_-:=\sum_{v\in W_0}(-1)^{\ell(v)}\kappa_v(0)^{-1}T_v=\kappa_{w_0}(0)^{-2}\sum_{v\in W_0}(-1)^{\ell(v)}\kappa_v(0)T_{v^{-1}}^{-1},
\end{equation}
which satisfies $T_i\mathbf{1}_-=-k_i^{-1}\mathbf{1}_-=\mathbf{1}_-T_i$ ($1\leq i\leq r$). 
\begin{definition}
Let $\cc\in C^J$ and $\mathfrak{t}\in T_J^\prime$. Define $E_y^-(x;\mathfrak{t})\in\mathcal{P}_{\mathfrak{t}}^{(\cc)}$ \textup{(}$y\in\mathcal{O}_\cc$\textup{)} by
\begin{equation}\label{basicminus}
E_y^{J,-}(x;\mathfrak{t}):=\pi_{\cc,\mathfrak{t}}(\mathbf{1}_-)E_y^J(x;\mathfrak{t}).
\end{equation}
\end{definition}
We state below the basic formulas for $E_y^{J,-}(x;\mathfrak{t})$ but we do not provide detailed proofs, since they are similar to the proofs for $E_y^{J,+}(x;\mathfrak{t})$.

Let $\cc\in C^J$ and $\mathfrak{t}\in T_J^\prime$. Suppose that $\mathfrak{t}$ is $J$-generic. Let $t_y\in T$ be given by \eqref{ty}.
Then
\[
E_{s_iy}^{J,-}(x;\mathfrak{t})=-k_i^{\eta(\alpha_i(y))}\Bigl(\frac{k_i-k_i^{-1}t_y^{\alpha_i^\vee}}{1-t_y^{\alpha_i^\vee}}\Bigr)E_y^{J,-}(x;\mathfrak{t})\,\hbox{ for }\,
1\leq i\leq r\, \hbox{ such that }\, \alpha_i(y)>0
\]
and
\begin{equation}\label{minussigma}
E_y^{J,-}(x;\mathfrak{t})=\Bigl(\prod_{\alpha\in\Phi_0^+}\frac{1-k_\alpha^{-2}x^{-\alpha^\vee}}{1-x^{-\alpha^\vee}}\Bigr)
\sum_{v\in W_0}(-1)^{\ell(v)}\sigma_{\cc,\mathfrak{t}}(v)E_y^J(x;\mathfrak{t}).
\end{equation}
Furthermore,
\begin{equation}\label{Eminzero}
E_y^{J,-}(x;\mathfrak{t})=0\,\, \hbox{ if }\,\, y\not\in E^{\textup{reg}}\cap\mathcal{O}_\cc 
\end{equation}
and
\begin{equation}\label{Eminexp}
E_y^{J,-}(x;\mathfrak{t})=\sum_{y^\prime\in W_0y}C_y^-(y^\prime)E_{y^\prime}^J(x;\mathfrak{t})\qquad (y\in\mathcal{O}_\cc)
\end{equation}
with $C_y^-(y^\prime)\in\mathbf{F}$ satisfying, for all $y^\prime\in W_0y$,
\[
C_{s_iy}^-(y^\prime)=k_i^{\eta(\alpha_i(y))}\Bigl(\frac{k_i^{-1}-k_it_y^{-\alpha_i^\vee}}{t_y^{-\alpha_i^\vee}-1}\Bigr)C_y^-(y^\prime)\,
\hbox{ for }\, 1\leq i\leq r\, \hbox{ such that }\, \alpha_i(y)>0,
\]
$C_{y_+}^-(y^\prime)=0$ if $y_+\in\overline{E}_+\setminus E_+$
and, for $y\in E^{\textup{reg}}\cap\mathcal{O}_\cc$,
\[
C_{y_+}^-(y^\prime)=(-1)^{\ell(w_0)}\frac{\kappa_{w_0}(y_+)}{\kappa_{w_0}(0)}\prod_{\beta\in\Pi(g_{y^\prime}^{-1})}
k_\beta^{-\eta(\beta(y_-))}\Bigl(\frac{k_\beta^{-1}-k_\beta t_{y_-}^{-\beta^\vee}}{t_{y_-}^{-\beta^\vee}-1}\Bigr)\qquad (y^\prime\in W_0y).
\]

For the Whittaker limit of $E_y^{J,-}(x;\mathfrak{t})$ (see Subsection \ref{rationalsection}), it is again convenient to use the explicit expansion of $E_{y_-}^{J,-}(x;\mathfrak{t})$ in the quasi-polynomials $E_{vy_-}^J(x;\mathfrak{t})$ ($v\in W_0$) when $y_-\in E_-$.
\begin{proposition}\label{Wformulaminus}
Let $\cc\in C^J$ and suppose that $\mathfrak{t}\in T_J^\prime$ is $J$-generic. For $y\in E_-\cap\mathcal{O}_\cc$ we have
\begin{equation*}
\begin{split}
E_y^{J,-}(x;\mathfrak{t})=\kappa_{w_0}(0)^{-2}&\prod_{\alpha\in\Phi_0^+}\Bigl(\frac{1-k_\alpha^{2}t_y^{\alpha^\vee}}{1-t_y^{\alpha^\vee}}\Bigr)\\
&\times
\sum_{v\in W_0}(-1)^{\ell(v)}\kappa_v(0)\kappa_v(y)\Bigl(\prod_{\beta\in\Pi(v)}\Bigl(\frac{1-k_\beta^{-2}t_y^{\beta^\vee}}{1-t_y^{\beta^\vee}}\Bigr)\Bigr)E_{vy}^J(x;\mathfrak{t})
\end{split}
\end{equation*}
where $t_y\in T$ is defined by \eqref{ty}.
\end{proposition}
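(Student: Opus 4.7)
The proof will parallel that of Corollary \ref{corplus}, replacing the plus-case recursion \eqref{ch1} and base case \eqref{ch2} by their minus-case analogues stated in Subsection \ref{S65}. Since $y\in E_-$ forces the $W_0$-stabilizer of $y$ to be trivial, the orbit $W_0y$ has $|W_0|$ elements, and \eqref{Eminexp} together with \eqref{Eminzero} reduces to $E_y^{J,-}(x;\mathfrak{t})=\sum_{v\in W_0}C_y^-(vy)E_{vy}^J(x;\mathfrak{t})$; it therefore suffices to compute $C_y^-(vy)$ for every $v\in W_0$ and match it with the coefficient appearing in the proposition.

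First I would iterate the minus-case recursion from $y_+$ down to $y$ along a reduced expression $w_0=s_{j_1}\cdots s_{j_N}$. Setting $z_0=y_+$ and $z_k=s_{j_{N-k+1}}z_{k-1}$, the chain ends at $z_N=w_0y_+=y$, and at each step the condition $\alpha_{j_{N-k+1}}(z_{k-1})=\delta_k(y_+)>0$ holds, where $\delta_k:=s_{j_N}\cdots s_{j_{N-k+2}}\alpha_{j_{N-k+1}}$ runs bijectively through $\Pi(w_0)=\Phi_0^+$. Iterated use of Corollary \ref{wyrules}(1) yields $w_{z_{k-1}}=s_{j_{N-k+2}}\cdots s_{j_N}w_{y_+}$, so $W$-equivariance of the pairing gives $t_{z_{k-1}}^{-\alpha_{j_{N-k+1}}^\vee}=t_{y_+}^{-\delta_k^\vee}$. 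Substituting $\alpha:=-w_0\delta$ (a self-bijection of $\Phi_0^+$) together with $t_{y_+}=w_0(t_y)$ converts the product over $\delta$ into a product over $\alpha\in\Phi_0^+$ involving $t_y^{\alpha^\vee}$; combined with the $W_0$-invariance of $\mathbf{k}$ and the identity $\kappa_{w_0}(y_+)\kappa_{w_0}(y)=1$ (a consequence of $\eta(-x)=-\eta(x)$ for $x\ne 0$ and of $y\in E_-$), the accumulated recursion factors repackage as $\kappa_{w_0}(y)\cdot(-1)^{|\Phi_0^+|}\kappa_{w_0}(0)^{-1}\prod_{\alpha\in\Phi_0^+}(1-k_\alpha^2t_y^{\alpha^\vee})/(1-t_y^{\alpha^\vee})$.

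For the base-case factor, triviality of $W_{0,y}$ gives $g_{vy}=v^{-1}$ and hence $\Pi(g_{vy}^{-1})=\Pi(v)$, so the formula for $C_{y_+}^-(vy)$ recorded in Subsection \ref{S65} can be rewritten---using $(k_\beta^{-1}-k_\beta t_y^{-\beta^\vee})/(t_y^{-\beta^\vee}-1)=-k_\beta(1-k_\beta^{-2}t_y^{\beta^\vee})/(1-t_y^{\beta^\vee})$ and $\kappa_v(0)\kappa_v(y)=\prod_{\beta\in\Pi(v)}k_\beta^{1-\eta(\beta(y))}$---as a product featuring exactly the $\Pi(v)$-factor appearing in the proposition. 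Multiplying the base case by the recursion product and absorbing the sign via $\ell(w_0)=|\Phi_0^+|$ produces the stated formula. The main obstacle is the bookkeeping in the second step: one must verify that the product naturally indexed by the roots $\delta_k$ attached to a chosen reduced expression of $w_0$ reassembles correctly---after the substitution $\alpha=-w_0\delta$ and the transition $t_{y_+}\mapsto t_y$---into the symmetric-looking product over $\Phi_0^+$ in the proposition, with the correct signs and $k$-exponents. This relies crucially on $y$ being strictly in $E_-$, so that no $\delta_k$ vanishes on $y_+$ and $\eta(\delta(y_+))=-\eta((-w_0\delta)(y))$ holds unambiguously.
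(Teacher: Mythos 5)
Your proposal is correct and follows essentially the route the paper intends: just as in the proof of Corollary \ref{corplus}, one iterates the minus-case recursion for $C^-_{s_iy}(y')$ along a reduced expression of $g_{y_+}=w_0$ (using that $y\in E_-$ is regular, so $W_{0,y}$ is trivial and $w_{z}$, $t_z$ transform as you state), and then substitutes the stated base-case formula for $C_{y_+}^-(y')$ with $\Pi(g_{vy}^{-1})=\Pi(v)$. Your bookkeeping — the substitution $\alpha=-w_0\delta$, $t_{y_+}^{-\delta^\vee}=t_y^{\alpha^\vee}$, the identity $\kappa_{w_0}(y_+)\kappa_{w_0}(y)=1$, and the sign cancellation $(-1)^{\ell(w_0)}\cdot(-1)^{|\Phi_0^+|}=1$ with the residual $(-1)^{\ell(v)}$ coming from the $-k_\beta$ factors — reproduces exactly the coefficient $\kappa_{w_0}(0)^{-2}(-1)^{\ell(v)}\kappa_v(0)\kappa_v(y)\prod_{\alpha\in\Phi_0^+}\frac{1-k_\alpha^2t_y^{\alpha^\vee}}{1-t_y^{\alpha^\vee}}\prod_{\beta\in\Pi(v)}\frac{1-k_\beta^{-2}t_y^{\beta^\vee}}{1-t_y^{\beta^\vee}}$ of the proposition.
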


\subsection{Pseudo-unitarity and orthogonality relations}\label{unitaritySection}
Let $d\mapsto d^\ast$ be a nontrivial automorphism of $\mathbf{F}$ of order two, and write
\begin{equation*}
\begin{split}
\mathbf{F}_u^\times&:=\{d\in\mathbf{F}^\times \,\, | \,\, d^\ast=d^{-1} \},\\
\mathbf{F}_r&:=\{d\in\mathbf{F} \,\, | \,\, d^\ast=d\}.
\end{split}
\end{equation*}
for the subgroup (resp. subfield) of unitary (resp. real) elements in $\mathbf{F}$. Write $T_u:=\textup{Hom}(Q^\vee,\mathbf{F}_u^\times)$ and
$T_r:=\textup{Hom}(Q^\vee,\mathbf{F}_r^\times)$ for the associated compact and real tori, respectively.
We assume in this subsection that $q,k_a\in\mathbf{F}_u^\times$ for all $a\in\Phi$. In case a root of $q$ is required (see Subsection \ref{IndPar}), then we assume it to lie in $\mathbf{F}_u^\times$ too. Note that under these assumptions, $\mathfrak{s}_J\in T_u$ for all $J\subsetneq [0,r]$.

The formulas
\[
T_w^\ast=T_w^{-1}\quad (w\in W),\qquad (x^\mu)^*=x^{-\mu} \quad (\mu\in Q^\vee)
\]
extend the automorphism $d\mapsto d^\ast$ of $\mathbf{F}$ to a $\mathbf{F}_r$-algebra anti-involution $h\mapsto h^\ast$ of $\mathbb{H}$. It satisfies $(Y^\mu)^*=Y^{-\mu}$ for all $\mu\in Q^\vee$. The following definition is from \cite[\S 3.6]{Ch}.
\begin{definition} Let $M$ be a $\mathbb{H}$-module. 
\begin{enumerate}
\item A $\mathbf{F}_r$-bilinear map $(\cdot,\cdot): M\times M\rightarrow \mathbf{F}$ is said to be $\ast$-sesquilinear if 
\begin{enumerate}
\item $(\cdot,\cdot)$ is $\mathbf{F}$-linear in the first component,
\item $(m,m^\prime)=(m^\prime,m)^*$ for all $m,m^\prime\in M$.
\end{enumerate}
\item We say that $M$ is pseudo-unitarizable if there exists a nonzero $\ast$-sesquilinear form $(\cdot,\cdot)$ on $M$ such that 
\[
(hm,m^\prime)=(m,h^*m^\prime)\qquad \forall\,  
h\in\mathbb{H},\,\,\,\forall\, m,m^\prime\in M.
\]
\end{enumerate}
\end{definition}

Define $N^w\in\mathcal{Q}$ ($w\in W$) by 
\begin{equation}\label{norm}
N^w(x):=\prod_{a\in\Pi(w)}\Bigl(
\frac{k_a^{-1}x^{a^\vee}-k_a}{k_ax^{a^\vee}-k_a^{-1}}\Bigr).
\end{equation}
The following theorem relates to \cite[Thm. 3.6.1]{Ch} via Remark \ref{14rem}.
\begin{theorem}\label{unitaritytheorem}
Let $\cc\in C^J$ and fix a $J$-generic $\mathfrak{t}\in T_J^\prime\cap\textup{Hom}(Q^\vee,\mathbf{F}_u^\times)$. 
\begin{enumerate}
\item There exists a unique $\ast$-sesquilinear form $(\cdot,\cdot)_{J,\mathfrak{t}}$ on $\mathcal{P}_{\mathfrak{t}}^{(\cc)}$ such that
\begin{equation}\label{norms}
\bigl(P_y^J(x;\mathfrak{t}),P_{y^\prime}^J(x;\mathfrak{t})\bigr)_{J,\mathfrak{t}}
:=\delta_{y,y^\prime}N^{w_y}(\mathfrak{s}_J\mathfrak{t})
\qquad \forall\, y,y^\prime\in\mathcal{O}_\cc.
\end{equation}
\item $\mathcal{P}_{\mathfrak{t}}^{(\cc)}$ is pseudo-unitarizable. 
\item
If $(\cdot,\cdot)$ is a nonzero $\ast$-sesquilinear form on $\Pc_{\mathfrak{t}}$ realising the pseudo-unitarity of $\Pc_{\mathfrak{t}}$, then 
$(\cdot,\cdot)=d\,(\cdot,\cdot)_{J,\mathfrak{t}}$ for some $d\in\mathbf{F}^\times_r$.
\end{enumerate}
\end{theorem}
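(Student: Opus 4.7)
The plan is to build $(\cdot,\cdot)_{J,\mathfrak{t}}$ directly from \eqref{norms} on the basis $\{P_y^J(x;\mathfrak{t})\}_{y\in\mathcal{O}_\cc}$ of $\mathcal{P}_{\mathfrak{t}}^{(\cc)}$ (available by Corollary \ref{14}), verify pseudo-unitarity by checking the adjoint relation on the generating set $\mathcal{P}_Y\cup\{\delta(T_j)\}_{j=0}^r$ of $\mathbb{H}$, and then deduce uniqueness from the simplicity of the $Y$-spectrum together with the adjoint relation for $\delta(T_j)$. For (1), the first step is to verify that the scalar $N^{w_y}(\mathfrak{s}_J\mathfrak{t})$ is real, using that the hypotheses force $\mathfrak{s}_J\mathfrak{t}\in T_u$: for $t\in T_u$ and $a\in\Phi$, multiplying numerator and denominator of each factor in \eqref{norm} by $t^{a^\vee}$ and applying $\ast$ shows $N^w(t)^\ast=N^w(t)$. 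This reality makes \eqref{norms} compatible with sesquilinearity, and $(\cdot,\cdot)_{J,\mathfrak{t}}$ is then uniquely determined by $\mathbf{F}$-linearity in the first slot and conjugate linearity in the second.

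For (2), the key observation is that $\mathbb{H}=\delta(\mathbb{H})$ is generated by $\mathcal{P}_Y$ and $\{\delta(T_j)\}_{j=0}^r$, and that $\ast$ commutes with $\delta$ on generators (hence everywhere), so in particular $\delta(T_j)^\ast=\delta(T_j)^{-1}$. For $Y^\mu$ the adjoint relation reduces to the identity $t_y^{-\mu}=(t_y^\mu)^\ast$, which is immediate from $t_y=w_y(\mathfrak{s}_J\mathfrak{t})\in T_u$ (the $W$-action on $T$ preserves $T_u$). For $\delta(T_j)$ I will use the pseudo-duality formula \eqref{DLopgbr}: expanding
\[
\pi_{\cc,\mathfrak{t}}(\delta(T_j))P_y^J(x;\mathfrak{t})=\frac{k_j-k_j^{-1}}{1-t_y^{\alpha_j^\vee}}P_y^J(x;\mathfrak{t})+\frac{k_j^{-1}-k_jt_y^{\alpha_j^\vee}}{1-t_y^{\alpha_j^\vee}}P_{s_jy}^J(x;\mathfrak{t}),
\]
and similarly for $\delta(T_j)^{-1}=\delta(T_j)-k_j+k_j^{-1}$, the form picks up nonzero contributions only when $y'\in\{y,s_jy\}$. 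The diagonal case ($y=y'$) is handled by the identity $\bigl(\tfrac{(k_j-k_j^{-1})t_y^{\alpha_j^\vee}}{1-t_y^{\alpha_j^\vee}}\bigr)^\ast=\tfrac{k_j-k_j^{-1}}{1-t_y^{\alpha_j^\vee}}$, proved by multiplying numerator and denominator by $t_y^{\alpha_j^\vee}$ and using $k_j^\ast=k_j^{-1}$, $(t_y^{\alpha_j^\vee})^\ast=t_y^{-\alpha_j^\vee}$. The off-diagonal case ($y'=s_jy$, $s_jy\neq y$) reduces, via the identity $t_{s_jy}^{\alpha_j^\vee}=t_y^{-\alpha_j^\vee}$, to checking
\[
\frac{N^{w_{s_jy}}(\mathfrak{s}_J\mathfrak{t})}{N^{w_y}(\mathfrak{s}_J\mathfrak{t})}=\frac{k_j-k_j^{-1}t_y^{\alpha_j^\vee}}{k_j^{-1}-k_jt_y^{\alpha_j^\vee}};
\]
this follows from $\Pi(w_{s_jy})=\Pi(w_y)\cup\{w_y^{-1}\alpha_j\}$ when $\alpha_j(y)>0$ (Lemma \ref{lengthadd} combined with Corollary \ref{wyrules}), together with $(\mathfrak{s}_J\mathfrak{t})^{(w_y^{-1}\alpha_j)^\vee}=t_y^{\alpha_j^\vee}$; the case $\alpha_j(y)<0$ is the symmetric one (swap the roles of $y$ and $s_jy$), and the degenerate case $s_jy=y$ (where $t_y^{\alpha_j^\vee}=k_j^{-2}$ by Lemma \ref{8}) reduces to the diagonal identity $(k_j)^\ast=k_j^{-1}$.

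For (3), let $(\cdot,\cdot)$ be any nonzero pseudo-unitary form. Applying the adjoint relation for $Y^\mu$ to $P_y^J$ and $P_{y'}^J$ gives $(t_y^{-\mu}-t_{y'}^{-\mu})(P_y^J,P_{y'}^J)=0$ for all $\mu\in Q^\vee$; since $\mathfrak{t}\in T_J'$, the $Y$-spectrum is simple and this forces $(P_y^J,P_{y'}^J)=0$ for $y\neq y'$, while hermiticity gives $c_y:=(P_y^J,P_y^J)\in\mathbf{F}_r$. Repeating the computation of case (b) in step 2 with the roles reversed now forces $c_{s_jy}/c_y=N^{w_{s_jy}}(\mathfrak{s}_J\mathfrak{t})/N^{w_y}(\mathfrak{s}_J\mathfrak{t})$ whenever $s_jy\neq y$. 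Since every $y\in\mathcal{O}_\cc$ is connected to $\cc$ by a chain of such simple reflections (using any reduced expression of $w_y$ in $W^J$), this determines all $c_y$ from $c_\cc$; setting $c:=c_\cc/N^{1}(\mathfrak{s}_J\mathfrak{t})=c_\cc\in\mathbf{F}_r^\times$ (nonzero by nontriviality of the form) yields $(\cdot,\cdot)=c\,(\cdot,\cdot)_{J,\mathfrak{t}}$.

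The main obstacle is the explicit verification in step 2 for $\delta(T_j)$: the pseudo-duality formula has several cases (depending on whether $s_jy=y$, $\alpha_j(y)>0$, or $\alpha_j(y)<0$), and the off-diagonal adjoint identity requires matching a ratio of $N^w$'s against an explicit $\ast$-conjugate of a rational function in $t_y^{\alpha_j^\vee}$; the bookkeeping involving $\Pi(w_{s_jy})$ versus $\Pi(w_y)$ and the sign conventions in the $W$-action on $T$ (particularly for $j=0$, where $\alpha_0^\vee=m^2K-\varphi^\vee$ makes the affine case slightly subtle) is the delicate part.
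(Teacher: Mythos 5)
Your argument is correct, and for parts (1) and (2) it is essentially the paper's own proof: reality of $N^{w_y}(\mathfrak{s}_J\mathfrak{t})$ for $\mathfrak{s}_J\mathfrak{t}\in T_u$, adjointness checked on the generating set $\mathcal{P}_Y\cup\{\delta(T_j)\}$, and the $\delta(T_j)$-case reduced via pseudo-duality (Theorem \ref{17}) to the cross-multiplied identity $(k_j^{-1}-k_jt_y^{\alpha_j^\vee})N^{w_{s_jy}}=(k_j-k_j^{-1}t_y^{\alpha_j^\vee})N^{w_y}$, which is exactly the paper's statement (b); you simply spell out the verification of (b) via $\Pi(w_{s_jy})=\{w_y^{-1}\alpha_j\}\cup\Pi(w_y)$ and $(\mathfrak{s}_J\mathfrak{t})^{(w_y^{-1}\alpha_j)^\vee}=t_y^{\alpha_j^\vee}$. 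Part (3) is where you genuinely diverge: the paper uses $(S_j^Y)^\ast=S_j^Y$, hence $(S_w^Y)^\ast=S_{w^{-1}}^Y$, together with $S_{w^{-1}}^YS_w^Y=\textup{n}_w(Y^{-1})$ and $P_{w\cc}^J=r_w(\mathfrak{s}_J\mathfrak{t})^{-1}\pi_{\cc,\mathfrak{t}}(S_w^Y)x^\cc$ to get $(P_{w\cc}^J,P_{w\cc}^J)=(x^\cc,x^\cc)N^w(\mathfrak{s}_J\mathfrak{t})$ in one stroke, whereas you re-run the $\delta(T_j)$-adjointness against the unknown form and propagate the diagonal values along reduced expressions of $w_y\in W^J$. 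Both routes are valid; the paper's is shorter but needs the self-adjointness of the $Y$-intertwiners, while yours recycles the case analysis from (2). Two small points of care in your version: state the one-step relations in cross-multiplied form rather than as ratios, since $J$-genericity only excludes $t_y^{\alpha_j^\vee}=k_j^{-2}$ when $w_y^{-1}\alpha_j$ is a positive root (i.e.\ $\alpha_j(y)>0$), so divisions are only licensed in the length-increasing direction (which your chain from $x^\cc$ upward indeed respects, and which also covers the $\alpha_j(y)<0$ case of (2) after multiplying through by $-t_y^{\alpha_j^\vee}$); and note that under the theorem's hypotheses $N^w(\mathfrak{s}_J\mathfrak{t})$ is guaranteed regular but not necessarily nonzero, which your recursion tolerates but a naive ratio formulation would not.
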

\begin{proof}
Write $t_y:=\mathfrak{s}_y\mathfrak{t}_y$ for $y\in\mathcal{O}_\cc$ (see \eqref{ty}). We have $t_y\in\textup{Hom}(Q^\vee,\mathbf{F}_u^\times)$ by the assumptions on $q,k_a$ and $\mathfrak{t}$.\\
(1) 
For $w\in W^J$ the quadratic norm $N^w(\mathfrak{s}_J\mathfrak{t})$ is well-defined since $\mathfrak{t}$ is $J$-generic (it may be zero).
Furthermore, 
$\{P_y^J(x;\mathfrak{t})\}_{y\in\mathcal{O}_\cc}$ is a $\mathbf{F}$-basis of $\Pc_{\mathfrak{t}}$. The result then follows directly from the observation that
$N^w(\mathfrak{s}_J\mathfrak{t})\in\mathbf{F}_r$ for all $w\in W^J$.\\
(2) We show that $\Pc_{\mathfrak{t}}$ is pseudo-unitary with respect to $(\cdot,\cdot)_{J,\mathfrak{t}}$. 
It suffices to show that
\[
\bigl(\pi_{\cc,\mathfrak{t}}(\delta(h))P_y^J(x;\mathfrak{t}),P_{y^\prime}^J(x;\mathfrak{t})\bigr)_{J,\mathfrak{t}}=\bigl(P_y^J(x;\mathfrak{t}),
\pi_{\cc,\mathfrak{t}}(\delta(h)^*)P_{y^\prime}^J(x;\mathfrak{t})\bigr)_{J,\mathfrak{t}}
\]
for $h=x^\mu$ ($\mu\in Q^\vee$) and $h=T_j$ ($0\leq j\leq r$).

For $h=x^\mu$ we have $\delta(x^\mu)=Y^{-\mu}$, hence
$\delta(x^{\mu})^*=Y^\mu$. Since $t_y^\mu\in\mathbf{F}_u$ and the
$P_y^J(x;\mathfrak{t})$ are orthogonal with respect to $(\cdot,\cdot)_{J,\mathfrak{t}}$, we have
\begin{equation*}
\begin{split}
\bigl(\pi_{\cc,\mathfrak{t}}(Y^{-\mu})P_y^J(x;\mathfrak{t}),P_{y^\prime}^J(x;\mathfrak{t})\bigr)_{J,\mathfrak{t}}&=
t_y^\mu\bigl(P_y^J(x;\mathfrak{t}),P_{y^\prime}^J(x;\mathfrak{t})\bigr)_{J,\mathfrak{t}}\\
&=t_{y^\prime}^\mu N^{w_y}(\mathfrak{s}_J\mathfrak{t})\delta_{y,y^\prime}=
\bigl(P_y^J(x;\mathfrak{t}),\pi_{\cc,\mathfrak{t}}(Y^\mu)P_{y^\prime}^J(x;\mathfrak{t})\bigr)_{J,\mathfrak{t}}
\end{split}
\end{equation*}
for $y,y^\prime\in\mathcal{O}_\cc$.

Finally, consider the case that $h=T_j$ ($0\leq j\leq r$). Then $\delta(T_j)^*=\delta(T_j^{-1})$. 
By Theorem \ref{17} and Corollary \ref{wyrules}
the following two statements are equivalent:
\begin{enumerate}
\item[(a)] For all $y,y^\prime\in\mathcal{O}_\cc$,
\[
\bigl(\pi_{\cc,\mathfrak{t}}(\delta(T_j))P_y^J(x;\mathfrak{t}),P_{y^\prime}^J(x;\mathfrak{t}))_{J,\mathfrak{t}}=
(P_y^J(x;\mathfrak{t}),\pi_{\cc,\mathfrak{t}}(\delta(T_j^{-1}))P_{y^\prime}^J(x;\mathfrak{t})\bigr)_{J,\mathfrak{t}}.
\]
\item[(b)] For $w\in W^J$ satisfying $s_jw\in W^J$,
\[
\Bigl(\frac{k_jt_{w\cc}^{\alpha_j^\vee}-k_j^{-1}}{t_{w\cc}^{\alpha_j^\vee}-1}\Bigr)
N^{s_jw}(\mathfrak{s}_J\mathfrak{t})=\Bigl(\frac{k_j^{-1}t_{w\cc}^{\alpha_j^\vee}-k_j}{t_{w\cc}^{\alpha_j^\vee}-1}\Bigr)
N^w(\mathfrak{s}_J\mathfrak{t}).
\]
\end{enumerate}
Statement (b) can be checked directly using
Lemma \ref{lengthadd} and \eqref{norm}, which completes the proof.\\
(3) Suppose that $(\cdot,\cdot)$ is a nonzero $\ast$-sesquilinear form on $\Pc_{\mathfrak{t}}$ which also realises the pseudo-unitarity of $\Pc_{\mathfrak{t}}$. Set
\[d:=(x^\cc,x^\cc)\in\mathbf{F}_r.
\]
We will show that $(\cdot,\cdot)=d\,(\cdot,\cdot)_{J,\mathfrak{t}}$ (and consequently $d\in\mathbf{F}_r^\times$).

By the simplicity of the $Y$-spectrum $\mathcal{S}(\mathcal{P}_{\mathfrak{t}}^{(\cc)})$ we have 
\[
(P_y^{J}(x;\mathfrak{t}),P_{y^\prime}^J(x;\mathfrak{t}))=\delta_{y,y^\prime}\bigl(P_y^J(x;\mathfrak{t}),P_y^J(x;\mathfrak{t})\bigr)\qquad (y,y^\prime\in\mathcal{O}_\cc).
\] 
We have $(S_j^Y)^*=S_j^Y$ for $j\in [0,r]$ (this can be verified by looking at its $\delta$-image and using \eqref{crossX}), hence
\[
(S_w^Y)^*=S_{w^{-1}}^Y\qquad (w\in W).
\]
For $w\in W^J$ we then obtain
\begin{equation*}
\begin{split}
\bigl(P_{w\cc}^J(x;\mathfrak{t}),P_{w\cc}^J(x;\mathfrak{t})\bigr)&=\frac{\bigl(\pi_{\cc,\mathfrak{t}}(S_{w^{-1}}^YS_w^Y)x^\cc,x^\cc\bigr)}{r_w(\mathfrak{s}_J\mathfrak{t})^*r_w(\mathfrak{s}_J\mathfrak{t})}\\
&=\frac{(x^\cc,x^\cc)\textup{n}_w(\mathfrak{s}_J\mathfrak{t})}{r_w(\mathfrak{s}_J\mathfrak{t})^*r_w(\mathfrak{s}_J\mathfrak{t})}\\
&=\bigl(x^\cc,x^\cc\bigr)N^w(\mathfrak{s}_J\mathfrak{t})=
d\,\bigl(P_{w\cc}^J(x;\mathfrak{t}),P_{w\cc}^J(x;\mathfrak{t})\bigr)_{J,\mathfrak{t}},
\end{split}
\end{equation*}
where the first equality is by definition of $P_y^J(x;\mathfrak{t})$ (see \eqref{boldbw}), the
second equality follows from \eqref{dSaction}, 
and the third equality
 from the explicit expressions \eqref{nw}, \eqref{rw} and \eqref{norm} of $\textup{n}_w(x)$, $r_w(x)$ and $N^w(x)$, respectively. 
It follows that $(\cdot,\cdot)=d\,(\cdot,\cdot)_{J,\mathfrak{t}}$, as desired.
\end{proof}
It is easy to check that
\[
\Phi^+=\bigcup_{w\in W}\Pi(w).
\]
For $J\subsetneq [0,r]$ this implies that
\[
\Phi^+\setminus\Phi_J^+=\bigcup_{(w,u)\in W^J\times W_J}u\bigl(\Pi(w)\bigr).
\]
In the present context, Theorem \ref{unitaritytheorem} gives the following sharpening of Theorem \ref{irredTHM}.
\begin{corollary}\label{correducible}
Keep the assumptions of Theorem \ref{unitaritytheorem}. Then 
\begin{enumerate}
\item $(\cdot,\cdot)_{J,\mathfrak{t}}$ is nondegenerate iff 
\[
(\mathfrak{s}_J\mathfrak{t})^{a^\vee}\not=k_a^2\qquad \forall\, a\in\bigcup_{w\in W^J}\Pi(w).
\]
\item $\mathcal{P}_{\mathfrak{t}}^{(\cc)}$ is reducible if $(\mathfrak{s}_J\mathfrak{t})^{a^\vee}=k_a^{2}$ for some $a\in\bigcup_{w\in W^J}\Pi(w)$.
\end{enumerate}
\end{corollary}
\begin{proof}
(1) By \eqref{norms} the sesquilinear form $(\cdot,\cdot)_{J,\mathfrak{t}}$ is degenerate iff $N^w(\mathfrak{s}_J\mathfrak{t})=0$ for some $w\in W^J$. In view of \eqref{norm}, this is equivalent to $(\mathfrak{s}_J\mathfrak{t})^{a^\vee}=k_a^2$ for some $a\in\Pi(w)$.\\
(2)
The radical 
\[
\textup{Rad}_{J,\mathfrak{t}}:=\{p\in\mathcal{P}\,\, | \,\, (p,\cdot)_{J,\mathfrak{t}}\equiv 0\}
\]
of $(\cdot,\cdot)_{J,\mathfrak{t}}$ is a proper $\mathbb{H}$-submodule of
$\mathcal{P}_{\mathfrak{t}}^{(\cc)}$ by Theorem \ref{unitaritytheorem}. It is nonzero iff $(\cdot,\cdot)_{J,\mathfrak{t}}$ is degenerate. The result now follows from part (1).
\end{proof}

Note that by $\Pi(w_y)=-w_y^{-1}\Pi(w_y^{-1})$, \eqref{actx}, Corollary \ref{newroots},
 and \eqref{norm}, we have the explicit expressions
 \[
 N^{w_y}(\mathfrak{s}_J\mathfrak{t})=\prod_{a\in\Phi^+: a(y)<0}
 \Bigl(\frac{k_a^{-1}t_y^{-a^\vee}-k_a}{k_at_y^{-a^\vee}-k_a^{-1}}\Bigr)\qquad (y\in\mathcal{O}_\cc)
 \]
 for the quadratic norms, where $t_y=\mathfrak{s}_y\mathfrak{t}_y\in T$.
 By Proposition \ref{leadcoeff}, the quadratic norms for the monic quasi-polynomials $E_y^J(x;\mathfrak{t})$ are
 \[
 \bigl(E_y^J(x;\mathfrak{t}),E_y^J(x;\mathfrak{t})\bigr)_{J,\mathfrak{t}}=\prod_{a\in\Phi^+: a(y)<0}\frac{(k_at_y^{-a^\vee}-k_a^{-1})(k_a^{-1}t_y^{-a^\vee}-k_a)}
 {(t_y^{-a^\vee}-1)^2}\qquad (y\in\mathcal{O}_\cc).
 \]

 \begin{remark}
 In case of Cherednik's polynomial representation $\pi=\pi_{0,1_T}$,
 there exists a concrete realization of the $\ast$-sesquilinear form $(\cdot,\cdot)_{0,1_T}: \mathcal{P}\times\mathcal{P}\rightarrow\mathbf{F}$. It is of the form 
 \[
 (p_1,p_2)_{[1,r],1_T}=\textup{ct}(p_1p_2^*\mathcal{W})\qquad (p_1,p_2\in\mathcal{P})
 \]
 with $p^\ast:=\sum_{\mu\in Q^\vee}d_\mu^\ast x^{-\mu}$ for $p=\sum_{\mu\in Q^\vee}d_\mu x^\mu\in\mathcal{P}$, with
 $\textup{ct}$ the constant term map (picking the coefficient of $x^0$ in the expansion in monomials), and with $\mathcal{W}$ an explicit weight function (see, e.g., \cite[\S 5.1]{Ma}).  Such a realisation for $(\cdot,\cdot)_{J,\mathfrak{t}}$ is not known when $(J,\mathfrak{t})\not=([1,r],1_T)$.
 \end{remark}

\subsection{The Whittaker limit}\label{rationalsection}
In this section we take
$\mathbf{F}=\mathbf{K}(q^{\frac{1}{2h}})$ for some field $\mathbf{K}$ of characteristic zero
and we assume
that ${}^{\textup{sh}}k^{\frac{1}{2}}$ and ${}^{\textup{lg}}k^{\frac{1}{2}}$ are in $\mathbf{K}$. 
Set
\[
\overline{\mathcal{P}}^{(\cc)}:=\bigoplus_{y\in\mathcal{O}_\cc}\mathbf{K}x^y\subset\mathbf{K}[E]\qquad (\cc\in\overline{C}_+).
\]
Let $\overline{\mathcal{Q}}$ be the quotient field of $\overline{\mathcal{P}}:=\overline{\mathcal{P}}^{(0)}$, and set
\[
\mathbf{K}_{\overline{\mathcal{Q}}}[E]:=\overline{\mathcal{Q}}\otimes_{\overline{\mathcal{P}}}\mathbf{K}[E].
\]
For $\cc\in\overline{C}_+$, let $\overline{\mathcal{Q}}^{(\cc)}\subset\mathbf{K}_{\overline{\mathcal{Q}}}[E]$ be the $\overline{\mathcal{Q}}$-submodule generated by
$\overline{\mathcal{P}}^{(\cc)}$. We view $\overline{\mathcal{Q}}^{(\cc)}$ as $\mathbf{K}$-submodule of $\mathcal{Q}^{(\cc)}$ in the natural way.

Denote by $\overline{H}_0\subset\overline{H}$ the finite and affine Hecke algebra over $\mathbf{K}$, respectively. They are regarded as $\mathbf{K}$-subalgebras of $H$ in the natural way.
\begin{lemma}\label{Wlemma}
\hfill
\begin{enumerate}
\item The formulas
\begin{equation}
\label{actionWhittaker}
(fx^y)\blacktriangleleft T_i:=k_i^{\chi_{\mathbb{Z}}(\alpha_i(y))}s_i(fx^{y})+(k_i-k_i^{-1})\Bigl(\frac{fx^y-s_i(f)x^{y-\lfloor \alpha_i(y)\rfloor\alpha_i^\vee}}{1-x^{\alpha_i^\vee}}\Bigr)
\end{equation}
for $1\leq i\leq r$, $f\in\overline{\mathcal{Q}}$ and $y\in E$ define a $\mathbf{K}$-linear right $\overline{H}_0$-action on $\mathbf{K}_{\overline{\mathcal{Q}}}[E]$. 
\item The $\overline{H}_0$-action \eqref{actionWhittaker} on $\mathbf{K}_{\overline{\mathcal{Q}}}[E]$ extends to a $\mathbf{K}$-linear right $\overline{H}$-action by 
\begin{equation}\label{actionWhittaker0}
(fx^y)\blacktriangleleft T_0:=(fx^{y-\varphi^\vee})\blacktriangleleft T_{s_\varphi}^{-1}
\end{equation}
for $f\in\overline{\mathcal{Q}}$ and $y\in E$. 
\item $\mathbf{K}[E]$, $\overline{\mathcal{Q}}^{(\cc)}$ and $\overline{\mathcal{P}}^{(\cc)}$ are 
$\overline{H}$-submodules of $\mathbf{K}_{\overline{\mathcal{Q}}}[E]$ for all $\cc\in\overline{C}_+$.
\item Let $\cc\in C^J$. The restriction $\pi_{\cc,\mathfrak{t}}^{X-\textup{loc}}\vert_{\delta(H)}$ of the $X$-localised quasi-polynomial representation $\pi_{\cc,\mathfrak{t}}^{X-\textup{loc}}: \mathbb{H}^{X-\textup{loc}}\rightarrow \textup{End}(\mathcal{Q}^{(\cc)})$ to $\delta(H)\subset\mathbb{H}^{X-\textup{loc}}$ does not depend on $\mathfrak{t}\in T_J$. Furthermore,
\begin{equation}\label{startpi}
f\blacktriangleleft h=\pi_{\cc,\mathfrak{t}}^{X-\textup{loc}}(\delta(h))(f)\qquad \forall\, f\in\overline{\mathcal{Q}}^{(\cc)},\, \forall\, h\in\overline{H}.
\end{equation}
\end{enumerate}
\end{lemma}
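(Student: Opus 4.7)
The plan is to establish (4) first and deduce (1)--(3) as consequences. The key observation is that for any $\cc\in C^J$ and any $\mathfrak{t}\in T_J$ (nonempty since $\widetilde{q}\in\mathbf{F}$ and $m(\varphi)\mid 2h$, cf.\ \eqref{mh}), the composition $h\mapsto\pi_{\cc,\mathfrak{t}}^{X-\textup{loc}}(\delta(h))$ is an anti-homomorphism from $\overline{H}$ into $\textup{End}_{\mathbf{K}}(\mathcal{Q}^{(\cc)})$, hence equivalently a right $\overline{H}$-action on $\mathcal{Q}^{(\cc)}$; this uses only that $\delta$ is an anti-involution of $\mathbb{H}$ and that $\overline{H}\subseteq H\subseteq\mathbb{H}$. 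To prove (4) it then suffices to verify \eqref{startpi} on the algebra generators $T_0,\ldots,T_r$ of $\overline{H}$, and to check that the resulting operators preserve $\overline{\mathcal{Q}}^{(\cc)}\subseteq\mathcal{Q}^{(\cc)}$.

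For $1\le i\le r$ we have $\delta(T_i)=T_i$, so by \eqref{hatform},
\[
\pi_{\cc,\mathfrak{t}}^{X-\textup{loc}}(T_i)(fx^y)=k_i^{\chi_{\mathbb{Z}}(\alpha_i(y))}s_{i,\mathfrak{t}}(fx^y)+(k_i-k_i^{-1})\frac{fx^y-\check{s}_i(fx^y)}{1-x^{\alpha_i^\vee}}.
\]
Using $s_{i,\mathfrak{t}}(fx^y)=s_i(f)x^{s_iy}=s_i(fx^y)$ (since $s_{i,\mathfrak{t}}x^y=x^{s_iy}$ is independent of $\mathfrak{t}$) together with $\check{s}_i(fx^y)=s_i(f)x^{y-\lfloor\alpha_i(y)\rfloor\alpha_i^\vee}$ (from \eqref{compatibleQ}), this expression matches the right hand side of \eqref{actionWhittaker} verbatim, is $\mathfrak{t}$-independent, and preserves $\overline{\mathcal{Q}}^{(\cc)}$ (no $q$-shift arises). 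For $i=0$ the identity $\delta(T_0)=T_{s_\varphi}^{-1}x^{-\varphi^\vee}$ from \eqref{delta0} gives
\[
\pi_{\cc,\mathfrak{t}}^{X-\textup{loc}}(\delta(T_0))(fx^y)=\pi_{\cc,\mathfrak{t}}^{X-\textup{loc}}(T_{s_\varphi}^{-1})(fx^{y-\varphi^\vee}),
\]
and since $T_{s_\varphi}^{-1}\in H_0$ is a product of terms $T_j^{-1}=T_j-(k_j-k_j^{-1})$ with $1\le j\le r$, iterating the case just settled produces $(fx^{y-\varphi^\vee})\blacktriangleleft T_{s_\varphi}^{-1}$, which matches \eqref{actionWhittaker0} and inherits both $\mathfrak{t}$-independence and $\overline{\mathcal{Q}}^{(\cc)}$-stability. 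This proves (4).

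The decomposition $\mathbf{K}_{\overline{\mathcal{Q}}}[E]=\bigoplus_{\cc\in\overline{C}_+}\overline{\mathcal{Q}}^{(\cc)}$ together with (4) assembles the $\mathfrak{t}$-independent right $\overline{H}$-actions on each summand into a single right $\overline{H}$-action on $\mathbf{K}_{\overline{\mathcal{Q}}}[E]$ described by the explicit formulas \eqref{actionWhittaker}--\eqref{actionWhittaker0}, settling (1) and (2). For (3), the stability of $\mathbf{K}[E]$ and $\overline{\mathcal{P}}^{(\cc)}$ under $\blacktriangleleft T_i$ ($1\le i\le r$) is immediate from the identity $\tfrac{x^y-x^{y-\lfloor\alpha_i(y)\rfloor\alpha_i^\vee}}{1-x^{\alpha_i^\vee}}=\nabla_i(x^y)\in\mathbf{K}[E]$ (see \eqref{nablaj}), and stability under $\blacktriangleleft T_0$ then follows by the $T_{s_\varphi}^{-1}$ reduction. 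There is no serious obstacle; the only point requiring attention is the $T_0$ case of (4), and that is transparent once one notes that $\delta(T_0)$ factors as multiplication by $x^{-\varphi^\vee}$ (a pure shift of exponents, with no $q$-dilation) followed by an element of $H_0$ whose action has already been computed.
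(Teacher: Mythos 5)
Your proposal is correct and follows essentially the same route as the paper: the paper likewise computes $\pi_{\cc,\mathfrak{t}}^{X-\textup{loc}}(\delta(T_i))$ via \eqref{hatform} and handles $T_0$ through $\delta(T_0)=T_{s_\varphi}^{-1}x^{-\varphi^\vee}$, observes the $\mathfrak{t}$-independence of these explicit formulas, and then reads off parts (1)--(3) from the resulting $\mathfrak{t}$-independent right $\overline{H}$-action $f\blacktriangleleft h=\pi_{\cc,\mathfrak{t}}^{X-\textup{loc}}(\delta(h))(f)$. Your ordering (stating (4) first and deducing the rest) and the extra remarks on $\overline{\mathcal{Q}}^{(\cc)}$-stability and on $\nabla_i$ for part (3) are only cosmetic differences from the paper's argument.
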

\begin{proof}
Let $\cc\in C^J$ and $\mathfrak{t}\in T_J$. For $i\in [1,r]$, $f\in\overline{\mathcal{Q}}$ and $y\in\mathcal{O}_\cc$ we have in $\overline{\mathcal{Q}}^{(\cc)}$,
\begin{equation}\label{hu1}
\begin{split}
\pi_{\cc,\mathfrak{t}}^{X-\textup{loc}}(\delta(T_i))(fx^y)&=\pi_{\cc,\mathfrak{t}}^{X-\textup{loc}}(T_i)(fx^y)\\
&=k_i^{\chi_{\mathbb{Z}}(\alpha_i(y))}s_i(fx^{y})+(k_i-k_i^{-1})\Bigl(\frac{fx^y-s_i(f)x^{y-\lfloor \alpha_i(y)\rfloor\alpha_i^\vee}}{1-x^{\alpha_i^\vee}}\Bigr)
\end{split}
\end{equation}
by \eqref{hatform}. Furthermore, we have by \eqref{delta0},
\begin{equation}\label{hu2}
\pi_{\cc,\mathfrak{t}}^{X-\textup{loc}}(\delta(T_0))(fx^y)=\pi_{\cc,\mathfrak{t}}^{X-\textup{loc}}(T_{s_\varphi}^{-1}x^{-\varphi^\vee})(fx^y)=\pi_{\cc,\mathfrak{t}}^{X-\textup{loc}}(\delta(T_{s_\varphi}^{-1}))(fx^{y-\varphi^\vee}).
\end{equation}
This shows that $\pi_{\cc,\mathfrak{t}}^{X-\textup{loc}}\vert_{\delta(H)}$ does not depend on $\mathfrak{t}\in T_J$. Hence \eqref{startpi} defines a $\mathfrak{t}$-independent $\mathbf{K}$-linear right $\overline{H}$-action $\blacktriangleleft$ on
$\overline{\mathcal{Q}}^{(\cc)}$. By the explicit formulas \eqref{hu1} and \eqref{hu2}, the $\overline{H}$-action $\blacktriangleleft$
on $\overline{\mathcal{Q}}^{(\cc)}$ is characterised by the formulas \eqref{actionWhittaker} and \eqref{actionWhittaker0}. The remaining statements now follow immediately.
\end{proof}
The adjoint torus over $\mathbf{K}$ is denoted by
\[
\overline{T}:=\textup{Hom}(Q^\vee,\mathbf{K})\subset T=\textup{Hom}(Q^\vee,\mathbf{F}).
\]
Let $J\subsetneq [0,r]$ and set
\[
\overline{T}_J^{\textup{red}}:=\{\mathfrak{t}\in\overline{T} \,\, | \,\, \mathfrak{t}^{D\alpha_j^\vee}=1\quad \forall\, j\in J \}.
\]
Choose $\lambda_J\in C^J$ such that $\lambda_J\in\frac{1}{2h}P^\vee$ (see Subsection \ref{genSection}). Recall that
$q^{\lambda_J}\in T_J$ is the character which takes the value
\[
q^{\langle \lambda_J,\alpha^\vee\rangle}=q_\alpha^{\alpha(\lambda_J)}\in\mathbb{Q}\bigl(q^{\frac{1}{2h}}\bigr)\subset\mathbf{F}
\]
at the coroot $\alpha^\vee\in\Phi_0^\vee$  (see Subsection \ref{genSection}). Note that
\begin{equation}\label{qseperated}
q^{\lambda_J}\overline{T}_J^{\textup{red}}\subseteq\{\mathfrak{t}\in T_J^\prime \,\, | \,\, \mathfrak{t}\, \hbox{ is }\, J\textup{-generic}\}.
\end{equation}

Let $\mathbf{F}_{\textup{reg}}\subset\mathbf{F}$ be the subring consisting of the elements $d\in\mathbf{F}=\mathbf{K}(q^{\frac{1}{2h}})$
which are regular at $q^{\,-\frac{1}{2h}}=0$. Specialisation at $q^{-\frac{1}{2h}}=0$ defines a ring homomorphism $\mathbf{F}_{\textup{reg}}\rightarrow\mathbf{K}$, $d\mapsto \overline{d}$ which we extend to a ring homomorphism
\[
\mathbf{F}_{\textup{reg}}[E]\rightarrow\mathbf{K}[E],\qquad f\mapsto\overline{f}:=\sum_{y\in E}\overline{d}_yx^y\quad (f=\sum_{y\in E}d_yx^y\in\mathbf{F}_{\textup{reg}}[E]).
\]
We call $\overline{f}\in\mathbf{K}[E]$ the {\it Whittaker limit} of $f\in\mathbf{F}_{\textup{reg}}[E]$. Write 
\[
\mathcal{P}_{\textup{reg}}^{(\cc)}:=\bigoplus_{y\in\mathcal{O}_\cc}\mathbf{F}_{\textup{reg}}x^y.
\]
\begin{proposition}\label{prop:qlimit}
Let $\cc\in C^J$, $\mathfrak{t}\in\overline{T}_J^{\textup{red}}$ and $y\in\mathcal{O}_\cc$. Then
\begin{enumerate}
\item $E_y^J(x;q^{\lambda_J}\mathfrak{t})\in\mathcal{P}_{\textup{reg}}^{(\cc)}$.
\item[\textup{(2)}] The Whittaker limit $\overline{E}_y^J(x)\in\overline{\mathcal{P}}^{(\cc)}$ of $E_y^J(x;q^{\lambda_J}\mathfrak{t})$
is independent of $\lambda_J$ and $\mathfrak{t}$.  
\item[\textup{(3)}] For $j\in [0,r]$ we have
\begin{equation}
\label{qlimit_inv_intertwine}
\overline{E}_{s_jy}^J(x)\blacktriangleleft T_j=
\begin{cases}
\kappa_{Ds_j}(y)^{-1}\overline{E}_y^J(x)\quad &\hbox{ if }\, \alpha_j(y)<0,\\
k_j\overline{E}_y^J(x)\quad &\hbox{ if }\, \alpha_j(y)=0.
\end{cases}
\end{equation}
\end{enumerate}
\end{proposition}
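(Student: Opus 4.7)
The approach is induction on $\ell(w_y)$, using Corollary \ref{Whittakerprop} as the driving recursion and Lemma \ref{Wlemma}(4) to identify $\pi_{\cc,q^{\lambda_J}\mathfrak{t}}(\delta(T_j))|_{\mathcal{P}^{(\cc)}}$ with the $\mathfrak{t}$-independent, $\mathbf{K}$-linear operator $\blacktriangleleft T_j$. The base case $y = \cc$ is handled by Lemma \ref{remE1}(1): $E_\cc^J(x;q^{\lambda_J}\mathfrak{t}) = x^\cc$ lies in $\mathcal{P}_{\textup{reg}}^{(\cc)}$ with Whittaker limit $x^\cc$. Any $y^\prime \in \mathcal{O}_\cc$ with $\ell(w_{y^\prime}) > 0$ can be written as $s_jy$ with $\ell(w_y) = \ell(w_{y^\prime}) - 1$ and $\alpha_j(y) > 0$ by extracting the leftmost simple reflection from a reduced expression of $w_{y^\prime}$, so it suffices to perform the inductive step under this sign condition.

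For such $y$ and $j$, Corollary \ref{Whittakerprop} applies (since $q^{\lambda_J}\mathfrak{t}$ is $J$-generic by \eqref{qseperated}) and rearranges to
\begin{equation*}
E_{s_jy}^J(x;q^{\lambda_J}\mathfrak{t}) = \kappa_{Ds_j}(y)^{-1}\Bigl(\pi_{\cc,q^{\lambda_J}\mathfrak{t}}(\delta(T_j)) - \frac{k_j-k_j^{-1}}{1 - t_y^{\alpha_j^\vee}}\Bigr)E_y^J(x;q^{\lambda_J}\mathfrak{t}).
\end{equation*}
Setting $a := w_y^{-1}\alpha_j$ with gradient $\alpha := Da$, the $W$-equivariance of character evaluation gives $t_y^{\alpha_j^\vee} = (\mathfrak{s}_J q^{\lambda_J}\mathfrak{t})^{a^\vee} = q_\alpha^{a(\lambda_J)}\,\mathfrak{s}_J^{\alpha^\vee}\mathfrak{t}^{\alpha^\vee}$. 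Since $\alpha_j(y) > 0$ forces $a \in \Phi^+\setminus\Phi_J^+$ (because $s_jw_y = w_{s_jy} \in W^J$ with $\ell(s_jw_y) = \ell(w_y)+1$ by Corollary \ref{wyrules}, so $a \in \Pi(s_jw_y) \subseteq \Phi^+\setminus\Phi_J^+$ by Lemma \ref{lengthadd} and \eqref{Phipos}), and $\lambda_J \in C^J \cap \tfrac{1}{2h}P^\vee$ has $\alpha_i(\lambda_J) > 0$ iff $i \notin J$, the exponent $a(\lambda_J)$ is strictly positive. Consequently $t_y^{\alpha_j^\vee}$ is a positive integer power of $\widetilde{q}$ times an element of $\mathbf{K}^\times$, so $(1 - t_y^{\alpha_j^\vee})^{-1} = -\sum_{m\ge 1}t_y^{-m\alpha_j^\vee}$ lies in $\mathbf{F}_{\textup{reg}}$ and vanishes at $\widetilde{q}^{-1}=0$; in particular the coefficient $(k_j-k_j^{-1})/(1-t_y^{\alpha_j^\vee})$ has zero Whittaker limit.

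Combining this with the inductive hypothesis and Lemma \ref{Wlemma}(3)--(4) -- the latter asserting that $\pi_{\cc,q^{\lambda_J}\mathfrak{t}}(\delta(T_j))$ acts on $\mathcal{P}^{(\cc)}$ as the $\mathfrak{t}$-independent $\mathbf{K}$-linear operator $\blacktriangleleft T_j$, hence preserves $\mathcal{P}_{\textup{reg}}^{(\cc)}$ and commutes with the Whittaker limit -- the displayed recursion yields $E_{s_jy}^J(x;q^{\lambda_J}\mathfrak{t}) \in \mathcal{P}_{\textup{reg}}^{(\cc)}$ together with the Whittaker-limit identity
\begin{equation*}
\overline{E}_y^J(x)\,\blacktriangleleft T_j = \kappa_{Ds_j}(y)\,\overline{E}_{s_jy}^J(x),
\end{equation*}
whose right-hand side is $\lambda_J, \mathfrak{t}$-independent by induction. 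This proves (1) and (2). For (3), case $\alpha_j(y) < 0$: applying the displayed identity with $y$ replaced by $s_jy$ (valid because $\alpha_j(s_jy) > 0$) and using $\kappa_{Ds_j}(s_jy) = \kappa_{Ds_j}(y)^{-1}$ yields $\overline{E}_{s_jy}^J \blacktriangleleft T_j = \kappa_{Ds_j}(y)^{-1}\overline{E}_y^J$ as stated; the case $\alpha_j(y) = 0$ follows by taking the Whittaker limit of $\pi_{\cc,q^{\lambda_J}\mathfrak{t}}(\delta(T_j))E_y^J = k_jE_y^J$ from Corollary \ref{Whittakerprop}. The main obstacle is the $q$-expansion of $t_y^{\alpha_j^\vee}$: establishing positivity of $a(\lambda_J)$ depends delicately on the choice $\lambda_J \in C^J \cap \tfrac{1}{2h}P^\vee$ and on identifying $w_y^{-1}\alpha_j$ as a positive affine root lying outside the parabolic subsystem $\Phi_J$.
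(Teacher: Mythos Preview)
Your proof is correct and follows essentially the same approach as the paper: induction on $\ell(w_y)$ using the recursion from Corollary~\ref{Whittakerprop}, with the key step being the verification that the scalar coefficient $(k_j-k_j^{-1})/(1-t_y^{\alpha_j^\vee})$ lies in $\mathbf{F}_{\textup{reg}}$ and vanishes in the Whittaker limit. Your positivity argument (identifying $a=w_y^{-1}\alpha_j\in\Phi^+\setminus\Phi_J^+$ and evaluating at $\lambda_J\in C^J$) is equivalent to the paper's use of Corollary~\ref{wyrules}(1) to transfer the sign of $\alpha_j$ from $w_y\cc$ to $w_y\lambda_J$; the paper simply runs the induction ``downward'' (choosing $j$ with $\alpha_j(y)<0$ and expressing $E_y^J$ in terms of $E_{s_jy}^J$) whereas you run it ``upward'', which is a purely cosmetic difference.
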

\begin{remark}
Note that $\kappa_{Ds_j}(y)\not=k_j^{-1}$ when $\alpha_j(y)=0$, so it is necessary to distinguish the two cases in part (3) of the proposition (an explicit formula for $\overline{E}_{s_jy}^J(x)\blacktriangleleft T_j$ when $\alpha_j(y)>0$ follows from \eqref{qlimit_inv_intertwine} and the Hecke relation \eqref{Hr} for $T_j$).
\end{remark}
\begin{proof}
For $y\in\mathcal{O}_\cc$ write $t_y:=w_y(q^{\lambda_J}\mathfrak{s}_J\mathfrak{t})\in T$ (see \eqref{qseperated} and \eqref{ty}). Then 
\begin{equation}\label{steepp}
t_y^{\alpha_j^\vee}=q_{\alpha_j}^{\alpha_j(w_y\lambda_J)}(\mathfrak{s}_y\mathfrak{t}_y)^{D\alpha_j^\vee}
\end{equation}
with $q_{\alpha_j}^{\alpha_j(w_y\lambda_j)}\in\mathbb{Q}\bigl(q^{\frac{1}{2h}}\bigr)$ and $(\mathfrak{s}_y\mathfrak{t}_y)^{D\alpha_j^\vee}\in\mathbf{K}$, and
Corollary \ref{Whittakerprop} gives
\begin{equation}\label{step}
\begin{split}
\pi_{\cc,q^{\lambda_J}\mathfrak{t}}(\delta(T_j))E_{s_jy}^J(x;q^{\lambda_J}\mathfrak{t})+
&\left(\frac{k_j-k_j^{-1}}{t_{y}^{-\alpha_j^\vee}-1}\right)E_{s_jy}^J(x;q^{\lambda_J}\mathfrak{t})=\\
&\quad=
\kappa_{Ds_j}(y)^{-1}E_y^J(x;q^{\lambda_J}\mathfrak{t})\quad \hbox{ if }\, \alpha_j(y)<0
\end{split}
\end{equation}
(we used here that $\kappa_{Ds_i}(s_iy)=\kappa_{Ds_i}(y)^{-1}$ by \eqref{kjcombi}, and $t_{s_jy}^{\alpha_j^\vee}=t_y^{-\alpha_j^\vee}$). We now use \eqref{step} to prove
(1) and (2) by induction to $\ell(w_y)$.

If $\ell(w_y)=0$ then $y=\cc$ and $E_\cc^J(x;q^{\lambda_J}\mathfrak{t})=x^\cc\in\mathcal{P}_{\textup{reg}}^{(\cc)}$. Furthermore, $\overline{E}_\cc^J(x)=x^\cc$.

Let $y\in\mathcal{O}_\cc$ with $\ell(w_y)>0$ and suppose that for all $y^\prime\in\mathcal{O}_\cc$ with $\ell(w_{y^\prime})<\ell(w_y)$ we have $E_{y^\prime}^J(x;q^{\lambda_J}\mathfrak{t})\in\mathcal{P}_{\textup{reg}}^{(\cc)}$ with 
$\overline{E}_{y^\prime}^J(x)$ independent of $\lambda_J$ and $\mathfrak{t}$. Take $0\leq j\leq r$ such that
$\ell(s_jw_y)=\ell(w_y)-1$. By Corollary \ref{wyrules} we have $\alpha_j(y)<0$ and $s_jw_y=w_{s_jy}$, in particular $\ell(w_{s_jy})<\ell(w_y)$. Since both $\cc$ and $\lambda_J$ lie in $C^J$ and $\alpha_j(y)=\alpha_j(w_y\cc)<0$, we also have $\alpha_j(w_y\lambda_J)<0$ by Corollary \ref{wyrules}(1). Taking also \eqref{steepp} into account we conclude that the left hand side of \eqref{step} lies in $\mathcal{P}_{\textup{reg}}^{(\cc)}$.  Hence $E_y^J(x;q^{\lambda_J}\mathfrak{t})\in\mathcal{P}_{\textup{reg}}^{(\cc)}$, and 
the Whittaker limit of \eqref{step} gives
\[
\overline{E}_{y}^J(x)=\kappa_{Ds_j}(y)\overline{E}_{s_jy}^J(x)\blacktriangleleft T_j.
\]
The right hand side does not depend on $\lambda_J$ and $\mathfrak{t}$ by the induction hypothesis. This completes the proof of the induction step.

The Whittaker limit of \eqref{step} 
yields the first case of \eqref{qlimit_inv_intertwine}. The second case  is immediate from Corollary \ref{Whittakerprop}.
\end{proof}

\begin{corollary}\label{cor:qlim_wy}
Let $\cc\in C^J$ and $y\in\mathcal{O}_\cc$. Then
\begin{equation*}
\overline{E}_y^J(x)=\frac{k(\cc)}{k(y)}\,x^\cc\blacktriangleleft T_{w_y^{-1}}.
\end{equation*}
\end{corollary}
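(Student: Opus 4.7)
The plan is to induct on $\ell(w_y)$, using Proposition \ref{prop:qlimit}(3) to peel off simple reflections from $w_y^{-1}$ one at a time, and then to recognise the resulting product of $\kappa$-factors as the ratio $k(\cc)/k(y)$ via Lemma \ref{kylemma2}(1). The base case $y = \cc$ is trivial: $w_y = e$, $T_{w_y^{-1}} = 1$, and $\overline{E}_\cc^J(x) = x^\cc$.

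For the inductive step I would fix a reduced expression $w_y = s_{j_1}\cdots s_{j_\ell}$ and set $y_m := s_{j_m}\cdots s_{j_\ell}\cc$ for $1\le m\le \ell$ and $y_{\ell+1}:=\cc$, so $y_1 = y$ and $w_{y_m} = s_{j_m}\cdots s_{j_\ell}$ is a reduced expression of minimal length. Corollary \ref{wyrules}(1) then guarantees that $\alpha_{j_m}(y_{m+1}) > 0$ (since length increases when we multiply by $s_{j_m}$ on the left), and hence $\alpha_{j_m}(y_m) = -\alpha_{j_m}(y_{m+1}) < 0$. The first case of \eqref{qlimit_inv_intertwine} therefore applies with $j = j_m$ and $y$ there equal to $y_m$, yielding
\begin{equation*}
\overline{E}^J_{y_m}(x) \;=\; \kappa_{Ds_{j_m}}(y_m)\,\overline{E}^J_{y_{m+1}}(x)\blacktriangleleft T_{j_m}.
\end{equation*}

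Iterating this identity from $m = \ell$ down to $m = 1$, using that the $\overline{H}$-action $\blacktriangleleft$ is a \emph{right} action so that $(f\blacktriangleleft T_{j_\ell})\blacktriangleleft T_{j_{\ell-1}}\cdots \blacktriangleleft T_{j_1} = f\blacktriangleleft (T_{j_\ell}T_{j_{\ell-1}}\cdots T_{j_1})$, and noting that $w_y^{-1} = s_{j_\ell}\cdots s_{j_1}$ is a reduced expression so $T_{j_\ell}\cdots T_{j_1} = T_{w_y^{-1}}$, gives
\begin{equation*}
\overline{E}^J_y(x) \;=\; \Bigl(\prod_{m=1}^{\ell}\kappa_{Ds_{j_m}}(y_m)\Bigr)\,x^\cc\blacktriangleleft T_{w_y^{-1}}.
\end{equation*}

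It then remains to identify the scalar. Since $\alpha_{j_m}(y_m) \ne 0$, Lemma \ref{kylemma2}(1) gives $\kappa_{Ds_{j_m}}(y_m) = k_{s_{j_m}}(y_m) = k(y_{m+1})/k(y_m)$, and the product telescopes to $k(y_{\ell+1})/k(y_1) = k(\cc)/k(y)$, as required. No step looks seriously obstructive: the only subtlety is making sure the hypothesis $\alpha_{j_m}(y_m) < 0$ of the relevant case of \eqref{qlimit_inv_intertwine} holds at each step, but this is guaranteed by the choice of reduced expression via Corollary \ref{wyrules}.
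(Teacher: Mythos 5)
Your proof is correct and follows essentially the same route as the paper, which proves the corollary by induction on $\ell(w_y)$ from Proposition \ref{prop:qlimit}, Corollary \ref{wyrules} and \eqref{kjcombi}; you merely unroll that induction along a reduced expression and telescope the $k$-factors. The only point left implicit is that each suffix $s_{j_m}\cdots s_{j_\ell}$ lies in $W^J$ (so that it equals $w_{y_m}$ and the case $\alpha_{j_m}(y_{m+1})=0$ of Corollary \ref{wyrules}(2) is excluded), a standard fact about minimal coset representatives that the paper itself uses, e.g.\ in the proof of Proposition \ref{facecor}.
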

\begin{proof}
This follows from Proposition \ref{prop:qlimit}, 
Corollary \ref{wyrules} and \eqref{kjcombi} by induction to $\ell(w_y)$.
\end{proof}
In Theorem \ref{qlim_formula} we will give an explicit formula for $\overline{E}_{y}^J(x)$ only involving the $\overline{H}_0$-action on $\mathbf{K}[E]$.
We need for this the following preparatory lemma.
\begin{lemma} \label{lem:dominant_factorization}
If $\cc\in\overline{C}_+$ and $y \in \mathcal{O}_{\cc}\cap \overline{E}_-$, then $y = v\cc + \mu$ for some $v \in W_0$ and $\mu \in Q^{\vee}\cap\overline{E}_-$.
\end{lemma}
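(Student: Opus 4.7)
The plan is to write $y = v\cc + \mu$ for some initial $v \in W_0,\,\mu \in Q^\vee$---which is automatic since $y \in W\cc$ and $W = W_0 \ltimes Q^\vee$---and to show that a decomposition can be chosen with $\mu \in \overline{E}_-$. For any such decomposition and any $\alpha \in \Phi_0^+$, one has
\[
\alpha(\mu) = \alpha(y) - (v^{-1}\alpha)(\cc) \in \mathbb{Z},
\]
with $\alpha(y) \leq 0$ (since $y \in \overline{E}_-$) and $(v^{-1}\alpha)(\cc) \in [-\varphi(\cc),\varphi(\cc)] \subseteq [-1,1]$ (using $\beta(\cc) \in [0,\varphi(\cc)]$ for $\beta \in \Phi_0^+$), so $\alpha(\mu) \leq 1$ in any case.

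I would split into two cases according to whether $0 \in \mathbf{J}(\cc)$, i.e.\ whether $\varphi(\cc) = 1$. If $0 \notin \mathbf{J}(\cc)$, then $\varphi(\cc) < 1$, so $(v^{-1}\alpha)(\cc) \in (-1,1)$ strictly, giving $\alpha(\mu) < 1$; integrality forces $\alpha(\mu) \leq 0$. Hence $\mu \in \overline{E}_-$ for every decomposition, and we are done automatically.

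In the remaining case $0 \in \mathbf{J}(\cc)$, the bound only gives $\alpha(\mu) \leq 1$ and equality can occur. If it does, the bound analysis forces $\alpha(y) = 0$ and $\beta := -v^{-1}\alpha \in \Phi_0^+$ with $\beta(\cc) = \varphi(\cc) = 1$; equivalently, $\varphi-\beta$ is a non-negative combination of $\{\alpha_j : j \in J_0\}$, being a non-negative simple-root combination that vanishes at $\cc$. I exploit that $s_0 = \tau(\varphi^\vee)s_\varphi$ lies in $W_\cc$: multiplying $\tau(\mu)v$ on the right by $s_0$ produces the alternative decomposition $y = (vs_\varphi)\cc + (\mu + v\varphi^\vee)$. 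My plan is to iterate such moves (alongside the trivial moves coming from $s_j, j \in J_0$, which only alter $v$) to reduce the number of $\alpha \in \Phi_0^+$ with $\alpha(\mu) > 0$. The key numerics is
\[
\alpha(\mu + v\varphi^\vee) = 1 - \beta(\varphi^\vee),
\]
which equals $-1$ when $\beta = \varphi$ and in general lies in $\{-1,0,1\}$ according to $\beta(\varphi^\vee) \in \{0,1,2\}$.

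The hard part will be showing this iterative procedure terminates: an $s_0$-move may resolve one violating $\alpha$ but introduce new violations at other positive roots. I would establish termination by a monovariant argument---for instance showing that the length $\ell(\tau(\mu)v)$ of the corresponding Weyl group element strictly decreases under a suitably chosen move, using \eqref{fundlength}---or equivalently by recasting the problem as an optimization within the finite $\overline{W}_\cc$-orbit of $\cc$, where $\overline{W}_\cc$ denotes the image of $W_\cc$ under the projection $W \twoheadrightarrow W_0$ (generated by $s_\varphi$ and $\{s_j : j \in J_0\}$), and selecting the representative $v\cc$ for which $y - v\cc \in \overline{E}_-$; existence then reduces to a finite check using the relation $\varphi - \beta \in \mathbb{Z}_{\geq 0}\{\alpha_j : j \in J_0\}$ to produce the required orbit element by a parabolic reduction.
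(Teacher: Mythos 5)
Your reduction to the boundary situation is set up correctly: for any decomposition $y=v\cc+\mu$ and $\alpha\in\Phi_0^+$ one has $\alpha(\mu)=\alpha(y)-(v^{-1}\alpha)(\cc)\in\mathbb{Z}$, and your analysis correctly shows that a violation can only be $\alpha(\mu)=1$, occurring only when $\alpha(y)=0$ and $-v^{-1}\alpha=\beta\in\Phi_0^+$ with $\beta(\cc)=1$; in particular your Case 1 ($\varphi(\cc)<1$) is fine, and the same numerics in fact also disposes of every $y\in E_-$ regardless of $\cc$. But in the remaining case — $y$ on a wall and $\varphi(\cc)=1$ — your argument is not a proof. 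The single-root computation $\alpha(\mu+v\varphi^\vee)=1-\beta(\varphi^\vee)\in\{-1,0,1\}$ only controls the root you are currently fixing; as you yourself note, an $s_0$-move can create new roots $\alpha'$ with $\alpha'(\mu')=1$, and the termination of the iteration is exactly the content of the lemma in this case. Neither of your two suggested repairs is carried out: the monovariant claim that $\ell(\tau(\mu)v)$ strictly decreases under a suitably chosen move is unverified (it is not clear which move to choose when several roots violate, nor that a length-decreasing move in the coset $w_yW_\cc$ also makes progress on antidominance of $\mu$), and "existence then reduces to a finite check" within the $\overline{W}_\cc$-orbit merely restates the statement to be proved, since finiteness of the coset does not by itself produce a representative with $y-v\cc\in\overline{E}_-$.

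For comparison, the paper avoids this combinatorial difficulty entirely: it first proves the lemma for $y\in\mathcal{O}_\cc\cap E_-$, where (as in your Case 1 estimate) \emph{every} decomposition already has $\mu\in Q^\vee\cap\overline{E}_-$ because $\alpha(y)<0$ forces $\alpha(\mu)<1$, and then treats $y\in\overline{E}_-$ by a limiting argument: approximate $y$ by a sequence in $E_-$, pass to subsequences using the finiteness of $W_0$, the compactness of $\overline{C}_+$ (after reducing to $E=E'$), and the discreteness of $Q^\vee\cap\overline{E}_-$, so that the decompositions $y_\ell=v\cc_\ell+\mu$ converge to a decomposition of $y$ of the required form. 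If you want to keep your algebraic setup, the cleanest fix is to adopt this two-step structure (strict case plus closure/limit) rather than attempting to control the $s_0$-move dynamics; as it stands, the termination argument is a genuine gap.
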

\begin{proof}
We may assume without loss of generality that $E=E^\prime$ (i.e., $\Phi_0$ spans $E$).

First suppose that $y \in\mathcal{O}_\cc\cap E_-$.  Write $y=v\cc+\mu$ with $v\in W_0$ and $\mu \in Q^{\vee}$. 
For $\alpha \in \Phi_0^{+}$ we have $\alpha(\mu) =  \alpha(y)- \alpha(v\cc) < 1$, since $\alpha(y)<0$ and $|\alpha(v\cc)| \leq 1$.  Since $\mu \in Q^\vee$, it follows that $\alpha(\mu) \leq 0$, so $\mu \in Q^{\vee}\cap\overline{E}_-$.

The general case follows by continuity.  Indeed, let $y \in \mathcal{O}_{\cc}\cap\overline{E}_-$.  Let $\{y_\ell\}_{\ell=1}^{\infty}$ be a sequence in $E_-$ converging to $y$.
  Denote
$\cc_{y_\ell}\in\overline{C}_+$ by $\cc_\ell$.
By the first paragraph, we have $y_\ell = v_\ell \cc_\ell + \mu_\ell$ for some $v_\ell \in W_0$ and $\mu_\ell \in Q^{\vee}\cap\overline{E}_-$.  Since $W_0$ is finite, we may assume by passing to a subsequence that $v_\ell = v\in W_0$ is constant.  Since $\overline{C}_{+}$ is compact (by the assumption that $E=E^\prime$), we may assume by passing to a subsequence that $\cc_\ell$ converges to some $\cc^\prime \in \overline{C}_{+}$.  Since $Q^{\vee}\cap\overline{E}_-$ is discrete, it then follows that $\mu_\ell=\mu$ is constant for $\ell$ sufficiently large.  We then have $y = v\cc^\prime + \mu$, so $\cc^\prime = \cc$. The result now follows, since $\mu\in Q^\vee\cap\overline{E}_-$.	
\end{proof}
Recall that $g_y\in W_0$ denotes the unique element of minimal length such that $y_-=g_yy$.
\begin{theorem}\label{qlim_formula}
For $\cc\in C^J$ and $y\in\mathcal{O}_\cc$ we have
\begin{equation}\label{fff}
\overline{E}_y^J(x)=\frac{k(y_-)}{k(y)}\,x^{y_-}\blacktriangleleft T_{g_y^{-1}}^{-1}.
\end{equation}
\end{theorem}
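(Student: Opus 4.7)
The strategy is induction on $\ell(g_y)$, treating the dominant case $y=y_-$ as the base.

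For the inductive step, fix a reduced expression $g_y = s_{i_m}\cdots s_{i_1}$ and set $y^{(k)}:= s_{i_k}\cdots s_{i_1}y$, so that $y^{(0)}=y$ and $y^{(m)}=y_-$. Standard Coxeter combinatorics---the function $y'\mapsto \#\{\alpha\in\Phi_0^+ : \alpha(y')>0\}$ must drop by exactly one at each step of a shortest descent into $\overline{E}_-$---forces $\alpha_{i_k}(y^{(k-1)})>0$ and hence $\alpha_{i_k}(y^{(k)})<0$ for every $k$. Applying Proposition~\ref{prop:qlimit}(3) with $j=i_k$ at the element $y^{(k)}$, and using \eqref{kjcombi} to rewrite $\kappa_{Ds_{i_k}}(y^{(k)})^{-1} = k(y^{(k)})/k(y^{(k-1)})$, yields the one-step relation
\[
\overline{E}_{y^{(k-1)}}^J(x) \;=\; \frac{k(y^{(k)})}{k(y^{(k-1)})}\,\overline{E}_{y^{(k)}}^J(x)\blacktriangleleft T_{i_k}^{-1}.
\]
Iterating for $k=1,\dots,m$ telescopes the scalar factors into $k(y_-)/k(y)$ and composes the $T_{i_k}^{-1}$'s into $T_{g_y^{-1}}^{-1}$, reducing the theorem to the base case $\overline{E}_{y_-}^J(x)=x^{y_-}$.

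For the base case, use Lemma~\ref{lem:dominant_factorization} to write $y_- = v\cc + \mu$ with $v\in W_0$ and $\mu\in Q^\vee\cap\overline{E}_-$, and set $\widetilde{w}:=\tau(\mu)v$. Formula \eqref{lengthwmu} makes the factorisation $\widetilde{w}=\tau(\mu)v$ length-additive, so reversing a corresponding reduced expression gives $T_{\widetilde{w}^{-1}} = T_{v^{-1}}T_{\tau(-\mu)} = T_{v^{-1}}Y^{-\mu}$, using $Y^{-\mu}=T_{\tau(-\mu)}$ from \eqref{YT}. Applying the anti-involution $\delta$ yields $\delta(T_{\widetilde{w}^{-1}}) = x^\mu T_v$, and the exact identity \eqref{phiformula} then produces $\pi_{\cc,\mathfrak{t}}(x^\mu T_v)x^\cc = \kappa_v(\cc)\,x^{y_-}$ \emph{with no lower-order terms}. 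This expression is already $\widetilde{q}$-independent, so passage to the Whittaker limit gives $x^\cc \blacktriangleleft T_{\widetilde{w}^{-1}} = \kappa_v(\cc)\,x^{y_-}$ in $\overline{\mathcal{P}}^{(\cc)}$.

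Writing $\widetilde{w} = w_{y_-}u$ for the unique $u\in W_J$, the defining property of $W^J$ supplies length-additivity and hence $T_{\widetilde{w}^{-1}} = T_{u^{-1}}T_{w_{y_-}^{-1}}$, while Lemma~\ref{actionHt} (applied iteratively along a reduced expression of $u^{-1}$ in the generators $s_j$, $j\in J$) gives $x^\cc\blacktriangleleft T_{u^{-1}}\in\mathbf{K}^\times x^\cc$. Therefore $x^\cc\blacktriangleleft T_{w_{y_-}^{-1}}$, and consequently $\overline{E}_{y_-}^J(x)$ via Corollary~\ref{cor:qlim_wy}, is a scalar multiple of $x^{y_-}$; since the leading coefficient of $E_{y_-}^J(x;\mathfrak{t})$ in its quasi-monomial expansion is $1$ (Theorem~\ref{Edef}(1)) and is preserved by the Whittaker limit, the scalar must equal $1$, giving $\overline{E}_{y_-}^J(x)=x^{y_-}$. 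I expect this base case to be the main obstacle, since $w_{y_-}$ is typically not of the product form $\tau(\mu)v$ to which Lemma~\ref{mbasisvdmonomial}/\eqref{phiformula} applies directly; the device of replacing $w_{y_-}$ by the alternate coset representative $\widetilde{w}$, extracting the exact (no-l.o.t.) identity there, and then transporting the conclusion along the trivial $W_J$-action on $x^\cc$ is precisely what circumvents this.
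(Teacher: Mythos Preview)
Your proof is correct and follows essentially the same two-stage strategy as the paper: first establish the base case $\overline{E}_{y_-}^J(x)=x^{y_-}$ by passing through the alternate coset representative $\widetilde{w}=\tau(\mu)v\in w_{y_-}W_J$ (via Lemma~\ref{lem:dominant_factorization}, the exact identity \eqref{phiformula}, and the trivial $W_J$-action on $x^\cc$), then iterate Proposition~\ref{prop:qlimit}(3) along a reduced expression of $g_y$. The only cosmetic differences are that the paper cites Proposition~\ref{prop:qlimit}(3) rather than Lemma~\ref{actionHt} for the $W_J$-triviality step, and justifies $\alpha_{i_k}(y^{(k)})<0$ via $g_y^{-1}\in W_0^{y_-}$ and \eqref{rootsdescription} rather than your counting heuristic (which is correct in conclusion but slightly imprecise as stated for non-regular $y$).
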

\begin{proof}
First we prove the theorem for $y\in\mathcal{O}_\cc\cap\overline{E}_-$. 

In this case the right hand side of \eqref{fff} is simply $x^y$.
Since $\overline{E}_y^J(x)=x^y+\textup{l.o.t.}$ it suffices to show that $\overline{E}_y^J(x)\sim x^y$, where we write
$p\sim p^\prime$ if $p$ and $p^\prime$ in $\overline{\mathcal{P}}^{(\cc)}$ differ by a nonzero constant multiple. 

Write $y\in\mathcal{O}_\cc\cap\overline{E}_-$ as $y=v\cc+\mu$ with $v\in W_0$ and $\mu\in Q^{\vee}\cap\overline{E}_-$, see Lemma \ref{lem:dominant_factorization}. Set $w:=\tau(\mu)v\in W$, then $w=w_yh_y$ for a unique $h_y\in W_J$ and 
\[
\ell(v)+\ell(\tau(\mu))=\ell(w)=\ell(w_y)+\ell(h_y),
\]
where the first equality is due to \eqref{fundlength}. It follows that 
\begin{equation}\label{dT}
\delta(T_{w^{-1}})=\delta(T_{v^{-1}}Y^{-\mu})=x^\mu T_v
\end{equation} 
and $T_{w_y^{-1}}=T_{h_y^{-1}}^{-1}T_{w^{-1}}$. Since $x^\cc\blacktriangleleft T_{h_y^{-1}}^{-1}\sim x^\cc$ by Proposition \ref{prop:qlimit}(3), we get from Corollary \ref{cor:qlim_wy} that
\begin{equation*}
\overline{E}_y^J(x)\sim x^\cc\blacktriangleleft T_{w_y^{-1}}\sim x^\cc\blacktriangleleft T_{w^{-1}}.
\end{equation*}
By Lemma \ref{Wlemma} and \eqref{dT} we then have for $\mathfrak{t}\in T_J$,
\begin{equation*}
\overline{E}_y^J(x)\sim\pi_{\cc,\mathfrak{t}}(\delta(T_{w^{-1}}))x^\cc=x^\mu\pi_{\cc,\mathfrak{t}}(T_v)x^\cc\sim x^{\mu+v\cc}=x^y,
\end{equation*}
where the final step is due to Corollary \ref{monomialcor}. Hence $\overline{E}_y^J(x)=x^y$, which completes the proof of \eqref{fff} when $y\in\mathcal{O}_\cc\cap\overline{E}_-$.

Suppose now that $y\in\mathcal{O}_\cc$ is arbitrary. Take a reduced expression 
$g_y^{-1}=s_{i_1}\cdots s_{i_\ell}$. Since $g_y^{-1}\in W_0^{y_-}$, formula \eqref{rootsdescription} implies that
\[
\alpha_{i_m}(s_{i_{m+1}}\cdots s_{i_\ell}y_-)<0
\]
for $1\leq m\leq\ell$.
Hence by \eqref{kjcombi} and by repeated application of Proposition \ref{prop:qlimit}(3),
\[
\overline{E}^J_y(x)=\overline{E}^J_{g_y^{-1}y_-}(x)=
\frac{k(y_-)}{k(y)}\overline{E}^J_{y_-}(x)\blacktriangleleft T_{g_y^{-1}}^{-1}.
\]
The result now follows since $\overline{E}_{y^-}^J(x)=x^{y_-}$ by the previous paragraph.
\end{proof}
Let $\cc\in C^J$, $\mathfrak{t}\in\overline{T}^{\textup{red}}_J$ and $y\in\mathcal{O}_\cc$. By Lemma \ref{Wlemma} and Proposition \ref{prop:qlimit}, both $E_y^{J,+}(x;q^{\lambda_J}\mathfrak{t})$ and $E_y^{J,-}(x;q^{\lambda_J}\mathfrak{t})$ lie in $\mathbf{F}_{\textup{reg}}[E]$, and their Whittaker limits $\overline{E}_y^{J,+}(x)$ and $\overline{E}_y^{J,-}(x)$ do not depend on $\lambda_J$ and $\mathfrak{t}$. We now take a close look at $\overline{E}_y^{J,-}(x)$, since they are closely related to spherical metaplectic Whittaker functions (see Section \ref{MfinalSection}). We list the main formulas for $\overline{E}_y^{J,+}(x)$ at the end of the subsection. 

First note that 
\[
\overline{E}_y^{J,-}(x)=0\quad \hbox{ if }\, y\not\in E^{\textup{reg}}\cap\mathcal{O}_\cc
\]
by \eqref{Eminzero}. Furthermore, $\overline{E}_{vy}^{J,-}(x)$ ($v\in W_0$) is a nonzero constant multiple of $\overline{E}_y^{J,-}(x)$ when $y\in E^{\textup{reg}}\cap\mathcal{O}_\cc$.
\begin{corollary}\label{linkmWcor}
Suppose $\cc\in C^J$ and $y\in E_-\cap\mathcal{O}_\cc$. Then 
\begin{equation}\label{linkmW1}
\overline{E}_y^{J,-}(x)=\kappa_{w_0}(0)^{-2}\sum_{v\in W_0}(-1)^{\ell(v)}\kappa_v(0)\kappa_v(y)\overline{E}_{vy}^J(x).
\end{equation}
Furthermore, 
\begin{equation}\label{linkmW2}
\kappa_v(y)\overline{E}_{vy}^J(x)=x^y\blacktriangleleft T_v^{-1}\qquad\quad(v\in W_0).
\end{equation}
\end{corollary}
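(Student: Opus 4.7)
The plan for \eqref{linkmW1} is to specialise Proposition \ref{Wformulaminus} at the $J$-generic parameter $q^{\lambda_J}\mathfrak{t}$ (with $\mathfrak{t}\in\overline{T}_J^{\textup{red}}$, so that $q^{\lambda_J}\mathfrak{t}\in T_J^\prime$ by \eqref{qseperated}) and then pass to the Whittaker limit $\widetilde{q}^{-1}\to 0$. Unwinding the $W$-action on $T$ shows that for $\alpha\in\Phi_0^+$,
\[
t_y^{\alpha^\vee}=q_\alpha^{\alpha(w_y\lambda_J)}\bigl(v_y^{-1}(\mathfrak{s}_J\mathfrak{t})\bigr)^{\alpha^\vee},
\]
the second factor lying in $\mathbf{K}$. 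The strategy is to show that every rational prefactor of the form $(1-k^{\pm 2}t_y^{\alpha^\vee})/(1-t_y^{\alpha^\vee})$ appearing in Proposition \ref{Wformulaminus} tends to $1$ as $\widetilde{q}^{-1}\to 0$, after which Proposition \ref{prop:qlimit}(2) identifies the Whittaker limit of each $E_{vy}^J(x;q^{\lambda_J}\mathfrak{t})$ with $\overline{E}_{vy}^J(x)$, giving \eqref{linkmW1}.

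The essential obstacle -- indeed the only one -- is to verify $\alpha(w_y\lambda_J)<0$ for every $\alpha\in\Phi_0^+$, which, in view of $\lambda_J\in\frac{1}{2h}P^\vee$, guarantees that $q_\alpha^{\alpha(w_y\lambda_J)}$ is a strictly negative rational power of $\widetilde{q}$ and hence tends to $0$. This is a face-theoretic fact: both $y=w_y\cc$ and $w_y\lambda_J$ lie in the common open face $w_yC^J\in\Sigma(\Phi)$, and the proof of Lemma \ref{faceconstantlem} shows that the restriction of $\alpha$ to such a face is either a fixed integer or is confined to a fixed open integer interval $(m,m+1)$. Since $\alpha(y)<0$, this integer (respectively $m$) must be $\leq -1$, which forces $\alpha(w_y\lambda_J)<0$ as well.

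For the second identity \eqref{linkmW2}, the plan is to invoke Theorem \ref{qlim_formula} at the point $vy$. Since $y\in E_-$ has trivial $W_0$-stabiliser, $(vy)_-=y$ and the minimal-length element $g_{vy}\in W_0$ satisfying $g_{vy}(vy)=y$ is forced to be $v^{-1}$, so Theorem \ref{qlim_formula} yields
\[
\overline{E}_{vy}^J(x)=\frac{k(y)}{k(vy)}\,x^y\blacktriangleleft T_v^{-1}.
\]
The remaining step is to identify $k(vy)/k(y)=k_v(y)$ with $\kappa_v(y)$. For $y\in E^{\textup{reg}}$ this follows by iterated application of Lemma \ref{kylemma2}(1) along a reduced expression $v=s_{i_1}\cdots s_{i_m}$, whose intermediate points $s_{i_{j+1}}\cdots s_{i_m}y$ all remain in $E^{\textup{reg}}$, together with the cocycle factorisation $\kappa_v(y)=\prod_{j=1}^m\kappa_{s_{i_j}}(s_{i_{j+1}}\cdots s_{i_m}y)$ deduced from Lemma \ref{kylemma1}(2).
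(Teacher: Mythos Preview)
Your proof is correct and follows essentially the same route as the paper. For \eqref{linkmW2} you match the paper exactly: apply Theorem \ref{qlim_formula} at $vy$ with $g_{vy}=v^{-1}$ and identify $k(vy)/k(y)=\kappa_v(y)$ on $E^{\textup{reg}}$. For \eqref{linkmW1} the paper offers two derivations; you chose the second one, specialising Proposition \ref{Wformulaminus} and using that $y=w_y\cc\in E_-$ forces $w_y\lambda_J\in E_-$ (your face argument via Lemma \ref{faceconstantlem} is a more explicit version of the paper's one-line justification). The paper's alternative route takes the Whittaker limit directly in the definition $E_y^{J,-}=\pi_{\cc,\mathfrak{t}}(\mathbf{1}_-)E_y^J$ to obtain $\overline{E}_y^{J,-}(x)=\kappa_{w_0}(0)^{-2}\sum_v(-1)^{\ell(v)}\kappa_v(0)\,x^y\blacktriangleleft T_v^{-1}$ and then feeds in \eqref{linkmW2}; this avoids the face/sign analysis of $t_y^{\alpha^\vee}$ but requires proving \eqref{linkmW2} first.
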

\begin{proof}
We first prove \eqref{linkmW2}. For $v\in W_0$ we have $g_{vy}=v^{-1}$ since $y\in E_-$, and hence by \eqref{fff},
\[
\overline{E}_{vy}^J(x)=\frac{k(y)}{k(vy)}x^y\blacktriangleleft T_v^{-1}.
\]
Then use that $k(vy)/k(y)=k_v(y)=\kappa_v(y)$, since $y\in E^{\textup{reg}}$.

There are two ways one can arrive at \eqref{linkmW1}. First, one can take the Whittaker limit of \eqref{basicminus} after fixing $\mathfrak{t}:=q^{\lambda_J}\mathfrak{t}^\prime\in T_J^\prime$ with $\mathfrak{t}^\prime\in\overline{T}_J^{\textup{red}}$. By Lemma \ref{Wlemma}, Proposition \ref{prop:qlimit} and \eqref{fff} it then 
follows that 
\begin{equation}\label{linkmW3}
\overline{E}_y^{J,-}(x)=\kappa_{w_0}(0)^{-2}\sum_{v\in W_0}(-1)^{\ell(v)}\kappa_v(0)\,x^y\blacktriangleleft T_v^{-1},
\end{equation}
and \eqref{linkmW1} follows from \eqref{linkmW2}. Second, one can take the Whittaker limit of the formula in Proposition \ref{Wformulaminus}
after fixing $\mathfrak{t}:=q^{\lambda_J}\mathfrak{t}^\prime\in T_J^\prime$ with $\mathfrak{t}^\prime\in\overline{T}_J^{\textup{red}}$. To perform the specialisation in this case, one notes that for $\beta\in\Phi_0^+$, 
\[
t_y^{\beta^\vee}=q_\beta^{\beta(w_y\lambda_J)}\bigl(\mathfrak{s}_y\mathfrak{t}_y\bigr)^{\beta^\vee}
\]
and the $q$-power is strictly negative, since $y=w_y\cc\in E_-$ implies $w_y\lambda_J\in E_-$. The Whittaker limit of the formula in Proposition \ref{Wformulaminus} then
gives \eqref{linkmW1} (compare with the proof of Corollary \ref{corplus}).
\end{proof}
\begin{lemma}\label{dotWzeroaction}
The formulas
\begin{equation*}
\begin{split}
s_i\blacktriangleright x^y&:=\frac{k_i^{\chi_{\mathbb{Z}}(\alpha_i(y))}(x^{\alpha_i^\vee}-1)}{(k_ix^{\alpha_i^\vee}-k_i^{-1})}x^{s_iy}+\frac{(k_i-k_i^{-1})}{(k_ix^{\alpha_i^\vee}-k_i^{-1})}
x^{y-\lfloor\alpha_i(y)\rfloor\alpha_i^\vee},\\
f\blacktriangleright x^y&:=fx^y
\end{split}
\end{equation*}
for $i\in [1,r]$, $f\in\overline{\mathcal{Q}}$ and $y\in\mathcal{O}_\cc$ define a left $W_0\ltimes\overline{\mathcal{Q}}$-action on $\mathbf{K}_{\overline{\mathcal{Q}}}[E]$. Furthermore, if $\cc\in C^J$ then $\sigma_{\cc,\mathfrak{t}}\vert_{W_0\ltimes\overline{\mathcal{Q}}}$ does not depend on $\mathfrak{t}\in T_J$, and
\begin{equation}\label{startsigma}
v\blacktriangleright (fx^y)=\sigma_{\cc,\mathfrak{t}}(v)(fx^y)\qquad\quad (v\in W_0,\, f\in\overline{\mathcal{Q}},\, y\in\mathcal{O}_\cc).
\end{equation}
\end{lemma}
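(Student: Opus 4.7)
The plan is to recognise that $\blacktriangleright$ is precisely the restriction of the $W\ltimes\mathcal{Q}$-action $\sigma_{\cc,\mathfrak{t}}$ from Theorem \ref{aCG} to the subalgebra $W_0\ltimes\overline{\mathcal{Q}}\subseteq W\ltimes\mathcal{Q}$, once one observes that this restriction loses its dependence on $\mathfrak{t}$. First I would note that for $i\in[1,r]$, $\cc\in C^J$, and any $\mathfrak{t}\in T_J$, the formula \eqref{xyWt} gives $s_{i,\mathfrak{t}}x^y=x^{s_iy}$ with no $\mathfrak{t}$-dependence. Substituting this into the expression \eqref{sigmaaction} for $\sigma_{\cc,\mathfrak{t}}(s_i)x^y$ (with $j=i$, so $D\alpha_i=\alpha_i$) reproduces verbatim the stated formula for $s_i\blacktriangleright x^y$. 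Similarly $\sigma_{\cc,\mathfrak{t}}(f)g=fg$ for all $f\in\mathcal{Q}$, and in particular for $f\in\overline{\mathcal{Q}}\subseteq\mathcal{Q}$. Note that $T_J$ is nonempty for every $J\subsetneq[0,r]$ thanks to the assumption $\widetilde{q}\in\mathbf{F}$ (see Subsection \ref{IndPar} and \eqref{mh}), so such an $\mathfrak{t}$ always exists.

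Next I would use the decomposition
\[
\mathbf{K}_{\overline{\mathcal{Q}}}[E]=\bigoplus_{\cc\in\overline{C}_+}\overline{\mathcal{Q}}^{(\cc)}
\]
and observe that each summand is stable under the proposed operators: for $y\in\mathcal{O}_\cc$, both $s_iy$ and $y-\lfloor\alpha_i(y)\rfloor\alpha_i^\vee$ lie in $\mathcal{O}_\cc$ (the latter since $\mathcal{O}_\cc$ is $Q^\vee$-translation stable), and the coefficients appearing in $s_i\blacktriangleright x^y$ lie in $\overline{\mathcal{Q}}$. Hence it suffices to verify the $W_0\ltimes\overline{\mathcal{Q}}$-axioms on each $\overline{\mathcal{Q}}^{(\cc)}$ separately.

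Fix $\cc\in C^J$ and pick any $\mathfrak{t}\in T_J$. Pulling $\sigma_{\cc,\mathfrak{t}}$ back along the embedding $\overline{\mathcal{Q}}^{(\cc)}\hookrightarrow\mathcal{Q}^{(\cc)}$ yields a $W_0\ltimes\overline{\mathcal{Q}}$-action on $\overline{\mathcal{Q}}^{(\cc)}$, because the subalgebra $W_0\ltimes\overline{\mathcal{Q}}$ is closed in $W\ltimes\mathcal{Q}$ (the field $\overline{\mathcal{Q}}$ is $W_0$-stable since $W_0$ permutes $Q^\vee$ and fixes $\mathbf{K}$) and the subspace $\overline{\mathcal{Q}}^{(\cc)}\subseteq\mathcal{Q}^{(\cc)}$ is preserved by this restricted action, by the stability assertion above. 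By the first paragraph this pulled-back action coincides with $\blacktriangleright$ on quasi-monomials, and hence on all of $\overline{\mathcal{Q}}^{(\cc)}$. Assembling over all $\cc\in\overline{C}_+$ yields the desired $W_0\ltimes\overline{\mathcal{Q}}$-action on $\mathbf{K}_{\overline{\mathcal{Q}}}[E]$, and it simultaneously establishes \eqref{startsigma} as well as the asserted $\mathfrak{t}$-independence of $\sigma_{\cc,\mathfrak{t}}\vert_{W_0\ltimes\overline{\mathcal{Q}}}$.

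I foresee no substantive obstacle: the argument reduces purely to matching the $\mathfrak{t}$-independent parts of the formulas in Theorem \ref{aCG} with the formulas for $\blacktriangleright$, and invoking the well-definedness of $\sigma_{\cc,\mathfrak{t}}$ already proved there. The only point requiring a little care is confirming that $\overline{\mathcal{Q}}^{(\cc)}$ is honestly stable under $W_0\ltimes\overline{\mathcal{Q}}\subseteq W\ltimes\mathcal{Q}$ inside $\mathcal{Q}^{(\cc)}$, which follows at once from the explicit formulas.
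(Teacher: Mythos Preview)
Your proposal is correct and follows essentially the same approach as the paper: both arguments boil down to observing that for $i\in[1,r]$ the formula \eqref{sigmaaction} for $\sigma_{\cc,\mathfrak{t}}(s_i)$ has $s_{i,\mathfrak{t}}x^y=x^{s_iy}$ (no $\mathfrak{t}$-dependence) and hence matches $s_i\blacktriangleright x^y$ verbatim, so that the already-established representation $\sigma_{\cc,\mathfrak{t}}$ does all the work. The paper's proof is a terse two-liner pointing to Theorem~\ref{aCG}; your version spells out the orbit decomposition and stability checks, which is helpful but not a different method.
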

\begin{proof}
It suffices to note that \eqref{startsigma} is correct for $v=s_i$ ($1\leq i\leq r$) and $y\in\mathcal{O}_\cc$. This follows immediately from the definition of $\sigma_{\cc,\mathfrak{t}}$, see Theorem \ref{aCG}.
\end{proof}

In the following lemma we describe the right $\overline{H}_0$-action on $\mathbf{K}_{\overline{\mathcal{Q}}}[E]$ in terms of Demazure-Lusztig type operators.
\begin{lemma}\label{lemDKtransfer}
For $i\in [1,r]$ and $f\in\mathbf{K}_{\overline{\mathcal{Q}}}[E]$ we have
\begin{equation}\label{DLformspec}
f\blacktriangleleft T_i=k_if+k_i^{-1}\Bigl(\frac{1-k_i^2x^{\alpha_i^\vee}}{1-x^{\alpha_i^\vee}}\Bigr)\bigl(s_i\blacktriangleright f-f\bigr).
\end{equation}
\end{lemma}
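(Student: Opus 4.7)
The plan is to reduce the claim to the Demazure--Lusztig identity already recorded in Remark \ref{DLform}. Both the right $\overline{H}$-action $\blacktriangleleft$ on $\mathbf{K}_{\overline{\mathcal{Q}}}[E]$ (Lemma \ref{Wlemma}) and the left $W_0\ltimes\overline{\mathcal{Q}}$-action $\blacktriangleright$ (Lemma \ref{dotWzeroaction}) preserve the $\overline{\mathcal{Q}}$-module decomposition
\[
\mathbf{K}_{\overline{\mathcal{Q}}}[E]=\bigoplus_{\cc\in\overline{C}_+}\overline{\mathcal{Q}}^{(\cc)},
\]
so it suffices to verify \eqref{DLformspec} on each summand $\overline{\mathcal{Q}}^{(\cc)}$.

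Fix $\cc\in C^J$ and pick any $\mathfrak{t}\in T_J$. By Lemma \ref{Wlemma}(4), the right action of $T_i$ on $\overline{\mathcal{Q}}^{(\cc)}$ coincides with $\pi_{\cc,\mathfrak{t}}^{X-\textup{loc}}(T_i)$; by Lemma \ref{dotWzeroaction}, the left action of $s_i$ coincides with $\sigma_{\cc,\mathfrak{t}}(s_i)$. With these identifications, formula \eqref{DLformspec} is exactly the Demazure--Lusztig expression for $\pi_{\cc,\mathfrak{t}}^{X-\textup{loc}}(T_i)$ in terms of $\sigma_{\cc,\mathfrak{t}}(s_i)$ recorded in Remark \ref{DLform}, which in turn follows from Theorem \ref{locTHM} via \eqref{linkCGlocal}.

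Alternatively, one can verify the identity by direct computation on the basis of quasi-monomials $\{x^y\}_{y\in E}$. Using \eqref{actionWhittaker} for the left-hand side and the explicit formula in Lemma \ref{dotWzeroaction} for $s_i\blacktriangleright x^y$, and rewriting
\[
k_i^{-1}\Bigl(\frac{1-k_i^2x^{\alpha_i^\vee}}{1-x^{\alpha_i^\vee}}\Bigr)=\frac{k_ix^{\alpha_i^\vee}-k_i^{-1}}{x^{\alpha_i^\vee}-1},
\]
the two terms involving $x^{s_iy}$ match at once, while the remaining terms collapse, after combining over the common denominator $x^{\alpha_i^\vee}-1$, to
\[
(k_i-k_i^{-1})\Bigl(\frac{x^y-x^{y-\lfloor\alpha_i(y)\rfloor\alpha_i^\vee}}{1-x^{\alpha_i^\vee}}\Bigr),
\]
which is the divided-difference term in \eqref{actionWhittaker}.

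There is no genuine obstacle here: the only point that requires attention is the compatibility of the specialisation arguments, namely that the $\blacktriangleleft$- and $\blacktriangleright$-actions are independent of the auxiliary parameter $\mathfrak{t}\in T_J$, which is precisely what Lemma \ref{Wlemma}(4) and Lemma \ref{dotWzeroaction} were set up to record. Once those are in hand, the identity is a short bookkeeping check.
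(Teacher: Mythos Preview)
Your proposal is correct and follows essentially the same approach as the paper: the paper's proof consists of the single sentence ``This follows from \eqref{startpi}, \eqref{startsigma} and Remark \ref{DLform},'' which is precisely your first argument (noting that $\delta(T_i)=T_i$ for $i\in[1,r]$). Your alternative direct computation is a valid redundancy but adds nothing new.
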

\begin{proof}
This follows from \eqref{startpi}, \eqref{startsigma} and Remark \ref{DLform}.
\end{proof}
\begin{proposition}
Let $\cc\in C^J$ and $y\in\mathcal{O}_\cc$. Then 
\[
\overline{E}_y^{J,-}(x)=\Bigl(\prod_{\alpha\in\Phi_0^+}\frac{1-k_\alpha^{-2}x^{-\alpha^\vee}}{1-x^{-\alpha^\vee}}\Bigr)\sum_{v\in W_0}(-1)^{\ell(v)}v\blacktriangleright\overline{E}_y^J(x).
\]
In particular, 
\begin{equation}\label{CStype}
\overline{E}_y^{J,-}(x)=\Bigl(\prod_{\alpha\in\Phi_0^+}\frac{1-k_\alpha^{-2}x^{-\alpha^\vee}}{1-x^{-\alpha^\vee}}\Bigr)\sum_{v\in W_0}(-1)^{\ell(v)}v\blacktriangleright x^y\qquad (y\in E_-\cap\mathcal{O}_\cc).
\end{equation}
\end{proposition}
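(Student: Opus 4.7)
The proof proceeds by taking the Whittaker limit of the identity \eqref{minussigma}. Fix $\mathfrak{t}'\in\overline{T}_J^{\textup{red}}$ and set $\mathfrak{t}:=q^{\lambda_J}\mathfrak{t}'\in T_J^\prime$, which is $J$-generic by \eqref{qseperated}. Then \eqref{minussigma} reads
\[
E_y^{J,-}(x;\mathfrak{t})=\Bigl(\prod_{\alpha\in\Phi_0^+}\frac{1-k_\alpha^{-2}x^{-\alpha^\vee}}{1-x^{-\alpha^\vee}}\Bigr)\sum_{v\in W_0}(-1)^{\ell(v)}\sigma_{\cc,\mathfrak{t}}(v)\,E_y^J(x;\mathfrak{t}).
\]
By Lemma \ref{dotWzeroaction}, the restriction of $\sigma_{\cc,\mathfrak{t}}(v)$ to the $\mathbf{K}$-form $\mathbf{K}_{\overline{\mathcal{Q}}}[E]\subset\mathcal{Q}^{(\cc)}$ equals the $\widetilde{q}$-independent action $v\blacktriangleright$, so one may rewrite the sum as $\sum_v(-1)^{\ell(v)}v\blacktriangleright E_y^J(x;\mathfrak{t})$.

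The plan is then to pass to the limit $\widetilde{q}^{\,-1}\to 0$ on both sides. The prefactor has coefficients in $\mathbf{K}$, hence is unchanged. By Proposition \ref{prop:qlimit}, $E_y^J(x;\mathfrak{t})\in\mathcal{P}^{(\cc)}_{\textup{reg}}$ with Whittaker limit $\overline{E}_y^J(x)\in\overline{\mathcal{P}}^{(\cc)}$, independent of the choice of $\lambda_J$ and $\mathfrak{t}'$. Expanding $E_y^J(x;\mathfrak{t})=\sum_{z\in\mathcal{O}_\cc}c_z(\widetilde{q})\,x^z$ with $c_z\in\mathbf{F}_{\textup{reg}}$, and using that the operator $v\blacktriangleright$ is $\mathbf{K}$-linear with $\widetilde{q}$-independent coefficients taking values in $\overline{\mathcal{Q}}$, we have
\[
v\blacktriangleright E_y^J(x;\mathfrak{t})=\sum_{z}c_z(\widetilde{q})\,(v\blacktriangleright x^z)\in\mathbf{F}\cdot\overline{\mathcal{Q}}^{(\cc)}.
\]
Each $v\blacktriangleright x^z$ lies in $\overline{\mathcal{Q}}^{(\cc)}\subset\mathbf{K}_{\overline{\mathcal{Q}}}[E]$ (coefficient-wise over the $\mathbf{K}$-basis of quasi-monomials), so the individual terms have well-defined Whittaker limits obtained by specialising the scalar coefficients $c_z(\widetilde{q})\mapsto\overline{c}_z$. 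Therefore
\[
\lim_{\widetilde{q}^{-1}\to 0}\bigl(v\blacktriangleright E_y^J(x;\mathfrak{t})\bigr)=\sum_z\overline{c}_z\,(v\blacktriangleright x^z)=v\blacktriangleright\overline{E}_y^J(x)\in\mathbf{K}_{\overline{\mathcal{Q}}}[E].
\]
Summing over $v\in W_0$ with signs and multiplying by the prefactor yields the first claimed identity. There is no genuine obstacle here: the only subtlety is checking that taking the Whittaker limit commutes with the operator $v\blacktriangleright$, which holds because its coefficients lie in the $\widetilde{q}$-independent subring $\overline{\mathcal{Q}}$.

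For the special case $y\in E_-\cap\mathcal{O}_\cc$, we have $y_-=y$ and $g_y=e$, so Theorem \ref{qlim_formula} gives
\[
\overline{E}_y^J(x)=\frac{k(y_-)}{k(y)}\,x^{y_-}\blacktriangleleft T_e^{-1}=x^y.
\]
Substituting $\overline{E}_y^J(x)=x^y$ into the first formula gives \eqref{CStype}.
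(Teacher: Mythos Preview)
Your proof is correct and follows essentially the same approach as the paper: take the Whittaker limit of \eqref{minussigma} after specialising $\mathfrak{t}=q^{\lambda_J}\mathfrak{t}'$ with $\mathfrak{t}'\in\overline{T}_J^{\textup{red}}$, then invoke \eqref{fff} (Theorem \ref{qlim_formula}) for the second part. You have been more explicit than the paper in justifying why the limit commutes with $v\blacktriangleright$, which is indeed the only point requiring care.
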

\begin{proof}
The first formula follows by taking the Whittaker limit of \eqref{minussigma}
after fixing $\mathfrak{t}=q^{\lambda_J}\mathfrak{t}^\prime\in T_J^\prime$ with $\mathfrak{t}^\prime\in\overline{T}_J^{\textup{red}}$. Then \eqref{CStype} follows from \eqref{fff}.
\end{proof}
We now give the main formulas for $\overline{E}_y^{J,+}(x)$. Let $\cc\in C^J$ and $y\in\mathcal{O}_\cc$. Then $E_{vy}^{J,+}(x)$ ($v\in W_0$) is a nonzero scalar multiple of $E_y^{J,+}(x)$, and 
\[
\overline{E}_y^{J,+}(x)=\kappa_{w_0}(0)^2\sum_{v\in W_0}\kappa_v(0)^{-1}\overline{E}_y^J(x)\blacktriangleleft T_v^{-1}.
\]
In particular, for $y\in\overline{E}_-\cap\mathcal{O}_\cc$ it follows from \eqref{qlimit_inv_intertwine} and \eqref{linkmW2} that
\begin{equation}\label{ChintaGunnellsplus}
\begin{split}
\overline{E}_y^{J,+}(x)&=\kappa_{w_0}(0)^2\Bigl(\sum_{v\in W_{0,y}}\kappa_v(0)^{-2}\Bigr)\sum_{v\in W_0^y}\kappa_v(0)^{-1}x^y\blacktriangleleft T_v^{-1}\\
&=\kappa_{w_0}(0)^2\Bigl(\sum_{v\in W_{0,y}}\kappa_v(0)^{-2}\Bigr)\sum_{v\in W_0^y}\frac{\kappa_v(y)}{\kappa_v(0)}\overline{E}_{vy}^J(x).
\end{split}
\end{equation}
The latter formula can also be directly obtained by taking the Whittaker limit of the formula in Corollary \ref{corplus} after taking $\mathfrak{t}=q^{\lambda_J}\mathfrak{t}^\prime$
with $\mathfrak{t}^\prime\in\overline{T}^{\textup{red}}_J$. Then one uses the fact that $\beta(w_y\lambda_J)<0$ for $\beta\in\Pi(v)$ ($v\in W_0^y$) since $\beta(y)<0$, and that
$\sum_{v\in W_{0,y}}\kappa_{g_{y_+}^{-1}v}(0)^2=\kappa_{w_0}(0)^2\sum_{v\in W_{0,y}}\kappa_v(0)^{-2}$.

Taking the Whittaker limit of the formula in Proposition \ref{Namaraprop} we obtain the expression
\[
\overline{E}_y^{J,+}(x)=\Bigl(\prod_{\alpha\in\Phi_0^+}\frac{1}{1-x^{-\alpha^\vee}}\Bigr)\sum_{v\in W_0}\kappa_v(0)^2\Bigl(\prod_{\alpha\in\Phi_0^+}\bigl(1-k_\alpha^{2\chi(v^{-1}\alpha)}x^{-\alpha^\vee}\bigr)\Bigr)\,
v\blacktriangleright\overline{E}_y^J(x).
\]
In particular, for $y\in\overline{E}_-\cap\mathcal{O}_\cc$,
\begin{equation}\label{CasselmannShalikaplus}
\begin{split}
\overline{E}_y^{J,+}(x)&=\Bigl(\prod_{\alpha\in\Phi_0^+}\frac{1}{1-x^{-\alpha^\vee}}\Bigr)
\sum_{v\in W_0}\kappa_v(0)^2\Bigl(\prod_{\alpha\in\Phi_0^+}\bigl(1-k_\alpha^{2\chi(v^{-1}\alpha)}x^{-\alpha^\vee}\bigr)\Bigr)\,v\blacktriangleright x^y\\
&=\sum_{v\in W_0}\Bigl(\prod_{\alpha\in\Phi_0^+}\Bigl(\frac{1-k_\alpha^2x^{-v\alpha^\vee}}{1-x^{-v\alpha^\vee}}\Bigr)\Bigr)\,v\blacktriangleright x^y,
\end{split}
\end{equation}
cf. the proof of Proposition \ref{Namaraprop} for the second equality.
\begin{remark}\label{HLcase}
For $\mu\in P^\vee$ and $v\in W_0$ we have $v\blacktriangleright x^\mu=\sigma_{\cc,\mathfrak{t}}(v)x^\mu=x^{v\mu}$ by \eqref{startsigma} and Remark 
\ref{linkCGlocalrem}(1). Hence the right hand side of 
\eqref{CasselmannShalikaplus} for $y=\mu\in P^{\vee,-}\cap\mathcal{O}_\cc$ gives the familiar explicit expression of Macdonald's spherical function on a $\mathfrak{p}$-adic Lie group \cite{Macpadic} (which is the Hall-Littlewood polynomial in type $A$). Since $E_\mu^{[1,r],+}(x)$ ($\mu\in\overline{E}_-\cap Q^\vee$) is a symmetric Macdonald polynomial by Remark \ref{remsymMac}, and the monic symmetric Macdonald polynomials are invariant under the simultaneous inversion of the parameters $q$ and $k_\alpha$ ($\alpha\in\Phi_0$), this is in agreement with the interpretation of Macdonald's spherical function as the $q\rightarrow 0$ limit of the symmetric Macdonald polynomial, see \cite[\S 10]{Mac}.
\end{remark}
For $\mu\in P^{\vee,-}$ write 
\[
s_\mu(x)=\Bigl(\prod_{\alpha\in\Phi_0^+}\frac{1}{1-x^{-\alpha^\vee}}\Bigr)\sum_{v\in W_0}(-1)^{\ell(v)}x^{-\rho^\vee+v(\mu+\rho^\vee)}=
\sum_{v\in W_0}\prod_{\alpha\in\Phi_0^+}\Bigl(\frac{x^{v\mu}}{1-x^{-v\alpha^\vee}}\Bigr),
\]
which is Weyl's formula for the character of the associated finite dimensional irreducible representation 
 of the complex simply connected semisimple Lie group with underlying root system $\Phi_0^\vee$ (the highest weight $\mu$ is relative to $\Phi_0^{\vee,-}$). It is the Schur function for type $A$.

Recall that $\xi_y$ is the unique element in $y+P^\vee$ such that $0\leq \alpha_i(\xi_y)<1$ for $i\in [1,r]$ (see Remark \ref{k0limit}). 
\begin{proposition}
Let $\cc\in C^J$ and $y\in\overline{E}_-\cap\mathcal{O}_\cc$. For the quasi-polynomial $\overline{E}_y^{J,+}(x)=\overline{E}_y^{J,+}(x;\mathbf{k})$ in the limiting case $\mathbf{k}=\mathbf{0}$ we have
\[
\overline{E}_y^{J,+}(x;\mathbf{0})=x^{\xi_y}s_{y-\xi_y}(x).
\]
\end{proposition}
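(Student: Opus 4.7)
The plan is to specialize the explicit Whittaker-limit formula \eqref{CasselmannShalikaplus} at $\mathbf{k}=\mathbf{0}$ and recognize the result as the Weyl character formula. All the $\mathbf{k}$-dependence in \eqref{CasselmannShalikaplus} sits in the prefactor $\prod_{\alpha\in\Phi_0^+}(1-k_\alpha^2x^{-v\alpha^\vee})$ and in the operator $v\blacktriangleright(\cdot)$, and both have a well-defined $\mathbf{k}\to\mathbf{0}$ limit, so the specialization is unproblematic.

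First I would record that the numerator factors become trivial in the limit:
\[
\lim_{\mathbf{k}\to\mathbf{0}}\prod_{\alpha\in\Phi_0^+}\bigl(1-k_\alpha^2x^{-v\alpha^\vee}\bigr)=1
\qquad (v\in W_0).
\]
Next I would invoke Remark \ref{k0limit} to identify the $\mathbf{k}=\mathbf{0}$ specialization of the $W_0$-action $\blacktriangleright$, namely
\[
v\blacktriangleright x^y\bigr|_{\mathbf{k}=\mathbf{0}}
=\sigma_{\cc,\mathfrak{t}}^{\mathbf{k}=\mathbf{0}}(v)\,x^y
=x^{\xi_y+v(y-\xi_y)}
\qquad (v\in W_0),
\]
where we have used Lemma \ref{dotWzeroaction} to identify $v\blacktriangleright$ with $\sigma_{\cc,\mathfrak{t}}(v)$.

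Substituting these two identities into \eqref{CasselmannShalikaplus} and pulling $x^{\xi_y}$ out of the sum I would obtain
\[
\overline{E}_y^{J,+}(x;\mathbf{0})
=x^{\xi_y}\sum_{v\in W_0}\prod_{\alpha\in\Phi_0^+}\frac{x^{v(y-\xi_y)}}{1-x^{-v\alpha^\vee}}.
\]
Comparing with the explicit Weyl-character expression given just before the statement, the right-hand side equals $x^{\xi_y}s_{y-\xi_y}(x)$, provided $y-\xi_y$ is anti-dominant. I would verify this last point directly: $y-\xi_y\in P^\vee$ by the definition of $\xi_y$, and for each $i\in[1,r]$ one has $\alpha_i(y-\xi_y)=\alpha_i(y)-\alpha_i(\xi_y)\leq 0$, since $\alpha_i(y)\leq 0$ by the assumption $y\in\overline{E}_-$ and $\alpha_i(\xi_y)\geq 0$ by definition of $\xi_y$. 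Hence $y-\xi_y\in P^{\vee,-}$, finishing the proof.

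The only potentially delicate point is to check that the $\mathbf{k}\to\mathbf{0}$ limit commutes with the Whittaker limit used to define $\overline{E}_y^{J,+}$; but this is automatic because \eqref{CasselmannShalikaplus} is already an identity in $\mathbf{K}[E]$ whose $\mathbf{k}$-dependence is through polynomial factors and through the $\mathbf{k}$-rational operator $v\blacktriangleright$, both of which admit explicit, regular $\mathbf{k}=\mathbf{0}$ specializations by the analysis above. No further computation is needed.
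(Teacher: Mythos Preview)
Your proposal is correct and follows essentially the same approach as the paper: specialize \eqref{CasselmannShalikaplus} at $\mathbf{k}=\mathbf{0}$, use Remark \ref{k0limit} via \eqref{startsigma} to identify $v\blacktriangleright_{\mathbf{k}=\mathbf{0}}x^y=x^{\xi_y+v(y-\xi_y)}$, verify $y-\xi_y\in P^{\vee,-}$, and match with the Weyl character formula. Your write-up is more explicit about the numerator specialization and the anti-dominance check, but the argument is the same.
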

\begin{proof}
Note that $y-\xi_y\in P^{\vee,-}$ since $y\in\overline{E}_-$. By \eqref{startsigma} and Remark \ref{k0limit} we have for $v\in W_0$,
\[
v\blacktriangleright_{\mathbf{k}=\mathbf{0}}x^y=\sigma_{\cc,\mathfrak{t}}^{\mathbf{k}=\mathbf{0}}(v)x^y=x^{\xi_y+v(y-\xi_y)},
\]
and hence the result follows from \eqref{CasselmannShalikaplus}.
\end{proof}

We will see in Section \ref{MfinalSection} that the $\overline{E}_y^J(x)$ and $\overline{E}_y^{J,-}(x)$ are, after some elementary modifications, the metaplectic Iwahori-Whittaker and metaplectic spherical Whittaker functions from \cite{PP} and \cite{CGP}, respectively. Hence $E_{y}^{J}(x;\mathfrak{t})$ and $E_{y}^{J,-}(x;\mathfrak{t})$ are $q$-analogues of metaplectic Whittaker functions, depending on the additional parameters $\mathfrak{t}\in T_J$.  The results in this subsection then relates to the metaplectic theory
as follows:
\begin{enumerate}
\item Lemma \ref{dotWzeroaction} corresponds to Chinta's and Gunnells' \cite{CG07,CG} Weyl group action on rational functions, used to construct the local parts of Weyl group multiple Dirichlet series (see also \cite[\S 3.3]{SSV}).
\item Formula \eqref{linkmW2} corresponds via \eqref{DLformspec} with Patnaik's and Puskas' \cite[Cor. 5.4]{PP} expression of the metaplectic Iwahori-Whittaker function in terms of metaplectic Demazure-Lusztig operators.
\item Formula \eqref{CStype} corresponds to McNamara's \cite[Thm. 15.2]{McN} Casselman-Shalika type formula for the metaplectic spherical Whittaker functions.
 \item Formula \eqref{linkmW3} corresponds Chinta's, Gunnells', and Puskas' \cite[Thm. 16]{CGP} expression of the metaplectic spherical Whittaker function in terms of metaplectic Demazure-Lusztig operators.
 \end{enumerate}
 In Section \ref{MfinalSection} we give further details about these connections with metaplectic representation theory and metaplectic Whittaker functions.

\section{Quasi-polynomial representations of extended double affine Hecke algebras}\label{Extsection}
In this section we generalise the results of the previous sections to extended root data. We assume throughout this section that $J$ is a proper subset of $[0,r]$.

\subsection{Extended root datum and extended affine Weyl groups}\label{S70}
 For $e\in\mathbb{Z}_{>0}$ let $(\mathcal{L}^{\times 2})_e$ be the set of pairs $(\Lambda,\Lambda^\prime)$ with 
$\Lambda,\Lambda^\prime\in\mathcal{L}$  (see \eqref{latticecond}) satisfying the additional requirement that
\begin{equation}\label{latticecompatibility}
\langle \Lambda,\Lambda^\prime\rangle\subseteq e^{-1}\mathbb{Z}.
\end{equation}
Our choice of normalisation of the root system $\Phi_0$ implies that $Q^\vee\subseteq Q$, hence $(Q^\vee,\Lambda^\prime)\in (\mathcal{L}^{\times 2})_e$
for all $\Lambda^\prime\in\mathcal{L}$.

Furthermore, if either $\Lambda$ or $\Lambda^\prime$ is contained in $E^\prime$ then $(\Lambda,\Lambda^\prime)\in (\mathcal{L}^{\times 2})_{e}$ when $h\vert e$. Indeed, 
if either $\Lambda$ or $\Lambda^\prime$ is contained in $E^\prime$ then $\langle\Lambda,\Lambda^\prime\rangle\subseteq \langle P^\vee,P^\vee\rangle$, and
$\langle P^\vee,P^\vee\rangle\subseteq h^{-1}\mathbb{Z}$ since $Q^\vee\subseteq Q$ and $P^\vee\subseteq \frac{1}{h}Q^\vee$ (see Proposition \ref{latticecompatibilityprop}).

Note that if $(\Lambda_2,\Lambda_2^\prime)\in (\mathcal{L}^{\times 2})_e$ and if $\Lambda_1,\Lambda_1^\prime\in\mathcal{L}$ are such that $\Lambda_1\subseteq\Lambda_2$ and $\Lambda_1^\prime\subseteq\Lambda_2^\prime$, then $(\Lambda_1,\Lambda_1^\prime)\in (\mathcal{L}^{\times 2})_e$. 
\begin{definition}\label{defpartorder}
For $(\Lambda_1,\Lambda_1^\prime),(\Lambda_2,\Lambda_2^\prime)\in (\mathcal{L}^{\times 2})_e$ we write 
\[
(\Lambda_1,\Lambda_1^\prime)\leq (\Lambda_2,\Lambda_2^\prime)
\]
if $\Lambda_1\subseteq\Lambda_2$ and $\Lambda_1^\prime\subseteq\Lambda_2^\prime$.
\end{definition}
Definition \ref{defpartorder} turns $(\mathcal{L}^{\times 2})_e$ into a partially ordered set with least element $(Q^\vee,Q^\vee)$.

The lattice $\Lambda^\prime\subset E$ is $W_0$-invariant. The resulting group
\[
W_{\Lambda^\prime}:=W_0\ltimes \Lambda^\prime
\]
is called the extended affine Weyl group. It contains $W$ as normal subgroup, and $W_{\Lambda^\prime}/W\simeq
\Lambda^\prime/Q^\vee$. We denote its elements by $v\tau(\lambda)$ ($v\in W_0$, $\lambda\in\Lambda^\prime$), and write $(w,y)\mapsto wy$ ($w\in W_{\Lambda^\prime}$, $y\in E$) for the $W_{\Lambda^\prime}$-action on $E$ by reflections and translations.
Extend the group epimorphism $D: W\twoheadrightarrow W_0$ to a group epimorphism $D: W_{\Lambda^\prime}\twoheadrightarrow W_0$ by $D(v\tau(\lambda)):=v$ for $v\in W_0$ and $\lambda\in\Lambda^\prime$.

The contragredient $W_{\Lambda^\prime}$-action provides a linear action on the space $E^*\oplus\mathbb{R}$
of affine linear functionals on $E$. The action is explicitly given by \eqref{linearaction}. 
The condition that $\alpha(\Lambda^\prime)\subseteq\mathbb{Z}$ for all $\alpha\in\Phi_0$ implies that
the affine root system $\Phi\subset E^*\oplus\mathbb{R}$ is $W_{\Lambda^\prime}$-stable. We extend the length function $\ell: W\rightarrow \mathbb{Z}_{\geq 0}$ to $W_{\Lambda^\prime}$ using formula \eqref{length}. It satisfies \eqref{fundlength} for $\mu\in\Lambda^\prime$ and $v\in W_0$.

In Definition \ref{wydef} we introduced, for $y\in E$, the element $w_y\in W$ as the unique element of minimal length such that $w_y\cc_y=y$, as well as the finite Weyl group element $v_y:=w_y^{-1}\tau(\mu_y)\in W_0$, where $\mu_y=w_y(0)$. For $y=\lambda\in Q^\vee$ one has $\mu_\lambda=\lambda$, hence
\begin{equation}\label{decompstandard}
\tau(\lambda)=w_\lambda v_\lambda \qquad\quad (\lambda\in Q^\vee).
\end{equation}
We now generalise the factorisation \eqref{decompstandard} to $\lambda\in\Lambda^\prime$ as follows.

Let $\lambda\in\Lambda^\prime$. By, e.g., \cite[(2.4.5)]{Ma}, there exists a unique 
$w_{\lambda,\Lambda^\prime}\in\tau(\lambda)W_0\subset W_{\Lambda^\prime}$ of minimal length. The resulting finite Weyl group element
\[
v_{\lambda,\Lambda^\prime}:=w_{\lambda,\Lambda^\prime}^{-1}\tau(\lambda)\in W_0 \qquad (\lambda\in\Lambda^\prime)
\]
is the shortest element in $W_0$ such that $v_{\lambda,\Lambda^\prime}\lambda=\lambda_-$. Furthermore, 
$\ell(w_{\lambda,\Lambda^\prime}v)=\ell(w_{\lambda,\Lambda^\prime})+\ell(v)$ for $v\in W_0$ (for details see, e.g., \cite[\S 2.4]{Ma}). We thus have
the decomposition
\[
\tau(\lambda)=w_{\lambda,\Lambda^\prime}v_{\lambda,\Lambda^\prime}\qquad\quad (\lambda\in\Lambda^\prime)
\]
with $\ell(\tau(\lambda))=
\ell(w_{\lambda,\Lambda^\prime})+\ell(v_{\lambda,\Lambda^\prime})$.  Note that $w_{\lambda,\Lambda^\prime}\not=w_\lambda$ for $\lambda\in\Lambda^\prime\setminus Q^\vee$, but we do have
\begin{equation}\label{IsEqualQ}
w_{\lambda,\Lambda^\prime}=w_\lambda,\qquad\quad v_{\lambda,\Lambda^\prime}=v_\lambda\qquad (\lambda\in Q^\vee).
\end{equation}
To compare $w_{\lambda,\Lambda^\prime}$ and $w_\lambda$ for $\lambda\in\Lambda^\prime\setminus Q^\vee$ we first need to describe the elements of the subgroup 
\[
\Omega_{\Lambda^\prime}:=\{\omega\in W_{\Lambda^\prime} \,\, | \,\, \ell(\omega)=0\}
\]
of $W_{\Lambda^\prime}$ in detail. It stabilises $C_+$ and $\Delta$. For $j\in [0,r]$ and $\omega\in\Omega_{\Lambda^\prime}$ we write
$\omega(j)\in [0,r]$ for the index such that $\omega(\alpha_j)=\alpha_{\omega(j)}$. Then we have $\omega s_j\omega^{-1}=s_{\omega(j)}$.

We now recall some well known facts about the abelian quotient group $\Lambda^\prime/Q^\vee$ and $\Omega_{\Lambda^\prime}$
(see, e.g., \cite[\S 2.5]{Ma} for details).
First note that $\mu+Q^\vee=W\mu$ for all $\mu\in\Lambda^\prime$, since $\Lambda^\prime\in\mathcal{L}$.
Hence the set 
\[
\Lambda_{\textup{min}}^\prime:=\overline{C}_+\cap \Lambda^\prime=\{\lambda\in\Lambda^\prime \,\,\, | \,\,\, 0\leq \alpha(\lambda)\leq 1\quad
\forall\, \alpha\in\Phi_0^+\}
\]
of minuscule weights in $\Lambda^\prime$ forms a complete set of representatives of $\Lambda^\prime/Q^\vee$. The inverse of the set-theoretic bijection  $\Lambda_{\textup{min}}^\prime\overset{\sim}{\longrightarrow}\Lambda^\prime/Q^\vee$, $\zeta\mapsto \zeta+Q^\vee$ is concretely given by
$\mu+Q^\vee\mapsto\cc_\mu$ for $\mu\in\Lambda^\prime$.
The following result is well known (see, e.g., \cite[\S 2.5]{Ma}).
\begin{proposition}\label{Omegaprop}\hfill
\begin{enumerate}
\item The assignment $\omega\mapsto \omega(0)+Q^\vee$ defines a group isomorphism $\Omega_{\Lambda^\prime}\overset{\sim}{\longrightarrow}\Lambda^\prime/Q^\vee$.
Its inverse is explicitly given by $\mu+Q^\vee\mapsto w_{\cc_\mu,\Lambda^\prime}$ for $\mu\in\Lambda^\prime$.
\item $W_{\Lambda^\prime}=\Omega_{\Lambda^\prime}\ltimes W$.
\end{enumerate}
\end{proposition}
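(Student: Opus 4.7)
The plan is to exhibit $\mu + Q^\vee \mapsto w_{\cc_\mu,\Lambda^\prime}$ as a two-sided inverse of the candidate map $\Phi\colon \omega \mapsto \omega(0) + Q^\vee$, and then deduce part (2) formally. First I would verify that $\Phi$ is a well-defined group homomorphism from $\Omega_{\Lambda^\prime}$ to $\Lambda^\prime/Q^\vee$. Writing $\omega = \tau(\lambda_\omega) u_\omega$ with $\lambda_\omega \in \Lambda^\prime$ and $u_\omega \in W_0$, one has $\omega(0)=\lambda_\omega$, and a direct calculation based on $u\tau(\lambda)u^{-1} = \tau(u\lambda)$ gives
\[
(\omega_1\omega_2)(0) = \lambda_{\omega_1} + u_{\omega_1}\lambda_{\omega_2}.
\]
The hypothesis $\alpha(\Lambda^\prime) \subseteq \mathbb{Z}$ for all $\alpha \in \Phi_0$ from Definition \ref{latticeconddef} then forces $u\lambda - \lambda \in Q^\vee$ for every $u \in W_0$ and $\lambda \in \Lambda^\prime$, giving the homomorphism property.

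The core of the argument is to show $\ell(w_{\cc_\mu,\Lambda^\prime}) = 0$ for every $\mu \in \Lambda^\prime$, so that $w_{\cc_\mu,\Lambda^\prime} \in \Omega_{\Lambda^\prime}$. Note first that $\cc_\mu \in \overline{C}_+ \cap \Lambda^\prime = \Lambda^\prime_{\textup{min}}$, because $W$ preserves $\Lambda^\prime$ and $\cc_\mu = w_\mu^{-1}\mu$. Applying the length identity \eqref{fundlength} to $\tau(\cc_\mu) v$, the bounds $0 \leq \alpha(\cc_\mu) \leq 1$ for $\alpha \in \Phi_0^+$ make every summand manifestly nonnegative, so minimising over $v \in W_0$ amounts to choosing $v$ for which $\Phi_0^+ \cap v\Phi_0^- = \{\alpha \in \Phi_0^+ : \alpha(\cc_\mu)=1\}$ and $\Phi_0^+ \cap v\Phi_0^+ = \{\alpha \in \Phi_0^+ : \alpha(\cc_\mu)=0\}$. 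This is the standard fact that the set of positive roots taking the value $1$ on a minuscule coweight is an inversion set in $\Phi_0$; the element $v_{\cc_\mu,\Lambda^\prime}$ is exactly this minimiser, yielding $\ell(w_{\cc_\mu,\Lambda^\prime})=0$.

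Once this is in place, surjectivity of $\Phi$ is immediate, since $\Phi(w_{\cc_\mu,\Lambda^\prime}) = \cc_\mu + Q^\vee = \mu + Q^\vee$ (use $\mu - \cc_\mu \in Q^\vee$, which itself follows from $\mu \in W\cc_\mu$ together with $W_0$-invariance of $Q^\vee$ and $\alpha(\cc_\mu)\in\mathbb{Z}$). For injectivity, if $\omega \in \Omega_{\Lambda^\prime}$ satisfies $\omega(0) \in Q^\vee$, then $\omega \in W$, and the only length-zero element of a Coxeter group is the identity, so $\omega = e$. Part (2) is then a formal consequence: $\Omega_{\Lambda^\prime}$ is a subgroup of $W_{\Lambda^\prime}$ meeting $W$ trivially and projecting isomorphically onto $W_{\Lambda^\prime}/W \cong \Lambda^\prime/Q^\vee$, while $W = W_0 \ltimes Q^\vee$ is visibly normal in $W_{\Lambda^\prime}=W_0 \ltimes \Lambda^\prime$. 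The main obstacle I anticipate is the length-zero calculation for $w_{\cc_\mu,\Lambda^\prime}$, where one must pin down the correct minimising element of $W_0$ and confirm both sums in \eqref{fundlength} vanish; this is the one step that cannot be reduced to a formal manipulation.
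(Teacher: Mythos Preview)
Your argument is correct. The paper does not give its own proof of this proposition; it simply records the result as well known and refers to \cite[\S 2.5]{Ma}. Your direct verification via the length formula \eqref{fundlength} is exactly the standard route taken in that reference: for $\zeta\in\Lambda_{\textup{min}}^\prime$ one has $\alpha(\zeta)\in\{0,1\}$ for all $\alpha\in\Phi_0^+$, so both sums in \eqref{fundlength} vanish precisely when $\Phi_0^+\cap v\Phi_0^-=\{\alpha\in\Phi_0^+:\alpha(\zeta)=1\}$, and the biconvexity of this set guarantees such a $v$ exists. The paper in fact records the resulting identity $\Pi(v_{\zeta,\Lambda^\prime})=\{\alpha\in\Phi_0^+:\alpha(\zeta)=1\}$ as \eqref{zetalength} immediately after the proposition. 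Your deduction of part~(2) from part~(1) is also the expected one; the only small remark is that normality of $W$ in $W_{\Lambda^\prime}$ is not quite ``visible'' from $W_0\ltimes Q^\vee\subset W_0\ltimes\Lambda^\prime$ alone, but it follows from the same observation $u\lambda-\lambda\in Q^\vee$ you already used for the homomorphism property.
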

A reduced expression of $w\in W_{\Lambda^\prime}$ is an expression $w=\omega s_{j_1}\cdots s_{j_\ell}$ with $\omega\in\Omega_{\Lambda^\prime}$, $0\leq j_i\leq r$ and $\ell=\ell(w)$. The roots in $\Pi(w)$ are still described by \eqref{rootsdescription}.

The elements $v_{\zeta,\Lambda^\prime}\in W_0$ ($\zeta\in \Lambda_{\textup{min}}^\prime$) satisfy
\begin{equation}\label{zetalength}
\begin{split}
\Pi(v_{\zeta,\Lambda^\prime})&=\{\alpha\in\Phi^+_0 \,\, | \,\, \alpha(\zeta)=1\},\\
\Phi^+_0\setminus\Pi(v_{\zeta,\Lambda^\prime})&=\{\alpha\in\Phi^+_0 \,\, | \,\, \alpha(\zeta)=0\},
\end{split}
\end{equation}
see \cite[\S 2.5]{Ma}. Furthermore, for $\zeta\in \Lambda_{\textup{min}}^\prime$ we have 
\[
\zeta^\prime:=-w_0\zeta=-v_{\zeta,\Lambda}\zeta\in \Lambda_{\textup{min}}^\prime,
\]
$w_{\zeta^\prime,\Lambda^\prime}=w_{\zeta,\Lambda^\prime}^{-1}$, 
$v_{\zeta^\prime,\Lambda^\prime}=v_{\zeta,\Lambda^\prime}^{-1}$ (see \cite[(2.5.9)]{Ma}), and
\[
w_{\zeta,\Lambda^\prime}y
=\zeta+v_{\zeta,\Lambda^\prime}^{-1}y\qquad (y\in E).
\]

The minuscule weights admit the following explicit description. 
Recall the functions $n_i: \Phi_0\rightarrow\mathbb{Z}$
for $1\leq i\leq r$, which encode the expansion coefficients of the roots in simple roots: $\alpha=\sum_{i=1}^rn_i(\alpha)\alpha_i$ for all $\alpha\in\Phi_0$. 
Define
\begin{equation}\label{Ilambda}
I_{\Lambda^\prime}:=\{i\in [1,r] \,\,\, | \,\,\, n_i(\varphi)=1 \,\, \& \,\, (\varpi_i^\vee+E_{\textup{co}})\cap \Lambda^\prime\not=\emptyset\}.
\end{equation}
For $i\in I_{\Lambda^\prime}$, $(\varpi_i^\vee+E_{\textup{co}})\cap \Lambda^\prime$ is a coset in $\Lambda^\prime/(\Lambda^{\prime}\cap E_{\textup{co}})$, and we fix a representative
$\varpi_{i,\Lambda^\prime}^\vee$ once and for all. The set $\Lambda_{\textup{min}}^\prime$ of minuscule weights is invariant for the translation action of 
$\Lambda^{\prime}\cap E_{\textup{co}}$.
\begin{lemma}
$\{0\}\cup\{\varpi_{i,\Lambda^\prime}^\vee\}_{i\in I_{\Lambda^\prime}}$ is a complete set of representatives of the $\Lambda^{\prime}\cap E_{\textup{co}}$-orbits in $\Lambda_{\textup{min}}^\prime$.
\end{lemma}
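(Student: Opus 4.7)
The plan is to reduce the statement to the classical classification of minuscule coweights in $P^\vee$ via the projection $\textup{pr}_{E^\prime}$.

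First I would observe that for $\lambda, \lambda^\prime \in \Lambda_{\textup{min}}^\prime$ one has $\lambda - \lambda^\prime \in \Lambda^\prime \cap E_{\textup{co}}$ if and only if $\textup{pr}_{E^\prime}(\lambda) = \textup{pr}_{E^\prime}(\lambda^\prime)$, so the $\Lambda^\prime \cap E_{\textup{co}}$-orbits in $\Lambda_{\textup{min}}^\prime$ are in bijection with $\textup{pr}_{E^\prime}(\Lambda_{\textup{min}}^\prime)$. Since every $\alpha \in \Phi_0$ vanishes on $E_{\textup{co}}$, we have $\alpha(\lambda) = \alpha(\textup{pr}_{E^\prime}(\lambda))$, and conversely, for any $\mu \in \textup{pr}_{E^\prime}(\Lambda^\prime)$ satisfying $0 \leq \alpha(\mu) \leq 1$ for all $\alpha \in \Phi_0^+$, any lift $\lambda \in \Lambda^\prime$ with $\textup{pr}_{E^\prime}(\lambda)=\mu$ automatically lies in $\Lambda_{\textup{min}}^\prime$. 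Hence
\[
\textup{pr}_{E^\prime}(\Lambda_{\textup{min}}^\prime) = \{\mu \in \textup{pr}_{E^\prime}(\Lambda^\prime) \,\,|\,\, 0 \leq \alpha(\mu) \leq 1 \,\, \forall\, \alpha \in \Phi_0^+\}.
\]

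Next I would use Proposition \ref{latticecompatibilityprop} to note that $\textup{pr}_{E^\prime}(\Lambda^\prime) \subseteq P^\vee$, so it suffices to describe the minuscule elements of $P^\vee$ that lie in $\textup{pr}_{E^\prime}(\Lambda^\prime)$. Given $\mu \in P^\vee$ with $0 \leq \alpha(\mu) \leq 1$ for every $\alpha \in \Phi_0^+$, the integrality $\alpha_i(\mu) \in \mathbb{Z}$ forces $\alpha_i(\mu) \in \{0,1\}$ for each $i \in [1,r]$; and the bound $1 \geq \varphi(\mu) = \sum_{i=1}^r n_i(\varphi)\alpha_i(\mu)$ combined with $n_i(\varphi) \in \mathbb{Z}_{>0}$ shows that at most one coordinate $\alpha_i(\mu)$ equals $1$, and if so, $n_i(\varphi) = 1$. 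Consequently $\mu \in \{0\} \cup \{\varpi_i^\vee \,\,|\,\, n_i(\varphi)=1\}$.

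Finally I would observe that $\varpi_i^\vee \in \textup{pr}_{E^\prime}(\Lambda^\prime)$ is equivalent to $(\varpi_i^\vee + E_{\textup{co}}) \cap \Lambda^\prime \neq \emptyset$, which is exactly the defining condition $i \in I_{\Lambda^\prime}$ in \eqref{Ilambda}. Choosing the prescribed lifts (namely $0$ over $0$ and $\varpi_{i,\Lambda^\prime}^\vee$ over $\varpi_i^\vee$ for $i \in I_{\Lambda^\prime}$) yields the claimed complete set of representatives. No step here is a genuine obstacle; the only subtlety is keeping the projection-lifting dictionary between $\Lambda^\prime$ and $\textup{pr}_{E^\prime}(\Lambda^\prime) \subseteq P^\vee$ clean, and invoking the classical classification of minuscule coweights at the right moment.
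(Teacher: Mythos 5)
Your argument is correct and complete. Note that the paper does not prove this lemma at all: its ``proof'' is a pointer to \cite[\S 2.5]{Ma} and \cite[\S 9.3.4]{StAB}, so you are supplying the self-contained argument that those references contain, and it is indeed the standard one. The reduction is sound at every step: the $\Lambda^\prime\cap E_{\textup{co}}$-orbits in $\Lambda_{\textup{min}}^\prime$ are exactly the fibres of $\textup{pr}_{E^\prime}$ because the roots kill $E_{\textup{co}}$ (which also makes $\Lambda_{\textup{min}}^\prime$ stable under these translations and shows any lift of a minuscule projection is again minuscule); $\textup{pr}_{E^\prime}(\Lambda^\prime)\subseteq P^\vee$ follows either from Proposition \ref{latticecompatibilityprop} or directly from $\alpha(\Lambda^\prime)\subseteq\mathbb{Z}$; and the classification of $\mu\in P^\vee$ with $0\leq\alpha(\mu)\leq 1$ for all $\alpha\in\Phi_0^+$ via $\varphi(\mu)=\sum_i n_i(\varphi)\alpha_i(\mu)$ and $n_i(\varphi)\in\mathbb{Z}_{>0}$ is the classical minuscule-coweight argument. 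Two points you leave implicit but which are immediate and worth a half-sentence each if this were written out: the chosen lifts $\varpi_{i,\Lambda^\prime}^\vee$ do lie in $\Lambda_{\textup{min}}^\prime$ (since $\alpha(\varpi_{i,\Lambda^\prime}^\vee)=\alpha(\varpi_i^\vee)=n_i(\alpha)\leq n_i(\varphi)=1$ for $\alpha\in\Phi_0^+$), and the listed representatives are pairwise in distinct orbits because their projections $0$ and $\varpi_i^\vee$ ($i\in I_{\Lambda^\prime}$) are pairwise distinct. With those remarks your proof could replace the citation verbatim.
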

\begin{proof}
See, e.g., \cite[\S 2.5]{Ma} and \cite[\S 9.3.4]{StAB}.
\end{proof}
For $\lambda\in\Lambda^\prime$ the element $w_{\lambda,\Lambda^\prime}\in W_{\Lambda^\prime}$
is related to the shortest element $w_\lambda\in W$ such that $\lambda=w_\lambda c_\lambda$
in the following way.
\begin{lemma}\label{relww}
For $\lambda\in\Lambda^\prime$ we have $\cc_\lambda\in\Lambda^\prime$ and $w_{\cc_\lambda,\Lambda^\prime}\in\Omega_{\Lambda^\prime}$. Furthermore,
\begin{equation}\label{IsEqualQ2}
w_\lambda=w_{\lambda,\Lambda^\prime}w_{\cc_\lambda,\Lambda^\prime}^{-1}
\end{equation}
in $W_{\Lambda^\prime}$.  
\end{lemma}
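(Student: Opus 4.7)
The plan is as follows. First, I would observe that $\Lambda^\prime$ is stable under $W=W_0\ltimes Q^\vee$, since $Q^\vee\subseteq\Lambda^\prime$ and for $\mu\in\Lambda^\prime$ and $\alpha\in\Phi_0$ we have $s_\alpha\mu-\mu=-\alpha(\mu)\alpha^\vee\in Q^\vee$ (using $\alpha(\Lambda^\prime)\subseteq\mathbb{Z}$). Writing $\lambda=w\cc_\lambda$ with $w\in W$, this gives $\cc_\lambda=w^{-1}\lambda\in\Lambda^\prime$, so $\cc_\lambda\in\overline{C}_+\cap\Lambda^\prime=\Lambda^\prime_{\min}$. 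Then the description of the inverse isomorphism in Proposition \ref{Omegaprop}(1) yields $w_{\cc_\lambda,\Lambda^\prime}\in\Omega_{\Lambda^\prime}$, settling the first part of the lemma.

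Next I would set $w^\prime:=w_{\lambda,\Lambda^\prime}w_{\cc_\lambda,\Lambda^\prime}^{-1}\in W_{\Lambda^\prime}$ and verify two things. First, $w^\prime\cc_\lambda=\lambda$: since $w_{\cc_\lambda,\Lambda^\prime}\in\tau(\cc_\lambda)W_0$ and $w_{\lambda,\Lambda^\prime}\in\tau(\lambda)W_0$, evaluating at $0$ gives $w_{\cc_\lambda,\Lambda^\prime}\cdot 0=\cc_\lambda$ and $w_{\lambda,\Lambda^\prime}\cdot 0=\lambda$, so $w^\prime\cc_\lambda=w_{\lambda,\Lambda^\prime}\cdot 0=\lambda$. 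Second, $w^\prime\in W$: via Proposition \ref{Omegaprop} the quotient map $W_{\Lambda^\prime}\twoheadrightarrow W_{\Lambda^\prime}/W\simeq\Lambda^\prime/Q^\vee$ sends $w_{\lambda,\Lambda^\prime}$ and $w_{\cc_\lambda,\Lambda^\prime}$ to $\lambda+Q^\vee$ and $\cc_\lambda+Q^\vee$, respectively; these coincide because $\lambda\in W\cc_\lambda=\cc_\lambda+Q^\vee$ (again using $\alpha(\Lambda^\prime)\subseteq\mathbb{Z}$ to see that $W_0\cc_\lambda\subseteq\cc_\lambda+Q^\vee$). Hence $w^\prime$ lies in the kernel $W$ of that quotient map.

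To conclude $w^\prime=w_\lambda$ I would show that $w^\prime\in W^{\mathbf{J}(\cc_\lambda)}$; combined with $w^\prime\cc_\lambda=\lambda$, the uniqueness in Definition \ref{wydef} then forces $w^\prime=w_\lambda$. Fix $j\in\mathbf{J}(\cc_\lambda)$. Since $w_{\cc_\lambda,\Lambda^\prime}^{-1}\in\Omega_{\Lambda^\prime}$ permutes the simple affine roots, there is $j^\prime\in[0,r]$ with $w_{\cc_\lambda,\Lambda^\prime}^{-1}\alpha_j=\alpha_{j^\prime}$. Evaluating at $0$ gives $\alpha_{j^\prime}(0)=\alpha_j(w_{\cc_\lambda,\Lambda^\prime}\cdot 0)=\alpha_j(\cc_\lambda)=0$, and since $\alpha_0(0)=1$, this forces $j^\prime\in[1,r]$. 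Because $w_{\lambda,\Lambda^\prime}$ is of minimal length in $\tau(\lambda)W_0$ and $s_{j^\prime}\in W_0$, we get $\ell(w_{\lambda,\Lambda^\prime}s_{j^\prime})>\ell(w_{\lambda,\Lambda^\prime})$; the right-multiplication version of Lemma \ref{lengthadd}(2), obtained by passing to inverses and which extends to $W_{\Lambda^\prime}$ since $\ell(\omega w)=\ell(w)=\ell(w\omega)$ for $\omega\in\Omega_{\Lambda^\prime}$, then yields $w_{\lambda,\Lambda^\prime}\alpha_{j^\prime}\in\Phi^+$. Hence $w^\prime\alpha_j=w_{\lambda,\Lambda^\prime}\alpha_{j^\prime}\in\Phi^+$ for every $j\in\mathbf{J}(\cc_\lambda)$, which by \eqref{Phipos} places $w^\prime\in W^{\mathbf{J}(\cc_\lambda)}$.

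The main point requiring care is the last step: one must handle the possibility $0\in\mathbf{J}(\cc_\lambda)$, which is not covered by the analogous classical argument with $\lambda\in Q^\vee$ (where $\cc_\lambda=0$ and $\mathbf{J}(\cc_\lambda)=[1,r]$). The clean way to deal with it is precisely to twist by $w_{\cc_\lambda,\Lambda^\prime}^{-1}\in\Omega_{\Lambda^\prime}$, which converts the simple affine root $\alpha_j$ ($j\in\mathbf{J}(\cc_\lambda)$) into a \emph{finite} simple root $\alpha_{j^\prime}$ ($j^\prime\in[1,r]$) against which the minimality of $w_{\lambda,\Lambda^\prime}$ in $\tau(\lambda)W_0$ can be directly used. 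The rest is bookkeeping with $W_{\Lambda^\prime}=\Omega_{\Lambda^\prime}\ltimes W$ and the $\Omega_{\Lambda^\prime}$-invariance of the length function.
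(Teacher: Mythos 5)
Your proof is correct, and while its setup matches the paper's, the final identification is carried out by a genuinely different argument. Like the paper, you first check that $w^\prime:=w_{\lambda,\Lambda^\prime}w_{\cc_\lambda,\Lambda^\prime}^{-1}$ lies in $W$ and maps $\cc_\lambda$ to $\lambda$ (the paper gets $w^\prime\in W$ by computing the translation part $w^\prime(0)=\lambda-v_{\lambda,\Lambda^\prime}^{-1}v_{\cc_\lambda,\Lambda^\prime}\cc_\lambda\in Q^\vee$, you pass to the quotient $W_{\Lambda^\prime}/W\simeq\Lambda^\prime/Q^\vee$ — the same computation in different clothes). The divergence is in proving $w^\prime=w_\lambda$: the paper runs a two-sided length comparison, namely $\ell(w_{\lambda,\Lambda^\prime})=\ell(w^\prime)\geq\ell(w_\lambda)$ since $w_\lambda$ has minimal length among elements of $W$ sending $\cc_\lambda$ to $\lambda$, and $\ell(w_\lambda)=\ell(w_\lambda w_{\cc_\lambda,\Lambda^\prime})\geq\ell(w_{\lambda,\Lambda^\prime})$ since $w_\lambda w_{\cc_\lambda,\Lambda^\prime}\in\tau(\lambda)W_0$, with equality forcing $w_\lambda w_{\cc_\lambda,\Lambda^\prime}=w_{\lambda,\Lambda^\prime}$ by uniqueness of the minimal-length element of $\tau(\lambda)W_0$; you instead verify the defining property $w^\prime\in W^{\mathbf{J}(\cc_\lambda)}$ directly, by twisting each $\alpha_j$ with $j\in\mathbf{J}(\cc_\lambda)$ through $w_{\cc_\lambda,\Lambda^\prime}^{-1}\in\Omega_{\Lambda^\prime}$ into a finite simple root $\alpha_{j^\prime}$ and playing the minimality of $w_{\lambda,\Lambda^\prime}$ in $\tau(\lambda)W_0$ against $s_{j^\prime}\in W_0$, after which the uniqueness in Definition \ref{wydef} finishes. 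Both routes rest on the same pillars (length-invariance under $\Omega_{\Lambda^\prime}$ and minimality of $w_{\lambda,\Lambda^\prime}$), but yours yields the root-positivity statement $w^\prime\Phi_{\mathbf{J}(\cc_\lambda)}^+\subseteq\Phi^+$ explicitly, at the cost of two small points worth spelling out: the right-handed version of Lemma \ref{lengthadd}(2) on $W_{\Lambda^\prime}$ uses not only $\ell(\omega w)=\ell(w\omega)=\ell(w)$ but also that $\Omega_{\Lambda^\prime}$ stabilises $\Delta$ (hence $\Phi^+$), and passing from $w^\prime\alpha_j\in\Phi^+$ for $j\in\mathbf{J}(\cc_\lambda)$ to the criterion \eqref{Phipos} uses $\Phi^+=\mathbb{Z}_{\geq 0}\Delta\cap\Phi$; both are routine and do not affect correctness.
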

\begin{proof}
Fix $\lambda\in\Lambda^\prime$ and write $\zeta:=\cc_\lambda$. Then clearly $\zeta\in\overline{C}_+\cap\Lambda^\prime=\Lambda_{\textup{min}}^\prime$.
Hence $w_{\zeta,\Lambda^\prime}\in\Omega_{\Lambda^\prime}$ by Proposition \ref{Omegaprop}, proving the first statement.

For the second statement, first note that
\[
w_{\lambda,\Lambda^\prime} w_{\zeta,\Lambda^\prime}^{-1}(0)=\lambda-v_{\lambda,\Lambda^\prime}^{-1}v_{\zeta,\Lambda^\prime}\zeta.
\]
This lies in $Q^\vee$ since $\lambda\in\zeta+Q^\vee$, hence $w_{\lambda,\Lambda^\prime}w_{\zeta,\Lambda^\prime}^{-1}\in W$. 

Now note that 
\[
w_{\lambda,\Lambda^\prime}w_{\zeta,\Lambda^\prime}^{-1}(\zeta)=
w_{\lambda,\Lambda^\prime}(0)=\tau(\lambda)(0)=\lambda.
\]
So $w_{\lambda,\Lambda^\prime}w_{\zeta,\Lambda^\prime}^{-1}$ and $w_\lambda$ are two elements in $W$ mapping $\zeta$ to $\lambda$. Since $w_\lambda\in W^\zeta$ this implies that $\ell(w_{\lambda,\Lambda^\prime})=\ell(w_{\lambda,\Lambda^\prime}w_{\zeta,\Lambda^\prime}^{-1})\geq \ell(w_\lambda)$ (the equality follows from the fact that $w_{\zeta,\Lambda^\prime}^{-1}\in\Omega_{\Lambda^\prime}$).
On the other hand, $w_\lambda w_{\zeta,\Lambda^\prime}\in\tau(\lambda)W_0$, so by definition of $w_{\lambda,\Lambda^\prime}\in W_{\Lambda^\prime}$ we have $\ell(w_\lambda)=\ell(w_\lambda w_{\zeta,\Lambda^\prime})\geq \ell(w_{\lambda,\Lambda^\prime})$ with equality iff 
$w_\lambda w_{\zeta,\Lambda^\prime}=w_{\lambda,\Lambda^\prime}$. 
Combining the two inequalities we conclude that 
$\ell(w_{\lambda,\Lambda^\prime})=\ell(w_\lambda)$, and hence 
$w_\lambda=w_{\lambda,\Lambda^\prime}w_{\zeta,\Lambda^\prime}^{-1}$.
\end{proof}
\begin{remark}
Note that $c_\lambda=0$ iff $\lambda\in Q^\vee$, in which case $w_{0,\Lambda^\prime}=1$ and \eqref{IsEqualQ2} reduces to
\eqref{IsEqualQ}.
\end{remark}

\subsection{The extended double affine Hecke algebra}\label{S700}
Fix lattices $(\Lambda,\Lambda^\prime)\in (\mathcal{L}^{\times 2})_e$ and assume that $q\in\mathbf{F}^\times$ has an $e^{\textup{th}}$ root $q^{\frac{1}{e}}\in\mathbf{F}^\times$, which we fix once and for all. We write
\[
q^{\ell/e}:=(q^{\frac{1}{e}})^{\ell},\qquad q_\alpha^{\ell/e}:=(q^{\frac{1}{e}})^{\frac{2\ell}{\|\alpha\|^2}}
\]
for $\ell\in\mathbb{Z}$
(recall that $2/\|\alpha\|^2\in\mathbb{Z}$ by the choice of normalisation of $\Phi_0$).

The extended affine Weyl group $W_{\Lambda^\prime}$ then acts on $T_\Lambda$ by $q$-dilations and reflections. In particular, $\tau(\lambda)\in W_{\Lambda^\prime}$ ($\lambda\in\Lambda^\prime$) acts by $\tau(\lambda)t=q^\lambda t$ (here $q^\lambda\in T_{\Lambda}$ is the torus element mapping $\mu\in\Lambda$ to $q^{\langle\lambda,\mu\rangle}$). Later on we will also use the $W_\Lambda$-action on $T_{\Lambda^\prime}$ by $q$-dilations and reflections.

\begin{remark}
The natural map $j_\Lambda: T_{P^\vee}\rightarrow T_\Lambda$ from Corollary \ref{ilacor} is $W$-equivariant. If $(P^\vee,\Lambda^\prime)\in(\mathcal{L}^{\times 2})_e$, then $j_\Lambda$ is
 $W_{\Lambda^\prime}$-equivariant if either $\Lambda\subset E^\prime$ or $\Lambda^\prime\subset E^\prime$.
\end{remark}

The contragredient representation gives a $W_{\Lambda^\prime}$-action on $\mathcal{P}_\Lambda$ by algebra automorphisms. The explicit formulas for the action are given by \eqref{xyW}.
The action of the subgroup $\Omega_{\Lambda^\prime}$ on $\mathcal{P}_\Lambda$ is given by
\begin{equation}\label{ux}
w_{\zeta,\Lambda^\prime}(x^\lambda)=q^{-\langle v_{\zeta,\Lambda^\prime}\zeta,\lambda\rangle}x^{v_{\zeta,\Lambda^\prime}^{-1}\lambda}
\end{equation}
for $\zeta\in\Lambda_{\textup{min}}^\prime$
and $\lambda\in\Lambda$. The formulas $T_j\mapsto T_{\omega(j)}$
($\omega\in\Omega_{\Lambda^\prime}$, $0\leq j\leq r$) extend the action of $\Omega_{\Lambda^\prime}$ on $\mathcal{P}=\mathcal{P}_{Q^\vee}$
to an action of $\Omega_{\Lambda^\prime}$ on $\mathbb{H}$
by algebra automorphisms.

\begin{definition}
The smashed product algebra $\mathbb{H}_{Q^\vee,\Lambda^\prime}:=\Omega_{\Lambda^\prime}\ltimes\mathbb{H}$ is the
$Y$-extended double affine Hecke algebra.
\end{definition}
Concretely we have the commutation relation $\omega T_j=T_{\omega(j)}\omega$ for $\omega\in\Omega_{\Lambda^\prime}$ and $0\leq j\leq r$
in $\mathbb{H}_{Q^\vee,\Lambda^\prime}$,
as well as the commutation relations
\begin{equation}\label{crossOmegaexplicit}
w_{\zeta,\Lambda^\prime}x^\mu=q^{-\langle v_{\zeta,\Lambda^\prime}\zeta,\mu\rangle}x^{v_{\zeta,\Lambda^\prime}^{-1}\mu}\,w_{\zeta,\Lambda^\prime}
\end{equation}
in $\mathbb{H}_{Q^\vee,\Lambda^\prime}$ for $\zeta\in\Lambda_{\textup{min}}^\prime$ and $\mu\in Q^\vee$.

The extended affine Hecke algebra $H_{\Lambda^\prime}$ is the subalgebra $\Omega_{\Lambda^\prime}\ltimes H$ of
$\mathbb{H}_{Q^\vee,\Lambda^\prime}$. Multiplication defines a $\mathbf{F}$-linear isomorphism
$\mathcal{P}\otimes H_{\Lambda^\prime}\overset{\sim}{\longrightarrow}\mathbb{H}_{Q^\vee,\Lambda^\prime}$. If
$w=\omega s_{j_1}\cdots s_{j_\ell}\in W_{\Lambda^\prime}$ ($\omega\in\Omega_{\Lambda^\prime}$, $0\leq j_i\leq r$) is a reduced
expression, then 
\[
T_w:=\omega T_{j_1}\cdots T_{j_\ell}\in H_{\Lambda^\prime}
\]
is well defined. 

There exists a unique group homomorphism $\Lambda^\prime\rightarrow H_{\Lambda^\prime}$, $\lambda\mapsto Y^\lambda$, such that 
\begin{equation}\label{Ywhere}
Y^{\lambda}=T_{\tau(\lambda)}\qquad\quad (\lambda\in\Lambda^{\prime +}:=\Lambda^\prime\cap \overline{E}_+).
\end{equation}
Note that $Y^\mu$ for $\mu\in Q^\vee$ reduces to the corresponding element in $H$, as defined in Subsection \ref{DAHA}. Then 
\[
\{T_vY^\lambda \,\,\, | \,\,\, v\in W_0,\, \lambda\in\Lambda^\prime\}
\]
is a basis of
$H_{\Lambda^\prime}$, and the 
cross relations \eqref{crossrelation} in $H_{\Lambda^\prime}$ holds true for
$\mu\in\Lambda^\prime$ and $1\leq i\leq r$.

The elements in $\Omega_{\Lambda^\prime}$, viewed as elements in $\mathbb{H}_{Q^\vee,\Lambda^\prime}$, factorise as follows, 
\begin{equation}\label{OmegaHH}
w_{\zeta,\Lambda^\prime}=Y^{\zeta}T_{v_{\zeta,\Lambda^\prime}}^{-1}\qquad (\zeta\in \Lambda_{\textup{min}}^\prime).
\end{equation}

\begin{definition}
The extended double affine Hecke algebra $\mathbb{H}_{\Lambda,\Lambda^\prime}$ is the algebra generated by
$\mathcal{P}_\Lambda$ and $H_{\Lambda^\prime}$ subject to the cross relations \eqref{crossX} 
for $0\leq j\leq r$  and
$p\in\mathcal{P}_\Lambda$, as well as the relations \eqref{crossOmegaexplicit} for $\zeta\in\Lambda_{\textup{min}}^\prime$ and $\mu\in\Lambda$. 
\end{definition}
Note that 
\begin{equation}\label{smashedprodform}
\mathbb{H}_{\Lambda,\Lambda^\prime}\simeq\Omega_{\Lambda^\prime}\ltimes\mathbb{H}_{\Lambda,Q^\vee}
\end{equation}
with the action of $\Omega_{\Lambda^\prime}$ on $\mathbb{H}_{\Lambda,Q^\vee}$ by algebra automorphisms defined by \eqref{ux} and $\omega(T_j):=T_{\omega(j)}$
for $\omega\in\Omega_{\Lambda^\prime}$ and $0\leq j\leq r$. The algebra $\mathbb{H}_{\Lambda,Q^\vee}$ is the {\it $X$-extended double affine Hecke algebra}.

The duality anti-involution $\delta$ of $\mathbb{H}$ defined in Theorem \ref{deltatheorem} extends to an anti-algebra isomorphism $\delta:\mathbb{H}_{\Lambda,\Lambda^\prime}\rightarrow\mathbb{H}_{\Lambda^\prime,\Lambda}$ by 
\[
\delta(Y^\lambda)=x^{-\lambda}\qquad (\lambda\in\Lambda^\prime).
\] 
Note that $\delta$ restricts to an anti-algebra isomorphism $\mathbb{H}_{Q^\vee,\Lambda^\prime}\overset{\sim}{\longrightarrow}\mathbb{H}_{\Lambda^\prime,Q^\vee}$, and
\[
\delta\bigl(w_{\zeta,\Lambda^\prime}\bigr)=T_{v_{\zeta,\Lambda^\prime}^{-1}}^{-1}x^{-\zeta}\qquad
(\zeta\in\Lambda_{\textup{min}}^\prime).
\]

The extended $X$-intertwiners are defined by
\[
S_{\omega w}^X:= \omega S_w^X\in\mathbb{H}_{Q^\vee,\Lambda^\prime}\qquad (\omega\in\Omega_{\Lambda^\prime}, w\in W).
\]
They satisfy \eqref{Iprop} for $w\in W_{\Lambda^\prime}$ and $p\in\mathcal{P}_\Lambda$. The extended $Y$-intertwiners are 
\[
S_w^Y:=\delta(S_{w^{-1}}^X)\in\mathbb{H}_{\Lambda,Q^\vee}
\qquad (w\in W_\Lambda).
\]
In particular,
\begin{equation}
\label{buildingYintertwinersOmega}
S_{w_{\zeta,\Lambda}}^Y=\delta\bigl(w_{\zeta,\Lambda}^{-1})=x^\zeta T_{v_{\zeta,\Lambda}^{-1}}\qquad (\zeta\in \Lambda_{\textup{min}}).
\end{equation}
The extended $Y$-intertwiners $S_w^Y$ ($w\in W_\Lambda$) satisfy \eqref{IpropY} for  $p\in\mathcal{P}_{\Lambda^\prime}$.

\subsection{The extended quasi-polynomial representation}\label{S71}

Recall the definition of the base-point $\mathfrak{s}_y\in T_{\Lambda^\prime}$ ($y\in E$) from \eqref{ffrak}.
We have the following extension of Lemma \ref{fraks}.
\begin{lemma}\label{Omegas}
For $\omega\in\Omega_{\Lambda}$ and $y\in E$ we have $(D\omega)\mathfrak{s}_y=\mathfrak{s}_{\omega y}$ in $T_{\Lambda^\prime}$.
\end{lemma}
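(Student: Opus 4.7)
The plan is to reduce to an explicit check with $\omega=w_{\zeta,\Lambda}$ for some $\zeta\in\Lambda_{\textup{min}}$ and then compare both sides coroot-by-coroot using the defining formula \eqref{ffrak}, the $W_0$-invariance of $\mathbf{k}$, and the step-function identity $\eta(x)+\eta(1-x)=0$ from \eqref{etaaffine}.

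More precisely, by Proposition \ref{Omegaprop} every $\omega\in\Omega_\Lambda$ is of the form $w_{\zeta,\Lambda}=\tau(\zeta)v_{\zeta,\Lambda}^{-1}$ for a unique $\zeta\in\Lambda_{\textup{min}}$, so $D\omega=v$ with $v:=v_{\zeta,\Lambda}^{-1}\in W_0$ and $\omega y=\zeta+vy$ for all $y\in E$. The action of $W_0$ on $T_{\Lambda^\prime}$ via the contragredient representation gives $v(k_\alpha^\alpha)=k_\alpha^{v\alpha}=k_{v\alpha}^{v\alpha}$, so
\[
(D\omega)\mathfrak{s}_y=v\mathfrak{s}_y=\prod_{\gamma\in v\Phi_0^+}k_\gamma^{\eta((v^{-1}\gamma)(y))\gamma}.
\]
Split the product using the decomposition $v\Phi_0^+=\bigl(\Phi_0^+\setminus\Pi(v^{-1})\bigr)\sqcup\bigl(-\Pi(v^{-1})\bigr)$; after reindexing $\gamma=-\delta$ in the second piece and using $k_{-\delta}=k_\delta$, this yields
\[
(D\omega)\mathfrak{s}_y=\prod_{\beta\in\Phi_0^+\setminus\Pi(v^{-1})}k_\beta^{\eta((v^{-1}\beta)(y))\beta}\cdot\prod_{\delta\in\Pi(v^{-1})}k_\delta^{-\eta(-(v^{-1}\delta)(y))\delta}.
\]

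On the other hand, since $\zeta\in\Lambda_{\textup{min}}$ satisfies $\beta(\zeta)\in\{0,1\}$ with $\beta(\zeta)=1$ precisely when $\beta\in\Pi(v_{\zeta,\Lambda})=\Pi(v^{-1})$ (by \eqref{zetalength}), the definition \eqref{ffrak} and the identity $\beta(vy)=(v^{-1}\beta)(y)$ give
\[
\mathfrak{s}_{\omega y}=\mathfrak{s}_{\zeta+vy}=\prod_{\beta\in\Phi_0^+\setminus\Pi(v^{-1})}k_\beta^{\eta((v^{-1}\beta)(y))\beta}\cdot\prod_{\beta\in\Pi(v^{-1})}k_\beta^{\eta(1+(v^{-1}\beta)(y))\beta}.
\]
The two expressions are now termwise identical on the first factor, and agree on the second factor because $\eta(1-x)=-\eta(x)$ applied with $x=-(v^{-1}\beta)(y)$ gives $\eta(1+(v^{-1}\beta)(y))=-\eta(-(v^{-1}\beta)(y))$. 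This proves $(D\omega)\mathfrak{s}_y=\mathfrak{s}_{\omega y}$.

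The only mildly delicate step is the bookkeeping in the decomposition of $v\Phi_0^+$ and the matching of the $\Pi(v^{-1})$-contributions via the $\eta$-identity; the fact that $\zeta$ is minuscule is what forces $\beta(\zeta)\in\{0,1\}$, which is precisely the input needed to apply $\eta(1-x)+\eta(x)=0$ rather than a more complicated integer-shift version.
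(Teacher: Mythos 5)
Your proof is correct and follows essentially the same route as the paper's: both reduce to $\omega=w_{\zeta,\Lambda}$ via Proposition \ref{Omegaprop}, split the product defining $(D\omega)\mathfrak{s}_y$ according to $\Pi(v_{\zeta,\Lambda})=\Pi(v^{-1})$, and cancel the discrepancies using \eqref{zetalength} together with the identity $\eta(x)+\eta(1-x)=0$ from \eqref{etaaffine}. The only cosmetic difference is that the paper writes $v_{\zeta,\Lambda}^{-1}\mathfrak{s}_y$ as $\mathfrak{s}_{\omega y}$ times an explicit correction factor and shows that factor equals $1$, whereas you compare the two products termwise.
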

\begin{proof}
Let $\zeta\in\Lambda_{\textup{min}}$ such that $\omega=w_{\zeta,\Lambda}=\tau(\zeta)v_{\zeta,\Lambda}^{-1}$
(see Proposition \ref{Omegaprop}). 
By the definition \eqref{ffrak} of $\mathfrak{s}_y$ we have 
\begin{equation}\label{us}
v_{\zeta,\Lambda}^{-1}\mathfrak{s}_y=\mathfrak{s}_{w_{\zeta,\Lambda}y}
\prod_{\alpha\in\Phi_0^+\setminus\Pi(v_{\zeta,\Lambda})}k_\alpha^{(\eta(\alpha(v_{\zeta,\Lambda}^{-1}y))-
\eta(\alpha(w_{\zeta,\Lambda}y)))\alpha}
\prod_{\beta\in\Pi(v_{\zeta,\Lambda})}k_\beta^{-(\eta(-\beta(v_{\zeta,\Lambda}^{-1}y))+\eta(\beta(w_{\zeta,\Lambda}y)))\beta}
\end{equation}
in $T_{\Lambda^\prime}$.
The second line of \eqref{zetalength} gives
\[
\eta(\alpha(v_{\zeta,\Lambda}^{-1}y))-
\eta(\alpha(w_{\zeta,\Lambda}y))=\eta(\alpha(v_{\zeta,\Lambda}^{-1}y))-
\eta(\alpha(v_{\zeta,\Lambda}^{-1}y))=0
\]
for $\alpha\in \Phi_0^+\setminus\Pi(v_{\zeta,\Lambda})$, 
hence the product over $\alpha$ in \eqref{us} is equal to $1$.
Formula \eqref{etaaffine} and the first line of \eqref{zetalength} imply that the product over $\beta$ in \eqref{us} is equal to $1$.
\end{proof}

For $\cc\in C^J$ and $\mathfrak{t}\in T_{\Lambda^\prime,J}$ we extend the quasi-polynomial representation $\pi_{\cc,\mathfrak{t}\vert_{Q^\vee}}$ of $\mathbb{H}$ 
to $\mathbb{H}_{\Lambda,\Lambda^\prime}$ under suitably further restrictions on $\mathfrak{t}\in T_{\Lambda^\prime,J}$. It is convenient to discuss
the extension to the $Y$-extended double affine Hecke algebra $\mathbb{H}_{Q^\vee,\Lambda^\prime}$ first, in which case we do not need to impose additional conditions 
on $\mathfrak{t}\in T_{\Lambda^\prime,J}$.

Let  $\cc\in C^J$ and $\mathfrak{t}\in T_{\Lambda^\prime,J}$. Recall the notation
$\mathfrak{t}_y:=w_y\mathfrak{t}\in T_{\Lambda^\prime}$ ($y\in\mathcal{O}_\cc$) from Definition \ref{defty}.
We then have a $\mathfrak{t}$-dependent
$W_{\Lambda^\prime}\ltimes\mathcal{P}$-action $(w,p)\mapsto w_{\mathfrak{t}}p$ on $\mathcal{P}^{(\cc)}$, with the action of $v\in W_0$ and $\tau(\mu)$ ($\mu\in\Lambda^\prime$) given by
\eqref{xyWt}. Note that
\begin{equation}\label{OmegaPol1}
\bigl(w_{\zeta,\Lambda^\prime}\bigr)_{\mathfrak{t}}(x^y)=
\mathfrak{t}_{v_{\zeta,\Lambda^\prime}^{-1}y}^{-\zeta}x^{v_{\zeta,\Lambda^\prime}^{-1}y}\qquad
(\zeta\in\Lambda_{\textup{min}}^\prime,\, y\in\mathcal{O}_\cc).
\end{equation} 
\begin{lemma}\label{gluelemma}
Let $\cc\in C^J$ and $\mathfrak{t}\in T_{\Lambda^\prime,J}$. 

The quasi-polynomial representation
$\pi_{\cc,\mathfrak{t}\vert_{Q^\vee}}: \mathbb{H}\rightarrow\textup{End}(\mathcal{P}^{(\cc)})$ \textup{(}see Theorem \ref{gbr}\textup{)}
extends to a representation $\pi_{\cc,\mathfrak{t}}^{Q^\vee,\Lambda^\prime}: \mathbb{H}_{Q^\vee,\Lambda^\prime}\rightarrow\textup{End}(\mathcal{P}^{(\cc)})$ by
\[
\pi_{\cc,\mathfrak{t}}^{Q^\vee,\Lambda^\prime}(\omega)(x^y):=\omega_{\mathfrak{t}}(x^y)
\qquad (\omega\in\Omega_{\Lambda^\prime},\, y\in\mathcal{O}_\cc).
\]
Following the notations from Theorem \ref{gbr}, we denote the $\mathbb{H}_{Q^\vee,\Lambda^\prime}$-module $(\mathcal{P}^{(\cc)},
\pi_{\cc,\mathfrak{t}}^{Q^\vee,\Lambda^\prime})$ again by $\mathcal{P}^{(\cc)}_{\mathfrak{t}}$.
\end{lemma}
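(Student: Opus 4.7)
Since $\mathbb{H}_{Q^\vee, \Lambda^\prime} \simeq \Omega_{\Lambda^\prime} \ltimes \mathbb{H}$, extending the $\mathbb{H}$-action $\pi_{\cc, \mathfrak{t}\vert_{Q^\vee}}$ to an $\mathbb{H}_{Q^\vee, \Lambda^\prime}$-action with $\omega \mapsto \omega_{\mathfrak{t}}$ amounts to checking that (i) $\omega \mapsto \omega_{\mathfrak{t}}$ defines a group representation of $\Omega_{\Lambda^\prime}$ on $\mathcal{P}^{(\cc)}$, and (ii) the covariance relation $\omega_{\mathfrak{t}} \circ \pi_{\cc, \mathfrak{t}\vert_{Q^\vee}}(h) = \pi_{\cc, \mathfrak{t}\vert_{Q^\vee}}(\omega(h)) \circ \omega_{\mathfrak{t}}$ holds for every $\omega \in \Omega_{\Lambda^\prime}$ and $h \in \mathbb{H}$, where $\omega(h)$ denotes the smash-product action of $\Omega_{\Lambda^\prime}$ on $\mathbb{H}$.

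For (i), the approach is to extend the $\mathbb{W}$-action of Lemma \ref{actiondeform} to a $W_{\Lambda^\prime}$-action on $\mathcal{P}^{(\cc)}$ by the same formulas \eqref{xyWt}, now permitting $\mu \in \Lambda^\prime$ and interpreting $\mathfrak{t}_y := w \mathfrak{t} \in T_{\Lambda^\prime}$ for any $w \in W$ with $w\cc = y$. Independence of this definition from the choice of $w$ reduces to $W_J$-invariance of $\mathfrak{t} \in T_{\Lambda^\prime, J}$, which follows from the defining condition $\mathfrak{t}^{\alpha_j^\vee} = 1$ for $j \in J$ combined with \eqref{actiononT}. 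The nontrivial relations of $W_{\Lambda^\prime}$ reduce to $\mathfrak{t}_{v^{-1}y} = v^{-1}\mathfrak{t}_y$ for $v \in W_0$, which holds by choosing $w = v^{-1}w_y$ as a representative. Restricting to $\Omega_{\Lambda^\prime}$ and consulting \eqref{OmegaPol1} confirms that $\omega_{\mathfrak{t}}$ preserves $\mathcal{P}^{(\cc)}$, since $v_{\zeta, \Lambda^\prime}^{-1}$ lies in $W_0 \subset W$ and so stabilises $\mathcal{O}_\cc$.

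For (ii), it suffices to check on the generators $T_j$ ($0 \leq j \leq r$) and $x^\mu$ ($\mu \in Q^\vee$) of $\mathbb{H}$. Compatibility with $\pi(x^\mu)$ is a short computation combining \eqref{OmegaPol1}, \eqref{ux}, and the translation property \eqref{integralshiftt}. For $T_j$, the plan is to split $\pi_{\cc, \mathfrak{t}\vert_{Q^\vee}}(T_j)$ into its $\mathfrak{t}$-twisted reflection piece $k_j^{\chi_{\mathbb{Z}}(\alpha_j(\cdot))} s_{j, \mathfrak{t}}$ and its truncated divided-difference piece $(k_j - k_j^{-1}) \nabla_j$, and verify each separately. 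The reflection piece is covariant because $\omega s_j \omega^{-1} = s_{\omega(j)}$ in $W_{\Lambda^\prime}$, $k_{\omega(j)} = k_j$ (the multiplicity function is $\Omega_{\Lambda^\prime}$-invariant), and $\chi_{\mathbb{Z}}(\alpha_j(y)) = \chi_{\mathbb{Z}}(\alpha_{\omega(j)}(v_{\zeta, \Lambda^\prime}^{-1} y))$ (the two arguments differ by the integer $D\alpha_j(v_{\zeta, \Lambda^\prime}\zeta)$).

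The remaining identity $\omega_{\mathfrak{t}} \circ \nabla_j = \nabla_{\omega(j)} \circ \omega_{\mathfrak{t}}$ is the main calculation. Applying $\omega_{\mathfrak{t}}$ termwise to the monomial expansion of $\nabla_j(x^y) = \frac{1 - x^{-\lfloor D\alpha_j(y)\rfloor \alpha_j^\vee}}{1 - x^{\alpha_j^\vee}} x^y$ replaces $\alpha_j^\vee \in E \oplus \mathbb{R}$ by $\omega \cdot \alpha_j^\vee = \alpha_{\omega(j)}^\vee$ under the linear $W_{\Lambda^\prime}$-action \eqref{linearactionE} (using \eqref{compatibilities}); under the specialisation $x^K = q$ this gives $x^{\alpha_{\omega(j)}^\vee}$ in the denominator, and the floor is preserved by the contragredient identity $D\alpha_{\omega(j)}(v_{\zeta, \Lambda^\prime}^{-1} y) = D\alpha_j(y)$. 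The main potential pitfall is the bookkeeping of $q$-powers arising from the second component $m^2 K$ of $\alpha_0^\vee$ when $j = 0$; these are absorbed by the $\mathfrak{t}$-dependent scalar factor in \eqref{OmegaPol1}, yielding the claimed equality.
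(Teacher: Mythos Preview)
Your proposal is correct and follows essentially the same approach as the paper: both verify the smash-product relations $\omega x^\mu = \omega(x^\mu)\omega$ and $\omega T_j = T_{\omega(j)}\omega$, splitting the latter into the reflection piece (via $\omega s_j\omega^{-1}=s_{\omega(j)}$, $k_{\omega(j)}=k_j$, and the integrality argument for $\chi_{\mathbb{Z}}$) and the divided-difference piece (via $\omega\cdot\alpha_j^\vee=\alpha_{\omega(j)}^\vee$ and $D\alpha_{\omega(j)}((D\omega)y)=D\alpha_j(y)$). The paper packages things slightly more cleanly by first recording the factorisation $w_{\mathfrak{t}}(px^y)=w(p)w_{\mathfrak{t}}(x^y)$ for $p\in\mathcal{P}$, which immediately handles both the $x^\mu$ relation and the polynomial prefactor in $\nabla_j(x^y)$; your termwise description and the closing remark about $q$-powers amount to the same thing, though the clean explanation is that the standard $\omega$-action on $\mathcal{P}$ already incorporates all $q$-powers via \eqref{actx}, rather than that they are absorbed by the $\mathfrak{t}$-scalar.
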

\begin{proof}
Note that $\mathfrak{t}|_{Q^\vee}\in T_J$, hence the quasi-polynomial representation $\pi_{\cc,\mathfrak{t}\vert_{Q^\vee}}$ of $\mathbb{H}$ is well-defined. Since 
\begin{equation}\label{wptrivial}
w_{\mathfrak{t}}(px^y)=w(p)w_{\mathfrak{t}}(x^y)\qquad (w\in W_{\Lambda^\prime}, p\in\mathcal{P}, y\in\mathcal{O}_\cc),
\end{equation}
the commutation relations $\omega x^\mu=\omega(x^\mu)\omega$ in $\mathbb{H}_{Q^\vee,\Lambda^\prime}$ for $\omega\in\Omega_{\Lambda^\prime}$ and $\mu\in Q^\vee$ are respected by the proposed extension $\pi_{\cc,\mathfrak{t}}^{Q^\vee,\Lambda^\prime}$ of $\pi_{\cc,\mathfrak{t}\vert_{Q^\vee}}$ to $\mathbb{H}_{Q^\vee,\Lambda^\prime}$. 
It remains to check that the same is true for the relations $\omega T_j=T_{\omega(j)}\omega$
($\omega\in\Omega_{\Lambda^\prime}$, $0\leq j\leq r$). Note that $k_{\omega(j)}=k_j$, $\omega s_j=s_{\omega(j)}\omega$ and 
\[
\chi_{\mathbb{Z}}(\alpha_{\omega(j)}(D\omega(y)))=
\chi_{\mathbb{Z}}(\alpha_{\omega(j)}(\omega y))=\chi_{\mathbb{Z}}(\alpha_j(y)),
\]
hence it suffices to show that 
\[
\omega_{\mathfrak{t}}\bigl(\nabla_j(x^y)\bigr)=\nabla_{\omega(j)}(\omega_{\mathfrak{t}}(x^y))
\]
for $\omega\in\Omega_{\Lambda^\prime}$, $0\leq j\leq r$ and $y\in\mathcal{O}_\cc$. 

Fix $\omega\in\Omega_{\Lambda^\prime}$ and $0\leq j\leq r$. By \eqref{actx}, which holds true for $w\in W_{\Lambda^\prime}$ and $a\in\Phi$, and \eqref{wptrivial}
we have
\begin{equation}\label{ttdd}
\omega_{\mathfrak{t}}\bigl(\nabla_j(x^y)\bigr)=
\left(\frac{1-x^{-\lfloor D\alpha_j(y)\rfloor \alpha_{\omega(j)}^\vee}}{1-x^{\alpha_{\omega(j)}^\vee}}
\right)\omega_{\mathfrak{t}}(x^y).
\end{equation}
Now note that $D\alpha_{\omega(j)}((D\omega)y)=D(\omega^{-1}\alpha_{\omega(j)})(y)=
D\alpha_j(y)$, hence the right hand side of \eqref{ttdd} is equal to $\nabla_{\omega(j)}(\omega_{\mathfrak{t}}(x^y))$, as desired.
\end{proof}
In terms of the new notation from Lemma \ref{gluelemma}, the quasi-polynomial representation $\pi_{\cc,\mathfrak{t}}$ is equal to $\pi_{\cc,\mathfrak{t}}^{Q^\vee,Q^\vee}$.

In the same way Theorem \ref{aCG} extends as follows. 
\begin{lemma}\label{aCGrem}
Let $\cc\in C^J$ and $\mathfrak{t}\in T_{\Lambda^\prime,J}$. The formulas 
\begin{equation*}
\begin{split}
\sigma_{\cc,\mathfrak{t}}^{Q^\vee,\Lambda^\prime}(s_j)(x^y)&:=
\frac{k_j^{\chi_{\mathbb{Z}}(\alpha_j(y))}(x^{\alpha_j^\vee}-1)}{(k_jx^{\alpha_j^\vee}-k_j^{-1})}
s_{j,\mathfrak{t}}(x^y)+\frac{(k_j-k_j^{-1})}{(k_jx^{\alpha_j^\vee}-k_j^{-1})}x^{y-\lfloor D\alpha_j(y)
\rfloor\alpha_j^\vee},\\
\sigma_{\cc,\mathfrak{t}}^{Q^\vee,\Lambda^\prime}(\omega)(x^y)&:=\omega_{\mathfrak{t}}(x^y),\\
\sigma_{\cc,\mathfrak{t}}^{Q^\vee,\Lambda^\prime}(f)(x^y)&:=fx^y
\end{split}
\end{equation*}
for $0\leq j\leq r$, $y\in\mathcal{O}_\cc$, $\omega\in\Omega_{\Lambda^\prime}$ and $f\in\mathcal{Q}$ define a representation 
\[
\sigma_{\cc,\mathfrak{t}}^{Q^\vee,\Lambda^\prime}: W_{\Lambda^\prime}\ltimes\mathcal{Q}\rightarrow\textup{End}(\mathcal{Q}^{(\cc)}).
\]
\end{lemma}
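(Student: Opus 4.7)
The plan is to mimic the proof of Theorem \ref{aCG} in the extended setting: first localise the $\mathbb{H}_{Q^\vee,\Lambda^\prime}$-representation $\pi_{\cc,\mathfrak{t}}^{Q^\vee,\Lambda^\prime}$ of Lemma \ref{gluelemma} at $\mathcal{P}^\times$, and then transfer the action to $W_{\Lambda^\prime}\ltimes\mathcal{Q}$ by means of a mild extension of Cherednik's isomorphism $\ss$ (Theorem \ref{locTHM}).

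First I would observe that since every $\omega\in\Omega_{\Lambda^\prime}$ acts on $\mathbb{H}$ by an algebra automorphism preserving $\mathcal{P}^\times$ (see \eqref{ux}), the Ore set $\mathcal{P}^\times$ is $\Omega_{\Lambda^\prime}$-stable and the localised algebra $\mathbb{H}_{Q^\vee,\Lambda^\prime}^{X-\textup{loc}}$ exists. Using the presentation \eqref{smashedprodform} we get
\[
\mathbb{H}_{Q^\vee,\Lambda^\prime}^{X-\textup{loc}}=\Omega_{\Lambda^\prime}\ltimes\mathbb{H}^{X-\textup{loc}}.
\]
Mimicking the construction in the proof of Theorem \ref{aCG}, the $\mathbb{H}_{Q^\vee,\Lambda^\prime}$-module $\mathcal{P}^{(\cc)}_{\mathfrak{t}}$ extends uniquely to an $\mathbb{H}_{Q^\vee,\Lambda^\prime}^{X-\textup{loc}}$-module $\pi_{\cc,\mathfrak{t}}^{Q^\vee,\Lambda^\prime,X-\textup{loc}}$ on $\mathcal{Q}^{(\cc)}$, via transport through the canonical isomorphism $\mathcal{Q}^{(\cc)}\overset{\sim}{\longrightarrow}\mathbb{H}_{Q^\vee,\Lambda^\prime}^{X-\textup{loc}}\otimes_{\mathbb{H}_{Q^\vee,\Lambda^\prime}}\mathcal{P}^{(\cc)}_{\mathfrak{t}}$. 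Restricted to $\mathbb{H}^{X-\textup{loc}}$ this recovers the operator $\pi_{\cc,\mathfrak{t}|_{Q^\vee}}^{X-\textup{loc}}$ of Theorem \ref{aCG}, and restricted to $\Omega_{\Lambda^\prime}$ it acts by $\omega\mapsto\omega_{\mathfrak{t}}$.

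Next I would extend $\ss$ to an isomorphism $\ss_{\Lambda^\prime}:W_{\Lambda^\prime}\ltimes\mathcal{Q}\overset{\sim}{\longrightarrow}\mathbb{H}_{Q^\vee,\Lambda^\prime}^{X-\textup{loc}}$ by setting $\ss_{\Lambda^\prime}(\omega f):=\omega\,\ss(f)$ for $\omega\in\Omega_{\Lambda^\prime}$ and $f\in\mathcal{Q}$, and $\ss_{\Lambda^\prime}(s_jf):=\widetilde{S}_j^Xf$ on the $W$-part as before. The content is that the resulting map respects the relation $\omega s_j\omega^{-1}=s_{\omega(j)}$ in $W_{\Lambda^\prime}$, which amounts to the identity $\omega\widetilde{S}_j^X\omega^{-1}=\widetilde{S}_{\omega(j)}^X$ in $\mathbb{H}_{Q^\vee,\Lambda^\prime}^{X-\textup{loc}}$; this follows from $\omega T_j\omega^{-1}=T_{\omega(j)}$ and $\omega(x^{\alpha_j^\vee})=x^{\alpha_{\omega(j)}^\vee}$, upon substituting the defining formula \eqref{normS} for $\widetilde{S}_j^X$. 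Once this is verified, the smash product structure immediately lifts $\ss$ to the desired algebra isomorphism $\ss_{\Lambda^\prime}$.

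The sought representation is then obtained by composition:
\[
\sigma_{\cc,\mathfrak{t}}^{Q^\vee,\Lambda^\prime}:=\pi_{\cc,\mathfrak{t}}^{Q^\vee,\Lambda^\prime,X-\textup{loc}}\circ\ss_{\Lambda^\prime}\colon W_{\Lambda^\prime}\ltimes\mathcal{Q}\longrightarrow\textup{End}(\mathcal{Q}^{(\cc)}).
\]
The three formulas in the statement are then read off from the corresponding images under $\ss_{\Lambda^\prime}$: the $s_j$-formula is given by Theorem \ref{aCG} (cf.\ the final display in that proof), multiplication by $f\in\mathcal{Q}$ is tautological, and $\sigma^{Q^\vee,\Lambda^\prime}_{\cc,\mathfrak{t}}(\omega)(x^y)=\pi^{Q^\vee,\Lambda^\prime}_{\cc,\mathfrak{t}}(\omega)(x^y)=\omega_{\mathfrak{t}}(x^y)$ by Lemma \ref{gluelemma}. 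The only genuinely new work is the $\Omega_{\Lambda^\prime}$-equivariance of the building blocks $\widetilde{S}_j^X$; this is the main (though routine) obstacle, and once it is settled the rest of the argument is automatic.
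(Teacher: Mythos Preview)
Your proposal is correct and follows exactly the approach the paper intends: the paper does not give a separate proof of Lemma \ref{aCGrem} but merely states ``In the same way Theorem \ref{aCG} extends as follows,'' and your argument spells out precisely that extension---localising $\mathbb{H}_{Q^\vee,\Lambda^\prime}=\Omega_{\Lambda^\prime}\ltimes\mathbb{H}$ at $\mathcal{P}^\times$, checking the $\Omega_{\Lambda^\prime}$-equivariance $\omega\widetilde{S}_j^X\omega^{-1}=\widetilde{S}_{\omega(j)}^X$, and composing with the extended $\ss$. The identification of that equivariance as the only new ingredient is exactly right.
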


As the next step we introduce a natural extension of the quasi-polynomial representation  
from $\mathbb{H}_{Q^\vee,\Lambda^\prime}$ to $\mathbb{H}_{\Lambda,\Lambda^\prime}$. Its representation space will be
\[\
\mathcal{P}_\Lambda^{(\cc)}:=\bigoplus_{y\in\mathcal{O}_{\Lambda,\cc}}\mathbf{F}x^y,
\]
where $\mathcal{O}_{\Lambda,\cc}$ is the $W_\Lambda$-orbit of $\cc\in C^J$ in $E$. We realise it as a quotient of the 
induced $\mathbb{H}_{\Lambda,\Lambda^\prime}$-module 
\[
V_{\cc,\mathfrak{t}}:=\mathbb{H}_{\Lambda,\Lambda^\prime}\otimes_{\mathbb{H}_{Q^\vee,\Lambda^\prime}}\mathcal{P}_{\mathfrak{t}}^{(\cc)}
\]
under suitable further restrictions on $\mathfrak{t}\in T_{\Lambda^\prime,J}$.

We first consider the $\mathbb{H}_{\Lambda,\Lambda^\prime}$-module $V_{\cc,\mathfrak{t}}$ in more detail.
\begin{lemma}\label{Vct}
Let $\cc\in C^J$ and $\mathfrak{t}\in T_{\Lambda^\prime,J}$.
As a $\mathbb{H}_{Q^\vee,\Lambda^\prime}$-module we have
\[
V_{\cc,\mathfrak{t}}
\simeq \bigoplus_{\omega\in\Omega_\Lambda}\mathcal{P}_{\omega\mathfrak{t}}^{(\omega\cc)}.
\]
\end{lemma}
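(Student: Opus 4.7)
The plan is to carry out the proof in three steps: first decompose $\mathbb{H}_{\Lambda,\Lambda^\prime}$ as a free $\mathbb{H}_{Q^\vee,\Lambda^\prime}$-module on the generators $S^Y_\omega$ for $\omega\in\Omega_\Lambda$; second, tensor to obtain a $\mathbb{H}_{Q^\vee,\Lambda^\prime}$-module splitting of $V_{\cc,\mathfrak{t}}$; and third, identify each summand with the corresponding quasi-polynomial module. For the first step, Proposition~\ref{Omegaprop} applied to $\Lambda$, the PBW decomposition $\mathcal{P}_\Lambda=\bigoplus_{\zeta\in\Lambda_{\textup{min}}}x^\zeta\mathcal{P}$, and the identity~\eqref{buildingYintertwinersOmega} together imply
\[
\mathbb{H}_{\Lambda,\Lambda^\prime}=\bigoplus_{\omega\in\Omega_\Lambda}S^Y_\omega\,\mathbb{H}_{Q^\vee,\Lambda^\prime}
\]
as a right $\mathbb{H}_{Q^\vee,\Lambda^\prime}$-module. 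To upgrade this to a left decomposition one checks that conjugation by $S^Y_\omega$ preserves $\mathbb{H}_{Q^\vee,\Lambda^\prime}$; via the duality $\delta$ this is equivalent to stability of $\mathbb{H}_{\Lambda^\prime,Q^\vee}$ under $\Omega_\Lambda$-conjugation inside the $\delta$-dual smashed product $\mathbb{H}_{\Lambda^\prime,\Lambda}\simeq\Omega_\Lambda\ltimes\mathbb{H}_{\Lambda^\prime,Q^\vee}$, which is built into the smashed-product structure.

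Tensoring on the right by $\mathcal{P}^{(\cc)}_\mathfrak{t}$ yields the $\mathbb{H}_{Q^\vee,\Lambda^\prime}$-module splitting
\[
V_{\cc,\mathfrak{t}}=\bigoplus_{\omega\in\Omega_\Lambda}S^Y_\omega\otimes\mathcal{P}^{(\cc)}_\mathfrak{t}.
\]
For the identification of the $\omega$-th summand with $\mathcal{P}^{(\omega\cc)}_{\omega\mathfrak{t}}$, set $v_\omega:=S^Y_\omega\otimes x^\cc$ and verify that $v_\omega$ satisfies the same one-dimensional $\mathbb{H}^Y_{\omega(J),\Lambda^\prime}$-character $\chi_{\omega(J),\mathfrak{s}_{\omega(J)}\omega\mathfrak{t}}$ (the extension via Lemma~\ref{gluelemma} of the character~\eqref{expchitJ}) that characterises the cyclic vector $x^{\omega\cc}\in\mathcal{P}^{(\omega\cc)}_{\omega\mathfrak{t}}$. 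For $Y^\mu$ with $\mu\in\Lambda^\prime$, the intertwining identity~\eqref{IpropY} applied to $\omega\in W_\Lambda$ and $p\in\mathcal{P}_{\Lambda^\prime}$ gives $Y^\mu S^Y_\omega=S^Y_\omega\,\omega^{-1}(x^{-\mu})(Y^{-1})$; combined with Lemma~\ref{actionHt} and the contragredient action of $\omega$, one obtains
\[
Y^\mu\cdot v_\omega=(\omega\mathfrak{s}_J\cdot\omega\mathfrak{t})^{-\mu}\,v_\omega=(\mathfrak{s}_{\omega(J)}\omega\mathfrak{t})^{-\mu}\,v_\omega,
\]
where the last equality uses Lemma~\ref{Omegas} and~\eqref{srelation}. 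For $j\in\omega(J)$ with $\omega^{-1}(j)\in J$, applying $\delta$ to the $\Omega_\Lambda$-relation $\omega T_{\omega^{-1}(j)}=T_j\omega$ yields $\delta(T_j)S^Y_\omega=S^Y_\omega\delta(T_{\omega^{-1}(j)})$, and Lemma~\ref{actionHt} then forces $\delta(T_j)\cdot v_\omega=k_j\,v_\omega$.

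By the universal property of the induced module $\mathcal{P}^{(\omega\cc)}_{\omega\mathfrak{t}}\simeq\mathbb{M}^{\omega(J),\Lambda^\prime}_{\mathfrak{s}_{\omega(J)}\omega\mathfrak{t}}$ supplied by Theorem~\ref{gbr}(2) in its $\mathbb{H}_{Q^\vee,\Lambda^\prime}$-extended form (Lemma~\ref{gluelemma}), these relations produce a unique $\mathbb{H}_{Q^\vee,\Lambda^\prime}$-equivariant map $\mathcal{P}^{(\omega\cc)}_{\omega\mathfrak{t}}\to S^Y_\omega\otimes\mathcal{P}^{(\cc)}_\mathfrak{t}$ sending $x^{\omega\cc}$ to $v_\omega$. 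Cyclicity of $\mathcal{P}^{(\cc)}_\mathfrak{t}$ over $\mathbb{H}_{Q^\vee,\Lambda^\prime}$ shows surjectivity, and the matching quasi-monomial bases (both indexed by $\mathcal{O}_\cc$ via $y\leftrightarrow\omega y$) yield injectivity; direct summation over $\omega\in\Omega_\Lambda$ gives the claimed isomorphism. The main technical obstacle is to synchronize the $\Omega_\Lambda$-twist of the base-point pair $(\cc,\mathfrak{t})$ with the intertwining behaviour of $S^Y_\omega=\delta(\omega^{-1})$ across both the $Y$-side and the Hecke generators, where $\omega$ itself does not lie in $\mathbb{H}_{Q^\vee,\Lambda^\prime}$; the identity $\omega\mathfrak{s}_J=\mathfrak{s}_{\omega(J)}$ from Lemma~\ref{Omegas} is what makes this compatibility work.
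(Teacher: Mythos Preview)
Your proof is correct and follows essentially the same route as the paper: decompose $\mathbb{H}_{\Lambda,\Lambda^\prime}$ over $\mathbb{H}_{Q^\vee,\Lambda^\prime}$ using the $S_\omega^Y$ (the paper obtains this by applying $\delta$ to the smashed-product decomposition~\eqref{smashedprodform}), then identify each summand $S_\omega^Y\otimes\mathcal{P}^{(\cc)}_{\mathfrak{t}}$ with $\mathcal{P}^{(\omega\cc)}_{\omega\mathfrak{t}}$ via the cyclic vector. The paper compresses the identification to ``it is easy to check'' while you spell out the character computations for $Y^\mu$ and $\delta(T_j)$; one small imprecision is that the paper never defines an extended induced module $\mathbb{M}^{\omega(J),\Lambda^\prime}_{\mathfrak{s}_{\omega(J)}\omega\mathfrak{t}}$ with a universal property over $\mathbb{H}_{Q^\vee,\Lambda^\prime}$, so it is cleaner to first produce the $\mathbb{H}$-map via Theorem~\ref{gbr}(2) and then verify $\Omega_{\Lambda^\prime}$-equivariance separately, but this does not affect the validity of your argument.
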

\begin{proof}
Applying $\delta$ to the decomposition \eqref{smashedprodform} it follows that the subspaces
\[
V_{\cc,\mathfrak{t}}(\omega):=\textup{span}\{S_\omega^Y\otimes_{\mathbb{H}_{Q^\vee,\Lambda^\prime}}f
\,\,\, | \,\,\, f\in\mathcal{P}_{\mathfrak{t}}^{(\cc)}\} 
\qquad (\omega\in\Omega_\Lambda)
\]
are $\mathbb{H}_{Q^\vee,\Lambda^\prime}$-submodules of $V_{\cc,\mathfrak{t}}$,
and 
\[
V_{\cc,\mathfrak{t}}=\bigoplus_{\omega\in\Omega_\Lambda}V_{\cc,\mathfrak{t}}(\omega).
\]
It is easy to check that $V_{\cc,\mathfrak{t}}(\omega)\simeq\mathcal{P}_{\omega\mathfrak{t}}^{(\omega\cc)}$
as $\mathbb{H}_{Q^\vee,\Lambda^\prime}$-modules, with the isomorphism mapping $S_\omega^Y\otimes_{\mathbb{H}_{Q^\vee,\Lambda^\prime}}x^\cc$
to $x^{\omega\cc}$.
\end{proof}
Let $\cc\in C^J$ and write 
\[
\Omega_{\Lambda,J}:=\{\omega\in\Omega_{\Lambda} \,\, 
| \,\, \omega(J)=J\}.
\]
Denote by $W_{\Lambda,\cc}$ the fix-point subgroup in $W_\Lambda$ of $\cc$, and by 
$\Omega_{\Lambda,\cc}$ the fix-point subgroups in $\Omega_{\Lambda}$ of $\cc$.
Then $\Omega_{\Lambda,\cc}\subseteq\Omega_{\Lambda,J}$ and 
$W_{\Lambda,\cc}=\Omega_{\Lambda,\cc}\ltimes W_J$. Write $\Omega_{\Lambda}^\cc$ for a complete set of representatives of $\Omega_{\Lambda}/\Omega_{\Lambda,\cc}$.
The $W_\Lambda$-orbit $\mathcal{O}_{\Lambda,\cc}:=W_\Lambda\cc$ then decomposes as
\[
\mathcal{O}_{\Lambda,\cc}=\bigsqcup_{\omega\in\Omega^\cc_\Lambda}\mathcal{O}_{\omega\cc}.
\] 
In particular, we have
\[
\mathcal{P}_\Lambda^{(\cc)}=
\bigoplus_{\omega\in\Omega_{\Lambda}^\cc}\mathcal{P}^{(\omega\cc)}.
\]
as $\mathcal{P}$-modules.

We now look for a quotient of the induced $\mathbb{H}_{\Lambda,\Lambda^\prime}$-module
$V_{\cc,\mathfrak{t}}$ admitting a quasi-polynomial realisation on $\mathcal{P}_{\Lambda}^{(\cc)}$.
For this we need to restrict to the following parameter subset of $T_{\Lambda^\prime,J}$ to the following subset.
 
\begin{definition}\label{Tres}
For $\cc\in C^J$ write
\[
{}^\Lambda T_{\Lambda^\prime}^{\cc}:=\{\mathfrak{t}\in T_{\Lambda^\prime,J} \,\, | \,\, 
\omega\mathfrak{t}=\mathfrak{t}\,\,\, \forall\,\omega\in\Omega_{\Lambda,\cc}\}.
\]
\end{definition}
The second and third example below should be compared to Lemma \ref{resLambdaQ}. 
\begin{example}\label{exampleTspec}\hfill
\begin{enumerate}
\item For $\omega\in\Omega_\Lambda$ and $\cc\in\overline{C}_+$ we have $\omega({}^{\Lambda}T_{\Lambda^\prime}^{\cc})={}^{\Lambda}T_{\Lambda^\prime}^{\omega\cc}$.
\item For $\cc\in C^J$ we have ${}^{Q^\vee}T_{\Lambda^\prime}^\cc=T_{\Lambda^\prime,J}$. Furthermore, the restriction map $T_{\Lambda^\prime,J}\rightarrow T_J$, $s\mapsto s\vert_{Q^\vee}$ restricts to a map ${}^\Lambda T_{\Lambda^\prime}^\cc\rightarrow {}^\Lambda T_{Q^\vee}^\cc$ \textup{(}Lemma \ref{resLambdaQ}(2) does not have an apparent extension to the present context\textup{)}.
\item For all $\cc\in\overline{C}_+$ the subset ${}^{\Lambda}T_{\Lambda^\prime}^\cc$ of $T_{\Lambda^\prime}$ is a $T_{\Lambda^\prime,[1,r]}$-subset \textup{(}use here the fact that $T_{\Lambda,[1,r]}\subseteq T_{\Lambda}^{W_0}$\textup{)}.
\item If $\zeta\in\Lambda_{\textup{min}}$ then 
\[
{}^{\Lambda}T_{\Lambda^\prime}^\zeta=q^\zeta T_{\Lambda^\prime,[1,r]}.
\]
Indeed, note that ${}^{\Lambda}T_{\Lambda^\prime}^0=T_{\Lambda^\prime,[1,r]}$ since $\Omega_{\Lambda,0}=\{1\}$ and $\mathbf{J}(0)=[1,r]$, and so
\[
{}^{\Lambda}T_{\Lambda^\prime}^\zeta=w_{\zeta,\Lambda}({}^{\Lambda}T_{\Lambda^\prime}^0)=q^\zeta v_{\zeta,\Lambda}^{-1}(T_{\Lambda^\prime,[1,r]})=
q^\zeta T_{\Lambda^\prime,[1,r]}.
\]
Here we used part (2) and the fact that $w_{\zeta,\Lambda}=\tau(\zeta)v_{\zeta,\Lambda}^{-1}\in\Omega_\Lambda$ maps $0$ to $\zeta$. 
\end{enumerate}
\end{example}
The lifting properties of characters as described in Lemma \ref{resLambdaQ} do not seem to extend easily to ${}^\Lambda T_{\Lambda^\prime}^\cc$.

For $\mathfrak{t}\in {}^{\Lambda}T_{\Lambda^\prime}^{\cc}$ the induced module $V_{\cc,\mathfrak{t}}$ admits the following natural family of intertwiners.
\begin{lemma}
Let $\cc\in C^J$ and $\mathfrak{t}\in {}^{\Lambda}T_{\Lambda^\prime}^\cc$.
For each $\omega\in\Omega_{\Lambda,\cc}$ there exists a unique intertwiner 
\[\phi_\omega\in\textup{End}_{\mathbb{H}_{\Lambda,\Lambda^\prime}}
\bigl(V_{\cc,\mathfrak{t}}\bigr)
\]
such that 
\begin{equation}\label{t10d10}
\phi_\omega\bigl(S_{\omega^\prime}^Y\otimes_{\mathbb{H}_{Q^\vee,\Lambda^\prime}}x^\cc\bigr)
=S_{\omega^\prime\omega}^Y\otimes_{\mathbb{H}_{Q^\vee,\Lambda^\prime}}x^\cc
\qquad (\omega^\prime\in\Omega_\Lambda).
\end{equation}
\end{lemma}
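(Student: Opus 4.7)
The plan is to use the universal property of induction. Since $V_{\cc,\mathfrak{t}} = \mathbb{H}_{\Lambda,\Lambda^\prime} \otimes_{\mathbb{H}_{Q^\vee,\Lambda^\prime}} \mathcal{P}^{(\cc)}_{\mathfrak{t}}$, Frobenius reciprocity identifies $\textup{End}_{\mathbb{H}_{\Lambda,\Lambda^\prime}}(V_{\cc,\mathfrak{t}})$ with $\textup{Hom}_{\mathbb{H}_{Q^\vee,\Lambda^\prime}}\bigl(\mathcal{P}^{(\cc)}_{\mathfrak{t}}, V_{\cc,\mathfrak{t}}\bigr)$ via $\phi \mapsto \phi\circ (1\otimes\,\cdot\,)$, with inverse sending $\psi$ to $h\otimes f \mapsto h\cdot\psi(f)$. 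Thus to produce $\phi_\omega$ satisfying \eqref{t10d10} it suffices to exhibit an $\mathbb{H}_{Q^\vee,\Lambda^\prime}$-linear map $\psi_\omega : \mathcal{P}^{(\cc)}_{\mathfrak{t}} \to V_{\cc,\mathfrak{t}}$ with $\psi_\omega(x^\cc) = S_\omega^Y \otimes_{\mathbb{H}_{Q^\vee,\Lambda^\prime}} x^\cc$. Given such a $\psi_\omega$, the extension to $\phi_\omega$ and the multiplicativity $S_{\omega^\prime}^Y S_\omega^Y = S_{\omega^\prime\omega}^Y$ in $\mathbb{H}_{\Lambda,\Lambda^\prime}$ (an immediate consequence of the definition $S_w^Y = \delta(S_{w^{-1}}^X)$, the identity $S_\omega^X = \omega$ for $\omega \in \Omega_\Lambda$, and the anti-homomorphism property of $\delta$) together yield \eqref{t10d10} for all $\omega^\prime \in \Omega_\Lambda$.

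To construct $\psi_\omega$, I invoke Lemma \ref{Vct}, which supplies an $\mathbb{H}_{Q^\vee,\Lambda^\prime}$-module isomorphism
\[
V_{\cc,\mathfrak{t}} \simeq \bigoplus_{\omega^\prime \in \Omega_\Lambda} \mathcal{P}^{(\omega^\prime \cc)}_{\omega^\prime \mathfrak{t}}
\]
under which $S_{\omega^\prime}^Y \otimes_{\mathbb{H}_{Q^\vee,\Lambda^\prime}} x^\cc$ corresponds to the cyclic vector $x^{\omega^\prime\cc}$ of the $\omega^\prime$-summand. The two hypotheses on $(\omega,\mathfrak{t})$ are engineered so that the $\omega$-summand coincides with the starting module $\mathcal{P}^{(\cc)}_{\mathfrak{t}}$: the condition $\omega \in \Omega_{\Lambda,\cc}$ gives $\omega\cc = \cc$, and the condition $\mathfrak{t} \in {}^\Lambda T^{\cc}_{\Lambda^\prime}$ gives $\omega\mathfrak{t} = \mathfrak{t}$. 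Hence $\mathcal{P}^{(\omega\cc)}_{\omega\mathfrak{t}} = \mathcal{P}^{(\cc)}_{\mathfrak{t}}$ as $\mathbb{H}_{Q^\vee,\Lambda^\prime}$-modules with $x^{\omega\cc} = x^\cc$, and I define $\psi_\omega$ as the composition of the identity map on $\mathcal{P}^{(\cc)}_{\mathfrak{t}}$ with the inclusion of the $\omega$-summand into $V_{\cc,\mathfrak{t}}$; this forces $\psi_\omega(x^\cc) = S_\omega^Y \otimes_{\mathbb{H}_{Q^\vee,\Lambda^\prime}} x^\cc$, as required.

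Uniqueness of $\phi_\omega$ is clear, since $1 \otimes_{\mathbb{H}_{Q^\vee,\Lambda^\prime}} x^\cc$ generates $V_{\cc,\mathfrak{t}}$ over $\mathbb{H}_{\Lambda,\Lambda^\prime}$ by the definition of $V_{\cc,\mathfrak{t}}$ together with Theorem \ref{gbr}(3) (which guarantees that $x^\cc$ is $\mathbb{H}$-cyclic in $\mathcal{P}^{(\cc)}_{\mathfrak{t}}$, and hence also $\mathbb{H}_{Q^\vee,\Lambda^\prime}$-cyclic). The only real content beyond unravelling the adjunction is the observation in the second paragraph, namely that the twin conditions $\omega\cc = \cc$ and $\omega\mathfrak{t} = \mathfrak{t}$ collapse the $\omega$-summand of Lemma \ref{Vct} back onto $\mathcal{P}^{(\cc)}_{\mathfrak{t}}$; this is the precise structural reason for Definition \ref{Tres}, and it is the one place where the restrictions on $\mathfrak{t}$ enter the argument.
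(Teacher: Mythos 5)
Your proof is correct. It relies on the same key structural facts as the paper --- the decomposition of $V_{\cc,\mathfrak{t}}$ from Lemma \ref{Vct} and the observation that the conditions $\omega\in\Omega_{\Lambda,\cc}$ and $\mathfrak{t}\in{}^{\Lambda}T_{\Lambda^\prime}^\cc$ force $\mathcal{P}_{\omega\mathfrak{t}}^{(\omega\cc)}=\mathcal{P}_{\mathfrak{t}}^{(\cc)}$ --- but the way you assemble the intertwiner is genuinely different. The paper defines $\phi_\omega$ summand by summand, composing the isomorphisms $V_{\cc,\mathfrak{t}}(\omega^\prime)\simeq\mathcal{P}_{\omega^\prime\mathfrak{t}}^{(\omega^\prime\cc)}\simeq V_{\cc,\mathfrak{t}}(\omega^\prime\omega)$ for every $\omega^\prime\in\Omega_\Lambda$, which immediately yields \eqref{t10d10} and $\mathbb{H}_{Q^\vee,\Lambda^\prime}$-linearity, but then still has to verify separately that the resulting map commutes with the $\delta(\Omega_\Lambda)$-action (the ``straightforward check'' the paper leaves to the reader, needed because $\mathbb{H}_{\Lambda,\Lambda^\prime}$ is generated by $\mathbb{H}_{Q^\vee,\Lambda^\prime}$ together with the $S^Y_\omega$). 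You instead invoke the tensor--hom adjunction for the induced module, so that exhibiting a single $\mathbb{H}_{Q^\vee,\Lambda^\prime}$-linear map $\psi_\omega:\mathcal{P}^{(\cc)}_{\mathfrak{t}}\to V_{\cc,\mathfrak{t}}(\omega)\subset V_{\cc,\mathfrak{t}}$ with $\psi_\omega(x^\cc)=S^Y_\omega\otimes x^\cc$ produces a full $\mathbb{H}_{\Lambda,\Lambda^\prime}$-endomorphism for free; the validity of \eqref{t10d10} for all $\omega^\prime$ then comes from the multiplicativity $S^Y_{\omega^\prime}S^Y_\omega=S^Y_{\omega^\prime\omega}$, which you correctly justify from $S^X_\omega=\omega$ and the anti-homomorphism property of $\delta$ (in the paper this identity is implicit in the summand-wise definition). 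Your route thus trades the paper's final equivariance check for the adjunction formalism, and makes the role of Definition \ref{Tres} and of the group-like behaviour of the $S^Y_\omega$ more transparent; the uniqueness argument via cyclicity of $1\otimes x^\cc$ is the same in both treatments (cyclicity of $x^\cc$ is perhaps more directly Theorem \ref{gbr}(2) or Corollary \ref{monomialcor}(2) than \ref{gbr}(3), but this is immaterial).
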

\begin{proof}
Let $\omega^\prime\in\Omega_\Lambda$. Since $\omega\in\Omega_{\Lambda,\cc}$, the condition $\omega\mathfrak{t}=\mathfrak{t}$ and the proof of Lemma \ref{Vct} 
provide isomorphisms
\[
V_{\cc,\mathfrak{t}}(\omega^\prime)\overset{\sim}{\longrightarrow} \mathcal{P}_{\omega^\prime\mathfrak{t}}^{(\omega^\prime\cc)}\overset{\sim}{\longrightarrow}
V_{\cc,\mathfrak{t}}(\omega^\prime\omega)
\]
of $\mathbb{H}_{Q^\vee,\Lambda^\prime}$-modules. The resulting $\mathbb{H}_{Q^\vee,\Lambda^\prime}$-linear isomorphism 
$V_{\cc,\mathfrak{t}}(\omega^\prime)\overset{\sim}{\longrightarrow}
V_{\cc,\mathfrak{t}}(\omega^\prime\omega)$ is characterised by
\[
S_{\omega^\prime}^Y\otimes_{\mathbb{H}_{Q^\vee,\Lambda^\prime}}x^\cc\mapsto 
S_{\omega^\prime\omega}^Y\otimes_{\mathbb{H}_{Q^\vee,\Lambda^\prime}}x^\cc.
\] 
Hence \eqref{t10d10} gives rise to a well defined intertwiner 
$\phi_\omega\in\textup{Hom}_{\mathbb{H}_{Q^\vee,\Lambda^\prime}}(V_{\cc,\mathfrak{t}})$.
A straightforward check shows that
$\phi_\omega$ also intertwines the $\delta(\Omega_\Lambda)$-action on $V_{\cc,\mathfrak{t}}$.
\end{proof}

Recall that for $\cc\in C^J$ and $\mathfrak{t}\in T_{\Lambda^\prime,J}$, we introduced $W$-translates $\mathfrak{t}_y\in T_{\Lambda^\prime}$
for $y\in\mathcal{O}_\cc$ in Definition \ref{defty}.
In case $\mathfrak{t}\in {}^{\Lambda}T^\cc_{\Lambda^\prime}\subseteq T_{\Lambda^\prime,J}$, we extend this construction to $W_\Lambda$-translates 
as follows.
\begin{definition}\label{deftyc}
Let $\cc\in C^J$ and $\mathfrak{t}\in {}^{\Lambda}T^\cc_{\Lambda^\prime}$. Define $\mathfrak{t}_{y;\cc}\in T_{\Lambda^\prime}$
for $y\in\mathcal{O}_{\Lambda,\cc}$ by 
\begin{equation}\label{tyc}
\mathfrak{t}_{w\cc;\cc}:=w\mathfrak{t} \qquad (w\in W_\Lambda).
\end{equation}
\end{definition}
Note that \eqref{tyc} is well defined since $\mathfrak{t}\in {}^{\Lambda}T^\cc_{\Lambda^\prime}$ implies that $w\mathfrak{t}=\mathfrak{t}$ for 
all $w\in W_{\Lambda,\cc}$. Furthermore, $\mathfrak{t}_{y;\cc}=\mathfrak{t}_y$ for $y\in\mathcal{O}_\cc$, and more generally,
\begin{equation}\label{O4}
\mathfrak{t}_{y;\cc}=(\omega\mathfrak{t})_y\,\,\,\hbox{ when }\, y\in\mathcal{O}_{\omega\cc}\, \hbox{ and }\, \omega\in\Omega_\Lambda.
\end{equation}
Furthermore, note that for $\omega\in\Omega_\Lambda$ we have
$\omega\bigl({}^{\Lambda}T_{\Lambda^\prime}^\cc\bigr)={}^{\Lambda}T_{\Lambda^\prime}^{\omega\cc}\subseteq T_{\Lambda^\prime,\omega(J)}$ by Example \ref{exampleTspec}(1), and 
\begin{equation}\label{O3}
(\omega\mathfrak{t})_{y;\omega\cc}=\mathfrak{t}_{y;\cc}\,\,\, \hbox{ when }\, y\in\mathcal{O}_{\Lambda,\cc}\, \hbox{ and }\, \omega\in\Omega_\Lambda.
\end{equation}

For $w\in W_\Lambda$ and $y\in\mathcal{O}_{\Lambda,\cc}$ we have $\mathfrak{t}_{wy;\cc}=w\mathfrak{t}_{y;\cc}$. For translations this implies that
\begin{equation}\label{Lambdatranslatet}
\mathfrak{t}_{y+\lambda;\cc}=q^{\lambda}\mathfrak{t}_{y;\cc}\qquad (y\in\mathcal{O}_{\Lambda,\cc},\, \lambda\in\Lambda).
\end{equation}

Lemma \ref{actiondeform} now immediately generalises as follows.
\begin{lemma}\label{actiondeformext}
For $\cc\in C^J$ and $\mathfrak{t}\in{}^{\Lambda}T^\cc_{\Lambda^\prime}$ the extended affine Weyl group
$W_{\Lambda^\prime}$ acts linearly on $\mathcal{P}_\Lambda^{(\cc)}$
by
\begin{equation*}
\begin{split}
v_{\mathfrak{t};\cc}(x^y)&:=x^{vy}\qquad\qquad\qquad (v\in W_0),\\
\tau(\mu)_{\mathfrak{t};\cc}(x^y)&:=
\mathfrak{t}_{y;\cc}^{-\mu}x^y
\qquad\qquad\quad (\mu\in\Lambda^\prime)
\end{split}
\end{equation*}
for $y\in\mathcal{O}_{\Lambda,\cc}$. 
\end{lemma}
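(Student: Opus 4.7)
The plan is to follow the template of Lemma \ref{actiondeform}, appropriately extended to the $W_\Lambda$-orbit $\mathcal{O}_{\Lambda,\cc}$ and to characters in $T_{\Lambda^\prime}$. Since $W_{\Lambda^\prime}=W_0\ltimes\Lambda^\prime$ is a semidirect product of the abelian group $\Lambda^\prime$ (acting as translations) and $W_0$ (acting as a Coxeter group), it suffices to check three facts: (i) the formulas $v_{\mathfrak{t};\cc}(x^y):=x^{vy}$ produce a well-defined $W_0$-action on $\mathcal{P}_\Lambda^{(\cc)}$; (ii) the formulas $\tau(\mu)_{\mathfrak{t};\cc}(x^y):=\mathfrak{t}_{y;\cc}^{-\mu}x^y$ produce a well-defined $\Lambda^\prime$-action; (iii) these two actions intertwine correctly, i.e.\ $v_{\mathfrak{t};\cc}\,\tau(\mu)_{\mathfrak{t};\cc}\,v_{\mathfrak{t};\cc}^{-1}=\tau(v\mu)_{\mathfrak{t};\cc}$.

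The preparatory step, which is the only point where the hypothesis $\mathfrak{t}\in{}^{\Lambda}T_{\Lambda^\prime}^{\cc}$ enters, is to confirm that Definition \ref{deftyc} produces a well-defined element $\mathfrak{t}_{y;\cc}\in T_{\Lambda^\prime}$ for every $y\in\mathcal{O}_{\Lambda,\cc}$. Indeed, writing $y=w\cc=w'\cc$ with $w,w'\in W_\Lambda$, one has $w^{-1}w'\in W_{\Lambda,\cc}=\Omega_{\Lambda,\cc}\ltimes W_J$. The assumption $\mathfrak{t}\in T_{\Lambda^\prime,J}$ gives $W_J$-invariance via \eqref{inclusions}, while the defining condition of ${}^{\Lambda}T_{\Lambda^\prime}^{\cc}$ (Definition \ref{Tres}) gives $\Omega_{\Lambda,\cc}$-invariance. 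Hence $w\mathfrak{t}=w'\mathfrak{t}$, so $\mathfrak{t}_{y;\cc}$ is unambiguously defined and satisfies the equivariance property $\mathfrak{t}_{wy;\cc}=w\mathfrak{t}_{y;\cc}$ for all $w\in W_\Lambda$, as asserted in the paragraph after Definition \ref{deftyc}.

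With this in hand, item (i) is immediate: $\mathcal{O}_{\Lambda,\cc}$ is $W_\Lambda$-stable and in particular $W_0$-stable, so the permutation action $v(x^y)=x^{vy}$ evidently defines a $W_0$-action. For (ii), the group homomorphism property $\tau(\mu+\nu)_{\mathfrak{t};\cc}=\tau(\mu)_{\mathfrak{t};\cc}\tau(\nu)_{\mathfrak{t};\cc}$ reduces to the identity $\mathfrak{t}_{y;\cc}^{-(\mu+\nu)}=\mathfrak{t}_{y;\cc}^{-\mu}\mathfrak{t}_{y;\cc}^{-\nu}$ in $\mathbf{F}^\times$, which is obvious; and $\tau(0)_{\mathfrak{t};\cc}$ acts as the identity. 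For (iii), a direct computation gives
\[
v_{\mathfrak{t};\cc}\,\tau(\mu)_{\mathfrak{t};\cc}\,v_{\mathfrak{t};\cc}^{-1}(x^y)=v_{\mathfrak{t};\cc}\bigl(\mathfrak{t}_{v^{-1}y;\cc}^{-\mu}x^{v^{-1}y}\bigr)=\mathfrak{t}_{v^{-1}y;\cc}^{-\mu}x^y,
\]
and by the $W_\Lambda$-equivariance $\mathfrak{t}_{v^{-1}y;\cc}=v^{-1}\mathfrak{t}_{y;\cc}$ we have $\mathfrak{t}_{v^{-1}y;\cc}^{-\mu}=\mathfrak{t}_{y;\cc}^{-v\mu}$, which equals the value $\tau(v\mu)_{\mathfrak{t};\cc}(x^y)/x^y$ on the right-hand side.

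The main (and only) obstacle is really the well-definedness of $\mathfrak{t}_{y;\cc}$, which is precisely what motivates the restricted parameter set ${}^{\Lambda}T_{\Lambda^\prime}^{\cc}$; once this is secured, verification of the $W_{\Lambda^\prime}$-relations is a mechanical computation paralleling the $W$-case of Lemma \ref{actiondeform}.
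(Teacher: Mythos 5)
Your proof is correct and follows essentially the same route as the paper: the paper treats this as an immediate generalisation of the direct check in Lemma \ref{actiondeform}, with the only substantive point being the well-definedness of $\mathfrak{t}_{y;\cc}$, which the paper secures (in the remark after Definition \ref{deftyc}) exactly as you do, via $W_{\Lambda,\cc}=\Omega_{\Lambda,\cc}\ltimes W_J$ together with $\mathfrak{t}\in T_{\Lambda^\prime,J}\subseteq T_{\Lambda^\prime}^{W_J}$ and the $\Omega_{\Lambda,\cc}$-invariance built into ${}^{\Lambda}T_{\Lambda^\prime}^{\cc}$. Your explicit verification of the semidirect-product relations is just the "direct check" the paper leaves to the reader.
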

Note that
$w_{\mathfrak{t};\cc}(pf)=w(p)w_{\mathfrak{t};\cc}(f)$ for $w\in W_{\Lambda^\prime}$, $p\in\mathcal{P}_\Lambda$ and
$f\in\mathcal{P}_\Lambda^{(\cc)}$, and
\begin{equation}\label{OmegaPol}
\begin{split}
s_{0,\mathfrak{t};\cc}(x^y)&=
\mathfrak{t}_{y;\cc}^{\varphi^\vee}x^{s_\varphi y},\\
(w_{\zeta,\Lambda^\prime})_{\mathfrak{t};\cc}(x^y)&=
\mathfrak{t}_{v_{\zeta,\Lambda^\prime}^{-1}y;\cc}^{-\zeta}x^{v_{\zeta,\Lambda^\prime}^{-1}y}
\end{split}
\end{equation}
for $y\in\mathcal{O}_{\Lambda,\cc}$ and $\zeta\in \Lambda_{\textup{min}}^\prime$ (cf. \eqref{OmegaPol1}).
Note furthermore that $\bigl(w_{\zeta,\Lambda^\prime}\bigr)_{\mathfrak{t};\cc}(x^y)$ is a constant multiple of $x^{(Dw_\zeta)y}$, and that for
$\cc\in C^J$, $\omega\in\Omega_{\Lambda}$ and $\mathfrak{t}\in{}^{\Lambda}T^\cc_{\Lambda^\prime}$,
\begin{equation}
\label{compatibleOmegatranslate}
w_{\mathfrak{t};\cc}(x^y)=w_{\omega\mathfrak{t}}(x^y)\qquad (w\in W_{\Lambda^\prime},\,
y\in\mathcal{O}_{\omega\cc}).
\end{equation}
We now have the following extension of Lemma \ref{gluelemma} to $\mathbb{H}_{\Lambda,\Lambda^\prime}$.
\begin{theorem}\label{glueprop}
Let $\cc\in C^J$ and $\mathfrak{t}\in {}^{\Lambda}T_{\Lambda^\prime}^\cc$. 
\begin{enumerate}
\item The formulas 
\begin{equation}\label{actionformulasext}
\begin{split}
\pi_{\cc,\mathfrak{t}}^{\Lambda,\Lambda^\prime}(T_j)x^y&:=k_j^{\chi_{\mathbb{Z}}(\alpha_j(y))}s_{j,\mathfrak{t};\cc}(x^y)
+(k_j-k_j^{-1})\nabla_j(x^y),\\
\pi_{\cc,\mathfrak{t}}^{\Lambda,\Lambda^\prime}(x^\lambda)x^y&:=x^{y+\lambda},\\
\pi_{\cc,\mathfrak{t}}^{\Lambda,\Lambda^\prime}(\omega)x^y&:=\omega_{\mathfrak{t};\cc}(x^y)
\end{split}
\end{equation}
for $0\leq j\leq r$, $\lambda\in\Lambda$, $\omega\in\Omega_{\Lambda^\prime}$ and $y\in\mathcal{O}_{\Lambda,\cc}$ define a representation 
\[
\pi_{\cc,\mathfrak{t}}^{\Lambda,\Lambda^\prime}: \mathbb{H}_{\Lambda,\Lambda^\prime}\rightarrow\textup{End}
(\mathcal{P}_\Lambda^{(\cc)}).
\]
We denote the resulting $\mathbb{H}_{\Lambda,\Lambda^\prime}$-module by $\mathcal{P}_{\Lambda,\mathfrak{t}}^{(\cc)}$.
\item As $\mathbb{H}_{Q^\vee,\Lambda^\prime}$-modules,
\begin{equation}\label{Pdec}
\mathcal{P}_{\Lambda,\mathfrak{t}}^{(\cc)}=\bigoplus_{\omega\in\Omega_\Lambda^\cc}\mathcal{P}_{\omega\mathfrak{t}}^{(\omega\cc)}.
\end{equation}
\item $\mathcal{P}_{\Lambda,\mathfrak{t}}^{(\cc)}$
is a quotient of $V_{\cc,\mathfrak{t}}$.
\end{enumerate}
\end{theorem}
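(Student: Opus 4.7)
The plan is to establish parts (1) and (2) simultaneously and then derive part (3) from the construction. The starting point is the decomposition $W_\Lambda=\Omega_\Lambda\ltimes W$, which yields $\mathcal{O}_{\Lambda,\cc}=\bigsqcup_{\omega\in\Omega_\Lambda^\cc}\mathcal{O}_{\omega\cc}$ and hence the vector-space decomposition
\[
\mathcal{P}_\Lambda^{(\cc)}=\bigoplus_{\omega\in\Omega_\Lambda^\cc}\mathcal{P}^{(\omega\cc)}.
\]
For each $\omega\in\Omega_\Lambda^\cc$ one has $\omega\cc\in C^{\omega(J)}$ and, by Example \ref{exampleTspec}(1)--(2), $\omega\mathfrak{t}\in T_{\Lambda^\prime,\omega(J)}$, so Lemma \ref{gluelemma} furnishes the well-defined $\mathbb{H}_{Q^\vee,\Lambda^\prime}$-module $\mathcal{P}^{(\omega\cc)}_{\omega\mathfrak{t}}$. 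The compatibility identities \eqref{O4} and \eqref{compatibleOmegatranslate} then show that on $\mathcal{P}^{(\omega\cc)}$ the proposed operators $\pi_{\cc,\mathfrak{t}}^{\Lambda,\Lambda^\prime}(T_j)$, $\pi_{\cc,\mathfrak{t}}^{\Lambda,\Lambda^\prime}(x^\mu)$ ($\mu\in Q^\vee$) and $\pi_{\cc,\mathfrak{t}}^{\Lambda,\Lambda^\prime}(\omega^\prime)$ for $\omega^\prime\in\Omega_{\Lambda^\prime}$ agree with those of $\pi_{\omega\cc,\omega\mathfrak{t}}^{Q^\vee,\Lambda^\prime}$. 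Consequently every summand is stable and all defining relations of $\mathbb{H}_{Q^\vee,\Lambda^\prime}$ are automatically satisfied, which proves part (2) and reduces part (1) to checking the relations genuinely involving $x^\lambda$ for $\lambda\in\Lambda\setminus Q^\vee$.

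The remaining relations are (a) the multiplicative relation $x^\lambda x^{\lambda^\prime}=x^{\lambda+\lambda^\prime}$, which is trivial; (b) the cross relation \eqref{crossX} for $j\in[0,r]$ and $\mu=\lambda\in\Lambda$; and (c) the commutation $\omega^\prime x^\lambda=\omega^\prime(x^\lambda)\omega^\prime$ for $\omega^\prime\in\Omega_{\Lambda^\prime}$. For (b), note first that $\alpha_j(\lambda)\in\mathbb{Z}$ for $\lambda\in\Lambda$, so $\chi_\mathbb{Z}(\alpha_j(y+\lambda))=\chi_\mathbb{Z}(\alpha_j(y))$. Using \eqref{Lambdatranslatet} together with the reflection formula \eqref{saxyt} one derives the multiplicative property
\[
s_{j,\mathfrak{t};\cc}(x^\lambda x^y)=s_j(x^\lambda)\,s_{j,\mathfrak{t};\cc}(x^y),
\]
after identifying $x^{\lambda-D\alpha_j(\lambda)\alpha_j^\vee}=s_j(x^\lambda)$ via a direct computation of $q$-factors. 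Expanding $\pi_{\cc,\mathfrak{t}}^{\Lambda,\Lambda^\prime}(T_j)(x^\lambda x^y)-s_j(x^\lambda)\pi_{\cc,\mathfrak{t}}^{\Lambda,\Lambda^\prime}(T_j)x^y$ makes the twist terms cancel, and the residual divided-difference term is evaluated by applying the step identity \eqref{stepder} with $a=\alpha_j$, $y^\prime=\lambda$ (valid since $\alpha_j(\lambda)\in\mathbb{Z}$), followed by \eqref{usualDD}. This yields exactly the right-hand side of \eqref{crossX}. For (c), the explicit formula \eqref{OmegaPol} and \eqref{Lambdatranslatet} give $\omega^\prime_{\mathfrak{t};\cc}(x^\lambda x^y)=q^{-\langle v_{\zeta,\Lambda^\prime}^{-1}\lambda,\zeta\rangle}x^{v_{\zeta,\Lambda^\prime}^{-1}\lambda}\,\omega^\prime_{\mathfrak{t};\cc}(x^y)$ for $\omega^\prime=w_{\zeta,\Lambda^\prime}$, which is $\omega^\prime(x^\lambda)\,\omega^\prime_{\mathfrak{t};\cc}(x^y)$ by \eqref{ux}. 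The scalar $q^{-\langle v_{\zeta,\Lambda^\prime}^{-1}\lambda,\zeta\rangle}$ lies in $\mathbf{F}$ thanks to the pairing condition $\langle\Lambda,\Lambda^\prime\rangle\subseteq e^{-1}\mathbb{Z}$ and the fixed $q^{1/e}\in\mathbf{F}^\times$.

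For part (3), Lemma \ref{Vct} identifies $V_{\cc,\mathfrak{t}}\simeq\bigoplus_{\omega\in\Omega_\Lambda}V_{\cc,\mathfrak{t}}(\omega)$ as $\mathbb{H}_{Q^\vee,\Lambda^\prime}$-modules, with $V_{\cc,\mathfrak{t}}(\omega)\simeq\mathcal{P}^{(\omega\cc)}_{\omega\mathfrak{t}}$. Writing each $\omega\in\Omega_\Lambda$ uniquely as $\omega=\overline{\omega}\omega_0$ with $\overline{\omega}\in\Omega_\Lambda^\cc$ and $\omega_0\in\Omega_{\Lambda,\cc}$, the intertwiner $\phi_{\omega_0}$ (whose existence uses $\omega_0\mathfrak{t}=\mathfrak{t}$) provides an $\mathbb{H}_{\Lambda,\Lambda^\prime}$-linear isomorphism $V_{\cc,\mathfrak{t}}(\overline{\omega})\overset{\sim}{\longrightarrow}V_{\cc,\mathfrak{t}}(\omega)$. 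Summing the inverses gives the desired surjection
\[
V_{\cc,\mathfrak{t}}\twoheadrightarrow\bigoplus_{\overline{\omega}\in\Omega_\Lambda^\cc}V_{\cc,\mathfrak{t}}(\overline{\omega})\overset{\sim}{\longrightarrow}\mathcal{P}^{(\cc)}_{\Lambda,\mathfrak{t}},
\]
which is $\mathbb{H}_{\Lambda,\Lambda^\prime}$-equivariant by parts (1) and (2).

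The main obstacle will be the bookkeeping of $q$-factors in paragraph two: verifying the cross relation for $j=0$ requires converting between the affine coroot $\alpha_0^\vee=m^2K-\varphi^\vee$ and its $E$-component, which involves reconciling the scalar $q^{\varphi(\lambda)m^2}$ arising from the $K$-part with $q_\varphi^{\varphi(\lambda)}$ arising from the dilation action of $s_0=\tau(\varphi^\vee)s_\varphi$ on $x^\lambda$.
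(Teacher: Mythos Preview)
Your argument for parts (1) and (2) is correct and matches the paper's approach exactly: both reduce to Lemma \ref{gluelemma} on each $W$-orbit $\mathcal{O}_{\omega\cc}$ via the identification \eqref{compatibleOmegatranslate}/\eqref{O4}, and then check by hand the remaining relations involving $x^\lambda$ for $\lambda\in\Lambda$ (the paper compresses your explicit verification of \eqref{crossX} to the single phrase ``using \eqref{stepder}, the cross relations are verified by a direct computation''). Your worry about the $j=0$ bookkeeping is unfounded: the identity $s_{0,\mathfrak{t};\cc}(x^\lambda x^y)=s_0(x^\lambda)\,s_{0,\mathfrak{t};\cc}(x^y)$ follows directly from \eqref{Lambdatranslatet} and \eqref{OmegaPol} as you indicate, and the $q$-factors cancel.

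Your treatment of part (3), however, is slightly off. The summands $V_{\cc,\mathfrak{t}}(\omega)$ are only $\mathbb{H}_{Q^\vee,\Lambda^\prime}$-submodules of $V_{\cc,\mathfrak{t}}$, not $\mathbb{H}_{\Lambda,\Lambda^\prime}$-submodules, so the phrase ``$\phi_{\omega_0}$ provides an $\mathbb{H}_{\Lambda,\Lambda^\prime}$-linear isomorphism $V_{\cc,\mathfrak{t}}(\overline{\omega})\overset{\sim}{\longrightarrow}V_{\cc,\mathfrak{t}}(\omega)$'' does not parse, and the final claim that the assembled surjection is $\mathbb{H}_{\Lambda,\Lambda^\prime}$-equivariant ``by parts (1) and (2)'' does not follow: parts (1) and (2) concern only the target, and you would still need to check compatibility of your piecewise map with the $x^\lambda$-action on the source, which shuffles the summands $V_{\cc,\mathfrak{t}}(\omega)$. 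The paper bypasses this entirely via the universal property of induction: the inclusion $\mathcal{P}^{(\cc)}_{\mathfrak{t}}\hookrightarrow\mathcal{P}^{(\cc)}_{\Lambda,\mathfrak{t}}$ is $\mathbb{H}_{Q^\vee,\Lambda^\prime}$-linear by part (2), hence induces an $\mathbb{H}_{\Lambda,\Lambda^\prime}$-map $V_{\cc,\mathfrak{t}}\to\mathcal{P}^{(\cc)}_{\Lambda,\mathfrak{t}}$, $h\otimes f\mapsto\pi^{\Lambda,\Lambda^\prime}_{\cc,\mathfrak{t}}(h)f$, which is surjective since $x^\cc$ is cyclic (this is made explicit in Corollary \ref{corOmega}). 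Your intertwiner approach can be salvaged by passing to the $\Omega_{\Lambda,\cc}$-coinvariants of $V_{\cc,\mathfrak{t}}$ under the $\phi_{\omega_0}$-action and then identifying the quotient with $\mathcal{P}^{(\cc)}_{\Lambda,\mathfrak{t}}$, but the Frobenius route is shorter.
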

\begin{proof}
Fix $\omega\in\Omega^\cc_\Lambda$. 
We first show that
\begin{equation}\label{tttddd}
\pi_{\cc,\mathfrak{t}}^{\Lambda,\Lambda^\prime}(h)x^y=\pi_{\omega\cc,\omega\mathfrak{t}}^{Q^\vee,\Lambda^\prime}(h)x^y\qquad
(y\in\mathcal{O}_{\omega\cc})
\end{equation}
for $h=T_j$ ($0\leq j\leq r$), $h=x^\mu$ ($\mu\in Q^\vee$) and $h=\omega^\prime$ ($\omega^\prime\in\Omega_{\Lambda^\prime}$).
Observe that the right hand side is well defined since $\omega\cc\in C^{\omega(J)}$ and $\omega\mathfrak{t}\in T_{\Lambda^\prime,\omega(J)}$. Formula \eqref{tttddd} is trivially correct for $h=x^\mu$
($\mu\in Q^\vee$), and it follows from \eqref{compatibleOmegatranslate} when $h=\omega^\prime$ ($\omega^\prime\in\Omega_{\Lambda^\prime}$). For $h=T_j$ ($0\leq j\leq r$), formula \eqref{tttddd} follows from the observation that 
\begin{equation*}
\begin{split}
s_{a,\omega\mathfrak{t}}(x^y)=
(\omega\mathfrak{t})_y^{-m\alpha^\vee}
x^{s_\alpha y}=
\mathfrak{t}_{y;\cc}^{-m\alpha^\vee}x^{s_\alpha y}=
s_{a,\mathfrak{t};\cc}(x^y)
\end{split}
\end{equation*}
for $y\in\mathcal{O}_{\omega\cc}$ and $a=(\alpha,m)\in\Phi$,
where we have used \eqref{O4} in the second equality.

Proof of (1): In view of Lemma \ref{gluelemma} and the previous paragraph it suffices to show that the formulas
\eqref{actionformulasext} respect the defining relations of $\mathbb{H}_{\Lambda,\Lambda^\prime}$ involving the generators $x^\lambda$ ($\lambda\in\Lambda$).
The properties of the action $(w,f)\mapsto w_{\mathfrak{t};\cc}(f)$ of $w\in W_{\Lambda^\prime}$
on $f\in\mathcal{P}_\Lambda^{(\cc)}$ mentioned directly after Lemma \ref{actiondeformext} imply that
$\pi_{\cc,\mathfrak{t}}^{\Lambda,\Lambda^\prime}$ respect the relation $\omega x^\lambda=\omega(x^\lambda)\omega$ in $\mathbb{H}_{\Lambda,\Lambda^\prime}$ ($\omega\in\Omega_{\Lambda^\prime}$, $\lambda\in\Lambda$). 
Using \eqref{stepder}, 
the cross relations \eqref{crossX} for the operators
$\pi_{\cc,\mathfrak{t}}^{\Lambda,\Lambda^\prime}(T_j)$ and $\pi_{\cc,\mathfrak{t}}^{\Lambda,\Lambda^\prime}(x^\lambda)$
($0\leq j\leq r$, $\lambda\in\Lambda$) are verified by a direct computation.\\
Proof of (2): This is immediate from \eqref{tttddd} and part (1).\\
Proof of (3): This is clear from part (2) and Lemma \ref{Vct}.
\end{proof}
\begin{remark}\label{O3rem}\hfill
\begin{enumerate}
\item Cherednik's polynomial representation of the extended double affine Hecke algebra is 
\[
\pi_{0,1_{T_{\Lambda^\prime}}}^{\Lambda,\Lambda^\prime}: \mathbb{H}_{\Lambda,\Lambda^\prime}\rightarrow\textup{End}(\mathcal{P}_\Lambda).
\] 
\item
Let $\cc\in\overline{C}_+$ and $\cc^\prime\in\overline{C}_+\cap\mathcal{O}_{\Lambda,\cc}$. Then $\cc^\prime=\omega\cc$ for a unique $\omega\in\Omega_\Lambda$, and
\[
\pi_{\omega\mathfrak{t};\omega\cc}^{\Lambda,\Lambda^\prime}=\pi_{\mathfrak{t};\cc}^{\Lambda,\Lambda^\prime}\qquad\quad (\omega\in\Omega_\Lambda)
\]
by Example \ref{exampleTspec}(2) and \eqref{O3}. In particular, for $\zeta\in\Lambda_{\textup{min}}$ we have $\mathcal{P}_\Lambda^{(\zeta)}=\mathcal{P}_\Lambda$ and 
\begin{equation}\label{lateid}
\pi_{\zeta,\mathfrak{t}}^{\Lambda,\Lambda^\prime}=\pi_{0,w_{\zeta,\Lambda}^{-1}\mathfrak{t}}^{\Lambda,\Lambda^\prime}=\pi_{0,q^{-\zeta}\mathfrak{t}}^{\Lambda,\Lambda^\prime}
\end{equation}
for $\mathfrak{t}\in {}^\Lambda T_{\Lambda^\prime}^\zeta=q^\zeta T_{\Lambda^\prime,[1,r]}$ (see Example \ref{exampleTspec}(4)). For the second equality of \eqref{lateid} we used that
$w_{\zeta,\Lambda^\prime}^{-1}\mathfrak{t}=v_{\zeta,\Lambda^\prime}(q^{-\zeta}\mathfrak{t})=q^{-\zeta}\mathfrak{t}$ since $q^{-\zeta}\mathfrak{t}\in T_{\Lambda^\prime,[1,r]}\subseteq T_{\Lambda^\prime}^{W_0}$.
\end{enumerate}
\end{remark}

We now compare extended quasi-polynomial representations for different choices of lattices $(\Lambda,\Lambda^\prime)$.
\begin{proposition}\label{ordercompatiblerep}
Let $(\Lambda_1,\Lambda_1^\prime), (\Lambda_2,\Lambda_2^\prime)\in (\mathcal{L}^{\times 2})_e$ such that $(\Lambda_1,\Lambda_1^\prime)\leq (\Lambda_2,\Lambda_2^\prime)$. Let $\cc\in C^J$ and $\mathfrak{t}\in {}^{\Lambda_2}T_{\Lambda_2^\prime}^\cc$. 

Then the inclusion map 
$\mathcal{P}_{\Lambda_1}^{(\cc)}\hookrightarrow\mathcal{P}_{\Lambda_2}^{(\cc)}$
defines an embedding
\[
\mathcal{P}_{\Lambda_1,\mathfrak{t}\vert_{\Lambda_1^\prime}}^{(\cc)}\hookrightarrow\mathcal{P}_{\Lambda_2,\mathfrak{t}}^{(\cc)}
\]
of $\mathbb{H}_{\Lambda_1,\Lambda_1^\prime}$-modules, where we view $\mathbb{H}_{\Lambda_1,\Lambda_1^\prime}$ as subalgebra of $\mathbb{H}_{\Lambda_2,\Lambda_2^\prime}$ in the natural way.
\end{proposition}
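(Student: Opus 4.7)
The plan has three components: verify that $\mathfrak{t}\vert_{\Lambda_1^\prime}$ lies in ${}^{\Lambda_1}T_{\Lambda_1^\prime}^\cc$ so that the source representation is defined, verify that the inclusion of quasi-monomial spaces lands inside $\mathcal{P}_{\Lambda_2}^{(\cc)}$ and is preserved by the subalgebra $\mathbb{H}_{\Lambda_1,\Lambda_1^\prime}\subseteq\mathbb{H}_{\Lambda_2,\Lambda_2^\prime}$, and finally match the action formulas \eqref{actionformulasext} on each generator.

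First I will note the elementary compatibilities induced by $(\Lambda_1,\Lambda_1^\prime)\leq(\Lambda_2,\Lambda_2^\prime)$: the orbit inclusion $\mathcal{O}_{\Lambda_1,\cc}=W_{\Lambda_1}\cc\subseteq W_{\Lambda_2}\cc=\mathcal{O}_{\Lambda_2,\cc}$ (hence $\mathcal{P}_{\Lambda_1}^{(\cc)}\subseteq\mathcal{P}_{\Lambda_2}^{(\cc)}$), the inclusions $W_{\Lambda_i^\prime}\subseteq W_{\Lambda_2^\prime}$ and $\Omega_{\Lambda_i^\prime}\subseteq\Omega_{\Lambda_2^\prime}$ (coming from $\Lambda_{\textup{min}}^\prime\overset{\sim}{\to}\Lambda^\prime/Q^\vee$, Proposition \ref{Omegaprop}), and the fact that $v_{\zeta,\Lambda_1^\prime}=v_{\zeta,\Lambda_2^\prime}$ and $w_{\zeta,\Lambda_1^\prime}=w_{\zeta,\Lambda_2^\prime}$ for $\zeta\in(\Lambda_1^\prime)_{\textup{min}}$, since these depend only on $\zeta$. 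A direct check using the explicit formulas \eqref{xyW} and \eqref{tauaction} shows that the restriction map $T_{\Lambda_2^\prime}\twoheadrightarrow T_{\Lambda_1^\prime}$ is $W_{\Lambda_1}$-equivariant: for $v\in W_0$ this is $(v\mathfrak{t})^\lambda=\mathfrak{t}^{v^{-1}\lambda}$, and for $\mu\in\Lambda_1$ the character $q^\mu\in T_{\Lambda_2^\prime}$ restricts to the corresponding character in $T_{\Lambda_1^\prime}$ because $\langle\mu,\lambda\rangle\in e^{-1}\mathbb{Z}$ for any $\lambda\in\Lambda_1^\prime\subseteq\Lambda_2^\prime$.

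For step (1): since $\alpha_j^\vee\in Q^\vee\subseteq\Lambda_1^\prime$, the condition $\mathfrak{t}^{\alpha_j^\vee}=1$ for $j\in J$ transfers verbatim, so $\mathfrak{t}\vert_{\Lambda_1^\prime}\in T_{\Lambda_1^\prime,J}$. For any $\omega\in\Omega_{\Lambda_1,\cc}\subseteq\Omega_{\Lambda_2,\cc}$ the $W_{\Lambda_1}$-equivariance of restriction yields
\[
\omega(\mathfrak{t}\vert_{\Lambda_1^\prime})=(\omega\mathfrak{t})\vert_{\Lambda_1^\prime}=\mathfrak{t}\vert_{\Lambda_1^\prime},
\]
so $\mathfrak{t}\vert_{\Lambda_1^\prime}\in{}^{\Lambda_1}T_{\Lambda_1^\prime}^\cc$ and the representation $\mathcal{P}_{\Lambda_1,\mathfrak{t}\vert_{\Lambda_1^\prime}}^{(\cc)}$ is well defined by Theorem \ref{glueprop}.

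For step (2), I will verify stability of $\mathcal{P}_{\Lambda_1}^{(\cc)}$ under the generators: multiplication by $x^\lambda$ ($\lambda\in\Lambda_1$) sends $x^y$ with $y\in\mathcal{O}_{\Lambda_1,\cc}$ to $x^{y+\lambda}$ with $y+\lambda\in\mathcal{O}_{\Lambda_1,\cc}$; the truncated divided-difference $\nabla_j$ produces exponents $y-k\alpha_j^\vee$ with $\alpha_j^\vee\in Q^\vee\subseteq\Lambda_1$; $s_{j,\mathfrak{t};\cc}$ uses $s_j\in W_{\Lambda_1}$; and $\omega\in\Omega_{\Lambda_1^\prime}$ acts by $x^y\mapsto(\text{scalar})\cdot x^{v_{\zeta,\Lambda_1^\prime}^{-1}y}$ with $v_{\zeta,\Lambda_1^\prime}^{-1}\in W_0\subseteq W_{\Lambda_1}$. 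For step (3), it remains to see that the twist scalars $\mathfrak{t}_{y;\cc}^\mu$ appearing in $s_{j,\mathfrak{t};\cc}$ (with $\mu\in Q^\vee$) and in $(w_{\zeta,\Lambda_1^\prime})_{\mathfrak{t};\cc}$ (with $\mu=\zeta\in\Lambda_1^\prime$) agree with the corresponding scalars $(\mathfrak{t}\vert_{\Lambda_1^\prime})_{y;\cc}^\mu$ for $y\in\mathcal{O}_{\Lambda_1,\cc}$. Writing $y=w\cc$ with $w\in W_{\Lambda_1}$ and using Definition \ref{deftyc} together with $W_{\Lambda_1}$-equivariance of restriction gives
\[
\mathfrak{t}_{y;\cc}^\mu=(w\mathfrak{t})^\mu=\bigl((w\mathfrak{t})\vert_{\Lambda_1^\prime}\bigr)^\mu=\bigl(w(\mathfrak{t}\vert_{\Lambda_1^\prime})\bigr)^\mu=(\mathfrak{t}\vert_{\Lambda_1^\prime})_{y;\cc}^\mu,
\]
valid for $\mu\in\Lambda_1^\prime$. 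This finishes the proof.

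There is no serious obstacle; the only mildly subtle point is the $W_{\Lambda_1}$-equivariance of the restriction $T_{\Lambda_2^\prime}\twoheadrightarrow T_{\Lambda_1^\prime}$, which hinges on the pairing condition $\langle\Lambda_1,\Lambda_2^\prime\rangle\subseteq\langle\Lambda_2,\Lambda_2^\prime\rangle\subseteq e^{-1}\mathbb{Z}$ so that the translation part of the $W_{\Lambda_1}$-action is compatible with both torus structures.
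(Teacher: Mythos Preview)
Your proof is correct and follows essentially the same approach as the paper's: establish $\Omega_{\Lambda_1,\cc}\subseteq\Omega_{\Lambda_2,\cc}$ and the $W_{\Lambda_1}$-equivariance of restriction to conclude $\mathfrak{t}\vert_{\Lambda_1^\prime}\in{}^{\Lambda_1}T_{\Lambda_1^\prime}^\cc$, then verify on generators that the action formulas \eqref{actionformulasext} agree. The paper's version merely asserts the generator check as ``a direct check'', while you have written it out in full.
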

\begin{proof}
We have $\Omega_{\Lambda_1}\subseteq\Omega_{\Lambda_2}$, and hence the restriction map $T_{\Lambda_2^\prime}\rightarrow T_{\Lambda_1^\prime}$,
$\mathfrak{t}\mapsto \mathfrak{t}\vert_{\Lambda_1^\prime}$ restricts to a group homomorphism
\[
{}^{\Lambda_2}T_{\Lambda_2^\prime}^\cc\rightarrow{}^{\Lambda_1}T_{\Lambda_1^\prime}^\cc.
\]
Furthermore, a direct check shows that
\[
\pi^{\Lambda_1,\Lambda_1^\prime}_{\cc,\mathfrak{t}\vert_{\Lambda_1^\prime}}(h)=\pi^{\Lambda_2,\Lambda_2^\prime}_{\cc,\mathfrak{t}}(h)\vert_{\mathcal{P}_{\Lambda_1}^{(\cc)}}
\qquad \forall\, h\in\mathbb{H}_{\Lambda_1,\Lambda_1^\prime}
\]
for $\mathfrak{t}\in {}^{\Lambda_2}T_{\Lambda_2^\prime}^\cc$. 
\end{proof}

The action of the commuting family of operators $\pi_{\cc,\mathfrak{t}}^{\Lambda,\Lambda^\prime}(Y^\mu)$ ($\mu\in\Lambda^\prime$) on the cyclic vector $x^\cc$ is described as follows.
\begin{lemma}\label{zetaY}
Let $\cc\in C^J$ and $\mathfrak{t}\in {}^{\Lambda}T_{\Lambda^\prime}^\cc$. Then
\begin{equation}\label{eveqextended}
\pi_{\cc,\mathfrak{t}}^{\Lambda,\Lambda^\prime}(Y^\mu)x^\cc=(\mathfrak{s}_J\mathfrak{t})^{-\mu}x^\cc
\qquad \forall\,\mu\in\Lambda^\prime.
\end{equation}
\end{lemma}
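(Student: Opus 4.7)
The plan is to prove the lemma in two stages, first for $\mu \in Q^\vee$ and then extending to all of $\Lambda'$ via the factorisation $\Lambda' = Q^\vee + \Lambda'_{\textup{min}}$.

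For $\mu \in Q^\vee$ the result is essentially already contained in Theorem \ref{gbr}. Indeed, by Lemma \ref{gluelemma} the restriction of $\pi_{\cc,\mathfrak{t}}^{\Lambda,\Lambda'}$ to $\mathbb{H}$ agrees, on the $\mathbb{H}$-submodule $\mathcal{P}^{(\cc)} \subseteq \mathcal{P}_\Lambda^{(\cc)}$ containing $x^\cc$, with $\pi_{\cc,\mathfrak{t}|_{Q^\vee}}$. Theorem \ref{gbr}(2) gives an $\mathbb{H}$-module isomorphism $\mathbb{M}_{\mathfrak{s}_J\mathfrak{t}|_{Q^\vee}}^J \overset{\sim}{\to} \mathcal{P}_\mathfrak{t}^{(\cc)}$ sending $m_{\mathfrak{s}_J\mathfrak{t}|_{Q^\vee}}^J$ to $x^\cc$, and by \eqref{expchitJ} we have $Y^\mu m_{\mathfrak{s}_J\mathfrak{t}|_{Q^\vee}}^J = (\mathfrak{s}_J\mathfrak{t})^{-\mu}m_{\mathfrak{s}_J\mathfrak{t}|_{Q^\vee}}^J$ for $\mu\in Q^\vee$; this proves \eqref{eveqextended} in this case.

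For the extension to $\mu \in \Lambda'$, write $\mu = \mu_0 + \zeta$ with $\zeta := \cc_\mu \in \Lambda'_{\textup{min}}$ and $\mu_0 := \mu - \zeta \in Q^\vee$. Since $\mu \mapsto Y^\mu$ is a group homomorphism, it suffices to establish \eqref{eveqextended} for $\mu = \zeta$. Using the factorisation \eqref{OmegaHH}, i.e.\ $Y^\zeta = w_{\zeta,\Lambda'}T_{v_{\zeta,\Lambda'}}$, together with Corollary \ref{monomialcor}(2) and the second formula of \eqref{OmegaPol}, I will compute
\begin{equation*}
\pi_{\cc,\mathfrak{t}}^{\Lambda,\Lambda'}(Y^\zeta)x^\cc
= \kappa_{v_{\zeta,\Lambda'}}(\cc)\,\pi_{\cc,\mathfrak{t}}^{\Lambda,\Lambda'}(w_{\zeta,\Lambda'})x^{v_{\zeta,\Lambda'}\cc}
= \kappa_{v_{\zeta,\Lambda'}}(\cc)\,\mathfrak{t}_{\cc;\cc}^{-\zeta}x^{\cc}
= \kappa_{v_{\zeta,\Lambda'}}(\cc)\,\mathfrak{t}^{-\zeta}x^\cc.
\end{equation*}
Then the lemma reduces to the scalar identity $\kappa_{v_{\zeta,\Lambda'}}(\cc) = \mathfrak{s}_J^{-\zeta}$.

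This identity, which is the one step that genuinely requires verification, will be checked directly from the definitions. By \eqref{zetalength}, $\Pi(v_{\zeta,\Lambda'}) = \{\alpha \in \Phi_0^+ : \alpha(\zeta) = 1\}$, so by \eqref{kvy}
\begin{equation*}
\kappa_{v_{\zeta,\Lambda'}}(\cc) \;=\; \prod_{\alpha\in\Phi_0^+:\,\alpha(\zeta)=1} k_\alpha^{-\eta(\alpha(\cc))}.
\end{equation*}
On the other hand, $\zeta\in\Lambda'_{\textup{min}}\subseteq\overline{C}_+\cap\Lambda'$ forces $\alpha(\zeta)\in\{0,1\}$ for all $\alpha\in\Phi_0^+$, so from \eqref{fraksprime}
\begin{equation*}
\mathfrak{s}_J^{-\zeta} \;=\; \prod_{\alpha\in\Phi_0^+} k_\alpha^{-\eta_J(\alpha)\alpha(\zeta)} \;=\; \prod_{\alpha\in\Phi_0^+:\,\alpha(\zeta)=1} k_\alpha^{-\eta_J(\alpha)}.
\end{equation*}
Finally, the fact that $\cc\in C^J$ implies, by direct comparison of the cases in \eqref{etaJ} with $\eta(x)=\chi_{\mathbb{Z}_{>0}}(x)-\chi_{\mathbb{Z}_{\leq 0}}(x)$, that $\eta(\alpha(\cc)) = \eta_J(\alpha)$ for every $\alpha\in\Phi_0^+$. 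The two displayed products agree, completing the proof. The only step that is more than a direct unfolding of definitions is this last scalar identity; the Omega-action on $x^{v_{\zeta,\Lambda'}\cc}$ is designed precisely so that the $\mathfrak{t}$-dependence collapses to $\mathfrak{t}^{-\zeta}$, while the $\mathbf{k}$-dependence picks up the base-point factor $\mathfrak{s}_J^{-\zeta}$.
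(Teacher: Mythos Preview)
Your proof is correct and follows essentially the same approach as the paper: reduce to $\mu\in Q^\vee$ via Theorem \ref{gbr}, then for $\zeta\in\Lambda'_{\textup{min}}$ use $Y^\zeta=w_{\zeta,\Lambda'}T_{v_{\zeta,\Lambda'}}$, Corollary \ref{monomialcor}(2), and the $\Omega$-action formula to obtain $\kappa_{v_{\zeta,\Lambda'}}(\cc)\,\mathfrak{t}^{-\zeta}x^\cc$, and finally verify $\kappa_{v_{\zeta,\Lambda'}}(\cc)=\mathfrak{s}_J^{-\zeta}$ via \eqref{zetalength}. The only minor imprecision is that the restriction claim for $\pi_{\cc,\mathfrak{t}}^{\Lambda,\Lambda'}$ on $\mathcal{P}^{(\cc)}$ is not literally Lemma \ref{gluelemma} (which concerns $\pi_{\cc,\mathfrak{t}}^{Q^\vee,\Lambda'}$) but follows from Theorem \ref{glueprop}(2) or Proposition \ref{ordercompatiblerep}; the paper invokes the latter.
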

\begin{proof}
By Proposition \ref{ordercompatiblerep} and 
Theorem \ref{gbr}(2), formula \eqref{eveqextended} is correct for $\mu\in Q^\vee$.
It thus suffices to prove the lemma for $\mu\in\Lambda_{\textup{min}}^\prime$.
Fix $\zeta\in\Lambda_{\textup{min}}^\prime$.  Then
\begin{equation*}
\begin{split}
\pi_{\cc,\mathfrak{t}}^{\Lambda,\Lambda^\prime}(Y^\zeta)x^\cc=
\pi_{\cc,\mathfrak{t}}^{\Lambda,\Lambda^\prime}(w_{\zeta,\Lambda^\prime})\pi_{\cc,\mathfrak{t}}(T_{v_{\zeta,\Lambda^\prime}})x^\cc
&=\kappa_{v_{\zeta,\Lambda^\prime}}(\cc)\pi_{\cc,\mathfrak{t}}^{\Lambda,\Lambda^\prime}(w_{\zeta,\Lambda^\prime})x^{v_{\zeta,\Lambda^\prime}\cc}\\
&=
\kappa_{v_{\zeta,\Lambda^\prime}}(\cc)\bigl(w_{\zeta,\Lambda^\prime}\bigr)_{\mathfrak{t};\cc}(x^{v_{\zeta,\Lambda^\prime}\cc})=
\kappa_{v_{\zeta,\Lambda^\prime}}(\cc)
\mathfrak{t}^{-\zeta}x^\cc
\end{split}
\end{equation*}
with the first equality by \eqref{OmegaHH} and Proposition \ref{ordercompatiblerep},
the second equality by Corollary \ref{monomialcor} and the fourth equality by
\eqref{OmegaPol}. The result now follows from the fact that
\[
\mathfrak{s}_J^{-\zeta}=\mathfrak{s}_\cc^{-\zeta}=
\prod_{\alpha\in\Phi_0^+}
k_\alpha^{-\eta(\alpha(\cc))\alpha(\zeta)}=
\kappa_{v_{\zeta,\Lambda^\prime}}(\cc),
\]
where the last equality follows from \eqref{zetalength}.
\end{proof}

Corollary \ref{monomialcor} now extends as follows.
\begin{proposition}\label{monomialpropext}
Let $\cc\in C^J$ and $\mathfrak{t}\in{}^{\Lambda}T_{\Lambda^\prime}^\cc$. Define
the surjective linear map $\psi_{\cc,\mathfrak{t}}^{\Lambda,\Lambda^\prime}: \mathbb{H}_{\Lambda,\Lambda^\prime}\twoheadrightarrow\mathcal{P}_\Lambda^{(\cc)}$
by
\[
\psi_{\cc,\mathfrak{t}}^{\Lambda,\Lambda^\prime}(x^\lambda T_v Y^{\mu}):=
\kappa_v(\cc)(\mathfrak{s}_J\mathfrak{t})^{-\mu}x^{\lambda+v\cc}
\qquad (\lambda\in\Lambda,\, v\in W_0,\, \mu\in\Lambda^\prime).
\]
Then 
\begin{equation}\label{ttttdddd}
\psi_{\cc,\mathfrak{t}}^{\Lambda,\Lambda^\prime}(h)=\pi_{\cc,\mathfrak{t}}^{\Lambda,\Lambda^\prime}(h)x^\cc\qquad
\qquad\forall\,h\in\mathbb{H}_{\Lambda,\Lambda^\prime}.
\end{equation}
\end{proposition}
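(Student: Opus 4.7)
The plan is to verify the identity \eqref{ttttdddd} on a PBW-style basis of $\mathbb{H}_{\Lambda,\Lambda^\prime}$ and then extend by linearity. The first thing I would note is that $\{x^\lambda T_v Y^\mu \,\, | \,\, \lambda\in\Lambda,\, v\in W_0,\, \mu\in\Lambda^\prime\}$ is an $\mathbf{F}$-linear basis of $\mathbb{H}_{\Lambda,\Lambda^\prime}$, which follows from combining Cherednik's PBW theorem for $\mathbb{H}$ with the description of $\mathbb{H}_{\Lambda,\Lambda^\prime}$ given in Subsection \ref{S700} (the basis $\{T_vY^\mu\,\,|\,\, v\in W_0, \mu\in\Lambda^\prime\}$ of $H_{\Lambda^\prime}$ combined with $\mathcal{P}_\Lambda\otimes H_{\Lambda^\prime}\simeq \mathbb{H}_{\Lambda,\Lambda^\prime}$). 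Hence the stated formula does define a well-defined $\mathbf{F}$-linear map $\psi^{\Lambda,\Lambda^\prime}_{\cc,\mathfrak{t}}: \mathbb{H}_{\Lambda,\Lambda^\prime}\to\mathcal{P}_\Lambda^{(\cc)}$, and its surjectivity is immediate from $\mathcal{O}_{\Lambda,\cc}=W_\Lambda\cc=W_0\cc+\Lambda$.

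Next, by linearity it suffices to establish \eqref{ttttdddd} for each basis element $h=x^\lambda T_vY^\mu$. The verification is a three-step computation on the cyclic vector $x^\cc$:
\begin{enumerate}
\item[(i)] By Lemma \ref{zetaY}, $\pi_{\cc,\mathfrak{t}}^{\Lambda,\Lambda^\prime}(Y^\mu)x^\cc=(\mathfrak{s}_J\mathfrak{t})^{-\mu}x^\cc$.
\item[(ii)] Apply Proposition \ref{ordercompatiblerep} with $(\Lambda_1,\Lambda_1^\prime)=(Q^\vee,Q^\vee)$ and $(\Lambda_2,\Lambda_2^\prime)=(\Lambda,\Lambda^\prime)$: the restriction of $\pi_{\cc,\mathfrak{t}}^{\Lambda,\Lambda^\prime}$ to $\mathbb{H}$ preserves the subspace $\mathcal{P}^{(\cc)}\subseteq\mathcal{P}_\Lambda^{(\cc)}$ and coincides there with the unextended quasi-polynomial representation $\pi_{\cc,\mathfrak{t}|_{Q^\vee}}$. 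Since $T_v\in H_0\subseteq\mathbb{H}$ and $x^\cc\in\mathcal{P}^{(\cc)}$, Corollary \ref{monomialcor}(2) yields $\pi_{\cc,\mathfrak{t}}^{\Lambda,\Lambda^\prime}(T_v)x^\cc=\kappa_v(\cc)x^{v\cc}$.
\item[(iii)] Finally, $\pi_{\cc,\mathfrak{t}}^{\Lambda,\Lambda^\prime}(x^\lambda)$ acts by multiplication, producing $\kappa_v(\cc)(\mathfrak{s}_J\mathfrak{t})^{-\mu}x^{\lambda+v\cc}$.
\end{enumerate}
This precisely matches $\psi_{\cc,\mathfrak{t}}^{\Lambda,\Lambda^\prime}(x^\lambda T_vY^\mu)=\kappa_v(\cc)(\mathfrak{s}_J\mathfrak{t})^{-\mu}x^{\lambda+v\cc}$, completing the verification.

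There is no real obstacle here: once steps (i)--(iii) are assembled, the identity follows directly and no new technical input beyond Lemma \ref{zetaY}, Proposition \ref{ordercompatiblerep}, and Corollary \ref{monomialcor}(2) is required. The only point that warrants care is the reduction in step (ii) to the unextended setting, which is legitimate precisely because $T_v\in H_0$ and the vector $x^\cc$ lies in the $\mathbb{H}$-stable subspace $\mathcal{P}_{\mathfrak{t}|_{Q^\vee}}^{(\cc)}$ identified by Proposition \ref{ordercompatiblerep}.
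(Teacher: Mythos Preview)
Your proof is correct and follows essentially the same route as the paper's: both reduce to the unextended case (you via Proposition \ref{ordercompatiblerep}, the paper via Theorem \ref{glueprop}(2) and the observation $\psi_{\cc,\mathfrak{t}}^{\Lambda,\Lambda^\prime}|_{\mathbb{H}}=\psi_{\cc,\mathfrak{t}|_{Q^\vee}}$), invoke Lemma \ref{zetaY} for the $Y^\mu$-action, and Corollary \ref{monomialcor} for the $T_v$-action on $x^\cc$. The only cosmetic difference is that the paper peels off the factors $x^\lambda$, $Y^\mu$, $T_v$ by successive reductions, whereas you compute directly on each PBW basis element.
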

\begin{proof}
First note that it suffices to prove 
\eqref{ttttdddd} for $h\in H_{\Lambda^\prime}$ since
$\psi_{\cc,\mathfrak{t}}^{\Lambda,\Lambda^\prime}(x^\lambda h^\prime)=
x^\lambda\psi_{\cc,\mathfrak{t}}^{\Lambda,\Lambda^\prime}(h^\prime)$ 
and $\pi_{\cc,\mathfrak{t}}^{\Lambda,\Lambda^\prime}(x^\lambda h)x^\cc=
x^\lambda\pi_{\cc,\mathfrak{t}}^{\Lambda,\Lambda^\prime}(h)(x^\cc)$
for $\lambda\in\Lambda$ and $h^\prime\in H_{\Lambda^\prime}$. By a similar argument it follows from Lemma \ref{zetaY} and Theorem \ref{glueprop}(2) 
that it suffices to prove \eqref{ttttdddd} for $h\in H_0$.
But $\psi_{\cc,\mathfrak{t}}^{\Lambda,\Lambda^\prime}\vert_{\mathbb{H}}=\psi_{\cc,\mathfrak{t}\vert_{Q^\vee}}$, hence this follows from
Corollary \ref{monomialcor}.
\end{proof}

\begin{corollary}\label{corOmega}
Let $\cc\in\overline{C}_+$ and $\mathfrak{t}\in{}^{\Lambda}T_{\Lambda^\prime}^\cc$. Then
\[
\pi_{\cc,\mathfrak{t}}^{\Lambda,\Lambda^\prime}(S_\omega^Y)x^\cc=
\kappa_{D\omega}(\cc)x^{\omega\cc}\qquad (\omega\in\Omega_\Lambda).
\]
In particular, $\pi_{\cc,\mathfrak{t}}^{\Lambda,\Lambda^\prime}(S_\omega^Y)x^\cc=x^\cc$ for $\omega\in\Omega_{\Lambda,\cc}$, and 
\[
S_\omega^Y\otimes_{\mathbb{H}_{Q^\vee,\Lambda^\prime}}f\mapsto 
\pi_{\cc,\mathfrak{t}}^{\Lambda,\Lambda^\prime}(S_\omega^Y)f\qquad
(\omega\in\Omega_\Lambda,\, f\in\mathcal{P}^{(\cc)})
\]
defines an epimorphism $V_{\cc,\mathfrak{t}}\twoheadrightarrow \mathcal{P}_{\Lambda,\mathfrak{t}}^{(\cc)}$ of $\mathbb{H}_{\Lambda,\Lambda^\prime}$-modules.
\end{corollary}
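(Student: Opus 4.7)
First I would unpack the intertwiner $S_\omega^Y$ for $\omega\in\Omega_\Lambda$. Writing $\omega=w_{\zeta,\Lambda}$ for the unique $\zeta:=\omega(0)\in\Lambda_{\textup{min}}$, formula \eqref{buildingYintertwinersOmega} gives $S_\omega^Y=x^\zeta T_{v_{\zeta,\Lambda}^{-1}}\in\mathbb{H}_{\Lambda,Q^\vee}\subseteq\mathbb{H}_{\Lambda,\Lambda^\prime}$. Proposition \ref{ordercompatiblerep} applied to $(Q^\vee,Q^\vee)\leq(\Lambda,\Lambda^\prime)$ allows me to apply Corollary \ref{monomialcor}(2) inside the extended representation, yielding $\pi_{\cc,\mathfrak{t}}^{\Lambda,\Lambda^\prime}(T_{v_{\zeta,\Lambda}^{-1}})x^\cc=\kappa_{v_{\zeta,\Lambda}^{-1}}(\cc)\,x^{v_{\zeta,\Lambda}^{-1}\cc}$. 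Multiplying by $x^\zeta$ and using $\omega\cc=\zeta+v_{\zeta,\Lambda}^{-1}\cc$ together with $D\omega=v_{\zeta,\Lambda}^{-1}$ gives the first claimed identity $\pi_{\cc,\mathfrak{t}}^{\Lambda,\Lambda^\prime}(S_\omega^Y)x^\cc=\kappa_{D\omega}(\cc)\,x^{\omega\cc}$.

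For the special case $\omega\in\Omega_{\Lambda,\cc}$ the first identity reduces to $\pi(S_\omega^Y)x^\cc=\kappa_{D\omega}(\cc)x^\cc$, so it suffices to prove $\kappa_{D\omega}(\cc)=1$. The key observation is that $\omega\mapsto S_\omega^Y$ is a group homomorphism $\Omega_\Lambda\to\mathbb{H}_{\Lambda,Q^\vee}^\times$: since $S_\omega^Y=\delta(\omega^{-1})$ and length-zero elements satisfy $S_{\omega_2^{-1}}^X S_{\omega_1^{-1}}^X=\omega_2^{-1}\omega_1^{-1}=S_{(\omega_1\omega_2)^{-1}}^X$ in $\mathbb{H}_{Q^\vee,\Lambda}$, applying $\delta$ yields $S_{\omega_1}^Y S_{\omega_2}^Y=S_{\omega_1\omega_2}^Y$. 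Restricting the composition $\omega\mapsto\pi^{\Lambda,\Lambda^\prime}_{\cc,\mathfrak{t}}(S_\omega^Y)$ to $\Omega_{\Lambda,\cc}$ and evaluating at $x^\cc$ produces a group homomorphism $\chi:\Omega_{\Lambda,\cc}\to\mathbf{F}^\times$, $\omega\mapsto\kappa_{D\omega}(\cc)$, so that $\chi(\omega)^{|\Omega_{\Lambda,\cc}|}=1$. But $\kappa_{D\omega}(\cc)$ is visibly a Laurent monomial $({}^{\textup{sh}}k)^a({}^{\textup{lg}}k)^b$ in the two Hecke parameters with exponents in $\mathbb{Z}$ depending only on the combinatorial datum $(\omega,\cc)$; working in the universal ring $\mathbb{Q}(q)[{}^{\textup{sh}}k^{\pm 1},{}^{\textup{lg}}k^{\pm 1}]$ where these parameters are algebraically independent, the identity $({}^{\textup{sh}}k)^{a|\Omega_{\Lambda,\cc}|}({}^{\textup{lg}}k)^{b|\Omega_{\Lambda,\cc}|}=1$ forces $a=b=0$, hence $\kappa_{D\omega}(\cc)=1$ identically, and therefore in every specialization.

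For the final assertion, the inclusion $\iota:\mathcal{P}^{(\cc)}_{\mathfrak{t}}\hookrightarrow\mathcal{P}^{(\cc)}_{\Lambda,\mathfrak{t}}$ is $\mathbb{H}_{Q^\vee,\Lambda^\prime}$-linear by Proposition \ref{ordercompatiblerep} applied to $(Q^\vee,\Lambda^\prime)\leq(\Lambda,\Lambda^\prime)$. By the universal property of induction, $\iota$ extends uniquely to an $\mathbb{H}_{\Lambda,\Lambda^\prime}$-linear map $\Psi:V_{\cc,\mathfrak{t}}\to\mathcal{P}^{(\cc)}_{\Lambda,\mathfrak{t}}$ with $\Psi(h\otimes_{\mathbb{H}_{Q^\vee,\Lambda^\prime}}f)=\pi^{\Lambda,\Lambda^\prime}_{\cc,\mathfrak{t}}(h)f$, and specialising $h=S_\omega^Y$ recovers the formula in the statement. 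Surjectivity of $\Psi$ then follows from the first identity together with Theorem \ref{glueprop}(2): for each $\omega\in\Omega^\cc_\Lambda$ the vector $\Psi(S_\omega^Y\otimes x^\cc)=\kappa_{D\omega}(\cc)x^{\omega\cc}$ is a nonzero scalar multiple of the $\mathbb{H}_{Q^\vee,\Lambda^\prime}$-cyclic generator of the summand $\mathcal{P}^{(\omega\cc)}_{\omega\mathfrak{t}}$, so the image of $\Psi$ hits every summand in the decomposition \eqref{Pdec}. The main obstacle is the identity $\kappa_{D\omega}(\cc)=1$: the universality argument via characters is expedient, but a purely combinatorial proof would have to exploit the constraint $v_{\zeta,\Lambda}^{-1}\cc=\cc-\zeta$ (equivalent to $\omega\cc=\cc$) to pair the roots $\beta\in\Pi(v_{\zeta,\Lambda})$ with $\beta(\cc)=0$ against those with $\beta(\cc)=1$ in a length-preserving manner via the bijection $\alpha\mapsto -v_{\zeta,\Lambda}^{-1}\alpha$ between $\Pi(v_{\zeta,\Lambda}^{-1})$ and $\Pi(v_{\zeta,\Lambda})$.
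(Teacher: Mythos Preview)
Your argument for the first identity and for the epimorphism is essentially the paper's: you unpack $S_\omega^Y=x^\zeta T_{v_{\zeta,\Lambda}^{-1}}$ and use the monomial formula (the paper cites Proposition \ref{monomialpropext}, which is the extended version of Corollary \ref{monomialcor}(2) you invoke), and for the epimorphism you use Frobenius reciprocity and the cyclicity of $x^{\omega\cc}$ in each summand of \eqref{Pdec}, which is exactly what the paper's reference to Theorem \ref{glueprop} and Lemma \ref{Vct} amounts to.

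The one place you diverge is the ``in particular'' claim $\kappa_{D\omega}(\cc)=1$ for $\omega\in\Omega_{\Lambda,\cc}$. Your character argument is correct (note that $\Omega_{\Lambda,\cc}\subseteq W_{\Lambda,\cc}$ is finite because $D$ embeds $W_{\Lambda,\cc}$ into $W_0$), and the universality trick is a clean way to finish. The paper's proof of this corollary actually leaves this step implicit; the direct combinatorial route appears later in the paper as formula \eqref{t7d7} in the proof of Proposition \ref{propEVext}, where it is shown that $\kappa_{D\omega}(y)=k_\omega(y)=k(\omega y)/k(y)$ for all $\omega\in\Omega_\Lambda$ and $y\in E$. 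That identity immediately gives $\kappa_{D\omega}(\cc)=1$ when $\omega\cc=\cc$, and is closer in spirit to the root-pairing you sketch at the end than to your character argument. Either approach works; the cocycle identity is more explicit and reusable, while your character argument avoids the combinatorics of $\Pi(v_{\zeta,\Lambda}^{-1})$ altogether.
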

\begin{proof}
Write $\omega=w_{\zeta,\Lambda}\in\Omega_\Lambda$ ($\zeta\in\Lambda_{\textup{min}}$), Then we have 
\[
\pi_{\cc,\mathfrak{t}}^{\Lambda,\Lambda^\prime}(S_{w_{\zeta,\Lambda}}^Y)x^\cc=\pi_{\cc,\mathfrak{t}}^{\Lambda,\Lambda^\prime}(x^\zeta
T_{v_{\zeta,\Lambda}^{-1}})x^\cc=\kappa_{v_{\zeta,\Lambda}^{-1}}(\cc)x^{\zeta+v_{\zeta,\Lambda}^{-1}\cc}=
\kappa_{Dw_{\zeta,\Lambda}}(\cc)x^{w_{\zeta,\Lambda}\cc},
\]
where we used Proposition \ref{monomialpropext} in the second equality. The last statement now follows from Theorem \ref{glueprop} and the proof of Lemma \ref{Vct}.
\end{proof}
\subsection{Twist parameters}\label{Tp}
The group isomorphism $\Lambda^\prime/Q^\vee\overset{\sim}{\longrightarrow}\Omega_{\Lambda^\prime}$ from Proposition \ref{Omegaprop}(1)
allows one to identify ${}^{\Lambda}T_{\Lambda^\prime}^0=T_{\Lambda^\prime,[1,r]}\simeq\textup{Hom}(\Lambda^\prime/Q^\vee,\mathbf{F}^\times)$ with the character group $\textup{Hom}(\Omega_{\Lambda^\prime},\mathbf{F}^\times)$ of $\Omega_{\Lambda^\prime}$. Concretely, the multiplicative character of $\Omega_{\Lambda^\prime}$ corresponding to $\mathfrak{t}\in T_{\Lambda^\prime,[1,r]}$ is given by $w_{\zeta,\Lambda^\prime}\mapsto \mathfrak{t}^\zeta$ ($\zeta\in\Lambda_{\textup{min}}^\prime$). Note that
$w_{\cc_\mu,\Lambda^\prime}\mapsto \mathfrak{t}^\mu$ for $\mu\in\Lambda^\prime$. 

\begin{definition}
For $\mathfrak{t}\in T_{\Lambda^\prime,[1,r]}$ let $\Xi_{\mathfrak{t}}$ be the algebra automorphism
of the extended double affine Hecke algebra $\mathbb{H}_{\Lambda,\Lambda^\prime}\simeq\Omega_{\Lambda^\prime}\ltimes\mathbb{H}_{\Lambda,Q^\vee}$ defined by 
$h\mapsto h$\, \textup{(}$h\in\mathbb{H}_{\Lambda,Q^\vee}$\textup{)} and $w_{\zeta,\Lambda^\prime}\mapsto \mathfrak{t}^{-\zeta} w_{\zeta,\Lambda^\prime}$\, \textup{(}$\zeta\in\Lambda_{\textup{min}}^\prime$\textup{)}.
\end{definition}
 In particular, $\Xi_{\mathfrak{t}}(x^\lambda)=x^\lambda$ and $\Xi_{\mathfrak{t}}(T_j)=T_j$ for $\lambda\in\Lambda$ and $j\in [0,r]$.

\begin{proposition}\label{twistactionprop}
Let $\cc\in C^J$, $\mathfrak{t}^\prime\in {}^{\Lambda}T_{\Lambda^\prime}^\cc$ and $\mathfrak{t}\in T_{\Lambda^\prime,[1,r]}$. Then 
\begin{equation}\label{twistaction}
\pi_{\cc,\mathfrak{t}^\prime\mathfrak{t}}^{\Lambda,\Lambda^\prime}=\pi_{\cc,\mathfrak{t}^\prime}^{\Lambda,\Lambda^\prime}\circ\Xi_{\mathfrak{t}}.
\end{equation}
Furthermore, $\Xi_{\mathfrak{t}}(Y^\mu)=\mathfrak{t}^{-\mu}Y^\mu$ for all $\mu\in\Lambda^\prime$.
\end{proposition}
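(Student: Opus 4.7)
The proposition splits into two assertions. For the statement $\Xi_{\mathfrak{t}}(Y^\mu)=\mathfrak{t}^{-\mu}Y^\mu$, I would first settle it for $\mu=\zeta\in\Lambda^\prime_{\textup{min}}$ by reading off \eqref{OmegaHH}: $Y^\zeta=w_{\zeta,\Lambda^\prime}T_{v_{\zeta,\Lambda^\prime}}$, so since $\Xi_{\mathfrak{t}}$ fixes $T_{v_{\zeta,\Lambda^\prime}}\in\mathbb{H}_{\Lambda,Q^\vee}$ and scales $w_{\zeta,\Lambda^\prime}$ by $\mathfrak{t}^{-\zeta}$, we obtain $\Xi_{\mathfrak{t}}(Y^\zeta)=\mathfrak{t}^{-\zeta}Y^\zeta$. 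Since $\mathfrak{t}\in T_{\Lambda^\prime,[1,r]}$ vanishes on $Q^\vee$, the identity $\Xi_{\mathfrak{t}}(Y^\nu)=Y^\nu=\mathfrak{t}^{-\nu}Y^\nu$ is trivial for $\nu\in Q^\vee$. Writing an arbitrary $\mu\in\Lambda^\prime$ as $\zeta+\nu$ with $\zeta\in\Lambda^\prime_{\textup{min}}$ and $\nu\in Q^\vee$ and using that $\lambda\mapsto Y^\lambda$ is a group homomorphism together with the multiplicativity of $\mathfrak{t}$ then yields the claim in general.

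For \eqref{twistaction}, the key reduction is to the following identity:
\begin{equation}\label{keyplan}
(\mathfrak{t}^\prime\mathfrak{t})_{y;\cc}=\mathfrak{t}^\prime_{y;\cc}\cdot\mathfrak{t}\qquad\forall\,y\in\mathcal{O}_{\Lambda,\cc}.
\end{equation}
To prove \eqref{keyplan}, write $y\in\mathcal{O}_{\omega\cc}$ for the appropriate $\omega\in\Omega_\Lambda^\cc$ and $w_y=v\tau(\mu)\in W$ with $v\in W_0$ and $\mu\in Q^\vee$. Since $\mathfrak{t}\in T_{\Lambda^\prime,[1,r]}\subseteq T_{\Lambda^\prime}^{W_0}$ is $W_0$-invariant, a direct computation (unraveling $(w(\mathfrak{t}^\prime\mathfrak{t}))=v(q^\mu\mathfrak{t}^\prime\mathfrak{t})=q^{v\mu}v(\mathfrak{t}^\prime)\mathfrak{t}$ and the analogous calculation for $\omega\in\Omega_\Lambda$) gives $\omega(\mathfrak{t}^\prime\mathfrak{t})=(\omega\mathfrak{t}^\prime)\mathfrak{t}$ and $w_y(\omega\mathfrak{t}^\prime\cdot\mathfrak{t})=w_y(\omega\mathfrak{t}^\prime)\cdot\mathfrak{t}$, yielding \eqref{keyplan} upon applying the definition $\mathfrak{t}_{y;\cc}=w_y(\omega\mathfrak{t})$ from \eqref{tyc} and \eqref{O4}. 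In particular this also confirms (via Example \ref{exampleTspec}(3)) that $\mathfrak{t}^\prime\mathfrak{t}\in{}^\Lambda T_{\Lambda^\prime}^\cc$, so the left-hand side of \eqref{twistaction} is well defined.

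With \eqref{keyplan} in hand, I verify \eqref{twistaction} on the algebra generators $T_j$ ($0\le j\le r$), $x^\lambda$ ($\lambda\in\Lambda$), and $w_{\zeta,\Lambda^\prime}$ ($\zeta\in\Lambda^\prime_{\textup{min}}$), using the explicit formulas \eqref{actionformulasext} for the extended quasi-polynomial representation together with \eqref{OmegaPol}. For $x^\lambda$ and for $T_i$ with $1\le i\le r$ the action is manifestly independent of the $\mathfrak{t}$-parameter, so both sides of \eqref{twistaction} agree trivially. For $T_0$ the only $\mathfrak{t}$-dependent scalar is $\mathfrak{t}^\prime_{y;\cc}{}^{\varphi^\vee}$; by \eqref{keyplan} and the fact that $\mathfrak{t}^{\varphi^\vee}=1$ (since $\varphi^\vee\in Q^\vee$ and $\mathfrak{t}\in T_{\Lambda^\prime,[1,r]}$), the $\mathfrak{t}$-factor drops out, matching $\Xi_{\mathfrak{t}}(T_0)=T_0$. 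For $w_{\zeta,\Lambda^\prime}$ the scalar is $(\mathfrak{t}^\prime\mathfrak{t})_{v_{\zeta,\Lambda^\prime}^{-1}y;\cc}{}^{-\zeta}$; applying \eqref{keyplan} produces exactly $\mathfrak{t}^{-\zeta}\mathfrak{t}^\prime_{v_{\zeta,\Lambda^\prime}^{-1}y;\cc}{}^{-\zeta}$, matching the definition $\Xi_{\mathfrak{t}}(w_{\zeta,\Lambda^\prime})=\mathfrak{t}^{-\zeta}w_{\zeta,\Lambda^\prime}$.

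The main obstacle is the correct derivation of the identity \eqref{keyplan}; the subtle point is that $\tau(\mu)$ acts on $T_{\Lambda^\prime}$ not by a group automorphism but by $q$-dilation, so the naive equality $w(\mathfrak{t}^\prime\mathfrak{t})=w(\mathfrak{t}^\prime)w(\mathfrak{t})$ fails — it is the $W_0$-invariance of $\mathfrak{t}$ that exactly compensates for the extra $q^{v\mu}$-factor so that the $\mathfrak{t}$-contribution survives without modification. Once this is properly established, the remaining verifications on generators are short direct substitutions.
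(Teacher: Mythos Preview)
Your proposal is correct and follows essentially the same route as the paper's proof. The paper also isolates the identity $(\mathfrak{t}'\mathfrak{t})_{y;\cc}=\mathfrak{t}'_{y;\cc}\,\mathfrak{t}$ as the key step (proving it via $g(\mathfrak{t}'\mathfrak{t})=(g\mathfrak{t}')((Dg)\mathfrak{t})$ for $g\in W_\Lambda$, which is your observation about the $q$-dilation repackaged), and the paper's treatment of $\Xi_{\mathfrak{t}}(Y^\mu)$ via $Y^\mu\in w_{\cc_\mu,\Lambda'}H$ is exactly your decomposition $\mu=\zeta+\nu$. The only cosmetic difference is that the paper packages the generator check into the single identity $w_{\mathfrak{t}'\mathfrak{t};\cc}(x^y)=\Theta_{\mathfrak{t}}(w)\,w_{\mathfrak{t}';\cc}(x^y)$ for the group homomorphism $\Theta_{\mathfrak{t}}:W_{\Lambda'}\to\mathbf{F}^\times$ determined by $\Theta_{\mathfrak{t}}|_W\equiv 1$ and $\Theta_{\mathfrak{t}}(w_{\zeta,\Lambda'})=\mathfrak{t}^{-\zeta}$, whereas you verify $T_0$, $T_i$, $x^\lambda$, $w_{\zeta,\Lambda'}$ one at a time --- but these are the same computation.
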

\begin{proof}
Fix $\mathfrak{t}^\prime\in {}^{\Lambda}T_{\Lambda^\prime}^\cc$ and $\mathfrak{t}\in T_{\Lambda^\prime,[1,r]}$.
Then $\mathfrak{t}^\prime\mathfrak{t}\in{}^{\Lambda^\prime}T_{\Lambda}^\cc$ by Example \ref{exampleTspec}(3), and
\begin{equation}\label{twistpart}
(\mathfrak{t}^\prime\mathfrak{t})_{y;\cc}=\mathfrak{t}^\prime_{y;\cc}\mathfrak{t}\qquad \forall\, y\in\mathcal{O}_{\Lambda,\cc}.
\end{equation}
Indeed, if $g\in W_{\Lambda}$ is such that $y=g\cc$ then $(\mathfrak{t}^\prime\mathfrak{t})_{y;\cc}=g(\mathfrak{t}^\prime\mathfrak{t})=(g\mathfrak{t}^\prime)((Dg)\mathfrak{t})=\mathfrak{t}^\prime_{y;\cc}\mathfrak{t}$ since $Dg\in W_0$ and $\mathfrak{t}\in T_{\Lambda^\prime}^{W_0}$.

Let $\Theta_{\mathfrak{t}}: W_{\Lambda^\prime}\rightarrow\mathbf{F}^\times$ be the group homomorphism defined by $w\mapsto 1$ ($w\in W$) and $w_{\zeta,\Lambda^\prime}\mapsto \mathfrak{t}^{-\zeta}$ ($\zeta\in\Lambda^\prime_{\textup{min}}$). Since $\tau(\mu)\in w_{\cc_\mu,\Lambda^\prime}W$ for all $\mu\in\Lambda^\prime$, we have
\[
\Theta_{\mathfrak{t}}(\tau(\mu))=\mathfrak{t}^{-\mu}\qquad\quad \forall\, \mu\in\Lambda^\prime.
\]
We claim that
\begin{equation}\label{twistactionclassical}
w_{\mathfrak{t}^\prime\mathfrak{t};\cc}(x^y)=\Theta_{\mathfrak{t}}(w)w_{\mathfrak{t}^\prime;\cc}(x^y)
\end{equation}
for $w\in W_{\Lambda^\prime}$ and $y\in\mathcal{O}_{\Lambda,\cc}$. 
Formula \eqref{twistactionclassical} is trivial for $w\in W_0$. If $w=\tau(\mu)\in W$ ($\mu\in\Lambda^\prime$) then it follows from \eqref{twistpart} that
\[
\tau(\mu)_{\mathfrak{t}^\prime\mathfrak{t};\cc}(x^y)=(\mathfrak{t}^\prime_{y;\cc}\mathfrak{t})^{-\mu}x^y=\Theta_{\mathfrak{t}}(\tau(\mu))\tau(\mu)_{\mathfrak{t}^\prime;\cc}(x^y)
\]
for $y\in\mathcal{O}_{\Lambda,\cc}$, which completes the proof of \eqref{twistactionclassical}.

Formula \eqref{twistaction} follows immediately from 
\eqref{twistactionclassical}.
Finally, we have $\Xi_{\mathfrak{t}}(Y^\mu)=\mathfrak{t}^{-\mu}Y^\mu$ for $\mu\in\Lambda^\prime$ since $Y^\mu\in w_{\cc_\mu,\Lambda^\prime}H$ by \eqref{Ywhere}.
\end{proof}
Proposition \ref{twistactionprop} implies, under suitable generic conditions on $\mathfrak{t}\in {}^\Lambda T_{\Lambda^\prime}^\cc$, that the simultaneous eigenfunctions of the commuting operators
$\pi_{\cc,\mathfrak{t}}^{\Lambda,\Lambda^\prime}(Y^\mu)$ ($\mu\in \Lambda^\prime$) will only depend on the coset $\mathfrak{t}T_{\Lambda^\prime,[1,r]}$ inside
${}^\Lambda T_{\Lambda^\prime}^\cc$. The precise statement will be given in Theorem \ref{propEVext0}(3).

It follows from Remark \ref{O3rem} and Proposition \ref{twistactionprop} that
for $\zeta\in\Lambda_{\textup{min}}$ and $\mathfrak{t}\in {}^{\Lambda}T_{\Lambda^\prime}^\zeta=q^{-\zeta}T_{\Lambda^\prime,[1,r]}$, the extended quasi-polynomial
representation
\[
\pi_{\zeta,\mathfrak{t}}^{\Lambda,\Lambda^\prime}:
\mathbb{H}_{\Lambda,\Lambda^\prime}\rightarrow\textup{End}(\mathcal{P}_\Lambda)
\]
is the twisted version $\pi_{0,1_{T_{\Lambda^\prime}}}^{\Lambda,\Lambda^\prime}\circ\Xi_{q^{-\zeta}\mathfrak{t}}$
of the Cherednik representation $\pi_{0,1_{T_{\Lambda^\prime}}}^{\Lambda,\Lambda^\prime}$. The corresponding commuting $Y$-operators are 
related by the formula
\[
\pi_{\zeta,\mathfrak{t}}^{\Lambda,\Lambda^\prime}(Y^\mu)=q^{\langle\zeta,\mu\rangle}\mathfrak{t}^{-\mu}\,\pi_{0,1_{T_{\Lambda^\prime}}}^{\Lambda,\Lambda^\prime}(Y^\mu)
\qquad\quad\forall\,\mu\in\Lambda^\prime.
\]

\subsection{The extended eigenvalue equations}\label{S72}
In this subsection we consider the monic quasi-polynomial simultaneous eigenfunctions of $\pi_{\cc,\mathfrak{t}}^{\Lambda,\Lambda^\prime}(Y^\mu)$ ($\mu\in\Lambda^\prime$).

\begin{theorem}\label{propEVext0}
Let $\cc\in C^J$, and let $\mathcal{O}$ be a $W$-orbit in $\mathcal{O}_{\Lambda,\cc}$. Let $\mathfrak{t}\in{}^{\Lambda}T_{\Lambda^\prime}^\cc$ such that 
\begin{equation}\label{eecond}
(\mathfrak{s}_y\mathfrak{t}_{y;\cc})\vert_{\Lambda^\prime}\not=(\mathfrak{s}_{y^\prime}\mathfrak{t}_{y^\prime;\cc})\vert_{\Lambda^\prime}\quad\hbox{ when }\,\, y,y^\prime\in 
\mathcal{O}\,\, \hbox{ and }\,\, y\not=y^\prime.
\end{equation}
For all $y\in\mathcal{O}$ we have
\begin{enumerate}
\item 
There exists a unique simultaneous eigenfunction
\[
E_{y;\cc}^{\Lambda,\Lambda^\prime}(x;\mathfrak{t})\in\mathcal{P}^{(\cc)}_\Lambda
\]
of the commuting operators
$\pi_{\cc,\mathfrak{t}}^{\Lambda,\Lambda^\prime}(Y^\mu)$ \textup{(}$\mu\in\Lambda^\prime$\textup{)} satisfying 
\begin{equation}\label{lotExt}
E_{y;\cc}^{\Lambda,\Lambda^\prime}(x;\mathfrak{t})=x^{y}+\textup{l.o.t.}
\end{equation}
\item We have
\begin{equation}\label{evYY}
\pi_{\cc,\mathfrak{t}}^{\Lambda,\Lambda^\prime}(Y^\mu)E_{y;\cc}^{\Lambda,\Lambda^\prime}(\cdot;\mathfrak{t})=
(\mathfrak{s}_y\mathfrak{t}_{y;\cc})^{-\mu}E_{y;\cc}^{\Lambda,\Lambda^\prime}(\cdot;\mathfrak{t})\qquad\quad\forall\, \mu\in\Lambda^\prime.
\end{equation}
\item 
If $\mathfrak{t}$ satisfies the more stringent genericity conditions
\begin{equation}\label{eecond2}
(\mathfrak{s}_y\mathfrak{t}_{y;\cc})\vert_{Q^\vee}\not=(\mathfrak{s}_{y^\prime}\mathfrak{t}_{y^\prime;\cc})\vert_{Q^\vee}\quad\hbox{ when }\,\, y,y^\prime\in 
\mathcal{O}\,\, \hbox{ and }\,\, y\not=y^\prime
\end{equation}
then
\begin{equation}\label{Eyc}
E_{y;\cc}^{\Lambda,\Lambda^\prime}(x;\mathfrak{t})=E_y^{\omega(J)}(x;(\omega\mathfrak{t})\vert_{Q^\vee}),
\end{equation}
where $\omega\in\Omega_\Lambda$ is such that $\mathcal{O}=\mathcal{O}_{\omega\cc}$.
\end{enumerate}
\end{theorem}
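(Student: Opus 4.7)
The plan is to first extend the triangularity of Proposition \ref{trianY} to all $\mu \in \Lambda^\prime$, and then deduce (1)--(3) by Gaussian elimination together with a reduction to the unextended case via Proposition \ref{ordercompatiblerep}.

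First I would prove that for $y \in \mathcal{O}$ and $\mu \in \Lambda^\prime$,
\begin{equation*}
\pi^{\Lambda,\Lambda^\prime}_{\cc,\mathfrak{t}}(Y^\mu) x^y = (\mathfrak{s}_y \mathfrak{t}_{y;\cc})^{-\mu} x^y + \textup{l.o.t.}
\end{equation*}
Writing $\mathcal{O} = \mathcal{O}_{\omega\cc}$ for some $\omega \in \Omega_\Lambda^\cc$, Theorem \ref{glueprop}(2) identifies $Y^\mu$ on $\mathcal{P}^{(\omega\cc)}$ with $\pi^{Q^\vee,\Lambda^\prime}_{\omega\cc,\omega\mathfrak{t}}(Y^\mu)$, while \eqref{O4} yields $(\omega\mathfrak{t})_y = \mathfrak{t}_{y;\cc}$, matching the proposed eigenvalue. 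Proposition \ref{ordercompatiblerep} combined with Proposition \ref{trianY} handles $\mu \in Q^\vee$ directly. For general $\mu \in \Lambda^\prime$, decompose $\mu = \mu_1 + \zeta$ with $\mu_1 \in Q^\vee$ and $\zeta := \cc_\mu \in \Lambda^\prime_{\textup{min}}$; since the $Y^\nu$ commute, it suffices to treat $\mu = \zeta$. For this, I would use the reduced expression $\tau(\zeta) = w_{\zeta,\Lambda^\prime} s_{j_1}\cdots s_{j_m}$ in $W_{\Lambda^\prime}$ with $m = \ell(v_{\zeta,\Lambda^\prime})$ and the factorisation $Y^\zeta = w_{\zeta,\Lambda^\prime} T_{v_{\zeta,\Lambda^\prime}}$ from \eqref{OmegaHH}. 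Combining with the identity $\pi(T_{v_{\zeta,\Lambda^\prime}}) = (v_{\zeta,\Lambda^\prime})_{\mathfrak{t};\cc}\, G_{\mathfrak{t};\cc}(b_1)\cdots G_{\mathfrak{t};\cc}(b_m)$ established in the proof of Proposition \ref{trianY}, and using $w_{\zeta,\Lambda^\prime}v_{\zeta,\Lambda^\prime} = \tau(\zeta)$ in $W_{\Lambda^\prime}$, one arrives at
\begin{equation*}
\pi^{\Lambda,\Lambda^\prime}_{\cc,\mathfrak{t}}(Y^\zeta) = \tau(\zeta)_{\mathfrak{t};\cc}\, G_{\mathfrak{t};\cc}(b_1) \cdots G_{\mathfrak{t};\cc}(b_m).
\end{equation*}
Applying Lemma \ref{triangularG} together with the description $\Pi(\tau(\zeta)) = \bigsqcup_{\alpha \in \Phi_0^+}\{(\alpha,\ell) : 0 \leq \ell < \alpha(\zeta)\}$, the leading coefficient of $G_{\mathfrak{t};\cc}(b_1)\cdots G_{\mathfrak{t};\cc}(b_m) x^y$ equals $\mathfrak{s}_y^{-\zeta}$ by \eqref{ffrak}, and Lemma \ref{actiondeformext} then produces $\tau(\zeta)_{\mathfrak{t};\cc}(\mathfrak{s}_y^{-\zeta} x^y) = (\mathfrak{s}_y\mathfrak{t}_{y;\cc})^{-\zeta} x^y$.

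With triangularity in hand, statements (1) and (2) follow exactly as in Theorem \ref{Edef}(1): condition \eqref{eecond} ensures that the eigenvalues $\{(\mathfrak{s}_{y^\prime}\mathfrak{t}_{y^\prime;\cc})^{-\mu}\vert_{\Lambda^\prime}\}_{\mu \in \Lambda^\prime}$ separate distinct $y^\prime \in \mathcal{O}$, so the triangular system is uniquely invertible, yielding the monic joint eigenfunction $E^{\Lambda,\Lambda^\prime}_{y;\cc}(x;\mathfrak{t})$ with expansion \eqref{lotExt} and eigenvalues \eqref{evYY}.

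For (3), the stronger condition \eqref{eecond2} provides eigenvalue separation already over $\mu \in Q^\vee$, which via \eqref{O4} and Example \ref{exampleTspec}(1) places $(\omega\mathfrak{t})\vert_{Q^\vee}$ in $T_{\omega(J)}^\prime$. Theorem \ref{Edef} then produces $E_y^{\omega(J)}(x; (\omega\mathfrak{t})\vert_{Q^\vee}) \in \mathcal{P}^{(\omega\cc)}$ as the monic joint eigenfunction of $\pi_{\omega\cc,(\omega\mathfrak{t})\vert_{Q^\vee}}(Y^\mu)$ ($\mu \in Q^\vee$) with eigenvalue $(\mathfrak{s}_y\mathfrak{t}_{y;\cc})^{-\mu}$, and by Proposition \ref{ordercompatiblerep} together with \eqref{Pdec} the same function is a monic joint eigenfunction of $\pi^{\Lambda,\Lambda^\prime}_{\cc,\mathfrak{t}}(Y^\mu)$ for every $\mu \in Q^\vee$. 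Since the $Q^\vee$-eigenvalues already separate distinct $y^\prime \in \mathcal{O}$ under \eqref{eecond2}, the uniqueness argument from (1) forces $E^{\Lambda,\Lambda^\prime}_{y;\cc}(x;\mathfrak{t}) = E_y^{\omega(J)}(x; (\omega\mathfrak{t})\vert_{Q^\vee})$. The main obstacle is the triangularity step for $\zeta \in \Lambda^\prime_{\textup{min}} \setminus Q^\vee$, where one must carefully track the $\Omega_{\Lambda^\prime}$-prefactor $w_{\zeta,\Lambda^\prime}$ through the intertwining rewriting of $\pi(T_{v_{\zeta,\Lambda^\prime}})$ and verify that it combines with $(v_{\zeta,\Lambda^\prime})_{\mathfrak{t};\cc}$ into the correct $W_{\Lambda^\prime}$-translation $\tau(\zeta)_{\mathfrak{t};\cc}$, producing the scalar $\mathfrak{t}_{y;\cc}^{-\zeta}$ on the leading term; the remaining arguments are formal adaptations of the proofs for Theorem \ref{Edef}.
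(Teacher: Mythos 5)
Your proposal is correct and follows essentially the same route as the paper: reduce (1)–(2) to the triangularity statement $\pi_{\cc,\mathfrak{t}}^{\Lambda,\Lambda^\prime}(Y^\mu)x^y=(\mathfrak{s}_y\mathfrak{t}_{y;\cc})^{-\mu}x^y+\textup{l.o.t.}$ for $\mu\in\Lambda^\prime$ (handled for $\mu\in Q^\vee$ via Theorem \ref{glueprop}(2), Proposition \ref{ordercompatiblerep}, \eqref{O3}/\eqref{O4} and Proposition \ref{trianY}), and deduce (3) from $(\omega\mathfrak{t})\vert_{Q^\vee}\in T_{\omega(J)}^\prime$ together with Theorem \ref{Edef}(1). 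The only difference is that you spell out the extension of Proposition \ref{trianY} to general $\mu\in\Lambda^\prime$ (reduction to minuscule $\zeta$, the factorisation \eqref{OmegaHH}, and the leading coefficient $\mathfrak{s}_y^{-\zeta}$ via \eqref{zetalength}), which the paper leaves as a "straightforward extension," and your computation of that step is correct.
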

\begin{proof}
(1)\&(2). It suffices to prove that 
\begin{equation}\label{Eyctr}
\pi_{\cc,\mathfrak{t}}^{\Lambda,\Lambda^\prime}(Y^\mu)x^y=(\mathfrak{s}_y\mathfrak{t}_{y;\cc})^{-\mu}x^y+\textup{l.o.t.}\qquad\quad (\mu\in\Lambda^\prime).
\end{equation}
By Proposition \ref{ordercompatiblerep},
Theorem \ref{glueprop}(2), 
\eqref{O3} and Proposition \ref{trianY} this is correct for $\mu\in Q^\vee$.
A straightforward extension of the proof of Proposition \ref{trianY} shows that it holds true for all $\mu\in\Lambda^\prime$.\\
(3) The extra assumption on $\mathfrak{t}$ implies that $(\omega\mathfrak{t})\vert_{Q^\vee}\in T_{\omega(J)}^\prime$, hence $E_y^{\omega(J)}\bigl(x;(\omega\mathfrak{t})\vert_{Q^\vee}\bigr)$ is well defined. The result is now an immediate consequence of Theorem \ref{glueprop}(2) and Theorem \ref{Edef}(1).
\end{proof}
As a special case of Theorem \ref{propEVext0}(3) we have for $\cc\in C^J$ and $\mathfrak{t}\in T_J^\prime$,
\[
E_y^J(x;\mathfrak{t})=E_{y;\cc}^{Q^\vee,Q^\vee}(x;\mathfrak{t})\qquad\quad (y\in\mathcal{O}_\cc).
\]

The index $\cc$ is to indicate that $E_{y;\cc}^{\Lambda,\Lambda^\prime}(x;\mathfrak{t})$ depends on the choice of the representative of $\mathcal{O}_{\Lambda,\cc}$ inside $\overline{C}_+$.
The quasi-polynomials for different choices of representatives are related as follows.
\begin{corollary}
Let $\cc\in\overline{C}_+$ and choose a $W$-orbit $\mathcal{O}$ in $\mathcal{O}_{\Lambda,\cc}$.
 Fix $\mathfrak{t}\in {}^{\Lambda}T_{\Lambda^\prime}^\cc$ satisfying \eqref{eecond}. Then $\omega\mathfrak{t}\in T_{\Lambda^\prime}$
lies in $\omega\mathfrak{t}\in{}^{\Lambda}T_{\Lambda^\prime}^{\omega\cc}$ and satisfies \eqref{eecond}, and
\begin{equation}\label{indepCplus}
E_{y;\omega\cc}^{\Lambda,\Lambda^\prime}(x;\omega\mathfrak{t})=E_{y;\cc}^{\Lambda,\Lambda^\prime}(x;\mathfrak{t})\qquad \forall\, y\in\mathcal{O}.
\end{equation}
\end{corollary}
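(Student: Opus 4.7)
The plan is to reduce everything to Remark \ref{O3rem}(2) together with the uniqueness statement in Theorem \ref{propEVext0}(1).

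First, I would verify the two membership/genericity claims. The assertion $\omega\mathfrak{t}\in {}^{\Lambda}T_{\Lambda^\prime}^{\omega\cc}$ is immediate from Example \ref{exampleTspec}(1), which gives $\omega\bigl({}^{\Lambda}T_{\Lambda^\prime}^\cc\bigr)={}^{\Lambda}T_{\Lambda^\prime}^{\omega\cc}$. To check that $\omega\mathfrak{t}$ satisfies \eqref{eecond} for the orbit $\mathcal{O}\subseteq\mathcal{O}_{\Lambda,\omega\cc}=\mathcal{O}_{\Lambda,\cc}$, I would use \eqref{O3}, which yields $(\omega\mathfrak{t})_{y;\omega\cc}=\mathfrak{t}_{y;\cc}$ for every $y\in\mathcal{O}$. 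Consequently $\mathfrak{s}_y(\omega\mathfrak{t})_{y;\omega\cc}=\mathfrak{s}_y\mathfrak{t}_{y;\cc}$, so pairwise distinctness of these characters when restricted to $\Lambda^\prime$ is inherited directly from the corresponding condition on $\mathfrak{t}$.

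For the main identity, the key input is Remark \ref{O3rem}(2), which asserts $\pi_{\omega\cc,\omega\mathfrak{t}}^{\Lambda,\Lambda^\prime}=\pi_{\cc,\mathfrak{t}}^{\Lambda,\Lambda^\prime}$ as representations of $\mathbb{H}_{\Lambda,\Lambda^\prime}$ on the common representation space $\mathcal{P}_\Lambda^{(\cc)}=\mathcal{P}_\Lambda^{(\omega\cc)}$ (the two underlying orbits $\mathcal{O}_{\Lambda,\cc}$ and $\mathcal{O}_{\Lambda,\omega\cc}$ coincide). In particular the commuting operators $\pi_{\omega\cc,\omega\mathfrak{t}}^{\Lambda,\Lambda^\prime}(Y^\mu)$ and $\pi_{\cc,\mathfrak{t}}^{\Lambda,\Lambda^\prime}(Y^\mu)$ agree for all $\mu\in\Lambda^\prime$. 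Fix $y\in\mathcal{O}$. By Theorem \ref{propEVext0}(2) applied at $\omega\cc$, together with $(\omega\mathfrak{t})_{y;\omega\cc}=\mathfrak{t}_{y;\cc}$, the quasi-polynomial $E_{y;\omega\cc}^{\Lambda,\Lambda^\prime}(x;\omega\mathfrak{t})$ is a joint eigenfunction of $\pi_{\cc,\mathfrak{t}}^{\Lambda,\Lambda^\prime}(Y^\mu)$ \textup{(}$\mu\in\Lambda^\prime$\textup{)} with eigenvalue $(\mathfrak{s}_y\mathfrak{t}_{y;\cc})^{-\mu}$, and by \eqref{lotExt} it has leading quasi-monomial $x^y$ modulo strictly lower terms with respect to the partial order $\leq$ on $E$. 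Since this partial order is intrinsic to $E$ \textup{(}Definition \ref{orderB} uses $\cc_y\in\overline{C}_+$ and $w_y\in W^{\mathbf{J}(\cc_y)}$, neither of which depends on the chosen representative $\cc$ versus $\omega\cc$\textup{)}, the uniqueness clause in Theorem \ref{propEVext0}(1) applied at $\cc$ with parameter $\mathfrak{t}$ forces the equality $E_{y;\omega\cc}^{\Lambda,\Lambda^\prime}(x;\omega\mathfrak{t})=E_{y;\cc}^{\Lambda,\Lambda^\prime}(x;\mathfrak{t})$.

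I do not anticipate a genuine obstacle in carrying this out. The only point requiring careful bookkeeping is making explicit that the triangularity condition \eqref{lotExt} refers to the intrinsic partial order on $E$ rather than to something tied to the choice of base-point, and that the hypothesis \eqref{eecond} at $\omega\cc$ is automatic because the characters $\mathfrak{s}_y\mathfrak{t}_{y;\cc}$ are unchanged when passing from $(\cc,\mathfrak{t})$ to $(\omega\cc,\omega\mathfrak{t})$ by virtue of \eqref{O3}. Everything else reduces to reading off the already-established equalities of representations and their eigenspace decompositions.
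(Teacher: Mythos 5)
Your argument is correct and is essentially the paper's own proof: the paper simply says the corollary follows from the uniqueness characterisation in Theorem \ref{propEVext0}(1), relying tacitly on the equality of representations noted in Remark \ref{O3rem}(2) and on \eqref{O3}, which you have spelled out explicitly (including the check that \eqref{eecond} transfers and that the triangularity order is intrinsic to $E$). No gaps; your version just makes the implicit inputs explicit.
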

\begin{proof}
This follows immediately from the characterisation of $E_{y;\cc}^{\Lambda,\Lambda^\prime}(x;\mathfrak{t})$ as given in Theorem \ref{propEVext0}(1).
\end{proof}
The following is an extension of Theorem \ref{propEVext0}(3).
\begin{corollary}\label{latticedependenceE}
Let $(\Lambda_1,\Lambda_1^\prime), (\Lambda_2,\Lambda_2^\prime)\in (\mathcal{L}^{\times 2})_e$ such that $(\Lambda_1,\Lambda_1^\prime)\leq (\Lambda_2,\Lambda_2^\prime)$. Let $\cc\in\overline{C}_+$ and choose a $W$-orbit $\mathcal{O}$ in $\mathcal{O}_{\Lambda_1,\cc}$. Fix  
$\mathfrak{t}\in {}^{\Lambda_2}T_{\Lambda_2^\prime}^\cc$ such that \eqref{eecond} holds true for $\Lambda^\prime=\Lambda_1^\prime$.
Then for all $y\in\mathcal{O}$,
\begin{equation}\label{relpolLambda}
E_{y;\cc}^{\Lambda_1,\Lambda_1^\prime}(x;\mathfrak{t}\vert_{\Lambda_1^\prime})=E_{y;\cc}^{\Lambda_2,\Lambda_2^\prime}(x;\mathfrak{t}).
\end{equation}
In particular, the quasi-polynomial $E_{y;\cc}^{\Lambda_1,\Lambda_1^\prime}(x;\mathfrak{t}\vert_{\Lambda_1^\prime})$ \textup{(}$y\in\mathcal{O}$\textup{)} satisfies the extended eigenvalue equations
\begin{equation}\label{evYYext}
\pi_{\cc,\mathfrak{t}}^{\Lambda_2,\Lambda_2^\prime}(Y^\mu)E_{y;\cc}^{\Lambda_1,\Lambda_1^\prime}(\cdot;\mathfrak{t}\vert_{\Lambda_1^\prime})=
(\mathfrak{s}_y\mathfrak{t}_{y;\cc})^{-\mu}E_{y;\cc}^{\Lambda_1,\Lambda_1^\prime}(\cdot;\mathfrak{t}\vert_{\Lambda_1^\prime})\qquad\quad\forall\, \mu\in\Lambda_2^\prime.
\end{equation}
\end{corollary}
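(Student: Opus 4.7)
\medskip

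\noindent\textbf{Proof plan.} The plan is to identify $E_{y;\cc}^{\Lambda_2,\Lambda_2'}(x;\mathfrak{t})$ directly with $E_{y;\cc}^{\Lambda_1,\Lambda_1'}(x;\mathfrak{t}\vert_{\Lambda_1'})$ by using the uniqueness part of Theorem \ref{propEVext0}(1) on the $(\Lambda_1,\Lambda_1')$-side, and then derive \eqref{evYYext} as a transparent consequence of Theorem \ref{propEVext0}(2) applied on the $(\Lambda_2,\Lambda_2')$-side. First I would observe that the hypothesis \eqref{eecond} for $\Lambda_1'$ formally implies \eqref{eecond} for $\Lambda_2'$, since any two characters of $\Lambda_2'$ whose restrictions to the sublattice $\Lambda_1'$ already differ are necessarily distinct. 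Hence the quasi-polynomial $E_{y;\cc}^{\Lambda_2,\Lambda_2'}(x;\mathfrak{t})$ is well defined by Theorem \ref{propEVext0}(1).

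Next I would show that $E_{y;\cc}^{\Lambda_2,\Lambda_2'}(x;\mathfrak{t})$ actually lives in the smaller space $\mathcal{P}_{\Lambda_1}^{(\cc)}$. Write $\mathcal{O}=\mathcal{O}_{\omega\cc}$ for some $\omega\in\Omega_{\Lambda_1}\subseteq\Omega_{\Lambda_2}$; then the triangular expansion \eqref{lotExt}, combined with the fact that the parabolic Bruhat order on $E$ forces $y'<y$ to lie in $Wy=\mathcal{O}\subseteq\mathcal{O}_{\Lambda_1,\cc}$, shows that every exponent occurring in $E_{y;\cc}^{\Lambda_2,\Lambda_2'}(x;\mathfrak{t})$ belongs to $\mathcal{O}\subset\mathcal{O}_{\Lambda_1,\cc}$. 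So under the embedding of Proposition \ref{ordercompatiblerep} we may regard $E_{y;\cc}^{\Lambda_2,\Lambda_2'}(x;\mathfrak{t})$ as an element of $\mathcal{P}_{\Lambda_1,\mathfrak{t}\vert_{\Lambda_1'}}^{(\cc)}$.

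Now I would invoke Proposition \ref{ordercompatiblerep} to transfer the eigenvalue equation \eqref{evYY} for $\mu\in\Lambda_1'\subseteq\Lambda_2'$: since the inclusion is $\mathbb{H}_{\Lambda_1,\Lambda_1'}$-linear, and since the elements $Y^\mu$ for $\mu\in\Lambda_1'$ correspond to the same element $T_{\tau(\mu^+)}\cdots$ in both $H_{\Lambda_1'}\subseteq H_{\Lambda_2'}$, the action of $\pi_{\cc,\mathfrak{t}\vert_{\Lambda_1'}}^{\Lambda_1,\Lambda_1'}(Y^\mu)$ on $E_{y;\cc}^{\Lambda_2,\Lambda_2'}(x;\mathfrak{t})$ returns $(\mathfrak{s}_y\mathfrak{t}_{y;\cc})^{-\mu}E_{y;\cc}^{\Lambda_2,\Lambda_2'}(x;\mathfrak{t})$ for every $\mu\in\Lambda_1'$. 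Combined with the fact that $E_{y;\cc}^{\Lambda_2,\Lambda_2'}(x;\mathfrak{t})$ still has the triangular form $x^y+\textup{l.o.t.}$ inside $\mathcal{P}_{\Lambda_1}^{(\cc)}$, the uniqueness assertion of Theorem \ref{propEVext0}(1) for the $(\Lambda_1,\Lambda_1')$-representation forces the identity \eqref{relpolLambda}. Finally, \eqref{evYYext} follows by applying Theorem \ref{propEVext0}(2) to the right-hand side of \eqref{relpolLambda} for all $\mu\in\Lambda_2'$.

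The only substantive point that requires care is the check that the element $Y^\mu$ for $\mu\in\Lambda_1'$ genuinely coincides under the natural inclusion $\mathbb{H}_{\Lambda_1,\Lambda_1'}\hookrightarrow\mathbb{H}_{\Lambda_2,\Lambda_2'}$; this amounts to verifying that the definition via $T_{\tau(\mu)}$ and the decomposition \eqref{YT0} do not depend on which ambient extended affine Weyl group one works in, which is a routine consequence of the length-additivity of $\tau(\mu)$ with respect to the finite Weyl group and the compatibility of the chosen reduced expressions. Beyond that, the argument is essentially a uniqueness-plus-triangularity bookkeeping, so I do not anticipate further obstacles.
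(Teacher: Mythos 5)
Your proposal is correct and follows essentially the same route as the paper: the genericity condition \eqref{eecond} for $\Lambda_1^\prime$ passes to $\Lambda_2^\prime$ by restriction of characters, the $\mathbb{H}_{\Lambda_1,\Lambda_1^\prime}$-linear embedding of Proposition \ref{ordercompatiblerep} transfers the $Y^\mu$-eigenvalue equations for $\mu\in\Lambda_1^\prime$, and the uniqueness/triangularity statement of Theorem \ref{propEVext0}(1) on the $(\Lambda_1,\Lambda_1^\prime)$-side yields \eqref{relpolLambda}, with \eqref{evYYext} then following from Theorem \ref{propEVext0}(2). Your additional checks (exponents of the $\Lambda_2$-polynomial lying in $\mathcal{O}\subset\mathcal{O}_{\Lambda_1,\cc}$, and compatibility of $Y^\mu$ under the inclusion of extended algebras) are exactly the routine details the paper leaves implicit.
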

\begin{proof}
Note that \eqref{eecond} also also holds true for $\Lambda^\prime=\Lambda_2^\prime$ since $\Lambda_1^\prime\subseteq\Lambda_2^\prime$. Hence both sides of
\eqref{relpolLambda} are well defined. The result now follows from Theorem \ref{propEVext0}(1) and Proposition \ref{ordercompatiblerep}.
\end{proof}

\begin{remark}\label{zetatranslaterem}
Let $\zeta\in\Lambda\cap E_{\textup{co}}$ and impose the assumptions of Theorem \ref{propEVext0}(1).
Then \eqref{eecond} also holds true for
$\tau(\zeta)\mathcal{O}$ since $\mathfrak{s}_{y+\zeta}=\mathfrak{s}_y$, and 
\begin{equation*}
E_{y+\zeta;\cc}^{\Lambda,\Lambda^\prime}(x;\mathfrak{t})=
x^\zeta E_{y;\cc}^{\Lambda,\Lambda^\prime}(x;\mathfrak{t})\qquad\quad
(\zeta\in\Lambda\cap E_{\textup{co}})
\end{equation*}
for all $y\in\mathcal{O}$ 
(compare with Lemma \ref{remE1}(2) and its proof).
\end{remark}

Under appropriate generic conditions on $\mathfrak{t}$
one can now transport the results on
the quasi-polynomial eigenfunctions $E_y^J(x;\mathfrak{t})$ from Section \ref{genSection} to $E_{y;\cc}^{\Lambda,\Lambda^\prime}(x;\mathfrak{t})$ 
using Theorem \ref{propEVext0}(3). We finish this subsection by giving an explicit example.

For an element $w\in W_\Lambda$ define $d_w\in\mathcal{P}$ and $k_w(y)$ by \eqref{dwt} and \eqref{kawy}, respectively. 

\begin{proposition}\label{propEVext}
Let $\cc\in C^J$ and choose a $W$-orbit $\mathcal{O}$ in $\mathcal{O}_{\Lambda,\cc}$. Let $\omega\in\Omega_\Lambda$
such that $\mathcal{O}=\mathcal{O}_{\omega\cc}$.
Fix $\mathfrak{t}\in {}^{\Lambda}T_{\Lambda^\prime}^\cc$ satisfying \eqref{eecond2} and suppose that $\mathfrak{t}$ is $J$-regular. Then 

\begin{enumerate}
\item For $y\in\mathcal{O}$
we have
\[
E_{y;\cc}^{\Lambda,\Lambda^\prime}(x;\mathfrak{t})=
d_{w_y\omega}(\mathfrak{s}_J\mathfrak{t})^{-1}k_{w_y\omega}(\cc)^{-1}
\pi_{\cc,\mathfrak{t}}^{\Lambda,\Lambda^\prime}(S_{w_y\omega}^Y)x^\cc.
\]
\item Assume that \eqref{eecond2} also holds true for the $W$-orbit $\omega^{-1}\mathcal{O}=\mathcal{O}_\cc$ in $\mathcal{O}_{\Lambda,\cc}$. Then
\[
E_{\omega y;\cc}^{\Lambda,\Lambda^\prime}(\cdot;\mathfrak{t})=k_\omega(y)^{-1}\pi_{\cc,\mathfrak{t}}^{\Lambda,\Lambda^\prime}(S_\omega^Y)E_y^J(\cdot;\mathfrak{t}\vert_{Q^\vee})\qquad\quad (y\in\mathcal{O}_\cc).
\]
\end{enumerate}
\end{proposition}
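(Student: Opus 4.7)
The plan is to prove part (1) by factorizing the extended $Y$-intertwiner and reducing to the non-extended setting, and then to derive part (2) from part (1).

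For part (1), I first write $S_{w_y\omega}^Y = S_{w_y}^Y \cdot \delta(\omega^{-1})$, which follows from the definition $S_w^Y := \delta(S_{w^{-1}}^X)$ on $W_\Lambda$ together with the convention $S^X_{\omega' w} := \omega' S^X_w$ for $\omega'\in\Omega_\Lambda$ and $w\in W$. Setting $S_\omega^Y := \delta(\omega^{-1})$ (consistent with the definition since $S_{\omega^{-1}}^X = \omega^{-1}$), Corollary \ref{corOmega} gives $\pi_{\cc,\mathfrak{t}}^{\Lambda,\Lambda^\prime}(S_\omega^Y)x^\cc = \kappa_{D\omega}(\cc)\,x^{\omega\cc}$. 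This vector lies in the $\mathbb{H}_{Q^\vee,\Lambda^\prime}$-submodule $\mathcal{P}^{(\omega\cc)}_{\omega\mathfrak{t}}$ of the decomposition in Theorem \ref{glueprop}(2), so the subsequent action of $S_{w_y}^Y\in\mathbb{H}$ is governed by the non-extended representation $\pi_{\omega\cc,(\omega\mathfrak{t})\vert_{Q^\vee}}$ on $\mathcal{P}^{(\omega\cc)}$ (using Proposition \ref{ordercompatiblerep} to restrict $\omega\mathfrak{t}$ to $Q^\vee$). Since $\omega\cc\in C^{\omega(J)}$ and $w_y\in W^{\omega(J)}$, Theorem \ref{prop:I_and_E} applied in the $\omega(J)$-setting yields $\pi_{\omega\cc,(\omega\mathfrak{t})\vert_{Q^\vee}}(S_{w_y}^Y)x^{\omega\cc} = d_{w_y}(\mathfrak{s}_{\omega(J)}(\omega\mathfrak{t})\vert_{Q^\vee})\,k_{w_y}(\omega\cc)\,E_y^{\omega(J)}(x;(\omega\mathfrak{t})\vert_{Q^\vee})$, and Theorem \ref{propEVext0}(3) identifies $E_y^{\omega(J)}(x;(\omega\mathfrak{t})\vert_{Q^\vee})$ with $E_{y;\cc}^{\Lambda,\Lambda^\prime}(x;\mathfrak{t})$.

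To finish part (1), the combined prefactor must be shown to equal $d_{w_y\omega}(\mathfrak{s}_J\mathfrak{t})\,k_{w_y\omega}(\cc)$. Since $\omega^{-1}\in\Omega_\Lambda$ preserves $\Phi^+$, we have $\Pi(w_y\omega) = \Pi(w_y)$, hence $d_{w_y\omega} = d_{w_y}$ as polynomials in $\mathcal{P}$; equality of their evaluations reduces to the coroot-wise identity $(\mathfrak{s}_J\mathfrak{t})^{a^\vee} = (\mathfrak{s}_{\omega(J)}(\omega\mathfrak{t}))^{a^\vee}$ for $a\in\Pi(w_y)$, which I would verify by direct computation using the decomposition $\omega = \tau(\zeta)v_{\zeta,\Lambda}^{-1}$ and Lemma \ref{Omegas}. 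For the $k$-factors, the cocycle identity $k_{w_y\omega}(\cc) = k_\omega(\cc)k_{w_y}(\omega\cc)$ reduces matters to $\kappa_{D\omega}(\cc) = k_\omega(\cc)$, which I would verify by computing the leading term of $\pi^{\Lambda,\Lambda^\prime}(\delta(T_{(w_y\omega)^{-1}}))x^\cc$ in two ways — via the factorization (yielding $\kappa_{D\omega}(\cc)k_{w_y}(\omega\cc)$) and via a direct extension of Proposition \ref{triangularitymbasis} to $W_\Lambda$ (yielding $k_{w_y\omega}(\cc)$).

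For part (2), I take $y = \omega y_0$ in part (1) and observe that length-preservation of $\omega\in\Omega_\Lambda$ gives $w_{\omega y_0} = \omega w_{y_0}\omega^{-1}$, so $w_{\omega y_0}\omega = \omega w_{y_0}$. Since $S_{\omega w_{y_0}}^Y$ has $\omega$ on the wrong side to factorize directly, I would instead verify the formula by matching eigenvectors: Lemma \ref{YintAction}, extended to $W_\Lambda$, shows that $\pi^{\Lambda,\Lambda^\prime}(S_\omega^Y)E_{y_0}^J(\cdot;\mathfrak{t}\vert_{Q^\vee})$ lies in the one-dimensional $Y^\mu$-eigenspace ($\mu\in\Lambda^\prime$) for eigenvalue $(\omega(\mathfrak{s}_{y_0}\mathfrak{t}_{y_0}))^{-\mu} = (\mathfrak{s}_{\omega y_0}\mathfrak{t}_{\omega y_0;\cc})^{-\mu}$ (using Lemma \ref{Omegas} and the definition of $\mathfrak{t}_{\cdot;\cc}$), which by Theorem \ref{propEVext0} is spanned by $E_{\omega y_0;\cc}^{\Lambda,\Lambda^\prime}(\cdot;\mathfrak{t})$. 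The proportionality constant is then identified as $k_\omega(y_0)$ by extracting the coefficient of $x^{\omega y_0}$ on the left, using the explicit formula $S_\omega^Y = x^\zeta T_{v_{\zeta,\Lambda}^{-1}}$ from \eqref{buildingYintertwinersOmega} and the triangularity of $\pi^{\Lambda,\Lambda^\prime}(T_v)$ on quasi-monomials.

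The main obstacle is the scalar bookkeeping, in particular the identity $\kappa_{D\omega}(\cc) = k_\omega(\cc)$ (which links the combinatorial $\kappa$-function to the face-constant $k$-function) and the leading-coefficient identification as $k_\omega(y_0)$ in part (2). Both require careful control of the $\pi^{\Lambda,\Lambda^\prime}(T_v)$-action on quasi-monomials $x^y$ where $y$ may lie on root hyperplanes (the case $\alpha_i(y) = 0$), where the rules for the simple reflection action and the truncated divided-difference operator interact nontrivially.
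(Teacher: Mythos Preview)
Your strategy for part (1) matches the paper's, but the $d$-factor step contains an error. You claim $\Pi(w_y\omega)=\Pi(w_y)$; in fact, since $\omega^{-1}$ preserves $\Phi^+$, one has
\[
\Pi(w_y\omega)=\Phi^+\cap\omega^{-1}w_y^{-1}\Phi^-=\omega^{-1}\Pi(w_y),
\]
which differs from $\Pi(w_y)$ in general (contrast with $\Pi(\omega w)=\Pi(w)$, which \emph{does} hold and is what is used in part (2)). Hence $d_{w_y\omega}=\omega^{-1}d_{w_y}$ in $\mathcal{P}$, not $d_{w_y}$, and the coroot-wise identity $(\mathfrak{s}_J\mathfrak{t})^{a^\vee}=(\mathfrak{s}_{\omega(J)}\omega\mathfrak{t})^{a^\vee}$ for $a\in\Pi(w_y)$ that your argument would then require is generically false. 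The correct route is a single line: Lemma \ref{Omegas} gives $\mathfrak{s}_{\omega(J)}\omega\mathfrak{t}=\omega(\mathfrak{s}_J\mathfrak{t})$, so $d_{w_y}(\mathfrak{s}_{\omega(J)}\omega\mathfrak{t})=(\omega^{-1}d_{w_y})(\mathfrak{s}_J\mathfrak{t})=d_{w_y\omega}(\mathfrak{s}_J\mathfrak{t})$.

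For the identity $\kappa_{D\omega}(\cc)=k_\omega(\cc)$, your proposed two-way leading-term argument is not well-founded: a direct extension of Proposition \ref{triangularitymbasis} to $w_y\omega\in W_\Lambda$ has no independent computation of the leading coefficient other than the very factorization you already used, which gives $\kappa_{D\omega}(\cc)k_{w_y}(\omega\cc)$. The paper instead proves the pointwise identity $k_\omega(y)=\kappa_{D\omega}(y)$ for all $y\in E$ by pure root combinatorics: writing $\omega=w_{\zeta,\Lambda}$ and expanding $k(w_{\zeta,\Lambda}y)$ using \eqref{zetalength} and \eqref{etaaffine} yields $k(w_{\zeta,\Lambda}y)=k(y)\kappa_{v_{\zeta,\Lambda}^{-1}}(y)$.

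For part (2), you have missed that $S_\omega^Y S_{w_y}^Y=S_{\omega w_y}^Y$ \emph{does} hold: from $\omega S_j^X\omega^{-1}=S_{\omega(j)}^X$ one gets $S_{w_y^{-1}}^X\omega^{-1}=\omega^{-1}S_{\omega w_y^{-1}\omega^{-1}}^X$, and applying $\delta$ gives the claim. With this, part (2) follows from part (1) in two lines: Theorem \ref{prop:I_and_E} and $\omega w_y=w_{\omega y}\omega$ give
\[
\pi_{\cc,\mathfrak{t}}^{\Lambda,\Lambda^\prime}(S_\omega^Y)E_y^J(\cdot;\mathfrak{t}\vert_{Q^\vee})
=d_{w_y}(\mathfrak{s}_J\mathfrak{t})^{-1}k_{w_y}(\cc)^{-1}\pi_{\cc,\mathfrak{t}}^{\Lambda,\Lambda^\prime}(S_{w_{\omega y}\omega}^Y)x^\cc,
\]
and part (1) rewrites the right side as $\dfrac{d_{\omega w_y}(\mathfrak{s}_J\mathfrak{t})k_{\omega w_y}(\cc)}{d_{w_y}(\mathfrak{s}_J\mathfrak{t})k_{w_y}(\cc)}\,E_{\omega y;\cc}^{\Lambda,\Lambda^\prime}(\cdot;\mathfrak{t})$; the ratio is $k_\omega(y)$ since $d_{\omega w_y}=d_{w_y}$ and $k_{\omega w_y}(\cc)/k_{w_y}(\cc)=k(\omega y)/k(y)$. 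Your eigenvalue-matching route would eventually succeed, but it is a detour that duplicates work already done in part (1).
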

\begin{proof}
(1) Fix $y\in\mathcal{O}_{\omega\cc}$. Then Corollary \ref{corOmega}, \eqref{tttddd} and Proposition \ref{ordercompatiblerep} give
\[
\pi_{\cc,\mathfrak{t}}^{\Lambda,\Lambda^\prime}(S_{w_y\omega}^Y)x^\cc=
\kappa_{D\omega}(\cc)\pi_{\omega\cc,\omega\mathfrak{t}}(S_{w_y}^Y)x^{\omega\cc}.
\]
Applying Theorem \ref{prop:I_and_E} we get
\[
\pi_{\cc,\mathfrak{t}}^{\Lambda,\Lambda^\prime}(S_{w_y\omega}^Y)x^\cc=
k_{w_y}(\omega\cc)\kappa_{D\omega}(\cc)d_{w_y}(\mathfrak{s}_{\omega(J)}\omega\mathfrak{t})
E_y^{\omega(J)}(x;(\omega\mathfrak{t})\vert_{Q^\vee}),
\]
so by \eqref{Eyc} it suffices to show that
\begin{equation}\label{t6d6}
\begin{split}
d_{w_y}(\mathfrak{s}_{\omega(J)}\omega\mathfrak{t})&=d_{w_y\omega}(\mathfrak{s}_\cc\mathfrak{t}),\\
k_{w_y}(\omega\cc)\kappa_{D\omega}(\cc)&=k_{w_y\omega}(\cc).
\end{split}
\end{equation}
For the first equation of \eqref{t6d6} note that by Lemma \ref{Omegas} we have
\[
d_{w_y}(\mathfrak{s}_{\omega(J)}\omega\mathfrak{t})=d_{w_y}(\omega(\mathfrak{s}_{J}\mathfrak{t}))=
(\omega^{-1}d_{w_y})(\mathfrak{s}_{J}\mathfrak{t}).
\]
But $\Pi(w_y\omega)=\omega^{-1}\Pi(w_y)$ since $\omega\in\Omega_\Lambda$, hence
$\omega^{-1}d_{w_y}=d_{w_y\omega}$ in $\mathcal{P}$. 
For the second equation it suffices to show that 
\begin{equation}\label{t7d7}
k_\omega(y)=\kappa_{D\omega}(y).
\end{equation}
Write $\omega=w_{\zeta,\Lambda}$ ($\zeta\in\Lambda_{\textup{min}}$), then we compute for $y\in E$,
\begin{equation*}
\begin{split}
k(w_{\zeta,\Lambda}y)&=\prod_{\alpha\in\Phi_0^+}k_\alpha^{\eta(\alpha(v_{\zeta,\Lambda}^{-1}y+\zeta))/2}\\
&=\prod_{\alpha\in\Pi(v_{\zeta,\Lambda})}k_\alpha^{\eta(\alpha(v_{\zeta,\Lambda}^{-1}y)+1)/2}
\prod_{\beta\in\Phi_0^+\setminus\Pi(v_{\zeta,\Lambda})}k_\beta^{\eta(\beta(v_{\zeta,\Lambda}^{-1}y))/2}\\
&=\prod_{\alpha\in\Pi(v_{\zeta,\Lambda})}k_\alpha^{-\eta(-\alpha(v_{\zeta,\Lambda}^{-1}y))/2}
\prod_{\beta\in\Phi_0^+\setminus\Pi(v_{\zeta,\Lambda}^{-1})}k_\beta^{\eta(\beta(y))/2}\\
&=\prod_{\alpha\in\Pi(v_{\zeta,\Lambda}^{-1})}k_\alpha^{-\eta(\alpha(y))/2}
\prod_{\beta\in\Phi_0^+\setminus\Pi(v_{\zeta,\Lambda}^{-1})}k_\beta^{\eta(\beta(y))/2}=
k(y)\kappa_{v_{\zeta,\Lambda}^{-1}}(y),
\end{split}
\end{equation*}
where we have used \eqref{zetalength} in the second equality,
\eqref{etaaffine} and 
\[
v_{\zeta,\Lambda}(\Phi_0^+\setminus\Pi(v_{\zeta,\Lambda}))=\Phi_0^+\setminus\Pi(v_{\zeta,\Lambda}^{-1})
\]
in the third equality, and $v_{\zeta,\Lambda}(\Pi(v_{\zeta,\Lambda}))=-\Pi(v_{\zeta,\Lambda}^{-1})$ in the fourth equality.  Now 
\eqref{t7d7} follows, since $Dw_{\zeta,\Lambda}=v_{\zeta,\Lambda}^{-1}$.\\
(2) Let $\omega\in\Omega_\Lambda$ and $y\in\mathcal{O}_\cc$. Then 
$\omega w_{y}\omega^{-1}=w_{\omega y}$, Proposition \ref{prop:I_and_E} and part (1) of the proposition give
\begin{equation*}
\begin{split}
\pi_{\cc,\mathfrak{t}}^{\Lambda,\Lambda^\prime}(S_\omega^Y)E_y^J(\cdot;\mathfrak{t}\vert_{Q^\vee})&=
d_{w_y}(\mathfrak{s}_J\mathfrak{t})^{-1}
k_{w_y}(\cc)^{-1}\pi_{\cc,\mathfrak{t}}^{\Lambda,\Lambda^\prime}(S_{w_{\omega y}\omega}^Y)x^\cc\\
&=\frac{d_{\omega w_y}(\mathfrak{s}_J\mathfrak{t})k_{\omega w_y}(\cc)}
{d_{w_y}(\mathfrak{s}_J\mathfrak{t})k_{w_y}(\cc)}
E_{\omega y;\cc}^{\Lambda,\Lambda^\prime}(\cdot;\mathfrak{t}).
\end{split}
\end{equation*}
Note that $d_{\omega w}=d_w$ ($\omega\in\Omega_\Lambda$, $w\in W_\Lambda$) in $\mathcal{P}$ since
$\Pi(\omega w)=\Pi(w)$. The result then follows from the observation that
\[
\frac{k_{\omega w_y}(\cc)}{k_{w_y}(\cc)}=\frac{k(\omega w_y\cc)}{k(w_y\cc)}=
\frac{k(\omega y)}{k(y)}=k_\omega(y).
\]
\end{proof}

\subsection{The theory for the $\textup{GL}_{r+1}$ root datum}\label{S73}
 In this subsection we make the results of the previous two subsections explicit for the root datum associated to $\textup{GL}_{r+1}$.
 
Let $\{\epsilon_i\}_{i=1}^{r+1}$ be the standard orthonormal basis of $\mathbb{R}^{r+1}$, and fix a positive integer $\ell\in\mathbb{Z}_{>0}$.
Then 
\[
\Phi_0:=\bigl\{(\epsilon_i-\epsilon_j)/\ell\,\,\,\, | \,\,\,\, 1\leq i\not=j\leq r+1\bigr\}\subset\mathbb{R}^{r+1}
\]
is a root system of type $A_r$, with roots having squared norm $2/\ell^2$ (this particular normalisation of the basis will be convenient for the metaplectic theory, see Sections  \ref{MetaplecticSection} and \ref{MfinalSection}). 
Take $\alpha_i:=(\epsilon_i-\epsilon_{i+1})/\ell$ \textup{(}$1\leq i\leq r$\textup{)} as the simple roots of $\Phi_0$.
Then $\varphi=(\epsilon_1-\epsilon_{r+1})/\ell$, and hence
\[
\alpha_0=\bigl((\epsilon_{r+1}-\epsilon_1)/\ell,1\bigr).
\]
The alcove $\overline{C}_+$ consists of
the vectors $y=(y_1,\ldots,y_{r+1})\in\mathbb{R}^{r+1}$ satisfying
\begin{equation}\label{ineq}
y_{r+1}\leq y_r\leq\cdots \leq y_1\leq y_{r+1}+\ell.
\end{equation}
The face $C^J$ consists of the vectors $y\in\overline{C}_+$ such that
\begin{enumerate}
\item $y_{i+1}=y_{i}$ if $i\in J_0$, 
\item $y_1=y_{r+1}+\ell$ if $0\in J$, 
\item the remaining inequalities in \eqref{ineq} are strict. 
\end{enumerate}

We now take $(\Lambda, \Lambda^\prime):=(\ell\mathbb{Z}^{r+1},\ell\mathbb{Z}^{r+1})\in (\mathcal{L}^{\times 2})_1$, with
$\ell\mathbb{Z}^{r+1}$ the lattice generated by the basis $\{\ell\epsilon_i\}_{i=1}^{r+1}$ of $\mathbb{R}^{r+1}$.
The abelian group 
$\Omega_{\ell\mathbb{Z}^{r+1}}$ is a free group of rank $1$ generated by 
\[
u:=w_{\ell\epsilon_1,\ell\mathbb{Z}^{r+1}}=\tau(\ell\epsilon_1)s_1\cdots s_r.
\]
Then $v_{\ell\epsilon_1,\ell\mathbb{Z}^{r+1}}=s_r\cdots s_2s_1$ and
 $u\alpha_j=\alpha_{j+1}$ ($0\leq j\leq r$) by \eqref{linearaction}, where the indices are read modulo $r+1$ \textup{(}this will be done throughout this example\textup{)}.

Note that $(\mathbb{R}^{r+1})_{\textup{co}}=
\mathbb{R}\varpi$ and $(\mathbb{R}^{r+1})_{\textup{co}}\cap\ell\mathbb{Z}^{r+1}=
\mathbb{Z}\varpi$ with
\[
\varpi:=\ell(\epsilon_1+\cdots+\epsilon_{r+1}).
\]
Furthermore, $I_{\ell\mathbb{Z}^{r+1}}=[1,r]$ (see \eqref{Ilambda}).
A complete set of representatives of the
$(\mathbb{R}^{r+1})_{\textup{co}}\cap\ell\mathbb{Z}^{r+1}$-orbits inside $(\ell\mathbb{Z}^{r+1})_{\textup{min}}$ is 
$\{0\}\cup\{\varpi_{i,\ell\mathbb{Z}^{r+1}}^\vee\}_{i=1}^r$ 
with 
\[
\varpi_{i,\ell\mathbb{Z}^{r+1}}^\vee:=\ell(\epsilon_1+\cdots+\epsilon_i)\qquad (1\leq i\leq r).
\]
For $s,i\in\mathbb{Z}$ we have
\[
u^{s(r+1)+i}=\tau(s\varpi)u^i
\]
and
\[
u^i=w_{\varpi_{i,\ell\mathbb{Z}^{r+1}}^\vee}\quad (1\leq i\leq r)
\]
(see, e.g., \cite[\S 9.3.5]{StAB}). 
\begin{lemma}
We have
\begin{equation}\label{stabOmega}
\Omega_{\ell\mathbb{Z}^{r+1},\cc}=\{1\}\qquad \forall\, \cc\in \overline{C}_+.
\end{equation}
In particular, ${}^{\ell\mathbb{Z}^{r+1}}T_{\ell\mathbb{Z}^{r+1}}^\cc=T_{\ell\mathbb{Z}^{r+1},J}$ for $\cc\in C^J$ and 
\[
\mathcal{O}_{\ell\mathbb{Z}^{r+1},\cc}=\bigsqcup_{(s,i)\in\mathbb{Z}\times [0,r]}\mathcal{O}_{u^i\cc+s\varpi}.
\]
\end{lemma}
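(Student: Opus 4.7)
First, I would prove the stabiliser triviality $\Omega_{\ell\mathbb{Z}^{r+1},\cc}=\{1\}$ by a simple coordinate-sum argument. Every element of $\Omega_{\ell\mathbb{Z}^{r+1}}$ has the form $u^m$ with $m\in\mathbb{Z}$, and writing $m=s(r+1)+i$ with $s\in\mathbb{Z}$, $0\leq i\leq r$, one has $u^m=\tau(s\varpi)u^i$. The factorisation $u^i=\tau(\varpi_{i,\ell\mathbb{Z}^{r+1}}^\vee)v_{\varpi_{i,\ell\mathbb{Z}^{r+1}}^\vee,\ell\mathbb{Z}^{r+1}}^{-1}$ recorded in the excerpt (with the convention $u^0=1$, $\varpi_{0,\ell\mathbb{Z}^{r+1}}^\vee=0$) then gives
\[
u^m\cc=s\varpi+\varpi_{i,\ell\mathbb{Z}^{r+1}}^\vee+v_{\varpi_{i,\ell\mathbb{Z}^{r+1}}^\vee,\ell\mathbb{Z}^{r+1}}^{-1}\cc.
\]
Since $W_0\simeq S_{r+1}$ permutes the coordinates in the basis $\{\epsilon_j\}$ and thus fixes the linear functional $\sigma(y):=\sum_{j=1}^{r+1}y_j$, I obtain $\sigma(u^m\cc)=s\ell(r+1)+\ell i+\sigma(\cc)=\ell m+\sigma(\cc)$. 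Hence the fixed-point equation $u^m\cc=\cc$ forces $\ell m=0$, i.e.\ $m=0$.

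Next, the equality ${}^{\ell\mathbb{Z}^{r+1}}T_{\ell\mathbb{Z}^{r+1}}^\cc=T_{\ell\mathbb{Z}^{r+1},J}$ for $\cc\in C^J$ is then immediate from Definition \ref{Tres}, since the invariance condition $\omega\mathfrak{t}=\mathfrak{t}$ for $\omega\in\Omega_{\ell\mathbb{Z}^{r+1},\cc}=\{1\}$ is vacuous. For the orbit decomposition, the same triviality means that the full set $\{u^m\}_{m\in\mathbb{Z}}$ itself serves as a complete set $\Omega_{\ell\mathbb{Z}^{r+1}}^\cc$ of representatives of $\Omega_{\ell\mathbb{Z}^{r+1}}/\Omega_{\ell\mathbb{Z}^{r+1},\cc}$. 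The general decomposition $\mathcal{O}_{\Lambda,\cc}=\bigsqcup_{\omega\in\Omega_\Lambda^\cc}\mathcal{O}_{\omega\cc}$ recorded just before Definition \ref{Tres}, combined with the identity $u^{s(r+1)+i}\cc=\tau(s\varpi)u^i\cc=u^i\cc+s\varpi$, then yields the claimed parametrisation indexed by $(s,i)\in\mathbb{Z}\times[0,r]$.

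The only substantive step is the short coordinate-sum calculation in the first paragraph; I do not foresee any serious obstacle, since everything else is formal bookkeeping built on top of the general framework already developed in Section \ref{Extsection}.
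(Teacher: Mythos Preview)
Your proposal is correct and follows essentially the same idea as the paper: both arguments detect the nontrivial translation in the $E_{\textup{co}}=\mathbb{R}\varpi$ direction, you via the coordinate-sum functional $\sigma$, the paper by noting that if $u^s$ fixes $\cc$ then so does $u^{s(r+1)}=\tau(s\varpi)$, forcing $s=0$. The paper's execution is a bit slicker since it bypasses the explicit decomposition $m=s(r+1)+i$, but the underlying mechanism is the same, and your derivation of the two consequences is exactly as in the paper.
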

\begin{proof}
If $u^s\in\Omega_{\ell\mathbb{Z}^{r+1},\cc}$ then 
$\cc=u^{s(r+1)}\cc=\cc+s\varpi$, hence $s=0$. 
\end{proof}

For $\cc\in C^J$ and $\mathfrak{t}\in T_{\ell\mathbb{Z}^{r+1},J}$ the $A_r$-type quasi-polynomial representation $\pi_{\cc,\mathfrak{t}}: \mathbb{H}\rightarrow\textup{End}(\mathcal{P}^{(\cc)})$ can thus be extended to a representation
\[
\widetilde{\pi}_{\cc,\mathfrak{t}}:=\pi_{\cc,\mathfrak{t}}^{\ell\mathbb{Z}^{r+1},\ell\mathbb{Z}^{r+1}}:\widetilde{\mathbb{H}}\rightarrow\textup{End}(\mathcal{P}_{\ell\mathbb{Z}^{r+1}}^{(\cc)})
\]
of the double affine Hecke algebra
\begin{equation}\label{tildeHH}
\widetilde{\mathbb{H}}:=\mathbb{H}_{\ell\mathbb{Z}^{r+1},\ell\mathbb{Z}^{r+1}}
\end{equation}
of type $\textup{GL}_{r+1}$.  
Note that $\widetilde{\mathbb{H}}$ depends on a single multiplicity parameter $k\in\mathbf{F}^\times$.
Algebraic generators of $\widetilde{\mathbb{H}}$ are $x^\lambda$ ($\lambda\in\ell\mathbb{Z}^{r+1}$),
$T_i$ ($1\leq i\leq r$) and $u^{\pm 1}$ (note that $T_0=uT_ru^{-1}$). The corresponding commuting elements $Y^\mu\in\widetilde{\mathbb{H}}$ for 
$\mu=\ell\sum_{i=1}^{r+1}m_i\epsilon_i\in\ell\mathbb{Z}^{r+1}$ ($m_i\in\mathbb{Z}$)
are explicitly given by 
\[
Y^\mu=(Y^{\ell\epsilon_1})^{m_1}\cdots \bigr(Y^{\ell\epsilon_{r+1}}\bigr)^{m_{r+1}}
\]
with
\begin{equation}\label{Yi}
Y^{\ell\epsilon_i}:=T_{i-1}^{-1}\cdots T_2^{-1}T_1^{-1}uT_r\cdots T_{i+1}T_i
\qquad (1\leq i\leq r+1).
\end{equation}
For $i=r+1$ this should be read as $Y^{\ell\epsilon_{r+1}}=T_r^{-1}\cdots T_2^{-1}T_1^{-1}u$.

For $\cc\in C^J$ and $\mathfrak{t}\in T_{\ell\mathbb{Z}^{r+1},J}$ the extended quasi-polynomial
representation
\[
\widetilde{\pi}_{\cc,\mathfrak{t}}: \widetilde{\mathbb{H}}\rightarrow\textup{End}(\mathcal{P}_{\ell\mathbb{Z}^{r+1}}^{(\cc)})
\]
is then characterised by the formulas
\begin{equation*}
\begin{split}
\widetilde{\pi}_{\cc,\mathfrak{t}}(T_i)x^y&=k_i^{\chi_{\mathbb{Z}}(\alpha_i(y))}x^{s_iy}+(k_i-k_i^{-1})\nabla_i(x^y),\\
\widetilde{\pi}_{\cc,\mathfrak{t}}(u)x^y&=\mathfrak{t}_{y;\cc}^{-\ell\epsilon_{r+1}}x^{s_1\cdots s_ry},\\
\widetilde{\pi}_{\cc,\mathfrak{t}}(x^\lambda)x^y&=x^{y+\lambda}
\end{split}
\end{equation*}
for $i\in [1,r]$, $\lambda\in\ell\mathbb{Z}^{r+1}$ and $y\in\mathcal{O}_{\ell\mathbb{Z}^{r+1},\cc}$.
For the corresponding monic quasi-polynomial eigenfunctions 
\[
\widetilde{E}_{y;\cc}(x;\mathfrak{t}):=E_{y;\cc}^{\ell\mathbb{Z}^{r+1},\ell\mathbb{Z}^{r+1}}(x;\mathfrak{t})
\]
of the $\widetilde{\pi}_{\cc,\mathfrak{t}}(Y^\mu)$ ($\mu\in\ell\mathbb{Z}^{r+1}$) 
we then have the following result.
\begin{corollary}
Let $\cc\in C^J$ and  $\mathfrak{t}\in T_{\ell\mathbb{Z}^{r+1},J}$. Choose a $W$-orbit $\mathcal{O}$ in $\mathcal{O}_{\ell\mathbb{Z}^{r+1},\cc}$ and assume that the generic conditions \eqref{eecond} hold true for $\Lambda^\prime=\ell\mathbb{Z}^{r+1}$. 
\begin{enumerate}
\item  We have
 \begin{equation}\label{translE}
\widetilde{E}_{y+s\varpi;\cc}(x;\mathfrak{t})=x^{s\varpi}\widetilde{E}_{y;\cc}(x;\mathfrak{t})
\end{equation}
for $y\in\mathcal{O}$ and $s\in\mathbb{Z}$.
\item If $\mathcal{O}=\mathcal{O}_{u^i\cc}$ \textup{(}$i\in\mathbb{Z}$\textup{)} and $\mathfrak{t}$ satisfies the genericity conditions \eqref{eecond2}, then
\begin{equation}\label{Eirel}
\widetilde{E}_{y;\cc}(x;\mathfrak{t})=E_y^{u^i(J)}(x;(u^i\mathfrak{t})\vert_{Q^\vee})\qquad\quad \forall\,y\in\mathcal{O}_{u^i\cc}.
\end{equation}
\end{enumerate}
\end{corollary}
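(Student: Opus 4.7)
The plan is to observe that both parts are essentially direct consequences of general results established earlier in the paper for arbitrary $(\Lambda,\Lambda^\prime)\in(\mathcal{L}^{\times 2})_e$, specialized to $(\Lambda,\Lambda^\prime)=(\ell\mathbb{Z}^{r+1},\ell\mathbb{Z}^{r+1})$. The principal inputs are Remark \ref{zetatranslaterem} and Theorem \ref{propEVext0}(3).

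For part (1), I would first check that $\varpi\in\Lambda\cap E_{\textup{co}}$: one has $\varpi=\ell(\epsilon_1+\cdots+\epsilon_{r+1})\in\ell\mathbb{Z}^{r+1}=\Lambda$, and for every $\alpha=(\epsilon_i-\epsilon_j)/\ell\in\Phi_0$ one has $\alpha(\varpi)=0$, so $\varpi\in E_{\textup{co}}$. Consequently $s\varpi\in\Lambda\cap E_{\textup{co}}$ for all $s\in\mathbb{Z}$. Remark \ref{zetatranslaterem} then gives the desired identity for $s=1$, provided the genericity condition \eqref{eecond} persists on the translated orbit $\tau(\varpi)\mathcal{O}$. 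This persistence is automatic: for $y^\prime=y+\varpi$ we have $\mathfrak{s}_{y^\prime}=\mathfrak{s}_y$ (because $\alpha(\varpi)=0$ annihilates every factor in \eqref{ffrak}) and $\mathfrak{t}_{y^\prime;\cc}=q^\varpi\mathfrak{t}_{y;\cc}$ by \eqref{Lambdatranslatet}, so the tuples $\{\mathfrak{s}_y\mathfrak{t}_{y;\cc}\vert_{\Lambda^\prime}\}_{y\in\mathcal{O}}$ and $\{\mathfrak{s}_{y^\prime}\mathfrak{t}_{y^\prime;\cc}\vert_{\Lambda^\prime}\}_{y^\prime\in\tau(\varpi)\mathcal{O}}$ differ only by multiplication with the fixed character $q^\varpi\vert_{\Lambda^\prime}$. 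Iterating the remark and its inverse then yields \eqref{translE} for arbitrary $s\in\mathbb{Z}$.

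For part (2), the statement is nothing more than the instance of Theorem \ref{propEVext0}(3) in which $(\Lambda,\Lambda^\prime)=(\ell\mathbb{Z}^{r+1},\ell\mathbb{Z}^{r+1})$ and $\omega=u^i\in\Omega_{\ell\mathbb{Z}^{r+1}}$. One checks directly that $\mathcal{O}_{u^i\cc}=\mathcal{O}_{\omega\cc}$ and that $\omega(J)=u^i(J)$ is the unique subset of $[0,r]$ such that $u^i\cc\in C^{u^i(J)}$, which follows from the fact that $u$ stabilises $\overline{C}_+$ and permutes the simple roots by $u\alpha_j=\alpha_{j+1}$ (indices mod $r+1$). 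Formula \eqref{Eyc} then reads exactly \eqref{Eirel}.

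Neither part presents a genuine obstacle once the preceding machinery is in place. The only subtlety is the bookkeeping in part (1): one must confirm that the genericity hypothesis \eqref{eecond} is preserved under translation by $\mathbb{Z}\varpi$, which as noted reduces to a one-line computation using that $\varpi\in E_{\textup{co}}$ kills the base-point factors $\mathfrak{s}_y$ and shifts $\mathfrak{t}_{y;\cc}$ by a uniform scalar character.
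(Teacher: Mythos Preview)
Your proposal is correct and follows exactly the paper's approach: part (1) is a special case of Remark \ref{zetatranslaterem} and part (2) is a special case of Theorem \ref{propEVext0}(3). The extra verifications you supply (that $\varpi\in\Lambda\cap E_{\textup{co}}$, that the genericity hypothesis persists under $\mathbb{Z}\varpi$-translation, and that $u^i(J)=\mathbf{J}(u^i\cc)$) are implicit in the paper's one-line references but are worth making explicit.
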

\begin{proof}
(1) This is a special case of Remark \ref{zetatranslaterem}.\\
(2) This is a special case of Theorem \ref{propEVext0}(3). 
\end{proof}
\begin{remark}
Writing $i=s(r+1)+j$ in \eqref{Eirel} with $s\in\mathbb{Z}$ and $j\in [1,r]$, we have $u^i(J)=u^j(J)$ and $(u^i\mathfrak{t})\vert_{Q^\vee}=(q^{s\varpi}u^j\mathfrak{t})\vert_{Q^\vee}=(u^j\mathfrak{t})\vert_{Q^\vee}$, and so
\[
\widetilde{E}_{y;\cc}(x;\mathfrak{t})=E_y^{u^j(J)}(x;(u^j\mathfrak{t})\vert_{Q^\vee})\qquad \forall\, y\in\mathcal{O}_{u^i\cc}.
\]
This is in agreement with \eqref{translE} in view of Lemma \ref{remE1}(2).
\end{remark}
We finish this subsection by giving explicit descriptions of $T_{\ell\mathbb{Z}^{r+1},J}$ and $T_{\ell\mathbb{Z}^{r+1},J}^{\textup{red}}$.

Identifying $T_{\ell\mathbb{Z}^{r+1}}$ with $(\mathbf{F}^\times)^{r+1}$ by
\[
T_{\ell\mathbb{Z}^{r+1}}\overset{\sim}{\longrightarrow} (\mathbf{F}^\times)^{r+1},\qquad \mathfrak{t}\mapsto (\mathfrak{t}_1,\ldots,\mathfrak{t}_{r+1})
\]
with $\mathfrak{t}_i:=\mathfrak{t}^{\ell\epsilon_i}$ \textup{(}$1\leq i\leq r+1$\textup{)}, we have $\mathfrak{t}^{\alpha_0^\vee}=q^{\ell^2}\mathfrak{t}_{r+1}\mathfrak{t}_1^{-1}$ and 
$\mathfrak{t}^{\alpha_i^\vee}=\mathfrak{t}_i\mathfrak{t}_{i+1}^{-1}$ ($1\leq i\leq r$). Furthermore, $u$ then
acts on $(\mathbf{F}^\times)^{r+1}$ by
\[
u\mathfrak{t}=(q^{\ell^2}\mathfrak{t}_{r+1},\mathfrak{t}_1,\ldots,\mathfrak{t}_r).
\]
Combined with \eqref{stabOmega} we then conclude that for $\cc\in C^J$,
\[
{}^{\ell\mathbb{Z}^{r+1}}T_{\ell\mathbb{Z}^{r+1}}^\cc=T_{\ell\mathbb{Z}^{r+1},J}
\simeq\{\mathfrak{t}\in (\mathbf{F}^\times)^{r+1}\,\, | \,\, \mathfrak{t}_j=\mathfrak{t}_{j+1}\,\, (j\in J_0)\, \hbox{ and } \mathfrak{t}_1=q^{\ell^2}\mathfrak{t}_{r+1}\, 
\hbox{ if }\, 0\in J\}.
\]

An explicit description of $T_{\ell\mathbb{Z}^{r+1},J}^{\textup{red}}$ is as follows. 
 Let $J\subsetneq [0,r]$, which we regard as subset of $[1,r+1]$ by identifying $0\equiv r+1$. 
Then $\mathfrak{t}\in T_{\ell\mathbb{Z}^{r+1}}$ lies in $T_{\ell\mathbb{Z}^{r+1},J}^{\textup{red}}$ if and only if
\[
\mathfrak{t}_i=\mathfrak{t}_{i+1}=\cdots=\mathfrak{t}_j=\mathfrak{t}_{j+1}
\]
for all subintervals $[i,j]$ of $[1,r+1]$ such that $i-1,j+1\not\in J$ (here the numbers are read modulo $r+1$ again). In particular,
$T_{\ell\mathbb{Z}^{r+1},J}^{\textup{red}}$ is a subtorus of $T_{\ell\mathbb{Z}^{r+1}}$ of dimension $\#([0,r]\setminus J)$, cf. Remark \ref{parametersREM} in case of adjoint root datum.

In the case $J=[1,r]$ corresponding to the usual Macdonald polynomials, one thus has
\[
T_{\ell\mathbb{Z}^{r+1},[1,r]}=T_{\ell\mathbb{Z}^{r+1},[1,r]}^{\textup{red}}=\{z^\varpi\,\, | \,\, z\in\mathbf{F}^\times\}.
\]

\section{Uniform quasi-polynomial representations}\label{UnifMainSection}

In this section we re-parametrise the quasi-polynomial representation in a way that allows us to naturally glue them
together to form a uniform quasi-polynomial representation on $\mathbf{F}[E]$. The part of the re-parametrisation which does not involve twisting will be given by a map
\[
\widehat{\mathcal{G}}\rightarrow\{E\rightarrow T_{\Lambda^\prime}\},\qquad \widehat{\mathbf{g}}\mapsto \{y\mapsto\mathfrak{t}_y(\widehat{\mathbf{g}})\}
\]
satisfying $\mathfrak{t}_\cc(\widehat{\mathbf{g}})\in \widetilde{T}_{\Lambda^\prime,\mathbf{J}(\cc)}$ (see \eqref{widetildeTJ})
for all $\cc\in\overline{C}_+$. The resulting uniform quasi-polynomial representation of $\mathbb{H}_{Q^\vee,\Lambda^\prime}$
will be a gauged version of 
\[
\bigoplus_{\cc\in\overline{C}_+}\pi_{\cc,\mathfrak{t}_\cc(\widehat{\mathbf{g}})}^{Q^\vee,\Lambda^\prime}: \mathbb{H}_{Q^\vee,\Lambda^\prime}\rightarrow\textup{End}(\mathbf{F}[E]),
\]
which now depends on $q$, $\mathbf{k}$ and on the parametrising set $\widehat{\mathcal{G}}$ of the different choices of uniformizations of the induction parameters of the quasi-polynomial representations.

Throughout this section we fix $e\in\mathbb{Z}_{>0}$ and a pair $(\Lambda,\Lambda^\prime)\in (\mathcal{L}^{\times 2})_e$.
We assume throughout this section that $\mathbf{F}$ contains a $\textup{lcm}(e,2h)^{\textup{th}}$ root $q^{\frac{1}{\textup{lcm}(e,2h)}}$
of $q$, which we fix once and for all. 

We often do not record the dependence on $\Lambda^\prime$ when dealing with multiplicative characters $t$ of $\Lambda^\prime$ defined as products of co-character values $z^\alpha$ ($z\in\mathbf{F}^\times$, $\alpha\in\Phi_0$) since its interpretation as multiplicative character on $\Lambda^\prime$ is unambiguous (for an example of such a multiplicative character, see
\eqref{fraksprime}). If it is important to specify the lattice 
then we write $t\vert_{\Lambda^\prime}$.

\subsection{The $g$-parameters}\label{S81}
The re-parametrisation of the quasi-polynomial representation is based on factorisations of multiplicative characters in terms of co-characters. 
An important example is the following factorisation of the multiplicative character $q^{\textup{pr}_{E^\prime}(\lambda)}\in T_{\Lambda^\prime}$ for $\lambda\in\Lambda$.
\begin{lemma}\label{qtranslates}
Let $\lambda\in\Lambda$, then as multiplicative character of $\Lambda^\prime$ we have
\[
q^{\textup{pr}_{E^\prime}(\lambda)}\vert_{\Lambda^\prime}=\prod_{\alpha\in\Phi_0^+}\bigl(q_\alpha^{\alpha(\lambda)/h}\bigr)^\alpha\vert_{\Lambda^\prime}.
\]
\end{lemma}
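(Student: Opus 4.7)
The plan is to evaluate both sides of the claimed identity as multiplicative characters of $\Lambda^\prime$ at an arbitrary $\mu\in\Lambda^\prime$, and to reduce the comparison to the formula \eqref{relationinvform} from Proposition \ref{latticecompatibilityprop}.

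First I would observe that both sides are well-defined characters of $\Lambda^\prime$: since $\Lambda\in\mathcal{L}$ we have $\alpha(\lambda)\in\mathbb{Z}$ for all $\alpha\in\Phi_0$, and $2/\|\alpha\|^2\in\mathbb{Z}_{>0}$ by the normalisation convention, so that $q_\alpha^{\alpha(\lambda)/h}=q^{2\alpha(\lambda)/(h\|\alpha\|^2)}\in\mathbf{F}^\times$ makes sense in view of the fixed $\textup{lcm}(e,2h)^{\textup{th}}$ root $\widehat{q}$ of $q$ (using $h\,|\,\textup{lcm}(e,2h)$). The left-hand side evaluates at $\mu\in\Lambda^\prime$ to $q^{\langle\textup{pr}_{E^\prime}(\lambda),\mu\rangle}$ by definition of $q^{\textup{pr}_{E^\prime}(\lambda)}$.

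Next I would evaluate the right-hand side at $\mu$ and collect exponents. Using $q_\alpha=q^{2/\|\alpha\|^2}$ and the defining relation $\langle\mu,\alpha^\vee\rangle=2\alpha(\mu)/\|\alpha\|^2$, the right-hand side at $\mu$ becomes
\[
\prod_{\alpha\in\Phi_0^+}q^{2\alpha(\lambda)\alpha(\mu)/(h\|\alpha\|^2)}
=q^{\frac{1}{h}\sum_{\alpha\in\Phi_0^+}\alpha(\lambda)\langle\mu,\alpha^\vee\rangle}
=q^{\bigl\langle\mu,\,\frac{1}{h}\sum_{\alpha\in\Phi_0^+}\alpha(\lambda)\alpha^\vee\bigr\rangle}.
\]
Finally I would invoke \eqref{relationinvform} from Proposition \ref{latticecompatibilityprop} applied to $y=\lambda$ to rewrite the inner vector as $\textup{pr}_{E^\prime}(\lambda)$, giving $q^{\langle\mu,\textup{pr}_{E^\prime}(\lambda)\rangle}$, which matches the left-hand side by symmetry of $\langle\cdot,\cdot\rangle$.

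There is no genuine obstacle here; the identity is a direct consequence of \eqref{relationinvform}. The only minor care needed is in the bookkeeping of fractional exponents of $q$, which is handled cleanly by the conventions fixed at the start of the section.
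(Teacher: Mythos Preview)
Your proof is correct and follows essentially the same route as the paper: evaluate both sides at an arbitrary $\mu\in\Lambda^\prime$, use the relation $\langle\mu,\alpha^\vee\rangle=2\alpha(\mu)/\|\alpha\|^2$ to combine the exponents, and invoke Proposition~\ref{latticecompatibilityprop} (formula~\eqref{relationinvform}) to identify $\frac{1}{h}\sum_{\alpha\in\Phi_0^+}\alpha(\lambda)\alpha^\vee$ with $\textup{pr}_{E^\prime}(\lambda)$. The only cosmetic difference is that the paper starts from the left-hand side and applies \eqref{relationinvform} first, whereas you start from the right-hand side and apply it last.
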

\begin{proof}
Let $\lambda\in\Lambda$. Then $\textup{pr}_{E^\prime}(\lambda)\in\textup{pr}_{E^\prime}(\Lambda)\in\mathcal{L}$, hence $q^{\textup{pr}_{E^\prime}(\lambda)}$ is well defined as multiplicative character of $\Lambda^\prime$. For $\mu\in\Lambda^\prime$ we have, 
\begin{equation*}
\begin{split}
q^{\langle\textup{pr}_{E^\prime}(\lambda),\mu\rangle}&=q^{\frac{1}{h}\sum_{\alpha\in\Phi_0^+}\alpha(\lambda)\langle\alpha^\vee,\mu\rangle}\\
&=\prod_{\alpha\in\Phi_0^+}\bigl(q_\alpha^{\alpha(\lambda)/h}\bigr)^{\alpha(\mu)},
\end{split}
\end{equation*}
where we used Proposition \ref{latticecompatibilityprop} in the first equality and the fact that $\alpha(\lambda)$ and $\alpha(\mu)$ are integers in the second equality.
\end{proof}
For general multiplicative characters $\mathfrak{t}\in T_{\Lambda^\prime}$ we have the following result.
Recall that 
\[
\Lambda^\prime\cap E^\prime\subseteq\frac{1}{h}Q^\vee
\]
by Proposition \ref{latticecompatibilityprop}.
\begin{lemma}\label{preppar}
Let $\mathfrak{t}\in T_{\Lambda^\prime}$ and suppose that there exists a $\widetilde{\mathfrak{t}}\in T_{\frac{1}{h}Q^\vee}$ such that
\[
\widetilde{\mathfrak{t}}\vert_{\Lambda^\prime\cap E^\prime}=\mathfrak{t}\vert_{\Lambda^\prime\cap E^\prime}.
\] 
For $\alpha\in\Phi_0$ denote by
$\kappa_\alpha(\widetilde{\mathfrak{t}})\in\mathbf{F}^\times$ the value of $\widetilde{\mathfrak{t}}$ at $\alpha^\vee/h$. 
\begin{enumerate}
\item We have
\begin{equation}\label{splitt}
\mathfrak{t}\vert_{\Lambda^\prime\cap E^\prime}=\prod_{\alpha\in\Phi_0^+}\kappa_\alpha(\widetilde{\mathfrak{t}})^\alpha\vert_{\Lambda^\prime\cap E^\prime}
\end{equation}
as multiplicative characters of $\Lambda^\prime\cap E^\prime\in\mathcal{L}$.
\item We have
\[
\kappa_{-\alpha}(\widetilde{\mathfrak{t}})=\kappa_\alpha(\widetilde{\mathfrak{t}})^{-1},\qquad \kappa_{v\alpha}(\widetilde{\mathfrak{t}})=\kappa_\alpha(v^{-1}\widetilde{\mathfrak{t}}),\qquad
\kappa_\alpha(q^\lambda\widetilde{\mathfrak{t}})=q_\alpha^{\alpha(\lambda)/h}\kappa_\alpha(\widetilde{\mathfrak{t}})
\] 
for $v\in W_0$ and $\lambda\in\Lambda$,
where $q^\lambda$ is viewed here as the multiplicative character of $\frac{1}{h}Q^\vee$ mapping $\mu$ to $q^{\langle\lambda,\mu\rangle}$ for $\mu\in\frac{1}{h}Q^\vee$.
\end{enumerate}
\end{lemma}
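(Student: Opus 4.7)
The plan is to verify each claim by direct evaluation of the multiplicative characters involved; no deeper machinery is needed beyond Proposition \ref{latticecompatibilityprop} and the definition $\kappa_\alpha(\widetilde{\mathfrak{t}}) = \widetilde{\mathfrak{t}}^{\alpha^\vee/h}$.

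For part (1), I would evaluate both sides at an arbitrary $\mu\in\Lambda^\prime\cap E^\prime$. The right-hand side gives
\[
\prod_{\alpha\in\Phi_0^+}\kappa_\alpha(\widetilde{\mathfrak{t}})^{\alpha(\mu)} \;=\; \widetilde{\mathfrak{t}}^{\frac{1}{h}\sum_{\alpha\in\Phi_0^+}\alpha(\mu)\alpha^\vee}.
\]
Here one needs that the exponent $\frac{1}{h}\sum_{\alpha\in\Phi_0^+}\alpha(\mu)\alpha^\vee$ lies in $\frac{1}{h}Q^\vee$, which is immediate since each $\alpha(\mu)\in\mathbb{Z}$ (as $\alpha\in\Phi_0$ and $\mu\in\Lambda^\prime$). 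Proposition \ref{latticecompatibilityprop} identifies this exponent with $\textup{pr}_{E^\prime}(\mu)$, and the condition $\mu\in E^\prime$ gives $\textup{pr}_{E^\prime}(\mu)=\mu$. Hence the right-hand side equals $\widetilde{\mathfrak{t}}^{\mu}$, which by the compatibility hypothesis $\widetilde{\mathfrak{t}}|_{\Lambda^\prime\cap E^\prime}=\mathfrak{t}|_{\Lambda^\prime\cap E^\prime}$ equals $\mathfrak{t}^\mu$.

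For part (2), each identity is a one-line verification at the level of characters of $\frac{1}{h}Q^\vee$:
\[
\kappa_{-\alpha}(\widetilde{\mathfrak{t}}) \;=\; \widetilde{\mathfrak{t}}^{-\alpha^\vee/h} \;=\; \kappa_\alpha(\widetilde{\mathfrak{t}})^{-1},
\]
using $(-\alpha)^\vee=-\alpha^\vee$; next
\[
\kappa_{v\alpha}(\widetilde{\mathfrak{t}}) \;=\; \widetilde{\mathfrak{t}}^{(v\alpha)^\vee/h} \;=\; \widetilde{\mathfrak{t}}^{v\alpha^\vee/h} \;=\; (v^{-1}\widetilde{\mathfrak{t}})^{\alpha^\vee/h} \;=\; \kappa_\alpha(v^{-1}\widetilde{\mathfrak{t}}),
\]
using the $W_0$-equivariance $(v\alpha)^\vee = v\alpha^\vee$ and the definition of the contragredient $W_0$-action on $T_{\frac{1}{h}Q^\vee}$; finally,
\[
\kappa_\alpha(q^\lambda\widetilde{\mathfrak{t}}) \;=\; (q^\lambda)^{\alpha^\vee/h}\,\widetilde{\mathfrak{t}}^{\alpha^\vee/h} \;=\; q^{\langle\lambda,\alpha^\vee\rangle/h}\,\kappa_\alpha(\widetilde{\mathfrak{t}}) \;=\; q_\alpha^{\alpha(\lambda)/h}\,\kappa_\alpha(\widetilde{\mathfrak{t}}),
\]
where the last equality uses $\langle\lambda,\alpha^\vee\rangle = 2\alpha(\lambda)/\|\alpha\|^2$ together with the normalisation $q_\alpha=q^{2/\|\alpha\|^2}$ fixed in Subsection \ref{ap}.

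There is no real obstacle: the whole lemma is bookkeeping built on the identity \eqref{relationinvform}, which supplies the single nontrivial ingredient in part (1). The only minor technical point is to confirm that $\alpha^\vee/h\in\frac{1}{h}Q^\vee$ so that $\kappa_\alpha(\widetilde{\mathfrak{t}})$ is defined, and that the fractional exponent $\frac{1}{h}\sum_{\alpha\in\Phi_0^+}\alpha(\mu)\alpha^\vee$ lies in $\frac{1}{h}Q^\vee$ so that $\widetilde{\mathfrak{t}}$ can be evaluated on it; both are immediate.
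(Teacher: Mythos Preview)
Your proof is correct and follows essentially the same approach as the paper: for part (1) you use Proposition \ref{latticecompatibilityprop} (the identity \eqref{relationinvform}) to rewrite the exponent $\frac{1}{h}\sum_{\alpha\in\Phi_0^+}\alpha(\mu)\alpha^\vee$ as $\mu$ for $\mu\in\Lambda^\prime\cap E^\prime$, exactly as the paper does (though the paper runs the computation from $\mathfrak{t}^\lambda$ to the product rather than the reverse). For part (2) the paper simply writes ``This follows by a direct check,'' and your explicit verifications are precisely that check.
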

\begin{proof}
(1) Let $\lambda\in\Lambda^\prime\cap E^\prime$. By Proposition \ref{latticecompatibilityprop} we have 
\[
\mathfrak{t}^\lambda=\widetilde{\mathfrak{t}}^{\,\textup{pr}_{E^\prime}(\lambda)}=\widetilde{\mathfrak{t}}^{\,\sum_{\alpha\in\Phi_0^+}\frac{\alpha(\lambda)}{h}\alpha^\vee}.
\]
Since $\alpha(\lambda)\in\mathbb{Z}$, we conclude that
\[
\mathfrak{t}^\lambda=\prod_{\alpha\in\Phi_0^+}\bigl(\widetilde{\mathfrak{t}}^{\,\alpha^\vee/h}\bigr)^{\alpha(\lambda)}=
\prod_{\alpha\in\Phi_0^+}\kappa_\alpha(\widetilde{\mathfrak{t}})^{\alpha(\lambda)}.
\]
(2) This follows by a direct check.
\end{proof}
Let $\mathcal{G}^{\textup{amb}}$ be the group of tuples $\mathbf{f}:=(f_\alpha)_{\alpha\in\Phi_0}$ of functions $f_\alpha: \mathbb{R}\rightarrow\mathbf{F}^\times$
(the group operation $\mathbf{f}\cdot\mathbf{f}^\prime$ is component-wise pointwise multiplication). 
For fixed $y\in E$ and $\mathbf{f}\in\mathcal{G}^{\textup{amb}}$, consider the multiplicative character
\begin{equation}\label{typrime}
\mathfrak{t}_y(\mathbf{f}):=\prod_{\alpha\in\Phi_0^+}f_\alpha(\alpha(y))^{\alpha}.
\end{equation}
Then $\mathbf{f}\mapsto \mathfrak{t}_y(\mathbf{f})\vert_{\Lambda^\prime}$ defines a group homomorphism $\mathcal{G}^{\textup{amb}}\rightarrow T_{\Lambda^\prime}$.

\begin{remark}\label{examplesunifsurj}\hfill
Suppose that $q^{\frac{1}{\textup{lcm}(e,2h)}}$ is part of an injective group homomorphism $\mathbb{R}\hookrightarrow\mathbf{F}^\times$, $d\mapsto q^{\,d}$.
\begin{enumerate}
\item For $y\in E$ write $q^y\vert_{\Lambda^\prime}$ for the multiplicative character of $\Lambda^\prime$, defined by the formula $\mu\mapsto q^{\langle y,\mu\rangle}$ for $\mu\in\Lambda^\prime$. Then we have for $\cc\in C^J$,
\[
q^\cc\vert_{\Lambda^\prime}\in T_{\Lambda^\prime,J},\qquad\quad q^{\textup{pr}_{E^\prime}(\cc)}\vert_{\Lambda^\prime}\in
\widetilde{T}_{\Lambda^\prime,J}.
\]
\item The map $C^J\cap E^\prime\rightarrow T_J$, $\cc\mapsto q^\cc\vert_{Q^\vee}$ is an injective map with image the set of characters $t\in T_J$ taking values in
the subgroup $\{q^{d}\}_{d\in\mathbb{R}}$ of $\mathbf{F}^\times$. Indeed, for such $t$ its pre-image $\textup{log}_q(t)\in C^J\cap E^\prime$ is the unique vector such that $t^{\alpha_i^\vee}=q^{\langle\alpha_i^\vee,\textup{log}_q(t)\rangle}$ for 
$i\in [1,r]$. 
By (the proof of) Lemma \ref{preppar} we then also have
\[
t=\mathfrak{t}_{\textup{log}_q(t)}(\widehat{\mathbf{p}})\vert_{Q^\vee}
\]
in $T$, with $\widehat{\mathbf{p}}=(\widehat{p}_\alpha)_{\alpha\in\Phi_0}\in\mathcal{G}^{\textup{amb}}$ defined by 
\begin{equation}\label{hatpspecial}
\widehat{p}_\alpha(d):=q^{2d/h\|\alpha\|^2}\qquad\quad (\alpha\in\Phi_0, d\in\mathbb{R}).
\end{equation}
In fact, for this choice of $\widehat{\mathbf{p}}\in\mathcal{G}^{\textup{amb}}$ we have
\begin{equation}\label{qyversion}
\mathfrak{t}_y(\widehat{\mathbf{p}})\vert_{\Lambda^\prime}=q^{\textup{pr}_{E^\prime}(y)}\vert_{\Lambda^\prime}
\qquad (y\in E),
\end{equation}
which immediately follows from \eqref{relationinvform}.
\end{enumerate}
\end{remark}

We will now consider a subset $\widehat{\mathcal{G}}\subset \mathcal{G}^{\textup{amb}}$ such that for all $\widehat{\mathbf{g}}\in\widehat{\mathcal{G}}$, the map
\[
E\rightarrow T_{\Lambda^\prime}, \qquad y\mapsto \mathfrak{t}_y(\widehat{\mathbf{g}})\vert_{\Lambda^\prime}
\] 
is $W$-equivariant. The conditions are motivated by Lemma \ref{preppar}(2) and Remark \ref{examplesunifsurj}(2).
\begin{definition}\label{gpar}
Denote by $\widehat{\mathcal{G}}$ the tuples $\widehat{\mathbf{g}}=(\widehat{g}_\alpha)_{\alpha\in\Phi_0}\in\mathcal{G}^{\textup{amb}}$ satisfying
\begin{enumerate}
\item $\widehat{g}_{v\alpha}(d+\ell)=q_\alpha^{\ell/h}\widehat{g}_\alpha(d)$,
\item $\widehat{g}_\alpha(-d)=\widehat{g}_\alpha(d)^{-1}$,
\item $\widehat{g}_\alpha(0)=1$
\end{enumerate}
for $v\in W_0$, $\alpha\in\Phi_0$, $\ell\in\mathbb{Z}$ and $d\in\mathbb{R}$.
\end{definition}
The restriction of $\widehat{\mathbf{g}}_\alpha$ to $\frac{1}{2}\mathbb{Z}$ is determined by the sign $\epsilon\in\{\pm 1\}$ such that 
\[
\widehat{g}_\alpha\bigl(\frac{1}{2}\bigr)=\epsilon q_{\alpha}^{\frac{1}{2h}}.
\]
Furthermore, the invariance properties from Definition \ref{gpar}(1)\&(2) may be reformulated as an equivariance property for suitable actions of the infinite dihedral group on $\mathbb{R}$ and $\mathbf{F}^\times$. 
The restriction of $\widehat{g}_\alpha$ to $\mathbb{R}\setminus\frac{1}{2}\mathbb{Z}$ is then determined by $\widehat{g}_\alpha\vert_{(0,1/2)}$, which may be chosen arbitrarily.
\begin{example}
$\widehat{\mathbf{p}}\in\mathcal{G}^{\textup{amb}}$ defined in Remark \ref{examplesunifsurj}(2) lies in $\widehat{\mathcal{G}}$.
\end{example}
\begin{remark}\label{basepoint}
The subset $\widehat{\mathcal{G}}$ is a $\mathcal{G}$-coset in $\mathcal{G}^{\textup{amb}}$, with $\mathcal{G}$ the subgroup of $\mathcal{G}^{\textup{amb}}$ 
consisting of tuples $\mathbf{g}=(g_\alpha)_{\alpha\in\Phi_0}$ satisfying
\begin{enumerate}
\item $g_{v\alpha}(d+\ell)=g_\alpha(d)$,
\item $g_\alpha(-d)=g_\alpha(d)^{-1}$,
\item $g_\alpha(0)=1$
\end{enumerate}
for $v\in W_0$, $\alpha\in\Phi_0$, $\ell\in\mathbb{Z}$ and $d\in\mathbb{R}$.
\end{remark}

\begin{lemma}\label{eqqlemma}
Let 
$\widehat{\mathbf{g}}\in\widehat{\mathcal{G}}$. Then 
\begin{equation}\label{eqq}
\mathfrak{t}_\lambda(\widehat{\mathbf{g}})\vert_{\Lambda^\prime}=q^{\textup{pr}_{E^\prime}(\lambda)}\vert_{\Lambda^\prime}
\end{equation}
for all $\lambda\in\Lambda$.
\end{lemma}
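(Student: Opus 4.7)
The plan is to show directly that each factor $\widehat{g}_\alpha(\alpha(\lambda))$ appearing in the product defining $\mathfrak{t}_\lambda(\widehat{\mathbf{g}})$ collapses to the corresponding factor $q_\alpha^{\alpha(\lambda)/h}$ appearing in the expansion of $q^{\textup{pr}_{E^\prime}(\lambda)}|_{\Lambda^\prime}$ established in Lemma~\ref{qtranslates}.

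First I will unpack the definition $\mathfrak{t}_\lambda(\widehat{\mathbf{g}}) = \prod_{\alpha\in\Phi_0^+} \widehat{g}_\alpha(\alpha(\lambda))^\alpha$ from \eqref{typrime} and observe that, because $\Lambda \in \mathcal{L}$, we have $\alpha(\lambda) \in \mathbb{Z}$ for every $\alpha \in \Phi_0$ and $\lambda \in \Lambda$. Next I will apply the defining relations of $\widehat{\mathcal{G}}$ (Definition~\ref{gpar}) at integer arguments: condition (3) gives $\widehat{g}_\alpha(0) = 1$, and condition (1) with $v = \textup{id}$, $d = 0$ and $\ell = \alpha(\lambda) \in \mathbb{Z}$ yields
\[
\widehat{g}_\alpha(\alpha(\lambda)) = q_\alpha^{\alpha(\lambda)/h}\,\widehat{g}_\alpha(0) = q_\alpha^{\alpha(\lambda)/h}.
\]
(Here $q_\alpha^{\alpha(\lambda)/h}$ is well defined as an element of $\mathbf{F}^\times$ via the conventions fixed at the start of Section~\ref{UnifMainSection}, since $h \mid \textup{lcm}(e, 2h)$.)

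Finally I will substitute this evaluation into the definition of $\mathfrak{t}_\lambda(\widehat{\mathbf{g}})$, obtaining
\[
\mathfrak{t}_\lambda(\widehat{\mathbf{g}}) = \prod_{\alpha \in \Phi_0^+} \bigl(q_\alpha^{\alpha(\lambda)/h}\bigr)^\alpha,
\]
and then invoke Lemma~\ref{qtranslates} to identify the right-hand side, restricted to $\Lambda^\prime$, with $q^{\textup{pr}_{E^\prime}(\lambda)}|_{\Lambda^\prime}$, yielding \eqref{eqq}.

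There is no real obstacle here: the statement is essentially a bookkeeping consequence of the integrality condition $\alpha(\Lambda) \subseteq \mathbb{Z}$ built into $\mathcal{L}$ and the $\widehat{q}$-quasi-invariance built into the defining relation of $\widehat{\mathcal{G}}$, with Lemma~\ref{qtranslates} supplying the matching factorisation of $q^{\textup{pr}_{E^\prime}(\lambda)}$.
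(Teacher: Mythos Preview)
Your proof is correct and follows essentially the same approach as the paper: both use the integrality $\alpha(\lambda)\in\mathbb{Z}$ together with Definition~\ref{gpar}(1)\&(3) to evaluate $\widehat{g}_\alpha(\alpha(\lambda))=q_\alpha^{\alpha(\lambda)/h}$, and then invoke Lemma~\ref{qtranslates}.
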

\begin{proof}
For $\lambda\in\Lambda$ and $\alpha\in\Phi_0^+$ we have $\alpha(\lambda)\in\mathbb{Z}$, hence 
\[
\widehat{g}_\alpha(\alpha(\lambda))=q_\alpha^{\alpha(\lambda)/h}
\]
by Definition \ref{gpar}(1)\&(3). The result now follows from
Lemma \ref{qtranslates}.
\end{proof}

\begin{lemma}\label{ginvlemma}
Let $\widehat{\mathbf{g}}\in\widehat{\mathcal{G}}$, $\mathbf{g}\in\mathcal{G}$ and $y\in E$. Then
\begin{enumerate}
\item $\mathfrak{t}_y(\widehat{\mathbf{g}})\vert_{\Lambda^\prime}\in T_{\Lambda^\prime}$ does not depend on the choice $\Phi_0^+$ of positive roots,
\item We have $\mathfrak{t}_{wy}(\widehat{\mathbf{g}})\vert_{\Lambda^\prime}=(w\mathfrak{t}_y(\widehat{\mathbf{g}}))\vert_{\Lambda^\prime}$ for all $w\in W_{P^\vee}$.
\item We have
$\mathfrak{t}_{wy}(\widehat{\mathbf{g}})\vert_{P^\vee}=(w\mathfrak{t}_y(\widehat{\mathbf{g}}))\vert_{P^\vee}$
 for all $w\in W_\Lambda$.
 \item We have $\mathfrak{t}_{wy}(\mathbf{g})=(Dw)\mathfrak{t}_y(\mathbf{g})$ for all $w\in W_\Lambda$.
\end{enumerate}
\end{lemma}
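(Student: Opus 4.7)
The plan is to derive all four statements from two consequences of Definition~\ref{gpar}: the function $\widehat{g}_\alpha$ depends only on the $W_0$-orbit of $\alpha$, and the scalar $A_\alpha := \widehat{g}_\alpha(\alpha(y))$ satisfies $A_{-\alpha}=A_\alpha^{-1}$.

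First I would extract these invariances. Setting $\ell = 0$ in Definition~\ref{gpar}(1) gives $\widehat{g}_{v\alpha}(d) = \widehat{g}_\alpha(d)$ for all $v \in W_0$; applying this with $v=s_\alpha$ yields $\widehat{g}_{-\alpha}=\widehat{g}_\alpha$ pointwise, and then Definition~\ref{gpar}(2) gives $A_{-\alpha}=A_\alpha^{-1}$. Part~(1) is then immediate: replacing $\alpha$ by $-\alpha$ in the defining product \eqref{typrime} changes the factor $A_\alpha^\alpha$ to $A_{-\alpha}^{-\alpha}=A_\alpha^\alpha$. The same reasoning, using Remark~\ref{basepoint} in place of Definition~\ref{gpar}, applies verbatim to $\mathbf{g}\in\mathcal{G}$.

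Next I would handle translations and finite Weyl group reflections separately. For $w=\tau(\mu)$ with $\alpha(\mu)\in\mathbb{Z}$ for all $\alpha\in\Phi_0$, Definition~\ref{gpar}(1) with $v=1$ and $\ell=\alpha(\mu)$ gives $\widehat{g}_\alpha(\alpha(y)+\alpha(\mu)) = q_\alpha^{\alpha(\mu)/h}\widehat{g}_\alpha(\alpha(y))$, and Lemma~\ref{qtranslates} yields
\[
\mathfrak{t}_{y+\mu}(\widehat{\mathbf{g}})\vert_{\Lambda^\prime} = q^{\textup{pr}_{E^\prime}(\mu)}\vert_{\Lambda^\prime} \cdot \mathfrak{t}_y(\widehat{\mathbf{g}})\vert_{\Lambda^\prime}.
\]
Since $\tau(\mu)$ acts on $T_{\Lambda^\prime}$ (resp.\ $T_{P^\vee}$) by multiplication by $q^\mu$, and since $q^\mu$ and $q^{\textup{pr}_{E^\prime}(\mu)}$ agree as characters of any sublattice of $E^\prime$, equality on $\Lambda^\prime$ holds for $\mu\in P^\vee\subset E^\prime$ (yielding~(2)), while equality on $P^\vee$ holds for general $\mu\in\Lambda$ (yielding~(3)). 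For $w=v\in W_0$, using $\widehat{g}_\alpha=\widehat{g}_{v^{-1}\alpha}$ and the substitution $\beta=v^{-1}\alpha$, both sides can be written as
\[
\mathfrak{t}_{vy}(\widehat{\mathbf{g}})^\nu = \prod_{\beta\in v^{-1}\Phi_0^+}A_\beta^{(v\beta)(\nu)}, \qquad (v\mathfrak{t}_y(\widehat{\mathbf{g}}))^\nu = \prod_{\alpha\in\Phi_0^+}A_\alpha^{(v\alpha)(\nu)},
\]
and these two expressions agree by part~(1) applied to the alternative positive system $v^{-1}\Phi_0^+$. Combining the translation and reflection cases produces parts~(2) and~(3).

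Part~(4) follows by the same procedure applied to $\mathbf{g}\in\mathcal{G}$: the translation step is trivial, since the defining invariance in Remark~\ref{basepoint} has no $q$-scaling, so $\mathfrak{t}_{y+\mu}(\mathbf{g})=\mathfrak{t}_y(\mathbf{g})$ for all $\mu\in\Lambda$ (matching $D\tau(\mu)=1$), while the $W_0$-argument proceeds verbatim.

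The main obstacle is the bookkeeping in the $W_0$-equivariance step. Conceptually it amounts to the observation that part~(1)---independence of the choice of positive system---implies $W_0$-equivariance, since the two sides of the equation in~(2) are really $\mathfrak{t}_y(\widehat{\mathbf{g}})$ computed with respect to the two positive systems $\Phi_0^+$ and $v^{-1}\Phi_0^+$, paired with the same exponents. However, care is required to track the exponents $(v\alpha)(\nu)$ versus $\alpha(\nu)$ through the reindexing and to verify that the sign flips produced by $A_{-\gamma}=A_\gamma^{-1}$ on the symmetric difference $\Phi_0^+\cap v^{-1}\Phi_0^-$ combine correctly with the sign flips in the exponents to yield the claimed character identity.
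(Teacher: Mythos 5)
Your proposal is correct and follows essentially the same route as the paper's proof: part (1) via the sign-flip identity $\widehat{g}_{-\alpha}(-d)=\widehat{g}_\alpha(d)^{-1}$, the $W_0$-equivariance by reindexing the product over a different positive system together with $\widehat{g}_{v\alpha}=\widehat{g}_\alpha$, the translation case via the quasi-periodicity and Lemma~\ref{qtranslates} giving the factor $q^{\textup{pr}_{E^\prime}(\lambda)}$, and the distinction between (2) and (3) coming from $\textup{pr}_{E^\prime}(\mu)=\mu$ for $\mu\in P^\vee$ versus agreement of $q^{\lambda}$ and $q^{\textup{pr}_{E^\prime}(\lambda)}$ on $P^\vee\subset E^\prime$. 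The treatment of (4) (triviality of the translation step by $1$-periodicity of $g_\alpha$, plus the same $W_0$ argument) also matches the paper.
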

\begin{proof}
(1) This follows from the fact that
$\widehat{g}_{-\alpha}(-d)=\widehat{g}_{-\alpha}(d)^{-1}=\widehat{g}_\alpha(d)^{-1}$.\\
(2)\&(3) For $v\in W_0$ we have
\[
\mathfrak{t}_{vy}(\widehat{\mathbf{g}})=\prod_{\alpha\in\Phi_0^+}\widehat{g}_{v\alpha}(\alpha(y))^{v\alpha}=
\prod_{\alpha\in\Phi_0^+}\widehat{g}_{\alpha}(\alpha(y))^{v\alpha}=v\mathfrak{t}_y(\widehat{\mathbf{g}})
\]
in $T_{\Lambda^\prime}$,
where we have used (1) in the first equality. For $\lambda\in\Lambda$ we have in $T_{\Lambda^\prime}$, 
\begin{equation}\label{projtrans}
\mathfrak{t}_{y+\lambda}(\widehat{\mathbf{g}})=\prod_{\alpha\in\Phi_0^+}\widehat{g}_\alpha(\alpha(y)+\alpha(\lambda))^\alpha=\prod_{\alpha\in\Phi_0^+}q_\alpha^{\alpha(\lambda)\alpha/h}\widehat{g}_\alpha(\alpha(y))^\alpha=q^{\textup{pr}_{E^\prime}(\lambda)}\mathfrak{t}_y(\widehat{\mathbf{g}}),
\end{equation}
where the last equality is due to Lemma \ref{qtranslates}. When $\lambda\in P^\vee$ we conclude from \eqref{projtrans} that
$\mathfrak{t}_{y+\lambda}(\widehat{\mathbf{g}})=q^\lambda\mathfrak{t}_y(\widehat{\mathbf{g}})=\tau(\lambda)\mathfrak{t}_y(\widehat{\mathbf{g}})$ in $T_{\Lambda^\prime}$,
which proves (2). For (3) note that for $\lambda\in\Lambda$,
\[
q^{\textup{pr}_{E^\prime}(\lambda)}\vert_{P^\vee}=q^\lambda\vert_{P^\vee}
\]
since $P^\vee\subset E^\prime$. Hence \eqref{projtrans} also implies that $\mathfrak{t}_{y+\lambda}(\widehat{\mathbf{g}})\vert_{P^\vee}=(\tau(\lambda)\mathfrak{t}_y(\widehat{\mathbf{g}}))\vert_{P^\vee}$ for all $\lambda\in\Lambda$. This proves (3).\\
(4) This is immediate for $w=\tau(\lambda)$ with $\lambda\in\Lambda$ since $g_\alpha$ is $1$-periodic. For $w=v\in W_0$ it follows as in the proof of (2).
\end{proof}
Denote $\widetilde{T}_{\Lambda,J}^{\textup{red}}:=T_{\Lambda,J}^{\textup{red}}\cap\widetilde{T}_\Lambda$, with $\widetilde{T}_\Lambda$ given by \eqref{widetildeTJ}.
\begin{corollary}\label{ginvcor}
Let $\cc\in C^J$, $\widehat{\mathbf{g}}\in\widehat{\mathcal{G}}$ and $\mathbf{g}\in\mathcal{G}$. We then have $\mathfrak{t}_\cc(\widehat{\mathbf{g}})\vert_{\Lambda^\prime}\in \widetilde{T}_{\Lambda^\prime,J}$ and $\mathfrak{t}_\cc(\mathbf{g})\vert_{\Lambda^\prime}\in \widetilde{T}_{\Lambda^\prime,J}^{\textup{red}}$.
\end{corollary}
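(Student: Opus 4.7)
The statement decomposes into showing $\mathfrak{t}_\cc(\widehat{\mathbf{g}})\vert_{\Lambda^\prime}$ lies in $T_{\Lambda^\prime,J}$ and $\mathfrak{t}_\cc(\mathbf{g})\vert_{\Lambda^\prime}$ lies in $T_{\Lambda^\prime,J}^{\textup{red}}$, plus in both cases the triviality on $\Lambda^\prime\cap E_{\textup{co}}$. The $\widetilde{T}_{\Lambda^\prime}$-part is immediate: for any $\mathbf{f}\in\mathcal{G}^{\textup{amb}}$ and $\mu\in\Lambda^\prime\cap E_{\textup{co}}$ we have $\alpha(\mu)=0$ for all $\alpha\in\Phi_0$, so every exponent in the product \eqref{typrime} defining $\mathfrak{t}_y(\mathbf{f})^\mu$ vanishes. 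This handles both $\widehat{\mathbf{g}}$ and $\mathbf{g}$ simultaneously.

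For $j\in J_0$, we have $D\alpha_j^\vee=\alpha_j^\vee$, so both required invariances reduce to $\mathfrak{t}_\cc(\mathbf{f})^{\alpha_j^\vee}=1$ for $\mathbf{f}\in\{\widehat{\mathbf{g}},\mathbf{g}\}$. The plan is to pair the factors in $\prod_{\alpha\in\Phi_0^+}f_\alpha(\alpha(\cc))^{\alpha(\alpha_j^\vee)}$ via the involution $\alpha\mapsto s_j\alpha$ on $\Phi_0^+\setminus\{\alpha_j\}$. Using $\alpha_j(\cc)=0$ one has $(s_j\alpha)(\cc)=\alpha(\cc)$ and $(s_j\alpha)(\alpha_j^\vee)=-\alpha(\alpha_j^\vee)$, while condition \textup{(1)} of Definition \ref{gpar} (respectively of Remark \ref{basepoint}) with $\ell=0$ gives $f_{s_j\alpha}(d)=f_\alpha(d)$. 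Hence each paired contribution collapses to $1$, and the $\alpha=\alpha_j$ factor equals $f_{\alpha_j}(0)^2=1$ by condition \textup{(3)}.

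For the case $j=0\in J$, we first treat $\mathbf{g}\in\mathcal{G}$, where $D\alpha_0^\vee=-\varphi^\vee$ and the condition becomes $\mathfrak{t}_\cc(\mathbf{g})^{\varphi^\vee}=1$. Here I will use $\varphi(\cc)=1$ and the involution $\alpha\mapsto\varphi-\alpha$ on $\Pi(s_\varphi)\setminus\{\varphi\}$; the identity $\|\varphi-\alpha\|^2=\|\alpha\|^2$ (from $\alpha(\varphi^\vee)=1$) places $\varphi-\alpha$ in the same $W_0$-orbit as $\alpha$ by irreducibility and reducedness of $\Phi_0$, so condition \textup{(1)} with $\ell=0$ yields $g_{\varphi-\alpha}=g_\alpha$. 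Combined with $1$-periodicity of $g_\alpha$ (condition \textup{(1)} with $v=1$) and condition \textup{(2)}, each pair contributes $g_\alpha(\alpha(\cc))\cdot g_\alpha(1-\alpha(\cc))=g_\alpha(\alpha(\cc))\cdot g_\alpha(-\alpha(\cc))=1$, while the $\alpha=\varphi$ term is $g_\varphi(1)^2=g_\varphi(0)^2=1$.

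The remaining case $j=0\in J$ for $\widehat{\mathbf{g}}\in\widehat{\mathcal{G}}$, which is where I expect the only real obstacle, demands $\mathfrak{t}_\cc(\widehat{\mathbf{g}})^{\varphi^\vee}=q_\varphi$ (since $x^{\alpha_0^\vee}=q_\varphi x^{-\varphi^\vee}$). The same pairing now produces pair contributions $\widehat{g}_\alpha(\alpha(\cc))\cdot\widehat{g}_\alpha(1-\alpha(\cc))=\widehat{g}_\alpha(\alpha(\cc))\cdot q_\alpha^{1/h}\widehat{g}_\alpha(-\alpha(\cc))=q_\alpha^{1/h}$ (from condition \textup{(1)} with $\ell=1$ and condition \textup{(2)}), together with $\widehat{g}_\varphi(1)^2=q_\varphi^{2/h}$, giving
\[
\mathfrak{t}_\cc(\widehat{\mathbf{g}})^{\varphi^\vee}=q_\varphi^{2/h}\prod_{\alpha\in\Pi(s_\varphi)\setminus\{\varphi\}}q_\alpha^{1/(2h)}.
\]
The task then reduces to the numerical identity
\[
\sum_{\alpha\in\Pi(s_\varphi)\setminus\{\varphi\}}\frac{\|\varphi\|^2}{\|\alpha\|^2}=2(h-2),
\]
which I will derive by computing $\varphi(\rho^\vee)=\frac{1}{2}\sum_{\alpha\in\Phi_0^+}\varphi(\alpha^\vee)$: substituting $\varphi(\alpha^\vee)=\alpha(\varphi^\vee)\|\varphi\|^2/\|\alpha\|^2$ and using the trichotomy $\alpha(\varphi^\vee)\in\{0,1,2\}$ on $\Phi_0^+$ from \eqref{thetalength}, the sum reduces to $1+\frac{1}{2}\sum_{\alpha\in\Pi(s_\varphi)\setminus\{\varphi\}}\|\varphi\|^2/\|\alpha\|^2$, and equating with the classical value $\varphi(\rho^\vee)=h-1$ yields the claim.
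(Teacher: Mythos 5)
Your proof is correct, but it takes a genuinely different route from the paper. The paper deduces the corollary in two lines from the equivariance already established in Lemma \ref{ginvlemma}: since $s_j\cc=\cc$ for $j\in J$, Lemma \ref{ginvlemma}(2) gives $s_j\mathfrak{t}_\cc(\widehat{\mathbf{g}})=\mathfrak{t}_{s_j\cc}(\widehat{\mathbf{g}})=\mathfrak{t}_\cc(\widehat{\mathbf{g}})$, and then \eqref{actiononT} yields $\mathfrak{t}_\cc(\widehat{\mathbf{g}})^{\alpha_j^\vee}=1$ (including the affine node, where the $q_\varphi$-shift is built into the affine action on the torus); the $\mathbf{g}$-statement follows the same way from Lemma \ref{ginvlemma}(4), and triviality on $\Lambda'\cap E_{\textup{co}}$ is immediate, as in your first paragraph. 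The numerical content that you extract by hand via $\varphi(\rho^\vee)=h-1$ is, in the paper's route, hidden inside Proposition \ref{latticecompatibilityprop} (the identity $\frac{1}{h}\sum_{\alpha\in\Phi_0^+}\alpha(y)\alpha^\vee=\textup{pr}_{E^\prime}(y)$, proved by Schur's lemma and $h=\#\Phi_0/r$), which feeds Lemma \ref{qtranslates} and hence the equivariance in Lemma \ref{ginvlemma}(2); your reduction to $\sum_{\alpha\in\Pi(s_\varphi)\setminus\{\varphi\}}\|\varphi\|^2/\|\alpha\|^2=2(h-2)$ is exactly the rank-one shadow of that identity evaluated against $\varphi^\vee$, and your derivation of it is valid. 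What your approach buys is a self-contained, purely root-theoretic verification that does not invoke the equivariance machinery; what the paper's buys is brevity and reuse of infrastructure needed elsewhere anyway. Two cosmetic remarks: in the case $i\in J_0$ the involution $\alpha\mapsto s_i\alpha$ on $\Phi_0^+\setminus\{\alpha_i\}$ has fixed points (the roots with $\alpha(\alpha_i^\vee)=0$), which you should note contribute trivially because their exponent vanishes; and the membership of $\varphi-\alpha$ in the $W_0$-orbit of $\alpha$ follows more directly from $\varphi-\alpha=-s_\varphi\alpha=s_\varphi s_\alpha\alpha$, so the equal-length argument, though correct, is not needed.
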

\begin{proof}
Let $\cc\in C^J$ and $j\in J$. We have $s_j\mathfrak{t}_{\cc}(\widehat{\mathbf{g}})=\mathfrak{t}_{s_j\cc}(\widehat{\mathbf{g}})=\mathfrak{t}_{\cc}(\widehat{\mathbf{g}})$ in $T_{P^\vee}$ by Lemma \ref{ginvlemma}(2).  
By \eqref{actiononT} we conclude that $\mathfrak{t}_{\cc}(\widehat{\mathbf{g}})^{\alpha_j^\vee}=1$. Hence $\mathfrak{t}_\cc(\widehat{\mathbf{g}})\in T_{\Lambda^\prime,J}$.
Clearly $\mathfrak{t}_\cc(\widehat{\mathbf{g}})\vert_{\Lambda^\prime\cap E_{\textup{co}}}\equiv 1$, hence $\mathfrak{t}_\cc(\widehat{\mathbf{g}})\in\widetilde{T}_{\Lambda^\prime,J}$. The statement about $\mathfrak{t}_\cc(\mathbf{g})$ follows in the same way, now using Lemma \ref{ginvlemma}(3).
\end{proof}

We will now add a $T_{\Lambda^\prime,[1,r]}$-factor to $\mathfrak{t}_y(\widehat{\mathbf{g}})$ (cf. Lemma \ref{resLambdaQ}). For the associated extended quasi-polynomial representations of $\mathbb{H}_{Q^\vee,\Lambda^\prime}$, this corresponds to twisting by an algebra automorphism $\Xi_{\mathfrak{t}}$ ($\mathfrak{t}\in T_{\Lambda^\prime,[1,r]}\simeq\textup{Hom}(\Omega_{\Lambda^\prime},\mathbf{F}^\times)$), see (the proof of) Proposition \ref{twistactionprop}.

\begin{definition}\hfill
\begin{enumerate}
\item Denote by $\mathcal{C}_{\Lambda,\Lambda^\prime}$ the space of functions $\mathfrak{c}: E_{\textup{co}}\rightarrow T_{\Lambda^\prime,[1,r]}$ such that 
\begin{equation}\label{ctrans}
\mathfrak{c}(y+\lambda)=q^\lambda\mathfrak{c}(y)\qquad \forall\, y\in E_{\textup{co}},\, \forall\, \lambda\in\textup{pr}_{E_{\textup{co}}}(\Lambda).
\end{equation}
\item For $\mathfrak{c}\in\mathcal{C}_{\Lambda,\Lambda^\prime}$, $\widehat{\mathbf{g}}\in\widehat{\mathcal{G}}$ and $y\in E$ set
\[
\mathfrak{t}_y(\widehat{\mathbf{g}},\mathfrak{c}):=\mathfrak{t}_y(\widehat{\mathbf{g}})\mathfrak{c}\bigl(\textup{pr}_{E_{\textup{co}}}(y)\bigr)\in T_{\Lambda^\prime}
\]
\textup{(}we suppress the dependence on the two lattices $\Lambda,\Lambda^\prime$\textup{)}.
\end{enumerate}
\end{definition}
Since $\mathfrak{c}(y)\vert_{Q^\vee}=1_T$ for $\mathfrak{c}\in\mathcal{C}_{\Lambda,\Lambda^\prime}$ and $y\in E_{\textup{co}}$, the set $\mathcal{C}_{\Lambda,\Lambda^\prime}$ identifies with the set of functions $E_{\textup{co}}/\textup{pr}_{E_{\textup{co}}}(\Lambda)\rightarrow \textup{Hom}(\Lambda^\prime/Q^\vee,\mathbf{F}^\times)\simeq\textup{Hom}(\Omega_{\Lambda^\prime},\mathbf{F}^\times)$.
In particular, $\mathcal{C}_{\Lambda,\Lambda^\prime}$ is nonempty. Note that
\begin{equation}\label{cextractLambda}
\mathfrak{c}(\textup{pr}_{E_{\textup{co}}}(\lambda))=q^{\textup{pr}_{E_{\textup{co}}}(\lambda)}\mathfrak{c}(0)\qquad\quad (\lambda\in\Lambda)
\end{equation}
as multiplicative characters of $\Lambda^\prime$.
Furthermore, if $(\Lambda_i,\Lambda_i^\prime)\in (\mathcal{L}^{\times 2})_e$ and 
$(\Lambda_1,\Lambda_1^\prime)\leq (\Lambda_2,\Lambda_2^\prime)$ then the assignment $\mathfrak{c}\mapsto \mathfrak{c}(\cdot)\vert_{\Lambda_1^\prime}$ defines a map
$\mathcal{C}_{\Lambda_2,\Lambda_2^\prime}\rightarrow\mathcal{C}_{\Lambda_1,\Lambda_1^\prime}$.

For $\mu\in\Lambda^\prime$ we have
\begin{equation}\label{decomptt}
\mathfrak{t}_y(\widehat{\mathbf{g}},\mathfrak{c})^{\mu}=\mathfrak{t}_y(\widehat{\mathbf{g}})^{\textup{pr}_{E^\prime}(\mu)}\mathfrak{c}(\textup{pr}_{E_{\textup{co}}}(\cc_y))^\mu.
\end{equation}
Special cases of this formula are
\[
\mathfrak{t}_y(\widehat{\mathbf{g}},\mathfrak{c})^\mu=\mathfrak{t}_y(\widehat{\mathbf{g}})^\mu\quad (\mu\in Q^\vee),\qquad
\mathfrak{t}_y(\widehat{\mathbf{g}},\mathfrak{c})^\nu=\mathfrak{c}(\textup{pr}_{E_{\textup{co}}}(\cc_y))^\nu\quad (\nu\in \Lambda^\prime\cap E_{\textup{co}}).
\]
\begin{example}\label{exampleqbaseExt}
Consider the setup of Remark \ref{examplesunifsurj}.
Then
\[
\mathfrak{c}(y):=q^y\in T_{\Lambda^\prime}\qquad (y\in E_{\textup{co}})
\]
lies $\mathcal{C}_{\Lambda,\Lambda^\prime}$. Taking $\mathbf{p}=(\widehat{p}_\alpha)_{\alpha\in\Phi_0}\in\widehat{\mathcal{G}}$ with $\widehat{p}_\alpha(d)$ defined by \eqref{hatpspecial}, we then have for any $y\in E$,
\begin{equation}\label{qyversionExt}
\mathfrak{t}_y(\widehat{\mathbf{p}},\mathfrak{c})=\mathfrak{t}_y(\widehat{\mathbf{p}})q^{\textup{pr}_{E_{\textup{co}}}(y)}=q^y
\end{equation}
in $T_{\Lambda^\prime}$. Here we used \eqref{qyversion}
for the second equality.
\end{example}
Lemma \ref{ginvlemma} and Corollary \ref{ginvcor} generalize as follows.
\begin{lemma}\label{ginvlemmaext}
Let $\widehat{\mathbf{g}}\in\widehat{\mathcal{G}}$, $\mathfrak{c}\in\mathcal{C}_{\Lambda,\Lambda^\prime}$ and $y\in E$. 
\begin{enumerate}
\item For $w\in W_\Lambda$ we have
\[
\mathfrak{t}_{wy}(\widehat{\mathbf{g}},\mathfrak{c})=w\mathfrak{t}_y(\widehat{\mathbf{g}},\mathfrak{c})
\]
in $T_{\Lambda^\prime}$.
\item $\mathfrak{t}_\cc(\widehat{\mathbf{g}},\mathfrak{c})\in {}^{\Lambda}T_{\Lambda^\prime}^\cc$ for $\cc\in\overline{C}_+$. 
In particular, for $\cc\in C^J$ we have $\mathfrak{t}_{\cc}(\widehat{\mathbf{p}},\mathfrak{c})\in T_{\Lambda^\prime,J}$.
\item For $y=\lambda\in\Lambda$ we have 
\[
\mathfrak{t}_\lambda(\widehat{\mathbf{g}},\mathfrak{c})=q^\lambda\mathfrak{c}(0)
\]
as multiplicative characters of $\Lambda^\prime$.
\end{enumerate}
\end{lemma}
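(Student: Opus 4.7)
The strategy is to extend the proofs of Lemma~\ref{ginvlemma} and Corollary~\ref{ginvcor} by tracking the extra cocycle factor $\mathfrak{c}(\textup{pr}_{E_{\textup{co}}}(y))$, using \eqref{ctrans}, \eqref{cextractLambda} and the fact that $\mathfrak{c}$ takes values in $T_{\Lambda',[1,r]}\subseteq T_{\Lambda'}^{W_0}$. Part (3) is purely computational and I would do it first; part (1) reduces to two cases (pure translations and $W_0$); part (2) then follows from (1) applied to the stabilizer $W_{\Lambda,\cc}$.

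For (3), let $\lambda\in\Lambda$. By Lemma~\ref{eqqlemma}, $\mathfrak{t}_\lambda(\widehat{\mathbf{g}})\vert_{\Lambda'}=q^{\textup{pr}_{E'}(\lambda)}\vert_{\Lambda'}$, and by \eqref{cextractLambda} (which is \eqref{ctrans} with $y=0$ and translation $\textup{pr}_{E_{\textup{co}}}(\lambda)\in\textup{pr}_{E_{\textup{co}}}(\Lambda)$), $\mathfrak{c}(\textup{pr}_{E_{\textup{co}}}(\lambda))=q^{\textup{pr}_{E_{\textup{co}}}(\lambda)}\mathfrak{c}(0)$. Multiplying,
\[
\mathfrak{t}_\lambda(\widehat{\mathbf{g}},\mathfrak{c})=q^{\textup{pr}_{E'}(\lambda)}q^{\textup{pr}_{E_{\textup{co}}}(\lambda)}\mathfrak{c}(0)=q^\lambda\mathfrak{c}(0)
\]
as multiplicative characters of $\Lambda'$.

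For (1), write $w=v\tau(\lambda)$ with $v\in W_0$ and $\lambda\in\Lambda$, and reduce to the two generating cases. For $w=v\in W_0$, $\textup{pr}_{E_{\textup{co}}}(vy)=\textup{pr}_{E_{\textup{co}}}(y)$ since $W_0$ fixes $E_{\textup{co}}$ pointwise, and $v\mathfrak{c}(\textup{pr}_{E_{\textup{co}}}(y))=\mathfrak{c}(\textup{pr}_{E_{\textup{co}}}(y))$ because $\mathfrak{c}$ is valued in $T_{\Lambda',[1,r]}\subseteq T_{\Lambda'}^{W_0}$; combined with Lemma~\ref{ginvlemma}(2) this yields $\mathfrak{t}_{vy}(\widehat{\mathbf{g}},\mathfrak{c})=v\mathfrak{t}_y(\widehat{\mathbf{g}},\mathfrak{c})$. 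For $w=\tau(\lambda)$ with $\lambda\in\Lambda$, the identity \eqref{projtrans} gives $\mathfrak{t}_{y+\lambda}(\widehat{\mathbf{g}})=q^{\textup{pr}_{E'}(\lambda)}\mathfrak{t}_y(\widehat{\mathbf{g}})$ in $T_{\Lambda'}$, while \eqref{ctrans} gives $\mathfrak{c}(\textup{pr}_{E_{\textup{co}}}(y)+\textup{pr}_{E_{\textup{co}}}(\lambda))=q^{\textup{pr}_{E_{\textup{co}}}(\lambda)}\mathfrak{c}(\textup{pr}_{E_{\textup{co}}}(y))$; multiplying and using $q^{\textup{pr}_{E'}(\lambda)}q^{\textup{pr}_{E_{\textup{co}}}(\lambda)}=q^\lambda=\tau(\lambda)$ (as operators on $T_{\Lambda'}$), we obtain $\mathfrak{t}_{y+\lambda}(\widehat{\mathbf{g}},\mathfrak{c})=\tau(\lambda)\mathfrak{t}_y(\widehat{\mathbf{g}},\mathfrak{c})$.

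For (2), fix $\cc\in C^J$. For $j\in J$ we have $s_j\in W\subseteq W_\Lambda$ and $s_j\cc=\cc$, so by (1), $s_j\mathfrak{t}_\cc(\widehat{\mathbf{g}},\mathfrak{c})=\mathfrak{t}_{s_j\cc}(\widehat{\mathbf{g}},\mathfrak{c})=\mathfrak{t}_\cc(\widehat{\mathbf{g}},\mathfrak{c})$, which by \eqref{actiononT} forces $\mathfrak{t}_\cc(\widehat{\mathbf{g}},\mathfrak{c})^{\alpha_j^\vee}=1$; hence $\mathfrak{t}_\cc(\widehat{\mathbf{g}},\mathfrak{c})\in T_{\Lambda',J}$. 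Likewise, for any $\omega\in\Omega_{\Lambda,\cc}\subseteq W_\Lambda$, part (1) gives $\omega\mathfrak{t}_\cc(\widehat{\mathbf{g}},\mathfrak{c})=\mathfrak{t}_{\omega\cc}(\widehat{\mathbf{g}},\mathfrak{c})=\mathfrak{t}_\cc(\widehat{\mathbf{g}},\mathfrak{c})$, verifying the $\Omega_{\Lambda,\cc}$-invariance required by Definition~\ref{Tres}. The only minor subtlety is bookkeeping the interplay between $\textup{pr}_{E'}$, $\textup{pr}_{E_{\textup{co}}}$, and the two lattices — the result is essentially forced once one notices that the decomposition $\lambda=\textup{pr}_{E'}(\lambda)+\textup{pr}_{E_{\textup{co}}}(\lambda)$ matches the decomposition $\mathfrak{t}_y(\widehat{\mathbf{g}},\mathfrak{c})=\mathfrak{t}_y(\widehat{\mathbf{g}})\cdot\mathfrak{c}(\textup{pr}_{E_{\textup{co}}}(y))$.
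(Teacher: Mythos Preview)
Your proofs of (1) and (3) are correct and essentially identical to the paper's. For (2), your verification of $\Omega_{\Lambda,\cc}$-invariance via part (1) also matches the paper.

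There is, however, a small gap in your argument that $\mathfrak{t}_\cc(\widehat{\mathbf{g}},\mathfrak{c})\in T_{\Lambda',J}$. From $s_j\mathfrak{t}_\cc(\widehat{\mathbf{g}},\mathfrak{c})=\mathfrak{t}_\cc(\widehat{\mathbf{g}},\mathfrak{c})$ in $T_{\Lambda'}$, formula \eqref{actiononT} only yields $\bigl(\mathfrak{t}_\cc(\widehat{\mathbf{g}},\mathfrak{c})^{-\alpha_j^\vee}\bigr)^{D\alpha_j}=1_{T_{\Lambda'}}$, i.e., the scalar $\mathfrak{t}_\cc(\widehat{\mathbf{g}},\mathfrak{c})^{\alpha_j^\vee}$ is annihilated by every integer in $D\alpha_j(\Lambda')$. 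This forces the value to be $1$ only when $D\alpha_j(\Lambda')=\mathbb{Z}$, which need not hold for a general $\Lambda'\in\mathcal{L}$; the paper flags exactly this point in the remark following \eqref{inclusions}, where it observes that the inclusion $T_{\Lambda,J}\subseteq T_\Lambda^{W_J}$ can be strict. The paper sidesteps the issue by arguing factorwise: Corollary~\ref{ginvcor} already gives $\mathfrak{t}_\cc(\widehat{\mathbf{g}})\in\widetilde{T}_{\Lambda',J}$ (its proof works in $T_{P^\vee}$, where $D\alpha_j(P^\vee)=\mathbb{Z}$ and the implication is valid), and $\mathfrak{c}(\textup{pr}_{E_{\textup{co}}}(\cc))\in T_{\Lambda',[1,r]}\subseteq T_{\Lambda',J}$, so the product lies in $T_{\Lambda',J}$. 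This is precisely the route your proof plan announced; the shortcut through part (1) does not quite close.
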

\begin{proof}
(1) By Lemma \ref{ginvlemma}(2) and the fact that $\mathfrak{c}$ takes values in $T_{\Lambda^\prime}^{W_0}$, we have 
$\mathfrak{t}_{vy}(\widehat{\mathbf{g}},\mathfrak{c})=v\mathfrak{t}_y(\widehat{\mathbf{g}},\mathfrak{c})$ in $T_{\Lambda^\prime}$ for $v\in W_0$. 
Let $\lambda\in\Lambda$. By \eqref{projtrans} we have
\[
\mathfrak{t}_{y+\lambda}(\widehat{\mathbf{g}})=q^{\textup{pr}_{E^\prime}(\lambda)}\mathfrak{t}_y(\widehat{\mathbf{g}})
\]
in $T_{\Lambda^\prime}$. By \eqref{ctrans} we have
\[
\mathfrak{c}(\textup{pr}_{E_{\textup{co}}}(y+\lambda))=q^{\textup{pr}_{E_{\textup{co}}}(\lambda)}\mathfrak{c}(\textup{pr}_{E_{\textup{co}}}(y))
\]
in $T_{\Lambda^\prime}$. Combining both formulas, we conclude that
\[
\mathfrak{t}_{y+\lambda}(\widehat{\mathbf{g}},\mathfrak{c})=q^{\lambda}\mathfrak{t}_y(\widehat{\mathbf{g}},\mathfrak{c})
\]
in $T_{\Lambda^\prime}$, which completes the proof.\\
(2) Suppose now that $\cc\in C^J$. Then  $\mathfrak{t}_\cc(\widehat{\mathbf{g}},\mathfrak{c})\in T_{\Lambda^\prime,J}$ by Corollary \ref{ginvcor} and by the fact that $\mathfrak{c}$ takes values in  $T_{\Lambda^\prime,[1,r]}$. Furthermore, for $\omega\in\Omega_{\Lambda,\cc}$,
\[
\omega\mathfrak{t}_\cc(\widehat{\mathbf{g}},\mathfrak{c})=\mathfrak{t}_{\omega\cc}(\widehat{\mathbf{g}},\mathfrak{c})=\mathfrak{t}_\cc(\widehat{\mathbf{g}},\mathfrak{c})
\]
by part (1) of the lemma. Hence $\mathfrak{t}_\cc(\widehat{\mathbf{g}},\mathfrak{c})\in {}^{\Lambda}T_{\Lambda^\prime}^\cc$.\\
(3) For $\lambda\in\Lambda$ we have in $T_{\Lambda^\prime}$,
\[
\mathfrak{t}_\lambda(\widehat{\mathbf{g}},\mathfrak{c})=\mathfrak{t}_\lambda(\widehat{\mathbf{g}})\mathfrak{c}(\textup{pr}_{E_{\textup{co}}}(\lambda))=
q^{\textup{pr}_{E^\prime}(\lambda)}q^{\textup{pr}_{E_{\textup{co}}}(\lambda)}\mathfrak{c}(0)=q^\lambda\mathfrak{c}(0),
\]
where we used Lemma \ref{eqqlemma} and \eqref{decomptt}
for the second equality.
\end{proof}

\begin{remark}\label{linkttpar}
Let $\widehat{\mathbf{g}}\in\widehat{\mathcal{G}}$, $\mathfrak{c}\in\mathcal{C}_{\Lambda,\Lambda^\prime}$ and $\cc\in\overline{C}_+$. Applying Definition \ref{deftyc} to the multiplicative character $\mathfrak{t}_{\cc}(\widehat{\mathbf{g}},\mathfrak{c})\in {}^{\Lambda}T_{\Lambda^\prime}^\cc$  gives rise to the multiplicative characters 
\[\mathfrak{t}_{\cc}(\widehat{\mathbf{g}},\mathfrak{c})_{w\cc;\cc}=w\mathfrak{t}_{\cc}(\widehat{\mathbf{g}},\mathfrak{c})\in T_{\Lambda^\prime}\qquad\quad (w\in W_{\Lambda}).
\]
It then follows from Lemma \ref{ginvlemmaext} that
\begin{equation}\label{relspectrum}
\mathfrak{t}_{\cc}(\widehat{\mathbf{g}},\mathfrak{c})_{w\cc;\cc}=\mathfrak{t}_{w\cc}(\widehat{\mathbf{g}},\mathfrak{c})\in T_{\Lambda^\prime}\qquad\quad (w\in W_\Lambda)
\end{equation}
in $T_{\Lambda^\prime}$. 
\end{remark}
The following extension of 
Lemma \ref{actiondeformext} is a direct consequence of Lemma \ref{ginvlemmaext}(2).
\begin{lemma}\label{refactionext}
Let $\widehat{\mathbf{g}}\in\widehat{\mathcal{G}}$ and $\mathfrak{c}\in\mathcal{C}_{\Lambda,\Lambda^\prime}$. The formulas
\[
w\cdot_{\widehat{\mathbf{g}},\mathfrak{c}}x^y:=w_{\mathfrak{t}_{\cc_y}(\widehat{\mathbf{g}},\mathfrak{c})}x^y\qquad \quad (w\in W_{\Lambda^\prime},\, y\in E)
\]
define a linear left $W_{\Lambda^\prime}$-action on $\mathbf{F}[E]$. 
\end{lemma}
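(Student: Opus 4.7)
The plan is to deduce the lemma by pulling back, face by face, the actions built in Lemma \ref{actiondeformext}. For each $\cc\in\overline{C}_+$, Lemma \ref{ginvlemmaext}(2) places the character $\mathfrak{t}_\cc(\widehat{\mathbf{g}},\mathfrak{c})$ in ${}^{\Lambda}T_{\Lambda^\prime}^\cc$, so Lemma \ref{actiondeformext} provides a $W_{\Lambda^\prime}$-action $w\mapsto w_{\mathfrak{t}_\cc(\widehat{\mathbf{g}},\mathfrak{c});\cc}$ on $\mathcal{P}_\Lambda^{(\cc)}=\bigoplus_{y\in\mathcal{O}_{\Lambda,\cc}}\mathbf{F}x^y$. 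Since $\mathcal{O}_{\Lambda,\cc}\supseteq\mathcal{O}_\cc$, the cyclic vector $x^\cc$ and more generally $x^y$ for $y\in\mathcal{O}_\cc$ lie in the representation space; we only need to show that each such piece of $\mathbf{F}[E]$ carries a well-defined action that agrees with the formula in the statement.

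The one point requiring verification is that the action $w_{\mathfrak{t}_\cc(\widehat{\mathbf{g}},\mathfrak{c});\cc}$ on $\mathcal{P}_\Lambda^{(\cc)}$ preserves the $W$-stable submodule $\mathcal{P}^{(\cc)}$. This is immediate from the explicit description of the action in Lemma \ref{actiondeformext}: $W_0$ acts by $v_{\mathfrak{t};\cc}(x^y)=x^{vy}$, which sends $\mathcal{O}_\cc$ to itself because $v\in W$, while translations $\tau(\mu)$ with $\mu\in\Lambda^\prime$ act by scalar multiplication of quasi-monomials via $\tau(\mu)_{\mathfrak{t};\cc}(x^y)=\mathfrak{t}_{y;\cc}^{-\mu}x^y$, which visibly preserves every $\mathbf{F}x^y$. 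Since $W_0$ and $\tau(\Lambda^\prime)$ generate $W_{\Lambda^\prime}=W_0\ltimes \Lambda^\prime$, the whole action preserves $\mathcal{P}^{(\cc)}\subseteq\mathcal{P}_\Lambda^{(\cc)}$.

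Using the disjoint-union decomposition $E=\bigsqcup_{\cc\in\overline{C}_+}\mathcal{O}_\cc$ from \eqref{WorbitE}, we have $\mathbf{F}[E]=\bigoplus_{\cc\in\overline{C}_+}\mathcal{P}^{(\cc)}$ as $\mathbf{F}$-vector spaces. Assembling the $W_{\Lambda^\prime}$-actions from the previous paragraph over all $\cc\in\overline{C}_+$ gives a linear $W_{\Lambda^\prime}$-action on $\mathbf{F}[E]$. For a quasi-monomial $x^y$ we have $x^y\in\mathcal{P}^{(\cc_y)}$ by the very definition of $\cc_y$, so this assembled action sends $x^y$ to $w_{\mathfrak{t}_{\cc_y}(\widehat{\mathbf{g}},\mathfrak{c});\cc_y}(x^y)$, matching the formula in the statement of the lemma. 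There is no substantial obstacle: the lemma is essentially a bookkeeping statement that repackages face-by-face data, and the only thing to check is the stability of $\mathcal{P}^{(\cc)}$ under the action, which is transparent from the formulas.
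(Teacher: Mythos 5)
Your argument is correct and is essentially the paper's own route: the paper obtains the lemma as a direct consequence of Lemma \ref{ginvlemmaext}(2) combined with Lemma \ref{actiondeformext}, glued over the decomposition $\mathbf{F}[E]=\bigoplus_{\cc\in\overline{C}_+}\mathcal{P}^{(\cc)}$ into $W$-orbit pieces, exactly as you do. The one point you leave implicit — that the restriction of $w_{\mathfrak{t};\cc}$ to $\mathcal{P}^{(\cc)}$ is the action $w_{\mathfrak{t}}$ appearing in the statement, because $\mathfrak{t}_{y;\cc}=\mathfrak{t}_y$ for $y\in\mathcal{O}_\cc$ — is purely notational and harmless.
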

Concretely, by Remark \ref{linkttpar} we have the formulas
\begin{equation*}
\begin{split}
v\cdot_{\widehat{\mathbf{g}},\mathfrak{c}}x^y&:=x^{vy}\qquad\qquad\quad\,\,\, (v\in W_0),\\
\tau(\mu)\cdot_{\widehat{\mathbf{g}}, \mathfrak{c}}x^y&:=\mathfrak{t}_y(\widehat{\mathbf{g}},\mathfrak{c})^{-\mu}x^y\qquad (\mu\in\Lambda^\prime),
\end{split}
\end{equation*}
for $y\in E$. Furthermore, 
\begin{equation}
\label{formpexact2}
s_0\cdot_{\widehat{\mathbf{g}},\mathfrak{c}}x^y=\mathfrak{t}_y(\widehat{\mathbf{g}},\mathfrak{c})^{\varphi^\vee}x^{s_\varphi y},\qquad
(w_{\zeta,\Lambda^\prime})\cdot_{\widehat{\mathbf{g}},\mathfrak{c}}x^y=\mathfrak{t}_{v_{\zeta,\Lambda^\prime}^{-1}y}(\widehat{\mathbf{g}},\mathfrak{c})^{-\zeta}x^{v_{\zeta,\Lambda^\prime}^{-1}y}
\end{equation}
for $\zeta\in\Lambda_{\textup{min}}^\prime$,
which follows from Remark \ref{linkttpar} and \eqref{OmegaPol1}. Since $\mathfrak{c}$ takes values in $T_{\Lambda^\prime,[1,r]}$ we have that
\[
w\cdot_{\widehat{\mathbf{g}},\mathfrak{c}}x^y=\Theta_{\mathfrak{c}(\textup{pr}_{E_{\textup{co}}}(y))}(w)\bigl(w_{\mathfrak{t}_{\cc_y}(\widehat{\mathbf{g}})}(x^y)\bigr)
\]
for $w\in W_{\Lambda^\prime}$ and $y\in E$ by \eqref{twistactionclassical}. In particular, the $\mathfrak{c}$-dependence of the $W_{\Lambda^\prime}$-action $\cdot_{\widehat{\mathbf{g}},\mathfrak{c}}$ on $\mathbf{F}[E]$ (and later on, in Theorem \ref{gthm}, the $\mathfrak{c}$-dependence of the corresponding extended double affine Hecke algebra representation), corresponds to twisting the representations by multiplicative characters of $\Omega_{\Lambda^\prime}$, cf. Proposition \ref{twistactionprop}.

\begin{example}\label{oneparexample}
Consider the setup of Example \ref{exampleqbaseExt}.
Then we have
\begin{equation}\label{expliA}
\tau(\mu)\cdot_{\widehat{\mathbf{p}},\mathfrak{c}} x^y=q^{-\langle\mu,y\rangle}x^y\qquad (\mu\in\Lambda^\prime,\, y\in E),
\end{equation}
so in this case $\cdot_{\widehat{\mathbf{p}},\mathfrak{c}}$ is the standard $W_{\Lambda^\prime}$-action on $\mathbf{F}[E]$ by $q$-translations and reflections. 
\end{example}

\subsection{The uniform quasi-polynomial representation}\label{unifSection}

For $\widehat{\mathbf{g}}\in\widehat{\mathcal{G}}$, $\mathfrak{c}\in\mathcal{C}_{\Lambda,\Lambda^\prime}$ and $\cc\in\overline{C}_+$ we have the extended quasi-polynomial representation 
\[
\pi_{\cc,\mathfrak{t}_\cc(\widehat{\mathbf{g}},\mathfrak{c})}^{\Lambda,\Lambda^\prime}: \mathbb{H}_{\Lambda,\Lambda^\prime}\rightarrow\textup{End}(\mathcal{P}^{(\cc)}_\Lambda)
\] 
from Theorem \ref{glueprop}, 
since $\mathfrak{t}_\cc(\widehat{\mathbf{g}},\mathfrak{c})\in {}^{\Lambda}T_{\Lambda^\prime}^\cc$ by Lemma \ref{ginvlemmaext}(2).
Their direct sum turns $\mathbf{F}[E]$ into an $\mathbb{H}_{\Lambda,\Lambda^\prime}$-module, with an explicit $(\widehat{\mathbf{g}},\mathfrak{c})$-dependent $\mathbb{H}_{\Lambda,\Lambda^\prime}$-action in terms of truncated Demazure-Lusztig operators. 

To connect it to metaplectic representation theory, in particular to the $\textup{GL}_{r+1}$-type double affine Hecke algebra representation from \cite[\S 5]{SSV}, 
it is convenient to fix a representative $\widehat{\mathbf{p}}$ of $\widehat{\mathcal{G}}$ and 
twist the $\mathbb{H}_{\Lambda,\Lambda^\prime}$-actions $\pi_{\cc,\mathfrak{t}_\cc(\widehat{\mathbf{p}}\cdot\mathbf{g},\mathfrak{c})}^{\Lambda,\Lambda^\prime}$ 
on $\mathcal{P}_\Lambda^{(\cc)}$ by a linear automorphism of $\mathcal{P}_\Lambda^{(\cc)}$ that separates the parameters $\mathbf{g}\in\mathcal{G}$ from $\widehat{\mathbf{p}}\cdot\mathbf{g}\in \widehat{\mathcal{G}}$
(the explicit connection to the metaplectic representation theory will be the subject of Section \ref{MfinalSection}). 
The linear automorphism, depending on $\mathbf{g}\in\mathcal{G}$, is defined as follows.

\begin{lemma}\label{ISOlemma}
Let $\cc\in\overline{C}_+$ and $\mathbf{g}\in\mathcal{G}$.
There exists a unique $\mathcal{P}_\Lambda$-linear automorphism $\Gamma_{\Lambda,\mathbf{g}}^{(\cc)}$ of $\mathcal{P}_\Lambda^{(\cc)}$ satisfying
\begin{equation}\label{explw}
\Gamma_{\Lambda,\mathbf{g}}^{(\cc)}(x^{w\cc})=\Bigl(\prod_{\alpha\in\Pi(Dw)}g_\alpha(\alpha(\cc))\Bigr)x^{w\cc}\qquad\quad (w\in W_\Lambda).
\end{equation}
\end{lemma}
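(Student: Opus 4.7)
The space $\mathcal{P}_\Lambda^{(\cc)}=\bigoplus_{y\in\mathcal{O}_{\Lambda,\cc}}\mathbf{F}x^y$ is a free $\mathcal{P}_\Lambda$-module whose generators are indexed by $\Lambda$-coset representatives in $\mathcal{O}_{\Lambda,\cc}$. Hence specifying a $\mathcal{P}_\Lambda$-linear endomorphism $\Gamma$ acting diagonally as $\Gamma(x^y)=c(y)x^y$ amounts to producing a $\Lambda$-translation-invariant function $c:\mathcal{O}_{\Lambda,\cc}\to\mathbf{F}^\times$; then invertibility of $\Gamma$ and the uniqueness claim in \eqref{explw} are automatic. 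For $y\in\mathcal{O}_{\Lambda,\cc}$ we set $c(y):=G(w)$ for any $w\in W_\Lambda$ with $w\cc=y$, where
\[
G(w):=\prod_{\alpha\in\Pi(Dw)}g_\alpha(\alpha(\cc)).
\]
The translation invariance $c(y+\lambda)=c(y)$ for $\lambda\in\Lambda$ is immediate from $D(\tau(\lambda)w)=Dw$, so the entire content of the lemma is the well-definedness of $c(y)$, namely that $G(w_1)=G(w_2)$ whenever $w_1\cc=w_2\cc$.

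Writing $w_2=w_1h$ with $h\in W_{\Lambda,\cc}=\Omega_{\Lambda,\cc}\ltimes W_J$, well-definedness reduces to showing $G(wh)=G(w)$ for every $w\in W_\Lambda$ and every generator $h$ of $W_{\Lambda,\cc}$, i.e.\ $h=s_j$ with $j\in J$ or $h\in\Omega_{\Lambda,\cc}$. As a preliminary reduction, the $1$-periodicity of $g_\alpha$ and the normalisation $g_\alpha(0)=1$ from Definition \ref{gpar}, combined with Lemma \ref{etaJdef}, yield $g_\alpha(\alpha(\cc))=1$ whenever $\alpha\in\Phi_{0,J_0}^+\cup\Phi_0^{+,J}$. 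Hence the product defining $G(w)$ effectively runs only over $\Pi(Dw)\cap S$, where $S:=\{\alpha\in\Phi_0^+:\alpha(\cc)\in(0,1)\}$; this, together with the $W_0$-invariance $g_{v\alpha}=g_\alpha$ (Definition \ref{gpar}\,(1) with $\ell=0$), will be the principal tool throughout.

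The case $h=s_j$ with $j\in J_0$ follows from the description of $\Pi(Dws_j)$ via Lemma \ref{lengthadd} on the right (distinguishing $(Dw)\alpha_j\in\Phi_0^\pm$), since $s_j\cc=\cc$, $g_{\alpha_j}(\alpha_j(\cc))=1$, and $g_{s_j\beta}(\beta(\cc))=g_\beta(\beta(\cc))$ by $W_0$-invariance. For $h=s_0$ with $0\in J$ (so $Ds_0=s_\varphi$ and $\varphi(\cc)=1$), note that $s_\varphi$ induces an involution on $S$: it fixes $\{\alpha\in S:\alpha(\varphi^\vee)=0\}$ pointwise and pairs $\alpha\leftrightarrow\varphi-\alpha$ on $\{\alpha\in S:\alpha(\varphi^\vee)=1\}$, these being exhaustive by \eqref{thetalength} since $\varphi\notin S$. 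A case analysis on how $s_\varphi$ transforms $(Dw)^{-1}\alpha$ (according to the value of $((Dw)^{-1}\alpha)(\varphi^\vee)$), combined with the $1$-periodicity of $g_\alpha$ to absorb the shift $(\varphi-\alpha)(\cc)=1-\alpha(\cc)$, yields $G(ws_0)=G(w)$ once the paired contributions are combined. Finally, for $h=\omega=w_{\zeta,\Lambda}\in\Omega_{\Lambda,\cc}$, the stabilising condition $\omega\cc=\cc$ forces $v_{\zeta,\Lambda}^{-1}\cc=\cc-\zeta$, and an argument paralleling that of Lemma \ref{Omegas}—built from the explicit description \eqref{zetalength} of $\Pi(v_{\zeta,\Lambda})$, the $W_0$-invariance of $\mathbf{g}$, and its $1$-periodicity—gives $G(w\omega)=G(w)$. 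The chief obstacle is the combinatorial bookkeeping in the $h=s_0$ and $h\in\Omega_{\Lambda,\cc}$ cases, where the right-multiplier is not a simple reflection and the action on $\Pi(Dw)\cap S$ must be tracked with care.
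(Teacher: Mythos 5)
Your route differs from the paper's: the paper never checks well-definedness case by case, but instead passes to $\mathbf{F}_{\textup{cl}}$, picks $\widetilde{\mathbf{g}}\in\mathcal{G}_{\textup{cl}}$ with $\widetilde{g}_\alpha^2=g_\alpha$, and defines $\Gamma_{\Lambda,\mathbf{g}}^{(\cc)}(x^y)=\gamma_{\widetilde{\mathbf{g}}}(\cc)\gamma_{\widetilde{\mathbf{g}}}(y)^{-1}x^y$ with $\gamma_{\widetilde{\mathbf{g}}}(y)=\prod_{\alpha\in\Phi_0^+}\widetilde{g}_\alpha(\alpha(y))$; since the coefficient depends only on $y=w\cc$, independence of the choice of $w$ is automatic and \eqref{explw} follows from the cocycle identities for $\gamma_{\widetilde{\mathbf{g}}}$. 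Your plan — show $G(wh)=G(w)$ for $h$ running over generators of $W_{\Lambda,\cc}=\Omega_{\Lambda,\cc}\ltimes W_J$ — is a legitimate alternative, and your two reflection cases do go through: for $j\in J_0$ one only needs $s_j\cc=\cc$, $g_{\alpha_j}(0)=1$ and $W_0$-invariance, and for $s_0$ the pairing $\alpha\leftrightarrow\varphi-\alpha$ on $\{\alpha\in S:\alpha(\varphi^\vee)=1\}$ works as you indicate (note, though, that after absorbing the shift by $1$-periodicity you still need condition (2) of Remark \ref{basepoint}, $g_\alpha(-d)=g_\alpha(d)^{-1}$, to make the paired factors cancel; periodicity alone does not do it).

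The genuine gap is the case $h=\omega=w_{\zeta,\Lambda}\in\Omega_{\Lambda,\cc}$. Carrying out the bookkeeping you allude to (splitting $\Pi((Dw)v_{\zeta,\Lambda}^{-1})$ according to the sign of $v_{\zeta,\Lambda}^{-1}\beta$, and using $v_{\zeta,\Lambda}^{-1}\cc=\cc-\zeta$, periodicity, $W_0$-invariance and antisymmetry) one finds $G(w\omega)=G(w)\cdot\bigl(\prod_{\beta\in\Phi_0^+:\,\beta(\zeta)=1}g_\beta(\beta(\cc))\bigr)^{-1}$, independently of $w$; so what must be proved is $\prod_{\beta\in\Phi_0^+:\,\beta(\zeta)=1}g_\beta(\beta(\cc))=1$, using \eqref{zetalength} and the stabilising condition. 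This is \emph{not} obtained by an argument paralleling Lemma \ref{Omegas}: there the cancellation is factor-by-factor, via $\eta(-x)+\eta(x+1)=0$, whereas here no single factor is $1$ in general and the cancellation must pair \emph{distinct} roots of $\Pi(v_{\zeta,\Lambda})$ (e.g.\ by walking along $\langle D\omega\rangle$-orbits in $\Phi_0$, where each sign change contributes a factor and consecutive sign changes contribute mutually inverse factors). You have not supplied such an argument, and it cannot be waved away: in rank one with $\Lambda=P^\vee$, $\zeta=\varpi_1^\vee$ and $\cc=\tfrac12\varpi_1^\vee$ (so $\Omega_{\Lambda,\cc}\neq\{1\}$), the required identity is the single equation $g_{\alpha_1}(\tfrac12)=1$, while the axioms of $\mathcal{G}$ only force $g_{\alpha_1}(\tfrac12)\in\{\pm1\}$. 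So this case carries genuine content about the half-integer sign choices — indeed it is exactly the point that the paper's proof handles by \emph{assuming} a square root $\widetilde{\mathbf{g}}\in\mathcal{G}_{\textup{cl}}$ of $\mathbf{g}$ exists (which already forces $g_\alpha(\tfrac12)=1$). Until you either prove the displayed product is $1$ under the hypothesis $\omega\cc=\cc$, or isolate the sign condition on $\mathbf{g}$ that makes it hold, your proof of well-definedness is incomplete precisely where the difficulty of the lemma is concentrated.
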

\begin{proof}
Let $\mathbf{F}_{\textup{cl}}$ be the algebraic closure of $\mathbf{F}$. Write $\mathcal{G}_{\textup{cl}}$ for the space of $\mathbf{g}$-parameters $\mathcal{G}$ over the field $\mathbf{F}_{\textup{cl}}$ (see Remark \ref{basepoint}). For $\alpha\in\Phi_0$ choose $\widetilde{\mathbf{g}}=(\widetilde{g}_\alpha(d))_{\alpha\in\Phi_0}\in\mathcal{G}_{\textup{cl}}$
satisfying $\widetilde{g}_\alpha(d)^2=g_\alpha(d)$ for $\alpha\in\Phi_0$ and $d\in\mathbb{R}$.
Define $\gamma_{\widetilde{\mathbf{g}}}: E\rightarrow\mathbf{F}_{\textup{cl}}^\times$ by 
\begin{equation}\label{gammag}
\gamma_{\widetilde{\mathbf{g}}}(y):=\prod_{\alpha\in\Phi_0^+}\widetilde{g}_\alpha(\alpha(y)).
\end{equation}
Then 
\begin{equation}\label{cocycleg}
\begin{split}
\gamma_{\widetilde{\mathbf{g}}}(y+\mu)&=\gamma_{\widetilde{\mathbf{g}}}(y)\qquad\qquad\qquad\qquad\quad\quad\,\,\, (\mu\in\Lambda),\\
\gamma_{\widetilde{\mathbf{g}}}(vy)&=\Bigl(\prod_{\alpha\in\Pi(v)}g_\alpha(\alpha(y))^{-1}\Bigr)\gamma_{\widetilde{\mathbf{g}}}(y)\qquad (v\in W_0).
\end{split}
\end{equation}
Let $\Gamma_{\Lambda,\mathbf{g}}^{(\cc)}$ be the $\mathbf{F}_{\textup{cl}}$-linear automorphism of $\bigoplus_{y\in\mathcal{O}_{\Lambda,\cc}}\mathbf{F}_{\textup{cl}}x^y$ defined by
\begin{equation}\label{ascoboundary}
\Gamma_{\Lambda,\mathbf{g}}^{(\cc)}(x^y):=\frac{\gamma_{\widetilde{\mathbf{g}}}(\cc)}{\gamma_{\widetilde{\mathbf{g}}}(y)}x^y\qquad\quad (y\in\mathcal{O}_{\Lambda,\cc}).
\end{equation}
Then 
\[
\Gamma_{\Lambda,\mathbf{g}}^{(\cc)}(x^{w\cc})=\Bigl(\prod_{\alpha\in\Pi(Dw)}g_\alpha(\alpha(\cc))\Bigr)x^{w\cc}\qquad\quad (w\in W_\Lambda)
\]
by \eqref{cocycleg}. Hence $\Gamma_{\Lambda,\mathbf{g}}^{(\cc)}$ does not depend on the choice of $\widetilde{\mathbf{g}}$, and it restricts to a $\mathbf{F}$-linear automorphism of
$\mathcal{P}_\Lambda^{(\cc)}$ satisfying \eqref{explw}.
Note that $\Gamma_{\Lambda,\mathbf{g}}^{(\cc)}$ is $\mathcal{P}_\Lambda$-linear due to $\Lambda$-translation invariance of $\gamma_{\widetilde{\mathbf{g}}}$.
\end{proof}
\begin{remark}\label{O4rem}
If $\cc^\prime\in\mathcal{O}_{\Lambda,\cc}\cap\overline{C}_+$ then $\cc^\prime=\omega\cc$ for some $\omega\in\Omega_{\Lambda}$ and 
\[
\Gamma_{\Lambda,\mathbf{g}}^{(\cc)}=\Bigl(\prod_{\alpha\in \Pi(D\omega)}g_\alpha(\alpha(\cc))\Bigr)\Gamma_{\Lambda,\mathbf{g}}^{(\omega\cc)}
\]
in view of \eqref{ascoboundary} and \eqref{cocycleg}.
\end{remark}

The $W_{\Lambda^\prime}$-action $\cdot_{\widehat{\mathbf{g}},\mathfrak{c}}$ on $\mathbf{F}[E]$ from Lemma \ref{refactionext} preserves $\mathcal{P}_{\Lambda}^{(\cc)}$.
The following lemma shows that $\Gamma_{\Lambda,\mathbf{g}}^{(\cc)}$ intertwines the 
$W_{\Lambda^\prime}$-action $\cdot_{\widehat{\mathbf{p}},\mathfrak{c}}$ on $\mathcal{P}_\Lambda^{(\cc)}$ with a twisted version of the $W_{\Lambda^\prime}$-action $\cdot_{\widehat{\mathbf{g}},\mathfrak{c}}$ on $\mathcal{P}_\Lambda^{(\cc)}$, where $\widehat{\mathbf{g}}:=\widehat{\mathbf{p}}\cdot\mathbf{g}$.
\begin{lemma}\label{gtransaction}
Let $\cc\in\overline{C}_+$, $\widehat{\mathbf{p}}\in\widehat{\mathcal{G}}$, $\mathbf{g}\in\mathcal{G}$ and $\mathfrak{c}\in\mathcal{C}_{\Lambda,\Lambda^\prime}$. Set $\widehat{\mathbf{g}}:=\widehat{\mathbf{p}}\cdot\mathbf{g}\in\widehat{\mathcal{G}}$. Then
\[
\Gamma_{\Lambda,\mathbf{g}}^{(\cc)}\bigl(w\cdot_{\widehat{\mathbf{p}},\mathfrak{c}}\Gamma_{\Lambda,\mathbf{g}}^{(\cc)\,-1}(x^y)\bigr)=
\Bigl(\prod_{a\in\Pi(w)}g_{Da}(a(y))\Bigr)w\cdot_{\widehat{\mathbf{g}},\mathfrak{c}}x^y
\]
for $w\in W_{\Lambda^\prime}$ and $y\in\mathcal{O}_{\Lambda,\cc}$.
\end{lemma}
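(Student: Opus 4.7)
The plan is to work through the explicit description of $\Gamma_{\Lambda,\mathbf{g}}^{(\cc)}$ given in the proof of Lemma \ref{ISOlemma}. Fixing a square root $\widetilde{\mathbf{g}}\in\mathcal{G}_{\textup{cl}}$ of $\mathbf{g}$ and setting $\gamma_{\widetilde{\mathbf{g}}}(y):=\prod_{\alpha\in\Phi_0^+}\widetilde{g}_\alpha(\alpha(y))$, one has
\[
\Gamma_{\Lambda,\mathbf{g}}^{(\cc)}(x^y)=\frac{\gamma_{\widetilde{\mathbf{g}}}(\cc)}{\gamma_{\widetilde{\mathbf{g}}}(y)}\,x^y
\]
for $y\in\mathcal{O}_{\Lambda,\cc}$, together with the cocycle $\gamma_{\widetilde{\mathbf{g}}}(vy)=\bigl(\prod_{\alpha\in\Pi(v)}g_\alpha(\alpha(y))^{-1}\bigr)\gamma_{\widetilde{\mathbf{g}}}(y)$ for $v\in W_0$ from \eqref{cocycleg}. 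Writing $w\in W_{\Lambda^\prime}$ in the form $w=v\tau(\mu)$ with $v\in W_0$ and $\mu\in\Lambda^\prime$, Lemma \ref{refactionext} combined with \eqref{relspectrum} gives the explicit formula $w\cdot_{\widehat{\mathbf{g}},\mathfrak{c}}x^y=\mathfrak{t}_y(\widehat{\mathbf{g}},\mathfrak{c})^{-\mu}x^{vy}$, and analogously for $\widehat{\mathbf{p}}$ in place of $\widehat{\mathbf{g}}$.

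Plugging these formulas into the left-hand side of the claim, a short calculation using the cocycle above yields
\[
\Gamma_{\Lambda,\mathbf{g}}^{(\cc)}\bigl(w\cdot_{\widehat{\mathbf{p}},\mathfrak{c}}\Gamma_{\Lambda,\mathbf{g}}^{(\cc)\,-1}(x^y)\bigr)=\Bigl(\prod_{\alpha\in\Pi(v)}g_\alpha(\alpha(y))\Bigr)\mathfrak{t}_y(\widehat{\mathbf{p}},\mathfrak{c})^{-\mu}\,x^{vy}.
\]
Since $\mathfrak{t}_y(\widehat{\mathbf{g}},\mathfrak{c})=\mathfrak{t}_y(\widehat{\mathbf{p}},\mathfrak{c})\,\mathfrak{t}_y(\mathbf{g})$, the claim reduces to the purely combinatorial statement
\[
\prod_{a\in\Pi(w)}g_{Da}(a(y))=\Bigl(\prod_{\alpha\in\Pi(v)}g_\alpha(\alpha(y))\Bigr)\mathfrak{t}_y(\mathbf{g})^{\mu}.
\]

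The main (mild) obstacle is verifying this last identity. Using the $\mathbb{Z}$-periodicity $g_\alpha(d+\ell)=g_\alpha(d)$ from Remark \ref{basepoint}(1) together with the identification $g_{-\alpha}(d)=g_\alpha(d)$ implicit in Lemma \ref{ginvlemma}(1), the left-hand side equals $\prod_{\alpha\in\Phi_0^+}g_\alpha(\alpha(y))^{n_\alpha(w)-n_{-\alpha}(w)}$ with $n_\beta(w):=\#\{\ell\in\mathbb{Z}:(\beta,\ell)\in\Pi(w)\}$, whereas the right-hand side equals $\prod_{\alpha\in\Phi_0^+}g_\alpha(\alpha(y))^{\chi_{\Pi(v)}(\alpha)+\alpha(\mu)}$. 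It therefore suffices to establish
\[
n_\alpha(w)-n_{-\alpha}(w)=\chi_{\Pi(v)}(\alpha)+\alpha(\mu),\qquad \alpha\in\Phi_0^+.
\]
This identity follows by case analysis of the defining condition $(\beta,\ell)\in\Pi(w)\Leftrightarrow (\beta,\ell)\in\Phi^+$ and $(v\beta,\ell-\beta(\mu))\in\Phi^-$: splitting according to the signs of $\beta$ and $v\beta$, a direct enumeration in each of the four resulting cases gives $n_\beta(w)=\max(0,\beta(\mu)+\chi_{\Pi(v)}(\beta))$ for $\beta\in\Phi_0^+$ and $n_{-\beta}(w)=\max(0,-\beta(\mu)-\chi_{\Pi(v)}(\beta))$, from which the required equality is immediate.
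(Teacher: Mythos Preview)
Your proof is correct and takes a genuinely different route from the paper's. The paper reduces to generators: it argues that by \eqref{rootsdescription} and the $1$-periodicity of $g_\alpha$ it suffices to verify the identity for $w=s_j$ ($0\leq j\leq r$) and for $w=\omega\in\Omega_{\Lambda^\prime}$, and then checks each of these three cases by hand using the cocycle \eqref{cocycleg} (the $s_0$ case via the explicit expansion of $\mathfrak{t}_y(\mathbf{g})^{\varphi^\vee}$ coming from \eqref{thetalength}, and the $\Omega$ case via \eqref{zetalength}). You instead treat all $w=v\tau(\mu)$ uniformly and reduce to the single root-counting identity $n_\alpha(w)-n_{-\alpha}(w)=\chi_{\Pi(v)}(\alpha)+\alpha(\mu)$, which you then verify by a clean case split on the signs of $\alpha$ and $v\alpha$. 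Your approach is more direct and avoids having to treat $s_0$ and $\Omega_{\Lambda^\prime}$ separately; the paper's approach has the advantage that the induction on length makes the cocycle nature of both sides transparent without any bookkeeping of affine root multiplicities. One small comment: the identity $g_{-\alpha}(d)=g_\alpha(d)$ you invoke follows immediately from Remark~\ref{basepoint}(1) with $v=s_\alpha$, $\ell=0$, rather than from Lemma~\ref{ginvlemma}(1).
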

\begin{proof}
We use the notations introduced in the proof of Lemma \ref{ISOlemma}.
Fix $\cc\in\overline{C}_+$. 
By a standard argument using \eqref{rootsdescription} and Remark \ref{basepoint}(1) it suffices to prove that
\begin{equation}\label{todogtrans}
\Gamma_{\Lambda,\mathbf{g}}^{(\cc)}\bigl(s_j\cdot_{\widehat{\mathbf{p}},\mathfrak{c}}\Gamma_{\Lambda,\mathbf{g}}^{(\cc)\,-1}(x^y)\bigr)=
g_{D\alpha_j}(\alpha_j(y))\,s_{j}\cdot_{\widehat{\mathbf{g}},\mathfrak{c}}x^y
\end{equation}
for $j\in [0,r]$ and $y\in\mathcal{O}_{\Lambda,\cc}$ and 
\begin{equation}\label{todogtrans2}
\Gamma_{\Lambda,\mathbf{g}}^{(\cc)}\bigl(\omega\cdot_{\widehat{\mathbf{p}},\mathfrak{c}}\Gamma_{\Lambda,\mathbf{g}}^{(\cc)\,-1}(x^y)\bigr)=\omega\cdot_{\widehat{\mathbf{g}},\mathfrak{c}}x^y
\end{equation}
for $\omega\in\Omega_{\Lambda^\prime}$ and $y\in\mathcal{O}_{\Lambda,\cc}$.

For $i\in [1,r]$ and $y\in\mathcal{O}_{\Lambda,\cc}$ we have
\[
\Gamma_{\Lambda,\mathbf{g}}^{(\cc)}\bigl(s_i\cdot_{\widehat{\mathbf{p}},\mathfrak{c}}\Gamma_{\Lambda,\mathbf{g}}^{(\cc)\,-1}(x^y)\bigr)=\frac{\gamma_{\widetilde{\mathbf{g}}}(y)}{\gamma_{\widetilde{\mathbf{g}}}(s_iy)}x^{s_iy}=
g_{\alpha_i}(\alpha_i(y))\,s_{i}\cdot_{\widehat{\mathbf{g}},\mathfrak{c}}x^y,
\]
which is \eqref{todogtrans} for $j=i\in [1,r]$. By \eqref{formpexact2} we have for $y\in\mathcal{O}_{\Lambda,\cc}$,
\begin{equation*}
\Gamma_{\Lambda,\mathbf{g}}^{(\cc)}\bigl(s_0\cdot_{\widehat{\mathbf{p}},\mathfrak{c}}\Gamma_{\Lambda,\mathbf{g}}^{(\cc)\,-1}(x^y)\bigr)=\frac{\gamma_{\widetilde{\mathbf{g}}}(y)}{\gamma_{\widetilde{\mathbf{g}}}(s_\varphi y)}
\mathfrak{t}_y(\widehat{\mathbf{p}},\mathfrak{c})^{\varphi^\vee}x^{s_\varphi y}.
\end{equation*}
Since $\mathfrak{t}_y(\widehat{\mathbf{g}},\mathfrak{c})=\mathfrak{t}_y(\widehat{\mathbf{p}},\mathfrak{c})\mathfrak{t}_y(\mathbf{g})$, formula \eqref{todogtrans} for $j=0$ then follows from the fact that
\[
\mathfrak{t}_y(\mathbf{g})^{\varphi^\vee}=g_{\varphi}(\varphi(y))\prod_{\alpha\in\Pi(s_\varphi)}g_\alpha(\alpha(y))=\frac{\gamma_{\widetilde{\mathbf{g}}}(y)}{g_{D\alpha_0}(\alpha_0(y))\gamma_{\widetilde{\mathbf{g}}}(s_\varphi y)}
\]
by \eqref{alpha0}, \eqref{thetalength} and \eqref{cocycleg}. 

It remains to prove \eqref{todogtrans2}. By \eqref{formpexact2} we have for $y\in E$ and $\zeta\in\Lambda^\prime_{\textup{min}}$,
\[
w_{\zeta,\Lambda^\prime}\cdot_{\widehat{\mathbf{g}},\mathfrak{c}}x^y=\mathfrak{t}_{v_{\zeta,\Lambda^\prime}^{-1}y}(\mathbf{g})^{-\zeta}(w_{\zeta,\Lambda^\prime}\cdot_{\widehat{\mathbf{p}},\mathfrak{c}}x^y).
\]
Formula \eqref{todogtrans2} will thus follow from the equality
\begin{equation}\label{todoOmega}
\frac{\gamma_{\widetilde{\mathbf{g}}}(y)}{\gamma_{\widetilde{\mathbf{g}}}(v_{\zeta,\Lambda^\prime}^{-1}y)}=\mathfrak{t}_{v_{\zeta,\Lambda^\prime}^{\,-1}y}(\mathbf{g})^{-\zeta}
\qquad (\zeta\in\Lambda^\prime_{\textup{min}}).
\end{equation}
This in turn follows from a straightforward computation using \eqref{zetalength}, \eqref{cocycleg} and Remark \ref{basepoint}.
\end{proof}

We now have the following uniform variant of the quasi-polynomial representation $\pi_{\cc,\mathfrak{t}}^{\Lambda,\Lambda^\prime}$ of $\mathbb{H}_{\Lambda,\Lambda^\prime}$ from Theorem \ref{glueprop}.
\begin{theorem}\label{gthm}
Let $\mathbf{g}\in\mathcal{G}$, $\widehat{\mathbf{p}}\in\widehat{\mathcal{G}}$ and $\mathfrak{c}\in\mathcal{C}_{\Lambda,\Lambda^\prime}$. 
\begin{enumerate}
\item The formulas 
\begin{equation}\label{gthmform}
\begin{split}
\pi_{\mathbf{g},\widehat{\mathbf{p}},\mathfrak{c}}^{\Lambda,\Lambda^\prime}(T_j)x^y&:=k_j^{\chi_{\mathbb{Z}}(\alpha_j(y))}g_{D\alpha_j}(\alpha_j(y))^{-1}s_j\cdot_{\widehat{\mathbf{p}},\mathfrak{c}} x^y+(k_j-k_j^{-1})\nabla_j(x^y),\\
\pi_{\mathbf{g},\widehat{\mathbf{p}},\mathfrak{c}}^{\Lambda,\Lambda^\prime}(x^\lambda)x^y&:=x^{y+\lambda},\\
\pi_{\mathbf{g},\widehat{\mathbf{p}},\mathfrak{c}}^{\Lambda,\Lambda^\prime}(\omega)x^y&:=\omega\cdot_{\widehat{\mathbf{p}},\mathfrak{c}}x^y
\end{split}
\end{equation}
for $j\in [0,r]$, $\lambda\in\Lambda$, $\omega\in\Omega_{\Lambda^\prime}$ and $y\in E$ define a representation 
\[
\pi_{\mathbf{g},\widehat{\mathbf{p}},\mathfrak{c}}^{\Lambda,\Lambda^\prime}: \mathbb{H}_{\Lambda,\Lambda^\prime}\rightarrow\textup{End}(\mathbf{F}[E]).
\]  
\item Let $\cc\in\overline{C}_+$. Then
$\mathcal{P}_\Lambda^{(\cc)}$ 
is a $\mathbb{H}_{\Lambda,\Lambda^\prime}$-submodule of 
$(\mathbf{F}[E],\pi_{\mathbf{g},\widehat{\mathbf{p}},\mathfrak{c}}^{\Lambda,\Lambda^\prime})$, and the $\mathcal{P}_\Lambda$-linear automorphism $\Gamma_{\Lambda,\mathbf{g}}^{(\cc)}$ of $\mathcal{P}_\Lambda^{(\cc)}$ realises an isomorphism
\[
\Gamma_{\Lambda,\mathbf{g}}^{(\cc)}: \bigl(\mathcal{P}_\Lambda^{(\cc)},\pi_{\mathbf{g},\widehat{\mathbf{p}},\mathfrak{c}}^{\Lambda,\Lambda^\prime}(\cdot)\vert_{\mathcal{P}_\Lambda^{(\cc)}}\bigr)\overset{\sim}{\longrightarrow}
\mathcal{P}_{\Lambda,\mathfrak{t}_\cc(\widehat{\mathbf{g}},\mathfrak{c})}^{(\cc)}
\]
of $\mathbb{H}_{\Lambda,\Lambda^\prime}$-modules,
where $\widehat{\mathbf{g}}:=\widehat{\mathbf{p}}\cdot\mathbf{g}\in\widehat{\mathcal{G}}$.
\end{enumerate}
\end{theorem}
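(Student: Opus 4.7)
The strategy is to reduce the theorem to Theorem \ref{glueprop} by conjugating the operators in \eqref{gthmform} with the $\mathcal{P}_\Lambda$-linear automorphisms $\Gamma_{\Lambda,\mathbf{g}}^{(\cc)}$ of Lemma \ref{ISOlemma}, exploiting the intertwining identity proven in Lemma \ref{gtransaction}. Setting $\widehat{\mathbf{g}} := \widehat{\mathbf{p}} \cdot \mathbf{g} \in \widehat{\mathcal{G}}$, this approach will yield both parts simultaneously.

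First I would fix a complete set $\mathcal{C}_\Lambda \subseteq \overline{C}_+$ of representatives of the $W_\Lambda$-orbits on $E$, giving the decomposition $\mathbf{F}[E] = \bigoplus_{\cc \in \mathcal{C}_\Lambda} \mathcal{P}_\Lambda^{(\cc)}$, and check that for each $\cc \in \mathcal{C}_\Lambda$ the subspace $\mathcal{P}_\Lambda^{(\cc)}$ is preserved by every operator in \eqref{gthmform}: multiplication by $x^\lambda$ shifts exponents by elements of $\Lambda$; $\nabla_j$ shifts exponents by integer multiples of $\alpha_j^\vee \in Q^\vee \subseteq \Lambda$ (with $\alpha_0^\vee$ interpreted via $x^{\alpha_0^\vee} = q^{m^2} x^{-\varphi^\vee}$); and both $s_j \cdot_{\widehat{\mathbf{p}},\mathfrak{c}}$ and $\omega \cdot_{\widehat{\mathbf{p}},\mathfrak{c}}$ shift exponents by $W_0$-translates (together with a $\varphi^\vee$-shift for $s_0$), all of which preserve $\mathcal{O}_{\Lambda,\cc}$.

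Since $\mathfrak{t}_\cc(\widehat{\mathbf{g}},\mathfrak{c}) \in {}^\Lambda T_{\Lambda^\prime}^\cc$ by Lemma \ref{ginvlemmaext}(2), Theorem \ref{glueprop} supplies the $\mathbb{H}_{\Lambda,\Lambda^\prime}$-representation $\pi_{\cc,\mathfrak{t}_\cc(\widehat{\mathbf{g}},\mathfrak{c})}^{\Lambda,\Lambda^\prime}$ on $\mathcal{P}_\Lambda^{(\cc)}$. It therefore suffices to verify that
\begin{equation*}
\Gamma_{\Lambda,\mathbf{g}}^{(\cc)} \circ \pi_{\mathbf{g},\widehat{\mathbf{p}},\mathfrak{c}}^{\Lambda,\Lambda^\prime}(h) = \pi_{\cc,\mathfrak{t}_\cc(\widehat{\mathbf{g}},\mathfrak{c})}^{\Lambda,\Lambda^\prime}(h) \circ \Gamma_{\Lambda,\mathbf{g}}^{(\cc)}
\end{equation*}
as operators on $\mathcal{P}_\Lambda^{(\cc)}$ for each generator $h$ of $\mathbb{H}_{\Lambda,\Lambda^\prime}$; since $\Gamma_{\Lambda,\mathbf{g}}^{(\cc)}$ is a linear isomorphism, this simultaneously shows that $\pi_{\mathbf{g},\widehat{\mathbf{p}},\mathfrak{c}}^{\Lambda,\Lambda^\prime}$ restricts to a well-defined representation on $\mathcal{P}_\Lambda^{(\cc)}$ and produces the isomorphism in (2). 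For $h = x^\lambda$ with $\lambda \in \Lambda$, both operators are multiplication by $x^\lambda$ and $\Gamma_{\Lambda,\mathbf{g}}^{(\cc)}$ is $\mathcal{P}_\Lambda$-linear. For $h = \omega \in \Omega_{\Lambda^\prime}$, Lemma \ref{gtransaction} applied with $w = \omega$ (so $\Pi(\omega) = \emptyset$) converts $\omega \cdot_{\widehat{\mathbf{p}},\mathfrak{c}}$ into $\omega \cdot_{\widehat{\mathbf{g}},\mathfrak{c}}$, which by Remark \ref{linkttpar} coincides with $\omega_{\mathfrak{t}_\cc(\widehat{\mathbf{g}},\mathfrak{c});\cc}$, the $\omega$-action prescribed by Theorem \ref{glueprop}. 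For $h = T_j$, I would split the operator into its reflection part and its divided-difference part. Since $\nabla_j(x^y) = f_y x^y$ for some $f_y \in \mathcal{P}_\Lambda$, the $\mathcal{P}_\Lambda$-linearity of $\Gamma_{\Lambda,\mathbf{g}}^{(\cc)}$ implies that $\nabla_j$ commutes with $\Gamma_{\Lambda,\mathbf{g}}^{(\cc)}$. For the reflection part, Lemma \ref{gtransaction} with $w = s_j$ (where $\Pi(s_j) = \{\alpha_j\}$) produces a scalar factor $g_{D\alpha_j}(\alpha_j(y))$, exactly cancelled by the prefactor $g_{D\alpha_j}(\alpha_j(y))^{-1}$ built into the definition \eqref{gthmform} of $\pi_{\mathbf{g},\widehat{\mathbf{p}},\mathfrak{c}}^{\Lambda,\Lambda^\prime}(T_j)$; combined with Remark \ref{linkttpar}, this matches \eqref{actionformulasext}.

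There is no serious obstacle to the proof: all nontrivial content has been packaged into Lemma \ref{gtransaction} and Lemma \ref{ISOlemma}. The only point requiring care is keeping the two $W_{\Lambda^\prime}$-actions $\cdot_{\widehat{\mathbf{p}},\mathfrak{c}}$ and $\cdot_{\widehat{\mathbf{g}},\mathfrak{c}}$ straight and recalling, via Remark \ref{linkttpar}, that the $W_\Lambda$-translates of $\mathfrak{t}_\cc(\widehat{\mathbf{g}},\mathfrak{c})$ at $y = w\cc$ ($w \in W_\Lambda$) agree with the pointwise parameter $\mathfrak{t}_y(\widehat{\mathbf{g}},\mathfrak{c})$, so that the prefactor $g_{D\alpha_j}(\alpha_j(y))^{-1}$ correctly converts the $\cdot_{\widehat{\mathbf{p}},\mathfrak{c}}$-action of $s_j$ into the $\mathfrak{t}_\cc(\widehat{\mathbf{g}},\mathfrak{c})$-twisted reflection $s_{j,\mathfrak{t}_\cc(\widehat{\mathbf{g}},\mathfrak{c});\cc}$ required by Theorem \ref{glueprop}.
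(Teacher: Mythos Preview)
Your proposal is correct and follows essentially the same approach as the paper: reduce to Theorem~\ref{glueprop} by showing that $\Gamma_{\Lambda,\mathbf{g}}^{(\cc)}$ conjugates $\pi_{\mathbf{g},\widehat{\mathbf{p}},\mathfrak{c}}^{\Lambda,\Lambda^\prime}(h)\vert_{\mathcal{P}_\Lambda^{(\cc)}}$ into $\pi_{\cc,\mathfrak{t}_\cc(\widehat{\mathbf{g}},\mathfrak{c})}^{\Lambda,\Lambda^\prime}(h)$ for each generator $h$, using $\mathcal{P}_\Lambda$-linearity for $x^\lambda$ and $\nabla_j$, and Lemma~\ref{gtransaction} for the reflection and $\Omega_{\Lambda^\prime}$ parts. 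The paper's proof is organised identically, citing the special case \eqref{todogtrans2} of Lemma~\ref{gtransaction} directly for $h=\omega$ rather than invoking the general statement with $\Pi(\omega)=\emptyset$ as you do.
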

\begin{proof}
Fix $\cc\in\overline{C}_+$. The linear operators on $\mathbf{F}[E]$ defined by \eqref{gthmform} preserve $\mathcal{P}_\Lambda^{(\cc)}$, and
$\mathfrak{t}_{\cc}(\widehat{\mathbf{g}},\mathfrak{c})\in {}^{\Lambda}T_{\Lambda^\prime}^\cc$ by Corollary \ref{ginvcor}.
In view of Theorem \ref{glueprop}
 it thus suffices to show that 
\begin{equation}\label{todo10}
\Gamma_{\Lambda,\mathbf{g}}^{(\cc)}\bigl(\pi_{\mathbf{g},\widehat{\mathbf{p}},\mathfrak{c}}^{\Lambda,\Lambda^\prime}(h)\Gamma_{\Lambda,\mathbf{g}}^{(\cc)\,-1}(x^y)\bigr)=
\pi_{\cc,\mathfrak{t}_\cc(\widehat{\mathbf{g}},\mathfrak{c})}^{\Lambda,\Lambda^\prime}(h)x^y\qquad\quad (y\in\mathcal{O}_{\Lambda,\cc})
\end{equation}
for $h=T_j$ ($0\leq j\leq r$), $h=x^\lambda$ ($\lambda\in\Lambda$) and $h=\omega$ ($\omega\in\Omega_{\Lambda^\prime}$).

For $h=x^\lambda$ ($\lambda\in\Lambda$) this is trivial. For $h=T_j$ 
note that $\Gamma_{\Lambda,\mathbf{g}}^{(\cc)}$ commutes with the truncated divided difference operator $\nabla_j$ since $\Gamma_{\Lambda,\mathbf{g}}^{(\cc)}$ is $\mathcal{P}$-linear. Hence in this case \eqref{todo10} 
 follows from Lemma \ref{gtransaction}.

Finally, for $h=\omega$ ($\omega\in\Omega_{\Lambda^\prime}$) formula \eqref{todo10} follows from Lemma \ref{refactionext} and \eqref{todogtrans2}.
\end{proof}
\begin{remark}
Recall that both the definition of the extended quasi-polynomial representation $\pi_{\cc,\mathfrak{t}_\cc(\widehat{\mathbf{g}},\mathfrak{c})}^{\Lambda,\Lambda^\prime}$ of $\mathbb{H}_{\Lambda,\Lambda^\prime}$
and of $\Gamma_{\Lambda,\mathbf{g}}^{(\cc)}\in\textup{End}_{\mathcal{P}_\Lambda}(\mathcal{P}_\Lambda^{(\cc)})$ depend on the choice of a representative $\cc$ of the $W_\Lambda$-orbit $\mathcal{O}_{\Lambda,\cc}$ inside $\overline{C}_+$ (see Remark \ref{O3rem} and Remark \ref{O4rem}). This is no longer the case for
$\bigl(\mathcal{P}_\Lambda^{(\cc)},\pi_{\mathbf{g},\widehat{\mathbf{p}},\mathfrak{c}}^{\Lambda,\Lambda^\prime}(\cdot)\vert_{\mathcal{P}_\Lambda^{(\cc)}}\bigr)$, but now it depends on a choice of a base-point $\widehat{\mathbf{p}}\in\widehat{\mathcal{G}}$. Note that for all $\cc\in\overline{C}_+$
\[
\pi_{\mathbf{g},\widehat{\mathbf{p}},\mathfrak{c}}^{\Lambda,\Lambda^\prime}(\cdot)\vert_{\mathcal{P}_\Lambda^{(\cc)}}\simeq\pi_{\mathbf{g}^\prime,\widehat{\mathbf{p}}^\prime,\mathfrak{c}}^{\Lambda,\Lambda^\prime}(\cdot)\vert_{\mathcal{P}_\Lambda^{(\cc)}}
\]
when $\widehat{\mathbf{p}}\cdot\mathbf{g}=\widehat{\mathbf{p}}^\prime\cdot\mathbf{g}^\prime$, in view of Theorem \ref{gthm}(3).
\end{remark}

The uniform quasi-polynomial representations for different root data are related as follows. 
\begin{lemma}\label{ordercompatiblerep2}
Let $(\Lambda_i,\Lambda_i^\prime)\in (\mathcal{L}^{\times 2})_e$ such that $(\Lambda_1,\Lambda_1^\prime)\leq (\Lambda_2,\Lambda_2^\prime)$. Then
\[
\pi_{\mathbf{g},\widehat{\mathbf{p}},\mathfrak{c}}^{\Lambda_2,\Lambda_2^\prime}\vert_{\mathbb{H}_{\Lambda_1,\Lambda_1^\prime}}=
\pi_{\mathbf{g},\widehat{\mathbf{p}},\mathfrak{c}(\cdot)\vert_{\Lambda_1^\prime}}^{\Lambda_1,\Lambda_1^\prime}.
\]
\end{lemma}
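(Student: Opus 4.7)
The plan is to verify the claimed equality of representations of $\mathbb{H}_{\Lambda_1,\Lambda_1^\prime}$ on $\mathbf{F}[E]$ by checking that both representation maps agree on a generating set. Since $\mathbb{H}_{\Lambda_1,\Lambda_1^\prime}$ is generated as a unital $\mathbf{F}$-algebra by $T_0,\ldots,T_r$, the elements $x^\lambda$ for $\lambda\in\Lambda_1$, and the finite group $\Omega_{\Lambda_1^\prime}\subseteq\Omega_{\Lambda_2^\prime}$, it suffices to compare the explicit formulas \eqref{gthmform} for these generators.

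For $h=T_j$ and $h=x^\lambda$ ($\lambda\in\Lambda_1$), inspection of \eqref{gthmform} shows that the only lattice-dependent ingredient is the left $W_{\Lambda^\prime}$-action $\cdot_{\widehat{\mathbf{p}},\mathfrak{c}}$ appearing through $s_j\cdot_{\widehat{\mathbf{p}},\mathfrak{c}}x^y$ in the formula for $T_j$; the truncated divided-difference operator $\nabla_j$ and the multiplication by $x^\lambda$ are defined directly on $\mathbf{F}[E]$ and make no reference to $\Lambda^\prime$. For $h=\omega\in\Omega_{\Lambda_1^\prime}$, the relevant ingredient is again the action $\omega\cdot_{\widehat{\mathbf{p}},\mathfrak{c}}x^y$. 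Hence the entire lemma reduces to the identity
\[
w\cdot_{\widehat{\mathbf{p}},\mathfrak{c}}x^y \;=\; w\cdot_{\widehat{\mathbf{p}},\mathfrak{c}(\cdot)\vert_{\Lambda_1^\prime}}x^y
\qquad (w\in W_{\Lambda_1^\prime},\, y\in E),
\]
where on the left the action is the one from Lemma \ref{refactionext} for the pair $(\Lambda_2,\Lambda_2^\prime)$, and on the right it is the one for the pair $(\Lambda_1,\Lambda_1^\prime)$.

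To verify this, I would use \eqref{formpexact2} together with the explicit formulas for $v\cdot_{\widehat{\mathbf{p}},\mathfrak{c}}x^y$ and $\tau(\mu)\cdot_{\widehat{\mathbf{p}},\mathfrak{c}}x^y$ listed immediately after Lemma \ref{refactionext}. Each of these is expressed in terms of $\mathfrak{t}_{y^\prime}(\widehat{\mathbf{p}},\mathfrak{c})^\mu$ for some $y^\prime\in E$ and some $\mu\in Q^\vee\cup\Lambda_1^\prime$, and what is needed is therefore the identity
\[
\mathfrak{t}_{y^\prime}(\widehat{\mathbf{p}},\mathfrak{c})^\mu \;=\; \mathfrak{t}_{y^\prime}\bigl(\widehat{\mathbf{p}},\mathfrak{c}(\cdot)\vert_{\Lambda_1^\prime}\bigr)^\mu\qquad (\mu\in\Lambda_1^\prime).
\]
This is immediate from the definition $\mathfrak{t}_{y^\prime}(\widehat{\mathbf{p}},\mathfrak{c})=\mathfrak{t}_{y^\prime}(\widehat{\mathbf{p}})\mathfrak{c}(\textup{pr}_{E_{\textup{co}}}(y^\prime))$ together with the observation that restricting a product of characters of $\Lambda_2^\prime$ to $\Lambda_1^\prime$ commutes with taking the product. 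In particular the only potentially non-trivial evaluation, namely $\mathfrak{t}_y(\widehat{\mathbf{p}},\mathfrak{c})^{\varphi^\vee}$ appearing in the formula for $s_0\cdot_{\widehat{\mathbf{p}},\mathfrak{c}}$, is harmless because $\varphi^\vee\in Q^\vee\subseteq\Lambda_1^\prime$.

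No step here is a real obstacle; the content of the lemma is a compatibility check and is essentially a tautology once one unwinds the definitions of Lemma \ref{refactionext} and Theorem \ref{gthm}. The only point requiring a moment's care is bookkeeping for the two generators $T_0$ and $w_{\zeta,\Lambda_1^\prime}$ ($\zeta\in\Lambda_{1,\textup{min}}^\prime$), since their action involves exponents $\varphi^\vee$ and $\zeta$ respectively, and one needs to confirm that these exponents lie in $\Lambda_1^\prime$ so that restriction of $\mathfrak{c}$ to $\Lambda_1^\prime$ is indeed all that is being used.
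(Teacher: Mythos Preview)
Your proof is correct and follows the same approach as the paper: direct verification on generators via the explicit formulas \eqref{gthmform}. The paper's own proof is simply ``This is immediate (cf.\ Proposition \ref{ordercompatiblerep})'', so you have spelled out in detail exactly the compatibility check that the authors regard as obvious.
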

\begin{proof}
This is immediate (cf. Proposition \ref{ordercompatiblerep}).
\end{proof}

\subsection{Uniform quasi-polynomial eigenfunctions}\label{ExtUnifSection}

 In the following theorem we introduce the uniform versions of the monic quasi-polynomial eigenfunctions $E_{y;\cc}^{\Lambda,\Lambda^\prime}(x;\mathfrak{t})$ from Theorem \ref{propEVext0}. 
 
\begin{proposition}\label{propgE}
Let $\mathcal{O}\subset E$ be a $W$-orbit,  $\widehat{\mathbf{p}}\in\widehat{\mathcal{G}}$, $\mathfrak{c}\in\mathcal{C}_{\Lambda,\Lambda^\prime}$ and
$\mathbf{g}\in\mathcal{G}$ such that 
\begin{equation}\label{ggencond}
\mathfrak{s}_y\mathfrak{t}_y(\widehat{\mathbf{g}},\mathfrak{c})\not=\mathfrak{s}_{y^\prime}\mathfrak{t}_{y^\prime}(\widehat{\mathbf{g}},\mathfrak{c})\,\,
\textup{ in }\, T_{\Lambda^\prime}\, \textup{ when }\, y,y^\prime\in \mathcal{O}\, \textup{ and } y\not=y^\prime,
\end{equation}
where $\widehat{\mathbf{g}}:=\widehat{\mathbf{p}}\cdot\mathbf{g}\in\widehat{\mathcal{G}}$.
For each $y\in \mathcal{O}$, the following holds true.
\begin{enumerate}
\item There exists a unique joint eigenfunction $\mathcal{E}_y^{\Lambda,\Lambda^\prime}(x;\mathbf{g},\widehat{\mathbf{p}},\mathfrak{c})\in\mathbf{F}[E]$ of
the commuting operators $\pi_{\mathbf{g},\widehat{\mathbf{p}},\mathfrak{c}}^{\Lambda,\Lambda^\prime}(Y^\mu)$ \textup{(}$\mu\in\Lambda^\prime$\textup{)}
satisfying
\begin{equation}\label{lot}
\mathcal{E}_y^{\Lambda,\Lambda^\prime}(x;\mathbf{g},\widehat{\mathbf{p}},\mathfrak{c})=
x^y+\textup{l.o.t.}
\end{equation}
\item We have
\begin{equation}\label{ee}
\pi_{\mathbf{g},\widehat{\mathbf{p}},\mathfrak{c}}^{\Lambda,\Lambda^\prime}(Y^\mu)\mathcal{E}_y^{\Lambda,\Lambda^\prime}(\cdot;\mathbf{g},\widehat{\mathbf{p}},\mathfrak{c})=(\mathfrak{s}_y\mathfrak{t}_y(\widehat{\mathbf{g}},\mathfrak{c}))^{-\mu}
\mathcal{E}_y^{\Lambda,\Lambda^\prime}(\cdot;\mathbf{g},\widehat{\mathbf{p}},\mathfrak{c})\qquad\forall\, \mu\in\Lambda^{\prime}.
\end{equation}
\item We have
\begin{equation}\label{relEun}
\Gamma_{\Lambda,\mathbf{g}}^{(\cc_y)}\Bigl(\mathcal{E}_y^{\Lambda,\Lambda^\prime}(x;\mathbf{g},\widehat{\mathbf{p}},\mathfrak{c})\Bigr)=
\Bigl(\prod_{\alpha\in\Pi(v_y^{-1})}g_\alpha(\alpha(c_y))\Bigr)
E_{y;\cc_y}^{\Lambda,\Lambda^\prime}(x;\mathfrak{t}_{\cc_y}(\widehat{\mathbf{g}},\mathfrak{c})).
\end{equation}
\end{enumerate}
\end{proposition}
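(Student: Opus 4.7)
The proof will proceed by transporting the existence, uniqueness, and eigenvalue statements of Theorem \ref{propEVext0} through the $\mathbb{H}_{\Lambda,\Lambda^\prime}$-module isomorphism $\Gamma_{\Lambda,\mathbf{g}}^{(\cc_y)}$ supplied by Theorem \ref{gthm}(2). Fix $y\in\mathcal{O}$ and write $\cc:=\cc_y\in\overline{C}_+$, so that $\mathcal{O}\subseteq\mathcal{O}_{\Lambda,\cc}$ is one of the $W$-orbits in the $W_\Lambda$-orbit $\mathcal{O}_{\Lambda,\cc}$. Set $\widehat{\mathbf{g}}:=\widehat{\mathbf{p}}\cdot\mathbf{g}$ and $\mathfrak{t}:=\mathfrak{t}_\cc(\widehat{\mathbf{g}},\mathfrak{c})$; by Lemma \ref{ginvlemmaext}(2) we have $\mathfrak{t}\in {}^{\Lambda}T_{\Lambda^\prime}^\cc$, so the quasi-polynomial module $\mathcal{P}_{\Lambda,\mathfrak{t}}^{(\cc)}$ is well defined.

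The first step is to check that the genericity hypothesis \eqref{ggencond} on the $W$-orbit $\mathcal{O}$ is exactly the genericity hypothesis \eqref{eecond} of Theorem \ref{propEVext0} applied to $\cc$, $\mathfrak{t}$ and $\mathcal{O}$. This is immediate from Remark \ref{linkttpar}, which states $\mathfrak{t}_{y^\prime;\cc}=\mathfrak{t}_{y^\prime}(\widehat{\mathbf{g}},\mathfrak{c})$ for all $y^\prime\in\mathcal{O}$. Theorem \ref{propEVext0}(1)--(2) then produces, for each $y^\prime\in\mathcal{O}$, a unique $E_{y^\prime;\cc}^{\Lambda,\Lambda^\prime}(x;\mathfrak{t})\in\mathcal{P}_\Lambda^{(\cc)}$ of the form $x^{y^\prime}+\textup{l.o.t.}$ satisfying the eigenvalue equations \eqref{evYY} with eigenvalue $(\mathfrak{s}_{y^\prime}\mathfrak{t}_{y^\prime}(\widehat{\mathbf{g}},\mathfrak{c}))^{-\mu}$.

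The second step transfers this back via $\Gamma_{\Lambda,\mathbf{g}}^{(\cc)}$. Because $\Gamma_{\Lambda,\mathbf{g}}^{(\cc)}$ acts diagonally in the quasi-monomial basis (formula \eqref{ascoboundary}), its inverse preserves the triangularity profile $x^{y^\prime}+\textup{l.o.t.}$ up to a scalar multiple of the leading term. To compute that scalar, note that $w_y=\tau(\mu_y)v_y^{-1}$, so $Dw_y=v_y^{-1}$ and hence $\Pi(Dw_y)=\Pi(v_y^{-1})$; formula \eqref{explw} therefore gives $\Gamma_{\Lambda,\mathbf{g}}^{(\cc)}(x^y)=\bigl(\prod_{\alpha\in\Pi(v_y^{-1})}g_\alpha(\alpha(\cc))\bigr)x^y$. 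Defining
\[
\mathcal{E}_y^{\Lambda,\Lambda^\prime}(x;\mathbf{g},\widehat{\mathbf{p}},\mathfrak{c}):=\Bigl(\prod_{\alpha\in\Pi(v_y^{-1})}g_\alpha(\alpha(\cc))\Bigr)^{-1}\Gamma_{\Lambda,\mathbf{g}}^{(\cc)\,-1}\bigl(E_{y;\cc}^{\Lambda,\Lambda^\prime}(x;\mathfrak{t})\bigr)
\]
yields a function in $\mathcal{P}_\Lambda^{(\cc)}\subset\mathbf{F}[E]$ of the form \eqref{lot}, and formula \eqref{relEun} of part (3) is built into this definition. The eigenvalue equation \eqref{ee} of part (2) is immediate since $\Gamma_{\Lambda,\mathbf{g}}^{(\cc)}$ intertwines the two $\mathbb{H}_{\Lambda,\Lambda^\prime}$-actions.

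For uniqueness in (1), suppose $f\in\mathbf{F}[E]$ is any joint $\pi_{\mathbf{g},\widehat{\mathbf{p}},\mathfrak{c}}^{\Lambda,\Lambda^\prime}(Y^\mu)$-eigenfunction of the form $x^y+\textup{l.o.t.}$ Triangularity forces $f$ into the finite-dimensional subspace spanned by $\{x^{y^\prime}\mid y^\prime\leq y\}\subseteq\mathcal{P}_\Lambda^{(\cc)}$, so $\Gamma_{\Lambda,\mathbf{g}}^{(\cc)}(f)$ makes sense. By Theorem \ref{gthm}(2) it is a joint eigenfunction of $\pi_{\cc,\mathfrak{t}}^{\Lambda,\Lambda^\prime}(Y^\mu)$ of the form $(\prod_\alpha g_\alpha(\alpha(\cc)))x^y+\textup{l.o.t.}$; the uniqueness clause of Theorem \ref{propEVext0}(1) identifies it with the corresponding scalar multiple of $E_{y;\cc}^{\Lambda,\Lambda^\prime}(x;\mathfrak{t})$, and applying $\Gamma^{-1}$ proves $f=\mathcal{E}_y^{\Lambda,\Lambda^\prime}$. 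The only delicate point is the bookkeeping for the leading constant, where one must use the identity $Dw_y=v_y^{-1}$ together with \eqref{explw}; all other steps are formal consequences of the isomorphism in Theorem \ref{gthm}(2) and Theorem \ref{propEVext0}.
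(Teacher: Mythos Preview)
Your proof is correct and follows essentially the same approach as the paper: verify via Remark \ref{linkttpar} that the genericity hypothesis \eqref{ggencond} translates into \eqref{eecond} for $\mathfrak{t}=\mathfrak{t}_{\cc_y}(\widehat{\mathbf{g}},\mathfrak{c})$, then transport Theorem \ref{propEVext0}(1)--(2) through the isomorphism $\Gamma_{\Lambda,\mathbf{g}}^{(\cc_y)}$ of Theorem \ref{gthm}(2), using Lemma \ref{ISOlemma} (in the form of \eqref{explw} together with $Dw_y=v_y^{-1}$) to identify the leading coefficient. The paper's proof is a terse three-sentence pointer to exactly these ingredients; you have simply spelled out the bookkeeping.
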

\begin{proof}
Write $\cc=\cc_y\in\overline{C}_+$. Note that \eqref{ggencond} implies the genericity condition \eqref{eecond} for $\mathfrak{t}=\mathfrak{t}_\cc(\widehat{\mathbf{g}},\mathfrak{c})\in
{}^{\Lambda}T_{\Lambda^\prime}^\cc$, in view of \eqref{relspectrum}. Hence the right hand side of \eqref{relEun} is well defined. The result now follows immediately from Theorem \ref{gthm}(2),
Theorem \ref{propEVext0}(1)\&(2) and Lemma \ref{ISOlemma}.
\end{proof}
Note that
\[
\mathcal{E}_y^{\Lambda,\Lambda^\prime}(x;\mathbf{g},\widehat{\mathbf{p}},\mathfrak{c})=x^y\,\,\textup{ when }\,\, y\in\overline{C}_+
\]
by \eqref{lot}.
In particular, this holds true when $y\in E_{\textup{co}}=C^{[1,r]}$. 

The explicit formula for the $Y$-weight of $\mathcal{E}_y^{\Lambda,\Lambda^\prime}(x;\mathbf{g},\widehat{\mathbf{p}},\mathfrak{c})\in\mathbf{F}[E]$ is
\begin{equation}\label{tyg}
\mathfrak{s}_y\mathfrak{t}_y(\widehat{\mathbf{g}},\mathfrak{c})=\mathfrak{c}(\textup{pr}_{E_{\textup{co}}}(y))\prod_{\alpha\in\Phi_0^+}\bigl(k_\alpha^{\eta(\alpha(y))}\widehat{g}_\alpha(\alpha(y))\bigr)^\alpha,
\end{equation}
where $\widehat{\mathbf{g}}=\widehat{\mathbf{p}}\cdot\mathbf{g}$. 
Note that \eqref{ee} in particular implies that
\begin{equation}\label{centeraction}
\pi^{\Lambda,\Lambda^\prime}_{\mathbf{g},\widehat{\mathbf{p}},\mathfrak{c}}(Y^\zeta)\mathcal{E}_y^{\Lambda,\Lambda^\prime}(\cdot;\mathbf{g},\widehat{\mathbf{p}},\mathfrak{c})=\mathfrak{c}(\textup{pr}_{E_{\textup{co}}}(\cc_y))^{-\zeta}\mathcal{E}_y^{\Lambda,\Lambda^\prime}(\cdot;\mathbf{g},\widehat{\mathbf{p}},\mathfrak{c})
\qquad\quad(\zeta\in\Lambda^\prime\cap E_{\textup{co}})
\end{equation}
for all $y\in\mathcal{O}$.
\begin{proposition}\label{ordercompatiblerep3}
Let $(\Lambda_i,\Lambda_i^\prime)\in (\mathcal{L}^{\times 2})_e$ such that $(\Lambda_1,\Lambda_1^\prime)\leq (\Lambda_2,\Lambda_2^\prime)$. 
Let $\mathcal{O}\subset E$ be a $W$-orbit,  $\widehat{\mathbf{p}}\in\widehat{\mathcal{G}}$, $\mathfrak{c}\in\mathcal{C}_{\Lambda_2,\Lambda^\prime_2}$ and
$\mathbf{g}\in\mathcal{G}$ such that 
\begin{equation}\label{ggencond2}
\mathfrak{s}_y\mathfrak{t}_y(\widehat{\mathbf{g}},\mathfrak{c}(\cdot)\vert_{\Lambda_1^\prime})\not=\mathfrak{s}_{y^\prime}\mathfrak{t}_{y^\prime}(\widehat{\mathbf{g}},
\mathfrak{c}(\cdot)\vert_{\Lambda_1^\prime})\,\,
\textup{ in }\, T_{\Lambda_1^\prime}\, \textup{ when }\, y,y^\prime\in \mathcal{O}\, \textup{ and } y\not=y^\prime,
\end{equation}
where $\widehat{\mathbf{g}}:=\widehat{\mathbf{p}}\cdot\mathbf{g}\in\widehat{\mathcal{G}}$. Then
\begin{equation}\label{relEE}
\mathcal{E}_y^{\Lambda_2,\Lambda_2^\prime}(x;\mathbf{g},\widehat{\mathbf{p}},\mathfrak{c})=\mathcal{E}_y^{\Lambda_1,\Lambda_1^\prime}(x;\mathbf{g},\widehat{\mathbf{p}},\mathfrak{c}(\cdot)\vert_{\Lambda_1^\prime})
\qquad \forall\, y\in\mathcal{O}.
\end{equation}
\end{proposition}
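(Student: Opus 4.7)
The plan is to reduce Proposition \ref{ordercompatiblerep3} to the uniqueness assertion in Proposition \ref{propgE}(1) applied to the smaller pair $(\Lambda_1,\Lambda_1^\prime)$, using Lemma \ref{ordercompatiblerep2} to transfer eigenvalue equations from $\mathbb{H}_{\Lambda_2,\Lambda_2^\prime}$ down to $\mathbb{H}_{\Lambda_1,\Lambda_1^\prime}$.

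First I would verify that both sides of \eqref{relEE} are well defined. The right-hand side uses the assumed genericity condition \eqref{ggencond2} directly. For the left-hand side, note that if two distinct elements $y,y^\prime\in\mathcal{O}$ yielded the same character of $\Lambda_2^\prime$ under $y\mapsto \mathfrak{s}_y\mathfrak{t}_y(\widehat{\mathbf{g}},\mathfrak{c})$, then restricting to the sublattice $\Lambda_1^\prime\subseteq\Lambda_2^\prime$ would produce equal characters of $\Lambda_1^\prime$, contradicting \eqref{ggencond2}. Hence the analogous genericity condition holds for $\Lambda_2^\prime$, and Proposition \ref{propgE}(1) applies to $(\Lambda_2,\Lambda_2^\prime)$ as well.

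Next, I would rewrite the eigenvalue equations \eqref{ee} for the left-hand side, restricted to $\mu\in\Lambda_1^\prime$. By Lemma \ref{ordercompatiblerep2} we have
\[
\pi_{\mathbf{g},\widehat{\mathbf{p}},\mathfrak{c}}^{\Lambda_2,\Lambda_2^\prime}(Y^\mu)=\pi_{\mathbf{g},\widehat{\mathbf{p}},\mathfrak{c}(\cdot)\vert_{\Lambda_1^\prime}}^{\Lambda_1,\Lambda_1^\prime}(Y^\mu)
\]
as operators on $\mathbf{F}[E]$ for every $\mu\in\Lambda_1^\prime$, since $Y^\mu$ lies in the subalgebra $\mathbb{H}_{\Lambda_1,\Lambda_1^\prime}\subseteq\mathbb{H}_{\Lambda_2,\Lambda_2^\prime}$. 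Moreover, the eigenvalue $(\mathfrak{s}_y\mathfrak{t}_y(\widehat{\mathbf{g}},\mathfrak{c}))^{-\mu}$ depends only on the restriction of $\mathfrak{s}_y\mathfrak{t}_y(\widehat{\mathbf{g}},\mathfrak{c})$ to $\Lambda_1^\prime$, and from the defining formula $\mathfrak{t}_y(\widehat{\mathbf{g}},\mathfrak{c})=\mathfrak{t}_y(\widehat{\mathbf{g}})\mathfrak{c}(\textup{pr}_{E_{\textup{co}}}(y))$ one has $\mathfrak{t}_y(\widehat{\mathbf{g}},\mathfrak{c})\vert_{\Lambda_1^\prime}=\mathfrak{t}_y(\widehat{\mathbf{g}},\mathfrak{c}(\cdot)\vert_{\Lambda_1^\prime})$. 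Combined, this yields
\[
\pi_{\mathbf{g},\widehat{\mathbf{p}},\mathfrak{c}(\cdot)\vert_{\Lambda_1^\prime}}^{\Lambda_1,\Lambda_1^\prime}(Y^\mu)\mathcal{E}_y^{\Lambda_2,\Lambda_2^\prime}(\cdot;\mathbf{g},\widehat{\mathbf{p}},\mathfrak{c})=\bigl(\mathfrak{s}_y\mathfrak{t}_y(\widehat{\mathbf{g}},\mathfrak{c}(\cdot)\vert_{\Lambda_1^\prime})\bigr)^{-\mu}\mathcal{E}_y^{\Lambda_2,\Lambda_2^\prime}(\cdot;\mathbf{g},\widehat{\mathbf{p}},\mathfrak{c})
\]
for all $\mu\in\Lambda_1^\prime$. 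Since $\mathcal{E}_y^{\Lambda_2,\Lambda_2^\prime}(x;\mathbf{g},\widehat{\mathbf{p}},\mathfrak{c})=x^y+\textup{l.o.t.}$ by \eqref{lot}, the uniqueness part of Proposition \ref{propgE}(1) (applied to $(\Lambda_1,\Lambda_1^\prime)$, which is legitimate by \eqref{ggencond2}) forces the equality \eqref{relEE}.

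I do not foresee a serious obstacle: the argument is essentially a compatibility check between the restriction of representations established in Lemma \ref{ordercompatiblerep2} and the restriction of multiplicative characters on lattices. The only point requiring minor care is verifying that the genericity hypothesis \eqref{ggencond2} for $\Lambda_1^\prime$ automatically upgrades to the analogous condition for $\Lambda_2^\prime$, which is clear from $\Lambda_1^\prime\subseteq\Lambda_2^\prime$, and checking that the $Y$-generators and the eigenvalue characters behave naturally under restriction, both of which are immediate from the definitions.
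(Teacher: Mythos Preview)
Your proof is correct and follows essentially the same approach as the paper: verify well-definedness by noting that the genericity condition on $\Lambda_1^\prime$ implies the one on $\Lambda_2^\prime$, then use Lemma \ref{ordercompatiblerep2} to see that the $(\Lambda_2,\Lambda_2^\prime)$-eigenfunction is also a $(\Lambda_1,\Lambda_1^\prime)$-eigenfunction with the correct triangularity, and conclude by the uniqueness in Proposition \ref{propgE}(1). Your write-up is in fact slightly more detailed than the paper's, spelling out the restriction of eigenvalues explicitly.
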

\begin{proof}
Note that \eqref{ggencond2} implies that \eqref{ggencond} holds true for $\Lambda^\prime=\Lambda_2^\prime$. Hence both sides of \eqref{relEE} are well defined.  Furthermore, both sides of \eqref{relEE} are joint eigenfunctions of $\pi_{\mathbf{g},\widehat{\mathbf{p}},\mathfrak{c}}^{\Lambda,\Lambda^\prime}(Y^\mu)$ \textup{(}$\mu\in\Lambda_1^\prime$\textup{)}
by Lemma \ref{ordercompatiblerep2}. The result now follows from Proposition \ref{propgE}(1) applied to the pair of lattices $(\Lambda,\Lambda^\prime)=(\Lambda_1,\Lambda_1^\prime)$.
\end{proof}

The relation between $
\mathcal{E}_y^{\Lambda,\Lambda^\prime}(x;\mathbf{g},\widehat{\mathbf{p}},\mathfrak{c})$ and the quasi-polynomial eigenfunctions relative to the adjoint root datum is as follows.
\begin{corollary}\label{relEuncor}
Let $\mathcal{O}\subset E$ be a $W$-orbit, $\widehat{\mathbf{p}}\in\widehat{\mathcal{G}}$, $\mathfrak{c}\in\mathcal{C}_{\Lambda,\Lambda^\prime}$ and $\mathbf{g}\in\mathcal{G}$. Set $\widehat{\mathbf{g}}:=\widehat{\mathbf{p}}\cdot\mathbf{g}\in\widehat{\mathcal{G}}$ and suppose that 
the genericity conditions \eqref{ggencond2} hold true for $\Lambda_1^\prime=Q^\vee$.
For $y\in\mathcal{O}$ we then have
\begin{equation}\label{relEun2}
\Gamma_{Q^\vee,\mathbf{g}}^{(\cc_y)}\bigl(\mathcal{E}_y^{\Lambda,\Lambda^\prime}(x;\mathbf{g},\widehat{\mathbf{p}},\mathfrak{c})\bigr)=
\Bigl(\prod_{\alpha\in\Pi(v_y^{-1})}g_\alpha(\alpha(c_y))\Bigr)E_y^{\mathbf{J}(\cc_y)}(x;\mathfrak{t}_{\cc_y}(\widehat{\mathbf{g}})\vert_{Q^\vee}).
\end{equation}
Under these assumptions, 
$\mathcal{E}_y^{\Lambda,\Lambda^\prime}(x;\mathbf{g},\widehat{\mathbf{p}},\mathfrak{c})$ does not depend on
$\mathfrak{c}\in\mathcal{C}_{\Lambda,\Lambda^\prime}$.
\end{corollary}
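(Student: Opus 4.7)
The plan is to assemble the corollary from three results established earlier in the section, by reducing from the extended data $(\Lambda,\Lambda^\prime)$ to the adjoint case $(Q^\vee,Q^\vee)$. First I would invoke Proposition \ref{ordercompatiblerep3} with $(\Lambda_1,\Lambda_1^\prime)=(Q^\vee,Q^\vee)\le(\Lambda,\Lambda^\prime)=(\Lambda_2,\Lambda_2^\prime)$, noting that the genericity hypothesis \eqref{ggencond2} for $\Lambda_1^\prime=Q^\vee$ is exactly what this proposition needs, to obtain
\[
\mathcal{E}_y^{\Lambda,\Lambda^\prime}(x;\mathbf{g},\widehat{\mathbf{p}},\mathfrak{c})=\mathcal{E}_y^{Q^\vee,Q^\vee}(x;\mathbf{g},\widehat{\mathbf{p}},\mathfrak{c}(\cdot)\vert_{Q^\vee}).
\]

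The key observation at this point is that $\mathcal{C}_{Q^\vee,Q^\vee}$ is a singleton consisting of the trivial function $\mathfrak{c}_0\colon E_{\textup{co}}\to T_{Q^\vee,[1,r]}$: indeed $T_{Q^\vee,[1,r]}=\{1_T\}$, and the translation constraint \eqref{ctrans} is vacuous because $\textup{pr}_{E_{\textup{co}}}(Q^\vee)=\{0\}$. Consequently $\mathfrak{c}(\cdot)\vert_{Q^\vee}=\mathfrak{c}_0$ regardless of $\mathfrak{c}\in\mathcal{C}_{\Lambda,\Lambda^\prime}$, and the preceding display shows that $\mathcal{E}_y^{\Lambda,\Lambda^\prime}(x;\mathbf{g},\widehat{\mathbf{p}},\mathfrak{c})$ is independent of $\mathfrak{c}$, which settles the last assertion of the corollary.

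Next I would apply Proposition \ref{propgE}(3) to $(Q^\vee,Q^\vee)$ with $\mathfrak{c}=\mathfrak{c}_0$, which yields
\[
\Gamma_{Q^\vee,\mathbf{g}}^{(\cc_y)}\bigl(\mathcal{E}_y^{Q^\vee,Q^\vee}(x;\mathbf{g},\widehat{\mathbf{p}},\mathfrak{c}_0)\bigr)=\Bigl(\prod_{\alpha\in\Pi(v_y^{-1})}g_\alpha(\alpha(\cc_y))\Bigr)E_{y;\cc_y}^{Q^\vee,Q^\vee}\bigl(x;\mathfrak{t}_{\cc_y}(\widehat{\mathbf{g}},\mathfrak{c}_0)\bigr),
\]
and triviality of $\mathfrak{c}_0$ gives $\mathfrak{t}_{\cc_y}(\widehat{\mathbf{g}},\mathfrak{c}_0)=\mathfrak{t}_{\cc_y}(\widehat{\mathbf{g}})\vert_{Q^\vee}$, which lies in $T_{\mathbf{J}(\cc_y)}$ by Corollary \ref{ginvcor}. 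Finally I would invoke Theorem \ref{propEVext0}(3) with $(\Lambda,\Lambda^\prime)=(Q^\vee,Q^\vee)$: here $\Omega_{Q^\vee}=\{1\}$ and the $W$-orbit $\mathcal{O}$ containing $y$ is simply $\mathcal{O}_{\cc_y}$, so one may take $\omega=1$ and conclude
\[
E_{y;\cc_y}^{Q^\vee,Q^\vee}\bigl(x;\mathfrak{t}_{\cc_y}(\widehat{\mathbf{g}})\vert_{Q^\vee}\bigr)=E_y^{\mathbf{J}(\cc_y)}\bigl(x;\mathfrak{t}_{\cc_y}(\widehat{\mathbf{g}})\vert_{Q^\vee}\bigr).
\]
Concatenating the three displays produces \eqref{relEun2}.

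The main bookkeeping hurdle is verifying that the stringent genericity \eqref{eecond2} required by Theorem \ref{propEVext0}(3) matches \eqref{ggencond2} for $\Lambda_1^\prime=Q^\vee$; this uses Corollary \ref{datumok} together with Definitions \ref{defty} and \ref{deftyc} to identify $(\mathfrak{s}_y\mathfrak{t}_{y;\cc_y})\vert_{Q^\vee}$ with $\mathfrak{s}_y\mathfrak{t}_y(\widehat{\mathbf{g}})\vert_{Q^\vee}=\mathfrak{s}_y\mathfrak{t}_y(\widehat{\mathbf{g}},\mathfrak{c})\vert_{Q^\vee}$, the last equality holding because $\mathfrak{c}$ is trivial on $Q^\vee$. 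Once this identification is in place, no further work is required.
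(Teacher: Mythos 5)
Your proof is correct and follows essentially the same route as the paper: reduce to the adjoint pair $(Q^\vee,Q^\vee)$ via Proposition \ref{ordercompatiblerep3}, apply \eqref{relEun} (Proposition \ref{propgE}(3)), and identify $E_{y;\cc_y}^{Q^\vee,Q^\vee}$ with $E_y^{\mathbf{J}(\cc_y)}$ using Theorem \ref{propEVext0}(3) together with $\mathfrak{c}(\cdot)\vert_{Q^\vee}\equiv 1_T$. The only cosmetic remark is that the identification of the $Y$-weights needed to match \eqref{eecond2} with \eqref{ggencond2} is most directly Lemma \ref{ginvlemmaext} and Remark \ref{linkttpar} (i.e.\ $\mathfrak{t}_{\cc}(\widehat{\mathbf{g}},\mathfrak{c})_{w\cc;\cc}=\mathfrak{t}_{w\cc}(\widehat{\mathbf{g}},\mathfrak{c})$) rather than Corollary \ref{datumok}, but this does not affect the argument.
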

\begin{proof}
We have $\Gamma_{\Lambda,\mathbf{g}}^{(\cc)}\vert_{\mathcal{P}^{(\cc)}}=\Gamma_{Q^\vee,\mathbf{g}}^{(\cc)}$ ($\cc\in\overline{C}_+$) and 
$\mathcal{E}_y^{\Lambda,\Lambda^\prime}(x;\mathbf{g},\widehat{\mathbf{p}},\mathfrak{c})\in\mathcal{P}^{(\cc_y)}$, hence the left hand side of \eqref{relEun2}
is well defined.

Since $(Q^\vee,Q^\vee)\leq (\Lambda,\Lambda^\prime)$, formula \eqref{relEun2} then follows from Proposition \ref{ordercompatiblerep3}, \eqref{relEun} and the fact that
\[
\mathcal{E}_{y;\cc_y}^{Q^\vee,Q^\vee}(x;\mathfrak{t}_{\cc_y}(\widehat{\mathbf{g}},\mathfrak{c}(\cdot)\vert_{Q^\vee}))=E_y^{\mathbf{J}(\cc_y)}(x;\mathfrak{t}_{\cc_y}(\widehat{\mathbf{g}})).
\]
Here we have also used that $\mathfrak{c}(\cdot)\vert_{Q^\vee}\equiv 1_T$.
\end{proof}

Changing the perspective by extending the eigenvalue equations from adjoint root datum to extended root data,
leads to the following main result of this subsection.
\begin{theorem}\label{unifcalE}
Let $\mathcal{O}\subset E$ be a $W$-orbit, $\widehat{\mathbf{p}}\in\widehat{\mathcal{G}}$ and $\mathbf{g}\in\mathcal{G}$. Suppose that
\begin{equation}\label{ggencond3}
\mathfrak{s}_y\mathfrak{t}_y(\widehat{\mathbf{g}})\not=\mathfrak{s}_{y^\prime}\mathfrak{t}_{y^\prime}(\widehat{\mathbf{g}})\,\,
\textup{ in }\, T\, \textup{ when }\, y,y^\prime\in \mathcal{O}\, \textup{ and } y\not=y^\prime,
\end{equation}
where $\widehat{\mathbf{g}}:=\widehat{\mathbf{p}}\cdot\mathbf{g}\in\widehat{\mathcal{G}}$. Then
\[
\mathcal{E}_y(x;\mathbf{g},\widehat{\mathbf{p}}):=\mathcal{E}_y^{Q^\vee,Q^\vee}(x;\mathbf{g},\widehat{\mathbf{p}},1_T)\qquad (y\in\mathcal{O})
\]
satisfies for all $(\Lambda,\Lambda^\prime)\in (\mathcal{L}^{\times 2})_e$ and all $\mathfrak{c}\in\mathcal{C}_{\Lambda,\Lambda^\prime}$, 
\begin{equation}\label{ee2}
\pi_{\mathbf{g},\widehat{\mathbf{p}},\mathfrak{c}}^{\Lambda,\Lambda^\prime}(Y^\mu)\mathcal{E}_y(\cdot;\mathbf{g},\widehat{\mathbf{p}})=(\mathfrak{s}_y\mathfrak{t}_y(\widehat{\mathbf{g}},\mathfrak{c}))^{-\mu}
\mathcal{E}_y(\cdot;\mathbf{g},\widehat{\mathbf{p}})\qquad\forall\, \mu\in\Lambda^{\prime}.
\end{equation}
Furthermore, 
\begin{equation}\label{ee3}
\Gamma_{Q^\vee,\mathbf{g}}^{(\cc_y)}\bigl(\mathcal{E}_y(x;\mathbf{g},\widehat{\mathbf{p}})\bigr)=
\Bigl(\prod_{\alpha\in\Pi(v_y^{-1})}g_\alpha(\alpha(c_y))\Bigr)E_y^{\mathbf{J}(\cc_y)}(x;\mathfrak{t}_{\cc_y}(\widehat{\mathbf{g}})\vert_{Q^\vee}).
\end{equation}
\end{theorem}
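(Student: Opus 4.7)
The plan is to deduce this as a synthesis of Proposition \ref{propgE}, Proposition \ref{ordercompatiblerep3} and Corollary \ref{relEuncor}, whose hypotheses we must check under the apparently weaker genericity condition \eqref{ggencond3}. Concretely, the first thing I would verify is the following lifting property of the genericity condition: for any $(\Lambda,\Lambda^\prime)\in(\mathcal{L}^{\times 2})_e$ and any $\mathfrak{c}\in\mathcal{C}_{\Lambda,\Lambda^\prime}$, the condition \eqref{ggencond3} already forces the apparently stronger \eqref{ggencond} in $T_{\Lambda^\prime}$. This uses that $\mathfrak{c}$ takes values in $T_{\Lambda^\prime,[1,r]}$, so $\mathfrak{c}(\textup{pr}_{E_{\textup{co}}}(y))\vert_{Q^\vee}=1_T$, and hence by \eqref{decomptt} we have
\[
\bigl(\mathfrak{s}_y\mathfrak{t}_y(\widehat{\mathbf{g}},\mathfrak{c})\bigr)\vert_{Q^\vee}=\mathfrak{s}_y\mathfrak{t}_y(\widehat{\mathbf{g}})
\]
in $T$; distinctness of these restrictions (assumed in \eqref{ggencond3}) forces distinctness of the characters themselves in $T_{\Lambda^\prime}$.

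Once this is in place, the argument unfolds as follows. Fix $(\Lambda,\Lambda^\prime)\in(\mathcal{L}^{\times 2})_e$ and $\mathfrak{c}\in\mathcal{C}_{\Lambda,\Lambda^\prime}$. By the previous paragraph, \eqref{ggencond} holds, so $\mathcal{E}_y^{\Lambda,\Lambda^\prime}(x;\mathbf{g},\widehat{\mathbf{p}},\mathfrak{c})$ is well defined by Proposition \ref{propgE} and satisfies the eigenvalue equations \eqref{ee}. Applying Proposition \ref{ordercompatiblerep3} to the pair $(Q^\vee,Q^\vee)\leq(\Lambda,\Lambda^\prime)$, whose hypothesis is precisely \eqref{ggencond3}, gives
\[
\mathcal{E}_y^{\Lambda,\Lambda^\prime}(x;\mathbf{g},\widehat{\mathbf{p}},\mathfrak{c})
=\mathcal{E}_y^{Q^\vee,Q^\vee}\bigl(x;\mathbf{g},\widehat{\mathbf{p}},\mathfrak{c}(\cdot)\vert_{Q^\vee}\bigr)
=\mathcal{E}_y^{Q^\vee,Q^\vee}(x;\mathbf{g},\widehat{\mathbf{p}},1_T)=\mathcal{E}_y(x;\mathbf{g},\widehat{\mathbf{p}}),
\]
where the middle equality uses $\mathcal{C}_{Q^\vee,Q^\vee}=\{1_T\}$ (which holds since $T_{Q^\vee,[1,r]}=\{1_T\}$). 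Substituting this identification into \eqref{ee} yields \eqref{ee2} for the chosen $(\Lambda,\Lambda^\prime)$ and $\mathfrak{c}$, and since the pair was arbitrary the first claim follows.

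For the remaining identity \eqref{ee3}, I would simply specialise \eqref{relEun2} in Corollary \ref{relEuncor} to $(\Lambda,\Lambda^\prime)=(Q^\vee,Q^\vee)$ and $\mathfrak{c}=1_T$, noting that $\mathfrak{t}_{\cc_y}(\widehat{\mathbf{g}},1_T)\vert_{Q^\vee}=\mathfrak{t}_{\cc_y}(\widehat{\mathbf{g}})\vert_{Q^\vee}$; Corollary \ref{relEuncor}'s hypothesis is again exactly \eqref{ggencond3}. I do not anticipate any genuine obstacle: the only subtlety is the bookkeeping in the first paragraph showing that the $\mathfrak{c}$-dependent shift by a character of $\Omega_{\Lambda^\prime}$ disappears upon restriction to $Q^\vee$, which is what makes the whole family of extended eigenvalue equations \eqref{ee2} collapse onto the single adjoint object $\mathcal{E}_y(x;\mathbf{g},\widehat{\mathbf{p}})$.
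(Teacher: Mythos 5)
Your proposal is correct and follows essentially the same route as the paper's own proof: both deduce \eqref{ggencond} from \eqref{ggencond3} via \eqref{decomptt}, identify $\mathcal{E}_y^{\Lambda,\Lambda^\prime}(x;\mathbf{g},\widehat{\mathbf{p}},\mathfrak{c})$ with $\mathcal{E}_y(x;\mathbf{g},\widehat{\mathbf{p}})$ through Proposition \ref{ordercompatiblerep3} using $\mathfrak{c}(\cdot)\vert_{Q^\vee}=1_T$, and then read off \eqref{ee2} from \eqref{ee} and \eqref{ee3} from \eqref{relEun2}. Your extra bookkeeping in the first paragraph just makes explicit what the paper compresses into one sentence.
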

\begin{proof}
Fix $(\Lambda,\Lambda^\prime)\in (\mathcal{L}^{\times 2})_e$ and $\mathfrak{c}\in\mathcal{C}_{\Lambda,\Lambda^\prime}$.
By \eqref{decomptt} the parameter conditions \eqref{ggencond3} imply \eqref{ggencond}. Furthermore $\mathfrak{c}(\cdot)\vert_{Q^\vee}=1_T$, 
hence \eqref{relEE} shows that
\[
\mathcal{E}_y^{\Lambda,\Lambda^\prime}(x;\mathbf{g},\widehat{\mathbf{p}},\mathfrak{c})=\mathcal{E}_y(x;\mathbf{g},\widehat{\mathbf{p}})
\]
for all $y\in\mathcal{O}$. Then \eqref{ee2} follows from \eqref{ee}, and \eqref{ee3} from \eqref{relEun2}.
\end{proof}
\begin{remark}
Under special conditions on the parameters $\mathbf{g}$ and $\widehat{\mathbf{p}}$ the quasi-poly\-no\-mials $\mathcal{E}_y^{\Lambda,\Lambda^\prime}(x;\mathbf{g},\widehat{\mathbf{p}},\mathfrak{c})$ ($y\in\mathcal{O}$) may depend on $\mathfrak{c}\in\mathcal{C}_{\Lambda,\Lambda^\prime}$. For instance, this is the case 
when $(\Lambda,\Lambda^\prime)=(Q^\vee,P^\vee)$ 
with $Q^\vee\subsetneq P^\vee$ if the $Y$-spectrum satisfies \eqref{ggencond} but not \eqref{ggencond3}.
\end{remark}

Further results for $\mathcal{E}_y(x;\mathbf{g},\widehat{\mathbf{p}})$ and $\mathcal{E}_y^{\Lambda,\Lambda^\prime}(x;\mathbf{g},\widehat{\mathbf{p}},\mathfrak{c})$, such as orthogonality and symmetrisation, can be directly obtained from the corresponding results for $E_{y;\cc}^{\Lambda,\Lambda^\prime}(x;\mathfrak{t})$ and $E_y^J(x;\mathfrak{t})$ using \eqref{relEun} and \eqref{relEun2}.

By Lemma \ref{ginvlemmaext}(1), the $W_{\Lambda^\prime}$-action $\cdot_{\widehat{\mathbf{p}},\mathfrak{c}}$ on $\mathbf{F}[E]$ extends to a $W_{\Lambda^\prime}\ltimes\mathcal{P}_\Lambda$-action, with $\mathcal{P}_\Lambda$
acting by multiplication operators (cf. Lemma \ref{actiondeform}). This extends to a $W_{\Lambda^\prime}\ltimes\mathcal{Q}_\Lambda$-action on 
$\mathbf{F}_{\mathcal{Q}_\Lambda}[E]:=\mathcal{Q}_\Lambda\otimes_{\mathcal{P}_\Lambda}\mathbf{F}[E]$,
cf. Subsection \ref{aWaSection}.
\begin{theorem}\label{CGaffineextended}
For $\widehat{\mathbf{p}}\in\widehat{\mathcal{G}}$, $\mathbf{g}\in\mathcal{G}$ and $\mathfrak{c}\in\mathcal{C}$, the formulas
\begin{equation*}
\begin{split}
\sigma_{\mathbf{g},\widehat{\mathbf{p}},\mathfrak{c}}^{\Lambda,\Lambda^\prime}(s_j)(x^y)&:=
\frac{k_j^{\chi_{\mathbb{Z}}(\alpha_j(y))}(x^{\alpha_j^\vee}-1)}{g_{D\alpha_j}(\alpha_j(y))(k_jx^{\alpha_j^\vee}-k_j^{-1})}
s_j\cdot_{\widehat{\mathbf{p}},\mathfrak{c}}x^y+\Bigl(\frac{k_j-k_j^{-1}}{k_jx^{\alpha_j^\vee}-k_j^{-1}}\Bigr)x^{y-\lfloor D\alpha_j(y)\rfloor\alpha_j^\vee},\\
\sigma_{\mathbf{g},\widehat{\mathbf{p}},\mathfrak{c}}^{\Lambda,\Lambda^\prime}(\omega)(x^y)&:=\omega\cdot_{\widehat{\mathbf{p}},\mathfrak{c}}x^y,
\qquad\qquad\sigma_{\mathbf{g},\widehat{\mathbf{p}},\mathfrak{c}}^{\Lambda,\Lambda^\prime}(f)x^y:=fx^y
\end{split}
\end{equation*}
for $y\in E$, $0\leq j\leq r$, $\omega\in\Omega_{\Lambda^\prime}$ and $f\in\mathcal{Q}_\Lambda$
defines a representation 
\[
\sigma_{\mathbf{g},\widehat{\mathbf{p}},\mathfrak{c}}^{\Lambda,\Lambda^\prime}:W_{\Lambda^\prime}\ltimes\mathcal{Q}_\Lambda\rightarrow\textup{End}_{\mathbf{F}}(\mathbf{F}_{\mathcal{Q}_\Lambda}[E]).
\]
\end{theorem}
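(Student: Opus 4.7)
The plan is to adapt the proof of Theorem \ref{aCG} to the extended setting, by localising both the representation $\pi_{\mathbf{g},\widehat{\mathbf{p}},\mathfrak{c}}^{\Lambda,\Lambda^\prime}$ and the algebra $\mathbb{H}_{\Lambda,\Lambda^\prime}$, and transporting the localised action to $W_{\Lambda^\prime}\ltimes\mathcal{Q}_\Lambda$ via an extended Cherednik isomorphism.

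First I would introduce the $X$-localised extended double affine Hecke algebra $\mathbb{H}_{\Lambda,\Lambda^\prime}^{X-\textup{loc}}$, the ring of fractions of $\mathbb{H}_{\Lambda,\Lambda^\prime}$ with respect to $\mathcal{P}_\Lambda^\times$. Since $\omega\in\Omega_{\Lambda^\prime}$ acts by $\mathbf{F}$-algebra automorphisms on $\mathcal{P}_\Lambda$, the commutation relation \eqref{crossOmegaexplicit} extends to $\omega f=\omega(f)\omega$ in $\mathbb{H}_{\Lambda,\Lambda^\prime}^{X-\textup{loc}}$ for all $f\in\mathcal{Q}_\Lambda$, and multiplication yields an isomorphism $\mathcal{Q}_\Lambda\otimes_{\mathbf{F}}H_{\Lambda^\prime}\overset{\sim}{\longrightarrow}\mathbb{H}_{\Lambda,\Lambda^\prime}^{X-\textup{loc}}$ extending \eqref{factorloc}. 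Setting $\widetilde{S}_\omega^X:=\omega$ for $\omega\in\Omega_{\Lambda^\prime}$ and $\widetilde{S}_{\omega w}^X:=\omega\widetilde{S}_w^X$ for $w\in W$, I would then extend Theorem \ref{locTHM} to an $\mathbf{F}$-algebra isomorphism
\[
\ss_{\Lambda^\prime}: W_{\Lambda^\prime}\ltimes\mathcal{Q}_\Lambda \overset{\sim}{\longrightarrow} \mathbb{H}_{\Lambda,\Lambda^\prime}^{X-\textup{loc}},\qquad w\otimes f\mapsto \widetilde{S}_w^Xf.
\]
Consistency with products reduces to the identity $\omega\widetilde{S}_j^X\omega^{-1}=\widetilde{S}_{\omega(j)}^X$ in $\mathbb{H}_{Q^\vee,\Lambda^\prime}^{X-\textup{loc}}$, which follows by applying $\omega$ to \eqref{normS} and using $\omega(\alpha_j^\vee)=\alpha_{\omega(j)}^\vee$.

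Next, following the proof of Theorem \ref{aCG}, I would extend $\pi_{\mathbf{g},\widehat{\mathbf{p}},\mathfrak{c}}^{\Lambda,\Lambda^\prime}$ to a representation $\pi_{\mathbf{g},\widehat{\mathbf{p}},\mathfrak{c}}^{\Lambda,\Lambda^\prime,X-\textup{loc}}$ of $\mathbb{H}_{\Lambda,\Lambda^\prime}^{X-\textup{loc}}$ on $\mathbf{F}_{\mathcal{Q}_\Lambda}[E]$, realised via the induced module $\mathbb{H}_{\Lambda,\Lambda^\prime}^{X-\textup{loc}}\otimes_{\mathbb{H}_{\Lambda,\Lambda^\prime}}\mathbf{F}[E]$. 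Concretely, $\mathcal{Q}_\Lambda$ acts by multiplication, $\omega\in\Omega_{\Lambda^\prime}$ acts as the rational extension of $\omega\cdot_{\widehat{\mathbf{p}},\mathfrak{c}}$ (well defined thanks to the $W_{\Lambda^\prime}\ltimes\mathcal{P}_\Lambda$-action supplied by Lemma \ref{refactionext}), and $T_j$ acts by the natural analog of \eqref{hatform}, with the twisted reflection $s_{j,\mathfrak{t}}$ replaced by $s_j\cdot_{\widehat{\mathbf{p}},\mathfrak{c}}$ rescaled by $k_j^{\chi_{\mathbb{Z}}(\alpha_j(y))}g_{D\alpha_j}(\alpha_j(y))^{-1}$, and with $\check{s}_j$ extended to $\mathbf{F}_{\mathcal{Q}_\Lambda}[E]$ by $\check{s}_j(fx^y):=s_j(f)x^{y-\lfloor D\alpha_j(y)\rfloor\alpha_j^\vee}$.

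Finally, setting $\sigma_{\mathbf{g},\widehat{\mathbf{p}},\mathfrak{c}}^{\Lambda,\Lambda^\prime}:=\pi_{\mathbf{g},\widehat{\mathbf{p}},\mathfrak{c}}^{\Lambda,\Lambda^\prime,X-\textup{loc}}\circ\ss_{\Lambda^\prime}$ yields an $\mathbf{F}$-algebra homomorphism automatically. The stated formula for $\sigma(s_j)$ follows by substituting \eqref{normS} and simplifying exactly as in the proof of Theorem \ref{aCG}; the formula for $\sigma(\omega)$ is immediate since $\ss_{\Lambda^\prime}(\omega)=\omega$; and $\sigma(f)$ acts by multiplication by construction. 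The main technical step—where I expect most of the bookkeeping to lie—is verifying that $\pi_{\mathbf{g},\widehat{\mathbf{p}},\mathfrak{c}}^{\Lambda,\Lambda^\prime,X-\textup{loc}}$ is well defined, i.e.\ that the proposed extensions of $T_j$ and $\omega$ to $\mathbf{F}_{\mathcal{Q}_\Lambda}[E]$ respect the cross relations \eqref{crossX} and \eqref{crossOmegaexplicit} after inversion of $\mathcal{P}_\Lambda^\times$. The key structural input is the twisted $\mathcal{P}_\Lambda$-compatibility $\check{s}_j(pg)=s_j(p)\check{s}_j(g)$ of $\check{s}_j$, combined with the $W_{\Lambda^\prime}\ltimes\mathcal{P}_\Lambda$-module property from Lemma \ref{refactionext}.
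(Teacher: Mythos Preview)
Your proposal is correct and follows precisely the second of the two approaches the paper outlines: repeat the proof of Theorem~\ref{aCG}, now using the uniform representation $\pi_{\mathbf{g},\widehat{\mathbf{p}},\mathfrak{c}}^{\Lambda,\Lambda^\prime}$ from Theorem~\ref{gthm}, localise, and pull back through the extended Cherednik isomorphism $\ss_{\Lambda^\prime}$. The paper also notes an alternative route---deriving the result from Lemma~\ref{aCGrem} (the $W_{\Lambda^\prime}\ltimes\mathcal{Q}$-action on each $\mathcal{Q}^{(\cc)}$) by conjugating with the $\mathcal{Q}_\Lambda$-linear extension of $\Gamma_{\Lambda,\mathbf{g}}^{(\cc)}$, in parallel with the proof of Theorem~\ref{gthm}---but your approach is one of the two the paper explicitly endorses, and your outline of the extended isomorphism $\ss_{\Lambda^\prime}$ (in particular the check $\omega\widetilde{S}_j^X\omega^{-1}=\widetilde{S}_{\omega(j)}^X$) and of the localised module structure is accurate.
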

\begin{proof}
One can derive the result from Lemma \ref{aCGrem} by similar arguments as in the proof of Theorem \ref{gthm},
using the extension of $\Gamma_{\Lambda,\mathbf{g}}^{(\cc)}$ to a $\mathcal{Q}_\Lambda$-linear automorphism of $\mathcal{Q}_\Lambda\otimes_{\mathcal{P}_\Lambda}\mathcal{P}_\Lambda^{(\cc)}$. 
Alternatively, one can repeat the proof of Theorem \ref{aCG},
now using the $\mathbb{H}_{\Lambda,\Lambda^\prime}$-representation $\pi_{\mathbf{g},\widehat{\mathbf{p}},\mathfrak{c}}^{\Lambda,\Lambda^\prime}$.
\end{proof}

\section{Metaplectic representations and metaplectic polynomials}\label{MetaplecticSection}

In this subsection we generalise the $\textup{GL}_{r+1}$-type metaplectic double affine Hecke algebra representation and the associated metaplectic polynomials from 
\cite[\S 5]{SSV} to arbitrary root systems using the uniform quasi-polynomial representation
for extended double affine Hecke algebras (Theorem \ref{gthm}) and the corresponding uniform quasi-polynomials 
(Theorem \ref{unifcalE}). This section provides full proofs for the results announced in \cite[\S 5]{SSV}. 

The metaplectic double affine Hecke algebra action is an action on spaces $\mathcal{P}_\Lambda$ of polynomials on a torus $T_\Lambda$, with $\Lambda\in\mathcal{L}$. It depends on a metaplectic datum, which is a pair $(n,\mathbf{Q})$ with $n\in\mathbb{Z}_{>0}$ and $\mathbf{Q}: \Lambda \rightarrow\mathbb{Q}$ a non-zero $W_0$-invariant quadratic form that restricts to an integer-valued quadratic form on $Q^\vee$.
The metaplectic datum leads to a finite root system $\Phi_0^m$, the so-called metaplectic root system, which is isomorphic to either $\Phi_0$ or $\Phi_0^\vee$ (see Subsection \ref{S91}). Its associated co-root lattice $Q^{m\vee}$ is contained in $Q^\vee$. 

The metaplectic representation will be realised as a sub-representation of the uniform quasi-polynomial representation
relative to the metaplectic root system $\Phi_0^m$, 
with the parameters $(\mathbf{g},\widehat{\mathbf{p}},\mathfrak{c})$ fixed in such a way that the twisted $W_0\ltimes Q^{m\vee}$-action $\cdot_{\widehat{\mathbf{p}},\mathfrak{c}}$ on $\mathcal{P}_\Lambda$ reduces to the standard action by $q$-dilation and reflection operators. 

The associated uniform quasi-polynomials $\mathcal{E}_y(x;\mathbf{g},\widehat{\mathbf{p}})\in\mathcal{P}_\Lambda$ ($y\in\Lambda$) will provide the metaplectic polynomials. 

The $\mathbf{g}$-dependence reduces to a dependence on so-called metaplectic parameters $\underline{h}\in\mathcal{M}_{(n,\mathbf{Q})}$
which, in the context of representation theory of metaplectic covers of reductive groups over a non-archimedean local field, are certain Gauss sums
(see, e.g., \cite{CGP,CG}). 

{}In the remainder of the section we fix a metaplectic datum $(n,\mathbf{Q})$, and we assume  
that $\mathbf{Q}$ takes positive values on co-roots. We write $\kappa_\ell\in\mathbb{Z}_{>0}$ for the natural number
\begin{equation}\label{kappal}
\kappa_\ell:=\mathbf{Q}(\alpha^\vee)\qquad\quad (\alpha\in{}^\ell\Phi_0)
\end{equation}
for $\ell\in\{\textup{sh},\textup{lg}\}$.

\subsection{The metaplectic parameters}\label{S91}
Write $\mathbf{B}: \Lambda \times \Lambda \rightarrow
\mathbb{Q}$ for the $W_0$-invariant symmetric bilinear pairing
\[
\mathbf{B}(\lambda,\mu):=\mathbf{Q}(\lambda+\mu)-\mathbf{Q}(\lambda
)-\mathbf{Q}(\mu)\qquad (\lambda,\mu\in \Lambda).
\]
Then 
\begin{equation}\label{eqn:BQ}
\mathbf{B}(\lambda,\alpha^\vee)=\mathbf{Q}(\alpha^\vee)\alpha(\lambda)\qquad (\lambda\in \Lambda,\, \alpha\in \Phi_0)
\end{equation}
and
\begin{equation}\label{def:m}
m(\alpha):=\frac{n}{\textup{gcd}(n,\mathbf{Q}(\alpha^\vee))}=\frac{\textup{lcm}%
(n,\mathbf{Q}(\alpha^\vee))}{\mathbf{Q}(\alpha^\vee)} \qquad (\alpha\in\Phi_0)
\end{equation}
defines a $W_0$-invariant $\mathbb{Z}_{>0}$-valued function on $\Phi_0$. The associated metaplectic root system is defined by
\[
\Phi_0^m:=\left\{\alpha^m\,\, | \,\,
\alpha\in\Phi_0\right\}\,\,\textup{ with }\,\, \alpha^m:=m(\alpha)^{-1}\alpha.
\]
Then $\Phi_0^m\simeq\Phi_0$ if $m$ is constant, and $\Phi_0^m\simeq\Phi_0^\vee$ otherwise (see \cite{SSV}). In particular, the Weyl group of $\Phi_0^m$ is still $W_0$. 
The fixed basis $\{\alpha_1,\ldots,\alpha_r\}$ of $\Phi_0$ determines a basis $\{\alpha_1^m,\ldots,\alpha_r^m\}$ of $\Phi_0^m$.
Note that
$\Phi_0^{m\vee}=\{\alpha^{m\vee}\}_{\alpha\in\Phi_0}$, with $\alpha^{m\vee}=m(\alpha)\alpha^\vee$. Write 
\[
Q^{m\vee}:=\mathbb{Z}\Phi_0^{m\vee}\subseteq Q^\vee
\]
for the coroot lattice of $\Phi_0^m$.

Let $\Phi^m:=\Phi_0^m\times\mathbb{Z}$ be the affine root system of $\Phi_0^m$, with corresponding extended basis 
$\{\alpha_0^m,\alpha_1^m,\ldots,\alpha_r^m\}$. Then $\alpha_0^m=(-\vartheta^m,1)$ with 
$\vartheta^m\in\Phi_0^{m+}$
the highest root in $\Phi_0^{m+}$. For the underlying root $\vartheta\in\Phi_0^+$ this means that
\begin{equation*}
\vartheta=
\begin{cases}
\varphi\quad &\hbox{ if } m \hbox{ is constant},\\
\theta\quad &\hbox{ otherwise},
\end{cases}
\end{equation*}
where 
$\theta\in\Phi_0^+$ is the highest short root.

Consider the affine Weyl group $W^m=W_0\ltimes Q^{m\vee}$ corresponding to the affine root system $\Phi^m$. The associated simple reflections are denoted by 
\[
s_j^m:=s_{\alpha_j^m}\in W^m\qquad\quad (j\in [0,r]).
\]
Then $s_i^m=s_i$ for $i\in [1,r]$, and
$s_0^m=\tau(\vartheta^{m\vee})s_\vartheta$. 
The fundamental alcove is now given by
\[
C_+^m:=\{y\in E \,\,\, | \,\,\, \alpha_i(y)>0\,\,\, (1\leq i\leq r)\,\,\, \& \,\,\, \vartheta(y)<m(\vartheta)\}.
\]
Its faces are denoted by $C^{mJ}$ for $J\subsetneq [0,r]$.

Let $\mathbf{k}: \Phi^m\rightarrow\mathbf{F}^\times$ be a multiplicity function on $\Phi^m$. Its values at simple roots are denoted by $k_j:=k_{\alpha_j^m}$ for $j\in [0,r]$.
Write $\mathcal{G}^m$ for the $g$-parameter space $\mathcal{G}$ with respect to the root system $\Phi_0^m$ (see Remark \ref{basepoint}). 

\begin{definition}\label{hdefinition}
We write $\mathcal{M}=\mathcal{M}_{(n,\mathbf{Q})}$ for the set of tuples 
\[
\underline{h}:=(h_s(\alpha))_{s\in\mathbf{Q}(\alpha^\vee)\mathbb{Z},\,\alpha\in\Phi_0}
\]
with $h_s(\alpha)\in\mathbf{F}^\times$ satisfying 
\begin{equation}\label{gmetparremark}
\begin{split}
h_{s+\textup{lcm}(n,\mathbf{Q}(\alpha^\vee))}(v\alpha)&=h_s(\alpha)\qquad\quad (v\in W_0,\, s\in\mathbf{Q}(\alpha^\vee)\mathbb{Z}),\\
h_0(\alpha)&=-1,\\
h_s(\alpha)h_{-s}(\alpha)&=k_{\alpha^m}^{-2}\qquad\qquad (s\in\mathbf{Q}(\alpha^\vee)\mathbb{Z}\setminus\textup{lcm}(n,\mathbf{Q}(\alpha^\vee))\mathbb{Z})
\end{split}
\end{equation}
for all $\alpha\in\Phi_0$. 
\end{definition}
We call $\mathcal{M}_{(n,\mathbf{Q})}$ the set of metaplectic parameters relative to the metaplectic datum $(n,\mathbf{Q})$. Note that for $\underline{h}\in\mathcal{M}_{(n,\mathbf{Q})}$,
\begin{equation}\label{half}
h_{\textup{lcm}(n,\mathbf{Q}(\alpha^\vee))/2}(\alpha)\in\{\pm k_{\alpha^m}^{-1}\}\quad \hbox{ if }\,\, m(\alpha)\in 2\mathbb{Z}
\end{equation}
in view of \eqref{def:m}.

\begin{lemma}\label{metgpar}
We have a surjective map 
\[
\mathcal{G}^m\twoheadrightarrow\mathcal{M}_{(n,\mathbf{Q})},\qquad \mathbf{g}
\mapsto\underline{h}^{\mathbf{g}}
=(h_s^{\mathbf{g}}(\alpha))_{s\in\mathbf{Q}(\alpha^\vee)\mathbb{Z},\,\alpha\in\Phi_0}
\]
with $h_s^{\mathbf{g}}(\alpha)\in\mathbf{F}^\times$ defined by 
\begin{equation}\label{eq:metg}
h_s^{\mathbf{g}}(\alpha):=-k_{\alpha^m}^{-\chi_{\mathbb{Z}\setminus\textup{lcm}(n,\mathbf{Q}(\alpha^\vee))\mathbb{Z}}(s)}
g_{\alpha^m}(s/\textup{lcm}(n,\mathbf{Q}(\alpha^\vee))).
\end{equation}
\end{lemma}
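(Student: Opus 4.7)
The proof has two parts: well-definedness and surjectivity of the map. I will handle well-definedness first. Fix $\mathbf{g} \in \mathcal{G}^m$ and set $\underline{h}^\mathbf{g} = (h_s^\mathbf{g}(\alpha))$ via \eqref{eq:metg}. I need to check that the three conditions of Definition \ref{hdefinition} hold. Write $L(\alpha) := \textup{lcm}(n,\mathbf{Q}(\alpha^\vee)) = m(\alpha)\mathbf{Q}(\alpha^\vee)$. The second condition, $h_0^\mathbf{g}(\alpha) = -1$, follows immediately since $\chi_{\mathbb{Z}\setminus L(\alpha)\mathbb{Z}}(0) = 0$ and $g_{\alpha^m}(0) = 1$ by Remark \ref{basepoint}(3). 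For the third condition with $s \notin L(\alpha)\mathbb{Z}$, both $s$ and $-s$ contribute a factor of $k_{\alpha^m}$, while $g_{\alpha^m}(s/L(\alpha)) g_{\alpha^m}(-s/L(\alpha)) = 1$ by Remark \ref{basepoint}(2); this gives $h_s^\mathbf{g}(\alpha)h_{-s}^\mathbf{g}(\alpha) = k_{\alpha^m}^{-2}$. The first (quasi-periodicity) condition uses that $m$, $\mathbf{Q}$, and $\mathbf{k}$ are $W_0$-invariant (so $L(\alpha)$ and $k_{\alpha^m}$ depend only on the $W_0$-orbit of $\alpha$), combined with the $\mathcal{G}^m$-invariance $g_{v\alpha^m}((s+L(\alpha))/L(\alpha)) = g_{v\alpha^m}(s/L(\alpha) + 1) = g_{\alpha^m}(s/L(\alpha))$ from Remark \ref{basepoint}(1), together with $L(\alpha)$-periodicity of the indicator $\chi_{\mathbb{Z}\setminus L(\alpha)\mathbb{Z}}$.

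For surjectivity, fix $\underline{h} \in \mathcal{M}_{(n,\mathbf{Q})}$ and construct $\mathbf{g} \in \mathcal{G}^m$ mapping to it. By Remark \ref{basepoint}, an element of $\mathcal{G}^m$ amounts to a choice, for each $W_0$-orbit $\mathcal{O} \subset \Phi_0^m$, of a $1$-periodic function $g_\mathcal{O} : \mathbb{R} \to \mathbf{F}^\times$ with $g_\mathcal{O}(0) = 1$ and $g_\mathcal{O}(-d)g_\mathcal{O}(d) = 1$; the function $g_\beta$ for $\beta \in \mathcal{O}$ equals $g_\mathcal{O}$. Fix a representative $\alpha_\mathcal{O}^m \in \mathcal{O}$ coming from $\alpha_\mathcal{O} \in \Phi_0$, put $L_\mathcal{O} := L(\alpha_\mathcal{O})$ and $m_\mathcal{O} := m(\alpha_\mathcal{O})$, and define $g_\mathcal{O}$ on $\frac{1}{m_\mathcal{O}}\mathbb{Z} = \frac{1}{L_\mathcal{O}}\mathbf{Q}(\alpha_\mathcal{O}^\vee)\mathbb{Z}$ by inverting \eqref{eq:metg}:
\[
g_\mathcal{O}(s/L_\mathcal{O}) := -k_{\alpha_\mathcal{O}^m}^{\chi_{\mathbb{Z}\setminus L_\mathcal{O}\mathbb{Z}}(s)} h_s(\alpha_\mathcal{O}), \qquad s \in \mathbf{Q}(\alpha_\mathcal{O}^\vee)\mathbb{Z}.
\]
The three defining conditions of $\mathcal{M}_{(n,\mathbf{Q})}$ translate, in order, into $1$-periodicity on $\frac{1}{m_\mathcal{O}}\mathbb{Z}$, the normalisation $g_\mathcal{O}(0) = 1$, and the antisymmetry relation $g_\mathcal{O}(d)g_\mathcal{O}(-d) = 1$ on $\frac{1}{m_\mathcal{O}}\mathbb{Z}$ (using \eqref{half} to handle the case $d = 1/2$ when $m_\mathcal{O}$ is even). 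Finally, extend $g_\mathcal{O}$ arbitrarily to the complement of $\frac{1}{m_\mathcal{O}}\mathbb{Z}$ in $\mathbb{R}$ — for instance, put $g_\mathcal{O}(d) := 1$ there, only taking care that $g_\mathcal{O}(1/2) \in \{\pm 1\}$ is forced when $m_\mathcal{O}$ is odd — to obtain a valid element of $\mathcal{G}^m$.

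The only potentially delicate point is verifying that the three conditions of Definition \ref{hdefinition} interact cleanly with the three conditions of Remark \ref{basepoint} at the half-integer values of $d$, where antisymmetry can clash with the prescribed value. This is precisely what the third condition of Definition \ref{hdefinition} guarantees, via the consequence $h_{L_\mathcal{O}/2}(\alpha_\mathcal{O}) \in \{\pm k_{\alpha_\mathcal{O}^m}^{-1}\}$ recorded in \eqref{half}; the forced value $g_\mathcal{O}(1/2) \in \{\pm 1\}$ then matches the antisymmetry requirement. The rest of the argument is a routine check that the $W_0$-equivariance and periodicity of the data on both sides of \eqref{eq:metg} correspond under the assignment $s \mapsto s/L_\mathcal{O}$.
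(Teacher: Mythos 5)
Your proof is correct: the three conditions of Definition \ref{hdefinition} for $\underline{h}^{\mathbf{g}}$ follow exactly as you verify from the three conditions defining $\mathcal{G}^m$ (Remark \ref{basepoint}), and your orbit-by-orbit inversion of \eqref{eq:metg} on $\frac{1}{m_\mathcal{O}}\mathbb{Z}$, with the sign consistency at $d=\tfrac12$ handled via \eqref{half} and an arbitrary admissible extension elsewhere, establishes surjectivity. The paper itself declares this lemma a straightforward check left to the reader, so your argument is precisely the intended direct verification, carried out in full.
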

\begin{proof}
This is a straightforward check, which is left to the reader.
\end{proof}

\begin{remark}
For the metaplectic representation of the affine Hecke algebra defined in \cite[Thm. 3.7]{SSV},
parameters $h_s(\alpha)$ ($s\in \mathbb{Z}$, $\alpha\in\Phi_0$) are used which satisfy
\begin{equation}\label{gmetparremarkn}
\begin{split}
h_{s+n}(v\alpha)&=h_s(\alpha)\qquad\quad (v\in W_0, s\in\mathbb{Z}),\\
h_0(\alpha)&=-1,\\
h_s(\alpha)h_{-s}(\alpha)&=k_{\alpha^m}^{-2}\qquad\qquad\,\,\,\, (s\in\mathbb{Z}\setminus n\mathbb{Z})
\end{split}
\end{equation}
for all $\alpha\in\Phi_0$ (see \cite[Def. 3.5]{SSV}), although the metaplectic representation only depends on the parameters
$h_s(\alpha)$ with $s\in\mathbf{Q}(\alpha^\vee)\mathbb{Z}$ and $\alpha\in\Phi_0$, which naturally form a tuple of metaplectic parameters in $\mathcal{M}_{(n,\mathbf{Q})}$.
Conversely, note that a tuple $\underline{h}=(h_s(\alpha))_{s\in\mathbf{Q}(\alpha^\vee)\mathbb{Z},\alpha\in\Phi_0}\in\mathcal{M}_{(n,\mathbf{Q})}$ of metaplectic parameters
can be extended to a set of parameters $(h_s(\alpha))_{s\in\mathbb{Z},\alpha\in\Phi_0}$ in such a way that \eqref{gmetparremarkn} holds true. For instance, one can take the extension
\begin{equation*}
\begin{split}
h_{\textup{gcd}(n,\mathbf{Q}(\alpha^\vee))s}(\alpha)&:=h_{\mathbf{Q}(\alpha^\vee)s\ell_\alpha}(\alpha)\qquad\,\,\,\,\, (s\in\mathbb{Z}),\\
h_s(\alpha)&:=k_{\alpha^m}^{-1}\qquad\qquad\qquad\,\,\, (s\in\mathbb{Z}\setminus\textup{gcd}(n,\mathbf{Q}(\alpha^\vee))\mathbb{Z})
\end{split}
\end{equation*}
for $\alpha\in\Phi_0$, where $\ell_\alpha\in\mathbb{Z}$ is such that $\textup{gcd}(n,\mathbf{Q}(\alpha^\vee))-\ell_\alpha\mathbf{Q}(\alpha^\vee)\in n\mathbb{Z}$.
\end{remark}

\subsection{The metaplectic basic representation}\label{msec}
In \cite[Thm. 5.4]{SSV} we introduced the metaplectic basic representation of the $\textup{GL}_{r+1}$ double affine Hecke algebra. 
In this subsection we introduce the metaplectic basic representation for extended double affine Hecke algebras for all types.

The metaplectic representation depends on $q$, $\mathbf{k}$ and on metaplectic parameters $\underline{h}\in\mathcal{M}_{(n,\mathbf{Q})}$. To keep the notations manageable, we will suppress the dependence on $\underline{h}\in\mathcal{M}_{(n,\mathbf{Q})}$ from the notations.

Let $\mathbb{H}^m:=\mathbb{H}(\mathbf{k},q)$ be the double affine Hecke algebra associated to the root system $\Phi_0^m$  (see Definition \ref{defDAHA}), which we call the {\it metaplectic double affine Hecke algebra}. 
We denote by $\mathcal{L}^m$ the set of finitely generated abelian subgroups $\Lambda^m\subset E$ such that 
\begin{equation}\label{latticecondm}
Q^{m\vee}\subseteq\Lambda^m\quad \&\quad \alpha(\Lambda^m)\subseteq m(\alpha)\mathbb{Z}\quad \forall\, \alpha\in\Phi_0.
\end{equation}
Note that it is the set $\mathcal{L}$ from Definition \ref{latticeconddef}, with the role of $\Phi_0$ taken over by $\Phi_0^m$. 

We have the following supplement to Lemma \ref{metgpar}.
\begin{lemma}\label{metgparext}
Let $\Lambda\in\mathcal{L}$ and $\mathbf{g}\in\mathcal{G}^m$. 
For $\alpha\in\Phi_0$ and $\lambda\in\Lambda$ we have
\begin{equation}\label{relgng2}
g_{\alpha^m}(\alpha^m(\lambda))=-k_{\alpha^m}^{1-\chi_{\mathbb{Z}}(\alpha(\lambda)/m(\alpha))}h_{\mathbf{Q}(\alpha^\vee)\alpha(\lambda)}^{\mathbf{g}}(\alpha).
\end{equation}
\end{lemma}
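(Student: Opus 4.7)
The plan is to prove this by directly unfolding the definition of $h_s^{\mathbf{g}}(\alpha)$ from \eqref{eq:metg} at the specific index $s = \mathbf{Q}(\alpha^\vee)\alpha(\lambda)$ and checking that the resulting formula rearranges into the claimed identity. Everything reduces to verifying one elementary identity between indicator functions.

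First, I would set $s := \mathbf{Q}(\alpha^\vee)\alpha(\lambda)$. Since $\Lambda\in\mathcal{L}$ we have $\alpha(\lambda)\in\mathbb{Z}$, so $s\in\mathbf{Q}(\alpha^\vee)\mathbb{Z}$ and $h_s^{\mathbf{g}}(\alpha)$ is defined. Formula \eqref{eq:metg} gives
\[
h_s^{\mathbf{g}}(\alpha)=-k_{\alpha^m}^{-\chi_{\mathbb{Z}\setminus\textup{lcm}(n,\mathbf{Q}(\alpha^\vee))\mathbb{Z}}(s)}\,g_{\alpha^m}\!\left(\frac{s}{\textup{lcm}(n,\mathbf{Q}(\alpha^\vee))}\right).
\]
By the second equality in \eqref{def:m}, $\textup{lcm}(n,\mathbf{Q}(\alpha^\vee))=m(\alpha)\mathbf{Q}(\alpha^\vee)$, hence
\[
\frac{s}{\textup{lcm}(n,\mathbf{Q}(\alpha^\vee))}=\frac{\alpha(\lambda)}{m(\alpha)}=\alpha^m(\lambda).
\]
Solving for $g_{\alpha^m}(\alpha^m(\lambda))$ then yields
\[
g_{\alpha^m}(\alpha^m(\lambda))=-k_{\alpha^m}^{\chi_{\mathbb{Z}\setminus\textup{lcm}(n,\mathbf{Q}(\alpha^\vee))\mathbb{Z}}(s)}\,h_s^{\mathbf{g}}(\alpha).
\]

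It remains to identify the exponent with $1-\chi_{\mathbb{Z}}(\alpha(\lambda)/m(\alpha))$. Since $s=\mathbf{Q}(\alpha^\vee)\alpha(\lambda)\in\mathbf{Q}(\alpha^\vee)\mathbb{Z}\subset\mathbb{Z}$, the condition $s\in\textup{lcm}(n,\mathbf{Q}(\alpha^\vee))\mathbb{Z}=m(\alpha)\mathbf{Q}(\alpha^\vee)\mathbb{Z}$ is equivalent to $\alpha(\lambda)\in m(\alpha)\mathbb{Z}$, i.e.\ to $\alpha^m(\lambda)\in\mathbb{Z}$. Therefore
\[
\chi_{\mathbb{Z}\setminus\textup{lcm}(n,\mathbf{Q}(\alpha^\vee))\mathbb{Z}}(s)=1-\chi_{\mathbb{Z}}\!\left(\frac{\alpha(\lambda)}{m(\alpha)}\right),
\]
and substituting this into the displayed formula for $g_{\alpha^m}(\alpha^m(\lambda))$ gives \eqref{relgng2}.

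There is no real obstacle here; the lemma is a bookkeeping statement translating the reparametrisation $\mathbf{g}\mapsto\underline{h}^{\mathbf{g}}$ of Lemma \ref{metgpar} into the normalisation used in the metaplectic basic representation. The only mildly delicate point is keeping track of the two scaling factors $m(\alpha)$ and $\textup{lcm}(n,\mathbf{Q}(\alpha^\vee))$ and recognising that the characteristic function appearing in \eqref{eq:metg} becomes the complement of $\chi_{\mathbb{Z}}(\alpha^m(\lambda))$ after the rescaling.
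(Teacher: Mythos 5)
Your proof is correct and follows essentially the same route as the paper: specialise \eqref{eq:metg} at $s=\mathbf{Q}(\alpha^\vee)\alpha(\lambda)$, use $\textup{lcm}(n,\mathbf{Q}(\alpha^\vee))=m(\alpha)\mathbf{Q}(\alpha^\vee)$ from \eqref{def:m} to identify the argument with $\alpha^m(\lambda)$, and reduce to the indicator identity $\chi_{\mathbb{Z}\setminus\textup{lcm}(n,\mathbf{Q}(\alpha^\vee))\mathbb{Z}}(\mathbf{Q}(\alpha^\vee)\alpha(\lambda))=1-\chi_{\mathbb{Z}}(\alpha(\lambda)/m(\alpha))$. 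The only difference is that you spell out the elementary computation the paper leaves to the reader, which is fine.
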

\begin{proof}
By \eqref{def:m} we have 
\[
\alpha^m(\lambda)= \frac{\mathbf{Q}(\alpha^\vee)\alpha(\lambda)}{\textup{lcm}(n,\mathbf{Q}(\alpha^\vee))},
\]
and $\alpha(\lambda)\in\mathbb{Z}$ since $\lambda\in\Lambda$. Hence we can apply \eqref{eq:metg} to obtain
\[
g_{\alpha^m}(\alpha^m(\lambda))=-k_{\alpha^m}^{\chi_{\mathbb{Z}\setminus\textup{lcm}(n,\mathbf{Q}(\alpha^\vee))\mathbb{Z}}(\mathbf{Q}(\alpha^\vee)\alpha(\lambda))}
h_{\mathbf{Q}(\alpha^\vee)\alpha(\lambda)}^{\mathbf{g}}(\alpha).
\]
The result now follows from the identity
\[
\chi_{\mathbb{Z}\setminus\textup{lcm}(n,\mathbf{Q}(\alpha^\vee))\mathbb{Z}}(\mathbf{Q}(\alpha^\vee)\alpha(\lambda))=1-
\chi_{\mathbb{Z}}(\alpha(\lambda)/m(\alpha)),
\]
which follows from an elementary computation using \eqref{def:m}.
\end{proof}

Let $\Lambda^{m\prime}\in\mathcal{L}^m$ such that $\langle\Lambda,\Lambda^{m\prime}\rangle\subseteq\mathbb{Z}$. One can for instance take $\Lambda^{m\prime}=Q^{m\vee}$, since the inclusions $Q^{m\vee}\subseteq Q^\vee\subseteq Q$ imply that 
\[
\langle \Lambda,Q^{m\vee}\rangle\subseteq\langle P^\vee,Q^{m\vee}\rangle\subseteq\mathbb{Z}.
\]
The metaplectic extended affine Weyl group $W_{\Lambda^{m\prime}}=W_0\ltimes\Lambda^{m\prime}$ acts on $\mathcal{P}_\Lambda$ by $q$-dilations and reflections,
\[
v(x^\lambda):=x^{v\lambda} \quad (v\in W_0),\qquad\quad \tau(\mu)(x^{\lambda}):=q^{-\langle\lambda,\mu\rangle}x^\lambda\quad (\mu\in \Lambda^{m\prime})
\]
for all $\lambda\in\Lambda$ (cf.  Subsection \ref{ap} and Subsection \ref{S71}).
Note that $s_0^m(x^\lambda)=q_\vartheta^{m(\vartheta)\vartheta(\lambda)}x^{s_\vartheta\lambda}$.

Let $r_\ell(s)\in\{0,\ldots,\ell-1\}$ be the remainder of $s\in\mathbb{Z}$ modulo $\ell\in\mathbb{Z}_{>0}$. Define linear maps $\nabla_j^m: \mathcal{P}_\Lambda\rightarrow\mathcal{P}_\Lambda$ for $j\in [0,r]$ by 
\begin{equation*}
\begin{split}
\nabla_0^m(x^\lambda)&:=\Bigl(\frac{1-(q_\vartheta^{m(\vartheta)}x^{-\vartheta^\vee})^{(r_{m(\vartheta)}(-\vartheta(\lambda))+\vartheta(\lambda))}}{1-(q_\vartheta^{m(\vartheta)}x^{-\vartheta^\vee})^{m(\vartheta)}}\Bigr)x^\lambda,\\
\nabla_i^m(x^\lambda)&:=\Bigl(\frac{1-x^{(r_{m(\alpha_i)}(\alpha_i(\lambda))-\alpha_i(\lambda))\alpha_i^\vee}}{1-x^{m(\alpha_i)\alpha_i^\vee}}\Bigr)x^\lambda
\end{split}
\end{equation*}
for $\lambda\in\Lambda$ and $i\in [1,r]$.
We now have the following action 
of the metaplectic double affine Hecke algebra on the space $\mathcal{P}_\Lambda$ of polynomials on $T_\Lambda$.
\begin{theorem}\label{met_basic_rep}
Fix $\Lambda\in\mathcal{L}$ and $\underline{h}\in\mathcal{M}_{(n,\mathbf{Q})}$.

For $\Lambda^m,\Lambda^{m\prime}\in \mathcal{L}^m$ such that $\Lambda^m\subseteq\Lambda$ and 
$\langle\Lambda,\Lambda^{m\prime}\rangle\subseteq\mathbb{Z}$, the formulas
\begin{equation}\label{pim}
\begin{split}
\pi^{m}_\Lambda(T_0)x^\lambda&:=-k_0h_{\mathbf{B}(\lambda,\vartheta^\vee)}(\vartheta)q_{\vartheta}^{m(\vartheta)\vartheta(\lambda)}x^{s_\vartheta\lambda}+(k_0-k_0^{-1})\nabla_0^m(x^\lambda),\\
\pi^{m}_\Lambda(T_i)x^\lambda&:=-k_ih_{-\mathbf{B}(\lambda,\alpha_i^\vee)}(\alpha_i)x^{s_i\lambda}+(k_i-k_i^{-1})\nabla_i^m(x^\lambda),\\
\pi^{m}_\Lambda(x^\mu)x^\lambda&:=x^{\lambda+\mu},\\
\pi^{m}_\Lambda(\omega)x^\lambda&:=\omega(x^\lambda)
\end{split}
\end{equation}
for $\lambda\in \Lambda$, $i\in [1,r]$, $\mu\in\Lambda^m$ and $\omega\in\Omega_{\Lambda^{m\prime}}$ define a representation
\[
\pi^m_\Lambda:
\mathbb{H}_{\Lambda^m,\Lambda^{m\prime}}\rightarrow\textup{End}(\mathcal{P}_\Lambda),
\]
where $\mathbb{H}_{\Lambda^m,\Lambda^{m\prime}}$ is the extension of the metaplectic double affine Hecke algebra $\mathbb{H}^m$ by the lattices
$(\Lambda^m,\Lambda^{m\prime})$.
\end{theorem}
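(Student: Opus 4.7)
The plan is to realize $\pi_\Lambda^m$ as the restriction to $\mathcal{P}_\Lambda \subseteq \mathbf{F}[E]$ of the uniform quasi-polynomial representation $\pi_{\mathbf{g},\widehat{\mathbf{p}},\mathfrak{c}}^{\Lambda^m,\Lambda^{m\prime}}$ of $\mathbb{H}_{\Lambda^m,\Lambda^{m\prime}}$ from Theorem \ref{gthm}, applied relative to the metaplectic root system $\Phi_0^m$ in place of $\Phi_0$. The parameters $\mathbf{g} \in \mathcal{G}^m$, $\widehat{\mathbf{p}} \in \widehat{\mathcal{G}}^m$, and $\mathfrak{c} \in \mathcal{C}_{\Lambda^m,\Lambda^{m\prime}}$ will be chosen so that $\mathbf{g}$ encodes the metaplectic parameters $\underline{h}$, while $\widehat{\mathbf{p}}$ and $\mathfrak{c}$ reduce the induced $W^m_{\Lambda^{m\prime}}$-action $\cdot_{\widehat{\mathbf{p}},\mathfrak{c}}$ on $\mathcal{P}_\Lambda$ to the standard $q$-dilation-and-reflection action used in \eqref{pim}.

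First, invoke the surjection in Lemma \ref{metgpar} to fix any lift $\mathbf{g} \in \mathcal{G}^m$ of $\underline{h}$, so that $\underline{h}^{\mathbf{g}} = \underline{h}$. Following Example \ref{exampleqbaseExt}, define $\widehat{\mathbf{p}} = (\widehat{p}_{\alpha^m})_{\alpha^m \in \Phi_0^m}$ by $\widehat{p}_{\alpha^m}(d) := q_{\alpha^m}^{d/h}$ (the Coxeter number of $\Phi_0^m$ equals that of $\Phi_0$ since they share the Weyl group $W_0$), and set $\mathfrak{c}(y) := q^y$ in $T_{\Lambda^{m\prime}}$ for $y \in E_{\textup{co}}$; the hypothesis $\langle \Lambda,\Lambda^{m\prime}\rangle \subseteq \mathbb{Z}$ ensures that $q^\lambda \in T_{\Lambda^{m\prime}}$ is well defined for every $\lambda \in \Lambda$. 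By \eqref{qyversionExt} we then have $\mathfrak{t}_y(\widehat{\mathbf{p}},\mathfrak{c}) = q^y$ in $T_{\Lambda^{m\prime}}$ for all $y \in E$, so that Lemma \ref{refactionext} combined with \eqref{expliA} shows that $\cdot_{\widehat{\mathbf{p}},\mathfrak{c}}$ restricts on $\mathcal{P}_\Lambda$ to the standard $W^m_{\Lambda^{m\prime}}$-action by $q$-dilations and reflections, including $\omega(x^\lambda)$ via \eqref{ux} for $\omega \in \Omega_{\Lambda^{m\prime}}$.

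Next verify that $\mathcal{P}_\Lambda \subseteq \mathbf{F}[E]$ is stable under $\pi_{\mathbf{g},\widehat{\mathbf{p}},\mathfrak{c}}^{\Lambda^m,\Lambda^{m\prime}}$ by inspecting \eqref{gthmform}: multiplication by $x^\mu$ with $\mu \in \Lambda^m \subseteq \Lambda$ preserves $\Lambda$; the Demazure-Lusztig summand of $T_j$ produces the exponents $s_j\lambda$ (respectively $s_\vartheta \lambda$ for $j=0$), which lie in $\Lambda$ by $W_0$-invariance, while the truncated divided difference $\nabla_j^m$ shifts exponents by integer multiples of $\alpha_j^{m\vee} \in Q^{m\vee} \subseteq \Lambda^m \subseteq \Lambda$; and for $\omega = w_{\zeta,\Lambda^{m\prime}} \in \Omega_{\Lambda^{m\prime}}$, formula \eqref{formpexact2} with our choice of parameters gives $\omega \cdot_{\widehat{\mathbf{p}},\mathfrak{c}} x^\lambda = q^{-\langle \lambda,\zeta\rangle} x^{v^{-1}_{\zeta,\Lambda^{m\prime}}\lambda} \in \mathcal{P}_\Lambda$.

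Finally, match the restricted operators with \eqref{pim} via Lemma \ref{metgparext}. For $i \in [1,r]$ and $\lambda \in \Lambda$, formula \eqref{relgng2} together with \eqref{eqn:BQ} converts the coefficient $k_i^{\chi_{\mathbb{Z}}(\alpha_i^m(\lambda))} g_{\alpha_i^m}(\alpha_i^m(\lambda))^{-1}$ of $x^{s_i\lambda}$ in \eqref{gthmform} into a scalar multiple of $h^{\mathbf{g}}_{\mathbf{B}(\lambda,\alpha_i^\vee)}(\alpha_i)^{-1}$; a short case analysis using the reflection relation $h_s(\alpha)h_{-s}(\alpha) = k_{\alpha^m}^{-2}$ for $s \notin \textup{lcm}(n,\mathbf{Q}(\alpha^\vee))\mathbb{Z}$ on one hand, and the normalization $h_0(\alpha) = -1$ together with the periodicity in $s$ on the other, collapses this to exactly $-k_i h^{\mathbf{g}}_{-\mathbf{B}(\lambda,\alpha_i^\vee)}(\alpha_i)$, as required. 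For $T_0$ the quasi-invariance (Definition \ref{gpar}(1)) and antisymmetry (Definition \ref{gpar}(2)) of $\widehat{g}$ first rewrite $g_{-\vartheta^m}(1 - \vartheta^m(\lambda))$ as $g_{\vartheta^m}(\vartheta^m(\lambda))^{-1}$, after which the same $h$-level manipulation produces the prefactor $-k_0 h^{\mathbf{g}}_{\mathbf{B}(\lambda,\vartheta^\vee)}(\vartheta)$; simultaneously $s_0^m \cdot_{\widehat{\mathbf{p}},\mathfrak{c}} x^\lambda$ supplies the remaining factor $q_\vartheta^{m(\vartheta)\vartheta(\lambda)}$ via \eqref{formpexact2}. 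The main obstacle is this last bookkeeping step for $T_0$, where the affine shift by $1$ in $\alpha_0^m(\lambda)$, the antisymmetry sign flip, and the dichotomy between $\vartheta^m(\lambda) \in \mathbb{Z}$ and $\vartheta^m(\lambda) \notin \mathbb{Z}$ must all conspire to reproduce precisely the coefficient in \eqref{pim}. Once this is done, the independence of $\pi_\Lambda^m$ from the choice of lift $\mathbf{g}$ of $\underline{h}$ is automatic, since the resulting formulas involve only $\underline{h}$.
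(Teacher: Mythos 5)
Your proposal follows essentially the same route as the paper's own proof: restrict the uniform quasi-polynomial representation $\pi_{\mathbf{g},\widehat{\mathbf{p}},\mathfrak{c}}^{\Lambda^m,\Lambda^{m\prime}}$ attached to $\Phi_0^m$ with the special choice of $\widehat{\mathbf{p}}$ and $\mathfrak{c}$ making $\mathfrak{t}_\lambda(\widehat{\mathbf{p}},\mathfrak{c})=q^\lambda$ (so that $\cdot_{\widehat{\mathbf{p}},\mathfrak{c}}$ becomes the standard action on $\mathcal{P}_\Lambda$), lift $\underline{h}$ to $\mathbf{g}$ via the surjection of Lemma \ref{metgpar}, and match the coefficients through \eqref{relgng2} and the relations \eqref{gmetparremark}. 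The only points the paper makes explicit that you leave tacit are the reduction ``without loss of generality $\mathbf{F}$ contains the required root of $q$'' (needed so that $\widehat{\mathbf{p}}$ and $\mathfrak{c}$ exist, after which the relations over $\mathbf{F}$ follow since the operators are defined over $\mathbf{F}$) and the routine identity $\nabla_j(\cdot)\vert_{\mathcal{P}_\Lambda}=\nabla_j^m$ via $r_\ell(s)=s-\ell\lfloor s/\ell\rfloor$; also note that the rewriting $g_{-\vartheta^m}(1-\vartheta^m(\lambda))=g_{\vartheta^m}(\vartheta^m(\lambda))^{-1}$ uses the properties of $\mathbf{g}\in\mathcal{G}$ (Remark \ref{basepoint}), not the quasi-invariance of $\widehat{\mathcal{G}}$.
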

\begin{proof}
Without loss of generality we may assume that $\mathbf{F}$ contains a $(2h)^{\textup{th}}$-root $q^{\frac{1}{2h}}$ of $q$. 
Write $\widehat{\mathcal{G}}^m$ for the set $\widehat{\mathcal{G}}$ of $\widehat{\mathbf{g}}$-parameters relative to the root system $\Phi_0^m$ (Definition \ref{gpar}). For $\widehat{\mathbf{p}}\in\widehat{\mathcal{G}}^m$, $\mathbf{g}\in\mathcal{G}^m$ and $\mathfrak{c}\in\mathcal{C}_{\Lambda^m,\Lambda^{m\prime}}$ the uniform quasi-polynomial representation 
$\pi_{\mathbf{g},\widehat{\mathbf{p}},\mathfrak{c}}^{\Lambda^m,\Lambda^{\prime m}}: \mathbb{H}_{\Lambda^m,\Lambda^{m\prime}}\rightarrow\textup{End}(\mathbf{F}[E])$ relative to the finite root system $\Phi_0^m$ (see Theorem \ref{gthm})
contains $\mathcal{P}_\Lambda$ as a subrepresentation, because $\Lambda^m\subseteq\Lambda$.
Denote 
\[
\pi_{\mathbf{g},\widehat{\mathbf{p}},\mathfrak{c};\Lambda}:=\pi_{\mathbf{g},\widehat{\mathbf{p}},\mathfrak{c}}^{\Lambda^m,\Lambda^{m\prime}}(\cdot)\vert_{\mathcal{P}_\Lambda}: \mathbb{H}_{\Lambda^m,\Lambda^{m\prime}}\rightarrow\textup{End}(\mathcal{P}_\Lambda)
\]
for its representation map. Its defining formulas are
\begin{equation}\label{metaplecticform}
\begin{split}
\pi_{\mathbf{g},\widehat{\mathbf{p}},\mathfrak{c};\Lambda}(T_j)x^\lambda&:=k_j^{\chi_{\mathbb{Z}}(\alpha_j^m(\lambda))}g_{D\alpha_j^m}(\alpha_j^m(\lambda))^{-1}s_j^m\cdot_{\widehat{\mathbf{p}},\mathfrak{c}} x^\lambda+(k_j-k_j^{-1})\nabla_j(x^\lambda),\\
\pi_{\mathbf{g},\widehat{\mathbf{p}},\mathfrak{c};\Lambda}(x^\mu)x^\lambda&:=x^{\lambda+\mu},\\
\pi_{\mathbf{g},\widehat{\mathbf{p}},\mathfrak{c};\Lambda}(\omega)x^\lambda&:=\omega\cdot_{\widehat{\mathbf{p}},\mathfrak{c}}x^\lambda
\end{split}
\end{equation} 
for $\lambda\in\Lambda$, $j\in [0,r]$, $\mu\in\Lambda^m$ and $\omega\in\Omega_{\Lambda^{m\prime}}$, with $\nabla_j$ the truncated divided difference operator \eqref{nablaj} relative to the root system $\Phi_0^m$.

Fix $\widehat{\mathbf{p}}\in\widehat{\mathcal{G}}^m$ and $\mathfrak{c}\in\mathcal{C}_{\Lambda^m,\Lambda^{\prime m}}$ such that
\begin{equation}\label{specialvalue}
\widehat{p}_{\alpha^m}(z):=q_{\alpha}^{m(\alpha)^2z/h}\quad (z\in n^{-1}\mathbb{Z}),\qquad\quad
\mathfrak{c}(\lambda):=q^\lambda\quad (\lambda\in\textup{pr}_{E_{\textup{co}}}(\Lambda)).
\end{equation}
For this specific choice we have
\begin{equation}\label{qydecomp}
q^\lambda=\mathfrak{t}_\lambda(\widehat{\mathbf{p}},\mathfrak{c})\qquad (\lambda\in\Lambda)
\end{equation} 
in $T_{\Lambda^{\prime m}}$ (compare with Example \ref{exampleqbaseExt}). In particular,
$w\cdot_{\widehat{\mathbf{p}},\mathfrak{c}}(x^\lambda)=w(x^\lambda)$ for $w\in W_{\Lambda^{m\prime}}$ and $\lambda\in\Lambda$.
The defining formulas \eqref{metaplecticform} for the corresponding
representation $\pi_{\mathbf{g};\Lambda}:=\pi_{\mathbf{g},\widehat{\mathbf{p}},\mathfrak{c};\Lambda}$ of $\mathbb{H}_{\Lambda^m,\Lambda^{m\prime}}$ reduce to
\begin{equation}\label{thisisit}
\begin{split}
\pi_{\mathbf{g};\Lambda}(T_j)x^\lambda&=k_j^{\chi_{\mathbb{Z}}(\alpha_j^m(\lambda))}g_{D\alpha_j^m}(\alpha_j^m(\lambda))^{-1}
s_j^m(x^\lambda)+(k_j-k_j^{-1})\nabla_j(x^\lambda),\\
\pi_{\mathbf{g};\Lambda}(x^\mu)x^\lambda&=x^{\lambda+\mu},\\
\pi_{\mathbf{g};\Lambda}(\omega)x^\lambda&=\omega(x^\lambda)
\end{split}
\end{equation}
for $j\in [0,r]$, $\omega\in\Omega_{\Lambda^{m\prime}}$, $\mu\in\Lambda^m$ and $\lambda\in\Lambda$. 
We now show that \eqref{thisisit} matches with the defining formulas for $\pi^{m}_\Lambda$ under the correspondence \eqref{eq:metg} between $\mathbf{g}$-parameters and metaplectic parameters.

Let $\mathbf{g}\in\mathcal{G}^m$ and set $\underline{h}:=\underline{h}^{\mathbf{g}}\in\mathcal{M}_{(n,\mathbf{Q})}$ in the defining formulas \eqref{pim} for $\pi^m_\Lambda$. 
It is then clear that
$\pi_{\mathbf{g};\Lambda}(p)=\pi^{m}_\Lambda(p)$ for $p\in\mathcal{P}_{\Lambda^m}$, and $\pi_{\mathbf{g};\Lambda}(\omega)=\pi^{m}_\Lambda(\omega)$ for $\omega\in\Omega_{\Lambda^{m\prime}}$. 
So it suffices to show that
\begin{equation}\label{todo100}
\pi_{\mathbf{g};\Lambda}(T_j)=\pi^{m}_\Lambda(T_j)\qquad\quad (j\in [0,r]).
\end{equation}
To show this, first note that
\[
\nabla_j(\cdot)\vert_{\mathcal{P}_\Lambda}=\nabla_j^m
\]
as endomorphisms of $\mathcal{P}_\Lambda$,
since $r_\ell(s)=s-\ell\lfloor\frac{s}{\ell}\rfloor$ for $s\in\mathbb{Z}$ and $\ell\in\mathbb{Z}_{>0}$ and
\[
\mathbf{B}(\lambda,\alpha^\vee)=\textup{lcm}(n,\mathbf{Q}(\alpha^\vee))\alpha^m(\lambda)
\] 
for $\lambda\in\Lambda$ and $\alpha\in\Phi_0$ by \eqref{eqn:BQ} and \eqref{def:m} (compare with the proof of Lemma \ref{metgparext}). Then \eqref{todo100} follows from 
\begin{equation}\label{relgng}
-k_{\alpha^m}h_{-\mathbf{B}(\lambda,\alpha^\vee)}^{\mathbf{g}}(\alpha)=k_{\alpha^m}^{\chi_{\mathbb{Z}}(\alpha^m(\lambda))}g_{\alpha^m}(\alpha^m(\lambda))^{-1}
\qquad (\alpha\in\Phi_0,\,\lambda\in\Lambda),
\end{equation}
which in turn is a consequence of \eqref{relgng2} and \eqref{gmetparremark}.
\end{proof}

\begin{definition}
We write $\mathcal{P}_\Lambda^{m}$ for $\mathcal{P}_\Lambda$ endowed with the $\mathbb{H}_{\Lambda^m,\Lambda^{m\prime}}$-actions $\pi^{m}_\Lambda$ from Theorem \ref{met_basic_rep}. We call $\pi^m_\Lambda$
the metaplectic basic representation.
\end{definition}

\begin{remark}\label{compLL}\hfill
\begin{enumerate}
\item
The condition $\langle\Lambda,\Lambda^{m\prime}\rangle\subseteq \mathbb{Z}$ in Theorem \ref{met_basic_rep} may be relaxed to 
$\langle\Lambda,\Lambda^{m\vee}\rangle\subseteq e^{-1}\mathbb{Z}$ when $q$ has an $e^{th}$ root in $\mathbf{F}$.
\item Suppose that $\Lambda_1,\Lambda_2\in\mathcal{L}$ with $\Lambda_1\subseteq\Lambda_2$ and that $\Lambda^m,\Lambda^{m\prime}\in\mathcal{L}^m$
such that $\Lambda^m\subseteq\Lambda_1$ and $\langle\Lambda_2,\Lambda^{m\prime}\rangle\subseteq\mathbb{Z}$. Then
\[
\pi_{\Lambda_2}^m(\cdot)\vert_{\mathcal{P}_{\Lambda_1}}=\pi_{\Lambda_1}^m
\]
as representations of $\mathbb{H}_{\Lambda^m,\Lambda^{m\prime}}$.
\end{enumerate}
\end{remark}

\begin{corollary}\label{Macdonaldmcase}
Let $\Lambda\in\mathcal{L}$ and $\underline{h}\in\mathcal{M}_{(n,\mathbf{Q})}$. 
Let $\Lambda^m,\Lambda^{m\prime}\in \mathcal{L}^m$ such that $\Lambda^m\subseteq\Lambda$ and 
$\langle\Lambda,\Lambda^{m\prime}\rangle\subseteq\mathbb{Z}$. 
The inclusion map
$\mathcal{P}_{\Lambda^m}\hookrightarrow\mathcal{P}_{\Lambda}$ is a morphism
\[
\mathcal{P}_{\Lambda^m,1_{T_{\Lambda^{m\prime}}}}^{(0)}\hookrightarrow \mathcal{P}_{\Lambda}^m
\]
of $\mathbb{H}_{\Lambda^m,\Lambda^{m\prime}}$-modules, 
 i.e., the $\mathbb{H}_{\Lambda^m,\Lambda^{m\prime}}$-submodule
$(\mathcal{P}_{\Lambda^m},\pi_\Lambda^m(\cdot)\vert_{\mathcal{P}_{\Lambda^m}})$ of $\mathcal{P}_\Lambda^m$
is Cherednik's polynomial representation of $\mathbb{H}_{\Lambda^m,\Lambda^{m\prime}}$.
\end{corollary}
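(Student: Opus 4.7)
The plan is to verify directly that for $\lambda\in\Lambda^m$, all the defining formulas for $\pi^m_\Lambda$ in Theorem \ref{met_basic_rep} collapse to the Demazure--Lusztig formulas for Cherednik's polynomial representation of $\mathbb{H}_{\Lambda^m,\Lambda^{m\prime}}$ acting on $\mathcal{P}_{\Lambda^m}$, and in particular that $\mathcal{P}_{\Lambda^m}$ is stable under $\pi^m_\Lambda$.

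The starting observation is that for $\lambda\in\Lambda^m$ and $\alpha\in\Phi_0$ one has $\alpha(\lambda)\in m(\alpha)\mathbb{Z}$ by the defining condition \eqref{latticecondm} of $\mathcal{L}^m$. Combined with \eqref{eqn:BQ} and \eqref{def:m} this gives
\[
\mathbf{B}(\lambda,\alpha^\vee)=\mathbf{Q}(\alpha^\vee)\alpha(\lambda)\in\mathbf{Q}(\alpha^\vee)m(\alpha)\mathbb{Z}=\textup{lcm}(n,\mathbf{Q}(\alpha^\vee))\mathbb{Z}.
\]
Using the periodicity and normalization properties \eqref{gmetparremark} of the metaplectic parameters, this shows $h_{\pm\mathbf{B}(\lambda,\alpha^\vee)}(\alpha)=h_0(\alpha)=-1$. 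So the prefactors $-k_0 h_{\mathbf{B}(\lambda,\vartheta^\vee)}(\vartheta)$ and $-k_i h_{-\mathbf{B}(\lambda,\alpha_i^\vee)}(\alpha_i)$ in the $T_0$- and $T_i$-formulas of \eqref{pim} reduce simply to $k_0$ and $k_i$.

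Next I would check that for $\lambda\in\Lambda^m$, the truncated operators $\nabla_j^m$ become the ordinary divided difference operators of the metaplectic root system $\Phi_0^m$ acting on $\mathcal{P}_{\Lambda^m}$. Indeed, since $\alpha_i(\lambda)\in m(\alpha_i)\mathbb{Z}$ one has $r_{m(\alpha_i)}(\alpha_i(\lambda))=0$, yielding
\[
\nabla_i^m(x^\lambda)=\frac{1-x^{-\alpha_i(\lambda)\alpha_i^\vee}}{1-x^{\alpha_i^{m\vee}}}\,x^\lambda=\frac{x^\lambda-x^{s_i^m\lambda}}{1-x^{\alpha_i^{m\vee}}},
\]
where the second equality uses $\alpha_i(\lambda)\alpha_i^\vee=\alpha_i^m(\lambda)\alpha_i^{m\vee}$ and $s_i^m\lambda=\lambda-\alpha_i^m(\lambda)\alpha_i^{m\vee}$; an analogous computation handles $\nabla_0^m(x^\lambda)$. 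A small side-computation, using $q_\vartheta^{m(\vartheta)\vartheta(\lambda)}=q^{\langle\vartheta^{m\vee},\lambda\rangle}$ and the $W_{\Lambda^{m\prime}}$-action \eqref{xyW} on $\mathcal{P}_\Lambda$, then shows the leading term of $\pi_\Lambda^m(T_0)x^\lambda$ is precisely $k_0s_0^m(x^\lambda)$. Together these reductions yield
\[
\pi^m_\Lambda(T_j)x^\lambda=k_js_j^m(x^\lambda)+(k_j-k_j^{-1})\nabla_j^m(x^\lambda)\qquad (j\in[0,r],\,\lambda\in\Lambda^m),
\]
which is the standard Demazure--Lusztig formula for Cherednik's polynomial representation of $\mathbb{H}^m$ on $\mathcal{P}_{\Lambda^m}$ (cf.\ \eqref{usualDLstrange}). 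The right-hand side visibly lies in $\mathcal{P}_{\Lambda^m}$.

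It remains to treat the $\mathcal{P}_{\Lambda^m}$- and $\Omega_{\Lambda^{m\prime}}$-generators. Multiplication by $x^\mu$ with $\mu\in\Lambda^m$ obviously preserves $\mathcal{P}_{\Lambda^m}$ and matches Cherednik's formula. For $\omega\in\Omega_{\Lambda^{m\prime}}$, the action $\pi_\Lambda^m(\omega)$ is the $W_{\Lambda^{m\prime}}$-action on $\mathcal{P}_\Lambda$ by $q$-dilations and reflections; since $\Lambda^m\subseteq\Lambda$ is $W_0$-stable and the translation part acts by scalars, this preserves $\mathcal{P}_{\Lambda^m}$ and coincides with the corresponding action in Cherednik's polynomial representation of $\mathbb{H}_{\Lambda^m,\Lambda^{m\prime}}$. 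Collecting these verifications proves the corollary. The only real bookkeeping is the identification in the $T_0$-formula, where one must verify that both the twist coefficient $h_{\mathbf{B}(\lambda,\vartheta^\vee)}(\vartheta)$ and the truncation in $\nabla_0^m$ disappear simultaneously for $\lambda\in\Lambda^m$; everything else is immediate from the definitions.
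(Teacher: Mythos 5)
Your proposal is correct and follows essentially the same route as the paper's proof: you observe that $\mathbf{B}(\lambda,\alpha^\vee)\in\textup{lcm}(n,\mathbf{Q}(\alpha^\vee))\mathbb{Z}$ for $\lambda\in\Lambda^m$, so that $h_{\pm\mathbf{B}(\lambda,\alpha^\vee)}(\alpha)=-1$ by \eqref{gmetparremark}, and that the truncated operators $\nabla_j^m$ restrict to the ordinary divided difference operators for $\Phi^m$ on $\mathcal{P}_{\Lambda^m}$, whence the formulas \eqref{pim} collapse to those of Cherednik's polynomial representation $\pi_{0,1_{T_{\Lambda^{m\prime}}}}^{\Lambda^m,\Lambda^{m\prime}}$. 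Your treatment is merely more explicit than the paper's (spelling out the $T_0$-term via $s_0^m(x^\lambda)=q_\vartheta^{m(\vartheta)\vartheta(\lambda)}x^{s_\vartheta\lambda}$ and the $x^\mu$- and $\Omega_{\Lambda^{m\prime}}$-generators), but the substance is identical.
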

\begin{proof}
Note that $\nabla_j^m(\cdot)\vert_{\mathcal{P}_{\Lambda^m}}$
is the standard divided difference operator associated to the simple root $\alpha_j^m\in\Phi^m$. Furthermore,
for $\lambda\in\Lambda^m$ and $\alpha\in\Phi_0$ we have
\[
\mathbf{B}(\lambda,\alpha^\vee)=\textup{lcm}(n,\mathbf{Q}(\alpha^\vee))\alpha^m(\lambda)\in\textup{lcm}(n,\mathbf{Q}(\alpha^\vee))\mathbb{Z}
\]
by \eqref{eqn:BQ} and \eqref{def:m}, hence $h_{\mathbf{B}(\lambda,\alpha^\vee)}(\alpha)=-1$ (see Definition \ref{hdefinition}).
Hence the defining formulas \eqref{pim} for $\pi_\Lambda^m$, restricted to $\mathcal{P}_{\Lambda^m}$,
reduce to the defining formulas for
$\pi_{0,1_{T_{\Lambda^{m\prime}}}}^{\Lambda^m,\Lambda^{m\prime}}$.
\end{proof}

\begin{remark}\label{examplesSSV} \hfill
\begin{enumerate}
\item[(1)] The restriction of $\pi^{m}_\Lambda$ to the affine Hecke algebra generated by $T_1, \dots, T_r$ and $x^{\nu}$ for $\nu \in Q^{m\vee}$ coincides with the metaplectic representation of the affine Hecke algebra from \cite[Thm. 3.7]{SSV} (be aware that the finite root system in \cite{SSV}, denoted by $\Phi$, is $\Phi_0^\vee$ in the present paper).
\item[(2)] We show in this remark that for the $\textup{GL}_{r+1}$ root datum, Theorem \ref{met_basic_rep} reduces to \cite[Thm. 5.4]{SSV}.
Recall the notations from Subsection \ref{S73}. Let $\Phi_0=\{\epsilon_i-\epsilon_j\}_{1\leq i\not=\leq j\leq r+1}$ be the root system of type $A_r$ with $\ell=1$ (i.e., the roots have squared length equal to $2$). As simple roots we take $\alpha_i:=\epsilon_i-\epsilon_{i+1}$ for $i=1,\ldots,r$.
Let $(n,\mathbf{Q})$ be an associated metaplectic datum. Write
$\kappa=\kappa_{\textup{lg}}\in\mathbb{Z}_{>0}$ for the value of $\mathbf{Q}$ at co-roots. The resulting multiplicity function $m$ is identically equal to $n/\textup{gcd}(n,\kappa)$, hence the metaplectic root system $\Phi_0^m=m^{-1}\Phi_0$ is the root system of type $A_{r+1}$ from Subsection \ref{S73} with $\ell=m$ (i.e., the squared length of the roots is equal to $2/m^2$). 

Take $\Lambda=\bigoplus_{i=1}^{r+1}\mathbb{Z}\epsilon_i$, which lies in $\mathcal{L}$, and 
\[
(\Lambda^m,\Lambda^{m\prime})=(m\mathbb{Z}^{r+1},m\mathbb{Z}^{r+1})\in (\mathcal{L}^m)^{\times 2}.
\]
Then 
\[
\widetilde{\mathbb{H}}^m=\mathbb{H}_{m\mathbb{Z}^{r+1},m\mathbb{Z}^{r+1}}
\]
is the $\textup{GL}_{r+1}$-type double affine Hecke algebra of type $\textup{GL}_{r+1}$ relative to the $A_r$ root system $\Phi_0^m$.
The resulting representation
\[
\pi^{m}_{\mathbb{Z}^{r+1}}: \widetilde{\mathbb{H}}^m\rightarrow\textup{End}(\mathcal{P}_{\mathbb{Z}^{r+1}})
\]
is the metaplectic basic representation $\widehat{\pi}^{(n,\kappa)}$ from \cite[Thm. 5.4]{SSV}.
\end{enumerate}
\end{remark}

The proof of Theorem \ref{met_basic_rep} implies that the metaplectic basic representation $\pi^m_\Lambda$ is a sub-representation of a uniform quasi-polynomial representation, hence it is isomorphic to a direct sum of extended quasi-polynomial representations (see Theorem \ref{gthm}(2)). 
We now describe this isomorphism in more detail.

Let $\Lambda\in\mathcal{L}$. The set
\[
X^m(\Lambda):=\Lambda\cap\overline{C}_+^m
\]
parametrizes the $W^m$-orbits in $\Lambda$, 
\begin{equation}\label{decomppLambda}
\Lambda=\bigsqcup_{\cc\in X^m(\Lambda)}\mathcal{O}_\cc.
\end{equation}
Note that $X^m(\Lambda)=\Lambda_{\textup{min}}$ when $\Lambda\in\mathcal{L}^m$. Consider the associated decomposition
\begin{equation}\label{deccP}
\mathcal{P}_\Lambda=\bigoplus_{\cc\in X^m(\Lambda)}\mathcal{P}^{(\cc)}
\end{equation}
as $\mathcal{P}_{Q^{m\vee}}$-modules. In fact, \eqref{deccP} is a decomposition as $\mathbb{W}^m$-modules, with $\mathbb{W}^m$ the double affine Weyl group associated to the metaplectic root system $\Phi_0^m$, see Proposition \ref{dWprop}.
For $\mathbf{g}\in\mathcal{G}^m$, consider the $\mathcal{P}_{Q^{m\vee}}$-linear automorphism
\[
\bigoplus_{\cc\in X^m(\Lambda)}\Gamma^{(\cc)}_{Q^{m\vee},\mathbf{g}}
\] 
of $\mathcal{P}_\Lambda$. By Lemma \ref{metgparext} and Lemma \ref{metgpar} we can express it entirely in terms of the 
metaplectic parameters $\underline{h}^{\mathbf{g}}$ associated to $\mathbf{g}$, leading to the following lemma.

In the following lemma the elements $\cc_y\in\overline{C}_+^m$ and $v_y\in W_0$ \textup{(}see Definition \ref{wydef}\textup{)} are relative to the metaplectic root system $\Phi_0^m$.
\begin{lemma}\label{relationGamma}
For fixed $\underline{h}=(h_s(\alpha))_{s,\alpha}\in\mathcal{M}_{(n,\mathbf{Q})}$
the formulas
\[
\Gamma^m(x^{\lambda}):=\Bigl(\prod_{\alpha\in\Pi(v_\lambda^{-1})}\bigl(-k_{\alpha^m}^{1-\chi_{\mathbb{Z}}(\alpha(\cc_\lambda)/m(\alpha))}h_{\mathbf{Q}(\alpha^\vee)\alpha(\cc_\lambda)}(\alpha)\bigr)
\Bigr)x^{\lambda}\qquad\quad (\lambda\in\Lambda)
\]
define a $\mathcal{P}_{Q^{m\vee}}$-linear
automorphism $\Gamma^m=\Gamma^m_{\underline{h}}$ of $\mathcal{P}_\Lambda$ satisfying
\[
\Gamma^m_{\underline{h}^{\mathbf{g}}}=\bigoplus_{\cc\in X^m(\Lambda)}\Gamma_{Q^{m\vee},\mathbf{g}}^{(\cc)}\qquad\quad (\mathbf{g}\in\mathcal{G}^m).
\]
\end{lemma}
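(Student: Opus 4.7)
The plan is to proceed by direct computation on the monomial basis $\{x^\lambda\}_{\lambda\in\Lambda}$, reducing the identity $\Gamma^m_{\underline{h}^{\mathbf{g}}} = \bigoplus_{\cc\in X^m(\Lambda)}\Gamma^{(\cc)}_{Q^{m\vee},\mathbf{g}}$ to the already-established translation formula of Lemma \ref{metgparext}. The two things to verify are (i) that $\Gamma^m$ is a well-defined $\mathcal{P}_{Q^{m\vee}}$-linear automorphism of $\mathcal{P}_\Lambda$, and (ii) the explicit equality with the direct sum of the $\Gamma^{(\cc)}_{Q^{m\vee},\mathbf{g}}$.

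For (i), the formula diagonally rescales the basis vectors $x^\lambda$ by nonzero scalars, so the only non-trivial point is $\mathcal{P}_{Q^{m\vee}}$-linearity. This amounts to showing that for $\mu\in Q^{m\vee}$ the prefactors at $\lambda$ and at $\lambda+\mu$ agree, which in turn reduces to the two identities $\cc_{\lambda+\mu}=\cc_\lambda$ and $v_{\lambda+\mu}^{-1}=v_\lambda^{-1}$ (where $\cc_y,v_y$ are taken with respect to the metaplectic affine root system $\Phi^m$). The first is immediate since $\mathcal{O}_{\lambda+\mu}^{m}=\mathcal{O}_\lambda^{m}$. For the second, note that $w_{\lambda+\mu}\cc_\lambda=\lambda+\mu=\tau(\mu)w_\lambda\cc_\lambda$, so $w_{\lambda+\mu}$ and $\tau(\mu)w_\lambda$ differ by an element of the stabilizer $W^m_{\cc_\lambda}\subseteq W_0$; passing to gradient parts gives $v_{\lambda+\mu}^{-1}=Dw_{\lambda+\mu}=Dw_\lambda=v_\lambda^{-1}$.

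For (ii), fix $\lambda\in\Lambda$ and set $\cc=\cc_\lambda$. Applying Lemma \ref{ISOlemma} (with the root system $\Phi_0^m$ and lattices $\Lambda^m=\Lambda^{m\prime}=Q^{m\vee}$) to $w=w_\lambda$, using $Dw_\lambda=v_\lambda^{-1}$ and the bijection $\Phi_0^+\leftrightarrow\Phi_0^{m+}$, $\alpha\leftrightarrow\alpha^m$ (which preserves the inversion set since $\alpha^m$ is a positive rescaling of $\alpha$), yields
\[
\Gamma^{(\cc)}_{Q^{m\vee},\mathbf{g}}(x^\lambda)=\Bigl(\prod_{\alpha\in\Pi(v_\lambda^{-1})}g_{\alpha^m}\bigl(\alpha^m(\cc)\bigr)\Bigr)x^\lambda .
\]
Since $\cc\in\Lambda$, Lemma \ref{metgparext} applies and gives
\[
g_{\alpha^m}\bigl(\alpha^m(\cc)\bigr)=-k_{\alpha^m}^{1-\chi_{\mathbb{Z}}(\alpha(\cc)/m(\alpha))}h_{\mathbf{Q}(\alpha^\vee)\alpha(\cc)}^{\mathbf{g}}(\alpha)
\]
for each $\alpha\in\Pi(v_\lambda^{-1})$. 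Substituting and comparing with the defining formula for $\Gamma^m_{\underline{h}^{\mathbf{g}}}$ proves the equality on each basis vector, hence on all of $\mathcal{P}_\Lambda$.

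There is no genuine obstacle here; the lemma is essentially a dictionary translation between the $\mathbf{g}$-parametrisation used in the uniform quasi-polynomial representation and the metaplectic parametrisation $\underline{h}$, packaged via the surjection of Lemma \ref{metgpar}. The most delicate book-keeping is keeping straight which root system the inversion sets $\Pi(\cdot)$ refer to, but as noted above the natural bijection $\Phi_0\leftrightarrow\Phi_0^m$ intertwines them, so no combinatorial difficulty arises.
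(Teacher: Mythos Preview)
Your treatment of part (ii) is correct and matches the paper's approach: apply Lemma \ref{ISOlemma} with $w=w_\lambda$ (so $Dw_\lambda=v_\lambda^{-1}$), then translate each factor $g_{\alpha^m}(\alpha^m(\cc_\lambda))$ via Lemma \ref{metgparext}.

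However, your argument for (i) contains an error. The inclusion $W^m_{\cc_\lambda}\subseteq W_0$ is false whenever $0\in\mathbf{J}(\cc_\lambda)$ (since then $s_0^m\in W^m_{\cc_\lambda}$), and even when it holds, applying $D$ to $w_{\lambda+\mu}=\tau(\mu)w_\lambda\cdot u$ with $u\in W_0$ gives $Dw_{\lambda+\mu}=Dw_\lambda\cdot u$, not $Dw_\lambda$. In fact the conclusion $v_{\lambda+\mu}=v_\lambda$ is simply false: already for $\Phi_0^m$ of type $A_1$ with $\cc_\lambda=0$ one has $v_0=1$ but $v_{\alpha_1^\vee}=s_1$. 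What is true is that the \emph{product} $\prod_{\alpha\in\Pi(v_\lambda^{-1})}g_{\alpha^m}(\alpha^m(\cc_\lambda))$ is unchanged under $\lambda\mapsto\lambda+\mu$, because the extra factors coming from the stabilizer element all have $\alpha^m(\cc_\lambda)\in\mathbb{Z}$ and hence contribute $g_{\alpha^m}(\alpha^m(\cc_\lambda))=1$ (or equivalently, because the product equals $\gamma_{\widetilde{\mathbf{g}}}(\cc_\lambda)/\gamma_{\widetilde{\mathbf{g}}}(\lambda)$, cf.\ \eqref{ascoboundary}--\eqref{cocycleg}). The cleanest fix, and the one implicit in the paper's proof, is to reverse the order: first establish (ii), then deduce (i) from the surjectivity in Lemma \ref{metgpar} together with the $\mathcal{P}_{Q^{m\vee}}$-linearity of each $\Gamma_{Q^{m\vee},\mathbf{g}}^{(\cc)}$ already proved in Lemma \ref{ISOlemma}.
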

\begin{proof}
This follows from Lemma \ref{ISOlemma}, Lemma \ref{metgpar} and Lemma \ref{metgparext}.
\end{proof}
For $\underline{h}\in\mathcal{M}_{(n,\mathbf{Q})}$ and $\lambda\in\Lambda$ write
\begin{equation}\label{tgenmet}
\mathfrak{t}_\lambda(\underline{h}):=\prod_{\alpha\in\Phi_0^+}\Bigl(-k_{\alpha^m}^{1-\chi_{\mathbb{Z}}(\alpha(\lambda)/m(\alpha))}h_{\mathbf{Q}(\alpha^\vee)\alpha(\lambda)}(\alpha)\Bigr)^{\frac{\alpha}{m(\alpha)}},
\end{equation}
viewed as multiplicative character of $\Lambda^{m\prime}$ for any $\Lambda^{m\prime}\in\mathcal{L}^m$. Note that 
\begin{equation}\label{relrel}
\mathfrak{t}_\lambda(\underline{h}^{\mathbf{g}})=
\mathfrak{t}_\lambda(\mathbf{g})\qquad\quad(\mathbf{g}\in\mathcal{G}^m),
\end{equation} 
where we use the definition \eqref{typrime} of $\mathfrak{t}_\lambda(\mathbf{f})$ relative to the
metaplectic root system $\Phi_0^m$. By Lemma \ref{ginvlemmaext}(2) and Lemma \ref{metgpar} we conclude that for $\cc\in X^m(\Lambda)$ and $\underline{h}\in\mathcal{M}_{(n,\mathbf{Q})}$,
\begin{equation}\label{rr2}
\mathfrak{t}_\cc(\underline{h})\in T_{\mathbf{J}(\cc),\Lambda^{m\prime}}^{\textup{red}}\,\,\hbox{ and }\,\, q^\cc, q^\cc\mathfrak{t}_\cc(\underline{h})\in {}^{\Lambda^m}T_{\Lambda^{m\prime}}^\cc.
\end{equation}

\begin{corollary}\label{corSSVlink}
Let $\Lambda\in\mathcal{L}$ and $\underline{h}\in\mathcal{M}_{(n,\mathbf{Q})}$.
\begin{enumerate}
\item 
The $\mathcal{P}_{Q^{m\vee}}$-linear automorphism $\Gamma^m$ of $\mathcal{P}_\Lambda$ defines an isomorphism
 \[
\mathcal{P}^{m}_\Lambda\overset{\sim}{\longrightarrow} \bigoplus_{\cc\in X^m(\Lambda)}\mathcal{P}_{q^\cc\mathfrak{t}_{\cc}(\underline{h})}^{(\cc)}
\]
of $\mathbb{H}^m$-modules.
\item For $\Lambda^m,\Lambda^{m\prime}\in \mathcal{L}^m$ such that $\Lambda^m\subseteq\Lambda$ and 
$\langle\Lambda,\Lambda^{m\prime}\rangle\subseteq\mathbb{Z}$ we have
\begin{equation}\label{isoEXTm}
\mathcal{P}^{m}_\Lambda\simeq \bigoplus_{\cc\in X^m(\Lambda,\Lambda^m)}\mathcal{P}_{\Lambda^m,q^\cc\mathfrak{t}_\cc(\underline{h})}^{(\cc)}
\end{equation}
as $\mathbb{H}_{\Lambda^m,\Lambda^{m\prime}}$-modules, where $X^m(\Lambda,\Lambda^m)$ is a complete set of representatives of the $\Omega_{\Lambda^{m}}$-orbits in $X^m(\Lambda)$. 
\end{enumerate}
\end{corollary}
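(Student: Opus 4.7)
The strategy is to exploit the realization of the metaplectic basic representation as a sub-representation of the uniform quasi-polynomial representation, as already established in the proof of Theorem \ref{met_basic_rep}. Concretely, fix a representative $\mathbf{g}\in\mathcal{G}^m$ of the pre-image of $\underline{h}\in\mathcal{M}_{(n,\mathbf{Q})}$ under the map of Lemma \ref{metgpar}, and choose $\widehat{\mathbf{p}}\in\widehat{\mathcal{G}}^m$ and $\mathfrak{c}\in\mathcal{C}_{\Lambda^m,\Lambda^{m\prime}}$ as in \eqref{specialvalue}, so that $\mathfrak{t}_\lambda(\widehat{\mathbf{p}},\mathfrak{c})=q^\lambda$ in $T_{\Lambda^{m\prime}}$ for all $\lambda\in\Lambda$ by \eqref{qyversionExt}. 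The proof of Theorem \ref{met_basic_rep} then shows that $\pi_\Lambda^m$ agrees with the restriction of $\pi_{\mathbf{g},\widehat{\mathbf{p}},\mathfrak{c}}^{\Lambda^m,\Lambda^{m\prime}}$ to $\mathcal{P}_\Lambda$, and Theorem \ref{gthm}(2) provides the sought-for isomorphisms orbit-by-orbit via the automorphisms $\Gamma_{\Lambda^m,\mathbf{g}}^{(\cc)}$.

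For part (1), take $(\Lambda^m,\Lambda^{m\prime})=(Q^{m\vee},Q^{m\vee})$, so that $\mathcal{C}_{Q^{m\vee},Q^{m\vee}}$ reduces to a point and $\mathfrak{c}$ is forced to be trivial. The decomposition \eqref{deccP} expresses $\mathcal{P}_\Lambda=\bigoplus_{\cc\in X^m(\Lambda)}\mathcal{P}^{(\cc)}$ as a direct sum of $\mathbb{H}^m$-submodules, since each $\mathcal{P}^{(\cc)}$ is stable under the $\mathbb{W}^m$-action underlying $\pi_\Lambda^m$. Applying Theorem \ref{gthm}(2) to the metaplectic root system $\Phi_0^m$ for each such $\cc$, the $\mathcal{P}_{Q^{m\vee}}$-linear automorphism $\Gamma_{Q^{m\vee},\mathbf{g}}^{(\cc)}$ realises an $\mathbb{H}^m$-module isomorphism onto $\mathcal{P}_{\mathfrak{t}_\cc(\widehat{\mathbf{g}},\mathfrak{c})}^{(\cc)}$, where $\widehat{\mathbf{g}}:=\widehat{\mathbf{p}}\cdot\mathbf{g}$. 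A direct computation gives $\mathfrak{t}_\cc(\widehat{\mathbf{g}},\mathfrak{c})=\mathfrak{t}_\cc(\widehat{\mathbf{p}},\mathfrak{c})\mathfrak{t}_\cc(\mathbf{g})=q^\cc\mathfrak{t}_\cc(\mathbf{g})$ as a character of $Q^{m\vee}$, and by \eqref{relrel} together with Lemma \ref{metgpar} we have $\mathfrak{t}_\cc(\mathbf{g})=\mathfrak{t}_\cc(\underline{h}^\mathbf{g})=\mathfrak{t}_\cc(\underline{h})$. Finally, Lemma \ref{relationGamma} identifies $\Gamma^m$ with $\bigoplus_{\cc\in X^m(\Lambda)}\Gamma_{Q^{m\vee},\mathbf{g}}^{(\cc)}$, assembling the orbit-wise isomorphisms into the statement of (1).

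For part (2) the argument is formally the same, but the orbit decomposition is coarser: $\mathcal{P}_\Lambda=\bigoplus_{\cc\in X^m(\Lambda,\Lambda^m)}\mathcal{P}_{\Lambda^m}^{(\cc)}$, where each summand is spanned by the quasi-monomials with exponents in $\mathcal{O}_{\Lambda^m,\cc}=W_{\Lambda^m}\cc$. This set is indeed contained in $\Lambda$ because $\Lambda$ is $W_0$-stable and $\Lambda^m\subseteq\Lambda$, and each summand is an $\mathbb{H}_{\Lambda^m,\Lambda^{m\prime}}$-submodule because $\Omega_{\Lambda^{m\prime}}$ and $W_{\Lambda^{m\prime}}\cdot_{\widehat{\mathbf{p}},\mathfrak{c}}$ preserve $\mathcal{O}_{\Lambda^m,\cc}$. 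Theorem \ref{gthm}(2) then supplies the isomorphism $\Gamma_{\Lambda^m,\mathbf{g}}^{(\cc)}:\mathcal{P}_{\Lambda^m}^{(\cc)}\overset{\sim}{\longrightarrow}\mathcal{P}_{\Lambda^m,\mathfrak{t}_\cc(\widehat{\mathbf{g}},\mathfrak{c})}^{(\cc)}$, and the computation $\mathfrak{t}_\cc(\widehat{\mathbf{g}},\mathfrak{c})=q^\cc\mathfrak{t}_\cc(\underline{h})$ in $T_{\Lambda^{m\prime}}$ is identical to the one above, now carried out as a character of $\Lambda^{m\prime}$ instead of $Q^{m\vee}$. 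Summation over the set $X^m(\Lambda,\Lambda^m)$ of representatives of $\Omega_{\Lambda^m}$-orbits in $X^m(\Lambda)$ yields \eqref{isoEXTm}.

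The only genuinely non-trivial point is bookkeeping: one must verify that $\mathfrak{t}_\cc(\widehat{\mathbf{p}})\,\mathfrak{c}(\textup{pr}_{E_{\textup{co}}}(\cc))=q^\cc$ as a character of $\Lambda^{m\prime}$ (which is exactly the content of \eqref{qyversionExt} for $y=\cc$), and that this identification is compatible with the restriction from $\Lambda^{m\prime}$ to $Q^{m\vee}\subseteq E^\prime$ used in part (1), where the $E_{\textup{co}}$-component of $\cc$ becomes invisible. This keeps the expression $q^\cc\mathfrak{t}_\cc(\underline{h})$ well-defined and consistent between the two parts of the corollary.
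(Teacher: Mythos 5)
Your proof is correct and follows essentially the same route as the paper's: realizing $\pi_\Lambda^m$ inside the uniform quasi-polynomial representation with the special choice \eqref{specialvalue}, applying Theorem \ref{gthm}(2) orbit-wise (with $\Gamma_{Q^{m\vee},\mathbf{g}}^{(\cc)}$ for part (1), assembled into $\Gamma^m$ via Lemma \ref{relationGamma}, and $\Gamma_{\Lambda^m,\mathbf{g}}^{(\cc)}$ for part (2)), and identifying the induction parameters through $\mathfrak{t}_\lambda(\widehat{\mathbf{g}},\mathfrak{c})=q^\lambda\mathfrak{t}_\lambda(\underline{h})$ using \eqref{relrel}. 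The only cosmetic point is that for $\lambda\in\Lambda$ the identity $\mathfrak{t}_\lambda(\widehat{\mathbf{p}},\mathfrak{c})=q^\lambda$ is \eqref{qydecomp} rather than \eqref{qyversionExt} (the latter is stated under the stronger hypothesis of a one-parameter subgroup $d\mapsto\widehat{q}^{\,d}$), which does not affect the argument.
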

\begin{proof}
Without loss of generality we may assume that $\mathbf{F}$ contains a $(2h)^{th}$ root of $q$.

Let $\mathbf{g}\in\mathcal{G}^m$ such that $\underline{h}=\underline{h}^{\mathbf{g}}$.
Choose $\widehat{\mathbf{p}}\in\widehat{\mathcal{G}}^m$ and $\mathfrak{c}\in\mathcal{C}_{\Lambda^m,\Lambda^{m\prime}}$ satisfying \eqref{specialvalue}. Then
\begin{equation}\label{isostart}
\mathcal{P}_\Lambda^m=(\mathcal{P}_\Lambda,\pi^{\Lambda^m,\Lambda^{m\prime}}_{\mathbf{g},\widehat{\mathbf{p}},\mathfrak{c}}(\cdot)\vert_{\mathcal{P}_\Lambda})
\end{equation}
as $\mathbb{H}_{\Lambda^m,\Lambda^{m\prime}}$-modules, by the proof of Theorem \ref{met_basic_rep}. Then
Lemma \ref{relationGamma} and Theorem \ref{gthm}(2) show that
the automorphism $\Gamma^m$ of $\mathcal{P}_\Lambda$ is an isomorphism
\[
\mathcal{P}_\Lambda^m\overset{\sim}{\longrightarrow}\bigoplus_{\cc\in X^m(\Lambda)}\mathcal{P}^{(\cc)}_{\mathfrak{t}_\cc(\widehat{\mathbf{g}},\mathfrak{c})}
\]
of $\mathbb{H}^m$-modules, where $\widehat{\mathbf{g}}:=\widehat{\mathbf{p}}\cdot\mathbf{g}$. Then (1) follows from the fact that
\begin{equation}\label{rr1}
\mathfrak{t}_\lambda(\widehat{\mathbf{g}},\mathfrak{c})=q^\lambda\mathfrak{t}_\lambda(\mathbf{g})=q^\lambda\mathfrak{t}_\lambda(\underline{h}^{\mathbf{g}})\qquad\quad
(\lambda\in\Lambda)
\end{equation}
by \eqref{qydecomp} and \eqref{relrel}.

Part (2) follows in a similar manner, now using that $X^m(\Lambda,\Lambda^m)$ parametrizes the $W_{\Lambda^m}$-orbits in $\Lambda$, leading to
the decomposition
\[
\mathcal{P}_\Lambda=\bigoplus_{\cc\in X^m(\Lambda,\Lambda^m)}\mathcal{P}_{\Lambda^m}^{(\cc)}
\]
of $\mathcal{P}_\Lambda$ as $\mathcal{P}_{\Lambda^m}$-modules. The isomorphism 
\[
\mathcal{P}^{m}_\Lambda\overset{\sim}{\longrightarrow} \bigoplus_{\cc\in X^m(\Lambda,\Lambda^m)}\mathcal{P}_{\Lambda^m,q^\cc\mathfrak{t}_\cc(\underline{h})}^{(\cc)}
\]
of $\mathbb{H}_{\Lambda^m,\Lambda^{m\prime}}$-modules is then realised by
$\bigoplus_{\cc\in X^m(\Lambda,\Lambda^m)}\Gamma^{(\cc)}_{\Lambda^m,\mathbf{g}}$, where $\Gamma_{\Lambda^m,\mathbf{g}}^{(\cc)}$ is the $\mathcal{P}_{\Lambda^m}$-linear automorphism of $\mathcal{P}_{\Lambda^m}^{(\cc)}$ from Lemma \ref{ISOlemma} and $\mathbf{g}\in\mathcal{G}^m$ is such that $\underline{h}=\underline{h}^{\mathbf{g}}$.
\end{proof}

\begin{remark}
Remark \ref{O3rem} and the fact that $\Gamma^m$ restricts to the identity on $\mathcal{P}_{\Lambda^m}$
show that Corollary \ref{corSSVlink}(2) is consistent with Corollary \ref{Macdonaldmcase}.
\end{remark}

Corollary \ref{corSSVlink}(2) and Proposition \ref{twistactionprop} allow to add twist parameters to $\pi^m_\Lambda: \mathbb{H}_{\Lambda^m,\Lambda^{m\prime}}\rightarrow\textup{End}(\mathcal{P}_\Lambda)$, with the twist parameters encoding a choice of multiplicative character of $\Omega_{\Lambda^{m\prime}}$ for each
$W^m$-orbit in $\mathcal{O}_\Lambda$.
We do not give full details here, since the metaplectic polynomials defined in Subsection \ref{mpsec} do not depend on these twist parameters.

We now count the number of free $h_s(\alpha)$-parameters in the metaplectic basic representation $\pi^{m}_\Lambda: \mathbb{H}_{\Lambda^m,\Lambda^{m\prime}}\rightarrow
\textup{End}(\mathcal{P}_\Lambda)$.
Consider the subgroup
\[
Z_{\Lambda,\textup{lg}}^m:=\{\varphi(\lambda)\,\, \textup{mod}\,\,m(\varphi)\mathbb{Z}\,\,\, | \,\,\, \lambda\in\Lambda\}
\]
of $\mathbb{Z}/m(\varphi)\mathbb{Z}$ and the subgroup
\begin{equation*}
Z_{\Lambda,\textup{sh}}^m:=
\begin{cases}
\{\theta(\lambda)\,\, \textup{mod}\,\,m(\theta)\mathbb{Z}\,\,\, | \,\,\, \lambda\in\Lambda\}\qquad &\hbox{ if }\, \theta\not=\varphi,\\
\{\overline{0}\}\qquad &\hbox{ if }\, \theta=\varphi
\end{cases}
\end{equation*}
of $\mathbb{Z}/m(\theta)\mathbb{Z}$,
where $\overline{0}$ is the neutral element of 
$\mathbb{Z}/m(\theta)\mathbb{Z}$. Both subgroups are independent of the lattices $\Lambda^m$ and $\Lambda^{m\prime}$. 

For $s\in\mathbb{Z}_{>0}$ the $2$-group $\mathbb{Z}_2$ acts on $\mathbb{Z}/s\mathbb{Z}$ by group automorphisms via sign changes.
Any subgroup $Z$ of 
$\mathbb{Z}/s\mathbb{Z}$ is $\mathbb{Z}_2$-stable, and its subgroup $Z^{\textup{inv}}$ of $\mathbb{Z}_2$-fixed elements is of order $1$ if $[\mathbb{Z}/s\mathbb{Z}:Z]$
is odd and of order $2$ otherwise.
This in particular applies to the subgroups $Z_{\Lambda,\textup{lg}}^m\subseteq\mathbb{Z}/m(\varphi)\mathbb{Z}$ and
$Z_{\Lambda,\textup{sh}}^m\subseteq\mathbb{Z}/m(\theta)\mathbb{Z}$. 

Write
\[
Z_{\Lambda,\ell}^{m,\textup{reg}}:=Z_{\Lambda,\ell}^m\setminus Z_{\Lambda,\ell}^{m,\textup{inv}}\subseteq Z_{\Lambda,\ell}^m.
\]
Then $Z_{\Lambda,\textup{sh}}^{m,\textup{reg}}=\emptyset$ if $\Phi_0=\Phi_0^{\textup{lg}}$ is simply laced. Let
\[
\widetilde{Z}_{\Lambda,\ell}^{m,\textup{reg}}\subseteq Z_{\Lambda,\ell}^{m,\textup{reg}}
\]
be a complete set of representatives of the $\mathbb{Z}_2$-orbits in $Z_\ell^{m,\textup{reg}}$.
\begin{proposition}\label{pargcount}
The metaplectic basic representation $\pi^m_\Lambda: \mathbb{H}_{\Lambda^m,\Lambda^{m\prime}}\rightarrow\textup{End}(\mathcal{P}_\Lambda)$ depends on
\[
\sum_{\ell\in\{\textup{sh},\textup{lg}\}}\#\widetilde{Z}_{\Lambda,\ell}^{m,\textup{reg}}
\]
free parameters $\{h_{\kappa_\ell z}(\varphi_\ell) \,\, | \,\, 
z\in \widetilde{Z}_{\Lambda,\ell}^{m,\textup{reg}}, \ell\in\{\textup{sh},\textup{lg}\}\}$
and on 
\[
\sum_{\ell\in\{\textup{sh},\textup{lg}\}}\Bigl(\#Z_{\Lambda,\ell}^{m,\textup{inv}}-1\Bigr)
\]
choices of signs $\{ h_{\kappa_\ell z}(\varphi_\ell) \,\, | \,\,
z\in Z_{\Lambda,\ell}^{m,\textup{inv}}\setminus\{\overline{0}\}, \ell\in\{\textup{sh},\textup{lg}\}\}$.
\end{proposition}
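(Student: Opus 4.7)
The argument consists of unwinding the defining formulas \eqref{pim} and imposing the constraints \eqref{gmetparremark}. Set $\varphi_{\textup{lg}}:=\varphi$ and $\varphi_{\textup{sh}}:=\theta$, so that $\varphi_\ell$ is the highest root of type $\ell$ (the short case being vacuous in the simply laced setting). First, by the $W_0$-invariance built into Definition \ref{hdefinition}, every $h_s(\alpha)$ equals $h_s(\varphi_\ell)$ with $\ell$ the length type of $\alpha$. Combined with the periodicity $h_{s+\textup{lcm}(n,\kappa_\ell)}(\varphi_\ell)=h_s(\varphi_\ell)$ and the identity $\textup{lcm}(n,\kappa_\ell)=\kappa_\ell m(\varphi_\ell)$ coming from \eqref{def:m}, the data of $\underline{h}$ reduces to the functions $z\mapsto h_{\kappa_\ell z}(\varphi_\ell)$ indexed by $z\in\mathbb{Z}/m(\varphi_\ell)\mathbb{Z}$ and $\ell\in\{\textup{sh},\textup{lg}\}$.

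Next I would identify which of these $z$-values actually appear in \eqref{pim}. The parameters enter only through the coefficients $h_{-\mathbf{B}(\lambda,\alpha_i^\vee)}(\alpha_i)=h_{-\kappa_{\ell_i}\alpha_i(\lambda)}(\alpha_i)$ in the $T_i$-formula and $h_{\mathbf{B}(\lambda,\vartheta^\vee)}(\vartheta)=h_{\kappa_{\ell_\vartheta}\vartheta(\lambda)}(\vartheta)$ in the $T_0$-formula, as $\lambda$ ranges over $\Lambda$. I claim that the set of classes $\alpha(\lambda)\bmod m(\varphi_\ell)\mathbb{Z}$ so obtained (for $\alpha$ a simple root of type $\ell$, or $\vartheta$ when $\ell_\vartheta=\ell$) equals $Z_{\Lambda,\ell}^m$. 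For one inclusion: any such $\alpha$ satisfies $\alpha=w\varphi_\ell$ for some $w\in W_0$, and $w^{-1}\lambda-\lambda\in Q^\vee\subseteq\Lambda$ gives $\alpha(\lambda)=\varphi_\ell(w^{-1}\lambda)\in\varphi_\ell(\Lambda)$, whose class lies in $Z_{\Lambda,\ell}^m$. For the reverse inclusion: if $\vartheta$ has type $\ell$ then $\varphi_\ell(\Lambda)$ is realised by the $T_0$-formula outright; otherwise $\ell=\textup{lg}$ with $\vartheta$ short, and given $\varphi(\lambda')\in\varphi(\Lambda)$ we pick any long-simple $\alpha_i=w\varphi$ and observe that $w\lambda'\in\Lambda$ (using $w\lambda'-\lambda'\in Q^\vee$) with $\alpha_i(w\lambda')=\varphi(\lambda')$. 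A parallel argument handles the short case.

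With the relevant $z$-values pinned down to $Z_{\Lambda,\ell}^m$, the constraints \eqref{gmetparremark} give the final count. The normalisation $h_0(\varphi_\ell)=-1$ fixes the value at $z=\overline{0}$; each $\mathbb{Z}_2$-orbit $\{z,-z\}$ in $Z_{\Lambda,\ell}^{m,\textup{reg}}$ contributes one free parameter $h_{\kappa_\ell z}(\varphi_\ell)\in\mathbf{F}^\times$, with its partner determined by the relation $h_{\kappa_\ell z}h_{-\kappa_\ell z}=k_{\varphi_\ell^m}^{-2}$; and each $z\in Z_{\Lambda,\ell}^{m,\textup{inv}}\setminus\{\overline{0}\}$ (necessarily $z=\overline{m(\varphi_\ell)/2}$ with $m(\varphi_\ell)$ even, cf.\ \eqref{half}) leaves only the binary sign $h_{\kappa_\ell z}(\varphi_\ell)\in\{\pm k_{\varphi_\ell^m}^{-1}\}$. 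Summing over $\ell$ yields the two stated totals.

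The main obstacle will be the $\supseteq$ part of the second paragraph, namely verifying that the $T_i$-formulas really do realise every class of $Z_{\Lambda,\textup{lg}}^m$ when $\vartheta$ is short; this hinges on combining $W_0$-invariance of $\underline{h}$ with $W_0$-translation of indices in $\Lambda$ modulo $Q^\vee$. A secondary subtlety is confirming the independence implicit in the count, namely that distinct admissible tuples $\underline{h}$ produce distinct endomorphisms through \eqref{pim}; this can be extracted by comparing the actions on monomials $x^\lambda$ whose relevant class $\alpha_i(\lambda)\bmod m(\varphi_\ell)$ is prescribed, which is possible since the pertinent class is attained by some $\lambda\in\Lambda$ by the analysis above.
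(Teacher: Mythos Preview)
Your argument is correct and follows precisely the route the paper intends: the paper's own proof is a one-line citation of Definition \ref{hdefinition}, \eqref{half}, Theorem \ref{met_basic_rep}, \eqref{eqn:BQ} and \eqref{def:m}, and you have simply unpacked these references in detail. Your identification of the residue set with $Z_{\Lambda,\ell}^m$ via $\alpha_i(\Lambda)=\varphi_\ell(\Lambda)$ (using $W_0$-invariance of $\Lambda$, which holds since $s_\alpha\lambda-\lambda\in Q^\vee\subseteq\Lambda$) and the subsequent orbit count under the $\mathbb{Z}_2$-action are exactly what those citations encode; the independence remark in your final paragraph is a reasonable extra caution that the paper does not make explicit either.
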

\begin{proof}
This follows from Definition \ref{hdefinition}, \eqref{half}, 
Theorem \ref{met_basic_rep}, \eqref{eqn:BQ} and \eqref{def:m}.
\end{proof}
Note that the maximum number of signs in $\pi_\Lambda^m$ is $2$.
\begin{remark}\label{check}
By Proposition \ref{pargcount} the metaplectic representation
$\pi_{\Lambda}^{m}$ can be defined over $\mathbf{F}=\mathbb{Q}\bigl(q,\mathbf{k}^{\frac{1}{2}},\check{h}\bigr)$, where $\check{h}$
stands for the parameters $h_{\kappa_\ell z}(\varphi_\ell)$ ($z\in \widetilde{Z}_{\Lambda,\ell}^{m,\textup{reg}}$, $\ell\in\{\textup{sh},\textup{lg}\}$),
viewed as indeterminates. 
\end{remark}

\begin{remark}
Consider the $\textup{GL}_{r+1}$ metaplectic basic representation $\pi_{\mathbb{Z}^{r+1}}^m: \widetilde{\mathbb{H}}\rightarrow\textup{End}(\mathcal{P}_{\mathbb{Z}^{r+1}})$ (see Remark \ref{examplesSSV}(2)).
In this case the multiplicity function $\mathbf{k}$ is a single scalar $k\in\mathbf{F}^\times$, $Z_{\textup{sh}}^m=\{\overline{0}\}$ and  
\[
Z_{\textup{lg}}^m=\mathbb{Z}/m\mathbb{Z},
\]
hence $Z_{\textup{lg}}^{m,\textup{reg}}=Z_{\textup{lg}}^m\setminus\{\overline{0}\}$ if $m$ is odd and $Z_{\textup{lg}}^{m,\textup{reg}}=Z_{\textup{lg}}^m\setminus\{\overline{0},
\overline{m/2}\}$
if $m$ is even. As representatives for the $\mathbb{Z}_2$-orbits in $Z_\ell^{m,\textup{reg}}$ we can take
\begin{equation*}
\widetilde{Z}_{\textup{lg}}^m=\{\overline{\ell}\,\, | \,\, \ell\in\mathbb{Z}:\,\, 1\leq \ell<\tfrac{m}{2}\}\subset \mathbb{Z}/m\mathbb{Z},
\end{equation*}
which is empty if $m\leq 2$. We conclude that the representation $\pi_{\mathbb{Z}^{r+1}}^{m}$ does not have free metaplectic parameters if $m\leq 2$, and it has
$\lfloor \frac{m-1}{2}\rfloor$ free metaplectic parameters if $m>2$. Furthermore, $\pi_{\mathbb{Z}^{r+1}}^m$ depends on a sign $\epsilon\in\{\pm 1\}$ when $m$ is even. This is in accordance with \cite[\S 5]{SSV}.
\end{remark}
\subsection{A metaplectic affine Weyl group action on rational functions}\label{CGsubsection}
Denote by $\mathcal{Q}^m$ the quotient field of $\mathcal{P}_{Q^{m\vee}}$.
For a finitely generated abelian subgroup $L\subset E$ let $\mathbf{F}_{\mathcal{Q}^m}[L]$ be the $\mathcal{Q}^m$-subalgebra of $\mathbf{F}_{\mathcal{Q}^m}[E]$ generated by $x^\lambda$ ($\lambda\in L$).
In view of \eqref{relgng} we can now formulate the following metaplectic version of Theorem \ref{CGaffineextended}. 
\begin{corollary}\label{WhitCGcor}
For $\underline{h}\in\mathcal{M}_{(n,\mathbf{Q})}$ the formulas
\begin{equation*}
\begin{split}
\sigma^{m}_\Lambda(s_0^m)(x^\lambda)&:=-k_0h_{\mathbf{B}(\lambda,\vartheta^\vee)}(\vartheta)
\Bigl(\frac{x^{\alpha_0^{m\vee}}-1}{k_0x^{\alpha_0^{m\vee}}-k_0^{-1}}\Bigr)q_\vartheta^{m(\vartheta)\vartheta(\lambda)}x^{s_\vartheta\lambda}\\&\qquad\qquad\qquad\qquad
+\Bigl(\frac{k_0-k_0^{-1}}{k_0x^{\alpha_0^{m\vee}}-k_0^{-1}}\Bigr)
x^{\lambda-\lfloor -\vartheta^m(\lambda)\rfloor \alpha_0^{m\vee}},\\
\sigma^{m}_\Lambda(s_i^m)(x^\lambda)&:=-k_ih_{-\mathbf{B}(\lambda,\alpha_i^\vee)}(\alpha_i)
\Bigl(\frac{x^{\alpha_i^{m\vee}}-1}{k_ix^{\alpha_i^{m\vee}}-k_i^{-1}}\Bigr)x^{s_i\lambda}+
\Bigl(\frac{k_i-k_i^{-1}}{k_ix^{\alpha_i^{m\vee}}-k_i^{-1}}\Bigr)
x^{\lambda-\lfloor \alpha_i^m(\lambda)\rfloor \alpha_i^{m\vee}},\\
\sigma_\Lambda^{m}(\omega)(x^\lambda)&:=\omega(x^\lambda),\qquad \sigma_\Lambda^{m}(f)x^\lambda:=fx^\lambda
\end{split}
\end{equation*}
for $\lambda\in\Lambda$, $1\leq i\leq r$, $\omega\in\Omega_{\Lambda^{m\prime}}$ and $f\in\mathbf{F}_{\mathcal{Q}^m}[\Lambda^m]$
define a representation 
\[
\sigma^m_\Lambda: W_{\Lambda^{m\prime}}\ltimes\mathbf{F}_{\mathcal{Q}^m}[\Lambda^m]\rightarrow
\textup{End}_{\mathbf{F}}(\mathbf{F}_{\mathcal{Q}^m}[\Lambda]).
\]
\end{corollary}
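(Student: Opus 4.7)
The plan is to derive $\sigma_\Lambda^m$ as a specialisation and restriction of the extended uniform $W$-action on rational functions from Theorem \ref{CGaffineextended}, applied to the metaplectic root system $\Phi_0^m$ with lattices $(\Lambda^m,\Lambda^{m\prime})\in (\mathcal{L}^m)^{\times 2}$. The method exactly parallels how Theorem \ref{met_basic_rep} was obtained from the uniform quasi-polynomial representation of Theorem \ref{gthm}. First I would fix $\mathbf{g}\in\mathcal{G}^m$ with $\underline{h}^{\mathbf{g}}=\underline{h}$ (Lemma \ref{metgpar}), together with $\widehat{\mathbf{p}}\in\widehat{\mathcal{G}}^m$ and $\mathfrak{c}\in\mathcal{C}_{\Lambda^m,\Lambda^{m\prime}}$ satisfying \eqref{specialvalue}. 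By the key identity \eqref{qydecomp} from the proof of Theorem \ref{met_basic_rep}, these choices force $\mathfrak{t}_\lambda(\widehat{\mathbf{p}},\mathfrak{c})=q^\lambda$ for all $\lambda\in\Lambda$, so that $w\cdot_{\widehat{\mathbf{p}},\mathfrak{c}}x^\lambda=w(x^\lambda)$ reduces to the standard $q$-dilation/reflection action of $W_{\Lambda^{m\prime}}$ on $\mathcal{P}_\Lambda$.

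Theorem \ref{CGaffineextended}, specialised to $\Phi_0^m$, yields a representation $\sigma_{\mathbf{g},\widehat{\mathbf{p}},\mathfrak{c}}^{\Lambda^m,\Lambda^{m\prime}}$ of $W_{\Lambda^{m\prime}}\ltimes\mathcal{Q}_{\Lambda^m}$ on $\mathbf{F}_{\mathcal{Q}_{\Lambda^m}}[E]$. I would then check that the $\mathcal{Q}^m$-submodule $\mathbf{F}_{\mathcal{Q}^m}[\Lambda]$ is stable under the restricted action of $W_{\Lambda^{m\prime}}\ltimes\mathbf{F}_{\mathcal{Q}^m}[\Lambda^m]$: the denominators $k_jx^{\alpha_j^{m\vee}}-k_j^{-1}$ in the defining formulas live in $\mathcal{P}_{Q^{m\vee}}^\times\subseteq\mathcal{Q}^m{}^\times$; the exponent corrections $-\lfloor\cdot\rfloor\alpha_j^{m\vee}$ shift by elements of $Q^{m\vee}\subseteq\Lambda$; the factors $v\in W_0$ preserve $\Lambda$ since $\Lambda\in\mathcal{L}$; the translations $\tau(\mu)$ ($\mu\in\Lambda^{m\prime}$) act by scalars in $\mathbf{F}^\times$ on $x^\lambda$ under the hypothesis $\langle\Lambda,\Lambda^{m\prime}\rangle\subseteq\mathbb{Z}$; and the $\Omega_{\Lambda^{m\prime}}$-action preserves $\mathcal{P}_\Lambda\subseteq\mathbf{F}_{\mathcal{Q}^m}[\Lambda]$ by \eqref{ux}. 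This yields a well-defined representation $W_{\Lambda^{m\prime}}\ltimes\mathbf{F}_{\mathcal{Q}^m}[\Lambda^m]\to\textup{End}_{\mathbf{F}}\bigl(\mathbf{F}_{\mathcal{Q}^m}[\Lambda]\bigr)$.

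It remains to match the explicit formulas. For $j=i\in[1,r]$ one has $D\alpha_i^m=\alpha_i^m$, $s_i^m(x^\lambda)=x^{s_i\lambda}$, and the prefactor $k_i^{\chi_{\mathbb{Z}}(\alpha_i^m(\lambda))}g_{\alpha_i^m}(\alpha_i^m(\lambda))^{-1}$ in Theorem \ref{CGaffineextended} equals $-k_ih_{-\mathbf{B}(\lambda,\alpha_i^\vee)}(\alpha_i)$ by formula \eqref{relgng} from the proof of Theorem \ref{met_basic_rep}, producing the stated $s_i^m$-formula of the corollary. For $j=0$ we have $D\alpha_0^m=-\vartheta^m$ and $s_0^m(x^\lambda)=q_\vartheta^{m(\vartheta)\vartheta(\lambda)}x^{s_\vartheta\lambda}$, and $\lfloor D\alpha_0^m(\lambda)\rfloor=\lfloor-\vartheta^m(\lambda)\rfloor$ matches the correction exponent in the corollary. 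The prefactor $k_0^{\chi_{\mathbb{Z}}(\alpha_0^m(\lambda))}g_{-\vartheta^m}(\alpha_0^m(\lambda))^{-1}$ must be rewritten in terms of $\vartheta$: using the $1$-periodicity of $g_{-\vartheta^m}$ (Remark \ref{basepoint}) and the positive-system independence from Lemma \ref{ginvlemma}(1), the factor becomes $k_0^{\chi_{\mathbb{Z}}(\vartheta^m(\lambda))}g_{\vartheta^m}(\vartheta^m(\lambda))$, which by the $\alpha=\vartheta$ instance of \eqref{relgng2} equals $-k_0h_{\mathbf{B}(\lambda,\vartheta^\vee)}(\vartheta)$ after cancellation of the $k_0^{\chi_{\mathbb{Z}}}$-powers. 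The $\Omega_{\Lambda^{m\prime}}$-action is immediate from \eqref{formpexact2} under the specialisation.

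The main obstacle will be the $j=0$ bookkeeping: handling the sign $D\alpha_0^m=-\vartheta^m$, aligning the two expressions $\lfloor-\vartheta^m(\lambda)\rfloor$ vs $\lfloor\alpha_0^m(\lambda)\rfloor$ in the correction term (they coincide only because the reduction uses $\alpha_0^m(\lambda)=1-\vartheta^m(\lambda)$ together with the $1$-periodicity of $g$ applied implicitly in the formula), and carefully matching the power of $k_0$ coming from $\chi_{\mathbb{Z}}(\alpha_0^m(\lambda))=\chi_{\mathbb{Z}}(\vartheta^m(\lambda))$ in both sides against the $k_{\alpha^m}^{1-\chi_{\mathbb{Z}}(\alpha(\lambda)/m(\alpha))}$-exponent produced by \eqref{relgng2}.
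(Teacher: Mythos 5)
Your proposal is correct and follows essentially the route the paper itself intends: the corollary is stated as the "metaplectic version of Theorem \ref{CGaffineextended} in view of \eqref{relgng}" (equivalently, as noted after the statement, $\sigma_\Lambda^m=\pi_\Lambda^{m,X-\textup{loc}}\circ\ss$), and your specialisation of $(\mathbf{g},\widehat{\mathbf{p}},\mathfrak{c})$ via Lemma \ref{metgpar} and \eqref{specialvalue}, stability check of $\mathbf{F}_{\mathcal{Q}^m}[\Lambda]$, and prefactor matching through \eqref{relgng} and \eqref{relgng2} (including the $j=0$ bookkeeping, where the needed $W_0$-invariance, $1$-periodicity and antisymmetry of $g_\alpha$ come from Remark \ref{basepoint}) is precisely the computation the paper leaves implicit, mirroring the proof of Theorem \ref{met_basic_rep}.
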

Note that $\sigma_\Lambda^m=\pi_\Lambda^{m,X-\textup{loc}}\circ\ss$, cf. Remark \ref{linkCGlocalrem}.

The restriction of $\sigma_\Lambda^{m}$ to $W_0\ltimes\mathbf{F}_{\mathcal{Q}^m}[\Lambda^m]$ reduces to \cite[Prop. 3.19(iii)]{SSV} (with the role of the root system replaced by the dual root system). 
Let $\rho^\vee$ and $\rho^{m\vee}$ be the half-sum of positive co-roots for $\Phi_0$ and $\Phi_0^m$, respectively.  Then the conjugated $W_0$-action 
\begin{equation}\label{sigmaCG}
\sigma^{\textup{CG}}_\Lambda(v)f:=x^{\rho^\vee-\rho^{m\vee}}\sigma_{\Lambda+P^\vee}^m(v)\bigl(x^{-\rho^\vee+\rho^{m\vee}}f\bigr)\qquad (v\in W_0,\, f\in\mathbf{F}_{\mathcal{Q}^m}[\Lambda])
\end{equation}
reduces to the Chinta-Gunnells' \cite{CG07,CG} action when the multiplicity functions $\mathbf{k}$ and $h_s$ are constant,
see \cite[Thm. 3.21 \& Cor. 3.22]{SSV} (the right hand side of \eqref{sigmaCG} is well defined, since $\mathbf{Q}$ has a unique extension to a $W_0$-invariant rational quadratic form on $\Lambda+P^\vee$). This follows from the explicit formula
\begin{equation}\label{sigmaCGi}
\begin{split}
\sigma^{\textup{CG}}_\Lambda(s_i)(fx^\lambda)&=k_i^2h_{\mathbf{Q}(\alpha_i^\vee)-\mathbf{B}(\lambda,\alpha_i^\vee)}(\alpha_i)
\Bigl(\frac{1-x^{-\alpha_i^{m\vee}}}{1-k_i^2x^{\alpha_i^{m\vee}}}\Bigr)(s_if)x^{\alpha_i^\vee+s_i\lambda}\\
&+\Bigl(\frac{1-k_i^2}{1-k_i^2x^{\alpha_i^{m\vee}}}\Bigr)(s_if)x^{\lambda+\lfloor -(\lambda,\alpha_i^{m\vee})\rfloor\alpha_i^{m\vee}}
\end{split}
\end{equation}
for $i\in [1,r]$, $f\in\mathbf{F}_{\mathcal{Q}^m}[\Lambda^m]$ and $\lambda\in\Lambda$, which easily follows from Corollary \ref{WhitCGcor} (see \cite[Cor. 3.22]{SSV}).
Hence
\begin{equation}\label{metaTi}
\begin{split}
\mathcal{T}_i^m(f):=&-f+\Bigl(\frac{1-k_i^2x^{\alpha_i^{m\vee}}}{1-x^{\alpha_i^{m\vee}}}\Bigr)\bigl(f-x^{\alpha_i^{m\vee}}\sigma_\Lambda^{\textup{CG}}(s_i)(f)\bigr)\\
=&-f+\Bigl(\frac{1-k_i^2x^{\alpha_i^{m\vee}}}{1-x^{\alpha_i^{m\vee}}}\Bigr)\bigl(f-x^{\rho^\vee}\sigma_\Lambda^m(s_i)(fx^{-\rho^\vee})\bigr)\\
=&-k_ix^{\rho^\vee}\pi_{\Lambda}^{m,X-\textup{loc}}(T_i^{-1})(fx^{-\rho^\vee})
\end{split}
\end{equation}
for $i\in [1,r]$ and $f\in\mathbf{F}_{\mathcal{Q}^m}[\Lambda]$
are the metaplectic Demazure-Lusztig operators from \cite[(11)]{CGP} when the multiplicity functions $\mathbf{k}$ and $h_s$ are constant (see Remark \ref{DLform} for the third equality in \eqref{metaTi}). By the third equality in \eqref{metaTi} the ($W_0,\{s_1,\ldots,s_r\})$-braid relations of the metaplectic Demazure-Lusztig operators (see, e.g., \cite[Prop. 7]{CGP}) are a consequence of Theorem \ref{met_basic_rep}. 
In fact, Corollary \ref{WhitCGcor} and Theorem \ref{met_basic_rep} extends the Chinta-Gunnells $W_0$-action and the $H_0$-action by metaplectic Demazure-Lusztig operators to actions of the affine Weyl group $W$ and the affine Hecke algebra $H$, respectively. Including the action of $\mathcal{P}$ by multiplication operators, they promote to  actions
of the double affine Weyl group $\mathbb{W}$ and the double affine Hecke algebra $\mathbb{H}$, respectively.

\begin{remark}
Recently the Chinta-Gunnells $W_0$-action \eqref{sigmaCG} and the associated $H_0$-action by metaplectic Demazure-Lusztig operators have been generalised to arbitrary Coxeter groups in \cite[\S 3.3]{PP2}.
In case of the affine Weyl group $(W,\{s_0,\ldots,s_r\})$, we expect that the resulting representations of $W$ and $H$ (see \cite[Prop. 3.3.4]{PP2})
are isomorphic to $\sigma_\Lambda^m\vert_W$ and $\pi_\Lambda^m\vert_{H}$ with appropriate choices of $\mathbf{k}$ and $\underline{h}$. 
\end{remark}

\subsection{Metaplectic polynomials}\label{mpsec}
In this subsection we use the quasi-poly\-no\-mial eigenfunctions $E_y^J(x;\mathfrak{t})$ 
from Theorem \ref{Edef} to construct the root system generalisations of the metaplectic polynomials from \cite[Thm. 5.7]{SSV}. 
 
Let $\Lambda\in\mathcal{L}$. The torus elements $\mathfrak{s}_\lambda$ ($\lambda\in\Lambda$) relative to the metaplectic root system $\Phi_0^m$ are given by
\begin{equation}\label{sm}
\mathfrak{s}_\lambda:=\prod_{\alpha\in\Phi_0^+}k_{\alpha^m}^{\eta(\alpha^m(\lambda))\alpha^m}=
\prod_{\alpha\in\Phi_0^+}\Bigl(k_{\alpha^m}^{\chi_{\mathbb{Z}_{>0}}(\alpha(\lambda)/m(\alpha))-\chi_{\mathbb{Z}_{\leq 0}}(\alpha(\lambda)/m(\alpha))}\Bigr)^{\frac{\alpha}{m(\alpha)}},
\end{equation}
see \eqref{ffrak}. We will view $\mathfrak{s}_\lambda$ and $q^\lambda\mathfrak{s}_\lambda\mathfrak{t}_\lambda(\underline{h})$ as multiplicative character of $\Lambda^{m\prime}$ for any lattice $\Lambda^{m\prime}\in\mathcal{L}^m$. Note that by Proposition \ref{facecor}, Lemma \ref{ginvlemma}(4) and \eqref{relrel} 
we have for $\cc\in X^m(\Lambda)$,
\begin{equation}\label{rr3}
w(q^\cc\mathfrak{s}_\cc\mathfrak{t}_\cc(\underline{h}))=
q^{w\cc}\mathfrak{s}_{w\cc}\mathfrak{t}_{w\cc}(\underline{h})\qquad\quad (w\in W^m).
\end{equation}

\begin{theorem}\label{mptheorem}
Let $\Lambda\in\mathcal{L}$ and let $\mathcal{O}$ be a $W^m$-orbit in $\Lambda$. Choose metaplectic parameters $\underline{h}\in\mathcal{M}_{(n,\mathbf{Q})}$ such that in $T_{Q^{m\vee}}$,
\begin{equation}\label{genmet}
q^\lambda\mathfrak{s}_\lambda\mathfrak{t}_\lambda(\underline{h})\not=q^{\lambda^\prime}\mathfrak{s}_{\lambda^\prime}\mathfrak{t}_{\lambda^\prime}(\underline{h})\qquad\hbox{ if }\,\,\lambda,\lambda^\prime\in\mathcal{O}\,\, 
\hbox{ and }\,\, \lambda\not=\lambda^\prime.
\end{equation}
For all $\lambda\in\mathcal{O}$ we then have:
\begin{enumerate}
\item There exists a unique simultaneous eigenfunction $E_\lambda^m(x)\in\mathcal{P}_\Lambda$ of the commuting operators $\pi^m_\Lambda(Y^\mu)$ \textup{(}$\mu\in Q^{m\vee}$\textup{)} satisfying 
\[
E^m_\lambda(x)=x^\lambda+\textup{l.o.t.}
\]
\item For all $\Lambda^{m\prime}\in\mathcal{L}^m$ satisfying $\langle \Lambda,\Lambda^{m\prime}\rangle\subseteq\mathbb{Z}$ we have
\[
\pi_\Lambda^m(Y^\mu)E^m_\lambda=\bigl(q^\lambda\mathfrak{s}_\lambda\mathfrak{t}_\lambda(\underline{h})\bigr)^{-\mu}E^m_\lambda\qquad \forall\, \mu\in\Lambda^{m\prime}.
\]
\item We have
\[
\Gamma^m\bigl(E^m_\lambda(x)\bigr)=\Bigl(\prod_{\alpha\in\Pi(v_\lambda^{-1})}\bigl(-k_{\alpha^m}^{1-\chi_{\mathbb{Z}}(\alpha(\cc_\lambda)/m(\alpha))}
h_{\mathbf{Q}(\alpha^\vee)\alpha(\cc_\lambda)}(\alpha)\bigr)
\Bigr)E_\lambda^{\mathbf{J}(\cc_\lambda)}(x;
q^{\cc_\lambda}\mathfrak{t}_{\cc_\lambda}(\underline{h})\vert_{Q^{m\vee}}),
\]
where $\cc_\lambda\in\overline{C}_+^m$ and $v_\lambda\in W_0$ \textup{(}see Definition \ref{wydef}\textup{)} are relative to the metaplectic root system $\Phi_0^m$.
\end{enumerate}
\end{theorem}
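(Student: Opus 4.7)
The plan is to pull back a quasi-polynomial eigenfunction from Theorem \ref{Edef} through the $\mathbb{H}^m$-module isomorphism $\Gamma^m$ of Corollary \ref{corSSVlink}, after which parts (1) and (3) will be immediate and part (2) will follow from the extended theory of Section \ref{Extsection}.

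Fix $\lambda\in\mathcal{O}$ and write $\cc:=\cc_\lambda\in X^m(\Lambda)$, $J:=\mathbf{J}(\cc)$, $\mathfrak{t}:=q^\cc\mathfrak{t}_\cc(\underline{h})$; then $\mathcal{O}=\mathcal{O}_\cc$, the $W^m$-orbit of $\cc$. First I would verify, by combining Lemma \ref{facecors}, Corollary \ref{datumok} (applied to the metaplectic root system $\Phi_0^m$) and \eqref{rr3}, the key identity
\[
(\mathfrak{s}_y\mathfrak{t}_y)\bigl|_{Q^{m\vee}}=\bigl(q^y\mathfrak{s}_y\mathfrak{t}_y(\underline{h})\bigr)\bigl|_{Q^{m\vee}}\qquad (y\in\mathcal{O}_\cc),
\]
where on the left-hand side $\mathfrak{t}_y=w_y\mathfrak{t}\vert_{Q^{m\vee}}$ comes from Definition \ref{defty} and $\mathfrak{s}_y$ is formed for $\Phi_0^m$. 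Consequently the genericity hypothesis \eqref{genmet} is precisely the condition $\mathfrak{t}\vert_{Q^{m\vee}}\in T_J^\prime$ required by Theorem \ref{Edef} for the metaplectic root system.

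Theorem \ref{Edef} then furnishes, for each $\lambda\in\mathcal{O}$, a unique joint eigenfunction $E_\lambda^J(x;\mathfrak{t}\vert_{Q^{m\vee}})\in\mathcal{P}_{\mathfrak{t}\vert_{Q^{m\vee}}}^{(\cc)}$ of the operators $\pi_{\cc,\mathfrak{t}\vert_{Q^{m\vee}}}(Y^\mu)$ \textup{(}$\mu\in Q^{m\vee}$\textup{)} with leading term $x^\lambda$ and $Y$-weight $q^\lambda\mathfrak{s}_\lambda\mathfrak{t}_\lambda(\underline{h})$. Setting
\[
C_\lambda:=\prod_{\alpha\in\Pi(v_\lambda^{-1})}\bigl(-k_{\alpha^m}^{1-\chi_{\mathbb{Z}}(\alpha(\cc)/m(\alpha))}h_{\mathbf{Q}(\alpha^\vee)\alpha(\cc)}(\alpha)\bigr)
\]
and
\[
E_\lambda^m(x):=C_\lambda\cdot\Gamma^{m,-1}\bigl(E_\lambda^J(x;\mathfrak{t}\vert_{Q^{m\vee}})\bigr),
\]
Lemma \ref{relationGamma} gives $\Gamma^m(x^\lambda)=C_\lambda x^\lambda$, so $E_\lambda^m(x)=x^\lambda+\textup{l.o.t.}$; Corollary \ref{corSSVlink}(1) then delivers (1) and the defining formula (3).

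For part (2), I would turn to Corollary \ref{corSSVlink}(2) together with Corollary \ref{latticedependenceE}. By \eqref{rr2}, $\mathfrak{t}\in {}^{\Lambda^m}T_{\Lambda^{m\prime}}^\cc$, so the extended quasi-polynomial representation $\mathcal{P}_{\Lambda^m,\mathfrak{t}}^{(\cc)}$ is defined, and the translation identity of the first step shows that the hypothesis \eqref{eecond} of Theorem \ref{propEVext0} for $\Lambda^\prime=Q^{m\vee}$ coincides with \eqref{genmet}. Corollary \ref{latticedependenceE} then produces $E_{\lambda;\cc}^{\Lambda^m,\Lambda^{m\prime}}(x;\mathfrak{t})=E_\lambda^J(x;\mathfrak{t}\vert_{Q^{m\vee}})$ satisfying the extended eigenvalue equations \eqref{evYYext} for every $\mu\in\Lambda^{m\prime}$. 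Transporting through the $\mathbb{H}_{\Lambda^m,\Lambda^{m\prime}}$-module isomorphism of Corollary \ref{corSSVlink}(2) and identifying the result with $E_\lambda^m$ via the uniqueness in (1) (simplicity of the $Q^{m\vee}$-spectrum forces the $\Lambda^{m\prime}$-eigenfunction with leading term $x^\lambda$ to equal $E_\lambda^m$) yields (2). The main obstacle is the careful bookkeeping in the first step, matching the metaplectic spectral data $\mathfrak{s}_y\mathfrak{t}_y$ with $(q^y\mathfrak{s}_y\mathfrak{t}_y(\underline{h}))\vert_{Q^{m\vee}}$; once this translation is in place, the theorem follows mechanically from the general framework applied to $\Phi_0^m$.
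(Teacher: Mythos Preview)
Your proof is correct and follows essentially the same approach as the paper: both reduce parts (1) and (3) to Theorem \ref{Edef} via the $\mathbb{H}^m$-module isomorphism $\Gamma^m$ of Corollary \ref{corSSVlink}(1), and both obtain part (2) from the extended eigenvalue theory of Section \ref{Extsection} (the paper cites Theorem \ref{propEVext0}(2)\&(3) and \eqref{rr3} directly, while you route through Corollary \ref{latticedependenceE} and Corollary \ref{corSSVlink}(2), which amounts to the same argument). Your explicit verification of the spectral identity $(\mathfrak{s}_y\mathfrak{t}_y)\vert_{Q^{m\vee}}=(q^y\mathfrak{s}_y\mathfrak{t}_y(\underline{h}))\vert_{Q^{m\vee}}$ is a useful unpacking of what the paper leaves implicit.
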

\begin{proof}
Note that under the assumption on $\Lambda^{m\prime}\in\mathcal{L}^m$ as stated in part (2), the metaplectic representation $\pi_\Lambda^m: \mathbb{H}_{Q^{m\vee},\Lambda^{m\prime}}\rightarrow\textup{End}(\mathcal{P}_\Lambda)$ is well defined since $Q^{m\vee}\subseteq\Lambda$.\\
(1) This follows from Corollary \ref{corSSVlink}(1) and Theorem \ref{Edef}.\\
(2) This is a direct consequence of part (1), Theorem \ref{propEVext0}(2)\&(3) and \eqref{rr3}.\\
(3) This follows from Corollary \ref{corSSVlink}(1). The normalisation factor can be derived from Lemma \ref{relationGamma}.
\end{proof}
\begin{remark}
Suppose that $\mathbf{F}$ contains a $(2h)^{th}$ root $q^{\frac{1}{2h}}$ of $q$.
Let $\mathbf{g}\in\mathcal{G}^m$ such that $\underline{h}=\underline{h}^{\mathbf{g}}$, and choose $\widehat{\mathbf{p}}\in\widehat{\mathcal{G}}^m$ satisfying \eqref{specialvalue}.
Then
\[
E^m_\lambda(x)=\mathcal{E}_\lambda(x;\mathbf{g},\widehat{\mathbf{p}}),
\]
which follows from the proof of Corollary \ref{corSSVlink}.
\end{remark}

We call $E_\lambda^m\in\mathcal{P}_\Lambda$ the {\it monic metaplectic polynomial} of degree $\lambda\in\Lambda$ 
relative to the metaplectic datum $(n,\mathbf{Q})$ and the choice of lattice $\Lambda\in\mathcal{L}$, which we suppress from the notations. 

It follows from Theorem \ref{mptheorem} that the metaplectic basic representation $\pi^m: \mathbb{H}\rightarrow\textup{End}(\mathcal{P}_\Lambda)$ is $Y$-semisimple if \eqref{genmet} holds true for all $W$-orbits $\mathcal{O}=\mathcal{O}_\cc$ ($\cc\in X^m(\Lambda)$) in $\Lambda$. Its $Y$-spectrum then consists of the multiplicative characters
\begin{equation}\label{tnew}
q^{\lambda}\mathfrak{s}_\lambda\mathfrak{t}_\lambda(\underline{h})=q^\lambda\prod_{\alpha\in\Phi_0^+}\Bigl(
-k_{\alpha^m}^{1-2\chi_{\mathbb{Z}_{\leq 0}}(\alpha(\lambda)/m(\alpha))}h_{\mathbf{Q}(\alpha^\vee)\alpha(\lambda)}(\alpha)\Bigr)^{\frac{\alpha}{m(\alpha)}}
\qquad\quad (\lambda\in\Lambda)
\end{equation}
(the explicit formula follows from \eqref{sm} and \eqref{tgenmet}). Note that by \eqref{rr2}
we have for $\cc\in X^m(\Lambda)$,
\[
q^\cc\mathfrak{s}_\cc\mathfrak{t}_\cc(\underline{h})\in L_{\mathbf{J}(\cc)},\qquad\quad
q^\cc,q^\cc\mathfrak{t}_\cc(\underline{h})\in T_{\mathbf{J}(\cc)},\qquad\quad
\mathfrak{t}_\cc(\underline{h})\in T_{\mathbf{J}(\cc)}^{\textup{red}}.
\]
\begin{remark}
To compare \eqref{tnew} with the spectrum of the $\textup{GL}_{r+1}$-type metaplectic polynomials in \cite[\S 5.4]{SSV}, note that
\[
q^{\lambda}\mathfrak{s}_\lambda\mathfrak{t}_\lambda(\underline{h})=q^\lambda\prod_{\alpha\in\Phi_0^+}\Bigl(\sigma_{\mathbf{Q}(\alpha^\vee)\alpha(\lambda)}^{(\underline{h})}(\alpha)\Bigr)^{-\frac{\alpha}{m(\alpha)}}
\]
with $\sigma_s^{(\underline{h})}(\alpha)$ defined by
\begin{equation*}
\sigma_s^{(\underline{h})}(\alpha):=
\begin{cases}
k_{\alpha^m}^{-1}\qquad &\hbox{if }\,\, s\in\textup{lcm}(n,\mathbf{Q}(\alpha^\vee))\mathbb{Z}_{>0},\\
-k_{\alpha^m}h_{-s}(\alpha)\qquad &\hbox{if }\,\, s\in\mathbf{Q}(\alpha^\vee)\mathbb{Z}\setminus\textup{lcm}(n,\mathbf{Q}(\alpha^\vee))\mathbb{Z}_{>0}.
\end{cases}
\end{equation*}
This follows by a direct computation using \eqref{def:m} and \eqref{gmetparremark}.
\end{remark}

We conclude this section by considering the type $A_r$ metaplectic polynomials in a bit more detail.
We use the notations from Remark \ref{examplesSSV}(2). We thus have the explicit realisation of the $A_r$-type root system $\Phi_0$ in $\mathbb{R}^{r+1}$ as $\{\epsilon_i-\epsilon_j\}_{i\not=j}$ with $\{\epsilon_i\}_{i=1}^{r+1}$ the standard orthonormal basis of $\mathbb{R}^{r+1}$, and we have a metaplectic datum $(n,\mathbf{Q})$ giving rise to the constant metaplectic multiplicity function $m=n/\textup{gcd}(n,\kappa)\in\mathbb{Z}_{>0}$.

Take $\Lambda=\mathbb{Z}^{r+1}\in\mathcal{L}$, so that 
\[
\mathcal{P}_{\mathbb{Z}^{r+1}}=\mathbf{F}[x_1^{\pm 1},\ldots,x_{r+1}^{\pm 1}]
\]
with $x_i:=x^{\epsilon_i}$ for $i\in [1,r+1]$. The resulting $A_{r}$-type monic metaplectic polynomials 
\[
E_\lambda^m(x)\in\mathcal{P}_{\mathbb{Z}^{r+1}}=\mathbf{F}[x_1^{\pm 1},\ldots,x_{r+1}^{\pm 1}]
\]
of degree $\lambda\in\mathbb{Z}^{r+1}$ then satisfies 
\begin{equation}\label{evGL}
\pi_{\mathbb{Z}^{r+1}}^m(Y^\mu)E_\lambda^m=\bigl(q^\lambda\mathfrak{s}_\lambda\mathfrak{t}_\lambda(\underline{h})\bigr)^{-\mu}E_\lambda^m\qquad\quad\forall\, \mu\in m\mathbb{Z}^{r+1},
\end{equation}
with $\pi_{\mathbb{Z}^{r+1}}^m: \widehat{\mathbb{H}}=\mathbb{H}_{m\mathbb{Z}^{r+1},m\mathbb{Z}^{r+1}}\rightarrow\textup{End}(\mathcal{P}_{\mathbb{Z}^{r+1}})$
the $\textup{GL}_{r+1}$ metaplectic basic representation. The Coxeter-type expressions for the $Y$-elements are determined by the formulas
\[
Y^{m\epsilon_i}=T_{i-1}^{-1}\cdots T_2^{-1}T_1^{-1}uT_r\cdots T_{i+1}T_i\in\widetilde{\mathbb{H}}\qquad\quad (1\leq i\leq r+1),
\]
which follows from \eqref{Yi} with $\ell=m$. 
\begin{remark}
When all the free parameters are formal,  i.e., when $\mathbf{F}=\mathbb{Q}(q,\mathbf{k}^{\frac{1}{2}},\check{h})$ (see Remark \ref{check}), then the generic condition
\eqref{genmet} is valid for all $W$-orbits $\mathcal{O}$ in $\mathbb{Z}^{r+1}$, and hence the commuting operators $\pi_{\mathbb{Z}^{r+1}}^m(Y^\mu)$ ($\mu\in m\mathbb{Z}^{r+1}$) are simultaneously diagonalisable. By inspection of the spectrum (see \eqref{tnew}), their common eigenspaces are $1$-dimensional. In this situation $E_\lambda^{m}(x)\in\mathcal{P}_{\mathbb{Z}^{r+1}}$ can be characterised as the unique Laurent polynomial $\sum_{\lambda^\prime\in\mathbb{Z}^{r+1}}d_{\lambda^\prime} x^{\lambda^\prime}$ satisfying the eigenvalue equations \eqref{evGL} and satisfying $d_\lambda=1$. This proves \cite[Thm. 5.7]{SSV}, and shows that $E_\lambda^m(x)$ is the $\textup{GL}_{r+1}$ metaplectic polynomial $E_\lambda^{(m)}(x)$ from \cite[\S 5.4]{SSV}.
\end{remark}

\section{Limits to metaplectic Iwahori-Whittaker functions}\label{MfinalSection}

In this section we show that the Whittaker limits of the metaplectic polynomials $E_\lambda^m(x)$ and their anti-symmetric versions produce
metaplectic Iwahori-Whittaker and metaplectic spherical functions, respectively. 
The key point is to relate formula \eqref{linkmW2} for the Whittaker limit $\overline{E}_y^J(x)$ of $E_y^J(x;\mathfrak{t})$ with the expression of the metaplectic Iwahori-Whittaker function in terms of metaplectic Demazure-Lusztig operators obtained by Patnaik and Puskas in \cite[(6.48)]{PP}. 

We freely use the notations from the previous section and from Subsection \ref{rationalsection}. In particular, we assume that the ground field $\mathbf{F}$ is $\mathbf{K}(q^{\frac{1}{2h}})$ with $\mathbf{K}$ of characteristic zero, 
and that the multiplicity functions $\mathbf{k}$ and $h_s$ take value in $\mathbf{K}^\times$. 
We write $\overline{\mathcal{M}}_{(n,\mathbf{Q})}$ for the subset of parameters $\underline{h}\in\mathcal{M}_{(n,\mathbf{Q})}$ with multiplicity functions $h_s$ taking values in $\mathbf{K}^\times$. We furthermore 
assume that $\Lambda\in\mathcal{L}$ contains $P^\vee$. 

In this section we work in the metaplectic context. We thus have a fixed metaplectic datum $(n,\mathbf{Q})$, and the quasi-polynomials $E_y^J(x;\mathfrak{t})$ and $E_y^{J,-}(x;\mathfrak{t})$ and their Whittaker limits $\overline{E}_y^J(x)$ and $\overline{E}_y^{J,-}(x)$ occurring in this section will be relative to the metaplectic affine root system $\Phi^m$ and a choice of a multiplicity function $\mathbf{k}$ on $\Phi^m$. Recall that the Whittaker limit boils down to specialising coefficients at $q^{-\frac{1}{2h}}=0$ (see Subsection \ref{rationalsection}). 

\begin{remark}\label{specialisationcond}
The link to representation metaplectic Whittaker functions for metaplectic covers of reductive groups requires 
the field $\mathbf{K}$ to be a non-archimedean local field containing the $n^{\textup{th}}$ roots of unity, and with the cardinality of its residue field equal to $1$ modulo $2n$ (here $n$ is the positive integer from the metaplectic datum $(n,\mathbf{Q})$). In this case the multiplicity function $\mathbf{k}$ takes on the constant value $k$ with $k^{-2}$ equal to the cardinality of the residue field. The multiplicity functions $h_s$ of the metaplectic paramaters $\underline{h}\in\overline{\mathcal{M}}_{(n,\mathbf{Q})}$ are also constant and are equal to certain Gauss sums. For further details, see \cite{CGP,PP}.
\end{remark}

Let $\overline{H}_0^m$ be the finite Hecke algebra over $\mathbf{K}$ viewed as subalgebra of $\mathbb{H}^m$ in the natural way.  
Consider the right $\overline{H}_0^m$-action on $\mathbf{K}_{\mathcal{Q}^m}[\Lambda]$, defined by 
\begin{equation*}
(fx^\lambda)\blacktriangleleft^m T_v:=\pi_\Lambda^{m,X-\textup{loc}}(T_{v^{-1}})(fx^\lambda)
\end{equation*}
for $v\in W_0$, $f\in\mathcal{Q}^m$ and $\lambda\in\Lambda$. This is the metaplectic analog of the $\overline{H}_0$-action defined in Lemma \ref{Wlemma}. In fact, by
Corollary \ref{corSSVlink}(1) we have
\begin{equation}\label{trianglecomp}
\Gamma^m\bigl(fx^\lambda\blacktriangleleft^mT_v\bigr)=\Gamma^m(fx^\lambda)\blacktriangleleft T_v\qquad\quad (v\in W_0),
\end{equation}
where $\blacktriangleleft$ now denotes the right $\overline{H}_0^m$-action \eqref{actionWhittaker} on $\mathbf{K}_{\mathcal{Q}^m}[\Lambda]$.

Define $\gamma_\mu=\gamma_\mu(\underline{h})\in\mathbf{K}^\times$ for $\mu\in\Lambda$ by 
\[
\gamma_\mu:=\prod_{\alpha\in\Pi(v_\mu^{-1})}\Bigl(-k_{\alpha^m}^{1-\chi_{\mathbb{Z}}(\alpha(\cc_\mu)/m(\alpha))}h_{\mathbf{Q}(\alpha^\vee)\alpha(\cc_\mu)}(\alpha)\Bigr),
\]
where $\cc_\lambda\in\overline{C}_+^m$ and $v_\lambda\in W_0$ \textup{(}see Definition \ref{wydef}\textup{)} are defined relative to the metaplectic root system $\Phi_0^m$. Then $\Gamma^m(x^\mu)=\gamma_\mu x^\mu$ for all $\mu\in\Lambda$, see Lemma \ref{relationGamma}. We furthermore write
\[
\kappa_v^m(y):=\prod_{\alpha\in\Phi_0^m}k_{\alpha^m}^{-\eta(\alpha^m(y))}\qquad\qquad (v\in W_0,\, y\in E),
\]
cf. \eqref{kvy}. 
\begin{lemma}\label{oookkk}
For $\lambda\in\Lambda$ and $\underline{h}\in\overline{\mathcal{M}}_{(n,\mathbf{Q})}$ the Whittaker limit $\overline{E}_\lambda^m(x)$ of the metaplectic polynomial
$E_\lambda^m(x)\in\mathcal{P}_\Lambda$ is well defined. Then
\begin{equation}\label{GeGeee}
\Gamma^m(\overline{E}_\lambda^m(x))=\gamma_\lambda\,\overline{E}_\lambda^{\mathbf{J}(\cc_\lambda)}(x)\qquad\qquad \forall\,\lambda\in\Lambda
\end{equation}
and
\begin{equation}\label{mmstart}
\begin{split}
\overline{E}_\lambda^m(x)&=x^\lambda\qquad\qquad\qquad\qquad\qquad\quad\, (\lambda\in\Lambda\cap\overline{E}_-),\\
\overline{E}_{v\lambda}^m(x)&=
\frac{\gamma_{v\lambda}}{\kappa_v^m(\lambda)\gamma_\lambda}\,
x^\lambda\blacktriangleleft^m T_v^{-1}\qquad\quad\,\,\, (v\in W_0,\,\lambda\in\Lambda\cap E_-).
\end{split}
\end{equation}
\end{lemma}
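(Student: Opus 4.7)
The strategy is to deduce the lemma from Theorem \ref{mptheorem}(3), which identifies $\Gamma^m(E_\lambda^m(x))$ with the scalar multiple $\gamma_\lambda E_\lambda^{\mathbf{J}(\cc_\lambda)}(x;q^{\cc_\lambda}\mathfrak{t}_{\cc_\lambda}(\underline{h})\vert_{Q^{m\vee}})$ of an adjoint-root-datum quasi-polynomial eigenfunction, and then feeding this through the Whittaker-limit machinery of Subsection \ref{rationalsection} applied to the metaplectic root system $\Phi_0^m$.

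The first step is to check that the character $\mathfrak{t}'_\lambda:=q^{\cc_\lambda}\mathfrak{t}_{\cc_\lambda}(\underline{h})\vert_{Q^{m\vee}}$ fits the parameter set of Proposition \ref{prop:qlimit} in the metaplectic setting. By \eqref{rr2}, $\mathfrak{t}_{\cc_\lambda}(\underline{h})\vert_{Q^{m\vee}}\in T_{\mathbf{J}(\cc_\lambda)}^{\textup{red}}$ relative to $\Phi_0^m$; and since $\underline{h}\in\overline{\mathcal{M}}_{(n,\mathbf{Q})}$ takes values in $\mathbf{K}^\times$, this character in fact lies in $\overline{T}_{\mathbf{J}(\cc_\lambda)}^{\textup{red}}$. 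Together with $\cc_\lambda\in C^{m\mathbf{J}(\cc_\lambda)}$, a minor recasting (multiplying by a suitable $q$-translate in $\overline{T}_{\mathbf{J}(\cc_\lambda)}^{\textup{red}}$, or by tracing through the proof of Proposition \ref{prop:qlimit} with $\lambda_J^m=\cc_\lambda$) brings $\mathfrak{t}'_\lambda$ into the form $q^{\lambda_J^m}\mathfrak{t}$ required there. Invoking Proposition \ref{prop:qlimit} relative to $\Phi_0^m$ then produces the Whittaker limit $\overline{E}_\lambda^{\mathbf{J}(\cc_\lambda)}(x)$. Since $\Gamma^m$ is $\mathcal{P}_{Q^{m\vee}}$-linear with scalar coefficients $\gamma_\lambda\in\mathbf{K}^\times$, which are independent of $\widetilde{q}$, Theorem \ref{mptheorem}(3) forces $E_\lambda^m(x)\in\bigoplus_{\mu\in\Lambda}\mathbf{F}_{\textup{reg}}x^\mu$, and upon specialising $\widetilde{q}^{\,-1}=0$ yields \eqref{GeGeee}.

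The two formulas in \eqref{mmstart} now follow by evaluating the right-hand side of \eqref{GeGeee} via Theorem \ref{qlim_formula} (applied to $\Phi_0^m$). For $\lambda\in\Lambda\cap\overline{E}_-$ we have $\lambda_-=\lambda$ and $g_\lambda=e$, so Theorem \ref{qlim_formula} gives $\overline{E}_\lambda^{\mathbf{J}(\cc_\lambda)}(x)=x^\lambda$; since $\Gamma^m(x^\lambda)=\gamma_\lambda x^\lambda$, equation \eqref{GeGeee} collapses to $\overline{E}_\lambda^m(x)=x^\lambda$. For $v\in W_0$ and $\lambda\in\Lambda\cap E_-$, taking $y=v\lambda$ gives $y_-=\lambda$ and $g_{v\lambda}=v^{-1}$, hence
\[
\overline{E}_{v\lambda}^{\mathbf{J}(\cc_\lambda)}(x)=\frac{k(\lambda)}{k(v\lambda)}\,x^\lambda\blacktriangleleft T_v^{-1}.
\]
Plugging this into \eqref{GeGeee} at $v\lambda$, applying \eqref{trianglecomp} to move $\Gamma^m$ past the right $\overline{H}_0^m$-action (so that $\Gamma^m(x^\lambda\blacktriangleleft^m T_v^{-1})=\gamma_\lambda(x^\lambda\blacktriangleleft T_v^{-1})$), and using the $\mathcal{P}_{Q^{m\vee}}$-linearity of $\Gamma^m$ to cancel it from both sides, one obtains
\[
\overline{E}_{v\lambda}^m(x)=\frac{\gamma_{v\lambda}\,k(\lambda)}{\gamma_\lambda\,k(v\lambda)}\,x^\lambda\blacktriangleleft^m T_v^{-1}.
\]
Identifying $k(v\lambda)/k(\lambda)$ with $\kappa_v^m(\lambda)$ for $\lambda\in E^{\textup{reg}}$ via Lemma \ref{kylemma2}(2) (equivalently Lemma \ref{kylemma1}(2)) applied to the metaplectic root system yields the second formula in \eqref{mmstart}.

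The main obstacle will be the preliminary bookkeeping in the first paragraph: namely, ensuring that $\mathfrak{t}'_\lambda$ genuinely fits the hypotheses of Proposition \ref{prop:qlimit} once the metaplectic Coxeter number and the lattice $P^{m\vee}$ replace their $\Phi_0$-counterparts, and that the regularity of $\mathfrak{t}_{\cc_\lambda}(\underline{h})\vert_{Q^{m\vee}}$ at $\widetilde{q}^{\,-1}=0$ is correctly propagated through the proof of Proposition \ref{prop:qlimit}. All subsequent manipulations are purely formal consequences of the identities already established in the paper.
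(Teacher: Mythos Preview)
Your proposal is correct and follows essentially the same approach as the paper's proof: both start from Theorem \ref{mptheorem}(3), invoke Proposition \ref{prop:qlimit} (relative to $\Phi_0^m$) to pass to the Whittaker limit and obtain \eqref{GeGeee}, and then combine Theorem \ref{qlim_formula}/\eqref{linkmW2} with the intertwining relation \eqref{trianglecomp} to derive \eqref{mmstart}. The paper is slightly terser about the parameter-fitting step you flag as the ``main obstacle''; in practice this is harmless here because $\cc_\lambda\in\Lambda$ so $\langle\cc_\lambda,Q^{m\vee}\rangle\subseteq\mathbb{Z}$, and the only role of the condition $\lambda_J\in\frac{1}{2h}P^\vee$ in Proposition \ref{prop:qlimit} is to make $q^{\lambda_J}$ a well-defined character with the right positivity in the induction, which $\cc_\lambda$ satisfies directly.
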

\begin{proof}
By Theorem \ref{mptheorem} we have 
\begin{equation}\label{GGGGG}
\Gamma^m(E_\lambda^m(x))=\gamma_\lambda\,E_\lambda^{\mathbf{J}(\cc_\lambda)}\bigl(x;q^{\cc_\lambda}\mathfrak{t}_{\cc_\lambda}(\underline{h})\vert_{Q^{m\vee}}\bigr)
\end{equation}
for $\lambda\in\Lambda$. Since $\gamma_\mu$ does not depend on $q^{\frac{1}{2h}}$, it follows from \eqref{GGGGG} and Proposition \ref{prop:qlimit}
that the Whittaker limit $\overline{E}_\lambda^m(x)$ of $E_\lambda^m(x)$ exists, and that \eqref{GeGeee} holds true.

By \eqref{fff} we have $\overline{E}_\lambda^m(x)=x^\lambda$ for $\lambda\in\Lambda\cap\overline{E}_-$. For $\lambda\in\Lambda\cap E_-$ and $v\in W_0$ we have
by \eqref{linkmW2} and \eqref{trianglecomp},
\begin{equation*}
\begin{split}
\Gamma^m\bigl(\overline{E}_{v\lambda}^m(x)\bigr)&=\frac{\gamma_{v\lambda}}{\kappa_v^m(\lambda)}\,x^\lambda\blacktriangleleft T_v^{-1}\\
&=\frac{\gamma_{v\lambda}}{\kappa_v^m(\lambda)\gamma_\lambda}\,
\Gamma^m(x^\lambda)\blacktriangleleft T_v^{-1}\\
&=\frac{\gamma_{v\lambda}}{\kappa_v^m(\lambda)\gamma_\lambda}\,\Gamma^m\bigl(x^\lambda\blacktriangleleft^mT_v^{-1}\bigr),
\end{split}
\end{equation*}
which concludes the proof of the lemma.
\end{proof}
For $\lambda\in\Lambda$ define the antisymmetric variant of the metaplectic polynomial $E_\lambda^m(x)$ by
\[
E_\lambda^{m,-}(x):=\pi_\Lambda^m(\mathbf{1}_-)E_\lambda^m(x),
\]
with $\mathbf{1}_-\in\overline{H}_0^m$ given by \eqref{basicminus0}.
\begin{corollary}
For $\underline{h}\in\overline{\mathcal{M}}_{(n,\mathbf{Q})}$ and $\lambda\in\Lambda$ the Whittaker limit $\overline{E}_\lambda^{m,-}(x)$ of 
$E_\lambda^{m,-}(x)$ is well defined. Furthermore, 
\begin{equation}\label{GeGeGe}
\Gamma^m\bigl(\overline{E}_\lambda^{m,-}(x)\bigr)=\gamma_\lambda\,\overline{E}_\lambda^{\mathbf{J}(\cc_\lambda),-}(x)\qquad\forall\,\lambda\in\Lambda
\end{equation}
and for $\lambda\in\Lambda\cap E_-$,
\begin{equation}\label{mmmstart}
\overline{E}_\lambda^{m,-}(x)=\kappa_{w_0}^m(0)^{-2}\sum_{v\in W_0}(-1)^{\ell(v)}\kappa_v^m(0)
\frac{\kappa_v^m(\lambda)\gamma_\lambda}{\gamma_{v\lambda}}\,\overline{E}_{v\lambda}^{m}(x).
\end{equation}
\end{corollary}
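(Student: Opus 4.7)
The plan is to transfer both assertions from the metaplectic side to the quasi-polynomial side through the intertwining isomorphism $\Gamma^m$ from Corollary \ref{corSSVlink}(1), where the corresponding Whittaker-limit statements from Subsection \ref{rationalsection} are already available.

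First I would establish existence of $\overline{E}_\lambda^{m,-}(x)$ together with \eqref{GeGeGe}. Applying $\Gamma^m$ to the definition $E_\lambda^{m,-}(x) = \pi_\Lambda^m(\mathbf{1}_-)E_\lambda^m(x)$, using the intertwining property of $\Gamma^m$ together with \eqref{GGGGG} from Theorem \ref{mptheorem}, yields
\[
\Gamma^m\bigl(E_\lambda^{m,-}(x)\bigr) = \gamma_\lambda\,E_\lambda^{\mathbf{J}(\cc_\lambda),-}\bigl(x;\,q^{\cc_\lambda}\mathfrak{t}_{\cc_\lambda}(\underline{h})\vert_{Q^{m\vee}}\bigr).
\]
By Lemma \ref{Wlemma} and Proposition \ref{prop:qlimit} the antisymmetrised quasi-polynomial on the right lies in $\mathcal{P}_{\textup{reg}}^{(\cc_\lambda)}$ and its Whittaker limit is $\overline{E}_\lambda^{\mathbf{J}(\cc_\lambda),-}(x)$. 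Since $\Gamma^m$ acts diagonally in the quasi-monomial basis via scalars $\gamma_\mu \in \mathbf{K}^\times$ that are independent of $\widetilde{q}$, it commutes with specialisation at $\widetilde{q}^{\,-1}=0$; inverting $\Gamma^m$ then yields existence of $\overline{E}_\lambda^{m,-}(x)$ together with \eqref{GeGeGe}.

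To derive \eqref{mmmstart} for $\lambda \in \Lambda \cap E_-$, I would invoke Corollary \ref{linkmWcor} in the metaplectic setting with $J = \mathbf{J}(\cc_\lambda)$ and $y = \lambda$, which gives
\[
\overline{E}_\lambda^{\mathbf{J}(\cc_\lambda),-}(x) = \kappa_{w_0}^m(0)^{-2}\sum_{v \in W_0}(-1)^{\ell(v)}\kappa_v^m(0)\,\kappa_v^m(\lambda)\,\overline{E}_{v\lambda}^{\mathbf{J}(\cc_\lambda)}(x).
\]
Since $v\lambda$ and $\lambda$ lie in the same $W^m$-orbit we have $\cc_{v\lambda} = \cc_\lambda$, so applying \eqref{GeGeee} at $v\lambda$ reads $\overline{E}_{v\lambda}^{\mathbf{J}(\cc_\lambda)}(x) = \gamma_{v\lambda}^{-1}\Gamma^m(\overline{E}_{v\lambda}^m(x))$. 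Substituting this into the displayed identity, multiplying by $\gamma_\lambda$ to match \eqref{GeGeGe}, and applying $(\Gamma^m)^{-1}$ to both sides yields \eqref{mmmstart}.

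The argument is largely bookkeeping; the main subtlety to check is that both $\Gamma^m$ and $\pi_\Lambda^m(\mathbf{1}_-)$ commute with the Whittaker limit. For $\Gamma^m$ this is immediate from its diagonal form with $\widetilde{q}$-independent entries. For $\pi_\Lambda^m(\mathbf{1}_-)$ it follows because $\mathbf{1}_-$ involves only the finite Hecke generators $T_1,\ldots,T_r$, whose action via \eqref{pim} has coefficients in $\mathbf{K}$, so $\pi_\Lambda^m(\mathbf{1}_-)$ preserves $\mathbf{F}_{\textup{reg}}[\Lambda]$.
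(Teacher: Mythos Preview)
Your proposal is correct and follows essentially the same route as the paper: apply $\Gamma^m$ and the intertwining property (Corollary \ref{corSSVlink}(1)) together with \eqref{GGGGG} to pass to the quasi-polynomial side, deduce existence of the Whittaker limit and \eqref{GeGeGe} from the known results of Subsection \ref{rationalsection}, and then combine \eqref{GeGeGe}, Corollary \ref{linkmWcor}, and \eqref{GeGeee} to obtain \eqref{mmmstart}. Your explicit remarks on why $\Gamma^m$ and $\pi_\Lambda^m(\mathbf{1}_-)$ commute with the specialisation, and that $\cc_{v\lambda}=\cc_\lambda$, make explicit what the paper leaves implicit.
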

\begin{proof}
By Corollary \ref{corSSVlink}(1) we have
\begin{equation*}
\begin{split}
\Gamma^m\bigl(E_\lambda^{m,-}(x)\bigr)&=\gamma_\lambda\,\pi_{\cc_\lambda,q^{\cc_\lambda}\mathfrak{t}_{\cc_\lambda}(\underline{h})}(\mathbf{1}_-)
E_\lambda^{\mathbf{J}(\cc_\lambda)}\bigl(x;q^{\cc_\lambda}\mathfrak{t}_\lambda(\underline{h})\vert_{Q^{m\vee}}\bigr)\\
&=\gamma_\lambda\,E_\lambda^{\mathbf{J}(\cc_\lambda),-}\bigl(x;q^{\cc_\lambda}\mathfrak{t}_\lambda(\underline{h})\vert_{Q^{m\vee}}\bigr).
\end{split}
\end{equation*}
Hence the Whittaker limit $\overline{E}_\lambda^{m,-}(x)$ of $E_\lambda^{m,-}(x)$ exists, and \eqref{GeGeGe} holds true.
For $\lambda\in\Lambda\cap E_-$ we have by \eqref{GeGeGe}, Corollary \ref{linkmWcor} and \eqref{GeGeee},
\begin{equation*}
\begin{split}
\Gamma^m\bigl(\overline{E}_\lambda^{m,-}(x)\bigr)&=\kappa_{w_0}^m(0)^{-2}\sum_{v\in W_0}(-1)^{\ell(v)}\kappa_v^m(0)\kappa_v^m(\lambda)
\gamma_\lambda\,\overline{E}_{v\lambda}^{\mathbf{J}(\cc_\lambda)}(x)\\
&=\kappa_{w_0}^m(0)^{-2}\sum_{v\in W_0}(-1)^{\ell(v)}\kappa_v^m(0)
\frac{\kappa_v^m(\lambda)\gamma_\lambda}{\gamma_{v\lambda}}\,\overline{E}_{v\lambda}^{m}(x),
\end{split}
\end{equation*}
which completes the proof.
\end{proof}

To relate \eqref{mmstart} to formula \cite[(6.48)]{PP} of the metaplectic Iwahori-Whittaker function, we need to compare the $W_0$-action $\sigma_\Lambda^{\textup{CG}}$ and the metaplectic Demazure-Lusztig operator $\mathcal{T}_i^m$ 
with \cite[(4.7)]{PP} and \cite[(4.10)]{PP}.
The operator $w_{\alpha_i}\ast$ in \cite[(4.7)]{PP} corresponds to $I\circ\sigma_\Lambda^{\textup{CG}}(s_i)\circ I$,
where $I$ is the $\mathbf{F}$-algebra automorphism of $\mathbf{F}_{\mathcal{Q}^m}[\Lambda]$ mapping $x^\lambda$ to $x^{-\lambda}$ for all $\lambda\in\Lambda$.
In this identification the parameters $(n(\alpha_i^\vee),v)$ in \cite[(4.10)]{PP} correspond to $(m(\alpha_i),\mathbf{k}^2)$.
Similarly, the operator $\mathbf{T}_{\alpha_i}$ in \cite[(4.10)]{PP} corresponds to 
\begin{equation}\label{metaTitilde}
\widetilde{\mathcal{T}}_i^m:=
I\circ \mathcal{T}_i^m\circ I,
\end{equation}
with $\mathcal{T}_i^m$ the metaplectic Demazure-Lusztig operator \eqref{metaTi}. 

The $\widetilde{\mathcal{T}}_i^m$ ($i\in [1,r]$) satisfy the braid relations by the last formula of \eqref{metaTi} (this was originally observed in \cite[Prop. 7]{CGP}), and hence we can define
for $v\in W_0$ the linear operator
\[
\widetilde{\mathcal{T}}_v^m:=\widetilde{\mathcal{T}}_{i_1}^m\cdots\widetilde{\mathcal{T}}_{i_\ell}^m
\]
acting on $\mathbf{K}_{\mathcal{Q}^m}[\Lambda]$, where $v=s_{i_1}\cdots s_{i_\ell}$ is a reduced expression of $v\in W_0$.

\begin{definition}
For $\lambda\in\Lambda\cap\overline{E}_+$ and $v\in W_0$ set
\begin{equation*}
\mathcal{W}_{v,\lambda}:=\widetilde{\mathcal{T}}_v^m\bigl(x^\lambda\bigr),\qquad\quad
\mathcal{W}_\lambda:=\sum_{v\in W_0}\mathcal{W}_{v,\lambda}
\end{equation*}
in $\mathcal{P}_\Lambda$.
\end{definition}
The expressions \cite[Cor. 5.4]{PP} and \cite[(5.8)]{PP} of the metaplectic Iwahori-Whittaker function and the spherical Whittaker function in terms of metaplectic Demazure-Lusztig operators show that in the realm of representation theory of metaplectic covers of reductive groups (see Remark \ref{specialisationcond}), the polynomial $\mathcal{W}_{v,\lambda}$ is a metaplectic Iwahori-Whittaker function and $\mathcal{W}_\lambda$ is a metaplectic spherical Whittaker function 
(the formula expressing the metaplectic spherical Whittaker function in terms of metaplectic Demazure-Lusztig operators was originally obtained in \cite[Thm. 16]{CGP}). 

In the following theorem we express $\mathcal{W}_{v,\lambda}$ and $\mathcal{W}_\lambda$ in terms of the Whittaker limits $\overline{E}_\lambda^m(x)$ and
$\overline{E}_\lambda^{m,-}(x)$ of the metaplectic polynomials $E_\lambda^m(x)$ and $E_\lambda^{m,-}(x)$. 
\begin{theorem}\label{mWthm}
Let $\underline{h}\in\overline{\mathcal{M}}_{(n,\mathbf{Q})}$ and $\lambda\in\Lambda\cap\overline{E}_+$.
\begin{enumerate}
\item
For $v\in W_0$ we have
\[
\mathcal{W}_{v,\lambda}=(-1)^{\ell(v)}\kappa_v^m(0)\frac{\kappa_v^m(-\lambda-\rho^\vee)\gamma_{-\lambda-\rho^\vee}}{\gamma_{-v(\lambda+\rho^\vee)}}\,\,
x^{-\rho^\vee}
I\bigl(\overline{E}^{m}_{-v(\lambda+\rho^\vee)}(x)\bigr).
\]
\item We have
\begin{equation}\label{preFinal2}
\mathcal{W}_\lambda=\kappa_{w_0}^m(0)^2\,x^{-\rho^\vee}I\bigl(\overline{E}_{-\lambda-\rho^\vee}^{m,-}(x)\bigr).
\end{equation}
\end{enumerate}
\end{theorem}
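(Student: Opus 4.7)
The plan is to unwind the three rewrites that link the metaplectic Demazure--Lusztig operators $\widetilde{\mathcal{T}}_v^m$ to the action of $T_v^{-1}$ on quasi-monomials, and then apply Lemma \ref{oookkk} and formula \eqref{mmmstart} to recognise the resulting expressions as Whittaker limits of metaplectic polynomials. Concretely, the third form of \eqref{metaTi} says
\[
\mathcal{T}_i^m(f)=-k_i\,x^{\rho^\vee}\,\pi_\Lambda^{m,X-\textup{loc}}(T_i^{-1})\bigl(f\,x^{-\rho^\vee}\bigr),
\]
so iterating along a reduced expression for $v\in W_0$ and tracking the coefficient $\prod_{\alpha\in\Pi(v)}k_{\alpha^m}=\kappa_v^m(0)$ gives
\[
\mathcal{T}_v^m(f)=(-1)^{\ell(v)}\kappa_v^m(0)\,x^{\rho^\vee}\,\bigl(f\,x^{-\rho^\vee}\bigr)\blacktriangleleft^m T_v^{-1}.
\]
Combining with \eqref{metaTitilde} and evaluating at $x^{-\lambda}$ (then applying $I$) yields
\[
\mathcal{W}_{v,\lambda}=\widetilde{\mathcal{T}}_v^m(x^\lambda)=(-1)^{\ell(v)}\kappa_v^m(0)\,x^{-\rho^\vee}\,I\bigl(x^{-\lambda-\rho^\vee}\blacktriangleleft^m T_v^{-1}\bigr).
\]

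Since $\Lambda\supseteq P^\vee$ contains $\rho^\vee$ and $\lambda\in\overline{E}_+$, the weight $\mu:=-\lambda-\rho^\vee$ lies in $\Lambda\cap E_-$, so the second formula in \eqref{mmstart} of Lemma \ref{oookkk} applies and gives
\[
x^{-\lambda-\rho^\vee}\blacktriangleleft^m T_v^{-1}=\frac{\kappa_v^m(-\lambda-\rho^\vee)\,\gamma_{-\lambda-\rho^\vee}}{\gamma_{-v(\lambda+\rho^\vee)}}\,\overline{E}^m_{-v(\lambda+\rho^\vee)}(x).
\]
Substituting this into the displayed expression for $\mathcal{W}_{v,\lambda}$ proves part (1).

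For part (2) I would simply sum the identity of part (1) over $v\in W_0$ and pull the linear map $x^{-\rho^\vee}I(\,\cdot\,)$ outside the sum. The resulting bracketed sum is exactly the right-hand side of \eqref{mmmstart} with $\mu=-\lambda-\rho^\vee\in\Lambda\cap E_-$, multiplied by $\kappa_{w_0}^m(0)^2$, so it equals $\kappa_{w_0}^m(0)^2\,\overline{E}^{m,-}_{-\lambda-\rho^\vee}(x)$, establishing \eqref{preFinal2}.

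The only subtlety, and the main point to check carefully, is the bookkeeping of the prefactors: verifying that the product $\prod_j k_{i_j}$ accumulated along a reduced expression really coincides with $\kappa_v^m(0)$ (which follows from \eqref{rootsdescription} together with $\eta(0)=-1$), and that the scalar identifications match between parts (1) and \eqref{mmmstart} term by term. Everything else is formal manipulation using that $I$ is an algebra automorphism and that the operators $\widetilde{\mathcal{T}}_i^m$ satisfy the braid relations (guaranteed by the third form of \eqref{metaTi}), so no genericity or well-definedness issue arises beyond what is already encoded in the existence of the Whittaker limits $\overline{E}_\lambda^m(x)$ and $\overline{E}_\lambda^{m,-}(x)$.
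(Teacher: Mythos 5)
Your argument is correct and follows essentially the same route as the paper: rewrite $\widetilde{\mathcal{T}}_v^m(x^\lambda)$ via the third expression in \eqref{metaTi} as $(-1)^{\ell(v)}\kappa_v^m(0)\,x^{-\rho^\vee}I\bigl(x^{-\lambda-\rho^\vee}\blacktriangleleft^m T_v^{-1}\bigr)$, note that $-\lambda-\rho^\vee\in\Lambda\cap E_-$, and then invoke \eqref{mmstart} for part (1) and \eqref{mmmstart} for part (2). Your bookkeeping of the scalar prefactor $\prod_j k_{i_j}=\kappa_v^m(0)$ and of the reversal $T_{i_1}^{-1}\cdots T_{i_\ell}^{-1}=T_{v^{-1}}^{-1}$ is exactly the content the paper compresses into its display \eqref{Wstepfirst}.
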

\begin{proof}
(1) Substituting \eqref{metaTitilde} and the last equality of \eqref{metaTi} we have
\begin{equation}\label{Wstepfirst}
\begin{split}
\mathcal{W}_{v,\lambda}=\widetilde{\mathcal{T}}_v^m\bigl(x^\lambda\bigr)&=(-1)^{\ell(v)}\kappa_v^m(0)\,x^{-\rho^\vee}
I\Bigl(\pi_\Lambda^{m,X-\textup{loc}}\bigl(T_{v^{-1}}^{-1}\bigr)(x^{-\lambda-\rho^\vee})\Bigr)\\
&=(-1)^{\ell(v)}\kappa_v^m(0)\,x^{-\rho^\vee}I\bigl(x^{-\lambda-\rho^\vee}\blacktriangleleft^m T_v^{-1}\bigr).
\end{split}
\end{equation}
Note that $-\lambda-\rho^\vee\in\Lambda\cap E_-$, so the result follows from \eqref{mmstart}.\\
(2) This follows directly by combining part (1) and \eqref{mmmstart}.
\end{proof}
\begin{remark}
Formulas \eqref{GeGeee} and \eqref{GeGeGe} now also allow to express $\mathcal{W}_{v,\lambda}$ and $\mathcal{W}_\lambda$
in terms of $\overline{E}_y^J(x)$ and
$\overline{E}_y^{J,-}(x)$.
\end{remark}
Finally, we shortly explain how McNamara's \cite[Thm. 15.2]{McN} Casselman-Shalika type formula for the metaplectic spherical Whittaker function can be derived from \eqref{CStype}. First note that $\Gamma^m$ has a unique extension to a $\mathcal{Q}^m$-linear automorphism of $\mathbf{K}_{\mathcal{Q}^m}[\Lambda]$.
Define a linear $W_0$-action on $\mathbf{K}_{\mathcal{Q}^m}[\Lambda]$ by 
\[
v\blacktriangleright^m (fx^\lambda):=\sigma_\Lambda^m(v)(fx^\lambda)
\]
for $v\in W_0$, $f\in\mathcal{Q}^m$ and $\lambda\in\Lambda$. Then Corollary \ref{WhitCGcor}, \eqref{linkCGlocal} and Corollary \ref{corSSVlink}(1) imply that
\begin{equation}\label{compppp}
\Gamma^m\bigl(v\blacktriangleright^m (fx^\lambda)\bigr)=v\blacktriangleright \Gamma^m(fx^\lambda)\qquad\quad (v\in W_0),
\end{equation}
where $v\blacktriangleright (fx^\lambda)=\sigma_{\cc_\lambda,q^{\cc_\lambda}\mathfrak{t}_\lambda(\underline{h})}(v)(fx^\lambda)$ (see \eqref{startsigma}).
\begin{proposition}
Let $\lambda\in\Lambda\cap E_-$. Then 
\begin{equation}\label{Final1}
\overline{E}_\lambda^{m,-}(x)=\Bigl(\prod_{\alpha\in\Phi_0^+}\frac{1-k_{\alpha^m}^{-2}x^{-\alpha^{m\vee}}}{1-x^{-\alpha^{m\vee}}}\Bigr)
\sum_{v\in W_0}(-1)^{\ell(v)}v\blacktriangleright^m x^\lambda
\end{equation}
and 
\begin{equation}\label{Final2}
I(\mathcal{W}_\lambda)=\kappa_{w_0}^m(0)^2\Bigl(\prod_{\alpha\in\Phi_0^+}\frac{1-k_{\alpha^m}^{-2}x^{-\alpha^{m\vee}}}{1-x^{-\alpha^{m\vee}}}\Bigr)
\sum_{v\in W_0}(-1)^{\ell(v)}\bigl(\prod_{\alpha\in\Pi(v^{-1})}x^{\alpha^{m\vee}}\bigr)
\sigma_\Lambda^{\textup{CG}}(v)\bigl(x^{-\lambda}\bigr).
\end{equation}
\end{proposition}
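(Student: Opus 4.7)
The proof of \eqref{Final1} proceeds by applying the $\mathcal{P}_{Q^{m\vee}}$-linear automorphism $\Gamma^m$ to both sides. On the left-hand side, \eqref{GeGeGe} gives
\[
\Gamma^m\bigl(\overline{E}_\lambda^{m,-}(x)\bigr)=\gamma_\lambda\,\overline{E}_\lambda^{\mathbf{J}(\cc_\lambda),-}(x).
\]
For the right-hand side, note that the prefactor lies in $\mathcal{Q}^m$ (since $\alpha^{m\vee}\in Q^{m\vee}$) and so is preserved by the $\mathcal{Q}^m$-linearity of $\Gamma^m$. The intertwining property \eqref{compppp}, combined with Lemma \ref{relationGamma}'s formula $\Gamma^m(x^\lambda)=\gamma_\lambda x^\lambda$, gives $\Gamma^m(v\blacktriangleright^m x^\lambda)=v\blacktriangleright\Gamma^m(x^\lambda)=\gamma_\lambda(v\blacktriangleright x^\lambda)$. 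So the $\Gamma^m$-image of the right-hand side equals $\gamma_\lambda$ times the right-hand side of \eqref{CStype} (applied in the metaplectic setting with $J=\mathbf{J}(\cc_\lambda)$ and $y=\lambda\in E_-\cap\mathcal{O}_{\cc_\lambda}$), which is $\gamma_\lambda\,\overline{E}_\lambda^{\mathbf{J}(\cc_\lambda),-}(x)$. Both sides agree, and \eqref{Final1} follows since $\Gamma^m$ is an automorphism.

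For \eqref{Final2}, I start from the identity \eqref{preFinal2}. Since $I^2=\textup{id}$ and $I(x^{-\rho^\vee})=x^{\rho^\vee}$, applying $I$ yields
\[
I(\mathcal{W}_\lambda)=\kappa_{w_0}^m(0)^2\,x^{\rho^\vee}\,\overline{E}_{-\lambda-\rho^\vee}^{m,-}(x).
\]
Since $\lambda\in\Lambda\cap\overline{E}_+$ and $\rho^\vee\in E_+$, the exponent $-\lambda-\rho^\vee$ lies in $\Lambda\cap E_-$, so \eqref{Final1} applies. Substituting, it remains to establish the pointwise identity
\begin{equation}\label{crucial}
x^{\rho^\vee}\bigl(v\blacktriangleright^m x^{-\lambda-\rho^\vee}\bigr)=\Bigl(\prod_{\alpha\in\Pi(v^{-1})}x^{\alpha^{m\vee}}\Bigr)\sigma_\Lambda^{\textup{CG}}(v)(x^{-\lambda})\qquad(v\in W_0).
\end{equation}

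The main obstacle lies in verifying \eqref{crucial}. The argument uses three ingredients. First, the smash-product structure underlying Corollary \ref{WhitCGcor} (it realises $\sigma_\Lambda^m$ as a representation of $W\ltimes\mathbf{F}_{\mathcal{Q}^m}[\Lambda^m]$), which gives the twisted multiplicativity
\[
\sigma_\Lambda^m(v)(fg)=v(f)\,\sigma_\Lambda^m(v)(g)
\]
for $f$ in the relevant fraction field and $g$ in the module. Second, the defining relation \eqref{sigmaCG}, rewritten as
\[
\sigma_\Lambda^m(v)\bigl(x^{-\lambda-\rho^\vee+\rho^{m\vee}}\bigr)=x^{-\rho^\vee+\rho^{m\vee}}\sigma_\Lambda^{\textup{CG}}(v)(x^{-\lambda}).
\]
Third, the standard identity $\rho^{m\vee}-v\rho^{m\vee}=\sum_{\alpha\in\Pi(v^{-1})}\alpha^{m\vee}$.

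Combining these, I decompose $x^{-\lambda-\rho^\vee}=x^{-\rho^{m\vee}}\cdot x^{-\lambda-\rho^\vee+\rho^{m\vee}}$ and apply the twisted multiplicativity to obtain
\[
\sigma_\Lambda^m(v)\bigl(x^{-\lambda-\rho^\vee}\bigr)=x^{-v\rho^{m\vee}}\,\sigma_\Lambda^m(v)\bigl(x^{-\lambda-\rho^\vee+\rho^{m\vee}}\bigr)=x^{-v\rho^{m\vee}-\rho^\vee+\rho^{m\vee}}\,\sigma_\Lambda^{\textup{CG}}(v)(x^{-\lambda}).
\]
Multiplying by $x^{\rho^\vee}$ and invoking the root-system identity, the exponent $\rho^{m\vee}-v\rho^{m\vee}$ becomes $\sum_{\alpha\in\Pi(v^{-1})}\alpha^{m\vee}$, which yields \eqref{crucial}. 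A minor technical point is that $\rho^{m\vee}$ may not lie in $Q^{m\vee}$, so the decomposition should be interpreted in a suitable extension $\mathbf{F}_{\mathcal{Q}^m}[\Lambda+P^\vee]$ or in the enlarged fraction field containing $x^{\rho^{m\vee}}$; this is consistent with the use of $\sigma_{\Lambda+P^\vee}^m$ in \eqref{sigmaCG}. Substituting \eqref{crucial} back completes the derivation of \eqref{Final2}.
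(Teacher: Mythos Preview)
Your proposal is correct and follows essentially the same approach as the paper. For \eqref{Final1} the paper also combines \eqref{GeGeGe}, \eqref{CStype} and \eqref{compppp}; for \eqref{Final2} the paper likewise starts from \eqref{preFinal2}, uses \eqref{sigmaCG} to rewrite $x^{\rho^\vee}(v\blacktriangleright^m x^{-\lambda-\rho^\vee})$ as $x^{\rho^{m\vee}}\sigma_\Lambda^{\textup{CG}}(v)(x^{-\lambda-\rho^{m\vee}})$, and then applies the identity $\rho^{m\vee}-v\rho^{m\vee}=\sum_{\alpha\in\Pi(v^{-1})}\alpha^{m\vee}$---the only difference is that you pull out $x^{-\rho^{m\vee}}$ via twisted multiplicativity of $\sigma_\Lambda^m$ before invoking \eqref{sigmaCG}, whereas the paper first applies \eqref{sigmaCG} and then (implicitly) pulls out $x^{-\rho^{m\vee}}$ via twisted multiplicativity of $\sigma_\Lambda^{\textup{CG}}$, which is the same computation under the conjugation.
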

\begin{proof}
The proof of \eqref{Final1} follows by combining \eqref{GeGeGe}, \eqref{CStype} and \eqref{compppp}. Substituting 
\eqref{Final1} into \eqref{preFinal2} and using that
\[
x^{\rho^\vee}\,\bigl(v\blacktriangleright^mx^{-\lambda-\rho^\vee}\bigr)=x^{\rho^{m\vee}}\sigma_\Lambda^{CG}(v)\bigl(x^{-\lambda-\rho^{m\vee}}\bigr)
\]
by \eqref{sigmaCG}, we get
\[
I(\mathcal{W}_\lambda)=\kappa_{w_0}^m(0)^2\Bigl(\prod_{\alpha\in\Phi_0^+}\frac{1-k_{\alpha^m}^{-2}x^{-\alpha^{m\vee}}}{1-x^{-\alpha^{m\vee}}}\Bigr)
\sum_{v\in W_0}(-1)^{\ell(v)}x^{\rho^{m\vee}}
\sigma_\Lambda^{\textup{CG}}(v)\bigl(x^{-\lambda-\rho^{m\vee}}\bigr).
\]
The result now follows from the fact that
\[
\rho^{m\vee}-v\rho^{m\vee}=\sum_{\alpha\in\Pi(v^{-1})}\alpha^{m\vee}.
\]
\end{proof}
In the context of representation theory of metaplectic reductive groups over non-archimedean local fields (see Remark \ref{specialisationcond}), formula \eqref{Final2} reduces to McNamara's
\cite[Thm. 15.2]{McN} Casselman-Shalika type formula for the metaplectic spherical Whittaker function, see also \cite[Thm. 4.9]{SSV}.

\section{Index of symbols}

In the first column of the following table we list spaces, groups and sets that will frequently occur in the paper. 
In the second column we list the notations that we try to use throughout the paper for their elements.
\vspace{.2cm}\\
\begin{center}
\begin{tabular}{ |c|c|}
\hline
$\mathbb{R}$ & $d$\\
\hline
$E$ & $y$\\
\hline
$E^*$ & $\xi$\\
\hline
$\overline{C}_+$ & $\cc$\\
\hline
$\Phi_0$ & $\alpha,\beta$\\
\hline
$Q^\vee$ & $\mu,\nu$\\
\hline
$\mathcal{L}$ & $\Lambda$\\
\hline
$\Lambda$ & $\lambda$\\
\hline
$\Lambda_{\textup{min}}^\prime$ & $\zeta$\\
\hline
$W_0$ & $v$\\
\hline
$\Phi$ & $a,b$\\
\hline
$W$ & $u,w$\\
\hline
$\mathbb{W}$ & $g$\\
\hline
$\Omega_{\Lambda^\prime}$ & $\omega$\\
\hline
$\{0,\ldots,r\}$ & $j$\\
\hline
$\{1,\ldots,r\}$ & $i$\\
\hline
$\mathbf{F}^\times$ & $z$\\
\hline
\end{tabular}
\end{center}

\noindent
\S \ref{ConvSection} {\bf Conventions}\\

\noindent
$\mathbf{F},q,\textup{Hom}(G,H),T_\Lambda,R[A],\mathcal{P}_\Lambda,x^\lambda,t^\lambda,[\ell,m]$.\\

\noindent
\S \ref{subsect_root} {\bf Root systems and Weyl groups}\\

\noindent
$E,\langle\cdot,\cdot\rangle, \|\cdot\|, E_{\textup{co}}, E^{\textup{reg}}, E_{\pm},y_{\pm},E^\prime, E_{\pm}^\prime,\textup{pr}_{E^\prime},\textup{pr}_{E_{\textup{co}}},m,
\Phi_0,\Phi_0^{\pm},\Delta_0=\{\alpha_i\}_{i=1}^r,\varphi, Q,$\\
$\Phi_0^\vee,\Delta_0^\vee=\{\alpha_i^\vee\}_{i=1}^r,W_0,w_0,s_\alpha,s_i,Q^\vee,P^\vee,P^{\vee,\pm},\varpi_i^\vee$.\\

\noindent
\S  \ref{S22} {\bf Affine root systems and affine Weyl groups}\\

\noindent
$E^{a,\textup{reg}},C_{\pm}, \mathcal{O}_y,E^*\oplus\mathbb{R}\overset{D}{\twoheadrightarrow} E^*,\Phi,\Phi^{\pm},\Delta=\{\alpha_j\}_{j=0}^r,W,W_y, s_a,s_0,\tau(\mu),W\overset{D}{\twoheadrightarrow} W_0$.\\

\noindent
\S \ref{cosetsection} {\bf Length function and parabolic subgroups}\\

\noindent
$\ell(w),\Pi(w),\cc_y,w_y,\mu_y,v_y,\mathbf{J}(\cc),\Phi_J,\Phi_J^{\pm},W_J, W^J$.\\

\noindent
\S \ref{S24} {\bf The Coxeter complex of $W$}\\

\noindent
$\Sigma(\Phi),C^J,C_J,\mathcal{F}_\Sigma(E,X)$.\\

\noindent
\S \ref{S25} {\bf The double affine Weyl group}\\

\noindent
$\Phi^\vee,a^\vee,K,\widehat{Q}^\vee,\mathbb{W},\mathcal{L},\Lambda,\widehat{\Lambda},j_\Lambda,\mathbf{q}$.\\

\noindent
\S \ref{ap} {\bf Algebras of $q$-difference reflection operators}\\

\noindent 
$\mathbf{F},q_\alpha,T_\Lambda,T,t^\lambda,\varpi,z^\xi,z^\mu,\mathcal{P}_\Lambda,\mathcal{P},\mathcal{Q}_\Lambda,\mathcal{Q},
W\ltimes\mathcal{P},W\ltimes\mathcal{Q}$.\\

\noindent
\S \ref{DAHA} {\bf The double affine Hecke algebra}\\

\noindent 
${}^{\textup{sh}}\Phi_0,{}^{\textup{lg}}\Phi_0,{}^{\textup{sh}}k,{}^{\textup{lg}}k,k,\mathbf{k},k_a,k_j,\chi_{\pm},\chi,
\mathbb{H}=\mathbb{H}(\mathbf{k},q),H=H(\mathbf{k}),H_0=H_0(\mathbf{k})$,\\
$\mathcal{P}_Y,T_w,Y^\mu,U_0,p(Y^{\pm 1}),\delta$.\\

\noindent
\S \ref{intSection} {\bf Intertwiners}\\

\noindent
$S_j^X,S_w^X,S_j^Y,S_w^Y,\ss,\mathbb{H}^{X-\textup{loc}},\mathbb{H}^{Y-\textup{loc}},\widetilde{S}_j^X,\widetilde{S}_w^X,\widetilde{S}_j^Y,\widetilde{S}_w^Y$.\\

\noindent
\S \ref{IndPar} {\bf Induction parameters}\\

\noindent
$T_{\Lambda,J},T_{\Lambda,J}^{\textup{red}},T_J,T_J^{\textup{red}},T_\Lambda^G,\widetilde{T}_\Lambda, \widetilde{T}_{\Lambda,J},L_{\Lambda,J},L_J,L^J,1_{T_\Lambda},W_J^{\textup{red}},J_0,\Phi_0^J,\Phi_{0,J_0},\Phi_0^{+,J}$,\\
$m(\varphi),h,\eta_J(\alpha),n_i(\alpha),\mathfrak{s}_J$.\\

\noindent
\S \ref{SectionMt} {\bf The $\mathbb{H}$-module $\mathbb{M}_t^J$}\\

\noindent
$\mathbb{H}_J^X,\mathbb{H}_J^Y,\chi_{J,t}^X,\chi_{J,t},H_J,\mathbf{F}_{J,t},1_{J,t},\mathbb{M}_t^J,m_t^J,J(t),L^J,\mathbf{F}_{J,t}^X,{}^X\mathbb{M}_t^J,\pi,t_{\textup{sph}},\mathcal{M}(T),\mathcal{E},m_{w;t}^J,\mu_w,v_w$.\\

\noindent
\S \ref{qpsection} {\bf Spaces of quasi-polynomials}\\

\noindent
$R,R[E],x^y,x^{(y,m)},\mathcal{P}^{(\cc)},\mathfrak{t}_y,w_{\mathfrak{t}},T_{\mathbb{R}}^\vee,\mathcal{P}_\gamma,[\cc]$.\\

\noindent
\S \ref{truncsection} {\bf Truncated divided difference operators}\\

\noindent
$\nabla_a,\nabla_j$.\\

\noindent
\S \ref{gbrsectionStatement} {\bf The quasi-polynomial realisation of $\mathbb{M}_t^J$}\\

\noindent
$\chi_B,\eta,\kappa_v(y),\pi_{\cc,\mathfrak{t}},\mathcal{P}_{\mathfrak{t}}^{(\cc)},\phi_{\cc,\mathfrak{t}}$.\\

\noindent
\S \ref{FaceSection} {\bf The face dependence of the quasi-polynomial representation}\\

\noindent
$e_{w,w^\prime;\mathfrak{t}}^{J,h}, \textup{pr}_{\cc,\cc^\prime}^{\mathfrak{t}^\prime}$.\\

\noindent
\S \ref{aWaSection} {\bf Affine Weyl group actions on quasi-rational functions}\\

\noindent
$\mathbf{F}_{\mathcal{Q}}[E],\mathcal{Q}_\gamma,\sigma_{\cc,\mathfrak{t}},\check{s}_j,\pi_{\cc,\mathfrak{t}}^{X-\textup{loc}},\varphi,\xi_y$.\\

\noindent
\S \ref{idsection} {\bf Properties of the base point $\mathfrak{s}_J$}\\

\noindent
$\mathfrak{s}_y,k(y),k_w(y)$.\\

\noindent
\S \ref{proof1section} {\bf The $\mathbb{H}$-action on $\mathcal{P}^{(\cc)}$}\\

\noindent
$\psi_{\cc,\mathfrak{t}},H_{i,m}$.\\

\noindent
\S \ref{POsection} {\bf The parabolic Bruhat order on $E$}\\

\noindent
$\leq_B,(E,\leq),\prec,\prec_\alpha,\textup{sgn}$.\\

\noindent
\S \ref{POsection2} {\bf Triangularity properties}\\

\noindent
$G_{\mathfrak{t}}(a)$.\\

\noindent
\S \ref{genSection} {\bf Genericity conditions for the multiplicity function}\\

\noindent
$T_J^\prime,\lambda_J,q^y,P,\varpi_i,I^{\textup{co}},\mathfrak{t}(\mathbf{z})$, $W_0^I$.\\

\noindent
\S \ref{monicSection} {\bf The monic $Y$-eigenbasis of $\mathcal{P}_{\mathfrak{t}}^{(\cc)}$}\\

\noindent
$m_y^J(x;\mathfrak{t}),d_{y,y^\prime;\mathfrak{t}}^J,d_{w,w^\prime;\mathfrak{t}}^J,\mathcal{S}(M),M[s],E_y^J(x;\mathfrak{t}),e_{y,y^\prime;\mathfrak{t}}^J,e_{w,w^\prime;\mathfrak{t}}^J$.\\

\noindent
\S \ref{CreationSection} {\bf Creation operators and irreducibility conditions}\\

\noindent
$d_w(x),\epsilon_{w,u},\epsilon_{w,u}^\prime,\mathcal{P}^{(\cc)}_{<w},n_w$.\\

\noindent
\S \ref{ClosureSection} {\bf Closure relations}\\

\noindent
$e_{w,u^\prime;\mathfrak{t}^\prime}^{J,J^\prime}$.\\

\noindent
\S \ref{S64} {\bf The normalised $Y$-eigenbasis and pseudo-duality}\\

\noindent
$r_w(x),P_y^J(x;\mathfrak{t}),t_y,\rho_w$.\\

\noindent
\S \ref{S65} {\bf (Anti)symmetrisation}\\

\noindent
$\mathbf{1}_{\pm},\mathcal{Q}_{\mathfrak{t}}^{(\cc),+},\mathcal{P}_{\mathfrak{t}}^{(\cc),+},E_y^{J,\pm}(x;\mathfrak{t}),P_y^{J,+}(x;\mathfrak{t}),\mathcal{O}_\cc^+,W_{0,y_{\pm}},W_0^{y_\pm},g_y,C_y^{\pm}(y^\prime)$.\\

\noindent
\S \ref{unitaritySection} {\bf Pseudo-unitarity and orthogonality relations}\\

\noindent
$\ast,\mathbf{F}_u^\times,\mathbf{F}_r,T_u,T_r,N_w(x),(\cdot,\cdot)_{J,\mathfrak{t}},p^*,\textup{ct},\mathcal{W}$.\\

\noindent
\S \ref{rationalsection} {\bf The Whittaker limit}\\

\noindent
$\mathbf{K},\mathbf{F}_{\textup{reg}},\overline{\mathcal{P}}^{(\cc)},\overline{\mathcal{P}},\mathcal{P}_{\textup{reg}}^{(\cc)},\overline{\mathcal{Q}}^{(\cc)},\overline{\mathcal{Q}},\overline{d},\overline{f},\overline{H}_0,\overline{H},\overline{T},\overline{T}_J^{\textup{red}},\overline{E}_y^J(x),
\overline{E}_y^{J,\pm}(x),\blacktriangleleft,\blacktriangleright, s_\mu$.\\

\noindent
\S \ref{S70} {\bf Extended root datum and extended affine Weyl groups}\\

\noindent
$e,(\mathcal{L}^{\times 2})_e,(\Lambda_1,\Lambda_1^\prime)\leq (\Lambda_2,\Lambda_2^\prime),q^{\ell/e},q_\alpha^{\ell/e},W_{\Lambda},\Omega_{\Lambda},w_{\lambda,\Lambda},v_{\lambda,\Lambda},\Lambda_{\textup{min}},\zeta^\prime,\omega(j),I_{\Lambda^\prime},\varpi_{i,\Lambda^\prime}^\vee$.\\

\noindent 
\S \ref{S700} {\bf The extended double affine Hecke algebra}\\

\noindent
$\mathbb{H}_{\Lambda,\Lambda^\prime},H_{\Lambda^\prime},\Lambda^{\prime+}$.\\

\noindent
\S \ref{S71} {\bf The extended quasi-polynomial representation}\\

\noindent
$\mathcal{P}_\Lambda^{(\cc)},\pi_{\cc,\mathfrak{t}}^{\Lambda,\Lambda^\prime},\mathcal{P}_{\Lambda,\mathfrak{t}}^{(\cc)},\sigma_{\cc,\mathfrak{t}}^{Q^\vee,\Lambda^\prime},V_{\cc,\mathfrak{t}},V_{\cc,\mathfrak{t}}(\omega),\phi_\omega,W_{\Lambda,\cc},\Omega_{\Lambda,J},\Omega_{\Lambda,\cc},\Omega_{\Lambda}^{\cc},{}^\Lambda T_{\Lambda^\prime}^\cc,\mathfrak{t}_{y;\cc},w_{\mathfrak{t};\cc},\psi_{\cc,\mathfrak{t}}^{\Lambda,\Lambda^\prime}$.\\

\noindent
\S \ref{Tp} {\bf Twist parameters}\\

\noindent
$\Xi_{\mathfrak{t}},\Theta_{\mathfrak{t}}$.\\

\noindent
\S \ref{S72} {\bf The extended eigenvalue equations}\\

\noindent
$E_{y;\cc}^{\Lambda,\Lambda^\prime}(x;\mathfrak{t})$.\\

\noindent
\S \ref{S73} {\bf The theory for the $\textup{GL}_{r+1}$ root datum}\\

\noindent
$\epsilon_i,u,\varpi,\varpi_{i,\ell\mathbb{Z}^{r+1}}^\vee,\widetilde{\mathbb{H}},\widetilde{\pi}_{\cc,\mathfrak{t}},Y_i,\widetilde{E}_{y;\cc}(x;\mathfrak{t})$.\\

\noindent
\S \ref{S81} {\bf The $g$-parameters}\\

\noindent
$\widetilde{\mathfrak{t}}$, $\kappa_\alpha(\widetilde{\mathfrak{t}}),\mathcal{G}^{\textup{amb}},\mathbf{f},f_\alpha,\widehat{\mathcal{G}},q^y,\widehat{\mathbf{g}},\widehat{g}_\alpha,\mathcal{G},\mathbf{g},g_\alpha,\mathcal{C}_{\Lambda,\Lambda^\prime},\mathfrak{c},\mathfrak{t}_y(\widehat{\mathbf{g}},\mathfrak{c}),w\cdot_{\widehat{\mathbf{p}},\mathfrak{c}}x^y$.\\

\noindent
\S \ref{unifSection} {\bf The uniform quasi-polynomial representation}\\

\noindent
$\Gamma_{\Lambda,\mathbf{g}}^{(\cc)},\mathbf{F}_{\textup{cl}},\widetilde{\mathbf{g}},\gamma_{\widetilde{\mathbf{g}}},\pi_{\mathbf{g},\widehat{\mathbf{p}},\mathfrak{c}}^{\Lambda,\Lambda^\prime}$.\\

\noindent
\S \ref{ExtUnifSection} {\bf Uniform quasi-polynomial eigenfunctions}\\

\noindent
$\mathcal{E}_y^{\Lambda,\Lambda^\prime}(x;\mathbf{g},\widehat{\mathbf{p}},\mathfrak{c}),\mathcal{E}_y(x;\mathbf{g},\widehat{\mathbf{p}}),\mathbf{F}_{\mathcal{Q}_\Lambda}[E],\sigma_{\mathbf{g},\widehat{\mathbf{p}},\mathfrak{c}}^{\Lambda,\Lambda^\prime}$.\\

\noindent
\S \ref{MetaplecticSection} {\bf Metaplectic representations and metaplectic polynomials}\\

\noindent
$n,\mathbf{Q},\kappa_\ell$.\\

\noindent
\S \ref{S91} {\bf The metaplectic parameters}\\

\noindent
$\mathbf{B},m(\alpha),\Phi_0^m,\alpha^m,Q^{m\vee},\Phi^m,\vartheta,\theta,s_j^m,C_+^m,C^{mJ},\mathcal{G}^m,\mathcal{M}=\mathcal{M}_{(n,\mathbf{Q})}$,\\
$\underline{h},h_s(\alpha),\underline{h}^{\mathbf{g}}, h_s^{\mathbf{g}}(\alpha),\ell_\alpha$.\\

\noindent
\S \ref{msec} {\bf The metaplectic basic representation}\\

\noindent
$\mathbb{H}^m,\mathcal{L}^m,\Lambda^m,\Lambda^{m\prime},r_\ell(s),\nabla_j^m,\pi_\Lambda^{m},\mathbb{H}_{\Lambda^m,\Lambda^{m\prime}},\widehat{\mathcal{G}}^m,\pi_{\mathbf{g},\widehat{\mathbf{p}},\mathfrak{c};\Lambda},
\pi_{\mathbf{g};\Lambda},
\mathcal{P}_\Lambda^{m},X^m(\Lambda)$,\\
$\Gamma^m=\Gamma^m_{\underline{h}},\mathfrak{t}_\lambda(\underline{h}),X^m(\Lambda,\Lambda^m),
\kappa,\widetilde{\mathbb{H}},Z_{\Lambda,\ell}^m,\overline{0},Z_{\Lambda,\ell}^{m,\textup{inv}},
Z_{\Lambda,\ell}^{m,\textup{reg}},\widetilde{Z}_{\Lambda,\ell}^{m,\textup{reg}},\check{h}$.\\

\noindent
\S \ref{CGsubsection} {\bf A metaplectic affine Weyl group action on rational functions}\\

\noindent
$\mathcal{Q}^m,\mathbf{F}_{\mathcal{Q}^m}[L],\sigma_\Lambda^m,\rho^\vee,\rho^{m\vee},\sigma_\Lambda^{\textup{CG}},\mathcal{T}_i^m$.\\

\noindent
\S \ref{mpsec} {\bf The metaplectic polynomials}\\

\noindent
$E_\lambda^m,\sigma_s^{(\underline{h})}(\alpha)$.\\

\noindent
\S \ref{MfinalSection} {\bf Limits to metaplectic Iwahori-Whittaker functions}\\

\noindent
$\overline{\mathcal{M}}_{(n,\mathbf{Q})},\overline{H}_0^m,\blacktriangleleft^m,\gamma_\mu=\gamma_\mu(\underline{h}),\kappa_v^m(y),
\overline{E}_\lambda^m(x),E_\lambda^{m,-}(x),\overline{E}_\lambda^{m,-},\widetilde{\mathcal{T}}_i^m,\widetilde{\mathcal{T}}_v^m,\mathcal{W}_{v,\lambda},
\mathcal{W}_\lambda,\blacktriangleright^m$.



\end{document}